\author{Clemens~Kirisits, Bochra~Mejri, Sergei~Pereverzev, Otmar~Scherzer, Cong~Shi}
\title{Regularization Methods for Nonlinear Inverse Problems}
\begin{document}
\frontmatter
\maketitle
\chapter*{Preface}
The focus of this book is on the analysis of regularization methods for solving \emph{nonlinear inverse problems}. Specifically, we place a strong emphasis on techniques that incorporate supervised or unsupervised data derived from prior experiments. This approach enables the integration of data-driven insights into the solution of inverse problems governed by physical models.

\emph{Inverse problems}, in general, aim to uncover the \emph{inner mechanisms} of an observed system based on indirect or incomplete measurements. This field has far reaching applications across various disciplines, such as medical or geophysical imaging, as well as, more broadly speaking, industrial processes where identifying hidden parameters is essential.

Solving inverse problems computationally is a highly challenging task.
The core difficulty lies in their inherent instability, which must be addressed using so called \emph{regularization methods}. These methods were first developed and analyzed by Andrey Nikolayevich Tikhonov. As fundamental as Tikhonov's work was, its results were primarily \emph{qualitative}. Specifically, regularization methods can exhibit arbitrarily slow convergence, and there was no objective framework to compare the efficiency of different methods. It was not until 1989 that the first paper providing quantitative error estimates for regularization methods for nonlinear ill-posed problems appeared: To this end Heinz Engl, Karl Kunisch and Andreas Neubauer introduced source conditions, which are conceptually similar to classical smoothness conditions in numerical analysis. Their work, being grounded in functional analysis, demonstrated how to overcome the issue of ``arbitrary slowness'', enabling a \emph{quantitative} evaluation of regularization methods in infinite dimensional settings. 

Up to the beginning of the 21st century \emph{data-driven} and \emph{machine learning methods} played only a minor role in the study of inverse problems. However, even before 2000, neural network ansatz functions had already been employed in regularization methods. In recent years the use of machine learning techniques has increased dramatically. Despite these developments, a comprehensive quantitative theory for \emph{data-driven} regularization methods applied to nonlinear inverse problems is still not available.

This book makes an attempt to close this gap. While much of the current literature positions machine learning or data-driven methods as competitors to physics-based approaches, our perspective is to integrate and analyze them in a hybrid framework. By leveraging the synergies between these approaches, we seek to enhance numerical reconstructions. Consequently, we focus on the study of
\bigskip\par
\fbox{
	\parbox{0.90\textwidth}{
		\begin{center}
			data-informed, physics-based inverse problems.
\end{center}}}
\bigskip\par
We are very grateful for our funding agencies. Without their support we would not have managed to work on this book:
This research was funded in whole, or in part, by the Austrian Science Fund
(FWF) 10.55776/P34981 -- New Inverse Problems of Super-Resolved Microscopy (NIPSUM),
SFB 10.55776/F68 ``Tomography Across the Scales'', project F6807-N36
(Tomography with Uncertainties), and 10.55776/T1160 ``Photoacoustic Tomography: Analysis and Numerics''. For open access purposes, the author has applied a CC BY public copyright license to any author-accepted manuscript version arising from this submission.
The financial support by the Austrian Federal Ministry for Digital and Economic
Affairs, the National Foundation for Research, Technology and Development and the Christian Doppler
Research Association is gratefully acknowledged.

\bigskip\par
Our gratitude goes to all our collaborators:
Each of us worked with numerous coauthors, who have significantly shaped our understanding of the field. Special thanks are due to them. In particular, Otmar Scherzer is very grateful to Heinz Engl and Andreas Neubauer for their supervision on quantitative (convergence rates) results of regularization methods, which was an unexplored area for nonlinear inverse problems at the time.

\hspace*{\fill} Clemens Kirisits, Bochra Mejri, Sergei Pereverzev, Otmar Scherzer and Cong Shi\\
\hspace*{\fill}
Vienna and Linz, June 2026

\mainmatter

\tableofcontents

\chapter{Introduction}\label{cha:intro}
Throughout this book we are concerned with inverse problems consisting in the solution of an operator equation
\begin{equation} \label{eq:op}
	F[\x] = \y\,.
\end{equation}
Our primary focus is on the classical setting, treated, for instance, in \cite{EngHanNeu96,EngKunNeu89}, where $F : \X \to \Y$ may be nonlinear, and $\X$ and $\Y$ are Hilbert-spaces. These spaces are typically infinite-dimensional and consist of functions of several real variables. To address practical applications, the inverse problem is discretized, resulting in a finite-dimensional approximation.
For the solution to be physically meaningful, the discrete problem must provide an accurate approximation of the solution to the original equation (1.1). Ensuring this approximation is one of the central questions explored in this book.

A prototypical inverse problem is the inversion of the \emph{Radon-transform}. It is the mathematical basis of \emph{computerized tomography (CT)}, where one reconstructs material properties of the interior of the body, represented by a function $\x$, from measurements $\y$ of penetrating $X$-rays. See \autoref{sec:radon}, where we study this problem in greater detail, or \cite{BarSwi81,Eps07} for more details on the image formation process in CT.

Inverse problems frequently are \emph{ill-posed} in the sense of Hadamard \cite{Had07}\index{Problem!ill-posed}, meaning that at least one of the following properties is violated:
\begin{enumerate}
	\item For every $\y \in \Y$ there exists a solution of \autoref{eq:op},
	\item the solution is unique and
	\item it depends continuously on the data. \label{it:stab}
\end{enumerate}

For a long time ill-posed inverse problems were overlooked in the literature because they were considered physically irrelevant.
It was believed that the ill-posedness can always be ``cured'' with an adequate measurement setup.
While this might be true in certain situations, it is widely recognized nowadays that most practical and industrial inverse problems, for instance, in medical imaging \cite{FauSch23,KakSla01,Kuc13} and geophysical prospecting \cite{TarVal82,Tar05,OpeMinBroComMet15,VirOpe09}, are, in fact, ill-posed. See, for instance, \cite{Sch15} or \cite{Wan15,Wes03} for more applied texts. 

When solving inverse problems in practice, a lack of continuous dependence on the data, \autoref{it:stab}, can be particularly challenging. Methods which are able to deal with such instabilities are called \emph{regularization methods}\index{regularization}.
Aside from \emph{special purpose} methods such as the back projection\index{back projection} algorithms widely used in tomography (see \autoref{sec:radon} or \cite{Kuc13,Nat01,Qui06} as well as the references given in \cite{Sch11}), a number of \emph{multi-purpose} regularization methods for solving ill-posed problems have been developed and analyzed.

Arguably the best-known method for solving ill-posed problems is \emph{Tikhonov-regularization}\index{Tikhonov-regularization}.
It consists in approximating a solution of \autoref{eq:op} by a
minimizer $\xad$ of the functional
\begin{equation} \label{eq:Tik}	\boxed{
	\T_{\alpha,\y^\delta}[\x]:= \| F[\x]-\y^\delta \|_\Y^2+\alpha \| \x-\x^0 \|_\X^2,}
\end{equation}
where $\x^0\in\X$ typically unifies all available a priori information on the solution, $\y^\delta \in \Y$ are the measured noisy data and $\alpha$ is a positive parameter.
The functional
\begin{equation*}
	\x \mapsto \| \x-\x^0 \|_\X^2
\end{equation*}
is used to enforce stability of the computed minimizer and is therefore called \emph{regularization functional}\index{functional!regularization}. Both the \emph{fit to data} functional
\begin{equation} \label{eq:op_min}
	\x\mapsto \| F[\x]-\y^\delta \|_\Y^2
\end{equation}
and the regularization functional can be considered in a more general setting: Banach-space norms or semi-norms but also the Kullback-Leibler-divergence and related statistical quantities are common alternatives to Hilbert-space norms. See \cite{AmaHug91b,Egg93,EngLan93,Fri72,FriBur72,KulLei51,Luk88,ResAnd07,SchGraGroHalLen09,SchuKalHofKaz12} to mention but a few.

This book concentrates on the simpler Hilbert-space setting to introduce the basic concepts and ideas of data-driven regularization. We note that this also simplifies certain considerations in machine learning: For instance, in his seminal paper \cite{Wie32} on Tauber-Wiener theorems Norbert Wiener essentially proved a version of the \emph{universal approximation theorem}, later popularized by George Cybenko \cite{Cyb89}, for $L^1$ and $L^2$ functions on the real line. By contrast, George Cybenko formulated the theorem in the framework of Radon-measures. We also present $L^p$-formulations of the universal approximation theorem and link it with the Tauber-Wiener theorem (see \autoref{ex:lpdiscriminatory}). Likewise, the classical Paley-Wiener theorem (see \autoref{th:PaleyWiener}) was formulated in \cite{PalWie34} in an $L^2$ setting before being generalized to distributions by Laurent Schwartz. Again, the arguments are significantly easier in the Hilbert-space setting.

The regularization parameter $\alpha$ determines the trade off between stability and data fidelity of the regularization method. It is typically correlated with the noise level $\delta$ satisfying 
\begin{equation} \label{eq:datn}
	\| \y^\delta-\y\|_\Y \leq \delta\,.
\end{equation}
As a rule of thumb one might say that a high noise level requires a large regularization parameter. Consequently, in such a situation regularization is prioritized over data fidelity.

Starting from the fundamental work of Andrei Tikhonov \cite{Tik43,Tik63b,Tik63} a relatively complete regularization theory in
\emph{finite-dimensional} (see \cite{Han10}) and \emph{infinite-dimensional} Hilbert-spaces (see \cite{EngHanNeu96}) has been developed. Extensions to Banach-spaces are also well-known by now \cite{SchGraGroHalLen09,SchuKalHofKaz12}. Moreover, the relation between learning and finite dimensional Tikhonov-regularization has been mainstreamed in \cite{GooBenCou16}.

\emph{Iterative regularization methods}\index{algorithms!iterative} are a popular alternative to Tikhonov-regularization. They approximately solve \autoref{eq:op} by terminating an iteration of the general form
\begin{equation} \label{eq:iterative} \boxed{
	\x_{k+1}^\delta = \x_k^\delta - G[\x_k^\delta,\x_{k-1}^\delta,\ldots,\x_0;\y^\delta]}\,,
\end{equation}
at a certain iteration index $k_*$. A common choice for $G$ is
\begin{equation} \label{eq:steepest}
	 G[\x_k^\delta,\x_{k-1}^\delta,\ldots,\x_0;\y^\delta] = \mu_k^\delta F'[\x_k^\delta]^* \left( F[\x_k^\delta]-\y^\delta \right),
\end{equation}
where $\opd{\x}^*$ is the adjoint of the Fr\'echet-derivative $\opd{\x}$. The resulting iterations are \emph{gradient descent algorithms}. Among these we highlight the \emph{Landweber-method}\index{Landweber-method} where $\mu_k^\delta=1$ for all $k \in \N_0$. That is, the Landweber-iteration is a gradient descent algorithm without step size control.

Textbooks on numerical analysis commonly prove convergence of iterations like \eqref{eq:iterative} for well-posed problems. 
For ill-posed problems, however, the iteration must be \emph{stopped early}\index{early stopping} in order to overcome the instability of the operator equation. Note that this is analogous to having a strictly positive regularization parameter in $\alpha$ in \autoref{eq:Tik}. 
In contrast to the well-posed case one proves that, as $\delta \to 0$, it is possible to choose $k_* = k_*(\delta)$ (or $\alpha = \alpha(\delta)$) such that $\x_{k_*(\delta)}^\delta$ (or $\x^\delta_{\alpha(\delta)}$) converges to a solution of \autoref{eq:op}.

\bigskip
\fbox{
	\parbox{0.90\textwidth}{
        	It is a fundamental observation of regularization theory that without additional prior information on the solution of \autoref{eq:op} there \emph{cannot} exist \emph{quantitative} error estimates for the regularized solution. In fact the approximation can be \emph{arbitrarily poor} (see for instance \cite{Gro84}).
	}}
\bigskip\par
    This dilemma can be resolved in two ways:
    \begin{enumerate}
    	\item One option is to impose so-called \emph{source conditions} (see \autoref{eq:source_var} below), which are similar to standard closeness conditions in numerical analysis. 
    	In fact the application of regression provides examples on the equivalence of smoothness and source conditions \cite{Han02,HanSch01}.
\end{enumerate}

\bigskip
\fbox{
	\parbox{0.90\textwidth}{
    		Source conditions for \emph{nonlinear} inverse problems were first used by Heinz Engl, Karl Kunisch and Andreas Neubauer \cite{EngKunNeu89} in order to obtain \emph{quantitative} error estimates. These allow to objectively compare the efficiency of regularization methods, which is particularly relevant for evaluating data-driven regularization methods.
    	}}
    	
\begin{enumerate} \setcounter{enumi}{1}
    	\item Alternatively, in certain situations, it may be sufficient to compute features $\z$ derived from $\x$. In such cases $\z$ can sometimes be more accurately approximated than the solution of \autoref{eq:op} itself. Robert Anderssen was the first to consider the \emph{linear functional strategy} \cite{And86}. This aspect is also discussed in \autoref{ch:Aspri paper}.
    \end{enumerate}
    
With the rise of \emph{machine learning} new research directions have emerged in the field of regularization theory concerning the development of algorithms making use of data obtained from prior experience:
	\paragraph{Supervised parameter tuning of algorithms.} In some earlier studies (see \cite{HanNeuSch95,Sch98}), the step sizes $\mu_k^\delta$ of iterative regularization methods were manually adjusted to accelerate convergence. In contrast, modern data-driven algorithms automatically determine the optimal step size parameters based on the given descent directions or, more generally, the search directions $G[\x_k^\delta,\x_{k-1}^\delta,\ldots,\x_0;\y^\delta]$ by using \emph{labeled samples}\index{labeled samples} (see \cite[Sec.~4.9.1]{ArrMaaOktScho19}):
    \begin{equation} \label{eq:trainingsamplesy}
	    \mathcal{T}_\no:=\set{(\x^{(0)},\z^{(0)})} \cup \set{(\x^{(\ell)},\z^{(\ell)}): \ell = 1,\ldots,\no}
    \end{equation}
    or
    \begin{equation} \label{eq:trainingsamplesx}
    	\mathcal{T}_\no:=\set{(\z^{(0)},\y^{(0)})} \cup \set{(\z^{(\ell)},\y^{(\ell)}): \ell = 1,\ldots,\no}\;,
    \end{equation}
    where each pair $(\x^{(\ell)},\y^{(\ell)})$ is a solution pair of \autoref{eq:op} and $\z^{(\ell)}$ is a feature, for example, a classifier of some sort, of $\y^{(\ell)}$ or $\x^{(\ell)}$, respectively. We emphasize that $(\x^{(0)},\y^{(0)})$ is a distinctive pair of expert information, which is given particular importance. Additionally, we consider triples and quadruples of labeled samples (see \autoref{eq:sampled1}-\autoref{eq:sampled3} below).

    Similar strategies have been devised for the optimal selection of parameters, such as $\alpha$, as well as for determining optimal regularization functionals in Tikhonov-regularization (see for instance \cite{AfkChuChu21,ArrMaaOktScho19,ChePerXu15,ChiVitMolRosVil24,VitForNau22} and \autoref{ch:par}).

    \paragraph{Surrogate modeling.} In recent years the need for accurate physical models of the forward operator $F$ has been questioned. Instead, it has been suggested to replace $F$ by a surrogate operator $F_S$ derived from \emph{expert information} \index{expert information} consisting of selected training samples
    \begin{equation} \label{eq:expert_information}
    	\mathcal{S}_\no:=\set{(\x^{(0)},\y^{(0)}=\op{\x^{(0)}})} \cup \set{(\x^{(\ell)},\y^{(\ell)}=\op{\x^{(\ell)}}): \ell = 1,\ldots,\no}.
    \end{equation}
    This approach has been studied, for instance, in the context of learning linear operators and matrices \cite{AspFriKorSch21,AspKorSch20,MolMucSul22_report,YaXinXueDac12}, holomorphic operator learning \cite{KovLanMis21,LanLiStu23_report,LanMisKar22,LanStu23_report} and vector-valued regression (see \cite{brogat_2022,MeuSheMolGreLi24_report}).

    The main difference between expert information $\mathcal{S}_\no$ and labeled samples $\mathcal{T}_\no$ is that the former is directly related to the operator $F$ while the latter consists of solutions or data coupled with features. 
    The topic of function, functional and operator learning is treated in \autoref{ch:op_learning}.

    \paragraph{Detail representation.} Machine learning techniques, such as \emph{neural network functions}, have been used as ansatz functions for the elements of the spaces $\X$ and $\Y$, respectively, in order to better represent essential features of the solutions to be reconstructed (see \cite{AntHal21,JinMccFroUns17,LiSchwAntHal20,SchwAntHal19}). We discuss neural network functions in \autoref{ch:nn}.
    
    \paragraph{Unsupervised information} is used to increase the efficiency of regularization algorithms by using reconstructions from prior experiments
    \begin{equation} \label{eq:expert_information_discriminative}
    	\mathcal{U}:=\set{\x^{(0)}} \cup \set{\x^{(\ell)}: \ell = 1,\ldots,\no},
    \end{equation}
    but not the data from which these reconstructions were obtained. Such an approach is effective when the elements of $\mathcal{U}$ exhibit a high degree of similarity to the desired solution.

\bigskip
\fbox{
	\parbox{0.90\textwidth}{
	One objective of this book is to systematically analyze data-driven methods for solving nonlinear inverse problems using \emph{functional analytic techniques} and to make \emph{quantitative} comparisons with classical regularization methods.
	}
}

\section[The impact of data-driven techniques]{An illustration of the impact of data-driven techniques on regularization}
In \cite{NeuSch90} we investigated finite-dimensional approximations of Tikhonov-regularization, where the functional
\begin{equation} \label{eq:Tikdis}
	\boxed{ 
	\T_{\alpha,\y^\delta}^{{\tt n}}[\x]:=\norms{F_{\tt n}[\x]-\y^\delta}_\Y^2+\alpha\norms{\x-\x^0}_\X^2}
\end{equation}
is minimized over a finite-dimensional subspace $\X_{\tt m} \subset \X$ providing a minimizer $\x^{\alpha,\delta}_{{\tt m},{\tt n}}$. The operator $F_{\tt n}$ in \eqref{eq:Tikdis} is an approximation to $F$.

Approximating the solution of \autoref{eq:op} using discretization and regularization goes back, at least, to \cite{GroNeu89,Nat77} for linear problems. For nonlinear inverse problems it has been studied in \cite{Neu89,NeuSch90,PoeResSch10}. Neural network ansatz functions have been employed to this end already in \cite{PogGir90} for the problem of denoising and in a more general setting, for example, in \cite{BurEng00,ObmSchwHal21}.

The results of \cite{Neu89,NeuSch90}, which are presented in \autoref{sec:fda}, show how ${\tt m}$, ${\tt n}$ and $\alpha$ have to be chosen in dependence of $\delta$ such that $\x^{\alpha,\delta}_{{\tt m},{\tt n}}$ converges to a solution of \autoref{eq:op}. The bottom line is that in typical applications the approximation space $\X_{\tt m}$ can be relatively coarse and the approximation $F_{\tt n}$ does not have to be too accurate, which aligns very well with the philosophy of learning surrogate operators mentioned above. We use the following example to demonstrate two ways of constructing an approximating operator.

\begin{example}[$c$-example] \label{ex:c_reconstruct} 
Consider the operator
	\begin{equation} \label{eq:cF}
		\begin{aligned}
			F: \mathcal{D}(F) := \set{\x \in L^2(0,1) : \x \geq 0} & \to L^2(0,1),\\
			\x &\mapsto F[\x] := \y
		\end{aligned}
	\end{equation}
	where $\y :=\y[\x] :[0,1] \to \R$ is the unique solution of
	\begin{equation} \label{eq:cproblem}
		\begin{aligned}
			-\y''(s) + \x(s)\y(s) & = {\tt f}(s) \quad s \in ]0,1[\,, \\
			\y(0) = 0 &= \y(1)\,,
		\end{aligned}
	\end{equation}
	\par\noindent
	for a given ${\tt f} \in L^2(0,1)$. 
\end{example}
\paragraph{The finite element approach.}
The solution $\y$ of \autoref{eq:cproblem} is also the unique solution of the weak form
\begin{equation} \label{eq:fe}
	\inner{\y'}{{\tt v}'}_{L^2} + \inner{\x\y}{\tt v}_{L^2} = \inner{\tt f}{\tt v}_{L^2} \quad \text{ for all } {\tt v} \in W_0^{1,2}(0,1)\,.
\end{equation}
See \autoref{eq:sob_hom} for the definition of the Sobolev space $W_0^{1,2}(0,1)$.
Let $\Y_{\tt n}$ be the space of linear splines on a uniform grid of $({\tt n}+1)$ points in $[0,1]$ vanishing at $0$ and $1$.
We approximate $\y$ by the unique solution $\y_\n = \sum_{i=1}^{\n-1} y_i \Lambda_i\in \Y_{\n}$ of
\begin{equation}\label{eq:fen}
	\inner{ \y_{\tt n}'}{{\tt v}'}_{L^2} + \inner{\x\y_{\tt n}}{\tt v}_{L^2} = \inner{\tt f}{\tt v}_{L^2}, \quad \text{ for all } {\tt v} \in \Y_{\tt n}, \;
\end{equation}
and define
\begin{equation}\label{eq:Fm}
	\begin{aligned}
	F_{\tt n}: \dom{\opo_\ttn} := \dom{\opo} &\to L^2(0,1)\;.\\
	\x &\mapsto F_{\tt n}[\x] := \y_{\tt n}
	\end{aligned}
\end{equation}
The approximate forward operators $F_{\tt n}$ satisfy
\begin{equation} \label{eq:m2_estimate}
	\| F[\x] - F_{\tt n}[\x] \|_{L^2} = \| \y - \y_{\tt n} \|_{L^2} \le C (1 + \| \x \|_{L^2}){\tt n}^{-2}.  
\end{equation}
This is a simple special case of a fundamental finite element error estimate, based on C\'ea's lemma, interpolation error bounds and the Aubin-Nitsche trick. See, for instance, \cite{Cia02} and also \cite{ArnLog14,Zla68}. Estimates of this type are indispensable for investigating finite-dimen\-sional approximations of Tikhonov-regularization, cf.\ \autoref{as:fda}.

%

\paragraph{The data driven approach} computes an approximation of the operator $F$ from training samples. See \cite{AspKorSch20,AspFriKorSch21,KovLanMis21,LuJinPanZhaKar21,LanLiStu23_report,LanMisKar22,LanStu23_report,HerSchwZec24}, for instance. A central result in this context is the 
\emph{universal approximation theorem} for \emph{operators} from \cite{TiaHon95}\index{operator!universal approximation theorem}. It guarantees the existence of a coefficient vector
\begin{equation} \label{eq:D}
	\Gamma = \begin{pmatrix} \underbrace{{\alpha}_{j,k}}_{\in \R} & \underbrace{{w}_{j,k,l}}_{\in \R} & \underbrace{w_j}_{\in \R} & \underbrace{{\theta}_{j,k}}_{\in \R} & \underbrace{s_l}_{\in [0,1]}
		 & \underbrace{{\theta}_j}_{\in \R} &\end{pmatrix}_{\!\!\! \tiny \begin{array}{c} j=1,\ldots,N_j\\ k=1,\ldots,N_k\\ l=1,\ldots,N_l \end{array}}
\end{equation}
such that
\begin{equation} \label{eq:deeponet}
	\begin{aligned}
		D_\Gamma[\x](t) := \sum_{j=1}^{N_j} \sum_{k=1}^{N_k} \alpha_{j,k} \operatorname{\sigma}\left( \sum_{l=1}^{N_l} w_{j,k,l} \x(s_l)+\theta_{j,k}\right) \sigma (w_j t + \theta_j)		
	\end{aligned}
\end{equation}
approximates $F[\x](t)$ for all $t \in [0,1]$ and all $\x$ in a compact subset of $C[0,1]$ with a prescribed accuracy $\ve > 0$.
In practice $\Gamma$ has to be estimated from a system of nonlinear equations of the form
\begin{equation} \label{eq:learn}
	\begin{aligned}
		\y^{(\ell)}(t_\rho) 
		&= D_\Gamma[\x^{(\ell)}](t_\rho) 
	\end{aligned}
\end{equation}
where $(\x^{(\ell)},\y^{(\ell)})$, $\ell=1,\ldots,\no$, are expert information pairs and $t_\rho \in [0,1]$, $\rho=1,\ldots,Q$, are sampling points. Note that there are
\begin{equation*}
	N_jN_k(1+N_l)+N_l+N_jN_l \approx  N_jN_kN_l
\end{equation*}
coefficients and $\no Q$ equations. In view of universal approximation theorems for functions (see \cite{Cyb89,Hor91}) it seems counter-productive to constrain the amount of parameters. In several applications, however, this can be justified (see, for instance, \cite{TiaHon95} and \autoref{ta:nn}) and is strongly dependent on the choice of the activation function $\sigma$. In practice this means that the training process can be significantly simplified by reducing the variables of $\Gamma$ a-priori.
	
After the training process, that is, after a coefficient vector $\hat \Gamma$ has been determined, the approximate forward operator is given by
\begin{equation} \label{eq:deepoapp}
	F_{\tt n}[\x] := D_{\hat \Gamma} [\x]\;.   
\end{equation}
Based on the rate at which the operator in \eqref{eq:Fm} or \eqref{eq:deepoapp} approximates $F$ and on the noise level $\delta$ (recall \eqref{eq:datn}), the aim is to choose approximation spaces $\X_{\tt m}$ and a rate for $\alpha \to 0$ such that the error between $\x^{\alpha,\delta}_{{\tt m},{\tt n}}$ and a solution of \autoref{eq:op} vanishes in the limit. How this can be achieved is made precise in \autoref{th:NeuSch90} and \autoref{th:NeuSch90b}. The important difference between these two theorems is that in the latter smoothness assumptions are imposed on the solution of \autoref{eq:op} leading to convergence rates.

\bigskip
\fbox{
	\parbox{0.90\textwidth}{
		This book employs the theory of approximate operators and source conditions to find optimal finite-dimensional approximations of regularized solutions. The essential difference between a \emph{finite-element-type} approximation and a neural network based surrogate operator (such as \autoref{eq:deepoapp}) is that for the former we can rely on results like \emph{C\'ea's lemma} to obtain estimates of the form
		\begin{equation}\label{eq:de_cia}
			\| F[\x] - F_{\tt n}[\x] \| = \mathcal{O}\left({\tt n}^{-p}\right),
		\end{equation}
		compare \eqref{eq:m2_estimate}. Here, ${\tt n}$ is inversely proportional to the fineness of the finite element triangulation. It is important to note that for neural network approximations such \emph{Sobolev estimates} are not available in general (see \autoref{ta:nn} and \autoref{tab:chal}). This appears to be one of the biggest obstacles for bridging the gap between neural network theory and regularization theory.
    }
}

\section{A guided tour through the book} \label{se:chall}
This book aims to develop an analysis of \emph{hybrid regularization methods}, which combine classical regularization theory with data-driven techniques (see \autoref{ch:Aspri paper}). The interplay between regularization theory and data-driven approaches is not a one-way interaction but a mutual exchange that drives progress in both areas. Examples of how concepts and methodologies from regularization theory have been applied in various machine learning contexts can be found in \cite{dinu2023addressing,GruHolLehHocZel24_report,zellinger2025binary,zellinger2021the}.

The focus of this book is on concepts rather than generality. Therefore we concentrate on simpler Hilbert-space settings. Generalization to more general function space settings (such as Banach-spaces) are conceptually analogous although technical difficulties can be anticipated. The relevant Hilbert-space regularization theory, which this book heavily builds on, has been developed in \cite{EngKunNeu89,EngHanNeu96} for variational and in \cite{Bak92,DeuEngSch98,HanNeuSch95,KalNeuSch08} for iterative regularization methods. What sets the cited work apart from much of the existing research on regularization is its focus on \emph{convergence rate results}. Building on these papers, convergence rate results have since been generalized in  \cite{Fle11,FleHof10,FleHof11,FleHofMat11,Hof06,HofKalPoeSch07,HofMat07,Hoh00,Sch01a,SchGraGroHalLen09}.

A difficulty in the \emph{quantitative} analysis of hybrid regularization methods is a mismatch of function spaces, summarized in \autoref{tab:chal}.\footnote{We use here the terminology Barron vector spaces and Barron energies in response to \cite{EMaWu22}:``It should be stressed that the terminologies ``space'' and ``norm'' are used in a loose way. For example, flow-induced norms are a family of quantities that control the approximation error. We do not take effort to investigate whether it is a real norm.''} Compare, for instance, the articles \cite{Bar93,BarCohDahDev08,Mha93,SieXu23,SieXu22}) with \cite{EngKunNeu89,EngHanNeu96,EngZou00,SchGraGroHalLen09}.
\begin{table}[h]
	\begin{center}
		\begin{tabular}{r||l}
			\hline
			Concept & Function space setting\\
			\hline
			\hline
			Neural networks & Barron vector spaces\\
			Regularization & Banach-and Hilbert-spaces,\\
			& in particular Sobolev spaces\\
			Wavelets & Besov spaces\\
			\hline
		\end{tabular}
	\end{center}
	\caption{\label{tab:chal} Prevailing function space settings in the respective theories. The most important definitions and properties are outlined in the Appendix in \autoref{sob_def}, \autoref{sec:barron} and \autoref{sec:besov}.}
\end{table}

In order to present a theory of hybrid regularization methods, we will need the following ingredients:
\begin{enumerate}	
	\item The classical theory of regularization methods for solving nonlinear inverse problems is reviewed in  \autoref{ch:classic} and lays the foundation for what comes after.
	
	\item We discuss three different classes of inverse problems in \autoref{sec:ill}:
	\begin{enumerate}
		\item Linear,
		\item compositions of linear ill-posed and nonlinear well-posed problem (see \autoref{se:decomp}) and
		\item general nonlinear inverse problems.
	\end{enumerate}
    Decomposed operators appear naturally when parametrizations with neural network functions are used for the functions
    $\x$ and $\y$ (see \autoref{eq:op2a}). It is important to note that in such situations convergence and rates of Newton's method for calculating neural network coefficients of a function $\x$ can be guaranteed (see \cite{SchHofNas23}).
	
	\item Neural network functions (see \autoref{ch:nn}) are central for many data driven approaches. They are compositions of \emph{activation functions} (denoted always by $\sigma$ throughout this book) and \emph{decision functions}\index{function!decision} (see \autoref{ch:nn}). These functions are building blocks of \emph{machine learning} methods.
	
	We adopt a \emph{generalized view} of \emph{neural network} functions (see \autoref{ch:nn}), which allows for versatile applications in regularization theory (see \autoref{ch:Aspri paper}). See \autoref{ta:nn} below, which shows a variety of such functions. Two historical research directions are followed for this purpose: The first historical detour starts with Norbert Wiener \cite{Wie32}, who approximated arbitrary functions by linear combinations of translations of \emph{one} activation function. This is a problem dating back to Alfred Tauber (see \cite{Tau97}). The second historical detour concerns wavelets (see Ingrid Daubechies \cite{Dau92} and Yves Meyer \cite{Mey93}). The ideas of wavelets and Wiener functions for approximating one dimensional functions can be generalized in various ways for approximating functions defined on a high dimensional Euclidean space $\R^n$  (see \autoref{ch:nn}): One of these generalizations leads to famous \emph{universal approximation theorem} developed by George Cybenko (see \cite{Cyb89}), Kurt Hornik \cite{Hor91}; see also the book of Walter Rudin \cite{Rud73}. The practical benefits for numerical implementations are that functions can be approximated by linear combinations of affine transformations of a \emph{single function}. \autoref{re:tw} shows the benefits of using only translations of a single function. However, in higher dimensions, this is too restrictive (see \autoref{ta:nn}).
	
	\item Instead
	of modeling the forward operator physically one can generate \emph{surrogate models} from expert data; recall \autoref{ex:c_reconstruct}. \index{surrogate model} Surrogate modeling of functions, functionals and operators is considered in \autoref{ch:op_learning}. We do not use them as surrogates but as priors in hybrid regularization methods (see \autoref{ch:Aspri paper}).
	
	For a comparison of the two approaches (constructive finite element approximations and neural network based forward operators) it is important to have quantitative convergence rates results, requiring both quantitative regularization results and error estimates for the approximate forward operator.

    \item While in \autoref{ch:par} we attempt to optimize regularization parameters, in \autoref{ch:Aspri paper} we combine learned operators with physically modeled operators. Applying the classical regularization to hybrid problems enlarges the possibilities of using prior information. See for instance \autoref{th:ip:rate-l}.

    \autoref{ch:op_learning} concerns all types of modern approximations of functions, functional and operators with neural network approximations, which can potentially replace classical function space approximations and operator approximations. In some sense, this makes the book different from parts of the literature, where data driven methods are employed as competitors to classical methods (see some discussions in \cite{GroKomLatScho24,HalNgu23,HerSchwZec24,LuJinPanZhaKar21}, to mention but a few).
	
	\item Machine learning provides opportunities to reinforce regularization as a \textit{data-driven approach}, for instance in the form of a posteriori parameter choice rules, such as the discrepancy principle, or balancing principles \cite{EngHanNeu96,LuPer13}. Data-driven techniques for optimal regularization parameter selection can be found for instance in
	\cite{CalCaoCarSchoVal16,ChiVitMolRosVil24,ChuEsp17,LosSchoVal16,LosSchoVal17,HabHol23_report,HabTen03,HolKunBar18,KunPoc13} and \autoref{ch:par}.
	
	\item The \emph{training process} of a neural network, such as the estimation of the coefficients $\Gamma$ in \autoref{eq:learn}, is highly demanding and is discussed in \autoref{ch:learning}.
\end{enumerate}

\autoref{fig:tg} shows a schematic overview of the contents of this book.
\begin{figure}[h]
	\begin{center}
		\includegraphics[width=.8\linewidth]{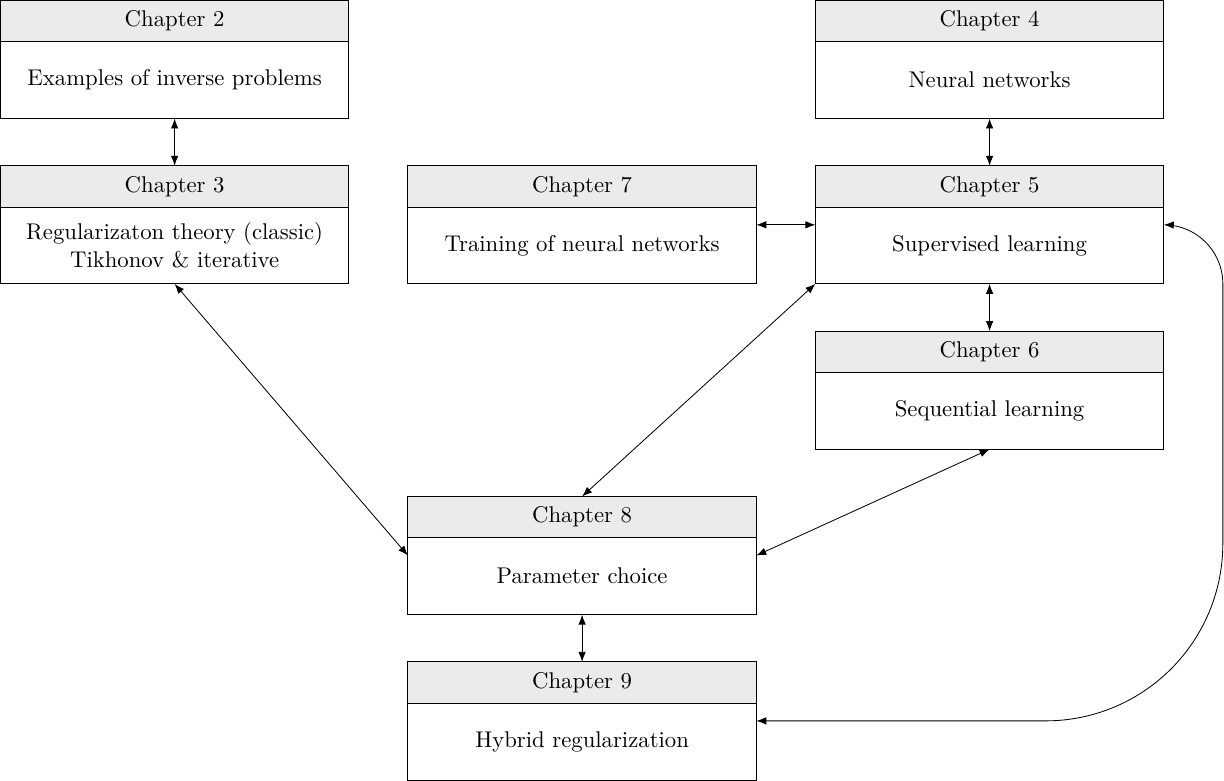}
		\caption{\label{fig:tg} There are strong dependencies between \autoref{ch:classic} and \autoref{ch:Aspri paper} as well as between \autoref{ch:op_learning} and \autoref{ch:Aspri paper}. \autoref{ch:Aspri paper} combines all the previously acquired information.}
	\end{center}
\end{figure}
Since the book combines many different areas, finding a unifying notation is very challenging. We have summarized our notation in \autoref{sec:not}.

The challenges of connecting data-driven approaches with regularization theory become evident through the simple examples originally presented by Heinz Engl, Karl Kunisch, and Andreas Neubauer in \cite{EngKunNeu89}. These examples serve as a foundation that we revisit and expand upon throughout this book.
\begin{description}
	\item{The  $c$-example:} 
		\begin{align*}
			\autoref{ex:c_reconstruct}
				&\Rightarrow \autoref{ss:c} \Rightarrow \autoref{ex:c-analysis} \Rightarrow \autoref{ex:c-rates} \\ 
				&\Rightarrow \autoref{ex:ac} \Rightarrow \autoref{ex:c_recon_cont}, \autoref{ex:c_recon_cont_II}
		\end{align*}
	\item{The $a$-example:}
		\begin{equation*}
			\autoref{a_reconstruct} \Rightarrow \autoref{ex:s_recon_cont}, \autoref{ex:a_recon_II}
		\end{equation*}
	The labels $c$ and $a$ refer to the variable names that were originally used in \cite{EngKunNeu89} to denote the parameter functions reconstructed in the respective inverse problems. In this book the notation is unified and all reconstructed parameters are usually denoted by $\x$.  
\end{description}
 \fbox{
	\parbox{0.90\textwidth}{The bottom line of these examples is that data-driven methodologies (in particular surrogate models) do not have the same powerful analysis available as for instance constructive (finite element) approaches (see \autoref{sec:appl}). However, the full strength of both approaches can be exploited if they are used in a hybrid manner (see \autoref{ch:Aspri paper}).
}}

\commentO{Unitary operator need to be unified.}

\chapter{Examples of inverse problems: from linear to nonlinear} \label{sec:ill}

In this book, we consider three different classes of ill--posed problems:
\begin{description}
	\item{{\bf Linear ill--posed problems}}, where the operator $\opo$ from \autoref{eq:op} is linear.
	\item{{\bf Nonlinear operators of categories $1$ and $2$:}} Here, the operator $\opo$ can be decomposed into a linear ill--posed operator $L$ and a well-posed nonlinear operator $\nopo$:
	\begin{description}
		\item{\bf Decomposition case 1} refers to the case $\opo = \nopo \circ L$, and
		\item{\bf Decomposition case 2} refers to the case $\opo = L \circ \nopo$.
	\end{description}
    These decomposition cases have been introduced in \cite{Nas87b,Hof94}.
    \index{decomposition case!1} \index{decomposition case!2}
    \item{{\bf General nonlinear problems:}} We consider two example problems that are neither of 1st nor 2nd decomposition cases. The first example is a one-dimensional \emph{inverse conductivity problem}, which can be formulated as an operator \autoref{eq:op}. The second problem concerns \emph{inpainting}, which \emph{cannot} be formulated as an operator equation. Still, regularization methods provide the basis to apply, in a second step, sophisticated painting techniques, such as \emph{texture synthesis}. 
\end{description}

\section{Inversion of the Radon transform} \label{sec:radon} is a \emph{linear} inverse problem:
\begin{enumerate}
	\item We investigate if we can learn the Radon operator from expert pairs $\mathcal{S}$, as defined in \autoref{eq:expert_information}. 
	Some methods which show the feasibility of this approach are discussed in \autoref{ss:lol} and \autoref{ss:Bio}. It is fascinating that methods from \autoref{ss:Bio} allow for the computation of the singular values of the Radon transform without any formal and physical knowledge of the operator $\opo$, but exclusively from expert pairs (see \cite{AspFriSch24_preprint}).
	\item With the methods from \autoref{ss:nol}, we can compute nonlinear approximations of $\opo$. It is a curious strategy, also feasible, to locally approximate a linear operator by a nonlinear operator (such as operator networks). However, we see that operator networks for linear operators are nothing more than a basis expansion with respect to an orthonormal basis (see \autoref{ss:nol}).
\end{enumerate}

The inversion of the \emph{Radon transform},\index{Radon transform} (see Radon's original work \cite{Rad17} and the reprint in \cite{Rad87}), is most likely the best studied linear inverse problem in the literature. This is due to its theoretical interest (see, for instance, the original papers  \cite{Fun13,Rad17,Rad17b,Cor63} and the historical comments \cite{Cor94}) and its relevance for medical imaging (see the original paper \cite{Hou73} and the mathematical survey of \cite{Kuc13}).\footnote{It is by now history that Allan Cormack and Godfrey Hounsfield got awarded the Nobel prize for the ``development of computer assisted tomography'' in 1979. On the occasion of the 75th anniversary of the Radon transform Allan Cormack stated that the Radon transform should be called Lorentz transform \cite{Cor94}.}

We consider reconstructing a function $\x: \R^2 \to \R$ given the values of integrals of $\x$ along lines. This problem is equivalent to determining $\x$ from its \emph{Radon transform}
\begin{equation}\label{eq:radon-transform}
	\begin{aligned}
		\y(t,\theta)&:=\mathcal{R}[\x](t,\theta) := \int_{L_{t,\theta}} \x(\vs) d \vs \\
		&=
		\int_{s=-\infty}^\infty \x \left(\gamma_{t,\theta}(s)\right) \norm{\frac{d}{ds} \gamma_{t,\theta}(s)}_2 ds \; \text{ for all } t \in \R, \theta \in [0,2\pi)\,,
	\end{aligned}
\end{equation}
where $\norm{\cdot}_2$ denotes the Euclidean norm and $\gamma_{t,\theta}$ is the parametrization of the line
\begin{equation}\label{eq:line}
	L_{t,\theta} = \set{\vs = \gamma_{t,\theta}(s)=\left(t  \vec{n}(\theta)+s \vec{n}(\theta)^\bot \right): s \in \R} .
\end{equation}
with $ \vec{n}(\theta) = \begin{pmatrix} \cos(\theta) \\ \sin(\theta) \end{pmatrix}$ and $ \vec{n}(\theta)^\bot = \begin{pmatrix} - \sin(\theta) \\ \cos(\theta) \end{pmatrix}$. 
This implies that 
\begin{equation}
	\label{eq:radon}
	\begin{aligned}
		\y(t,\theta)
		&=
		\int_{s=-\infty}^\infty \x \left(\gamma_{t,\theta}(s)\right) ds \; \text{ for all } t \in \R, \theta \in [0,2\pi)\;.
	\end{aligned}
\end{equation}
Because of the relation
\begin{equation*}
	\begin{aligned}
		\mathcal{R}[\x] (t,\theta+\pi) &= \int_{s=-\infty}^\infty \x \left(t\vec{n}(\theta+\pi)+s\vec{n}(\theta+\pi)^\bot \right) ds\\
		&=\int_{s=-\infty}^\infty \x \left(-t \vec{n}(\theta) - s  \vec{n}(\theta)^\bot \right) ds \\
		&= - \int_{\nu=\infty}^{-\infty} \x \left(-t \vec{n}(\theta) + \nu  \vec{n}(\theta)^\bot \right) d\nu 
		= \mathcal{R}[\x] (-t,\theta) \,,
	\end{aligned}
\end{equation*}
the Radon transform is completely determined from data for $t \in \R$ and $\theta \in [0,\pi[$. We call $(t,\theta) \in \R \times [0,\pi[$ the \emph{Radon-coordinates}\index{coordinates!Radon}.

\subsection*{Back projection}
Fundamental formulas for inverting the Radon transform are based on \emph{back projection}.
Suppose we select a point $\vs \in \R^2$, with $\vs\neq 0$, and represent it in polar coordinates:
$$\vs = \begin{pmatrix} s_1 \\ s_2 \end{pmatrix} = t  \vec{n}(\theta),$$
with some $t \in \R$ and $\theta \in [0,\pi[$. Note that the vector $ \vec{n}(\theta)$ is orthogonal to the line $L_{t,\theta}$ (see \autoref{eq:line}).
An intuitive way of recovering $\x$ from the values of its Radon transform consists in calculating the
average of all values of line integrals for lines passing through $\vs$. This is called the method of \emph{back projection}:\index{back projection}
\begin{definition}[Back projection] \label{de:back}
	Let \begin{equation*}
		\begin{aligned}
			{\tt z} : \R \times [0,\pi[ & \to \R,\\
			     (t,\theta) &\mapsto {\tt z}(t,\theta)
		\end{aligned}
	\end{equation*}
    be a function in Radon coordinates. The \emph{back projection} of ${\tt z}$ at a point $\vs \in \R^2$ is defined by
	\begin{equation*}
				\mathcal{B}[{\tt z}](\vs) := \frac{1}{\pi} \int_{\theta=0}^\pi {\tt z}(\inner{\vs}{\vec{n}(\theta)}_2,\theta) d\theta \,.
	\end{equation*}
\end{definition}
Note that the point $t  \vec{n}(\theta)$ with 
\begin{equation*}
t=\inner{\vs}{\vec{n}(\theta)}_2 = s_1\cos(\theta) +s_2\sin(\theta)
\end{equation*} 
is the projection of $\vs$ onto the normal vector
$ \vec{n}(\theta)$ of the line $L_{t,\theta}$ (see \autoref{fig:backprojection}).
\begin{figure}[h]
	\begin{center}
	\includegraphics[width=.4\linewidth]{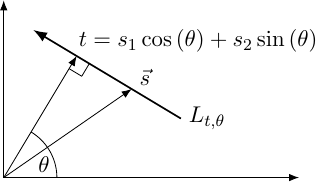}
	\caption{\label{fig:backprojection} Visualization of back projection}
	\end{center}
\end{figure}
In particular, the back projection of the Radon transform is given by:
\begin{equation*}
	\mathcal{B}[\mathcal{R}[\x]] (\vs) = \frac{1}{\pi} \int_{\theta=0}^\pi \mathcal{R}[\x] (\inner{\vs}{\vec{n}(\theta)}_2,\theta) d\theta \,.
\end{equation*}
Applying $\mathcal B$ to the Radon data $\mathcal{R}[\x]$ already provides a good approximation of $\x$ (see \cite{Qui01} and the elementary book \cite{Fee10}, where analytical examples are also computed).

\subsection*{The functional analytical setting of the Radon operator}
We consider the Radon transform for functions $\x \in L^2(\mathcal{B}_1(0))$, meaning that they are square integrable and have compact support in the unit disk $\mathcal{B}_1(0) \subseteq \R^2$.
\begin{definition}[Radon transform for functions supported in the unit disk] \label{de:radon}
	We consider the weighted $L^2$-space on ${\tt Z}:= (-1,1) \times (0,\pi)$ with weighted norm \index{Radon space!${\tt Z}$}
	\begin{equation*}	
		\norm{{\tt z}}_{L^2({\tt Z};{\tt w}^{-1})}^2= \int_{t=-1}^1 \int_{\theta=0}^\pi {\tt z}^2(t,\theta) {\tt w}^{-1}(t) d\theta \,dt \text{ for all } {\tt z} \in L^2({\tt Z};{\tt w}^{-1})
	\end{equation*}
	with weight function \index{weight function (Radon transform)!${\tt w}$}
	\begin{equation*}
		\begin{aligned}
			{\tt w}: [-1,1] &\to \R\,,\\
			     t &\mapsto \sqrt{1-t^2}\;.
		\end{aligned}
	\end{equation*}
	We consider the Radon transform as an operator between $L^2(\mathcal{B}_1(0))$ and $L^2({\tt Z};{\tt w}^{-1})$:	\index{Radon transform!${\mathcal R}$}
	\begin{equation} \label{eq:radon_disc}
		\begin{aligned}
			{\mathcal R}: L^2(\mathcal{B}_1(0)) &\to L^2({\tt Z};{\tt w}^{-1}).\\
			\x &\mapsto \mathcal{R}[\x]
		\end{aligned}
	\end{equation}
\end{definition}
Because 
\begin{equation*}
	\begin{aligned}
		~ & \norm{\mathcal{R}[\x]}_{L^2({\tt Z};{\tt w}^{-1})}^2 \\
		& = \int_{t=-1}^1 \int_{\theta=0}^\pi \abs{\mathcal{R}[\x](t,\theta)}^2 {\tt w}^{-1}(t) d\theta \,dt \\
		&=\int_{t=-1}^1 \frac{1}{\sqrt{1-t^2}} \int_{\theta=0}^\pi \left( \int_{s=-\sqrt{1-t^2}}^{\sqrt{1-t^2}} 1 \cdot \x \left(t \vec{n}(\theta) + s \vec{n}(\theta)^\bot \right)ds \right)^2  d\theta \,dt \\
		&\leq 2  \int_{\theta=0}^\pi \int_{t=-1}^1 \int_{s=-1}^1 \x^2 \left(t \vec{n}(\theta) + s \vec{n}(\theta)^\bot \right)ds dt d\theta\,,
	\end{aligned}
\end{equation*}

it follows by substituting
\begin{equation} \label{eq:subs}
	\vs = \vs(t,s) = t \vec{n}(\theta) + s \vec{n}(\theta)^\bot
\end{equation}
that (see also \cite{Dav81})
\begin{equation*}
	\norm{\mathcal{R}[\x]}_{L^2({\tt Z};{\tt w}^{-1})} \leq \sqrt{2 \int_{\theta=0}^\pi \int_{\vs \in \R^2} \x^2 (\vs) \,d\vs d\theta} = \sqrt{2\pi} \norm{\x}_{L^2(\mathcal{B}_1(0))}.
\end{equation*}
Thus, the Radon transform is bounded from $L^2(\mathcal{B}_1(0))$ into $L^2({\tt Z};{\tt w}^{-1})$. These function spaces serve as the basis for stable and convergent implementation of regularization methods. See for instance the more general (because applicable to nonlinear problems) results, \autoref{th:ip:stability}, \autoref{th:ip:convergence} and \autoref{th:ip:rate}.

In the following, we recall the expression of the adjoint operator of the Radon transform ${\mathcal R}$ defined for functions on the unit disk, which is the operator $\mathcal{R}^*: L^2({\tt Z};{\tt w}^{-1}) \to L^2(\mathcal{B}_1(0))$, which satisfies
\begin{equation*}
	\inner{\mathcal{R}[\x]}{{\tt z}}_{L^2({\tt Z};{\tt w}^{-1})} = \inner{\x}{\mathcal{R}^*[{\tt z}]}_{L^2(\mathcal{B}_1(0))}.
\end{equation*}
This operator should not be confused with the back projection operator, which has been defined without a function space setting. The adjoint operator is required, for instance, when applying gradient descent techniques (such as \autoref{eq:steepest}) to solve \autoref{eq:op}.
\begin{lemma}[Adjoint] \label{de:adjoint}
	For every ${\tt z} \in L^2({\tt Z};{\tt w}^{-1})$ and almost all $\vs \in \R^2$ the adjoint $\mathcal{R}^*$ of the Radon transform $\mathcal{R}$ defined in \autoref{eq:radon_disc} is given by
	\begin{equation*}
		\mathcal{R}^*[{\tt z}](\vs)= \int_{\theta = 0}^{\pi}
		{\tt z}(\xi(\vs,\theta),\theta) {\tt w}^{-1}(\xi(\vs,\theta))\, d \theta
	\end{equation*}
    with $\xi(\vs,\theta) = s_1 \cos(\theta) + s_2 \sin(\theta)$. 
\end{lemma}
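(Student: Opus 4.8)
The plan is to verify the defining identity $\inner{\mathcal{R}[\x]}{{\tt z}}_{L^2({\tt Z};{\tt w}^{-1})} = \inner{\x}{\mathcal{R}^*[{\tt z}]}_{L^2(\mathcal{B}_1(0))}$ directly for the candidate formula, by writing out the left-hand side and passing from Radon coordinates $(t,\theta)$ to Cartesian coordinates $\vs$. Note first that since $\mathcal{R}$ is bounded from $L^2(\mathcal{B}_1(0))$ into $L^2({\tt Z};{\tt w}^{-1})$ (shown right before \autoref{de:radon}), the adjoint $\mathcal{R}^*$ exists as a bounded operator and is characterized by that identity holding for all $\x$ and ${\tt z}$; so it suffices to identify its action.

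Fix $\x \in L^2(\mathcal{B}_1(0))$ and ${\tt z} \in L^2({\tt Z};{\tt w}^{-1})$. Expanding the weighted inner product and inserting \autoref{eq:radon} for $\mathcal{R}[\x]$ gives
\[
\inner{\mathcal{R}[\x]}{{\tt z}}_{L^2({\tt Z};{\tt w}^{-1})} = \int_{\theta=0}^\pi \int_{t=-1}^1 \left(\int_{s=-\infty}^\infty \x\bigl(t\vec{n}(\theta)+s\vec{n}(\theta)^\bot\bigr)\,ds\right){\tt z}(t,\theta)\,{\tt w}^{-1}(t)\,dt\,d\theta .
\]
For each fixed $\theta$, apply the substitution \autoref{eq:subs}, $\vs = t\vec{n}(\theta)+s\vec{n}(\theta)^\bot$, which is an orthogonal change of variables with unit Jacobian satisfying $t = \inner{\vs}{\vec{n}(\theta)}_2 = \xi(\vs,\theta)$; since $\x$ vanishes outside $\mathcal{B}_1(0)$ the integration in $(t,s)$ then turns into $\int_{\vs\in\R^2}\x(\vs)\,{\tt z}(\xi(\vs,\theta),\theta)\,{\tt w}^{-1}(\xi(\vs,\theta))\,d\vs$. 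Next, interchange the $\theta$- and $\vs$-integrations by Fubini to obtain
\[
\inner{\mathcal{R}[\x]}{{\tt z}}_{L^2({\tt Z};{\tt w}^{-1})} = \int_{\vs\in\R^2}\x(\vs)\left(\int_{\theta=0}^\pi {\tt z}(\xi(\vs,\theta),\theta)\,{\tt w}^{-1}(\xi(\vs,\theta))\,d\theta\right)d\vs .
\]
Comparing with the characterizing identity, and using that $\x$ was arbitrary, the inner $\theta$-integral equals $\mathcal{R}^*[{\tt z}](\vs)$ for almost every $\vs$, which is the claimed formula.

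The one genuine point requiring care — and the main obstacle — is the justification of the Fubini step, since the weight ${\tt w}^{-1}(t)=(1-t^2)^{-1/2}$ is singular at $t=\pm1$, so absolute integrability of the iterated integrand is not immediate. I would handle this by first establishing the identity for $\x$ in the dense class $C_c^\infty(\mathcal{B}_1(0))$ of smooth functions with support compactly contained in the open unit disk: for such $\x$ the quantity $\xi(\vs,\theta)=\inner{\vs}{\vec{n}(\theta)}_2$ stays in a fixed compact subinterval of $(-1,1)$ whenever $\vs$ lies in the support of $\x$, so ${\tt w}^{-1}(\xi(\vs,\theta))$ is bounded there and Tonelli applies verbatim. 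The general case then follows by continuity of both sides of the identity in $\x$, using the boundedness estimate $\norm{\mathcal{R}[\x]}_{L^2({\tt Z};{\tt w}^{-1})}\le\sqrt{2\pi}\,\norm{\x}_{L^2(\mathcal{B}_1(0))}$; this simultaneously confirms that the operator defined by the displayed formula is exactly the Hilbert-space adjoint. (Alternatively, the needed integrability can be obtained directly from Cauchy–Schwarz, in the weighted space for the ${\tt z}$-factor and in $L^2$ for the $\x$-factor.)
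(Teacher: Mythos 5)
Your proof is correct and follows essentially the same route as the paper's: expand the weighted inner product, apply the change of variables $\vs = t\vec{n}(\theta)+s\vec{n}(\theta)^\bot$ so that $t=\xi(\vs,\theta)$, and interchange the order of integration to read off $\mathcal{R}^*[{\tt z}]$. The only difference is that you explicitly justify the Fubini step (via density of compactly supported smooth functions and the boundedness of $\mathcal{R}$), whereas the paper performs the interchange without comment; this is a welcome addition, not a divergence.
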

\begin{proof} With this substitution $t = \xi(\vs,\theta)$ it follows
	\begin{equation*}
		\begin{aligned}
			~ & \inner{\mathcal{R}[\x]}{{\tt z}}_{L^2({\tt Z};{\tt w}^{-1})}\\
			& = \int_{\theta=0}^\pi \int_{t=-1}^1 \frac{1}{\sqrt{1-t^2}}\!\!\! \int_{s=-\sqrt{1-t^2}}^{\sqrt{1-t^2}} \x \left(t \vec{n}(\theta) + s \vec{n}(\theta)^\bot \right)ds \; {\tt z}(t,\theta)   \,dt d\theta \\
			& = \int_{\vs \in \R^2} \x (\vs) \;\int_{\theta=0}^\pi  {\tt z}(\xi(\vs,\theta),\theta)\frac{1}{\sqrt{1-\xi^2(\vs,\theta)}} \, d\theta d\vs \,,
		\end{aligned}
	\end{equation*}
	which is the assertion.
\end{proof}
In the following, we give a survey on the singular value decomposition (SVD) of the Radon transform for functions supported on the unit disk. The results are essentially copied from \cite{Nat01}, with the main difference being that we consider the adjoint $\mathcal{R}^*$ restricted to the range of $\mathcal{R}$ (in contrast to \autoref{de:back}).

 The singular value decomposition of the Radon transform has been computed in \cite{Dav81} (see also \cite{Nat01}) for the general case of images of $m$ variables. 
\begin{theorem}\cite[p 99]{Nat01} \label{th:singular_values} 
	The spectral decomposition (see \autoref{de:spectral}) of the Radon transform $\mathcal{R}$ is given by
	$$\set{({\tt u}_{k,l},{\tt v}_{k,l};\gamma_{k,l}): (k,l) \in \mathcal{I}}$$
	where
	\begin{enumerate}
		\item \label{it.1} $\mathcal{I} = \set{(k,l): k,l \in \N_0, l \leq k  \text{ and } l+k \text{ is even}}.$
		\item \label{it.2} $\gamma_k^2=\gamma_{k,l}^2 = \frac{2\pi}{k+1} > 0$ is independent of $l$,
		\item Let $\set{{\tt Y}_i : i \in \Z}$ denote the spherical harmonics (see \autoref{de:spherical_harmonics}) in $\R^2$\index{spherical harmonics} and let
		\begin{equation*}
			t \mapsto {\tt c}_k(t)=\frac{\sin \left((k+1)\arccos{(t)}\right)}{\sin(\arccos{(t)})}
		\end{equation*}
		denote the Chebyshev polynomials of the second kind\index{Chebyshev polynomials!second kind}. Moreover, let
		$$c_k^{-1} = \norm{ {\tt w} {\tt c}_k {\tt Y}_{k-2l}}_{L^2({\tt Z};{\tt w}^{-1})} = \norm{\sqrt{\tt w} {\tt c}_k}_{L^2(-1,1)}.$$
		Then the normalized eigenfunctions of $\mathcal{R}\mathcal{R}^*$ on the orthogonal complement of the nullspace of $\mathcal{R}^*$, $\mathcal{N}^\bot$, are given by
		\begin{equation}\label{eq:vkl_ukl}
			(t,\theta) \to {\tt v}_{k,l}(t,\theta):=c_k {\tt w}(t){\tt c}_k(t){\tt Y}_{k-2l}(\theta)  \text{ and } {\tt u}_{k,l} = \frac{1}{\gamma_k} \mathcal{R}^*[{\tt v}_{kl}]\;.
		\end{equation}
	\end{enumerate}
\end{theorem}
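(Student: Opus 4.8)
The plan is to diagonalize $\mathcal{R}$ by combining its equivariance under rotations of the plane with a well-chosen orthonormal basis of $L^2(\mathcal{B}_1(0))$, namely the \emph{Zernike polynomials}. \emph{Step 1 (reduction to angular modes).} Writing a point of the disk as $\vs = r\omega$ with $r\in[0,1)$ and $\omega\in S^1$, polar coordinates give the orthogonal decomposition $L^2(\mathcal{B}_1(0)) = \bigoplus_{m\in\ZN} L^2\big((0,1),\,r\,dr\big)\otimes\mathrm{span}\{{\tt Y}_m\}$. Since $\mathcal{R}$ commutes with rotations it preserves this decomposition: there is a one-dimensional kernel operator $h_m$ with
\begin{equation*}
 \mathcal{R}[\,f(r){\tt Y}_m(\cdot)\,](t,\theta) = {\tt Y}_m(\theta)\,(h_m f)(t),\qquad (h_m f)(t) = 2\int_{\abs t}^{1}{\tt T}_m\!\left(\tfrac{t}{r}\right)\bigl(1-t^2/r^2\bigr)^{-1/2}f(r)\,dr,
\end{equation*}
where ${\tt T}_m$ is the Chebyshev polynomial of the first kind. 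This ``Cormack formula'' is obtained by inserting polar coordinates into the line integral \autoref{eq:radon} and carrying out the angular integration; as a sanity check, $f\equiv 1$, $m=0$ yields the chord length $2\sqrt{1-t^2} = 2{\tt w}(t)$.

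\emph{Step 2 (the key integral identity --- the crux).} Let $f_{m,j}(r) := r^{\abs m}{\tt P}_j^{(0,\abs m)}(2r^2-1)$, $j\in\N_0$, be the radial parts of the Zernike basis, so that $\{f_{m,j}(r){\tt Y}_m(\omega)\}$ is a complete orthogonal system of $L^2(\mathcal{B}_1(0))$. The heart of the proof is the identity
\begin{equation*}
 (h_m f_{m,j})(t) = \kappa_{m,j}\,{\tt w}(t)\,{\tt c}_{\abs m+2j}(t)
\end{equation*}
with an explicit constant $\kappa_{m,j}$: the Radon transform maps the disk polynomial of total degree $k:=\abs m+2j$ to the weight ${\tt w}$ times the Chebyshev polynomial of the second kind ${\tt c}_k$ of the \emph{same} degree, times a scalar. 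This is a special case of classical integral identities relating Jacobi and Gegenbauer/Chebyshev polynomials (see \cite{Dav81}); it is the only genuinely computational step and the main obstacle. Once it is available, the index set $\mathcal{I}$ is just bookkeeping of the pairs that occur: each Chebyshev degree $k$ is matched with a radial degree $l:=j\ge0$ and the harmonic ${\tt Y}_{k-2l}$, whose order has the same parity as $k$ and absolute value at most $k$.

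\emph{Step 3 (orthonormality, normalization, completeness).} Put ${\tt v}_{k,l} := c_k\,{\tt w}\,{\tt c}_k\,{\tt Y}_{k-2l}$. Mutual orthogonality of the ${\tt v}_{k,l}$ in $L^2({\tt Z};{\tt w}^{-1})$ is immediate: for $k\ne k'$ the ${\tt c}_k$ are orthogonal with respect to ${\tt w}$, and for $k=k'$, $l\ne l'$ the harmonics ${\tt Y}_{k-2l}$ and ${\tt Y}_{k-2l'}$ have orders differing by an even integer, hence are orthogonal on $(0,\pi)$. The normalization constant then reduces to the standard evaluation
\begin{equation*}
 \norm{\sqrt{\tt w}\,{\tt c}_k}_{L^2(-1,1)}^2 = \int_{-1}^{1}\sqrt{1-t^2}\,{\tt c}_k(t)^2\,dt = \frac{\pi}{2},
\end{equation*}
so $c_k=\sqrt{2/\pi}$ independently of $l$, as in the statement. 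Completeness of $\{{\tt v}_{k,l}:(k,l)\in\mathcal{I}\}$ in $\nsp{\mathcal{R}^*}^\bot$ (the closure of the range of $\mathcal{R}$) follows from completeness of the Zernike system in $L^2(\mathcal{B}_1(0))$ together with Step 2: if ${\tt z}\perp{\tt v}_{k,l}$ for all $(k,l)$, then by \autoref{de:adjoint} and Step 2 the function $\mathcal{R}^*{\tt z}$ is orthogonal to every normalized Zernike function, hence $\mathcal{R}^*{\tt z}=0$.

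\emph{Step 4 (singular values and assembly).} Define ${\tt u}_{k,l}:=\gamma_k^{-1}\mathcal{R}^*[{\tt v}_{k,l}]$. Running Step 2 backwards through \autoref{de:adjoint}, $\mathcal{R}^*{\tt v}_{k,l}$ is a scalar multiple of the normalized Zernike function of degree $k$ and order $k-2l$, so $\mathcal{R}\mathcal{R}^*{\tt v}_{k,l} = \gamma_k^2\,{\tt v}_{k,l}$ with $\gamma_k^2$ the product of the forward and adjoint scalars; propagating the constants gives $\gamma_k^2 = 2\pi/(k+1)$, independent of $l$ --- consistent with the bound $\norm{\mathcal{R}}\le\sqrt{2\pi}$ established above, which is attained at $k=0$ by the constant function. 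Therefore $\{({\tt u}_{k,l},{\tt v}_{k,l};\gamma_{k,l}):(k,l)\in\mathcal{I}\}$ is the claimed spectral decomposition. Alternatively, Steps 2 and 4 may be quoted verbatim from \cite{Dav81} and \cite[p.~99]{Nat01}.
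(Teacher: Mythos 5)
The paper offers no proof of this theorem at all --- it states the result as a direct quotation from \cite[p.~99]{Nat01} (ultimately from \cite{Dav81}) --- and your argument is precisely the standard Zernike/Cormack proof from those references, so you have in effect supplied the proof the paper delegates to the literature. It is correct as sketched (the only soft spot is the bookkeeping $l:=j$ in Step~2, since with $k=\abs{m}+2j$ and $m=k-2l$ one actually has $j=\min\{l,k-l\}$, but the bijection between the index sets is unaffected).
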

The singular value decomposition of the Radon transform has been computed in several works: see, for instance, \cite{Dav81,Maa87,Nat01} and \cite[Theorem 6.4]{NatWub01}, to mention but a few. The singular values are illustrated in \autoref{fig:ex_func_u} and \autoref{fig:ex_func_v}, respectively.  
\begin{remark}\label{ex:sets_Ek}
	\begin{enumerate}
		\item The existence of a singular value decomposition with $\gamma_k \to 0$ implies, in particular, that the Radon transform is compact.
		\item From \autoref{th:singular_values}, \autoref{it.1} and \autoref{it.2}, we see that in general, $\gamma_k$ has multiplicity higher than one. The sets $E^k$ associated to a spectral value $\gamma_k$ are spanned by the spectral functions ${\tt v}_{kl}$ with $l$ such that $(k,l) \in \mathcal{I}$. For example, taking $k=0,\ldots,7$ and $l=0,\ldots,k$, with $k+l$ is even (meaning that $(k,l) \in \mathcal{I}$ in \autoref{it.1}), we have
		\begin{equation}\label{eq:Ek_example}
			\begin{aligned}
				E^1&=\spann\set{{\tt v}_{0,0}},\quad E^2=\spann\set{{\tt v}_{1,1}},\quad E^3=\spann\set{{\tt v}_{2,0},{\tt v}_{2,2}},\\
				E^4&=\spann\set{{\tt v}_{3,1},{\tt v}_{3,3}}, \quad
				E^5=\spann\set{{\tt v}_{4,0},{\tt v}_{4,2}, {\tt v}_{4,4}},\\
				E^6&=\spann\set{{\tt v}_{5,1},{\tt v}_{5,3}, {\tt v}_{5,5}}, \quad E^7 =\spann\set{{\tt v}_{6,0},{\tt v}_{6,2}, {\tt v}_{6,4}, {\tt v}_{6,6}}, \\
				E^8&=\spann\set{{\tt v}_{7,1},{\tt v}_{7,3}, {\tt v}_{7,5}, {\tt v}_{7,7}}.
			\end{aligned}
		\end{equation}
		It is an interesting fact, stated in \cite{Nat01}, that the adjoint of the Radon transform, $\mathcal{R}^*$, has a nullspace. In fact, the nullspace of $\mathcal{R}^*$ is given by
		\begin{equation*}
			\mathcal{N} = \set{{\tt v}_{k,l}: k \in \N_0, l \in \set{0,1,\ldots,k}, \text{ satisfying } l+k \text { is odd}}.
		\end{equation*}
		Moreover, $\mathcal{R}\mathcal{R}^*$ has the same nullspace, as shown in \cite[p. 99]{Nat01}. This is why we concentrate our considerations on data in $\mathcal{N}^\bot$, the complement space of $\mathcal{N}$, and excludes the nullspace of $\mathcal{R}^*$.
	\end{enumerate}
\begin{figure}[h!]
	\centering
	\includegraphics[width=\textwidth]{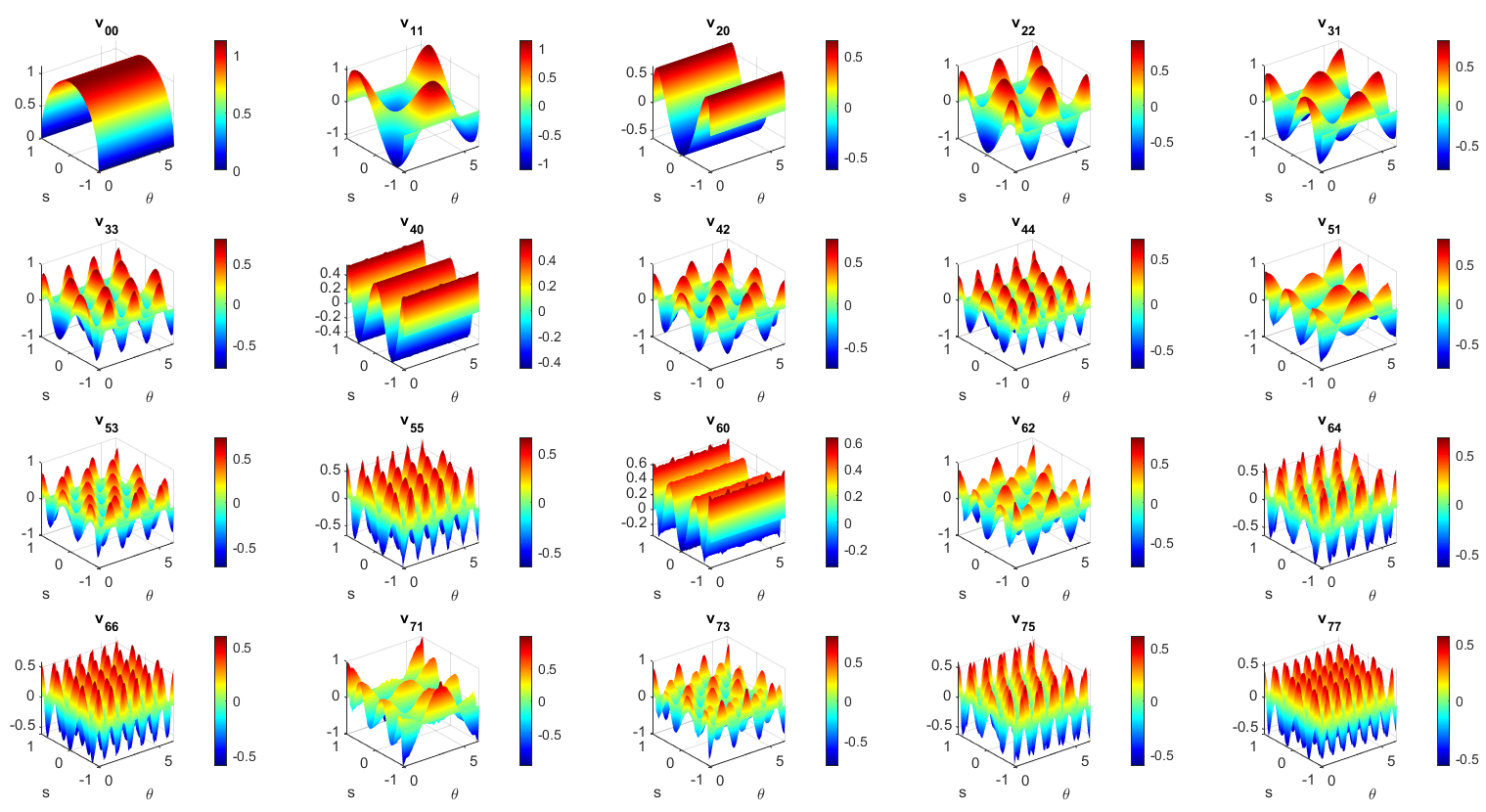}
	\caption{This plot visualizes the first twenty normalized singular functions ${\tt v}_{kl}$, defined in \autoref{eq:vkl_ukl}, for $k =0,1,\ldots,7$, $l=0,1,\ldots,k$ and $l+k$ even.}
	\label{fig:ex_func_v}
\end{figure}
\begin{figure}[h!]
\centering
\includegraphics[width=\textwidth]{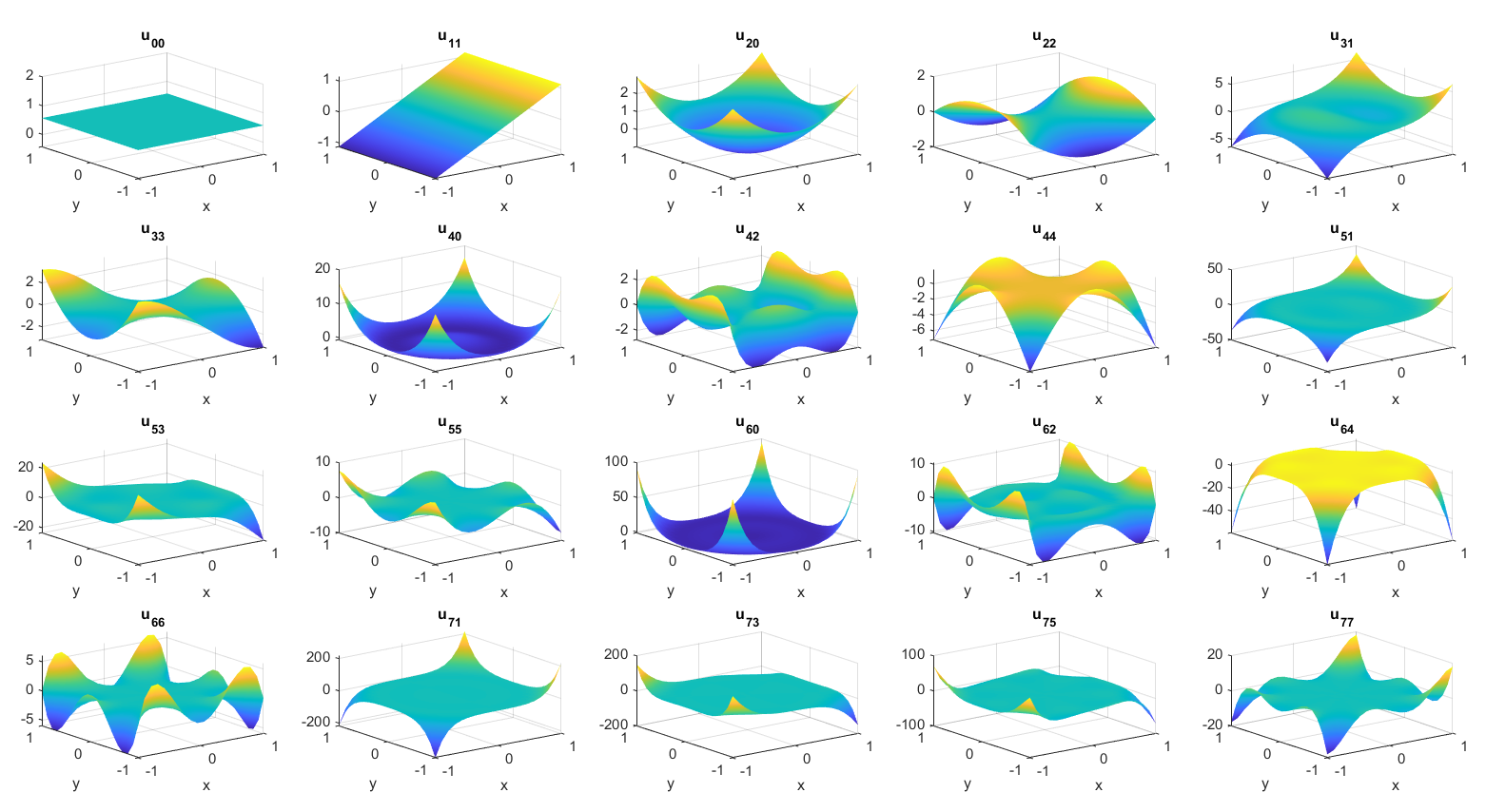}
\caption{This plot visualizes the first twenty singular functions ${\tt u}_{k,l}$, defined in \autoref{eq:vkl_ukl}, for $k =0,1,\ldots,7$, $l=0,1,\ldots,k$ and $l+k$ even.}
\label{fig:ex_func_u}
\end{figure} 
\end{remark}
Because of the analytical knowledge of its singular value decomposition the Radon transform it is interesting test example for learning properties, or even learning the complete Radon transform, from expert pairs (see \cite{AspFriSch24_preprint}).

\section{Decomposition cases}\label{se:decomp} The operators considered in this section have a weak form of nonlinearity. They can be decomposed into a linear ill-posed problems and a well--conditioned nonlinear problem. The way how they are composed, classifies them into first and second decomposition category, respectively. A recent survey containing several aspects of decomposition is \cite{Fle18}. 

\subsection{First decomposition case} \label{ss:decomp1} This decomposition case appears naturally, when the measurements are features of data generated by a linear operator: That is
\begin{equation} \label{eq:op2}
     \op{\x} = \nop{L \x} = \y\;.
\end{equation}
For instance, $\nopo$ could be an operator that maps a given function $\z = L\x$ onto a vector $\y = \vp \in \R^{\noc_*}$, which is a parametrization of $\z$, such as for instance the parameters $\vp$ of a shallow affine linear network (see \autoref{eq:classical_approximation}). Such a vector $\vp$ is called \emph{feature vector} of $\z$.\index{feature vectors} 

\begin{example}[Schlieren tomography] \label{ex:schlieren}
A practical example of the first decomposition case is \emph{Schlieren tomography}\index{Schlieren tomography} (see \cite{HanZan91,SchGraGroHalLen09,Fle11a}) where 
	\begin{equation}\label{eq:schlieren}
		\begin{aligned}
			\opo: \dom{\opo} \subseteq L^2(\mathcal{B}_1(0)) &\to L^2({\tt Z};{\tt w}^{-1})\,,\\
			\x &\mapsto \op{\x} = \left(\int_{L(\theta,d)} \x(s) \,ds \right)^2
		\end{aligned}
	\end{equation}
	is defined on the space of square-integrable functions on $\R^2$ and maps into some subspace of a weighted Sobolev space (see \autoref{de:radon}). Therefore Schlieren-tomography consist in inversion of the squared of the Radon-transform (see \cite{PitGreLuKin94,HanZan91,SchGraGroHalLen09}). Note that we have to ensure that elements of $\dom{\opo}$ are mapped into $L^2$, which requires some regularization. Moreover, note, that opposed to the standard Radon-transform (see \cite{Nat01,NatWub01}) the evaluated function $\x$ can have negative parts. This is a consequence of the fact that \autoref{eq:schlieren} is derived from an expansion in the Raman-Nath regime (see \cite{Boy20}).
\end{example}

\subsection{Second decomposition case} \label{ss:decomp2} This decomposition case has been studied in the context of \emph{state space regularization} \cite{ChaKun93} and in the context of the \emph{degree of ill--posedness} in \cite{GorHof94,Hof99,Hof94,HofSch94,HofTau97} as well as for defining the \emph{degree of nonlinearity} \cite{HofSch94}. Another ansatz leading to the second decomposition case is by parametrization of the elements of $\X$. Particularly, for regularization methods, finite dimensional approximations of images $\x \in \X$ with neural network functions have been studied in \cite{ObmSchwHal21} and for iterative regularization techniques in \cite{SchHofNas23}.

\begin{example}[Neural network representation] \label{ex:nn_second_decomp}
We consider the family of shallow affine linear neural network as defined in \autoref{eq:classical_approximation} and we 
use them as ansatz functions for solving a linear operator equation: Therefore we aim to solve
\begin{equation} \label{eq:op2a}
	\op{\x} = L\left(\nop{\vp}\right)= \y\,,
\end{equation}
where $\nop{\vp} = \Psi[\vp]$ as in \autoref{eq:classical_approximation}.

Related to \autoref{eq:op2} and \autoref{eq:op2a}, there are two central algorithmic issues:
\begin{enumerate}
	\item Given $\vp$ calculate $\x$, which is a straight forward evaluation (see \autoref{eq:classical_approximation} and \autoref{eq:DNN}).
	\item The computation of the parametrization $\vp$ given $\x$ is more delicate. The standard way consists in solving a minimization problem, to determine
	\begin{equation*}
		\argmin_{\vp \in \R^{\noc_*}} \norm{\x - \Psi[\vp]}_\X^2\;.
	\end{equation*}
    This minimization problem can be solved with a Gauss-Newton method and has been studied in \cite{SchHofNas23}. 
\end{enumerate}
\end{example}

\section{Examples of general nonlinear inverse problems}
In \autoref{ex:c_reconstruct} we have considered one instance of a nonlinear inverse problem. It gave us a first impression of how machine learning techniques can be used in regularization theory. Below, in \autoref{ss:c}, we provide further details regarding the well-posedness of the direct problem and the ill-posedness of the inverse problem. A further example concerns an \emph{inverse conductivity problem}\index{inverse conductivity problem} and is discussed in \autoref{a_reconstruct}. Both have been analyzed in \cite{ColKun86,EngKunNeu89,NeuSch90}.

All function spaces in \autoref{ss:c} and \autoref{a_reconstruct} consist of real valued functions on the unit interval $\Omega = ]0,1[$. and we avoid explicit mention of $\Omega$ whenever convenient.

\subsection{$c$-example}\label{ss:c}
	
\subsubsection*{The direct problem} 
	Recall the boundary value problem \index{problem!$c$}
	\begin{equation} \label{eq:c-bvp}
		\begin{aligned}
			A_\x \y	&= {\tt f} \quad \text{in } \Omega \,, \\
			\y(0) =  \y(1)	&= 0 \,,
		\end{aligned}
	\end{equation}
	where $A_\x \y := -\y'' + \x\y$ and $\f \in L^2$. In \autoref{ex:c_reconstruct} we assumed the coefficient $\x$ to belong to $\mathcal{C} = \{ {\tt c} \in L^2 : {\tt c} \ge 0 \; \text{a.e.} \}$. However, since $\mathcal{C}$ has empty interior in $L^2$, it will be useful later to slightly enlarge this set (see, for instance, \autoref{as:ip:rates}). Therefore, we introduce its closed \emph{$\epsilon$-fattening}
	\begin{align*}
		\mathcal{C}_\epsilon  = \{ {\tt c}  + \z : {\tt c}  \in \mathcal{C}, \; \z \in L^2, \; \| \z \|_{L^2} \le \epsilon \},
	\end{align*}	
	where $0 < \epsilon < 1$. The weak formulation of \eqref{eq:c-bvp} consists in finding $\hat \y \in W_0^{1,2}$ such that
	\begin{equation} \label{eq:c-bvp-weak}
		\inner{\hat \y'}{{\tt v}'}_{L^2} + \inner{\x\hat \y}{\tt v}_{L^2} = \inner{\tt f}{\tt v}_{L^2} \quad \text{ for all } {\tt v} \in W_0^{1,2}.
	\end{equation}
	Note that the estimate
	\begin{align}\label{eq:poincare}
		\norm{\y}_{L^2} \le \norm{\y}_{L^\infty} \le \norm{\y'}_{L^2},
	\end{align}
	which is valid for all $\y \in W_0^{1,2}$, implies that $\y \mapsto \norm{ \y' }_{L^2}$ is equivalent to the standard $W^{1,2}$ norm on the subspace $W_0^{1,2}$. Analogously, $\y \mapsto \norm{\y''}_{L^2} + \norm{\y'}_{L^2}$
	is equivalent to the standard $W^{2,2}$ norm on $W^{2,2} \cap W_0^{1,2}$.
	\begin{proposition}\label{thm:c-uniqueness}
		For every $\x \in \mathcal{C}_\epsilon$ and every $\f \in L^2$ problem \eqref{eq:c-bvp-weak} has a unique solution $\hat \y = \hat \y_{\x,\f} \in  W^{1,2}_0$. It satisfies
		\begin{align}\label{eq:c-H1-est}
			(1-\epsilon) \norm{\hat \y'}_{L^2} \le \norm{\f}_{L^2}.
		\end{align}
	\end{proposition}
	\begin{proof}
		The bilinear form given by the left-hand side of \eqref{eq:c-bvp-weak} is continuous due to \eqref{eq:poincare}. We prove coercivity next. Let $\tilde \x \in \mathcal{C}$ be such that $\norm{\x - \tilde \x}_{L^2} \le \epsilon.$ Then, the Cauchy-Schwarz inequality and \eqref{eq:poincare} imply
		\begin{align*}
			\inner{\x \y}{\y}_{L^2}
				&= \inner{\left( \x - \tilde \x + \tilde \x \right) \y}{\y}_{L^2} \ge \inner{\left( \x - \tilde \x \right) \y}{\y}_{L^2} \ge - \norm{\x - \tilde \x}_{L^2} \norm{\y^2}_{L^2} \\
				&\ge - \epsilon \norm{\y^2}_{L^2} \ge - \epsilon \norm{\y}^2_{L^\infty} \ge - \epsilon \norm{\y'}^2_{L^2}.
		\end{align*}
		Adding $\inner{\y'}{\y'}_{L^2}$ to both sides of the inequality and recalling that $\epsilon < 1$ shows coercivity. The Riesz representation theorem (or the Lax-Milgram theorem) now guarantees existence of a unique $\hat \y$ solving \eqref{eq:c-bvp-weak}.
		Estimate \eqref{eq:c-H1-est} follows from combining \eqref{eq:c-bvp-weak} for ${\tt v} = \hat \y$ with the coercivity estimate.
	\end{proof}	
	The next proposition shows that $\hat \y_{\x,\f}$ depends continuously, as an element of $W^{2,2}$, on both $\x$ and $\f$.
	\begin{proposition}\label{thm:c-H2}
		The weak solution $\hat \y = \hat \y_{\x,\f}$ belongs to $W^{2,2}$ and satisfies
		\begin{align}\label{eq:c-strong-ae}
			-\hat \y'' + \x \hat \y = \f \quad \text{a.e.\ in } \Omega.
		\end{align} 		
		Moreover, there is a constant $C = C(\epsilon)$ such that
		\begin{align}\label{eq:c-H2-est}
			\norm{\hat \y_{\x,\f} - \hat \y_{{\tt w},{\tt g}}}_{W^{2,2}} \le C \left( 1 + \norm{{\tt w}}_{L^2} \right) \left(\norm{\f - {\tt g}}_{L^2} + \norm{\f}_{L^2} \norm{\x - {\tt w}}_{L^2} \right)
		\end{align}
		for all $\f$, ${\tt g} \in L^2$ and all $\x, {\tt w} \in \mathcal{C}_\epsilon.$
	\end{proposition}
	\begin{proof}
		The weak formulation implies that
		\begin{align}\label{eq:c-y''}
			\inner{\hat \y'}{{\tt v}'}_{L^2} = -\inner{\x \hat \y -\f}{\tt v}_{L^2}
		\end{align}
		holds for all ${\tt v} \in C_c^\infty$. Thus, $\x \hat \y -\f$ is the weak derivative of $\hat \y'$ and since $\x \hat \y -\f$ belongs to $L^2$, $\hat \y$ belongs to $W^{2,2}$. Moreover, integration by parts of the left-hand side of \eqref{eq:c-y''} and applying the fundamental lemma of the calculus of variations yields \eqref{eq:c-strong-ae}.
		
		Turning to \eqref{eq:c-H2-est}, we have
		\begin{align*}
			 \norm{\y''}_{L^2} - \norm{\x}_{L^2}\norm{\y'}_{L^2} \le \norm{\y'' - \x \y}_{L^2}
		\end{align*}
		for every $\y \in W^{2,2} \cap W^{1,2}_0$. On the other hand,
		\begin{align*}
			(1-\epsilon) \norm{\y'}^2_{L^2} \le \inner{\y'}{\y'}_{L^2} + \inner{\x \y}{\y}_{L^2} = \inner{\y}{-\y'' + \x\y}_{L^2} \le \norm{\y'}_{L^2} \norm{-\y'' + \x\y}_{L^2}.
		\end{align*}
		Multiplying this inequality with $\frac{1 + \norm{\x}}{(1-\epsilon)\norm{\y'}}$ and adding it to the previous one shows that there is a constant $C'$ such that
		\begin{align*}
			\norm{\y}_{W^{2,2}} \le C' (1 + \norm{\x}_{L^2}) \norm{-\y'' + \x\y}_{L^2}
		\end{align*}
		holds for all $\y \in W^{2,2} \cap W^{1,2}_0$. Plugging in $\hat \y_{\x,\f}$ for $\y$ and using \eqref{eq:c-strong-ae} gives
		\begin{align} \label{eq:c-H2-est1}
			\norm{\hat \y_{\x,\f}}_{W^{2,2}} \le C' (1 + \norm{\x}_{L^2}) \norm{\f}_{L^2}.
		\end{align}
		The function $\z = \hat \y_{\x,\f} - \hat \y_{{\tt w},{\tt f}}$ satisfies
		\begin{align*}
			A_{\tt w} \z = A_{\tt w} \hat \y_{\x,\f} - \f = A_{\tt w} \hat \y_{\x,\f} - A_\x \hat \y_{\x,\f} = ({\tt w} - \x) \y_{\x,\f} \in L^2
		\end{align*}
		as well as the boundary conditions. Thus, $\z$ is the unique solution of the boundary value problem \eqref{eq:c-bvp} with inhomogeneity  $({\tt w} - \x) \y_{\x,\f}$ and coefficient ${\tt w}$. Applying \eqref{eq:c-H2-est1}, \eqref{eq:poincare} and \eqref{eq:c-H1-est} gives
		\begin{align}\label{eq:c-H2-est2}
			\norm{\z}_{W^{2,2}} \le C'(1 + \norm{{\tt w}}_{L^2}) \norm{({\tt w} - \x) \y_{\x,\f}}_{L^2} \le C''(1 + \norm{{\tt w}}_{L^2}) \norm{{\tt w} - \x}_{L^2} \norm{\f}_{L^2}.
		\end{align}
		Finally, we write
		\begin{align*}
			\norm{\hat \y_{\x,\f} - \hat \y_{{\tt w},{\tt g}}}_{W^{2,2}} = \norm{\z + \hat \y_{{\tt w},{\f}} - \hat \y_{{\tt w},{{\tt g}}}}_H  \le \norm{\z}_{W^{2,2}} + \norm{\hat \y_{{\tt w},{\f - {\tt g}}}}_{W^{2,2}}
		\end{align*}
		and apply \eqref{eq:c-H2-est2} to the first term on the right and \eqref{eq:c-H2-est1} to the second one. This gives \eqref{eq:c-H2-est} and the proof is finished.
	\end{proof}

\subsubsection*{An inverse problem} Fix arbitrary $\f \in L^2$ and $0<\epsilon<1$, set $\X = \Y = L^2$ and define the nonlinear operator
	\begin{align}\label{eq:c-op}
		\opo :\dom{\opo} = \mathcal{C}_\epsilon \subset \X \to \Y, \quad \x \mapsto \op{\x} = \hat{\y}_{\x,\f}.
	\end{align}
	First, we state a sufficient condition for unique solvability of the operator equation $\op{\x} = \y$.
	\begin{proposition}\label{thm:c-existence-uniqueness}
		Suppose $\y \in W^{2,2} \cap W^{1,2}_0$, $\y \neq 0$ a.e.\ in $\Omega$ and $\x:=\frac{\f + \y''}{\y} \in \mathcal{C}_\epsilon$. Then, $\x$ is the unique solution of $\op{\x} = \y$.
	\end{proposition}	
	\begin{proof}
		The function $\x$ is defined such that $-\y'' + \x\y = \f$ holds a.e. in $\Omega.$ Hence, $\op{\x} = \y.$ For every other solution $\tilde \x$ we have $-\y'' + \x\y = \f = -\y'' + \tilde \x\y$, which implies $\x = \tilde \x$ a.e.\ since $\y \neq 0$ a.e.
	\end{proof}
	The following instance of \eqref{eq:c-bvp}, which is due to \cite{EngKunNeu89}, shows that this inverse problem is ill-posed. Specifically, a sequence $\y_k \to \y$ is constructed such that $\opo^{-1}[\y_k]$ and $\opo^{-1}[\y]$ are uniquely defined but $ \lim_k \opo^{-1}[\y_k] \neq \opo^{-1}[\y]$.
	
		Set $\f \equiv 16$ and $\y(s) = 8s(1-s)$. Then, according to \autoref{thm:c-existence-uniqueness}, the unique solution of the inverse problem is $\x \equiv 0$. For $k \ge 2$ define $\y_k = \y + {\tt e}_k$, where ${\tt e}_k$ is symmetric with respect to $s=1/2$, meaning that ${\tt e}_k(s) = {\tt e}_k(1-s)$ for $0 \le s\le 1$, and
		\begin{align*}
			{\tt e}_k(s) = k^{-\frac54} (2s)^{2k} - 4k^{-\frac14}s, \quad 0 < s \le \frac12.
		\end{align*}
		We have $\norm{{\tt e}_k}_{L^2} \le \norm{{\tt e}_k}_{L^\infty} = |{\tt e}_k(1/2)| = |k^{-\frac54} - 2k^{-\frac14}| \to 0$ and therefore $\y_k \to \y$.
		
		Next, we verify that $\y_k$ meets the conditions of \autoref{thm:c-existence-uniqueness}. Note, first, that $\y_k$ is a polynomial and that it vanishes at the endpoints. Moreover, the function
		\begin{align*}
			\xk := \frac{\f + \y''_k}{\y_k} = \frac{{\tt e}''_k}{\y + {\tt e}_k},
		\end{align*}
		lies in $\mathcal{C}$ because it is bounded and both numerator and denominator are nonnegative. Thus, $\xk$ is the only function satisfying $\op{\xk} = \y_k$. However, it can be shown that $\norm{\xk}_{L^2} \to \infty$, showing the ill-posedness the inverse problem. 

\subsection{$a$-example: an inverse conductivity problem} \label{a_reconstruct}
\index{problem!$a$}
This is the $a$-example mentioned already in the introduction. For given $\nu > 0$ let $\opo$ be defined as follows
\begin{equation} \label{eq:aex} 
	\begin{aligned}
	\opo: \dom{\opo} := \set{\x \in W^{1,2} : \x \geq \nu > 0} &\to L^2\,,\\
	    \x &\mapsto \op{\x} := \y
	\end{aligned}
\end{equation}
where $\y:=\y[\x]$ is the unique solution of 
\begin{equation} \label{eq:aex2}
	\begin{aligned}
		-(\x \y')' & = \f \quad \text{in } \Omega,\\
		\y(0) = \y(1) & = 0,
	\end{aligned}
\end{equation}
for a given $\f \in L^2$. 

Analogously to \autoref{ex:c_reconstruct} we turn to the variational formulation. Let $\Y_{\tt n}$ be
as in \autoref{eq:Fm} and let $\y_{\tt n}[\x] \in \Y_{\tt n}$ be the unique solution of
\begin{equation} \label{eq:afem}
	\inner{\x \y_{\tt n}'}{{\tt v}'}_{L^2} = \inner{\tt f}{\tt v}_{L^2} \;\text{for all}\; 
	{\tt v} \in \Y_{\tt n}.
\end{equation}
$\opo_{\tt n}$ will then be defined as
\begin{equation} \label{eq:afm}
	\begin{aligned}
		\opo_{\tt n}: \dom{\opo} &\to L^2\;.\\
		\x &\mapsto \opo_{\tt n}[\x] := \y_{\tt n}[\x]
	\end{aligned}
\end{equation}
It is known (see \cite{Cia78}) that $\opo_{\tt n}$ approximates $\opo$ with the rate
\begin{equation} \label{eq:rate_a}
	\begin{aligned}
		\norm{\opo_{\tt n}[\x]-\op{\x}}_{L^2} &= \norm{\y_{\tt n}[\x] - \y[\x]}_{L^2} \\
		&= (1+\norm{\x}_{W^{1,2}}) \mathcal{O}\Bigl(
		{\tt n}^{-2} \Bigr),
	\end{aligned}
\end{equation}
which is uniform on bounded subsets of $W^{1,2}$. Note that ${\tt n}^{-1}$ is one half of the width of the support of the spline basis functions.

\begin{remark}
	The terminologies $a$-, $c$- inverse problems are motivated from the classical ordinary differential example equation
	\begin{equation*}
		a \x''+ b \x'+ c \x = {\tt f}\,,
	\end{equation*}
	which is called $a,b,c$-problem.
	\index{problem!$a,b,c$}
\end{remark}

\section{Inpainting}\label{ss:inpainting} \index{inpaiting}
Let $\Omega$ be an open set in $\R^2$ with $\bar \Omega \subset ]0,1[^2$. We consider the problem of \emph{naturally continuing} pixel intensities given by a function $\yd: ]0,1[^2 \backslash \Omega \to \R$ into $]0,1[^2$. The goal therefore is to reconstruct $\x :]0,1[^2 \to \R$ satisfying $\x\big|_{]0,1[^2 \backslash \Omega} \approx \y$ given $\yd$, which we assume to be noisy. We start by formulating the according operator
\begin{equation} \label{eq:inpaint}
	\begin{aligned}
			\opo: \X &\to \Y\,,\\
		\x &\mapsto \x \big|_{]0,1[^2 \backslash \Omega}
	\end{aligned}
\end{equation}
restricting a function $\x$ defined on $]0,1[^2$ onto $]0,1[^2 \backslash \Omega$. Common choices are $\X = W^{s,2}(]0,1[^2)$ (see \autoref{sob_def}) with $s=1,2$ and $\Y = L^2(]0,1[^2 \backslash \Omega)$. Some appetizers of methods and algorithms for inpainting are \cite{BalBerCas01,ChaShe05,ChaSheVes03,ChaShe05b,EseShe02,GroSch03,KokMorFitRay95,Mas02,MasMor98,OliBowMckCha01,Sap01} among many others. 
The extrapolation of data is based on physical models like partial differential equations, but does not resemble \emph{expected aesthetics}\index{expected aesthetics}, meaning that expected image information, like patterns from the neighborhood of the inpainting domain are inferred. In general, pure inpainting algorithms produce artifacts if the inpainting domain is surrounded by textured regions.

The operator $\opo: W^{s,2}(]0,1[^2) \to L^2(]0,1[^2 \backslash \Omega)$ is linear, however the constraints imposed on \autoref{eq:op} are typically nonlinear, and resemble the fact that the inpainting should look natural. We present one typical constraint and assume for the sake of simplicity that $\y = \yd$, that is, we have access to noise-free data in the painted domain. A common assumption is that for every $l \in \R$ the boundary of the level set 
    \begin{equation*} 
       \Omega_l := \set{\vs \in ]0,1[^2 \backslash \Omega: \x(\vs) \leq l}
    \end{equation*} (more precisely these are called sublevel sets)\index{level sets}\index{sublevel sets} has $1$-dimensional Hausdorff-measure (see \cite{EvaGar15}) and for $\vs \in \partial \Omega \cap \Omega_l$ the tangent to $\Omega_l$ is not parallel to $\partial \Omega$. This means that the level sets can locally be extended from the painted domain $]0,1[\backslash \Omega$ into $\Omega$.

\subsection*{Texture synthesis and inpainting}
Natural images contain texture, such as shading, noise, feature patterns, to mention but a few. Textures have to be interpolated in the inpainting region to make the overall look natural. The task is performed by \emph{texture synthesis}.\index{texture synthesis} There one makes the assumption that for every point $\vs \in \Omega$ there exists a point $\tilde{\vs} \in ]0,1[^2 \backslash \Omega$ such that the according \emph{textures}\index{image!texture} in neighboring \emph{patches} of the two points are identical; patches is a term from computer vision which refers to a vector of image information: For instance we can assemble a patch from the image in a neighborhood of a pixel by making a wavelet expansion of the image in the local neighborhood at different scale.\index{patch!image} 

\emph{Texture synthesis algorithms} use a sample of the available image data and fill the inpainting domain with texture, making a realistic image visualization, see for instance \cite{WeiLev00,Ash01,PagLon98,EfrLeu99b,WieLiSylVivTur09_report}. In the recent year improvements of these algorithms have be achieved in two ways: First of all by using by neural networks (see \autoref{ch:nn}) for inpainting (see for instance \cite{ZhaZhaLiZhoLin23}) by texture synthesis with a bigger database of images; Note that the above mentioned algorithms only use the texture information from the original image on which texture synthesis needs to be performed. 

\subsection*{A texture synthesis inpainting algorithm}
The following algorithm for inpainting with texture synthesis has been proposed in \cite{Gro04,Gro05} and consists of five steps:
\begin{algorithm}
\caption{A texture synthesis algorithm for inpainting}
\label{alg:AI}
\begin{algorithmic}[1]
	\State Filtering the image data and decomposing it into geometry and texture;
	\State Inpainting with variational or partial differential equation (PDE) based methods of the geometry part;
	\State Postprocessing of the geometry inpainting;
	\State Segmentation of the inpainted geometry image, for instance with level set methods (see \autoref{se:lsm});
	\State Synthesizing texture for each segment, for instance with \emph{Markov-models} (see \autoref{se:hmm}).\index{Markov!model}
\end{algorithmic}
\end{algorithm}
It is important to note that texture synthesis \emph{cannot} be formulated as a constraint of the operator equation \autoref{eq:op}. Therefore only partial problems can be solved with regularization methods as considered in this book: For instance finding an adequate partitioning of the inpainting domain can be found with regularization methods. \emph{Aesthetic} texture is inpainted in the partitioned inpainting domain via Markov-models (see \autoref{se:hmm}).

\section{Open research questions}
\begin{opq} \label{opq:AubNit} \autoref{ex:c_reconstruct} and \autoref{a_reconstruct} use the famous Aubin-Nitsche trick (see \cite{Aub67,Nit68}, which derives $L^2$-estimates for finite element approximations of a partial-differential equations, which are one order of discretization (to be precise $\tt{n}^{-2}$, see \autoref{eq:rate_a} and \autoref{eq:m2_estimate})) better than standard Cea's-lemma (see \cite{Cia02,ArnLog14,Zla68}) for $W^{1,2}$-estimates. The Aubin-Nitsche-trick is very important for inverse problems applications because it is assumed that the measurement errors are in $L^2$ and thus we require estimates $\norm{F_{\tt n}[\x]-F[\x]}_{L^2}$ in a neighborhood of the solution $\xdag$ of the inverse problem (that is of the solution of \autoref{eq:op}). In statistics (see for instance \cite{Nic23}) it is often assumed that the noise is $W^{-1,2}$, the dual of the Sobolev-space $W^{1,2}$. The open question is whether the Aubin-Nitsche-trick can be further generalized to the dual norm?  
\end{opq}

\section{Further reading}
Over the last decades a huge amount of examples of inverse problems have been studied. Therefore only an incomplete selection can be provided here. A highly studied nonlinear inverse problem is \emph{electrical impedance tomography} (see \cite{BarBro90,Nac88,SylUhl87,Uhl09}), which solves \emph{Calderon's problem} \cite{Cal80}. \index{Calderon's problem}\index{electrical impedance tomography}A similar amount of attention has been attracted by \emph{optical tomography}\index{optical tomography} (see \cite{Arr99,ArrScho09}) and \emph{inverse scattering} (see \cite{CakCol14,CakColHad16,ColKre19}).\index{inverse scattering} Relatively recently the area of \emph{coupled physics imaging} gained a lot of popularity as well \cite{ArrSch12,BalSch10,Bal12,BalUhl12,GilLevScho18,Kuc12}. The Radon transform and its spherical transform continue to be the focus of a sizable amount of research (see, for instance, \cite{ElbSchShi17,Kuc13,KucKun08,Nat12}), in part because of applications in photoacoustics \cite{KruLuiFanApp95,KruMilReyKisReiKru00,RivZhaAna06,Wan08,XuWan02a,XuFenWan02}\index{photoacoustics}. \emph{Diffraction tomography}\index{diffraction tomography} has been studied in \cite{Kak79,KakSla01,Lau02,Pan98,Pan00,TsiDev93} and has important applications in \emph{ultrasound tomography} and \emph{microscopy}\index{microscopy} (see \cite{BroBroZibAzh02,DurLitBabChaAze05,Gre83,NatWub01,NatWue95,SunChoFanBadDas09,WanMatAniLiDurAna15}). Nonlinear inverse problems related to \emph{elastography}\index{elastography} have been considered in \cite{BarGok04,BarObe07,Doy12,HubSheNeuSch18,KenKenSam14,KraSheHubLiuDre22,LarSam17,ManOliDreMahKru01,McLRen06a,MclRen06,OphCesPonYazLi91,PelFroInsHer04,PanGaoTaoLiuBai14,Schm98,SetGoeDhoBruSlo11,SheKraHubDreSch20}. In \cite{KalNguSch21} a series of nonlinear inverse problems for parabolic partial differential equations are considered. Further information on the decomposition of operators in the context of inverse problems can be found in \cite{EngHofZei93,Fle18}.

In many perspectives the inpainting \autoref{alg:AI} shares similarities with \emph{natural language models}\index{natural language models} (see for instance \cite{Cho56}) and other \emph{artificial intelligence algorithms}\index{Artificial intelligence!AI} (AI). On a methodological side these are Markov-models (see also \cite{Ros95}), with trained weights. Curiously, \emph{language models} which do not obey the context have been implemented around the year 2000 to write fake scientific papers. \emph{SCIgen}\index{SCigen} is a paper generator that uses \emph{context-free} grammar. See \url{https://pdos.csail.mit.edu/archive/scigen/}
In view of the development in language models one could consider comparing the hybrid inpainting algorithm combining texture synthesis and partitioning of the inpainting domain a \emph{natural} inpainting algorithm.

Texture is often modeled as having finite energy in negative Besov space. This idea goes back to Yves Meyer \cite{Mey01}. His ideas have been used to decompose images into cartoon and texture. See, for instance, \cite{AujAubBlaCha03,AujAubBlaCha05,AujCha05,AujGilChaOsh06,ChaMor25_report,OshSolVes03,VesOsh04,VesOsh03}).

\chapter[Regularization theory]{Regularization of nonlinear inverse problems} \label{ch:classic}
In this chapter we review the classical theory of variational and iterative regularization methods. This forms the theoretical basis of the hybrid regularization approaches presented in \autoref{ch:Aspri paper}, which combine physical and data driven modeling for the forward operator. 

\section{Tikhonov regularization} \label{se:Tik}
This section is devoted to quadratic Tikhonov regularization in Hilbert spaces, which, among variational regularization techniques, is arguably the easiest to analyze. However, the complications due to the nonlinear operator
\begin{equation*}
	\opo: \mathcal{D}(F) \subset \X \to \Y
\end{equation*}
are significant. Specifically, we approximate solutions of
\begin{equation} \label{eq:Op} 
	\op{\x} = \y
\end{equation}
by minimizers of
\begin{equation} \label{eq:tik}
	\T_{\alpha,\y^\delta}[\x] = \norm{F[\x]-\y^\delta}_\Y^2 + \alpha \norm{\x-\x^0}_\X^2\;.
\end{equation}
where the initial guess $\x^0$, noisy datum $\y^\delta$ and regularization parameter $\alpha > 0$ are given. We set $\mathcal{T}_{\alpha,\yd}[\x] = +\infty$ if $\x \notin \dom{\opo}$.

Next, in \autoref{sec:mns} we narrow down the set of solutions that are effectively approximated by minimizers of $\T_{\alpha,\y^\delta}$. A summary of the most fundamental notions and goals of Tikhonov regularization is presented in \autoref{sec:varcon}. In \autoref{sec:tikhonov} we outline the \emph{qualitative} analysis, which is relatively standard and follows classical textbooks such as \cite{EngHanNeu96,Mor84,TikArs77,TikGonSteYag95}. The \emph{quantitative} analysis is summarized in \autoref{ss:cr}. \autoref{sec:fda} is concerned with the finite-dimensional approximation of Tikhonov regularization. Finally, in \autoref{sec: Polregres} we present a brief stochastic version of the preceding deterministic analysis. 

\subsection{Minimum-norm solutions}\label{sec:mns}

In general, solutions of \autoref{eq:Op} are not unique. At the same time we assume to have an initial guess $\x^0$. Therefore, \autoref{de:xMNS} below is clearly a natural one. On the other hand, it is evident from the definition of $\T_{\alpha,\yd}$ that its minimizers $\xad$ are forced to be close to $\x^0$. Consequently, as will be shown in \autoref{th:ip:convergence}, if the $\xad$ converge to a solution of \autoref{eq:Op} for $\alpha, \delta \to 0$, then it must be an $\x^0$-minimum-norm solution. 
\begin{definition} \label{de:xMNS}
	A solution $\xdag \in \X$ of \autoref{eq:Op} is called $\x^0$-\emph{minimum-norm solution} if it satisfies
	\begin{equation*} 
		\norm{\xdag-\x^0} = \inf \set{ \norm{\x-\x^0}: \x \in \dom{\opo} \text{ and } F[\x]=\y}\;.
		\index{Solution!Minimal Norm}
	\end{equation*}
\end{definition}
Note that in general $\x^0$-minimum-norm solutions need not exist, and if they do, they need not be unique. However, if \autoref{eq:Op} has a solution and the operator is weakly sequentially closed, then existence of an $\x^0$-minimum-norm solution is guaranteed. 
%
\begin{definition}\label{de:wc})
A mapping $\opo: \mathcal{D}(F) \subset \X \to \Y$ is \emph{weakly sequentially closed}, if for every sequence $(\xk) \subset \mathcal{D}(F)$ we have:
		\begin{equation*}
			\text{If }\xk \rightharpoonup_{\X} \x \text{ and } \op{\x_k} \rightharpoonup_{\Y} \tilde{\y}, \text{ then } \x \in \mathcal{D}(F) \text{ and } F[\x]=\tilde{\y}\;.
		\end{equation*}
\end{definition}
From now on we mostly omit the subscripts in the norms, inner products, etc., as the spaces are easily identified from the context. For later reference we highlight the assumptions required for the existence of $\x^0$-minimum-norm solutions.
\begin{assumption}
	\label{as:ip:weakly-closed}
		\autoref{eq:Op} has a solution and $\opo$ is weakly sequentially closed. 
\end{assumption}
\begin{lemma}
	\label{le:ip:existence}
	Let \autoref{as:ip:weakly-closed} be satisfied. Then there exists an $\x^0$-minimum-norm solution.
\end{lemma}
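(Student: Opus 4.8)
The plan is to use the direct method of the calculus of variations on the minimization problem defining the $\x^0$-minimum-norm solution. First I would observe that, by hypothesis, the set
\begin{equation*}
	S := \set{\x \in \dom{\opo} : \op{\x} = \y}
\end{equation*}
is non-empty, so the infimum $m := \inf\set{\norm{\x - \x^0} : \x \in S} \geq 0$ is a well-defined real number. Pick a minimizing sequence $(\x_k)_{k \in \N} \subset S$, i.e.\ $\op{\x_k} = \y$ for all $k$ and $\norm{\x_k - \x^0} \to m$. Since $(\norm{\x_k - \x^0})_k$ is convergent, it is bounded, and hence $(\x_k)_k$ is a bounded sequence in the Hilbert space $\X$.

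Next I would invoke weak sequential compactness of bounded sets in a Hilbert space: there is a subsequence, which I relabel as $(\x_k)_k$, and some $\xdag \in \X$ with $\x_k \rightharpoonup_\X \xdag$. Now comes the step where \autoref{as:ip:weakly-closed} is essential: the constant sequence $\op{\x_k} = \y$ trivially satisfies $\op{\x_k} \rightharpoonup_\Y \y$, so weak closedness of $\opo$ yields $\xdag \in \dom{\opo}$ and $\op{\xdag} = \y$. Thus $\xdag \in S$, and in particular $\norm{\xdag - \x^0} \geq m$ by definition of the infimum.

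Finally I would close the argument using weak lower semicontinuity of the norm: since $\x_k - \x^0 \rightharpoonup_\X \xdag - \x^0$, we have
\begin{equation*}
	\norm{\xdag - \x^0} \leq \liminf_{k \to \infty} \norm{\x_k - \x^0} = m\;.
\end{equation*}
Combining the two inequalities gives $\norm{\xdag - \x^0} = m$, so $\xdag$ is an $\x^0$-minimum-norm solution, which proves the claim. I do not expect any serious obstacle here; the only point requiring care is making sure the hypothesis of \autoref{as:ip:weakly-closed} is applied correctly, namely checking that the (trivial) weak convergence $\op{\x_k} \rightharpoonup_\Y \y$ holds so that one may conclude $\op{\xdag} = \y$ — this is exactly the place, as the authors remark, where weak closedness of $\opo$ enters the proof.
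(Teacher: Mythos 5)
Your proposal is correct and follows exactly the same route as the paper's proof: extract a minimizing sequence, use its boundedness to pass to a weakly convergent subsequence, apply weak sequential closedness of $\opo$ to conclude the limit solves \autoref{eq:op}, and use weak lower semicontinuity of the norm to conclude minimality. The only difference is cosmetic — you spell out that the constant sequence $\op{\x_k}=\y$ trivially converges weakly, which the paper leaves implicit.
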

\begin{proof}
	The set $\opo^{-1}[\y]$ is non-empty by assumption. Therefore, it contains a sequence $(\xk)$ to such that
	\begin{equation*}
		\norm{\xk-\x^0} \to c := \inf \set{ \norm{\x-\x^0}: \x \in \mathcal{D}(F) \text{ and } F[\x]=\y}\;.
	\end{equation*}
	Since this sequence is bounded, it has a weakly convergent subsequence (see \autoref{th:weak_convergence}) with limit $\tilde{\x}$.
	Weak lower semi-continuity of the norm implies $\norm{\tilde{\x} - \x^0} \le c$, while weak sequential closedness of $\opo$ gives $\op{\tilde{\x}} = \y$.
	Therefore, $\tilde{\x}$ is an $\x^0$-minimum-norm solution.	
\end{proof}

\subsection{Basic assumptions and notation} \label{sec:varcon}
Before we start with the analytical considerations we summarize some notation and terminology, which is used throughout this book:
\begin{description}
	\item{\bf Spaces and operators:}
\begin{itemize}
	\item $\X$ and $\Y$ are separable Hilbert spaces.
	\item $\opo: \mathcal{D}(F) \subset \X \to \Y$ is a linear or nonlinear mapping.
\end{itemize}	
	\item{\bf Data and noise level:}
	\begin{itemize}
	\item $\y, \y^\delta \in \Y$  are the unperturbed and noisy data, respectively.
	\item $\delta \geq 0$ is the noise level, that is,
		\begin{equation}\label{eq:noise-level}
			\norm{\y - \yd}_{\Y} \le \delta
		\end{equation}
\end{itemize}	
\item{\bf Priors and solution:}
\begin{itemize}
	\item $\x^0, \x_0 \in \X$ 
		are prior guesses of a solution of \autoref{eq:Op}.
	\item $\xdag$ is an $\x^0$-minimum-norm solution, see \autoref{de:xMNS}. 
\end{itemize}	
\item{\bf Regularization:}
\begin{itemize}
	\item $\alpha > 0$ is the regularization parameter balancing data fidelity with regularity (i.e.\ closeness to $\x^0$) of minimizers. 
	\item $\xad$ is a regularized solution, i.e.\ a minimizer of $\mathcal{T}_{\alpha,\y^\delta}$. 
	\item $\xa$ is a regularized solution for exact data, i.e.\ a minimizer of $\mathcal{T}_{\alpha,\y}$.
\end{itemize}
\end{description}
\noindent
The following five results are of main interest for Tikhonov regularization.
\begin{itemize}
	\item{\emph{Existence}} (\autoref{th:ip:well-posedness}): For every $\alpha > 0$
	and every $\tilde{\y} \in \Y$ the functional $\mathcal{T}_{\alpha,\tilde{\y}}$ has a minimizer.
	\index{regularization!existence}
	\index{regularization!well-posedness}\index{Well-Posedness}
	This means that the method of Tikhonov regularization is \emph{well-defined}.
	\item{\emph{Stability}} (\autoref{th:ip:stability}) ensures that for $\alpha>0$ fixed, the regularized solution
	$\xad$ depends continuously on $\y^\delta$.
	\index{regularization!stability}\index{Stability}
	\item{\emph{Convergence}} (\autoref{th:ip:convergence}) guarantees that as $\delta \to 0$ the regularization parameter $\alpha$ can be chosen such that $\xad$ converges to a solution of \autoref{eq:op}.
	\index{regularization!convergence}
	\item{\emph{Convergence rates}} (\autoref{th:ip:rate}) quantify the speed of convergence of $\xad$ in terms of the noise level $\delta$.
	\index{regularization!convergence rates}
	\item{\emph{Stability estimates}} (\autoref{th:ip:qual_rates}) provide a bound on the distance between $\xad$
	and $\xa$ depending on $\delta$.
	\index{regularization!stability estimates}
\end{itemize}

\subsection{Analysis of Tikhonov regularization} \label{sec:tikhonov}
In the following we provide a review of the fundamental results concerning well-definedness, stability and convergence of Tikhonov regularization. For the proofs of this section we refer to standard references, in particular to \cite{EngHanNeu96,SchGraGroHalLen09}.
\begin{theorem}[Existence]
	\label{th:ip:well-posedness}
	Let \autoref{as:ip:weakly-closed} hold. Then, $\T_{\alpha, \tilde{\y}}$ has at least one minimizer for every $\alpha > 0$, $\x^0 \in \X$, and $\tilde{\y} \in \Y$.
\end{theorem}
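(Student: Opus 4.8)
The plan is to use the direct method of the calculus of variations. First I would fix $\alpha > 0$, $\x^0 \in \X$ and $\tilde{\y} \in \Y$, and observe that the functional $\mathcal{T}_{\alpha,\tilde{\y}}$ is bounded below by $0$, so the infimum $m := \inf_{\x \in \dom{\opo}} \mathcal{T}_{\alpha,\tilde{\y}}[\x]$ is a finite nonnegative number (finiteness of $m$ and nonemptiness of $\dom{\opo}$ come from \autoref{as:ip:weakly-closed}). Then I would pick a minimizing sequence $(\xk) \subset \dom{\opo}$ with $\mathcal{T}_{\alpha,\tilde{\y}}[\xk] \to m$.

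Next I would extract boundedness. Since $\alpha \norm{\xk - \x^0}^2 \le \mathcal{T}_{\alpha,\tilde{\y}}[\xk]$ is eventually bounded (say by $m+1$), the sequence $(\xk - \x^0)$ is bounded in $\X$, hence so is $(\xk)$. By reflexivity of the Hilbert space $\X$ (invoking \autoref{th:ap:Al}, exactly as in the proof of \autoref{le:ip:existence}), there is a subsequence, not relabeled, with $\xk \rightharpoonup \tilde{\x}$ in $\X$. The residuals $\op{\xk} - \tilde{\y}$ are likewise bounded in $\Y$ because $\norm{\op{\xk} - \tilde{\y}}^2 \le \mathcal{T}_{\alpha,\tilde{\y}}[\xk]$ is bounded, so after passing to a further subsequence we may assume $\op{\xk} \rightharpoonup \tilde{z}$ for some $\tilde{z} \in \Y$. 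Weak sequential closedness of $\opo$ (\autoref{as:ip:weakly-closed}) then yields $\tilde{\x} \in \dom{\opo}$ and $\op{\tilde{\x}} = \tilde{z}$.

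Finally I would pass to the limit using weak lower semicontinuity. The norm on a Hilbert space is weakly lower semicontinuous, so $\norm{\tilde{\x} - \x^0}^2 \le \liminf_k \norm{\xk - \x^0}^2$ (using $\xk - \x^0 \rightharpoonup \tilde{\x} - \x^0$) and $\norm{\op{\tilde{\x}} - \tilde{\y}}^2 = \norm{\tilde{z} - \tilde{\y}}^2 \le \liminf_k \norm{\op{\xk} - \tilde{\y}}^2$ (using $\op{\xk} - \tilde{\y} \rightharpoonup \tilde{z} - \tilde{\y}$). Adding these two inequalities (the second multiplied by nothing, the first by $\alpha > 0$) and using superadditivity of $\liminf$ gives
\begin{equation*}
	\mathcal{T}_{\alpha,\tilde{\y}}[\tilde{\x}] \le \liminf_{k\to\infty} \mathcal{T}_{\alpha,\tilde{\y}}[\xk] = m\,.
\end{equation*}
Since $\tilde{\x} \in \dom{\opo}$, also $\mathcal{T}_{\alpha,\tilde{\y}}[\tilde{\x}] \ge m$, so $\tilde{\x}$ is a minimizer.

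The only genuinely nontrivial step is the passage to the limit in the nonlinear term $\op{\xk}$: weak convergence $\xk \rightharpoonup \tilde{\x}$ does not by itself control $\op{\xk}$, which is why one must first secure weak convergence of the residuals from the coercivity built into the functional and then invoke \autoref{as:ip:weakly-closed} to identify the limit. Everything else — coercivity of the regularization term, weak lower semicontinuity of Hilbert-space norms — is routine.
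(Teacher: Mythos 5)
Your proof is correct and is essentially the standard direct-method argument that the paper itself omits, deferring instead to the references \cite{EngHanNeu96,SchGraGroHalLen09}; the proof given there proceeds exactly as you do, extracting boundedness of $(\xk)$ and of the residuals from the two terms of the functional, passing to weakly convergent subsequences, identifying the limit of $\op{\xk}$ via weak sequential closedness, and concluding by weak lower semicontinuity of the Hilbert-space norms. You also correctly isolate the one non-routine point, namely that weak closedness of $\opo$ (\autoref{as:ip:weakly-closed}) is what permits passage to the limit in the nonlinear data-fit term.
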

\begin{proof}
	Let $(\x_k) \subset \mathcal{D}(F)$ be a minimizing sequence, that is, $\T_{\alpha, \tilde{\y}}(\x_k) \to \inf \T_{\alpha, \tilde{\y}}$. Then, the sequence $(\T_{\alpha, \tilde{\y}}(\x_k))$ is bounded and since $ \alpha \norm{\x_k - \x^0}^2 \le \T_{\alpha, \tilde{\y}}[\x_k]$, the sequence $(\x_k)$ is bounded as well.
	Due to \autoref{th:weak_convergence} we can extract a subsequence $(\x_{k_j})$ that converges weakly to an $\hat \x \in \X.$
	
	Similarly, the sequence $(\op{\x_{k_j}}) \subset \Y$ is also bounded
	and has a subsequence that converges weakly to a $\hat \y \in \Y$. The weak closedness of $\opo$ now implies that $\hat \x \in \dom{\opo}$ and that $\op{\hat \x} = \hat \y$. In fact, the same argument shows that every subsequence of $(\op{\x_{k_j}})$ has a another subsequence weakly converging to $\op{\hat \x}$. Thus, $\op{\x_{k_j}} \rightharpoonup \op{\hat \x}$. 

Since the norms are weakly lower semicontinuous, cf.\ \autoref{th:ls}, we have
	\begin{align*}
		\T_{\alpha,\tilde y}[\hat \x] \le \liminf_{j \to \infty} \T_{\alpha,\tilde y}[\x_{k_j}] = \inf \T_{\alpha,\tilde y},
	\end{align*}
and we have shown that $\hat \x$ minimizes $\T_{\alpha, \tilde{\y}}$.
\end{proof}

\begin{theorem}[Stability]
	\label{th:ip:stability}
	Let \autoref{as:ip:weakly-closed} hold.
	Consider sequences $(\xk) \subset \X$ and $(\y_k) \subset \Y$ where $\y_k \to \tilde{\y}$ and $\xk$ is a minimizer of $\mathcal{T}_{\alpha,\y_k}$, $k\in\N$.
	Then $(\xk)$ has a strongly convergent subsequence and the limit of every such subsequence is a minimizer of $\mathcal{T}_{\alpha,\tilde{\y}}$.
\end{theorem}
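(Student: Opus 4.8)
The plan is to follow the classical stability argument for nonlinear Tikhonov regularization under weak closedness (cf.\ \cite{EngHanNeu96,SchGraGroHalLen09}): derive a uniform bound on the minimizers, extract weakly convergent subsequences, identify the weak limit as a minimizer of $\mathcal{T}_{\alpha,\tilde{\y}}$, and finally upgrade weak to strong convergence using the Hilbert space structure.

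\emph{A priori bound.} First I would fix some $\hat{\x}\in\dom{\opo}$ (which is non-empty by \autoref{as:ip:weakly-closed}; note $\x^0$ itself need not lie in $\dom{\opo}$). Minimality of $\xk$ for $\mathcal{T}_{\alpha,\y_k}$ gives
\[
	\alpha\norm{\xk-\x^0}^2 \le \mathcal{T}_{\alpha,\y_k}[\xk] \le \mathcal{T}_{\alpha,\y_k}[\hat{\x}] = \norm{\op{\hat{\x}}-\y_k}^2+\alpha\norm{\hat{\x}-\x^0}^2,
\]
and since $\y_k\to\tilde{\y}$ the right-hand side is bounded uniformly in $k$. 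Hence $(\xk)$ is bounded in $\X$ and, from the fit-to-data term, $(\op{\xk})$ is bounded in $\Y$.

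\emph{Compactness and identification.} Next I would use reflexivity of $\X$ and $\Y$ (\autoref{th:ap:Al}) to pass to a subsequence with $\xk\rightharpoonup\x^*$ in $\X$ and, along a further subsequence, $\op{\xk}\rightharpoonup\bar{w}$ in $\Y$; then \autoref{as:ip:weakly-closed} forces $\x^*\in\dom{\opo}$ and $\op{\x^*}=\bar{w}$. Since $\y_k\to\tilde{\y}$ strongly, $\op{\xk}-\y_k\rightharpoonup\op{\x^*}-\tilde{\y}$ and $\xk-\x^0\rightharpoonup\x^*-\x^0$, so weak lower semicontinuity of $\norm{\cdot}^2$ together with superadditivity of $\liminf$ yields $\mathcal{T}_{\alpha,\tilde{\y}}[\x^*]\le\liminf_k\mathcal{T}_{\alpha,\y_k}[\xk]$. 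On the other hand, for arbitrary $\x\in\dom{\opo}$ minimality of $\xk$ and $\y_k\to\tilde{\y}$ give $\limsup_k\mathcal{T}_{\alpha,\y_k}[\xk]\le\lim_k\mathcal{T}_{\alpha,\y_k}[\x]=\mathcal{T}_{\alpha,\tilde{\y}}[\x]$. Chaining these inequalities shows $\mathcal{T}_{\alpha,\tilde{\y}}[\x^*]\le\mathcal{T}_{\alpha,\tilde{\y}}[\x]$ for every $\x$, so $\x^*$ is a minimizer of $\mathcal{T}_{\alpha,\tilde{\y}}$; taking $\x=\x^*$ additionally gives $\mathcal{T}_{\alpha,\y_k}[\xk]\to\mathcal{T}_{\alpha,\tilde{\y}}[\x^*]$.

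\emph{Strong convergence and the last assertion.} The hard part will be upgrading $\xk\rightharpoonup\x^*$ to norm convergence. I would argue $\norm{\xk-\x^0}\to\norm{\x^*-\x^0}$ by contradiction: if some subsequence had $\norm{\xk-\x^0}\to L>\norm{\x^*-\x^0}$, then $\mathcal{T}_{\alpha,\y_k}[\xk]\to\mathcal{T}_{\alpha,\tilde{\y}}[\x^*]$ would force $\norm{\op{\xk}-\y_k}^2\to\mathcal{T}_{\alpha,\tilde{\y}}[\x^*]-\alpha L^2<\norm{\op{\x^*}-\tilde{\y}}^2$, contradicting weak lower semicontinuity along that subsequence; together with the reverse inequality from lower semicontinuity this gives $\norm{\xk-\x^0}\to\norm{\x^*-\x^0}$, and in a Hilbert space weak convergence plus convergence of norms implies strong convergence, so $\xk\to\x^*$. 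Finally, for the second claim, if $(\x_{k_j})$ is any convergent subsequence with limit $\bar{\x}$, then $\x_{k_j}\rightharpoonup\bar{\x}$ and $(\op{\x_{k_j}})$ is bounded, so re-running the compactness and identification step along $(\x_{k_j})$ shows $\bar{\x}$ is a minimizer of $\mathcal{T}_{\alpha,\tilde{\y}}$. The main obstacle is exactly this norm-convergence upgrade: the lower-semicontinuity machinery alone only delivers a weakly convergent subsequence whose limit minimizes $\mathcal{T}_{\alpha,\tilde{\y}}$, and promoting it to the strong convergence asserted in the theorem genuinely uses the Radon--Riesz property of Hilbert spaces; one must also keep careful track of the direction of the $\liminf$/$\limsup$ estimates for the sum of the two squared-norm terms.
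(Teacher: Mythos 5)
Your proof is correct and is exactly the standard argument that the paper itself defers to (it cites \cite{EngHanNeu96,SchGraGroHalLen09} rather than giving a proof): a priori bound from minimality, weak compactness plus weak sequential closedness to identify the weak limit, the $\liminf$/$\limsup$ sandwich to show the limit minimizes $\mathcal{T}_{\alpha,\tilde{\y}}$, and convergence of $\norm{\xk-\x^0}$ plus the Radon--Riesz property to upgrade to strong convergence. No gaps; the subsequence bookkeeping and the direction of the semicontinuity estimates are handled correctly.
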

\begin{proof}
	Take an arbitrary $\x \in \dom{F}$. Then $\T_{\alpha,\y_k}[\xk] \le \T_{\alpha,\y_k}[\x]$. Together with the boundedness of $\y_k$ it follows that $(\xk)$ and $(F[\xk])$ are bounded as well. As in the proof of \autoref{th:ip:well-posedness}, \autoref{th:weak_convergence} and the weak closedness of $\opo$ imply the existence of a subsequence, denoted again by $(\xk)$, such that $\xk \rightharpoonup \tilde \x \in \dom{\opo}$ and $\op{\xk} \rightharpoonup \op{\tilde \x}$. The weak lower semi-continuity of the norms (see \autoref{th:ls}) gives
%
%
	\begin{align} \label{eq:sthe}
			\mathcal{T}_{\alpha,\tilde \y}[\tilde{\x}]
            	&\leq \liminf_{k \to \infty} \T_{\alpha,\y_k}[\xk] 
            		\leq \limsup_{k \to \infty} \T_{\alpha,\y_k}[\xk] 
            		\leq \limsup_{k \to \infty} \T_{\alpha,\y_k}[\x]
					= \lim_{k \to \infty} \T_{\alpha,\y_k}[\x] \notag \\
				&= \T_{\alpha,\tilde \y}[\x] . 
	\end{align}
	Since $\x \in \dom{F}$ was arbitrary, $\tilde{\x}$ is a minimizer of $\mathcal{T}_{\alpha,\tilde \y}$. Thus we have shown that the sequence of minimizers $(\xk)$ has a weakly convergent subsequence and the limit of every such subsequence is a minimizer of $\mathcal{T}_{\alpha,\tilde{\y}}$. It remains to show strong convergence.
	
	Letting $\x = \tilde{\x}$ in \autoref{eq:sthe} gives
    \begin{equation*} 
		\lim_{k \to \infty} \T_{\alpha,\y_k}[\xk] = \T_{\alpha,\tilde \y}[\tilde \x].		
	\end{equation*}
	This implies $\norm{\xk - \x^0} \to \norm{\tilde \x - \x^0}$, because
	\begin{align*}
		\limsup_{k \to \infty} \alpha \norm{\xk - \x^0}^2
			&= \limsup_{k \to \infty} \left( \T_{\alpha,\y_k}[\xk] - \norm{\op{\xk} - \y_k}^2 \right) \\
			&\le \limsup_{k \to \infty} \T_{\alpha,\y_k}[\xk] + \limsup_{k \to \infty} \left( - \norm{\op{\xk} - \y_k}^2 \right) \\
			&= \T_{\alpha,\tilde \y}[\tilde \x] - \liminf_{k \to \infty} \norm{\op{\xk} - \y_k}^2 \\
			&\le \T_{\alpha,\tilde \y}[\tilde \x] - \norm{\op{\tilde \x} - \tilde \y}^2 \\
			&= \alpha \norm{\tilde \x - \x^0}^2.
	\end{align*}
	Combining convergence of norms with weak convergence yields strong convergence:
	\begin{align*}
		\norm{\xk - \tilde \x}^2
			&= \norm{\xk - \x^0 - ( \tilde \x - \x^0) }^2 \\
			&= \norm{\xk - \x^0}^2 - 2 \langle \xk - \x^0, \tilde \x - \x^0 \rangle + \norm{\tilde \x - \x^0}^2 \to 0.
	\end{align*}

	This ends the proof.
\end{proof}

The following theorem shows that minimization of $\mathcal{T}_{\alpha, \y^\delta}$ indeed is a regularization method: An exact solution of \autoref{eq:Op} can be recovered in the limit $\delta \to 0$, if $\alpha = \alpha(\delta)$ is chosen appropriately.
\begin{theorem}[Convergence]
	\label{th:ip:convergence}
	Let \autoref{as:ip:weakly-closed} hold. Assume $\alpha: (0,\infty) \to (0,\infty)$ satisfies
	\begin{equation} \label{eq:noise_cond}
		\boxed{\
			\alpha(\delta)\to 0
			\quad \text{and} \quad \frac{\delta^2}{\alpha(\delta)} \to 0\,,
			\text{ as } \delta \to 0\;.\
		}
	\end{equation}
	Consider sequences $(\delta _k),(\alpha_k), (\xk), (\y_k)$ where $\delta_k \to 0$, $\alpha_k = \alpha(\delta_k)$, $\norm{\y-\y_k} \leq \delta_k$ and $\xk \in \argmin \mathcal{T}_{\alpha_k,\y_k}$, $k\in\N$. Then $(\xk)$ has a strongly convergent subsequence and the limit of every such subsequence is an $\x^0$-minimum-norm solution. If there is only one $\x^0$-minimum-norm solution $\xdag$, then
	\begin{align*}
		\boxed{ \lim_{k \to \infty }\xk = \xdag.}	
	\end{align*}		
\end{theorem}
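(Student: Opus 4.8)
The plan is to establish the standard convergence result for Tikhonov regularization via a minimizing-sequence argument built on the boxed parameter choice \eqref{eq:noise_cond}, weak compactness, weak closedness of $\opo$ (\autoref{as:ip:weakly-closed}), and weak lower semicontinuity of the norm. Throughout I write $\xdag$ for a fixed $\x^0$-minimum-norm solution, which exists by \autoref{le:ip:existence}, and abbreviate $\alpha_k := \alpha(\delta_k)$.

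First I would derive the two key inequalities from minimality of $\xk$. Since $\xk$ minimizes $\mathcal{T}_{\alpha_k,\y_k}$, comparing its value with that at $\xdag$ and using $\op{\xdag}=\y$ together with $\norm{\y-\y_k}\le\delta_k$ gives
\begin{equation*}
	\norm{\op{\xk}-\y_k}^2 + \alpha_k\norm{\xk-\x^0}^2 \le \delta_k^2 + \alpha_k\norm{\xdag-\x^0}^2\,.
\end{equation*}
From this I read off, first, $\norm{\xk-\x^0}^2 \le \delta_k^2/\alpha_k + \norm{\xdag-\x^0}^2$, so by $\delta_k^2/\alpha_k\to 0$ the sequence $(\xk)$ is bounded and $\limsup_k \norm{\xk-\x^0} \le \norm{\xdag-\x^0}$; and second, $\norm{\op{\xk}-\y_k}^2 \le \delta_k^2 + \alpha_k\norm{\xdag-\x^0}^2 \to 0$ because $\alpha_k\to 0$, hence $\op{\xk}\to\y$ in $\Y$ (using $\y_k\to\y$).

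Next I would extract a subsequence. Boundedness of $(\xk)$ gives, by \autoref{th:ap:Al}, a weakly convergent subsequence $\x_{k_j}\rightharpoonup \tilde{\x}$ in $\X$; along it $\op{\x_{k_j}}\to\y$, in particular $\op{\x_{k_j}}\rightharpoonup\y$, so weak sequential closedness of $\opo$ (\autoref{as:ip:weakly-closed}) yields $\tilde{\x}\in\dom{\opo}$ and $\op{\tilde{\x}}=\y$; thus $\tilde{\x}$ solves \autoref{eq:op}. Weak lower semicontinuity of $\norm{\cdot-\x^0}$ combined with the $\limsup$ bound above gives
\begin{equation*}
	\norm{\tilde{\x}-\x^0} \le \liminf_j \norm{\x_{k_j}-\x^0} \le \limsup_j \norm{\x_{k_j}-\x^0} \le \norm{\xdag-\x^0}\,,
\end{equation*}
and since $\xdag$ is an $\x^0$-minimum-norm solution the reverse inequality $\norm{\tilde{\x}-\x^0}\ge\norm{\xdag-\x^0}$ holds, so $\tilde{\x}$ is itself an $\x^0$-minimum-norm solution with $\norm{\x_{k_j}-\x^0}\to\norm{\tilde{\x}-\x^0}$. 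In a Hilbert space, weak convergence together with convergence of norms upgrades to strong convergence; hence $\x_{k_j}\to\tilde{\x}$ in $\X$. The same argument applies verbatim to an arbitrary convergent subsequence of $(\xk)$: its limit solves \autoref{eq:op} and, by the norm bound, is an $\x^0$-minimum-norm solution. Finally, if the $\x^0$-minimum-norm solution is unique, a standard subsequence-of-subsequence argument (every subsequence of $(\xk)$ has a further subsequence converging to $\xdag$) forces the whole sequence $\xk\to\xdag$.

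The main obstacle, and the only genuinely delicate point, is the passage from weak to strong convergence: one must combine weak closedness of $\opo$ (to identify the limit as a solution), the minimality estimate (to bound $\limsup\norm{\xk-\x^0}$ by $\norm{\xdag-\x^0}$), and the minimum-norm property of $\xdag$ (to force equality of norms), and only then invoke the Hilbert-space fact that $\x_{k_j}\rightharpoonup\tilde\x$ with $\norm{\x_{k_j}-\x^0}\to\norm{\tilde\x-\x^0}$ implies $\x_{k_j}\to\tilde\x$. Everything else — the two inequalities, boundedness, and the final uniqueness argument — is routine.
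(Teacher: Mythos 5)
Your proof is correct and follows exactly the standard argument: the paper itself defers this proof to \cite{EngHanNeu96,SchGraGroHalLen09}, but its own proof of the finite-dimensional analogue (\autoref{th:NeuSch90}) uses precisely the same chain — minimality against $\xdag$, division by $\alpha_k$, weak compactness, weak closedness, the $\limsup$ bound forcing norm convergence, and the Hilbert-space upgrade from weak to strong convergence. No gaps.
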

\begin{proof}
	Let $\xdag$ be a $\x^0$-minimum-norm solution. 
	Then, 
	\begin{align}\label{eq:conv-est}
		\alpha_k \norm{\xk - \x^0}^2 \le \T_{\alpha_k, \y_k}[\xk] \le \T_{\alpha_k, \y_k}[\xdag] \le \delta_k^2 + \alpha_k \norm{\xdag - \x^0}^2.
	\end{align}
	Using \eqref{eq:noise_cond} it follows that $(\xk)$ is bounded. As the sequence $(\op{\xk})$ is also bounded, we conclude from the weak closedness of $\opo$, as we did in the previous two proofs, that $\op{\x_{k_j}} \rightharpoonup \op{\tilde \x}$ for a suitable subsequence and some $\tilde \x \in \dom{\opo}$.
	
	Next, $\tilde \x$ is a solution of \autoref{eq:Op}, because
	\begin{align*}
		\norm{\op{\tilde \x} - \y}^2
			&\le \liminf_{j \to \infty} \norm{\op{\x_{k_j}} - \y}^2 \le \liminf_{j \to \infty} \left( 2 \norm{\op{\x_{k_j}} - \y_{k_j}}^2 + 2 \delta_{k_j}^2 \right) \\
			&\le \liminf_{j \to \infty} \left( 2 \T_{\alpha_{k_j}, \y_{k_j}}[\x_{k_j}] + 2 \delta_{k_j}^2 \right)
	\end{align*}
	and the right-hand side vanishes due to \eqref{eq:conv-est} and \eqref{eq:noise_cond}. Estimate \eqref{eq:conv-est} also implies that
	\begin{align*}
		\limsup_{j \to \infty} \norm{\x_{k_j} - \x^0}^2 \le \norm{\xdag - \x^0}^2,
	\end{align*}
	and consequently
	\begin{align*}
		\norm{\tilde \x - \x^0}^2
			&\le \liminf_{j \to \infty} \norm{\x_{k_j} - \x^0}^2 \le \limsup_{j \to \infty} \norm{\x_{k_j} - \x^0}^2 \\
			&\le \norm{\xdag - \x^0}^2 \le \norm{\tilde \x - \x^0}^2.
	\end{align*}
	We conclude that $\tilde \x$ is an $\x^0$-minimum-norm solution and, using the same argument as in the proof of \autoref{th:ip:stability}, that $(\x_{k_j})$ converges in the norm.
	
	If there is only one $\x^0$-minimum-norm solution, then $\tilde \x = \xdag.$ Moreover, every subsequence of $(\x_k)$ has another subsequence converging to the same limit. Therefore, $\xk \to \xdag.$
\end{proof}

\begin{example}[$c$-example]\label{ex:c-analysis}
We return to the c-example from \autoref{ss:c} and check whether the results of this section apply, that is, we verify \autoref{as:ip:weakly-closed}. First, \autoref{thm:c-existence-uniqueness} provides a sufficient condition for existence of a solution of $\op{\x} = \y.$ The weak sequential closedness of $\opo$ is an immediate consequence of the following two properties: (i) weak sequential closedness of $\dom{\opo}$ and (ii) weak-to-weak sequential continuity of $\opo$. The former holds, because $\dom{\opo} = \mathcal{C}_\epsilon$ is convex (being the Minkowski sum of two convex sets) and closed in $L^2$. The latter follows from the proposition below.
\begin{proposition} Let $(\x_k) \subset \dom{\opo}$, $ \x_k \rightharpoonup \x$, then $\op{\x_k} \rightharpoonup \op{\x}$ in $W^{2,2}.$
\end{proposition}
\begin{proof}
	By virtue of being weakly convergent, the sequence $(\x_k)$ is bounded in $L^2$. Estimate \eqref{eq:c-H2-est} implies that the sequence $(\y_k) := (\op{\xk})$ is bounded in $W^{2,2}.$ Hence, we can extract a subsequence $(\y_{k_j})$ converging weakly in $W^{2,2}$ to some ${\tt w} \in W^{2,2} \cap W^{1,2}_0$. Letting $j \to \infty$ in the weak formulation
	\begin{equation*}
		\inner{\y'_{k_j}}{{\tt v}'}_{L^2} + \inner{\x_{k_j} \y_{k_j}}{\tt v}_{L^2} = \inner{\tt f}{\tt v}_{L^2}
	\end{equation*}
	gives
	\begin{equation*}
		\inner{{\tt w}'}{{\tt v}'}_{L^2} + \inner{\x {\tt w}}{\tt v}_{L^2} = \inner{\tt f}{\tt v}_{L^2}.
	\end{equation*}
	showing that ${\tt w} = \op{\x}$. The argument above shows that, in fact, every subsequence of $(\y_k)$ has another subsequence converging weakly in $W^{2,2}$ to $\op{\x}$. This proves the claim.
	\end{proof}
\end{example}

\subsection{Convergence rates} \label{ss:cr}

Because the convergence of regularized solutions can be arbitrarily slow, it is important to identify conditions that guarantee a rate of convergence and to determine that rate. The first convergence rates result for Tikhonov regularization of nonlinear inverse problems was established in \cite{EngKunNeu89}. Since then various extensions have been found; see, for instance, \cite{Hoh00,Neu89,Sch01a}. Source conditions, such as \autoref{eq:source_var}, are central for obtaining results of this kind.

\begin{assumption} \label{as:ip:rates} \mbox{}
\begin{enumerate}
	\item \label{it:mns} 
		There is an $\x^0$-minimum-norm solution $\xdag$ and a $\rho > \norm{\xdag - \x^0}$ such that $\mathcal{B}_\rho(\x^0) \subset \dom{\opo}$ .
	\item \label{it:lip} $\opo$ is Fr\'{e}chet differentiable at $\xdag$ and there is an $L\ge 0$ such that
		\begin{equation} \label{eq:lipschitz}
  			\norm{\op{\x} - \op{\xdag} - \opd{\xdag} \left( \x - \xdag \right)} \le \frac{L}{2} \norm{\x - \xdag}^2 
		\end{equation}
		for all $\x \in \mathcal{B}_\rho(\x^0)$.
	\item \label{it:sourcecon} There is an $\omega \in \Y$ satisfying the \emph{source condition}\index{source condition}
		\begin{equation} \label{eq:source_var}
			\boxed{\xdag - \x^0 = \opo'[\xdag]^*\omega \;, \quad L\norm{\omega} < 1}\;,
		\end{equation}
		where $\opo'[\xdag]^*$ is the adjoint of $\opo'[\xdag]$.
\end{enumerate}
\end{assumption}

\begin{theorem}[Convergence rates] \label{th:ip:rate}
Let Assumptions \ref{as:ip:weakly-closed} and \ref{as:ip:rates} hold. 
If, as $\delta \to 0$, inequality 
\eqref{eq:noise-level} holds and
\begin{equation} \label{eq:order}
	\boxed{\alpha = \alpha(\delta) \sim \delta\,,}
\end{equation}
which means that there are $c,C >0$ such that $c\delta \leq \alpha \leq C \delta$ for sufficiently small $\delta$, then
\begin{equation} \label{eq:delta_rate}
	\boxed{\norm{\xad - \xdag} = \mathcal{O} ( \sqrt{\delta} ).}
\end{equation}
\end{theorem}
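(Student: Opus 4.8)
The plan is to exploit the minimizing property of $\xad$ together with the source condition \eqref{eq:source_var}. First I would write the basic comparison inequality: since $\xad$ minimizes $\mathcal{T}_{\alpha,\yd}$, testing against the solution $\xdag$ (so $\op{\xdag}=\y$) and using the noise bound \eqref{eq:datn} gives
\[
	\norm{\op{\xad}-\yd}^2 + \alpha\norm{\xad - \x^0}^2 \le \delta^2 + \alpha\norm{\xdag - \x^0}^2 .
\]
Writing $\norm{\xad - \x^0}^2 = \norm{\xad - \xdag}^2 + 2\inner{\xad - \xdag}{\xdag - \x^0} + \norm{\xdag - \x^0}^2$ and rearranging isolates $\alpha\norm{\xad-\xdag}^2$ on the left, leaving the cross term $2\alpha\inner{\xad - \xdag}{\xdag - \x^0}$ to be estimated.

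Second, I would insert the source condition $\xdag - \x^0 = \opd{\xdag}^*\omega$ to rewrite the cross term as $2\alpha\inner{\opd{\xdag}(\xad - \xdag)}{\omega}$, and then use the Lipschitz estimate \eqref{eq:lipschitz} in the integral form of the Taylor remainder — legitimate because $\dom{\opo}$ is convex and $\xdag$ lies in its interior, so the segment $[\xdag,\xad)$ stays in $\operatorname{int}\dom{\opo}$ — to obtain
\[
	\norm{\op{\xad} - \op{\xdag} - \opd{\xdag}(\xad - \xdag)} \le \tfrac{L}{2}\norm{\xad - \xdag}^2 .
\]
This lets me replace $\opd{\xdag}(\xad-\xdag)$ by $\op{\xad}-\y$ up to a quadratic error, and then split $\op{\xad}-\y = (\op{\xad}-\yd) + (\yd-\y)$, bounding $\norm{\yd-\y}\le\delta$ once more.

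Collecting terms I expect an inequality of the form
\[
	\norm{\op{\xad}-\yd}^2 + \alpha\bigl(1 - L\norm{\omega}\bigr)\norm{\xad - \xdag}^2 \le \delta^2 + 2\alpha\norm{\omega}\,\norm{\op{\xad}-\yd} + 2\alpha\delta\norm{\omega},
\]
where the condition $L\norm{\omega}<1$ from \eqref{eq:source_var} is precisely what keeps the coefficient of $\norm{\xad-\xdag}^2$ positive after the quadratic remainder has been absorbed. Completing the square in the misfit term, using $\norm{\op{\xad}-\yd}^2 - 2\alpha\norm{\omega}\norm{\op{\xad}-\yd} \ge -\alpha^2\norm{\omega}^2$, then gives
\[
	\norm{\xad - \xdag}^2 \le \frac{(\delta + \alpha\norm{\omega})^2}{\alpha\,(1 - L\norm{\omega})},
\]
and substituting the parameter choice $\alpha\sim\delta$ from \eqref{eq:order} makes the right-hand side $\mathcal{O}(\delta)$, which is \eqref{eq:delta_rate}. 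Existence of a minimizer $\xad$ is guaranteed by \autoref{th:ip:well-posedness}, so there is nothing to check on that side, and the argument works for any minimizer.

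The main obstacle is the nonlinearity: one cannot identify $\opd{\xdag}(\xad-\xdag)$ with the residual $\op{\xad}-\y$, and the Taylor remainder this produces is quadratic in $\norm{\xad-\xdag}$ — the same order as the term we are trying to bound. The smallness built into the source condition, $L\norm{\omega}<1$, is exactly calibrated so that this remainder can be moved to the left-hand side without destroying positivity; the remaining difficulty is just careful bookkeeping of the constants through the completion-of-square step and the final substitution $\alpha\sim\delta$.
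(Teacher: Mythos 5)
Your argument is correct and is precisely the classical Engl--Kunisch--Neubauer proof that the paper itself omits and cites from \cite{EngKunNeu89,EngHanNeu96}: minimizer comparison against $\xdag$, binomial expansion of the penalty, the source condition to convert the cross term into $\inner{\opd{\xdag}(\xdag-\xad)}{\omega}$, the Taylor remainder bounded by $\tfrac{L}{2}\norm{\xad-\xdag}^2$ (valid on the segment since $\xdag\in\operatorname{int}\dom{\opo}$ and $\dom{\opo}$ is convex), absorption of that remainder using $L\norm{\omega}<1$, and completion of the square to reach $\norm{\xad-\xdag}^2\le(\delta+\alpha\norm{\omega})^2/(\alpha(1-L\norm{\omega}))$. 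Nothing further is needed.
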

\begin{proof}
	We show first that $\xad \in \mathcal{B}_\rho(\x^0)$ for sufficiently small $\delta$. From the definition of $\xad$ we obtain
	\begin{equation} \label{eq:conv-rate-pf}
		\begin{aligned} \hspace{2em}&\hspace{-2em} 
			\norm{\op{\xad} - \yd}^2 + \alpha \norm{\xad - \xdag}^2  \\
				&=		\T_{\alpha,\yd}[\xad] + \alpha \left( \norm{\xad - \xdag}^2 - \norm{\xad - \x^0}^2 \right)  \\
				&\le	\T_{\alpha,\yd}[\xdag] + \alpha \left( \norm{\xad - \xdag}^2 - \norm{\xad - \x^0}^2 \right) \\
				&\le	\delta^2 + \alpha \left( \norm{\xdag - \x^0}^2 + \norm{\xad - \xdag}^2 - \norm{\xad - \x^0}^2 \right)  \\
				&=		\delta^2 + 2\alpha \left\langle \xdag - \x^0, \xdag - \xad \right\rangle.
		\end{aligned}
	\end{equation}
	It follows that
	\begin{align*}
		\norm{\xad - \xdag}^2	&\le \frac{\delta^2}{\alpha} + 2\left\langle \xdag - \x^0, \xdag - \xad \right\rangle,
	\end{align*}
	and after completing the square
	\begin{align*}
		\norm{\xad - \x^0}	&\le \frac{\delta}{\sqrt{\alpha}} + \norm{\x^0 - \xdag}.
	\end{align*}
	This together with \eqref{eq:order} implies $\xad \in \mathcal{B}_\rho(\x^0)$ for sufficiently small $\delta$. Therefore, inequality \eqref{eq:lipschitz} holds for $\x = \xad$.
	
	We now resume the estimate \eqref{eq:conv-rate-pf} using the source condition \eqref{eq:source_var}, the Cauchy-Schwarz inequality and \eqref{eq:lipschitz}
	\begin{align*} \hspace{2em}&\hspace{-2em}
	\norm{\op{\xad} - \yd}^2 + \alpha \norm{\xad - \xdag}^2 \\
			&\le \delta^2 + 2\alpha \left\langle \xdag - \x^0, \xdag - \xad \right\rangle \\
			&=		\delta^2 + 2\alpha \left\langle \omega, \opd{\xdag} \left( \xdag - \xad \right) \right\rangle \\
			&=		\delta^2 + 2\alpha \left\langle \omega, \op{\xad} - \op{\xdag} - \opd{\xdag} \left( \xad - \xdag \right) + \op{\xdag} - \yd - \op{\xad} + \yd \right\rangle \\
			&\le 	\delta^2 + 2\alpha \norm{\omega} \left( \frac{L}{2} \norm{\xad - \xdag}^2 + \delta + \norm{\op{\xad} - \yd} \right).
	\end{align*}
	This inequality can be expressed as
	\begin{align*}
		\left( \norm{\op{\xad} - \yd} - \alpha \norm{\omega} \right)^2 + \alpha \left( 1 - L \norm{\omega} \right) \norm{\xad - \xdag}^2 \le \left( \delta + \alpha \norm{\omega} \right)^2.
	\end{align*}
	Dropping the first term and recalling that $L \norm{\omega} < 1$, it implies
	\begin{align*}
		\norm{\xad - \xdag} \le \frac{\delta + \alpha \norm{\omega} }{\sqrt{\alpha \left( 1 - L \norm{\omega} \right)}}.
	\end{align*}
	Using \eqref{eq:order}, the assertion follows.
\end{proof}
\begin{remark} \label{rem:rates}$ $
\begin{enumerate}
	\item Equations \eqref{eq:noise_cond}and \eqref{eq:order}, but also \eqref{eq:ps1}, \eqref{eq:ps2} and \eqref{eq:eta} below, present \emph{a-priori parameter choice rules}. They answer the question of how to choose $\alpha$ asymptotically in order to obtain convergence (rates) in the vanishing noise limit. Such rules are primarily of theoretical interest, not only because of their asymptotic nature but also because they depend on information that is typically not available in practice, such as a bound on the noise or smoothness of $\xdag$. This must be contrasted with the practical problem of selecting $\alpha$ for a given $\yd$ to obtain a good reconstruction $\xad$, ideally one minimizing $\norm{\xad - \xdag}$. The results of \autoref{se:Tik} only address the former. For \emph{a-posteriori}, \emph{data-driven} and \emph{heuristic} parameter selection strategies see \autoref{ch:par} and the references given there.
	\item Note that \autoref{th:ip:rate} in particular establishes convergence $\xad \to \xdag$ without extracting a subsequence and without assuming uniqueness of $\xdag$. Compare this to \autoref{th:ip:convergence}. Therefore, it follows from \autoref{th:ip:rate} that there can be only one $\xdag$ satisfying \autoref{as:ip:rates}. 
	\item \label{it:rem:lipschitz} Item \ref{it:lip} of \autoref{as:ip:rates} requires that the first order Taylor remainder of $\opo$ around $\xdag$ is $\mathcal{O}\big( \norm{\xdag - \x}^2 \big)$. This is the case, for instance, if $\opo$ is Fr\'{e}chet differentiable in $\mathcal{B}_\rho (\x^0)$ and the map $\x \mapsto \opd{\x}$ satisfies a Lipschitz-condition at $\xdag$, meaning that
  		\begin{equation*} 
  			\norm{\opd{\xdag} - \opd{\x}} \leq L\norm{\xdag - \x}
		\end{equation*}
		for all  $\x \in \mathcal{B}_\rho (\x^0)$, where the norm on the left is the operator norm between $\X$ and $\Y.$
\end{enumerate}
\end{remark}
Next we state a stability estimate first derived in \cite{Sch93}. The theorem below is a slight generalization of \cite{ResSch06}, where in \autoref{eq:source_var} $2 L \norm{\omega} < 1$ was assumed to hold. 
We refer to \cite{SchGraGroHalLen25} for a proof.


\begin{theorem}[Stability estimates]
	\label{th:ip:qual_rates}
	Let the assumptions of \autoref{th:ip:rate} hold. In addition, assume that $F$ is Fr\'{echet} differentiable in $\mathcal{B}_\rho(\x^0)$ and that
	\begin{gather}\label{eq:lipschitz2}
  		\norm{\op{\x} - \op{\tilde \x} - \opd{\tilde \x} \left( \x - \tilde \x \right)} \le \frac{L}{2} \norm{\x - \tilde \x}^2
	\end{gather}
	holds for all $\x, \tilde \x \in \mathcal{B}_\rho(\x^0)$. Then, for $\delta$ sufficiently small, we have
	\begin{equation*}
		\norm{\xad-\xa}^2 \leq \frac{4}{1-L \norm{\omega}}
		\frac{\delta^2}{\alpha}\;.
	\end{equation*}
	In particular, for $\alpha \sim \delta$, we have the
	stability estimate
	\begin{equation*}
		\norm{F[\xad]-F[\xa]}=\mathcal{O}(\delta)
		\qquad \text{and} \qquad
		\norm{\xad - \xa} =\mathcal{O}(\sqrt{\delta})\;.
	\end{equation*}
\end{theorem}

\begin{remark} \label{opq:stability}
	Pairs of elements $\x$ and $\tilde \x$ which satisfy \eqref{eq:lipschitz2} and
	\begin{gather} \label{eq:sourcecon2}
		\tilde \x - \x = \opo'[\tilde \x]^*\omega
	\end{gather}
	for an $\omega \in \Y$ with $L\norm{\omega} < 1$, also satisfy
	\begin{equation*}
		\norm{\tilde \x-\x}_\X \leq C\sqrt{\norm{\op{\tilde \x} - F[\x]}}\,
	\end{equation*}
for an appropriate constant $C$. In other words, \eqref{eq:lipschitz2} and \eqref{eq:sourcecon2} implicitly characterize Hölder stability (see \cite{HofYam10,Hoh00,Sch01a}).
\end{remark}

\begin{example}[$c$-example]\label{ex:c-rates}
In order to investigate whether $\opo$, as defined in \eqref{eq:c-op}, meets \autoref{as:ip:rates} it is helpful to regard $A_\x : \y \mapsto -\y'' + \x\y $ as an operator
\begin{align*}
	A_\x : \dom{A_\x} = W^{2,2} \cap W^{1,2}_0 \subset L^2 \to L^2.
\end{align*}
Recall that all function spaces consist of real valued functions on the unit interval. First, we collect a few basic properties of $A_\x$.
\begin{proposition}\label{thm:A_x}
	$A_\x$ is a densely defined self-adjoint operator with a self-adjoint inverse.
\end{proposition}
\begin{proof}
	$A_\x$ is densely defined, because $C_c^\infty \subset \dom{A_\x}$ and $C_c^\infty$ is dense in $L^2.$ It is invertible due to Propositions \ref{thm:c-uniqueness} and \ref{thm:c-H2}. Integrating by parts twice yields
	\begin{align*}
		\langle A_\x \y , \z \rangle_{L^2} = \langle  \y , A_\x \z \rangle_{L^2}
	\end{align*}
	for all $\y, \z \in \dom{A_\x}$. Thus, $A_\x $ is self-adjoint. Self-adjointness of the inverse follows from that of $A_\x$. See, for instance, \cite[Thm.\ 1.8]{Schm12}.
\end{proof}
\begin{proposition}\label{thm:c-frechet}
	The Fr\'echet derivative of $\opo$ at $\x \in \operatorname{int} \dom{\opo}$ is given by
	\begin{align*}
		\opd{\x} \h = - A_\x^{-1} (\h \op{\x}), \quad \h \in L^2.
	\end{align*}
\end{proposition}
The next proposition shows that the operator-valued mapping $\x \mapsto \opd{\x}$ satisfies a Lipschitz-condition implying item \ref{it:lip} of \autoref{as:ip:rates}. Recall \autoref{rem:rates}, \ref{it:rem:lipschitz}.
\begin{proposition}
	Let $\x \in \operatorname{int}\dom{\opo}$. Then there is a constant $L = L(\epsilon,\x,\f)$ such that
	\begin{align*}
		\norm{\opd{\tilde \x} - \opd{\x}} \le L \norm{\tilde \x - \x}
	\end{align*}
	for all $\tilde \x \in \operatorname{int}\dom{\opo}$.
\end{proposition}
\begin{proof}
	The proof is a straightforward consequence of \autoref{thm:c-H2} and \autoref{thm:c-frechet}.
\end{proof}
\begin{proposition}\label{thm:c-adjoint}
	The adjoint of $\opd{\x}$ is given by
	\begin{align*}
		\opd{\x}^* \h = - \op{\x} A_\x^{-1} \h, \quad \h \in L^2,
	\end{align*}
	and there is a constant $C = C(\epsilon)$ such that
	\begin{align}\label{eq:c-adjoint-est}
		\norm{\opd{\x}^* \h}_{W^{2,2}} \le C (1 + \norm{\x}_{L^2})^2 \norm{\f}_{L^2} \norm{\h}_{L^2}.
	\end{align}
\end{proposition}
\begin{proof}
	The first claim is a direct consequence of \autoref{thm:c-frechet} using the self-adjointness of $A_\x^{-1}$, recall \autoref{thm:A_x}. Regarding the second one, note that there is a $C'$ such that $\norm{\y\z}_{W^{2,2}} \le C' \norm{\y}_{W^{2,2}} \norm{\z}_{W^{2,2}}$. In conjunction with \eqref{eq:c-H2-est1} this gives
	\begin{align*}
		\norm{\opd{\x}^* \h}_{W^{2,2}}
			&= \norm{\op{\x} A_\x^{-1} \h}_{W^{2,2}} 
				= \norm{\hat \y_{\x,\f} \hat \y_{\x,\h}}_{W^{2,2}}
				\le C' \norm{\hat \y_{\x,\f}}_{W^{2,2}}  \norm{\hat \y_{\x,\h}}_{W^{2,2}} \\
			& \le C'' (1 + \norm{\x}_{L^2})^2 \norm{\f}_{L^2} \norm{\h}_{L^2},
	\end{align*}
	finishing the proof.
\end{proof}
	We conclude this section by presenting a simple instance where the source condition is satisfied. As in \autoref{ss:c} let $\f \equiv 16$ and $\y(s) = 8s(1-s)$, so that $\x \equiv 0$ is the solution of $\op{\x} = \y$. Since it is unique, $\xdag = \x \equiv 0$ is the $\x^0$-minimum-norm solution for every $\x^0 \in \X$. Due to \autoref{thm:c-adjoint} the source condition \eqref{eq:source_var} requires the existence of an $\omega \in L^2$ such that $\x^0 = \y A_0^{-1} \omega$. Such an $\omega$ exists if $\x^0 / \y \in \dom{A_0} = W^{2,2} \cap W^{1,2}_0$. This holds, for instance, if $\x^0 = \beta \phi$ where $\beta > 0$ and $\phi \in C_c^\infty$. The norm bound in \eqref{eq:source_var} becomes
	\begin{align*}
		\frac{1}{L} > \norm{\omega}_{L^2} = \norm{A_0(\x^0/\y)}_{L^2} = \beta \norm{(\phi/\y)''}_{L^2}
	\end{align*}
	and is surely met for sufficiently small $\beta.$
	\end{example}

\subsection{Finite-dimensional approximation} \label{sec:fda}
For the numerical realization of Tikhonov regularization the typically infinite-dimensional space $\X$ is approximated by a sequence of finite-dimensional subspaces $(\X_\ttm)_{\ttm \in \N}$.
Simultaneously, $\opo$ is approximated by a sequence of operators $(F_\ttn)_{\ttn \in \N}$. Moreover, we account for the practical limitation that it might not be possible to solve the resulting minimization problems exactly. Therefore, defining 
\begin{equation*} 
	\T_{\alpha,\y^\delta}^{{\tt n}}[\x]:=\norms{\opdis{\ttn}{\x}-\y^\delta}^2+\alpha\norms{\x-\x^0}^2,
\end{equation*}
we approximate $\x^0$-minimum-norm solutions by elements $\xademn \in \X_\ttm$ satisfying
\begin{equation} \label{eq:Tik_approx}
	\T_{\alpha,\y^\delta}^{{\tt n}}[\xademn] \le \T_{\alpha,\y^\delta}^{{\tt n}}[\x] + \eta, \qquad \text{for all } \x \in  \X_\ttm,
\end{equation} 
where $\eta >0$ and $\T_{\alpha,\y^\delta}^{{\tt n}}[\x] = + \infty$ if $\x \notin \dom{\opo_\ttn}$. 
As will be seen below, in order to obtain a convergent scheme the approximations $\X_\ttm$ 
and $F_\ttn$ must be appropriately balanced with $\eta$, $\alpha$ and $\delta$.

The results of this section are due to \cite{Neu89,NeuSch90}.

\begin{assumption} \label{as:fda} \mbox{}
\begin{itemize}
	\item The sequence $(\X_\ttm)$ consists of nested finite-dimensional subspaces of $\X$ with a dense union, that is,
	\begin{equation} \label{eq:nested}
		\X_1 \subset \X_2 \subset \quad \ldots \quad \text{and} \quad
		{\overline{\bigcup_{\ttm \in \N} \X_\ttm}}= \X\;.
	\end{equation}
	The orthogonal projector onto $\X_\ttm$ is denoted by $P_\ttm$ and
	\begin{equation*}
		 C_\ttm := \mathcal{D}(F) \cap \X_\ttm \neq \emptyset, \quad \text{for all} \quad \ttm \in \N.
	\end{equation*}
	\item For every ${\ttn} \in \N$ the map $F_\ttn: \X \to \Y$ has domain $\dom{F_\ttn}=\mathcal{D}(F)$. 
	Furthermore, there is a non-negative sequence $\nu_{\ttn} \to 0$ and a constant $C(\rho)>0$ for every $\rho > 0$ such that
	\begin{equation} \label{eq:uniform_approx}
		\norm{\opdis{\ttn}{\x}  - F[\x]} \leq C(\rho) \nu_{\ttn} \quad \text{for all} \quad
		\x \in \mathcal{D}(F) \cap \mathcal{B}_\rho(\xdag), \, \ttn \in \N.
	\end{equation}
\end{itemize}
\end{assumption}
The next theorem provides sufficient conditions for the convergence of $\xademn$ to an $\x^0$-minimum-norm solution.
\begin{theorem}[Convergence of finite-dimensional approximations] \label{th:NeuSch90}
	Let Assumptions \ref{as:ip:weakly-closed}, items \ref{it:mns} and \ref{it:lip} of \autoref{as:ip:rates} and \ref{as:fda} hold.
	Assume that, as $\ttm,{\ttn} \to \infty$ and $\delta,\eta \to 0$, the regularization parameter $\alpha = \alpha(\ttm,{\ttn},\delta,\eta)$ is such that
		\begin{equation} \label{eq:ps1}
			\boxed{\alpha \to 0, \quad \frac{\delta^2}{\alpha} \to 0, \quad \frac{\eta}{\alpha} \to 0, \quad \frac{\nu_{\ttn}^2} {\alpha} \to 0}
		\end{equation}
		and 
		\begin{equation} \label{eq:ps2}
			\frac{\gamma_\ttm \norm{(I-P_\ttm)\xdag} +
				\frac{L}{2}\norm{(I-P_\ttm)\xdag}^2}{\sqrt{\alpha}} \to 0\,,
		\end{equation}
	    where
		\begin{equation} \label{eq:gamma}
			\gamma_\ttm := \norm{(I-P_\ttm) \opo'[\xdag]^*}\;.
		\end{equation}
	Consider sequences $\ttm_k,\ttn_k \to \infty$, $\delta_k,\eta_k \to 0$, $\alpha_k := \alpha(\ttm_k,{\ttn}_k,\delta_k,\eta_k)$, and $(\y_k) \subset \Y$ with $\norm{\y-\y_k} \le \delta_k$. Then the sequence $(\x_k)$, where $\x_k := \x_{\ttm_k,{\ttn}_k}^{\alpha_k,\delta_k,\eta_k}$ satisfies \eqref{eq:Tik_approx} for $\y^{\delta_k} = \y_k$, has a convergent subsequence and the limit of every convergent subsequence is an $\x^0$-minimum-norm solution. If, in addition, the $\x^0$-minimum-norm solution $\xdag$ is unique, then
	\begin{equation*}
		\boxed{
		\lim_{k \to\infty} \x_k = \xdag.}
	\end{equation*}
\end{theorem}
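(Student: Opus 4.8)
The plan is to combine the minimizing-property of $\x_k$ with the source condition in the classical way, but carefully tracking the three extra error sources: the operator approximation $\opo_{\ttn_k}$, the subspace projection $P_{\ttm_k}$, and the inexactness parameter $\eta_k$. First I would fix a candidate comparison element in $C_{\ttm_k}$: the natural choice is $P_{\ttm_k}\xdag$, which lies in $\X_{\ttm_k}$ and, since $\xdag\in\operatorname{int}\dom\opo$ and $P_{\ttm_k}\xdag\to\xdag$, lies in $\dom\opo$ for $k$ large, hence in $C_{\ttm_k}$. Plugging $\x = P_{\ttm_k}\xdag$ into \eqref{eq:Tik_approx} gives
\begin{equation*}
	\norm{\opo_{\ttn_k}[\x_k]-\y_k}^2 + \alpha_k\norm{\x_k-\x^0}^2 \le \norm{\opo_{\ttn_k}[P_{\ttm_k}\xdag]-\y_k}^2 + \alpha_k\norm{P_{\ttm_k}\xdag-\x^0}^2 + \eta_k\;.
\end{equation*}
On the right-hand side I would estimate $\norm{\opo_{\ttn_k}[P_{\ttm_k}\xdag]-\y_k}$ by the triangle inequality through $\op{P_{\ttm_k}\xdag}$, $\op{\xdag}=\y$, and $\y_k$: the operator-approximation term is $O(\nu_{\ttn_k})$ by \eqref{eq:uniform_approx}, the data term is $\le\delta_k$, and $\norm{\op{P_{\ttm_k}\xdag}-\op{\xdag}}\le L\norm{(I-P_{\ttm_k})\xdag}+\norm{\opd{\xdag}(P_{\ttm_k}\xdag-\xdag)}$ by the Lipschitz estimate \autoref{it:lip} on the derivative together with a Taylor/mean-value argument; the linear part is bounded using $\opd{\xdag}(I-P_{\ttm_k}) = (\opd{\xdag}(I-P_{\ttm_k})^*)^* $ — more precisely one writes $\norm{\opd{\xdag}(I-P_{\ttm_k})\xdag} = \norm{(I-P_{\ttm_k})\opd{\xdag}^*(\text{something})}$, which is where $\gamma_{\ttm_k}$ from \eqref{eq:gamma} enters. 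So the data-fit term on the right is $O(\delta_k^2 + \nu_{\ttn_k}^2 + \gamma_{\ttm_k}^2\norm{(I-P_{\ttm_k})\xdag}^2 + \norm{(I-P_{\ttm_k})\xdag}^4)$.

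Next I would turn to the regularization term: expand $\norm{P_{\ttm_k}\xdag-\x^0}^2 = \norm{\xdag-\x^0}^2 - \norm{(I-P_{\ttm_k})(\xdag-\x^0)}^2 + (\text{cross terms that vanish by orthogonality})$, so this is $\le\norm{\xdag-\x^0}^2$. Feeding everything back, dividing by $\alpha_k$, and using the scaling hypotheses \eqref{eq:ps1}–\eqref{eq:ps2} (each of $\delta_k^2/\alpha_k$, $\eta_k/\alpha_k$, $\nu_{\ttn_k}^2/\alpha_k$, and the quantity in \eqref{eq:ps2} tends to $0$), I obtain
\begin{equation*}
	\limsup_{k\to\infty}\norm{\x_k-\x^0}^2 \le \norm{\xdag-\x^0}^2 \qquad\text{and}\qquad \opo_{\ttn_k}[\x_k]-\y_k \to 0\;.
\end{equation*}
In particular $(\x_k-\x^0)$ is bounded, hence $(\x_k)$ is bounded and has a weakly convergent subsequence $\x_{k_j}\rightharpoonup\obx$. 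From $\opo_{\ttn_{k_j}}[\x_{k_j}]-\y_{k_j}\to 0$, $\y_{k_j}\to\y$, and $\norm{\opo_{\ttn_{k_j}}[\x_{k_j}]-\op{\x_{k_j}}}\to 0$ — this last step needs \eqref{eq:uniform_approx} to apply to $\x_{k_j}$, which is legitimate once we know $\x_{k_j}$ eventually stays in a fixed ball $\mathcal{B}_\rho(\xdag)$; this follows because boundedness of $\x_{k_j}-\x^0$ and the $\limsup$ bound confine the iterates — we get $\op{\x_{k_j}}\to\y$. Then \autoref{as:ip:weakly-closed} (weak closedness) gives $\obx\in\dom\opo$ and $\op{\obx}=\y$, i.e. $\obx$ solves \autoref{eq:op}. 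Weak lower semicontinuity of the norm combined with the $\limsup$ inequality yields $\norm{\obx-\x^0}\le\norm{\xdag-\x^0}$, so $\obx$ is an $\x^0$-minimum-norm solution; moreover $\norm{\x_{k_j}-\x^0}\to\norm{\obx-\x^0}$, which upgrades weak convergence to norm convergence in the Hilbert space $\X$. The usual subsequence-of-subsequence argument then gives convergence of the whole sequence when $\xdag$ is unique.

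The main obstacle I anticipate is the bookkeeping around \eqref{eq:ps2}: correctly isolating the linear contribution $\opd{\xdag}(I-P_{\ttm_k})\xdag$ and rewriting it via the adjoint so that $\gamma_{\ttm_k}=\norm{(I-P_{\ttm_k})\opd{\xdag}^*}$ governs it — this requires noticing that $\norm{A(I-P)\x}$ for self-adjoint-like estimates is controlled by $\norm{(I-P)A^*}$ only after an appropriate manipulation (e.g. writing $\opd{\xdag}(I-P_{\ttm_k})\xdag$ and bounding $|\inner{\opd{\xdag}(I-P_{\ttm_k})\xdag}{z}| = |\inner{(I-P_{\ttm_k})\xdag}{(I-P_{\ttm_k})\opd{\xdag}^*z}|$). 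A secondary subtlety is justifying that \eqref{eq:uniform_approx} may be invoked uniformly along the subsequence — one must argue a priori that the $\x_k$ remain in a bounded neighborhood of $\xdag$ before the convergence is established, which is handled by extracting the $\limsup$ bound on $\norm{\x_k-\x^0}$ first and only then using the approximation estimate. Everything else is a routine adaptation of the proof of \autoref{th:ip:convergence}.
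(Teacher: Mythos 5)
Your overall strategy coincides with the paper's proof: compare with $P_{\ttm_k}\xdag$ in \eqref{eq:Tik_approx}, split the residual by the triangle inequality into the $\nu_{\ttn_k}$, $\delta_k$ and projection contributions, control $\norm{\opd{\xdag}(I-P_{\ttm_k})\xdag}$ by $\gamma_{\ttm_k}\norm{(I-P_{\ttm_k})\xdag}$ via idempotency of $I-P_{\ttm_k}$ and passage to the adjoint, divide by $\alpha_k$, and finish with weak compactness, weak closedness and the upgrade from weak to strong convergence. The adjoint manipulation you single out as the main obstacle is exactly the one the paper relies on, and your endgame (weak lower semicontinuity plus the $\limsup$ bound forcing $\norm{\x_{k_j}-\x^0}\to\norm{\obx-\x^0}$) is equivalent to the paper's inner-product computation \eqref{eq:help_conv3}.

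One intermediate claim is false as stated: you assert that
\begin{equation*}
\norm{P_{\ttm_k}\xdag-\x^0}^2 = \norm{\xdag-\x^0}^2 - \norm{(I-P_{\ttm_k})(\xdag-\x^0)}^2 + (\text{vanishing cross terms}) \le \norm{\xdag-\x^0}^2.
\end{equation*}
The correct orthogonal decomposition is $P_{\ttm_k}\xdag-\x^0 = P_{\ttm_k}(\xdag-\x^0) - (I-P_{\ttm_k})\x^0$, which gives
\begin{equation*}
\norm{P_{\ttm_k}\xdag-\x^0}^2 = \norm{\xdag-\x^0}^2 - \norm{(I-P_{\ttm_k})(\xdag-\x^0)}^2 + \norm{(I-P_{\ttm_k})\x^0}^2,
\end{equation*}
and the last (nonnegative) term can make the left-hand side strictly larger than $\norm{\xdag-\x^0}^2$ whenever $\x^0\notin\X_{\ttm_k}$: in $\R^2$ with $P$ the projection onto the first coordinate, $\xdag=(1,1)$ and $\x^0=(1,2)$, one gets $\norm{P\xdag-\x^0}^2=4>1=\norm{\xdag-\x^0}^2$. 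The damage is nil, however: all you need is $\limsup_k\norm{\x_k-\x^0}^2\le\norm{\xdag-\x^0}^2$, and that follows directly because \eqref{eq:nested} gives $P_{\ttm_k}\xdag\to\xdag$ strongly, hence $\norm{P_{\ttm_k}\xdag-\x^0}^2\to\norm{\xdag-\x^0}^2$; this is how the paper argues. A cosmetic point: your line bounding $\norm{\op{P_{\ttm_k}\xdag}-\op{\xdag}}$ should carry the Taylor remainder $\tfrac{L}{2}\norm{(I-P_{\ttm_k})\xdag}^2$ rather than $L\norm{(I-P_{\ttm_k})\xdag}$, as your subsequent fourth-power bookkeeping already presupposes.
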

\begin{proof}
	
	Since $P_\ttm \xdag \to \xdag$ for $\ttm \to \infty$ and $\xdag \in \mathcal{B}_\rho (\x^0)$, we know that $P_\ttm \xdag \in \mathcal{B}_\rho (\x^0)$ as well for sufficiently large $\ttm \in \N$.
	Now,
	\eqref{eq:lipschitz} gives 
	\begin{equation}\label{eq:lin-approx2}
		\norm{\op{P_\ttm \xdag} -\op{\xdag}} \le \gamma_\ttm \norm{(I-P_\ttm)\xdag} + \frac{L}{2} \norm{\xdag - P_\ttm \xdag}^2.
	\end{equation}
	For large enough $k \in \N$, \eqref{eq:Tik_approx}, \eqref{eq:lin-approx2} and \eqref{eq:uniform_approx} imply
	\begin{equation}\label{eq:long-est}
		\begin{aligned}
		\hspace{1em}&\hspace{-1em} \norm{\opdis{\ttn_k}{\x_k} - \y_k }^2 + \alpha_k \norm{\x_k-\x^0}^2 \\
			&\leq \norm{\opo_{\ttn_k}[P_{\ttm_k} \xdag]-\y_k}^2 + \alpha_k \norm{P_{\ttm_k} \xdag - \x^0}^2 + \eta_k \\
			&\leq \Bigl(\norm{\opo_{\ttn_k}[P_{\ttm_k} \xdag] - F[P_{\ttm_k} \xdag]} + \norm{\opo[P_{\ttm_k} \xdag] - \op{\xdag}} + \delta_k \Bigr)^2 \\
				&\quad {} + \alpha_k \norm{P_{\ttm_k} \xdag - \x^0}^2 + \eta_k \\
			&\leq \Bigl(  C(\rho) \nu_{\ttn_k} + \gamma_{\ttm_k} \norm{(I-P_{\ttm_k})\xdag} + \frac{L}{2} \norm{(I-P_{\ttm_k})\xdag}^2 +  \delta_k \Bigr)^2 \\
				&\quad {} + \alpha_k \norm{P_{\ttm_k} \xdag - \x^0}^2 + \eta_k.
		\end{aligned}
	\end{equation}
	Dividing by $\alpha_k>0$ and letting $k \to \infty$ while using \eqref{eq:ps1} and \eqref{eq:ps2} yields
	\begin{equation} \label{eq:help_conv}
		\opdis{\ttn_k}{\x_k} \to \y \quad \text{and} \quad \limsup_{k \to \infty} \norm{\x_k-\x^0} \leq \norm{\xdag-\x^0}.
	\end{equation}
	\autoref{eq:uniform_approx} now implies that $(\x_k)$ has a subsequence $(\x_{k_j})$ such that
	\begin{equation}\label{eq:help_conv2}
		\op{\x_{k_j}} \to \y \quad \text{and} \quad \x_{k_j} \rightharpoonup \bar{\x}.
	\end{equation}
	Since $\opo$ is weakly sequentially closed, we have $\bar \x \in \mathcal{D}(F)$ and $\op{\bar \x} = \y$.
	Using \autoref{eq:help_conv} and the fact that $\xdag$ is an $\x^0$-minimum-norm solution we obtain
	\begin{equation}\label{eq:help_conv3}
		\begin{aligned}
		\norm{{\bar{\x}}-\x^0}^2
			& =  \lim_{j \to \infty} \inner{\bar{\x}-\x^0}{\x_{k_j} - \x^0}
				\leq \norm{{\bar{\x}}-\x^0} \limsup_{j \to \infty}  \norm{\x_{k_j} - \x^0} \\
			& \leq \norm{{\bar{\x}}-\x^0} \norm{\xdag-\x^0}
				\leq \norm{{\bar{\x}}-\x^0}^2 \;.
		\end{aligned}
	\end{equation}
	\par\noindent
	It follows that $\norm{{\bar{\x}}-\x^0} = \norm{\xdag-\x^0} = \lim \norm{\x_{k_j} - \x^0},$ from which we conclude that ${\bar{\x}}$ is an $\x^0$-minimum-norm solution and $\x_{k_j} \to \bar{\x}$. Thus we have proved the first claim.
	
	If $\xdag$ is unique, then $\bar{\x} = \xdag.$
	In this case, we can actually conclude from \eqref{eq:help_conv} that every subsequence of $(\x_k)$ has another subsequence which satisfies \eqref{eq:help_conv2} and \eqref{eq:help_conv3} and therefore converges to $\xdag$.
	Hence, the sequence $(\x_k)$ itself converges to $\xdag$.
\end{proof}

\begin{remark}\label{rem:compact-gamma}
	The sequence $(\gamma_\ttm)$, defined in \eqref{eq:gamma}, is bounded by the uniform boundedness principle \commentS{I think it would be nice to include the corresponding theorem (banach-steinhaus) in the appendix.}. In the next theorem we assume that $\gamma_\ttm \to 0$. This condition is met, for instance, if $\opo$ is compact, meaning that $\opo$ is continuous and $\overline{ \op{B}}$ is a compact subset of $\Y$ for every bounded $B \subset \X.$ In this case, $\opd{\xdag}$ is compact as well, see \cite[Prop.~8.2]{Dei85b}, and precomposition with a compact operator turns pointwise into uniform convergence \commentS{This statement would also be nice to formulate in the appendix. Reference: Dirk Werner.}. Hence $\gamma_\ttm \to 0.$
\end{remark}

\begin{theorem}[Convergence rates for finite-dim.~approximations]\label{th:NeuSch90b} \mbox{}
	Let Assumptions \ref{as:ip:weakly-closed}, \ref{as:ip:rates} and \ref{as:fda} hold. In addition, as $\delta \to 0$ and $\ttm \to \infty$, suppose that \eqref{eq:noise-level} holds, that
	\begin{equation}\label{eq:proj-rate}
		\gamma_\ttm \to 0, \qquad \norm{(I-P_\ttm)\x^0} = \mathcal{O}(\gamma_\ttm),
	\end{equation}
	and that $\eta = \eta(\delta,\ttm)$, $\ttn = \ttn(\delta,\ttm)$, and $\alpha=\alpha(\delta,\ttm)$ are such that
	\begin{equation} \label{eq:eta}
		\boxed{
			\begin{aligned}
				\eta		= \mathcal{O}(\delta^2 + \gamma_\ttm^4),\qquad
				\nu_{\ttn}	= \mathcal{O}(\delta + \gamma_\ttm^2), \qquad
				\alpha	\sim \delta + \gamma_\ttm^2.  
			\end{aligned}
	        }
	\end{equation}
	Then
	\begin{equation} \label{eq:approx_rate}
		\boxed{
		\norm{\xademn - \xdag} = \mathcal{O}\left( \sqrt{\delta} + \gamma_\ttm \right)\;.}
	\end{equation}
\end{theorem}
\begin{proof}
	We start with deriving three auxiliary estimates \eqref{eq:aux-est}, \eqref{eq:aux-est2}, \eqref{eq:aux-est3}. First, the source condition (\autoref{it:sourcecon} of \autoref{as:ip:rates}) and \eqref{eq:proj-rate} give
	\begin{equation}\label{eq:aux-est} \tag{AE1}
		\begin{aligned}
			\norm{\xdag - P_\ttm \xdag}
				&\leq \norm{(I - P_\ttm)(\xdag - \x^0)} + \norm{(I - P_\ttm)\x^0} \\
				&\leq \frac{1}{L}\norm{(I - P_\ttm)\opo'[\xdag]^*} + \mathcal{O}(\gamma_\ttm) = \mathcal{O}(\gamma_\ttm).
		\end{aligned}
	\end{equation}
	Second, using \eqref{eq:lin-approx2} and \eqref{eq:aux-est} we have 
		\begin{align}\label{eq:aux-est2}
			\hspace{1em}&\hspace{-1em}
			\norm{\opdis{\ttn}{P_\ttm \xdag} - \y^\delta } \le \norm{\opdis{\ttn}{P_\ttm \xdag} - \op{P_\ttm \xdag} } + \norm{\op{P_\ttm \xdag} - \op{\xdag}} + \norm{\y - \y^\delta } \notag \\
				&\leq \mathcal{O}(\nu_\ttn) + \mathcal{O}(\gamma_\ttm^2) + \delta \leq \mathcal{O}(\gamma_\ttm^2 + \delta). \tag{AE2}
		\end{align}
	In order to derive the third estimate we show first that $\xademn$ eventually lies in $\mathcal{B}_\rho (\x^0)$. The definition of $\xademn$ in \eqref{eq:Tik_approx}, \eqref{eq:aux-est2} as well as the asymptoic behavior of $\eta$ in \eqref{eq:eta} imply
	\begin{align*}
		\alpha \norm{\xademn - \x^0}^2
			&\le \T_{\alpha,\y^\delta}^{{\tt n}}[\xademn] \le \T_{\alpha,\y^\delta}^{{\tt n}}[P_\ttm \xdag] + \eta \\
			&\le \mathcal{O}(\gamma_\ttm^4 + \delta^2) + \alpha \norm{P_\ttm \xdag - \xdag + \xdag - \x^0}^2.
	\end{align*}
	Consequently,
	\begin{align*}
		\norm{\xademn - \x^0} \le \mathcal{O} \left( \frac{\gamma_\ttm^2 + \delta}{\sqrt{\alpha}} \right) + \norm{P_\ttm \xdag - \xdag} + \norm{\xdag - \x^0},
	\end{align*}
	and we deduce from \eqref{eq:eta} and \eqref{eq:aux-est} that $\xademn \in \mathcal{B}_\rho (\x^0)$ for sufficiently large $\ttm, \ttn$ and sufficiently small $\delta, \eta$.
	
	Now, turning to the third auxiliary estimate, we use the self-adjointness of $P_\ttm$, the source condition and the Cauchy-Schwarz inequality to obtain
	{\allowdisplaybreaks
		\begin{align*}
			\hspace{1em}&\hspace{-1em}
			\inner{\xademn - P_\ttm \xdag}{\x^0 - P_\ttm \xdag} = \inner{P_\ttm \xdag - \xademn}{\xdag - \x^0} \notag \\
				&= 	\inner{P_\ttm \xdag - \xademn}{\opo'[\xdag]^*\omega} \notag \\
				&= 	\left\langle \opo'[\xdag](P_\ttm - I) \xdag + \op{\xdag} - \y^\delta + \y^\delta - \opdis{\ttn}{\xademn} + \opdis{\ttn}{\xademn} - \op{\xademn} \right. \notag \\
					&\quad \left. {}+ \op{\xademn} - \op{\xdag} - \opo'[\xdag](\xademn-\xdag), \omega \right\rangle \notag \\
				&\le 	\left( \norm{\opo'[\xdag](P_\ttm - I) \xdag} + \delta + \norm{\opdis{\ttn}{\xademn} - \y^\delta} + \norm{\opdis{\ttn}{\xademn} - \op{\xademn}} \right. \notag \\
					&\quad \left. {}+ \norm{\op{\xademn} - \op{\xdag} - \opo'[\xdag](\xademn-\xdag)} \right) \norm{\omega}. \notag
		\end{align*}
}
	Combining this inequality with \eqref{eq:aux-est}, \eqref{eq:uniform_approx}, the asymptotic behavior of $\nu_\ttn$ in \eqref{eq:eta} as well as \eqref{eq:lipschitz}, we get
	\begin{multline} \label{eq:aux-est3} \tag{AE3}
		\inner{\xademn - P_\ttm \xdag}{\x^0 - P_\ttm \xdag} \\
			\le \left( \mathcal{O}(\gamma_\ttm^2 + \delta) + \norm{\opdis{\ttn}{\xademn} - \y^\delta} + \frac{L}{2} \norm{\xademn - \xdag}^2 \right) \norm{\omega}.
	\end{multline}

	We now turn to the main estimate of this proof. Recalling that $\xademn \in \X_\ttm$ while $P_\ttm \xdag-\xdag\in \X_\ttm^\perp$ we can expand in the following way
	{\allowdisplaybreaks
	\begin{align*}
		\hspace{1em}&\hspace{-1em}
		\norm{\opdis{\ttn}{\xademn} - \y^\delta }^2 + \alpha \norm{\xademn-\xdag}^2 \\
			&=	\norm{\opdis{\ttn}{\xademn} - \y^\delta }^2 + \alpha \norm{\xademn-P_\ttm \xdag}^2 + \alpha \norm{P_\ttm \xdag-\xdag}^2 \\
			&\leq	\norm{\opdis{\ttn}{\xademn} - \y^\delta }^2 + \alpha \norm{\xademn - \x^0}^2 + 2\alpha \inner{\xademn - \x^0}{\x^0 - P_\ttm \xdag} \\
				&\quad {}+ \alpha \norm{\x^0 - P_\ttm \xdag}^2 + \alpha \mathcal{O}(\gamma_\ttm^2), \\
		\intertext{where we have also used \eqref{eq:aux-est}. The definition of $\xademn$ in \eqref{eq:Tik_approx} and the fact that $P_\ttm \xdag \in C_\ttm$ for large $\ttm$ (cf.\ the proof of \autoref{th:NeuSch90}), give }
			&\leq	\norm{\opdis{\ttn}{P_\ttm \xdag} - \y^\delta }^2 + \alpha \norm{P_\ttm \xdag - \x^0}^2 + \eta + 2\alpha \inner{\xademn - \x^0}{\x^0 - P_\ttm \xdag} \\
				&\quad {}+ \alpha \norm{\x^0 - P_\ttm \xdag}^2 + \alpha \mathcal{O}(\gamma_\ttm^2) \\
			&=	\norm{\opdis{\ttn}{P_\ttm \xdag} - \y^\delta }^2 + \eta + 2\alpha \inner{\xademn - P_\ttm \xdag}{\x^0 - P_\ttm \xdag} + \alpha \mathcal{O}(\gamma_\ttm^2). \\
		\intertext{Taking into account \eqref{eq:aux-est2}, \eqref{eq:aux-est3} and the asymptotic behaviour of $\eta$ in \eqref{eq:eta} we obtain}
			&\leq	\mathcal{O}(\gamma_\ttm^4 + \delta^2 + \alpha \gamma_\ttm^2) + \mathcal{O}(\alpha \delta) + 2 \alpha \norm{\omega} \norm{\opdis{\ttn}{\xademn}- \y^\delta }\\
				&\quad {} + \alpha L \norm{\omega} \norm{\xademn- \xdag}^2.
	\end{align*}
	}
	In total we have shown that
	\begin{multline*}
		\left( \norm{\opdis{\ttn}{\xademn} - \y^\delta } - \alpha \norm{\omega}\right)^2 + \alpha(1 - L \norm{\omega}) \norm{\xademn-\xdag}^2 \leq \\ \le \mathcal{O}(\gamma_\ttm^4 + \delta^2 + \alpha \gamma_\ttm^2) + \mathcal{O}(\alpha \delta) + \alpha^2 \norm{\omega}^2.
	\end{multline*}
	Noting that the right-hand side is a $\mathcal{O}(\alpha^2)$, due to \eqref{eq:eta}, the claim follows.
%
%
\end{proof}

\begin{remark} $ $
	\begin{itemize}
		\item Note that the asymptotic conditions \eqref{eq:ps1} and \eqref{eq:ps2} in \autoref{th:NeuSch90} can always be met by simply letting $\alpha \to 0$ sufficiently slowly. In this sense the situation is similar to \autoref{th:ip:convergence}. The parameter restrictions are, however, markedly different in \autoref{th:NeuSch90b}. There, $\alpha$, $\eta$ and $\ttn$ all must be chosen depending on the key quantity $\gamma_\ttm^2 + \delta$. While $\eta = \mathcal{O}(\gamma_\ttm^4 + \delta^2)$ puts an asymptotic bound on the accuracy with which the finite-dimensional minimization problems must be solved, the next assumption, $\nu_\ttn = \mathcal{O}(\gamma_\ttm^2 + \delta)$, is central because it establishes a connection between the two discretizations $\X_\ttm$ and $\opo_\ttn$. Ignoring noise, this assumption essentially says that the desired accuracy of the approximation space dictates the required accuracy of the approximate forward operator.
		\item In order to obtain convergent approximations or convergence rates it is of course not necessary to resort to \emph{near} minimizers. Exact minimizers $\x_{\ttm,\ttn}^{\alpha,\delta,0}$, where $\eta = 0$, can be used, provided they exist. In this case, the assumptions on the asymptotic behaviour of $\eta$ in \eqref{eq:ps1} and \eqref{eq:eta} are met automatically. We employ near minimizers primarily to model situations where exact minimization is impractical or infeasible.
	\end{itemize}
\end{remark}

\begin{example}[$c$-example] \label{ex:ac}
Below we describe an approximation scheme for the $c$-example that is entirely based on linear splines and fulfills the requirements of this section. First we note that the error between a function $\y \in W^{2,2}$ and its linear spline interpolant on a grid of mesh size $h>0$, $\mathcal{I}_h \y$, can be bounded by
\begin{align}\label{eq:interp-error}
	\norm{(\y - \mathcal{I}_h \y)^{(k)}}_{L^2} \le C h^{2-k} \norm{\y''}_{L^2}, \quad k=0,1.
\end{align}
See \cite{BreSco08} for instance.

Let $\X_\ttm$, $\ttm \in \N$, be the space of linear splines on a uniform grid of $2^{\ttm}+1$ points on $\bar{\Omega} = [0,1]$. These spaces are nested, dense in $L^2 = \X$ and have nonempty intersection with $\dom{\opo}=\mathcal{C}_\epsilon$. Thus, the first item in \autoref{as:fda} is met.

Turning to the second item, let $\Y_\ttn \subset W^{1,2}_0$ be the space of linear splines on a uniform grid of $\ttn+1$ points which vanish at the endpoints. That is 
\begin{equation} \label{eq:splinespace}
	\Y_{\tt n} = \spann \set{\Lambda_i : i=1,\ldots,\n -1}, 
\end{equation}
where $\Lambda_i$ is the linear spline satisfying
\begin{equation}\label{eq:linearspline}
	\Lambda_i \left( \frac{j}{\tt n} \right) = \delta_{ij}, \quad j=0,\ldots,\n.
\end{equation}
The operators $\opo_\ttn : \dom{\opo_\ttn} := \dom{\opo} \to \Y$ are defined by $\opdis{\ttn}{\x} := \y_\ttn$, where $\y_\ttn \in \Y_\ttn$ is the unique solution of the Galerkin equation
\begin{equation*}
	\inner{\y'_\ttn}{{\tt v}'}_{L^2} + \inner{\x\y_\ttn}{\tt v}_{L^2} = \inner{\tt f}{\tt v}_{L^2} \quad \text{ for all } {\tt v} \in \Y_\ttn,
\end{equation*}
recall the weak formulation \eqref{eq:c-bvp-weak}. It is well-known that
\begin{align}\label{eq:nn2}
	\norm{\op{\x} - \opdis{\ttn}{\x}}_{L^2} \le C (1 + \norm{\x}_{L^2}) \ttn^{-2},
\end{align}
showing that \eqref{eq:uniform_approx} is satisfied with $\nu_\ttn = \ttn^{-2}$. Thus, \autoref{th:NeuSch90} applies.

In order to apply \autoref{th:NeuSch90b} we need to additionally verify \eqref{eq:proj-rate}. That $\gamma_\ttm$ tends to zero follows from the compactness of $\opo$, which is a consequence of estimate \eqref{eq:c-H2-est1}. Also recall \autoref{rem:compact-gamma}. However, here we would like to obtain an explicit upper bound for $\gamma_\ttm$. In order to do so we use, in this order, the fact that $P_\ttm$ is an orthogonal projection, the interpolation error bound \eqref{eq:interp-error} and the estimate for the adjoint in \eqref{eq:c-adjoint-est}
\begin{align*}
	\norm{(I-P_\ttm)\opd{\xdag}^*{\tt g}}_{L^2}
		&\le \norm{(I-\mathcal{I}_{2^{-\ttm}})\opd{\xdag}^*{\tt g}}_{L^2}
			\le C 2^{-2\ttm} \norm{\big(\opd{\xdag}^*{\tt g}\big)''}_{L^2} \\
		&\le C' 2^{-2\ttm} \big(1 + \norm{\xdag}_{L^2}\big)^2 \norm{\f}_{L^2} \norm{{\tt g}}_{L^2}.
\end{align*}
Dividing by $\norm{{\tt g}}_{L^2}$ yields directly
\begin{align*}
	\gamma_\ttm \le C' 2^{-2\ttm} \big(1 + \norm{\xdag}_{L^2}\big)^2 \norm{\f}_{L^2} = \mathcal{O}\big( 2^{-2\ttm} \big).
\end{align*}
The second requirement in \eqref{eq:proj-rate} is met for every $\x^0 \in W^{2,2}$ due to \eqref{eq:interp-error}. Therefore, if $\eta$, $\ttn$ and $\alpha$ are chosen according to \eqref{eq:eta}, we obtain from \autoref{th:NeuSch90b} that
\begin{align*}
	\norm{\xdag - \xademn}_{L^2} = \mathcal{O} \big( \sqrt{\delta} + 2^{-2\ttm} \big).
\end{align*}

        \item{\bf a-example:} We approximate the operator $\opo$ with a linear finite element method, analogously as in \autoref{ex:ac} and make use of \autoref{eq:nn2}, which also holds here. Again we take the space of linear splines $\X_{\ttm}$ to approximate element $\x \in \dom{\opo} \subseteq W^{1,2}(0,1)$. 

        If $\xdag \in \dom{\opo} \subseteq W^{2,2}(0,1)$, then we get from \cite[Lemma 3.1.]{SwaVar72} that 
        \begin{equation} \label{eq:estimateh1}
        	\norm{(I-P_\ttm) \xdag}_{W^{1,2}} \leq C \ttm ^{-1} \norm{\xdag}_{W^{2,2}} \text{ and } \gamma_\ttm = \mathcal{O}(\ttm^{-1})\,.
        \end{equation}
        Applying \autoref{th:NeuSch90b} then yields the optimal parameter choice
        \begin{equation*}
        	\eta = \mathcal{O}(\delta^2 + \ttm^{-4}),\; \ttm \sim \ttn \text{ and } \alpha \sim \max \set{\delta,\ttm^{-2}}
        \end{equation*}
        and get
        \begin{equation*}
        	\norm{\x_{\ttm,\ttn} -\xdag}_{W^{1,2}} = \mathcal{O}(\sqrt{\delta} + \ttn^{-1})\;.
        \end{equation*}
\end{example}
\commentO{
\begin{example}[Schlieren tomography] \label{ex:schlieren_discrete}
We use a piecewise constant approximation of $\x$ in $\R^2$. For the evaluation of the Radon-operator, we discretize the line by the Bresenham-algorithm (see \cite{Bre65}) to approximate the line, where the Radon-transform is evaluated by summation. Then the error between $\opo_\ttn$ and $\opo$ can be estimated. We generalize this idea to linear splines approximating $\x$.
\end{example}}

\subsection{Tikhonov regularization in a stochastic setting}
\label{sec: Polregres}
In a stochastic setting the inverse problem for $\opo$ can be formulated as follows. The basic terms from probability theory are recalled in \autoref{ap:probability_theory}.

Let $(\Omega, \mathfrak{F}, \mathbb{P})$ be a probability space and let $\mathfrak{S}_\X$ and $\mathfrak{S}_\Y$ be $\sigma$-algebras on $\X$ and $\Y$, respectively. Consider random variables (meaning that they are measurable functions between a probability space and a measure space)
\begin{equation*}
	\begin{aligned}
		{\textsf x}: (\Omega, \mathfrak{F}, \mathbb{P}) \to (\X,\mathfrak{S}_\X)\,,
		\quad {\textsf x}^0: (\Omega, \mathfrak{F}, \mathbb{P}) \to (\X,\mathfrak{S}_\X)\,,\\
		{\textsf y}: (\Omega, \mathfrak{F}, \mathbb{P}) \to (\Y,\mathfrak{S}_\Y)\,, \quad
		{\textsf n}: (\Omega, \mathfrak{F}, \mathbb{P}) \to (\Y,\mathfrak{S}_\Y)\;,
	\end{aligned}
\end{equation*}
which are related by means of the operator $\opo$ in the following way
\begin{equation} \label{eq:op_eps}
	{\textsf y} = \op{{\textsf x}} + {\textsf n} \quad \text{almost surely}\;.
\end{equation}
To be precise, \emph{almost surely}\index{almost surely} in \autoref{eq:op_eps} means that there exists a subset $\tilde{\Omega} \in \mathfrak{F}$ (meaning it is measurable) and $\mathbb{P}(\Omega \backslash \tilde{\Omega}) =0$, which satisfies
\begin{equation}\label{eq:as}
	{\textsf y}(\xi) = \op{{\textsf x}(\xi)} + {\textsf n}(\xi) \text{ for all } \xi \in \tilde{\Omega}\;.
\end{equation}

As a consequence of the above notations, for each $\alpha > 0$, we define the regularized solution $\textsf{x}_\alpha: \Omega \to \X$ as

\begin{equation} \label{eq:Tikhonov_stoch}
	\textsf{x}_\alpha(\xi) \in \argmin_{\x \in \X} \Bigl\{ \norm{\op{\x}-{\textsf y}(\xi)}^2 + \alpha \norm{\x-\textsf{x}^0(\xi)}^2 \Bigr\}\;.
\end{equation}
Note that the existence of minimizer in \autoref{eq:Tikhonov_stoch} can be guaranteed almost surely by \autoref{th:ip:well-posedness}, while the measurability of $\textsf{x}_\alpha(\xi)$ as a function of $\xi$ can be ensured under some additional assumptions, such as the ones required, for example, in extensions of the Filippov's implicit function lemma (see \cite{Fil62,McsWar67}).
For the sake of simplicity, below we just assume this measurability, which allows a treatment of $\textsf{x}_\alpha(\xi)$  as the corresponding random variable.

The results of \cite{ChiVitMolRosVil24} provide an analog of \autoref{th:ip:rate} for this setting. 
\begin{theorem}[Convergence rates, stochastic] \label{th:Tikhonov_risk} Let \autoref{as:ip:weakly-closed}, \autoref{it:lip} of \autoref{as:ip:rates} as well as the following conditions hold:
	\begin{itemize}
		\item $\dom{\opo}$ is convex and has nonempty interior.
		\item The triple of random variables ${\textsf x} = {\textsf x}^*$, ${\textsf y}$ and ${\textsf n}$ satisfy \autoref{eq:op_eps} and ${\textsf x}^*$ almost surely belongs to the interior of $\dom{\opo}$.
		\item $\norm{\textsf{x}^* - \textsf{x}^0} \le 1$ holds almost surely in $\Omega$ and there is a $\Y$-valued random variable $\omega$ satisfying the \emph{probabilistic source condition}\index{source condition!probabilistic}
		\begin{equation} \label{eq:source_var_st}
			\boxed{{\textsf x}^* - \textsf{x}^0 = \opd{{\textsf x}^*}^*\omega, \quad L\norm{\omega} \leq 1, \quad \text{almost surely.}}
		\end{equation}
	In long this means that
			\begin{equation*} {\textsf x}^*(\xi) - \textsf{x}^0(\xi) = \opd{{\textsf x}^*(\xi)}^*\omega(\xi), \quad L\norm{\omega(\xi)} \leq 1, \quad \text{almost surely,}
	\end{equation*} 
	as explained in \autoref{eq:as}.
		\item The random variables ${\textsf x}^*$, ${\textsf n}$ are independent, and ${\textsf n}$ satisfies
		\begin{equation} \label{eq:exp_eps}
			\mathbb{E}\norm{{\textsf n}}^2 = \int_{\Omega} \norm{{\textsf n}(\xi)}^2 d\mathbb{P}(\xi) \le \delta^2 .\footnote{This condition replaces \autoref{eq:datn} in the deterministic setting.}
		\end{equation}
	\end{itemize}
	Then, for all $\alpha > 0$,
	\begin{equation}\label{eq:bound_chirinos}
		\mathbb{E} \left[\norm{{\textsf x}^* - \textsf{x}_\alpha}^2 \right] \le C \left(\alpha + \frac{\delta^2}{\alpha}\right)\;.
	\end{equation}
\end{theorem}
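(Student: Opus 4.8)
The plan is to reproduce, pointwise in $\xi \in \Omega$, the deterministic convergence-rates argument behind \autoref{th:ip:rate} and only afterwards pass to the expectation. Fix $\alpha > 0$ and abbreviate $x_\alpha = \textsf{x}_\alpha(\xi)$, $x^* = \textsf{x}^*(\xi)$, $n = \textsf{n}(\xi)$, $\omega = \omega(\xi)$, so that $\textsf{y}(\xi) = \op{x^*} + n$ by \eqref{eq:op_eps} and $x_\alpha$ realizes the minimum in \eqref{eq:Tikhonov_stoch} for a.e.\ $\xi$. Comparing $x_\alpha$ with the competitor $x^*$ in \eqref{eq:Tikhonov_stoch} and expanding $\norm{x_\alpha - \textsf{x}^0(\xi)}^2$ around $x^*$, the $\norm{x^* - \textsf{x}^0(\xi)}^2$ terms cancel and one is left with the a.s.\ inequality
\begin{equation*}
	\norm{\op{x_\alpha} - \textsf{y}(\xi)}^2 + \alpha\norm{x_\alpha - x^*}^2 \le \norm{n}^2 + 2\alpha\inner{x^* - x_\alpha}{x^* - \textsf{x}^0(\xi)}.
\end{equation*}

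Next I would feed the probabilistic source condition \eqref{eq:source_var_st} into the cross term, writing $\inner{x^* - x_\alpha}{x^* - \textsf{x}^0(\xi)} = \inner{\opd{x^*}(x^* - x_\alpha)}{\omega}$. Since $\dom{\opo}$ is convex and $x^*$ lies a.s.\ in $\operatorname{int}\dom{\opo}$, the segment $[x^*,x_\alpha]$ stays in $\dom{\opo}$, so \eqref{eq:lipschitz} yields the first-order remainder bound $\norm{\op{x_\alpha} - \op{x^*} - \opd{x^*}(x_\alpha - x^*)} \le \frac{L}{2}\norm{x_\alpha - x^*}^2$; with $\op{x^*} = \textsf{y}(\xi) - n$ this lets me replace $\opd{x^*}(x^* - x_\alpha)$ by $(\textsf{y}(\xi) - \op{x_\alpha}) - n + r$, where $\norm{r} \le \frac{L}{2}\norm{x_\alpha - x^*}^2$. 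Cauchy--Schwarz, the source bound $L\norm{\omega} \le 1$ a.s., and completing the square in $\norm{\op{x_\alpha} - \textsf{y}(\xi)}$ then collapse everything, exactly as in the deterministic rate proof, to
\begin{equation*}
	\alpha\bigl(1 - L\norm{\omega}\bigr)\norm{x_\alpha - x^*}^2 \le \bigl(\norm{n} + \alpha\norm{\omega}\bigr)^2 \le 2\norm{n}^2 + 2\alpha^2\norm{\omega}^2 \qquad \text{a.s.}
\end{equation*}

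To conclude I would divide by $\alpha(1 - L\norm{\omega})$ and integrate over $\Omega$, using $\E\norm{\textsf{n}}^2 \le \delta^2$ from \eqref{eq:exp_eps} and the a.s.\ bound $\norm{\omega} \le 1/L$ from \eqref{eq:source_var_st}; measurability of $\xi \mapsto \textsf{x}_\alpha(\xi)$ is part of the hypotheses, while the remaining assumptions (independence of $\textsf{x}^*$ and $\textsf{n}$, the normalization $\norm{\textsf{x}^* - \textsf{x}^0} \le 1$) serve to streamline constants rather than the structure. This yields $\E[\norm{\textsf{x}^* - \textsf{x}_\alpha}^2] \le C\bigl(\delta^2/\alpha + \alpha\bigr)$. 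The only genuinely new obstacle compared with \autoref{th:ip:rate} is the factor $1 - L\norm{\omega}$: in the deterministic theorem it is a fixed positive number one simply divides by, whereas here it is a random variable, so integrability of $\xi \mapsto (1 - L\norm{\omega(\xi)})^{-1}$ must be secured — under a strengthening $L\norm{\omega} \le \rho < 1$ a.s.\ one gets the explicit constant $C = \frac{2}{1-\rho}\max\{1, L^{-2}\}$, and the delicate modelling point is to phrase the probabilistic source condition so that this reciprocal moment is finite. Everything else is a term-by-term transcription of the Hilbert-space rate proof carried out $\xi$ by $\xi$.
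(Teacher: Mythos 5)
The paper does not actually prove \autoref{th:Tikhonov_risk}: it states the result and defers entirely to \cite{ChiVitMolRosVil24}, so there is no in-text proof to measure you against. Judged on its own terms, your plan is the natural one — run the Engl--Kunisch--Neubauer argument behind \autoref{th:ip:rate} pointwise in $\xi$ (minimality against the competitor $\textsf{x}^*(\xi)$, expansion of the penalty, source condition, Taylor remainder via \eqref{eq:lipschitz} on the segment $[\textsf{x}^*(\xi),\textsf{x}_\alpha(\xi)]$, completing the square) and only then integrate, replacing the pointwise bound $\norm{n}\le\delta$ by $\E\norm{\textsf{n}}^2\le\delta^2$ from \eqref{eq:exp_eps}. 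Every individual step of that chain is correct, and the resulting constant $C=\tfrac{2}{1-\rho}\max\{1,L^{-2}\}$ is consistent with the claimed bound $C(\alpha+\delta^2/\alpha)$.

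The genuine gap is the one you half-acknowledge at the end, and it is not merely a "modelling point": the theorem's hypothesis \eqref{eq:source_var_st} is $L\norm{\omega}\le 1$ almost surely, with equality allowed, whereas your final inequality
\begin{equation*}
\alpha\bigl(1-L\norm{\omega(\xi)}\bigr)\norm{\textsf{x}_\alpha(\xi)-\textsf{x}^*(\xi)}^2\le\bigl(\norm{\textsf{n}(\xi)}+\alpha\norm{\omega(\xi)}\bigr)^2
\end{equation*}
yields no control of the error on the event $\{L\norm{\omega}=1\}$, which may have positive probability; there the left-hand side degenerates to a bound on the residual only. So as written you prove a theorem with the strengthened hypothesis $L\norm{\omega}\le\rho<1$ a.s.\ (or an integrability condition on $(1-L\norm{\omega})^{-1}$), not the statement as given. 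To close the gap you would either need to adopt that strengthening explicitly or find the mechanism the cited reference uses to handle the borderline case — and note that your argument never invokes the independence of $\textsf{x}^*$ and $\textsf{n}$ nor the normalisation $\norm{\textsf{x}^*-\textsf{x}^0}\le 1$, which strongly suggests the proof in \cite{ChiVitMolRosVil24} is organised differently (those hypotheses must be doing work somewhere). A smaller loose end: you should say explicitly that the comparison element $\textsf{x}^*(\xi)$ is admissible in \eqref{eq:Tikhonov_stoch} for a.e.\ $\xi$ and that all the pointwise quantities you manipulate are measurable, though the paper's definition of $\textsf{x}_\alpha$ as a random variable largely grants you this.
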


\begin{remark}
	Note that the boundedness condition $\norm{\textsf{x}^* - \textsf{x}^0} \le 1$ can be weakened, and it is formulated here just as in \cite{ChiVitMolRosVil24} to avoid additional technicalities. Another formalism for lifting convergence rates results from the deterministic to the stochastic setting was suggested in \cite{EngHofKin05}. There quantitative bounds for the Prokhorov metric were established under rather general source conditions.
\end{remark}

\section{Iterative regularization methods} \label{sec:bascon}
The basic assumptions on the operator $\opo$ for analysing Tikhonov regularization and iterative regularization methods are significantly different. The reason is that Tikhonov regularization is a concept, where the method for computing a \emph{global minimizer} is not specified.
In contrast, iterative regularization methods are methods providing an explicit formula for each iterate $\xkd$. For nonlinear inverse problems iteration methods like the Landweber-iteration, as already introduced in \autoref{eq:steepest} with step-sizes $\mkd=1$ for all $k \in \N_0$, will in general not converge globally but locally.

We review below local convergence results of iterative regularization, which require imposing \emph{structural}
conditions on $\opo$, which are summarized in the following. We also emphasize that in the classical setting $\x^0 = \x_0$ (note that independent of $\delta$ we use $\x_0^\delta = \x_0$) in \autoref{eq:steepest}. Make benefits out of the flexibility of choosing different $\x_0$ and $\x^0$ will be discussed in \autoref{ch:Aspri paper} below.

Two ingredients are important for a successful convergences analysis:
\begin{enumerate}
	\item Structural conditions on $\opo$ guarantee that in the neighborhood of a desired solution it is the only solution, and it can be \emph{seen} from every point (see \autoref{fig:cone}). This avoids being trapped with a gradient descent method. Being able to always keep an eye on the solution is the basis of the analysis of iterative method in finite dimensions (see \autoref{th:deupot92}), so these are quite natural generalizations for operator equations.
	\item \emph{Early stopping:} The iteration at $k_* \in \N$, when for the first time during the iteration
	\begin{equation} \label{eq:disc}
		\norm{\op{\x_{k_*}}-\y^\delta}_\Y \leq \tau \delta < \norm{\op{\xkd}-\y^\delta}_\Y \text{ for all } 0 \leq k < k_*	
	\end{equation}
	where $\tau > 1$ is a fixed parameter. As an approximation of the solution of \autoref{eq:op} one uses $\x_{k_*}^\delta$ (see \cite{KalNeuSch08}). \index{early stopping} \index{discrepancy principle!Morozov}
	We mention that Morozov's discrepancy principle \cite{Mor66b,Mor93} -- with $\tau>1$ -- has been
	applied successfully by Vainikko \cite{Vai80} to the regularization of linear
	ill-posed problems via Landweber-iteration. In \cite{DefMol87}, Defrise \& De Mol used a
	different technique to study the stopping rule \autoref{eq:disc} for $\tau>2$. We mention that this parameter choice is an \emph{a-posteriori} strategy (see \autoref{sec:post}) because the regularization parameter $k_*$ is determined during the iteration. \index{stopping criterion!a-posteriori}
\end{enumerate}

\subsection{Basic assumptions}
In order to perform a convergence analysis of iterative methods in \emph{infinite dimensions} we make the following assumptions on $\opo$:
\begin{enumerate}
	\item We require that $\opo$ from \autoref{eq:op} is properly scaled. For our analysis we assume that the operator norm of $\opo'[\x]$ can locally uniform be estimated by one:
	\begin{equation}\label{eq:scal}
		\norm{\opo'[\x]}\leq 1\,,\qquad \x\in \overline{\mathcal{B}_{2\rho}(\x_0)} \subset \mathcal{D}(F)\,,
	\end{equation}
	where $\overline{\mathcal{B}_{2\rho}(\x_0)}$ denotes a closed ball of radius $2\rho$ around $\x_0$.
	\item
	In addition to this scaling property, we need the following local condition, originally introduced in \cite{HanNeuSch95} and named \emph{tangential cone condition} (see also \cite{Sch95}):\index{tangential cone condition}
	\begin{equation}\label{eq:nlc}
		\boxed*{
			\begin{aligned}
			\norm{\op{\x}-\op{\tilde{\x}}-\opo'[\x](\x-\tilde{\x})}_\Y \leq \eta\norm{\op{\x}-\op{\tilde{\x}}}_Y,\qquad \eta<\frac{1}{2}\,,\\
			\x,\tilde{\x}\in\overline{\mathcal{B}_{2\rho}(\x_0)}\subset\mathcal{D}(F)\,.			
			\end{aligned}
		}
	\end{equation}
	Both conditions together are strong enough to ensure local convergence to a solution of
	\autoref{eq:op} if it is solvable in $\overline{\mathcal{B}_\rho(\x_0)}$. They also guarantee
	that all iterates $\xkd$, $0\leq k\leq k_*$, remain in $\mathcal{D}(F)$, which makes the
	iteration method well defined.
\end{enumerate}
Condition \autoref{eq:nlc} essentially requires that from a point $(\x,F[\x])$ on the graph of $\opo$ the point $(\tilde{\x},\op{\tilde{\x}})$ is viewable and vice versa (see \autoref{fig:cone}). It is like walking in the mountains where one should always be able to keep an eye on the destination. In a local valley one might get trapped. To avoid the trapping one needs prior information such as a map, which will be realized with hybrid regularization (see \autoref{ch:Aspri paper}).
\begin{figure}[h]
	\begin{center}
		\includegraphics[width=.8\linewidth]{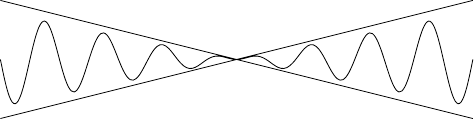}
		\caption{\label{fig:cone} Visualization of \autoref{eq:nlc}: The graph of the function $\opo(x)-\opo(x^\dagger)$, where $(x^\dagger,\opo(x^\dagger))$ is the central point in the middle of the graph. The area bounded by the two lines marks the region where \autoref{eq:nlc} is valid with $\tilde{x}$ and $x^\dagger$. However, the condition should be valid for all iteration points, and this is only possible in a very small neighborhood of $x^\dagger$. We recall our notation $x$ is a number and $\opo(\cdot)$ is a function, while in the general context we consider functions $\x$ and an operator $\op{\cdot}$.}
	\end{center}
\end{figure}
From \autoref{eq:nlc} it follows immediately with the triangle inequality that
\begin{equation}\label{eq:inc}
	\frac{1}{1+\eta}\norm{\opo'[\x](\tilde{\x} - \x)}_\Y \leq \norm{\op{\tilde{\x}}-F[\x]}_\Y \leq \frac{1}{1-\eta}
	\norm{\opo'[\x](\tilde{\x} - \x)}_\Y
\end{equation}
for all $\x,\tilde{\x}\in\overline{\mathcal{B}_{2\rho}(\x_0)}$. Thus, condition \autoref{eq:nlc} seems to be rather
restrictive. However, the condition is quite natural as the following argument
shows: if $\opd{\cdot}$ is Lipschitz-continuous and $\x,\tilde{\x}\in\mathcal{D}(F)$, then the
error bound
\begin{equation}\label{eq:tayl}
	\norm{\op{\tilde{\x}}-\op{\x}-\opo'[\x][\tilde{\x}-\x]}_\Y \leq c\norm{\tilde{\x}-\x}_\X^2
\end{equation}
holds for the Taylor approximation of $\opo$. For ill-posed problems, however, it
turns out that this estimate carries too little information about the local
behavior of $\opo$ around $x$ to draw conclusions about convergence of iterative methods.
For several examples one can even prove the stronger condition
\begin{equation}\label{eq:snlc}
	\norm{F[\x]-\op{\tilde{\x}}-\opo'[\x](\x-\tilde{\x})}_Y \leq c\norm{\x - \tilde{\x}}_\X \norm{F[\x]-\op{\tilde{\x}}}_\Y.
\end{equation}
Provided $\norm{\x-\tilde{\x}}_\X$ is sufficiently small, this implies condition
\autoref{eq:nlc}.

First, we recall a result about existence of a unique $\x_0$-minimum-norm solution, which already follows from conditions like \autoref{eq:nlc}:

\begin{proposition}\label{pr:pmns}
	Let $\opo$ satisfy:
	\begin{equation}\label{eq:p1}
		\begin{aligned}
			\norm{F[\x]-\op{\tilde{\x}}-\opo'[\x](\x - \tilde{\x})}_\Y \leq c(\x,\tilde{\x})\norm{F[\x]-\op{\tilde{\x}}}_\Y, \\*[1ex]
			\x,\tilde{\x}\in\overline{\mathcal{B}_\rho(\x_0)}\subset\mathcal{D}(F)\,,
		\end{aligned}
	\end{equation}
	for some $c(\x,\tilde{\x})\geq 0$, where
	\begin{equation*}
	c(\x,\tilde{\x})<1 \text{ if } \norm{\x - \tilde{\x}}\le \ve\;.
	\end{equation*}
	\begin{enumerate}
		\item Then for all $x\in\overline{\mathcal{B}_\rho(\x_0)}$
		\begin{equation*}
			M_\x:=\{\tilde{\x}\in\overline{\mathcal{B}_\rho(\x_0)}\,:\,\op{\tilde{\x}}=F[\x]\}
             =\x+\nsp{\opo'[\x]}\cap\overline{\mathcal{B}_\rho(\x_0)}
		\end{equation*}
		and $\nsp{\opo'[\x]}=\nsp{\opo'(\tilde{\x})}$ for all $\tilde{\x}\in M_\x$. Moreover,
		\begin{equation*}
			\nsp{\opo'[\x]}\supseteq \{t(\tilde{\x}-\x)\,:\,\tilde{\x}\in M_\x, t\in\R\},
		\end{equation*}
		where instead of $\supseteq$ equality holds if $\x\in \mathcal{B}_\rho(\x_0)$.
		\item If $F[\x]=\y$ is solvable in $\overline{\mathcal{B}_\rho(\x_0)}$, then a unique  $\x_0$-minimum-norm solution exists. It is
		characterized as the solution $\xdag$ of $F[\x]=\y$ in $\overline{\mathcal{B}_\rho(\x_0)}$ satisfying the
		condition
		\begin{equation}\label{eq:orthc}
			\xdag-\x_0\in\nsp{\opo'[\xdag]}^\bot.
		\end{equation}
	\end{enumerate}
\end{proposition}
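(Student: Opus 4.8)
The plan is to squeeze everything out of the single elementary consequence of \eqref{eq:p1}: whenever $\x,\tilde{\x}\in\overline{\mathcal{B}_\rho(\x_0)}$ and $\op{\x}=\op{\tilde{\x}}$, the right-hand side of \eqref{eq:p1} vanishes, hence $\opd{\x}(\x-\tilde{\x})=0$. Part (1) is a geometric bootstrapping of this observation, and part (2) is then a Hilbert-space projection argument applied to the convex solution set produced by part (1). Throughout write $B:=\overline{\mathcal{B}_\rho(\x_0)}$.

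For part (1), the inclusion $M_\x\subseteq(\x+\nsp{\opd{\x}})\cap B$ is immediate from the observation above. For the reverse inclusion, take $\tilde{\x}\in B$ with $\tilde{\x}-\x\in\nsp{\opd{\x}}$ and consider the segment $\x_t:=\x+t(\tilde{\x}-\x)$, $t\in[0,1]$, which stays in $B$ by convexity and in $\x+\nsp{\opd{\x}}$ since $\nsp{\opd{\x}}$ is a subspace. Put $T:=\{t\in[0,1]:\op{\x_t}=\op{\x}\}$, so $0\in T$. If $t_0\in T$, the observation applied to $\x_{t_0},\x$ gives $\opd{\x_{t_0}}(\x_{t_0}-\x)=0$, hence $\opd{\x_{t_0}}(\tilde{\x}-\x)=0$, hence $\opd{\x_{t_0}}(\x_t-\x_{t_0})=0$ for every $t$; applying \eqref{eq:p1} to the pair $\x_{t_0},\x_t$ then yields $\norm{\op{\x_t}-\op{\x_{t_0}}}\le c(\x_{t_0},\x_t)\norm{\op{\x_t}-\op{\x_{t_0}}}$, which forces $\op{\x_t}=\op{\x_{t_0}}=\op{\x}$ as soon as $|t-t_0|\,\norm{\tilde{\x}-\x}\le\ve$. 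Thus around each point of $T$ an interval of fixed length $\ve/\norm{\tilde{\x}-\x}$ lies in $T$, and stepping out from $0$ gives $T=[0,1]$; in particular $\op{\tilde{\x}}=\op{\x}$. (Equivalently, since $\opo$ is differentiable, hence continuous, on $B$, $T$ is closed, and the argument shows it is open, so $T=[0,1]$ by connectedness.)

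For the nullspace claims in (1): if $\tilde{\x}\in M_\x$ then $\op{\tilde{\x}}=\op{\x}$, so $M_{\tilde{\x}}=M_\x$ and the part just proved gives $(\x+\nsp{\opd{\x}})\cap B=M_\x=(\tilde{\x}+\nsp{\opd{\tilde{\x}}})\cap B$; translating the second affine set by $\x-\tilde{\x}\in\nsp{\opd{\tilde{\x}}}$ turns this into $(\x+\nsp{\opd{\x}})\cap B=(\x+\nsp{\opd{\tilde{\x}}})\cap B$. If $\x$ lies in the open ball, intersecting both subspaces with a fixed small ball around $0$ and using the subspace property forces $\nsp{\opd{\x}}=\nsp{\opd{\tilde{\x}}}$; symmetrically if $\tilde{\x}$ is interior. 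If $\x\ne\tilde{\x}$ are both on $\partial\mathcal{B}_\rho(\x_0)$, then the convex set $M_\x$ contains their midpoint $\x^\flat$, which lies in the open ball, and applying the interior case to $(\x,\x^\flat)$ and $(\tilde{\x},\x^\flat)$ gives $\nsp{\opd{\x}}=\nsp{\opd{\x^\flat}}=\nsp{\opd{\tilde{\x}}}$ (if $M_\x$ is a singleton the claim is trivial). Finally, $\{t(\tilde{\x}-\x):\tilde{\x}\in M_\x,\ t\in\R\}\subseteq\nsp{\opd{\x}}$ is clear from $M_\x\subseteq\x+\nsp{\opd{\x}}$ and the subspace property, and if $\x\in\mathcal{B}_\rho(\x_0)$ then for $h\in\nsp{\opd{\x}}$ and small $t>0$ we have $\x+th\in B$, hence $\x+th\in M_\x$ by the reverse inclusion, so $h$ belongs to the claimed set.

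For part (2), pick any solution $\x^\sharp\in B$ of $\op{\x}=\y$ (exists by hypothesis). By (1), $S:=\{\x\in B:\op{\x}=\y\}=M_{\x^\sharp}=(\x^\sharp+\nsp{\opd{\x^\sharp}})\cap B$ is a nonempty, closed, convex subset of the Hilbert space $\X$, so it has a unique point $\xdag$ of minimal distance to $\x_0$; this is the unique $\x_0$-minimum-norm solution (in $B$). Write $N:=\nsp{\opd{\x^\sharp}}$. If $\xdag$ lies in the open ball, then $\xdag+v\in S$ for small $v\in N$, so minimality gives $\langle\x_0-\xdag,v\rangle=0$ for all $v\in N$; moreover $\xdag$ is then interior, and applying (1) at $\xdag$ gives $S=(\xdag+\nsp{\opd{\xdag}})\cap B$, which compared with $S=(\xdag+N)\cap B$ on a small ball around $0$ yields $\nsp{\opd{\xdag}}=N$. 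If instead $\norm{\xdag-\x_0}=\rho$, then every point of $S$ is at distance $\rho$ from $\x_0$, so the convex set $S$ lies on $\partial\mathcal{B}_\rho(\x_0)$ and hence $S=\{\xdag\}=\{\x^\sharp\}$; the condition $\x^\sharp+v\notin B$ for all $0\ne v\in N$ forces $\langle\x^\sharp-\x_0,v\rangle=0$ for all $v\in N$, and $\nsp{\opd{\xdag}}=N$ trivially. In both cases $\xdag-\x_0\in\nsp{\opd{\xdag}}^\perp$. Conversely, if $\z\in S$ satisfies $\z-\x_0\in\nsp{\opd{\z}}^\perp$, then by (1) $S=(\z+\nsp{\opd{\z}})\cap B$, so for every $\z'\in S$ the Pythagorean identity gives $\norm{\z'-\x_0}^2=\norm{\z'-\z}^2+\norm{\z-\x_0}^2\ge\norm{\z-\x_0}^2$, whence $\z$ is the minimizer and $\z=\xdag$ by uniqueness. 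This establishes the characterization.

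The main obstacle is the reverse inclusion in part (1): passing from "$\tilde{\x}$ lies in the affine kernel space $\x+\nsp{\opd{\x}}$" to "$\op{\tilde{\x}}=\op{\x}$", since \eqref{eq:p1} only delivers the $c<1$ conclusion for nearby points — the continuation/connectedness argument along the segment, together with the uniform step length $\ve/\norm{\tilde{\x}-\x}$ and the fact that convexity of $B$ keeps all intermediate points admissible, is exactly what bridges this local-to-global gap. The secondary nuisance is the bookkeeping for points of $M_\x$ lying on $\partial\mathcal{B}_\rho(\x_0)$, both in the nullspace equality and in the projection step; this is handled uniformly by the observation that a convex subset of the ball which is not a single point must contain an interior point.
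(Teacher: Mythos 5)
Your proof is correct, and it follows essentially the standard argument behind this result (the paper itself gives no proof, deferring to Proposition~2.1 of \cite{KalNeuSch08}): the vanishing of the right-hand side of \eqref{eq:p1} on level sets, globalized along line segments by a continuation argument with uniform step length $\ve/\norm{\tilde{\x}-\x}$, yields the affine structure of $M_\x$, and the characterization in (2) is the Hilbert-space projection theorem applied to the resulting closed convex solution set. The only cosmetic point is that at $t_0=0$ the identity $\opd{\x_{t_0}}(\tilde{\x}-\x)=0$ should be taken from the hypothesis $\tilde{\x}-\x\in\nsp{\opd{\x}}$ rather than obtained by dividing $\opd{\x_{t_0}}(\x_{t_0}-\x)=0$ by $t_0$.
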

For a proof see \cite[Proposition 2.1]{KalNeuSch08}.

\subsection{Landweber-method}
The Landweber-method\index{Landweber-method} is defined as follows:
\begin{itemize} \label{alg:Landweber}
	\item Initialize $\x_0^\delta=\x_0$.
	\item Until Morozovs's stopping criterion\index{discrepancy principle!Morozov} \autoref{eq:disc}
	becomes active, that is $k=0,1,\ldots,k_*-1$ update
	\begin{equation} \label{eq:landweber}
		\xkpd = \xkd - \opd{\xkd}^* (\op{\xkd} - \y^\delta)\;.
	\end{equation}
	\item The approximating solution is given by $\x_{k_*}^\delta$.
\end{itemize}
The Landweber-iteration is a gradient descent method with fixed step length.
More sophisticated gradient descent methods, such as \emph{steepest descent}, \emph{minimal error methods} and \emph{conjugate gradient methods} have been analyzed in \cite{NeuSch95,Sch96}.

The Landweber-iteration is well-investigate. For the convergence and stability analysis we refer to \cite{HanNeuSch95,KalNeuSch08}. As any other regularization method, the convergence of $\x_{k_*}^\delta$ to $\xdag$ can be arbitrarily slow for $\delta \to 0$.

\subsection{Convergence and stability of the Landweber-iteration}
\begin{proposition} \label{lwmon}
	Assume that the conditions \autoref{eq:scal} and \autoref{eq:nlc} hold and that \autoref{eq:op}
	has a solution $\x^*\in\mathcal{B}_\rho(\x_0)$. If $\xkd\in\mathcal{B}_\rho(\x^*)$, a sufficient
	condition for $\xkpd$ to be a better approximation of $\x^*$ than $\xkd$ is that
	\begin{equation} \label{eq:resest}
		\norm{\op{\xkd}-\y^\delta}_\Y >2\,\frac{1+\eta}{1-2\eta}\,\delta\,.
	\end{equation}
	Moreover, it then holds that $\xkd,\xkpd\in\mathcal{B}_\rho(\x^*)\subseteq \mathcal{B}_{2\rho}(\x_0)$.
\end{proposition}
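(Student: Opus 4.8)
The plan is to control the monotonicity of the error directly, by computing $\norm{\xkpd - \x^*}^2 - \norm{\xkd - \x^*}^2$ from the Landweber update \eqref{eq:landweber}. First I would record a geometric preliminary: since $\x^*\in\mathcal{B}_\rho(\x_0)$ and $\xkd\in\mathcal{B}_\rho(\x^*)$, the triangle inequality gives $\mathcal{B}_\rho(\x^*)\subseteq\mathcal{B}_{2\rho}(\x_0)$, hence $\xkd,\x^*\in\overline{\mathcal{B}_{2\rho}(\x_0)}\subset\dom{\opo}$, so that the scaling bound \eqref{eq:scal} and the nonlinearity condition \eqref{eq:nlc} are applicable at $\xkd$ and $\x^*$. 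Then, using $\xkpd - \xkd = -\opd{\xkd}^*(\op{\xkd} - \yd)$ together with the identity $\norm{a-c}^2 - \norm{b-c}^2 = \norm{a-b}^2 + 2\inner{a-b}{b-c}$ for $a=\xkpd$, $b=\xkd$, $c=\x^*$, and moving the adjoint across the inner product, I obtain
\[
	\norm{\xkpd - \x^*}^2 - \norm{\xkd - \x^*}^2 = \norm{\opd{\xkd}^*(\op{\xkd} - \yd)}^2 - 2\inner{\op{\xkd} - \yd}{\opd{\xkd}(\xkd - \x^*)},
\]
where, by \eqref{eq:scal}, the first term is at most $\norm{\op{\xkd} - \yd}^2$.

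The key step — and the one needing the most care — is the estimate of the cross term. Writing $\op{\x^*}=\y$ and splitting $\opd{\xkd}(\xkd - \x^*) = \bigl(\op{\xkd} - \op{\x^*}\bigr) - \bigl(\op{\xkd} - \op{\x^*} - \opd{\xkd}(\xkd - \x^*)\bigr)$, the defect in the second bracket is bounded by $\eta\norm{\op{\xkd} - \y}$ via \eqref{eq:nlc}, while $\op{\xkd} - \y = (\op{\xkd} - \yd) + (\yd - \y)$ with $\norm{\yd - \y}\le\delta$ by \eqref{eq:datn}. Expanding all inner products in terms of $r_k := \norm{\op{\xkd} - \yd}$ and $\delta$ and applying Cauchy--Schwarz, everything collapses to
\[
	\norm{\xkpd - \x^*}^2 - \norm{\xkd - \x^*}^2 \le -(1 - 2\eta)\, r_k^2 + 2(1 + \eta)\,\delta\, r_k = -\,r_k\bigl((1 - 2\eta) r_k - 2(1 + \eta)\delta\bigr).
\]
Since $\eta < 1/2$, the factor $1 - 2\eta$ is positive, so the right-hand side is strictly negative precisely when $r_k > \frac{2(1+\eta)}{1-2\eta}\delta$, which is hypothesis \eqref{eq:resest}; hence $\norm{\xkpd - \x^*} < \norm{\xkd - \x^*}$.

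It then remains only to assemble the inclusions. From the strict decrease, $\norm{\xkpd - \x^*} < \norm{\xkd - \x^*}\le\rho$, so $\xkpd\in\mathcal{B}_\rho(\x^*)$; combined with the first-paragraph observation $\mathcal{B}_\rho(\x^*)\subseteq\mathcal{B}_{2\rho}(\x_0)$, both $\xkd$ and $\xkpd$ lie in $\mathcal{B}_{2\rho}(\x_0)\subset\dom{\opo}$, which confirms a posteriori that the iteration is well defined at these indices. I anticipate the only real subtlety is the bookkeeping in the cross-term expansion: tracking which norm carries the $\eta$, where the $\delta$'s enter, and making sure the threshold constant comes out exactly as $\frac{2(1+\eta)}{1-2\eta}$ rather than something weaker.
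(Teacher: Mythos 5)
Your argument is correct and is essentially the standard proof the paper relies on (Proposition 2.2 in \cite{KalNeuSch08}, going back to \cite{HanNeuSch95}): expand $\norm{\xkpd-\x^*}^2-\norm{\xkd-\x^*}^2$ via the update formula, bound $\norm{\opd{\xkd}^*(\op{\xkd}-\yd)}$ by $r_k$ using \eqref{eq:scal}, and split the cross term with \eqref{eq:nlc} and the noise bound to arrive at $-r_k\bigl((1-2\eta)r_k-2(1+\eta)\delta\bigr)$. The preliminary inclusion $\mathcal{B}_\rho(\x^*)\subseteq\mathcal{B}_{2\rho}(\x_0)$ justifying the use of \eqref{eq:scal} and \eqref{eq:nlc}, and the a posteriori conclusion $\xkpd\in\mathcal{B}_\rho(\x^*)$, are handled exactly as in the reference, so nothing is missing.
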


In view of this proposition, the number $\tau$ in the stopping rule \autoref{eq:disc}
should be chosen subject to the following constraint depending on $\eta$, with
$\eta$ as in \autoref{eq:nlc}:
\begin{equation} \label{eq:lwtau}
	\tau>2\,\frac{1+\eta}{1-2\eta}>2\,.
\end{equation}
From the proof of \autoref{lwmon} we can easily extract an inequality
that guarantees that the stopping index $k_*$ in \autoref{eq:disc} is finite and hence
well defined.

\begin{corollary} \label{corest}
	Let the assumptions of \autoref{lwmon} hold and let $k_*$ be chosen
	according to the stopping rule \autoref{eq:disc}, \autoref{eq:lwtau}. Then
	\begin{equation} \label{eq:ksest}
		k_*(\tau\delta)^2<\sum_{k=0}^{k_*-1}\norm{\op{\xkd}-\y^\delta}_\Y^2\leq \frac{\tau}
		{(1-2\eta)\tau-2(1+\eta)}\norm{\x_0-\x^*}_\X^2.
	\end{equation}
	In particular, in the case of exact data, it follows that
	\begin{equation} \label{eq:sumest}
		\sum_{k=0}^\infty \norm{\y-\op{\xk}}_\Y^2<\infty\,.
	\end{equation}
\end{corollary}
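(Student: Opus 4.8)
The plan is to reduce everything to the single monotonicity inequality that already underlies \autoref{lwmon}. First I would recall, starting from the Landweber update \eqref{eq:landweber} and the expansion
\begin{equation*}
	\norm{\xkpd - \x^*}^2 - \norm{\xkd - \x^*}^2 = -2\inner{\op{\xkd} - \yd}{\opd{\xkd}(\xkd - \x^*)} + \norm{\opd{\xkd}^*(\op{\xkd} - \yd)}^2 ,
\end{equation*}
that the quadratic term is bounded by $\norm{\op{\xkd} - \yd}^2$ via the scaling assumption \eqref{eq:scal}, while the inner product is estimated by writing $\opd{\xkd}(\xkd - \x^*) = (\op{\xkd} - \y) - \bigl(\op{\xkd} - \op{\x^*} - \opd{\xkd}(\xkd - \x^*)\bigr)$, applying the nonlinearity condition \eqref{eq:nlc} to the second summand and $\norm{\y - \yd}\le\delta$ to the first. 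This produces
\begin{equation*}
	\norm{\xkd - \x^*}^2 - \norm{\xkpd - \x^*}^2 \ge \norm{\op{\xkd} - \yd}\Bigl((1 - 2\eta)\norm{\op{\xkd} - \yd} - 2(1+\eta)\delta\Bigr) ,
\end{equation*}
which is exactly the inequality referred to in the statement; it simultaneously shows, together with \eqref{eq:resest}, that the hypotheses of \autoref{lwmon} propagate, so that all iterates with index $0\le k\le k_*$ remain in $\mathcal{B}_\rho(\x^*)\subseteq\mathcal{B}_{2\rho}(\x_0)$ and the displayed bound holds for every $k<k_*$.

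Next I would use the stopping rule. For $0\le k<k_*$ the discrepancy principle \eqref{eq:disc} gives $\norm{\op{\xkd} - \yd} > \tau\delta$, hence
\begin{equation*}
	(1 - 2\eta)\norm{\op{\xkd} - \yd} - 2(1+\eta)\delta > \Bigl((1-2\eta) - \tfrac{2(1+\eta)}{\tau}\Bigr)\norm{\op{\xkd} - \yd} = \frac{(1-2\eta)\tau - 2(1+\eta)}{\tau}\,\norm{\op{\xkd} - \yd} ,
\end{equation*}
where the constraint \eqref{eq:lwtau} on $\tau$ makes the last fraction strictly positive. Substituting into the monotonicity inequality and telescoping over $k = 0,\dots,k_*-1$ yields
\begin{equation*}
	\frac{(1-2\eta)\tau - 2(1+\eta)}{\tau}\sum_{k=0}^{k_*-1}\norm{\op{\xkd} - \yd}^2 < \norm{\x_0 - \x^*}^2 - \norm{\x_{k_*}^\delta - \x^*}^2 \le \norm{\x_0 - \x^*}^2 ,
\end{equation*}
which is the right-hand estimate in \eqref{eq:ksest}. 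The left-hand estimate is immediate, since each of the $k_*$ summands satisfies $\norm{\op{\xkd} - \yd}^2 > (\tau\delta)^2$. For $\delta>0$ these two bounds together give $k_* < \tau\,\norm{\x_0 - \x^*}^2 \big/\bigl((\tau\delta)^2\bigl((1-2\eta)\tau - 2(1+\eta)\bigr)\bigr)$, so the stopping index is finite and the Landweber iteration equipped with \eqref{eq:disc} is well defined.

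Finally, for exact data I would put $\delta = 0$ and $\yd = \y$ in the monotonicity inequality, obtaining $\norm{\xk - \x^*}^2 - \norm{\xkp - \x^*}^2 \ge (1 - 2\eta)\norm{\y - \op{\xk}}^2$ for all $k$; summing over $k = 0,\dots,N$, discarding the nonnegative term $\norm{\x_{N+1} - \x^*}^2$, and letting $N\to\infty$ gives $\sum_{k=0}^\infty \norm{\y - \op{\xk}}^2 \le \norm{\x_0 - \x^*}^2 \big/ (1 - 2\eta) < \infty$. I do not expect a genuine obstacle here: the analytic substance is already contained in \autoref{lwmon}, and the only care required is bookkeeping of constants and verifying that \eqref{eq:lwtau} keeps $(1-2\eta)\tau - 2(1+\eta)$ positive so that the telescoped sum can be bounded.
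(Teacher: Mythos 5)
Your proof is correct and follows exactly the route the paper intends: it extracts the monotonicity estimate $\norm{\xkd-\x^*}^2-\norm{\xkpd-\x^*}^2\ge\norm{\op{\xkd}-\yd}\bigl((1-2\eta)\norm{\op{\xkd}-\yd}-2(1+\eta)\delta\bigr)$ from the proof of \autoref{lwmon}, uses \eqref{eq:disc} and \eqref{eq:lwtau} to absorb the $\delta$-term, and telescopes; this is precisely the classical argument of Hanke--Neubauer--Scherzer to which the paper defers. The bookkeeping of constants, the lower bound $k_*(\tau\delta)^2$, and the exact-data case are all handled correctly.
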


\begin{theorem} \label{lwced}
	Assume that the conditions \autoref{eq:scal} and \autoref{eq:nlc} hold and that \autoref{eq:op}
	is solvable in $\mathcal{B}_\rho(\x_0)$. Then the nonlinear Landweber-iteration applied to exact data $\y$
	converges to a solution of \autoref{eq:op}. If
	\begin{equation*}
		\mathcal{N}(\opo'[\xdag]) \subseteq \mathcal{N}(\opo'[\x]) \text{ for all } \x\in \mathcal{B}_\rho(\xdag)\,,
	\end{equation*}
	then $\xk$ converges to $\xdag$ as $k\to\infty$.
\end{theorem}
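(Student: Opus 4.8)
The plan is to run the exact-data iteration, i.e.\ \eqref{eq:landweber} with $\yd$ replaced by $\y$, for which the discrepancy stopping rule \eqref{eq:disc} never activates, so that $(\x_k)_{k\in\N_0}$ is an infinite sequence. Two preparatory facts are available: by \autoref{lwmon} applied with $\delta=0$ and any solution $\x^*\in\mathcal{B}_\rho(\x_0)$ of \eqref{eq:op}, the sequence $\bigl(\norm{\x_k-\x^*}\bigr)_k$ is non-increasing (hence convergent) and every $\x_k$ stays in $\mathcal{B}_\rho(\x^*)\subseteq\overline{\mathcal{B}_{2\rho}(\x_0)}\subset\dom{\opo}$; and by \autoref{corest}, \eqref{eq:sumest}, $\sum_{k\ge 0}\norm{\y-\op{\x_k}}_\Y^2<\infty$, so in particular $\norm{\y-\op{\x_k}}_\Y\to 0$.

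The heart of the proof is to show that $(\x_k)$ is Cauchy. Fix a solution $\x^*$ as above. Given $j\le k$, I would choose an intermediate index $l\in\{j,\dots,k\}$ at which $i\mapsto\norm{\y-\op{\x_i}}_\Y$ is minimal over $\{j,\dots,k\}$, and bound $\norm{\x_k-\x_j}\le\norm{\x_k-\x_l}+\norm{\x_l-\x_j}$. For the second term, the identity $\norm{\x_l-\x_j}^2 = 2\inner{\x_l-\x_j}{\x_l-\x^*}+\norm{\x_j-\x^*}^2-\norm{\x_l-\x^*}^2$ reduces matters to the inner product, since the two norm-square terms are non-negative and tend to $0$ as $j\to\infty$ by the monotone convergence above. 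Telescoping $\x_l-\x_j=\sum_{i=j}^{l-1}\opd{\x_i}^*(\y-\op{\x_i})$ and moving the adjoint over, $\inner{\x_l-\x_j}{\x_l-\x^*}=\sum_{i=j}^{l-1}\inner{\y-\op{\x_i}}{\opd{\x_i}(\x_l-\x^*)}_\Y$. I would then estimate $\norm{\opd{\x_i}(\x_l-\x^*)}_\Y\le(1+\eta)\bigl(\norm{\op{\x_l}-\op{\x_i}}_\Y+\norm{\y-\op{\x_i}}_\Y\bigr)$ via \eqref{eq:inc}, and $\norm{\op{\x_l}-\op{\x_i}}_\Y\le\norm{\y-\op{\x_i}}_\Y+\norm{\y-\op{\x_l}}_\Y\le 2\norm{\y-\op{\x_i}}_\Y$ by minimality of $l$, so that $\abs{\inner{\x_l-\x_j}{\x_l-\x^*}}\le 3(1+\eta)\sum_{i\ge j}\norm{\y-\op{\x_i}}_\Y^2\to 0$. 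The term $\norm{\x_k-\x_l}^2$ is treated identically, again pairing the difference with $\x_l-\x^*$ (writing $\norm{\x_k-\x_l}^2=-2\inner{\x_k-\x_l}{\x_l-\x^*}+\norm{\x_k-\x^*}^2-\norm{\x_l-\x^*}^2$ and dropping the now non-positive boundary term) so that minimality of $l$ still applies to the summation indices $i\in\{l,\dots,k-1\}$. Hence $(\x_k)$ converges to some $\x_*\in\overline{\mathcal{B}_{2\rho}(\x_0)}\subset\dom{\opo}$; since $\opo$ is Lipschitz on this ball (its derivative being bounded by $1$ there, cf.\ \eqref{eq:scal}) and $\norm{\y-\op{\x_k}}_\Y\to 0$, it follows that $\op{\x_*}=\y$, which proves the first assertion.

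For the refined statement I would show $\x_*$ equals the $\x_0$-minimum-norm solution $\xdag$, which exists and satisfies $\xdag-\x_0\in\nsp{\opd{\xdag}}^\bot$ by \autoref{pr:pmns} (condition \eqref{eq:p1} there being implied by \eqref{eq:nlc}). Taking $\x^*=\xdag$ in the above, monotonicity keeps all $\x_k$ in $\mathcal{B}_\rho(\xdag)$, so the hypothesis $\nsp{\opd{\xdag}}\subseteq\nsp{\opd{\x_k}}$ yields $\nsp{\opd{\x_k}}^\bot\subseteq\nsp{\opd{\xdag}}^\bot$; as each increment $\x_{k+1}-\x_k=\opd{\x_k}^*(\y-\op{\x_k})$ lies in $\range{\opd{\x_k}^*}\subseteq\nsp{\opd{\x_k}}^\bot$, summing over $k$ and letting $k\to\infty$ gives $\x_*-\x_0\in\nsp{\opd{\xdag}}^\bot$. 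On the other hand, \autoref{pr:pmns} (applied with $\x=\xdag$, using that $\op{\x_*}=\y$ and $\x_*\in\overline{\mathcal{B}_\rho(\x_0)}$) gives $\x_*-\xdag\in\nsp{\opd{\xdag}}$, while combining $\x_*-\x_0\in\nsp{\opd{\xdag}}^\bot$ with $\xdag-\x_0\in\nsp{\opd{\xdag}}^\bot$ gives $\x_*-\xdag\in\nsp{\opd{\xdag}}^\bot$. Therefore $\x_*-\xdag\in\nsp{\opd{\xdag}}\cap\nsp{\opd{\xdag}}^\bot=\{0\}$, i.e.\ $\x_*=\xdag$.

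The step I expect to be the main obstacle is the Cauchy estimate in the second paragraph: it hinges on the clever choice of the intermediate index $l$ together with the simultaneous use of the monotone convergence of $\norm{\x_k-\x^*}$ (to kill the boundary terms), of the nonlinearity condition \eqref{eq:nlc} in the form \eqref{eq:inc} (to pass from residuals to $\opd{\x_i}(\x_l-\x^*)$), and of the square-summability of the residuals; the remaining steps are essentially bookkeeping.
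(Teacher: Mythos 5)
The paper does not prove \autoref{lwced} itself; it defers to \cite{HanNeuSch95,KalNeuSch08}, and your argument is precisely the proof given there (monotonicity via \autoref{lwmon}, square-summability of the residuals via \eqref{eq:sumest}, the Cauchy estimate built on the minimal-residual index $l$, and the nullspace/orthogonality argument identifying the limit with $\xdag$), so in substance it is correct and matches the intended source. The one imprecision is in your last paragraph: the iterates are only guaranteed to stay in $\mathcal{B}_\rho(\xdag)\subseteq\mathcal{B}_{2\rho}(\x_0)$, not in $\overline{\mathcal{B}_\rho(\x_0)}$, so \autoref{pr:pmns} as stated cannot be invoked to conclude $\x_*-\xdag\in\nsp{\opd{\xdag}}$; instead apply \eqref{eq:inc} at $\xdag$ with $\tilde{\x}=\x_*$ (both lie in $\overline{\mathcal{B}_{2\rho}(\x_0)}$) to get $\norm{\opd{\xdag}(\x_*-\xdag)}_\Y\le(1+\eta)\norm{\op{\x_*}-\op{\xdag}}_\Y=0$, after which your orthogonality argument closes exactly as written.
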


\begin{theorem} \label{lwcnd}
	Let the assumptions of \autoref{lwced} hold and let $k_*=k_*(\delta,\y^\delta)$
	be chosen according to the stopping rule \autoref{eq:disc}, \autoref{eq:lwtau}. Then the
	Landweber-iterates $\x_{k_*}^\delta$ converge to a solution of \autoref{eq:op}. If $\mathcal{N}(\opo'[\xdag])
	\subseteq \mathcal{N}(\opo'[\x])$ for all $\x \in \mathcal{B}_\rho(\xdag)$, then $\x_{k_*}^\delta$ converges to $\xdag$ as
	$\delta\to 0$.
\end{theorem}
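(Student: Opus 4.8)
The plan is to reduce the noisy-data statement to the exact-data result of \autoref{lwced}, using the monotonicity of \autoref{lwmon} as the bridge and the finiteness of the stopping index from \autoref{corest}. First I would dispose of the trivial case $\op{\x_0}=\y$, and otherwise record the preliminaries: \autoref{corest} makes $k_*=k_*(\delta,\yd)$ finite for every admissible pair $(\delta,\yd)$ with $\norm{\y-\yd}\le\delta$, and \autoref{lwced} produces a solution $\x^*\in\overline{\mathcal{B}_\rho(\x_0)}$ of \autoref{eq:op} to which the exact iterates $(\x_k)$ converge, with $\x^*=\xdag$ under the extra inclusion $\mathcal{N}(\opd{\xdag})\subseteq\mathcal{N}(\opd{\x})$ for $\x\in\mathcal{B}_\rho(\xdag)$. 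By a routine contradiction argument it then suffices to show: for every $\delta_n\to0$ and data $\y_n$ with $\norm{\y-\y_n}\le\delta_n$, the iterates $\x_{k_n}^{\delta_n}$ with $k_n:=k_*(\delta_n,\y_n)$ converge to $\x^*$.

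Next I would split into two cases according to whether $(k_n)$ tends to infinity. If it does not, set $k_0:=\liminf_n k_n<\infty$, so that $k_n\ge k_0$ for all large $n$ and $k_n=k_0$ along a subsequence $(n_j)$. Along that subsequence the stopping rule \autoref{eq:disc} forces $\norm{\op{\x_{k_0}^{\delta_{n_j}}}-\y_{n_j}}\le\tau\delta_{n_j}\to0$; combined with the continuous dependence of finitely many Landweber steps on the data, this gives $\x_{k_0}^{\delta_n}\to\x_{k_0}$ and $\op{\x_{k_0}}=\y$, whence the exact iteration is stationary from step $k_0$ on and $\x^*=\x_{k_0}$. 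Since $\tau>2\frac{1+\eta}{1-2\eta}$ makes \autoref{eq:resest} hold at every step $j<k_n$, \autoref{lwmon} shows $j\mapsto\norm{\x_j^{\delta_n}-\x^*}$ is non-increasing on $\{0,\dots,k_n\}$, so for large $n$ (where $k_n\ge k_0$) we get $\norm{\x_{k_n}^{\delta_n}-\x^*}\le\norm{\x_{k_0}^{\delta_n}-\x^*}=\norm{\x_{k_0}^{\delta_n}-\x_{k_0}}\to0$. In the remaining case $k_n\to\infty$, I would fix $\ve>0$, use \autoref{lwced} to pick $k$ with $\norm{\x_k-\x^*}\le\ve$, note $k_n\ge k$ for $n$ large, and combine the same monotonicity with the triangle inequality
\begin{equation*}
	\norm{\x_{k_n}^{\delta_n}-\x^*}\le\norm{\x_k^{\delta_n}-\x^*}\le\norm{\x_k^{\delta_n}-\x_k}+\norm{\x_k-\x^*}\le\norm{\x_k^{\delta_n}-\x_k}+\ve
\end{equation*}
and then let $n\to\infty$ (so $\norm{\x_k^{\delta_n}-\x_k}\to0$ by continuity for the fixed number of steps $k$) and $\ve\to0$. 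Putting the cases together gives $\x_{k_*}^\delta\to\x^*$, which is a solution of \autoref{eq:op} and equals $\xdag$ under the nullspace inclusion.

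The main obstacle is not any single estimate but coordinating three ingredients: (i) the monotone decay $\norm{\x_{j+1}^{\delta_n}-\x^*}\le\norm{\x_j^{\delta_n}-\x^*}$, which holds only for $j<k_n$ and only because the $\tau$ prescribed in \autoref{eq:lwtau} dominates $2\frac{1+\eta}{1-2\eta}$, so that pre-stopping residuals trigger \autoref{eq:resest}; (ii) keeping all relevant iterates inside the balls where \autoref{eq:scal}, \autoref{eq:nlc}, and hence \autoref{lwmon}, are valid, which requires some care with open versus closed balls around $\x^*$ and $\x_0$ when starting the induction from $\x_0^{\delta_n}=\x_0$; and (iii) the order of limits in the unbounded case, where the cut-off index $k$ must be chosen from the \emph{exact} iteration \emph{before} sending $\delta_n\to0$, so that $\norm{\x_k^{\delta_n}-\x_k}$ is killed by continuity of $k$ Landweber steps in the data rather than by any uniform-in-$k$ bound (which is unavailable for ill-posed problems). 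A secondary point is the justification of that continuous dependence, which rests on continuity of $\x\mapsto\opd{\x}$ (available here, e.g., under Lipschitz continuity of $\opd{\cdot}$); I would simply cite the corresponding lemma from \cite{KalNeuSch08}.
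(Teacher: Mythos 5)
Your proposal is correct and is essentially the proof given in the references the paper defers to for this theorem (\cite{HanNeuSch95,KalNeuSch08}): the case split on whether $k_n\to\infty$, stability of finitely many iterations in the data for the bounded case, and the monotonicity from \autoref{lwmon} (valid for $j<k_n$ precisely because of \autoref{eq:lwtau}) combined with a triangle inequality for the unbounded case. Your handling of the order of limits and of why \autoref{eq:resest} is triggered before the stopping index matches the standard argument, so there is nothing to add.
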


\subsection{Convergence rates of the Landweber-iteration}
As stated above the rate of convergence of $\xk \to \x^*$ as $k \to \infty$ (with precise data) or
$\x_{k_*}^\delta \to \x^*$ as $\delta \to 0$ (with perturbed data) will, in general, be
arbitrarily slow. As for Tikhonov regularization we can prove convergence rates, under the source condition \autoref{eq:source_var}. However, in contrast to Tikhonov regularization, assumption \autoref{eq:source_var}
(with $\norm{\omega}$ sufficiently small) is not enough to obtain convergence rates for Landweber-iteration.
In \cite{HanNeuSch95} rates were proven under the additional assumption that $\opo$ satisfies
\begin{equation} \label{eq:sc1}
	\boxed{
	\opo'[\x]=R_\x \opo'[\xdag]\text{ and } \norm{R_\x-\id}\leq c\norm{\x-\xdag}_\X,\qquad \x\in
	\mathcal{B}_{2\rho}(\x_0)\,,}
\end{equation}
where $\set{R_\x\,:\,\x\in\mathcal{B}_{2\rho}(\x_0)}$ is a family of bounded linear operators $R_\x:\Y \to \Y$ and $c$ is a positive constant. Again $\norm{R_\x-\id}$ denotes the operator norm of $R_\x-\id$.
\begin{remark}\label{re:deland} From \autoref{eq:sc1} it follows that
		\begin{equation} \label{eq:decomp3}
			\begin{aligned}
				F[\x] - \op{\xdag} &= \int_0^1 \opd{\xdag+t(\x-\xdag)}(\x-\xdag)dt \\
                &= \int_0^1 R_{\xdag + t(\x-\xdag)} \opd{\xdag}(\x-\xdag) dt\;.
			\end{aligned}
		\end{equation}

	In other words $\opo$ satisfies a condition, which is similar as the first decomposition case (see \autoref{ss:decomp1}).
	Actually, for operators, which satisfy \autoref{eq:decomp3}, we can prove similar results as for operators, which satisfy the first decomposition case.
\end{remark}

Unfortunately, the conditions above are not always satisfied (see
\cite[Example~4.3]{HanNeuSch95}). To enlarge the applicability of the results, in
this section we consider instead of the Landweber-iteration \autoref{eq:steepest} (with $\mkd=1$) the following slightly modified iteration method,
\begin{equation} \label{eq:lwm}
	\xkpd = \xkd - G^\delta[\xkd]^*(\op{\xkd}-\y^\delta)\,, \quad k\in \N_0\,,
\end{equation}
where $G^\delta[\x]:=G[\x,\y^\delta]$, and
\begin{equation*}
	G: \mathcal{D}(F) \subseteq \X \times \Y \to L(\X,\Y)
\end{equation*}
is a continuous operator. The iteration will again be stopped according to the discrepancy principle \autoref{eq:disc}.

To obtain local convergence and convergence rates for the modification \autoref{eq:lwm}
we need the following assumptions:

\begin{assumption} \label{as:asslw}
	Let $\rho$ be a positive number such that $\mathcal{B}_{2\rho}(\x_0)\subseteq \mathcal{D}(F)$.
	\begin{enumerate}
		\item \autoref{eq:op} has an minimum norm solution $\xdag$ in $\mathcal{B}_\rho(\x_0)$.
		\item There exist positive constants $c_1, c_2, c_3$ and linear operators
		$R_\x$ such that for all $x\in \mathcal{B}_\rho(\xdag)$ the following estimates hold:
		\begin{equation} \label{eq:nlc1}
			\norm{F[\x]-\op{\xdag}-\opo'[\xdag](\x-\xdag)}_\Y \leq c_1 \norm{\x-\xdag}_\X \norm{F[\x]-\op{\xdag}}_\Y\,,
		\end{equation}
	    satisfying
		\begin{equation} \label{eq:nlc2} \begin{aligned}
			G^\delta[\x]=R_\x G^\delta[\xdag] \text{ with }
			\norm{R_\x-\id} \leq c_2\|\x-\xdag\|_\X\,,\\
			\norm{\opo'[\xdag]-G^\delta[\xdag]} \leq c_3\delta\,.
			\end{aligned}
		\end{equation}
		\item Moreover, $\opo$ has to be scaled appropriately
		\begin{equation} \label{eq:om}
			\norm{\opo'[\xdag]} \leq 1\,.
		\end{equation}
	\end{enumerate}
\end{assumption}
Note that, if instead of \autoref{eq:nlc1} the slightly stronger condition \autoref{eq:snlc}
holds in $\mathcal{B}_{2\rho}(\x_0)$, then the existence of a unique minimum norm solution $\xdag$ is guaranteed (see \cite{KalNeuSch08}).

The next theorem states that under \autoref{as:asslw} the modified Landweber-iteration
\autoref{eq:lwm} converges locally to $\xdag$ if $\tau$ is chosen properly:

\begin{theorem}(Theorem 2.13 in \cite{KalNeuSch08}) \label{lwrat2} Let $\xkd$ be the iterates of \autoref{eq:lwm}.
	Assume that \autoref{as:asslw} holds and let $k_*=k_*(\delta,\y^\delta)$ be chosen
	according to \autoref{eq:disc} with $\tau>2$. If $\xdag-\x_0$ satisfies the source condition
	\autoref{eq:source_var}, then it follows that
	\begin{equation*}
		k_* = \mathcal{O} (\delta^{-1})
	\text{ and }
		\norm{\x_{k_*}^\delta-\xdag}_\X = \mathcal{O} (\sqrt{\delta})\;.
	\end{equation*}
\end{theorem}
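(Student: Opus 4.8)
The plan is to run the argument in two phases. The first is purely \emph{qualitative} and reproduces, for the modified update \eqref{eq:lwm}, the energy estimates behind \autoref{lwmon} and \autoref{corest}: it gives that the iteration is well-defined up to $k_*$, that the error decreases monotonically, and that $k_*$ is finite. The second is \emph{quantitative} and uses the source condition \eqref{eq:source_var} together with the decomposition structure in \autoref{as:asslw} to pin down the orders $k_*=\mathcal{O}(\delta^{-1})$ and $\norm{\x_{k_*}^\delta-\xdag}_\X=\mathcal{O}(\sqrt\delta)$.

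\emph{Phase 1.} For an iterate $\xkd\in\mathcal{B}_\rho(\xdag)$ I would expand
\[
	\norm{\xkpd-\xdag}^2-\norm{\xkd-\xdag}^2 = -2\inner{\op{\xkd}-\yd}{G^\delta[\xkd](\xkd-\xdag)}+\norm{G^\delta[\xkd]^*(\op{\xkd}-\yd)}^2 ,
\]
and then use $G^\delta[\xkd]=R_{\xkd}G^\delta[\xdag]$ with $\norm{R_{\xkd}-\id}\le c_2\norm{\xkd-\xdag}$, the bound $\norm{\opd{\xdag}-G^\delta[\xdag]}\le c_3\delta$, the tangential-cone-type estimate \eqref{eq:nlc1}, $\norm{\y-\yd}\le\delta$ and the scaling \eqref{eq:om}, to replace $G^\delta[\xkd](\xkd-\xdag)$ by $\op{\xkd}-\yd$ up to errors of size $\mathcal{O}(\delta\norm{\op{\xkd}-\yd})+\mathcal{O}(\norm{\xkd-\xdag}\norm{\op{\xkd}-\yd}^2)$, and to bound the quadratic term by $\bigl(1+\mathcal{O}(\rho)+\mathcal{O}(\delta)\bigr)\norm{\op{\xkd}-\yd}^2$. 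After shrinking $\rho$ so the $\mathcal{O}(\rho)$-constants drop below a fixed threshold, this collapses to an estimate of the form
\[
	\norm{\xkpd-\xdag}^2-\norm{\xkd-\xdag}^2 \le \norm{\op{\xkd}-\yd}\bigl(2\delta+\mathcal{O}(\rho)\norm{\op{\xkd}-\yd}-\norm{\op{\xkd}-\yd}\bigr).
\]
Since $\tau>2$, the right-hand side is negative whenever $\norm{\op{\xkd}-\yd}>\tau\delta$; hence all iterates up to $k_*$ remain in $\mathcal{B}_\rho(\xdag)\subset\mathcal{B}_{2\rho}(\x_0)$ (so \eqref{eq:lwm} is well-defined there), the error decreases monotonically, and telescoping — using $\delta<\tau^{-1}\norm{\op{\xkd}-\yd}$ on the relevant range to absorb the $2\delta$-term — gives $\sum_{k=0}^{k_*-1}\norm{\op{\xkd}-\yd}^2\le C\norm{\x_0-\xdag}^2$, in particular $k_*<\infty$.

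\emph{Phase 2.} Here the source condition $\xdag-\x_0=\opd{\xdag}^*\omega$, $L\norm{\omega}<1$, enters (recall $\x^0=\x_0$). Setting $A:=\opd{\xdag}$, the factorization $G^\delta[\xkd]=R_{\xkd}G^\delta[\xdag]$ together with \eqref{eq:nlc1}, \eqref{eq:nlc2}, \eqref{eq:om} rewrites the error recursion $e_{k+1}^\delta=e_k^\delta-G^\delta[\xkd]^*(\op{\xkd}-\yd)$ as a \emph{perturbed linear Landweber step} $e_{k+1}^\delta=(\id-A^*A)e_k^\delta+A^*(\yd-\y)+r_k$, where $\norm{r_k}$ is controlled by the quadratically small quantities coming from $\norm{R_{\xkd}-\id}\le c_2\norm{\xkd-\xdag}$, from \eqref{eq:nlc1}, and from the $c_3\delta$-mismatch in \eqref{eq:nlc2}. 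Iterating from $e_0=-A^*\omega$ and using the telescoping identity $\sum_{j=0}^{k-1}(\id-AA^*)^jAA^*=\id-(\id-AA^*)^k$, the noise contribution to the residual $\op{\xkd}-\yd\approx Ae_k^\delta+(\y-\yd)$ stays uniformly $\mathcal{O}(\delta)$, while the bias contribution equals $-AA^*(\id-AA^*)^k\omega$ up to the accumulated $r_j$-terms; the spectral bound $\sup_{\mu\in[0,1]}\mu(1-\mu)^k=\mathcal{O}(k^{-1})$ then gives, via an induction in which the smallness of $\norm{\omega}$ and $\rho$ absorbs the accumulated remainders, $\norm{\op{\xkd}-\yd}\le \mathcal{O}(k^{-1})\norm{\omega}+\mathcal{O}(\delta)$. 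Because $\tau>2$, the right-hand side drops below $\tau\delta$ already at some $k=\mathcal{O}(\delta^{-1})$, so $k_*=\mathcal{O}(\delta^{-1})$. Finally, in the representation $e_{k_*}^\delta=-A^*(\id-AA^*)^{k_*}\omega+(\text{noise})+(\text{remainders})$, the bias term has norm $\mathcal{O}(k_*^{-1/2})\norm{\omega}$, the noise term norm $\mathcal{O}(\sqrt{k_*}\,\delta)$, and the remainders are of the same orders; with $k_*=\mathcal{O}(\delta^{-1})$ all three are $\mathcal{O}(\sqrt\delta)$, whence $\norm{\x_{k_*}^\delta-\xdag}_\X=\mathcal{O}(\sqrt\delta)$.

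The main obstacle is Phase 2, specifically the induction controlling the accumulated nonlinear remainders. As the text emphasizes, for the unmodified Landweber iteration the source condition alone yields \emph{no} rate, so one must genuinely exploit the factorization $G^\delta[\x]=R_\x G^\delta[\xdag]$ to present the scheme as a perturbation of linear Landweber for $A=\opd{\xdag}$, and then show that the quadratic terms (from $c_2\norm{\xkd-\xdag}$ and from \eqref{eq:nlc1}) together with the $c_3\delta$-mismatch between $G^\delta[\xdag]$ and $\opd{\xdag}$ do not degrade the $\mathcal{O}(k^{-1})$ residual decay uniformly over $k\le k_*$. Closing that induction is exactly where the quantitative smallness — $L\norm{\omega}<1$ and a sufficiently small $\rho$ — is consumed; once the residual decay is in hand, the stated orders of $k_*$ and of $\norm{\x_{k_*}^\delta-\xdag}_\X$ follow by the bookkeeping above.
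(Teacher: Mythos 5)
Your overall strategy --- recast \autoref{eq:lwm} as a perturbed linear Landweber step for $A=\opd{\xdag}$, propagate the source representation $e_0=\x_0-\xdag=-A^*\omega$ through the recursion, and close an induction over the accumulated nonlinear remainders coming from \autoref{eq:nlc1} and \autoref{eq:nlc2} --- is the standard route and matches in spirit the proof in the cited source (Theorem 2.13 of \cite{KalNeuSch08}); the paper itself prints no proof and defers entirely to that reference, so the only comparison available is with the cited argument. Phase 1 (monotonicity, finiteness of $k_*$) and the derivation of $k_*=\mathcal{O}(\delta^{-1})$ from a residual decay of the form $\norm{\op{\xkd}-\yd}\le C_1 k^{-1}\norm{\omega}+C_2\delta$ with $C_2<\tau$ are sound in outline.

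There is, however, a genuine error in your concluding step. You bound the bias part of $e_{k_*}$ by $\mathcal{O}(k_*^{-1/2})\norm{\omega}$ and assert that $k_*=\mathcal{O}(\delta^{-1})$ makes this $\mathcal{O}(\sqrt{\delta})$. That inequality points the wrong way: $k_*\le C\delta^{-1}$ gives $k_*^{-1/2}\ge\sqrt{\delta/C}$, i.e.\ a \emph{lower} bound on the bias term, and if the discrepancy principle fires early (say $k_*=0$ because $\norm{\op{\x_0}-\yd}\le\tau\delta$ already) your estimate degenerates to $\mathcal{O}(1)$. The repair --- and the way the cited proof actually concludes --- is a duality argument: carry through the induction the representation $e_k=-A^*z_k+(\text{controlled corrections})$ with $\norm{z_k}$ bounded uniformly in $k\le k_*$ and in $\delta$, and then estimate $\norm{e_{k_*}}^2\le\abs{\inner{Ae_{k_*}}{z_{k_*}}}+\ldots\le\norm{Ae_{k_*}}\,\norm{z_{k_*}}+\ldots$, where $\norm{Ae_{k_*}}\le\norm{\op{\x_{k_*}^\delta}-\yd}+\delta+(\text{h.o.t.})\le(\tau+1)\delta+(\text{h.o.t.})$ by the stopping rule \autoref{eq:disc} and \autoref{eq:nlc1}. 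This yields $\norm{e_{k_*}}^2=\mathcal{O}(\delta)$ regardless of how small $k_*$ is. Your $\mathcal{O}(\sqrt{k_*}\,\delta)$ noise bookkeeping and the $k^{-1}$ residual decay can stay as they are; only this last inference must be replaced. The induction you yourself flag as the main obstacle is indeed where the smallness of $\norm{\omega}$, $c_1$, $c_2$, $c_3$ and $\rho$ is consumed, but since you identify its structure correctly I regard the faulty final inequality, not the unexecuted induction, as the concrete gap.
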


\subsection{Iteratively regularized Gauss-Newton-method (\IRGN)} \label{se:itrgn}
In this section we deal with the iteratively regularized Gauss-Newton-method\index{Gauss-Newton-method!iteratively regularized!\IRGN}
\begin{equation} \label{eq:itrgn}
	\begin{aligned}
		\xkpd = & \xkd - (\opd{\xkd}^*\opd{\xkd}+\alpha_k \id)^{-1}\bigl( \opd{\xkd}^*(\op{\xkd}-\y^\delta) \\
		& \qquad \quad + \alpha_k(\xkd-\x_0)\bigr)\,,
	\end{aligned}
\end{equation}
where $\alpha_k$ is a sequence of positive numbers tending towards zero.
This method was introduced in \cite{Bak92}.

Note that the approximate solution $\xkpd$ minimizes the Tikhonov functional for a linear operator $\opd{\xkd}$
\begin{equation*}
	\mathcal{T}_{\footnotesize{\alpha,\y^\delta + \opd{\xkd}\xkd}}[\x]
     =\norm{\op{\xkd}-\y^\delta-\opd{\xkd}(\x-\xkd)}_\Y^2+\alpha_k\norm{\x-\x_0}_\X^2.
\end{equation*}

This means that $\xkpd$ minimizes the Tikhonov functional where the nonlinear
operator $\opo$ is linearized around $\xkd$ (cf.~\cite[Chapter 10]{EngHanNeu96}).

We emphasize that for a fixed number of iterations the process \autoref{eq:itrgn} is a
stable method if $\opo'$ is Lipschitz-continuous.

\subsection{Convergence and stability of \IRGN}
A convergence and stability analysis, which does not making use of an a prior source condition for $\xdag$, such as \autoref{eq:source_var}, requires structural properties on $\opo$ as \autoref{rg1} in Case 1 below. This is similar as for the Landweber-iteration, \autoref{as:asslw}.

Under the source condition \autoref{eq:source_var} the structural conditions are not necessary, instead a standard Lipschitz-continuity of $\opo'$ is required. Here the analysis is different than to the Landweber-iteration.

The relevant conditions are as follows (see \cite{KalNeuSch08}):

\begin{assumption} \label{as:assrgn}
	The following general assumptions hold:
	\begin{enumerate}
		\item There exists $\rho>0$ such that $\mathcal{B}_{2\rho}(\x_0) \subset \mathcal{D}(F)$.
		\item \autoref{eq:op} has an a minimum norm solution  $\xdag$ in $\mathcal{B}_\rho(\x_0)$.
		\item $\opo$ is Fr{\'e}chet-differentable in $\mathcal{D}(F)$.
	\end{enumerate}
	There exists $r>1$ such that the sequence $(\alpha_k)_{k \in \N_0}$ in \autoref{eq:itrgn} satisfies
	\begin{equation} \label{alpb}
		\alpha_k>0\,,\; 1\leq \frac{\alpha_k}{\alpha_{k+1}}\leq r\,,\; \lim_{k \to \infty}\alpha_k=0\;.
	\end{equation}
	In addition either one of the following two cases holds:
	\begin{description}		
		\item{Case 1: the source condition \autoref{eq:source_var} is not satisfied:}
		The Fr\'echet-derivative $\opo'$ satisfies the following structural conditions
		\begin{equation} \label{rg1} \begin{aligned}
				\opo'[\tilde{\x}] &= R[\tilde{\x},\x]\opo'[\x]+Q[\tilde{\x},\x]\,,  \\
				\norm{\id-R[\tilde{\x},\x]} &\leq c_R\,, \\
				\norm{Q[\tilde{\x},\x]} &\leq c_Q\norm{\opo'[\xdag](\tilde{\x}-\x)}_\Y\,,
		\end{aligned} \end{equation}
		for $\x, \tilde{\x}\in\mathcal{B}_{2\rho}(\x_0)$, where $c_R$ and $c_Q$ are nonnegative constants.
		\item{Case 2: $\xdag-\x_0$ satisfies the source condition \autoref{eq:source_var}.}
		The Fr\'echet-derivative $\opo'$ is Lipschitz-continuous in
		$\mathcal{B}_{2\rho}(\x_0)$: That is, there exists some $L>0$ such that
		\begin{equation} \label{Lip}
			\norm{\opo'[\tilde{\x}]-\opo'[\x]}\leq L\norm{\tilde{\x}-\x}_\X,\qquad \x, \tilde{\x}\in\mathcal{B}_{2\rho}(\x_0)\;.
		\end{equation}
	\end{description}
\end{assumption}

\begin{remark}
	Similarly as in \autoref{re:deland} we can interpret \autoref{rg1} as a perturbation of the first decomposition case.
\end{remark}

\subsection{Convergence rates of \IRGN}
So far we considered the discrepancy principle \autoref{eq:disc} for early stopping. Originally in \cite{Bak92} an \emph{a--priori stopping rule} for \IRGN \index{stopping rule!a--priori}:
\begin{definition}[A--priori stopping] \label{de:apriori} For given noise level $\delta > 0$ and sequence $\set{\alpha_k:k \in \N}$ defining \IRGN we stop the iteration after $k_*=k_*(\delta)$ steps depending whether the
	source condition \autoref{eq:source_var} is satisfied or not:
	\begin{description}
		\item{Case 1:} We choose $\eta > 0$ and determine $k_*=k_*(\delta)$ such that
		\begin{equation}\label{stoprgb}
			k_*(\delta) \to \infty \text{ and } \eta \geq \frac{\delta}{\sqrt{\alpha_{k_*}}} \to 0
				\text{ as } \delta\to 0\;.
		\end{equation}
		\item{Case 2:} Then we choose some $\eta > 0$ and determine
		$k_*$ a-priori such that
              \begin{equation} \label{stoprga}
		        \eta \alpha_{k_*} \leq \delta < \eta\alpha_k
		        \text{ for all } 0 \leq k <k_*\;.
	          \end{equation}
    \end{description}
\end{definition}
Note that, due to \autoref{alpb}, this guarantees that
$k_*(\delta)<\infty$, if $\delta>0$ and that $k_*(\delta)\to\infty$ as
$\delta\to 0$. In the noise free case ($\delta=0$) we can set $k_*(0):=\infty$
and $\eta:=0$.

We will show in the next theorem that this a--priori stopping rule yields convergence and
convergence rates for the iteratively regularized Gauss-Newton-method \autoref{eq:itrgn} provided that $\norm{\omega}_\Y$, $c_R$,
$c_Q$, and $\eta$ are sufficiently small.

\begin{theorem} \label{rgrat1}
	Let \autoref{as:assrgn} hold and let $k_*=k_*(\delta)$ be chosen according
	to \autoref{stoprga} or \autoref{stoprgb} (depending on $\mu$). Moreover, we assume that $\x_0$ is close to $\xdag$ (the precise formulas can be found in \cite[Theorem 4.12]{KalNeuSch08}.
	Then
	\begin{equation*}
		\|\x_{k_*}^\delta-\xdag\|_\X=\left\{\begin{array}{ll} o(1) & \text{Case 1}, \\
			\mathcal{O}(\sqrt{\delta}) \qquad & \text{Case 2}. \end{array}\right.
	\end{equation*}
	For the noise free case $(\delta=0)$ we obtain that
	\begin{equation*}
		\|\xk-\xdag\|_\X=\left\{\begin{array}{ll} o(1) & \text{Case 1}\,, \\
			o(\sqrt{\alpha_k}) & \text{Case 2}\,, \end{array}\right.
    \text{ and }
		\norm{\op{\xk}-\y}_\Y=\left\{\begin{array}{ll} o(\sqrt{\alpha_k}) &\text{Case 1}\,, \\
			\mathcal{O}(\alpha_k) & \text{Case 2}\,. \end{array}\right.
	\end{equation*}
\end{theorem}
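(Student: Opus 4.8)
The plan is to control the iteration error $e_k^\delta:=\xkd-\xdag$ through a single spectral-calculus recursion, which I then analyse separately in the two cases. Writing $A_k:=\opd{\xkd}$ and inserting $\xkd-\xdag=(A_k^*A_k+\alpha_k\id)^{-1}(A_k^*A_k+\alpha_k\id)(\xkd-\xdag)$ into \autoref{eq:itrgn} gives
\begin{multline}\label{eq:rgn-rec}
	e_{k+1}^\delta=\alpha_k(A_k^*A_k+\alpha_k\id)^{-1}(\x_0-\xdag)\\
	{}-(A_k^*A_k+\alpha_k\id)^{-1}A_k^*\bigl(\op{\xkd}-\op{\xdag}-A_k e_k^\delta\bigr)-(A_k^*A_k+\alpha_k\id)^{-1}A_k^*(\y-\yd)\,.
\end{multline}
The estimates rely on three ingredients: the standard spectral bounds $\norm{\alpha(A^*A+\alpha\id)^{-1}}\le1$, $\norm{(A^*A+\alpha\id)^{-1}A^*}\le(2\sqrt{\alpha})^{-1}$, $\norm{\alpha(A^*A+\alpha\id)^{-1}A^*}\le\tfrac12\sqrt{\alpha}$ and $\norm{\alpha AA^*(AA^*+\alpha\id)^{-1}}\le\alpha$; the linearisation bound, obtained by writing $\op{\xkd}-\op{\xdag}-A_ke_k^\delta=\int_0^1(\opd{\xdag+te_k^\delta}-\opd{\xkd})e_k^\delta\,dt$ and invoking \autoref{Lip} in Case 2 — in Case 1 the factorisation \autoref{rg1} plays this role, yielding $\norm{\op{\xkd}-\op{\xdag}-\opd{\xdag}e_k^\delta}\le(c_R+\tfrac{c_Q}2\norm{e_k^\delta})\norm{\opd{\xdag}e_k^\delta}$; and $\norm{\y-\yd}\le\delta$ together with the stopping rule, which for $k<k_*$ makes $\delta/\sqrt{\alpha_k}$ small (in Case 2, $\le\eta\sqrt{\alpha_k}$ by \autoref{stoprga}; in Case 1, $\le\eta$ by \autoref{stoprgb} and monotonicity of $(\alpha_k)$). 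Throughout one must verify inductively that $\xkd\in\mathcal{B}_\rho(\xdag)\subset\mathcal{B}_{2\rho}(\x_0)$, which is where the hypothesis that $\x_0$ is close to $\xdag$ enters.

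In Case 2 I would substitute the source condition $\x_0-\xdag=-\opd{\xdag}^*\omega$ into the first term of \autoref{eq:rgn-rec} and, before estimating, split $\opd{\xdag}^*=A_k^*+(\opd{\xdag}-A_k)^*$, so that this term splits into $\alpha_kA_k^*(A_kA_k^*+\alpha_k\id)^{-1}\omega$ (norm $\le\tfrac12\sqrt{\alpha_k}\norm{\omega}$) and a remainder of norm $\le L\norm{\omega}\norm{e_k^\delta}$. Collecting everything yields $\norm{e_{k+1}^\delta}\le a_k+L\norm{\omega}\norm{e_k^\delta}+\tfrac{L}{4\sqrt{\alpha_k}}\norm{e_k^\delta}^2+\tfrac{\delta}{2\sqrt{\alpha_k}}$ with $a_k\le\tfrac12\sqrt{\alpha_k}\norm{\omega}$. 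Using $1\le\alpha_k/\alpha_{k+1}\le r$ from \autoref{alpb} and the smallness of $L\norm{\omega}$ and $\eta$, an induction (with base case $\norm{\x_0-\xdag}\le c_*\sqrt{\alpha_0}$, $c_*$ the smaller root of the resulting quadratic) gives $\norm{e_k^\delta}\le c_*\sqrt{\alpha_k}$ for $0\le k\le k_*$. Since $\alpha_{k_*}\le\delta/\eta$, this yields $\norm{\x_{k_*}^\delta-\xdag}=\mathcal{O}(\sqrt{\delta})$; for exact data it gives $\norm{\xk-\xdag}=\mathcal{O}(\sqrt{\alpha_k})$ and hence $\xk\to\xdag$. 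To sharpen the latter to $o(\sqrt{\alpha_k})$ I would use norm-continuity of $\x\mapsto\opd{\x}$ together with dominated convergence in the spectral representation of $\opd{\xdag}\opd{\xdag}^*$ to show the leading term satisfies $a_k=o(\sqrt{\alpha_k})$, feed this back into the recursion, and apply the elementary fact that $u_{k+1}\le qu_k+\varepsilon_k$ with $q<1$, $\varepsilon_k\to0$ forces $u_k\to0$, to $u_k:=\norm{\xk-\xdag}/\sqrt{\alpha_k}$. Finally, applying $\opd{\xdag}$ to \autoref{eq:rgn-rec}, splitting $\opd{\xdag}$ off both $A_k$-factors and using $\norm{\alpha\opd{\xdag}\opd{\xdag}^*(\opd{\xdag}\opd{\xdag}^*+\alpha\id)^{-1}}\le\alpha$, gives $\norm{\op{\xk}-\y}=\mathcal{O}(\alpha_k)$.

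In Case 1 there is no source condition, so the first term of \autoref{eq:rgn-rec} does not vanish; instead I would use the factorisation $\opd{\xkd}=R[\xkd,\xdag]\opd{\xdag}+Q[\xkd,\xdag]$ from \autoref{rg1}, whose small constants $c_R,c_Q$ allow one to control the components of $\x_0-\xdag$ that survive the filter and to derive — in the exact-data case — a monotonicity/energy inequality $\norm{e_{k+1}}^2\le\norm{e_k}^2+(\text{summable remainder})$ together with a bound analogous to \autoref{corest}. This forces $(\xk)$ to be bounded with a weakly convergent subsequence; weak closedness of $\opo$, $\alpha_k\to0$ and the orthogonality characterisation \autoref{eq:orthc} of $\xdag$ (see \autoref{pr:pmns}) identify the weak limit as $\xdag$, and a minimality argument in the spirit of the proof of \autoref{th:NeuSch90} upgrades this to strong convergence of the whole sequence, i.e.\ $\norm{\xk-\xdag}=o(1)$; the residual estimate $\norm{\op{\xk}-\y}=o(\sqrt{\alpha_k})$ drops out of the same estimates. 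For noisy data I would then invoke that, for each fixed iteration index $n$, the map $\yd\mapsto\x_n^\delta$ is continuous, and that $k_*(\delta)\to\infty$ with $\delta/\sqrt{\alpha_{k_*}}\to0$ by \autoref{stoprgb}; a diagonal argument transfers the exact-data convergence to $\x_{k_*(\delta)}^\delta\to\xdag$, giving $o(1)$. I expect the main obstacle to be this Case 1: establishing \emph{qualitative} convergence of the undamped Gauss--Newton iteration without a source condition is genuinely delicate, because the first term of \autoref{eq:rgn-rec} is only $\mathcal{O}(1)$ and cannot be absorbed into a contraction — one must extract a Lyapunov-type monotonicity from \autoref{rg1} and then run a non-constructive subsequence/limit argument, with the smallness hypotheses on $\norm{\omega}$, $c_R$, $c_Q$, $\eta$ and $\norm{\x_0-\xdag}$ all coupled and needing careful balancing. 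A secondary subtlety is the bootstrap from $\mathcal{O}(\sqrt{\alpha_k})$ to $o(\sqrt{\alpha_k})$ in Case 2, which genuinely needs the dominated-convergence argument for the $A_k$-dependent source term rather than the crude bound $\tfrac12\sqrt{\alpha_k}\norm{\omega}$.
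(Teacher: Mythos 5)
The paper itself does not prove this theorem --- it states ``The proofs are rather technical and omitted here'' and defers to \cite[Theorems 4.12]{KalNeuSch08} --- so the comparison below is against that cited argument. Your treatment of Case 2 is essentially that proof: the error recursion you write for $e_{k+1}^\delta$ is the correct and standard starting point, the spectral bounds are right, the splitting $\opd{\xdag}^* = A_k^* + (\opd{\xdag}-A_k)^*$ combined with \autoref{Lip} is exactly how the source term is handled, the induction $\norm{e_k^\delta}\le c_*\sqrt{\alpha_k}$ closes via \autoref{alpb} and the smallness assumptions, and the upgrade to $o(\sqrt{\alpha_k})$ for exact data does indeed require the dominated-convergence argument in the spectral representation of $\opd{\xdag}\opd{\xdag}^*$ (after first swapping $A_k$ for $\opd{\xdag}$ at the cost of an $L\norm{\omega}\,\norm{e_k}$ term), not the crude bound $\tfrac12\sqrt{\alpha_k}\norm{\omega}$. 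The residual estimate is obtained as you describe.

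The gap is in Case 1. The iteratively regularized Gauss--Newton method is not a descent method: the first term of your recursion, $\alpha_k(A_k^*A_k+\alpha_k\id)^{-1}(\x_0-\xdag)$, is independent of $e_k^\delta$ and ``resets'' the error toward a filtered copy of $\x_0-\xdag$ at every step, so there is no inequality of the form $\norm{e_{k+1}}^2\le\norm{e_k}^2+(\text{summable})$ to be had, and no analogue of \autoref{corest} --- those are artifacts of the gradient/Landweber structure. A Fej\'er-monotonicity-plus-weak-subsequence argument therefore does not get off the ground here. The mechanism that actually yields $o(1)$ without a source condition is different and more direct: by \autoref{pr:pmns} (specifically \autoref{eq:orthc}) the minimum-norm solution satisfies $\xdag-\x_0\in\nsp{\opd{\xdag}}^\bot=\overline{\range{\opd{\xdag}^*}}$, and for any $w$ in that closure one has $\norm{\alpha(\opd{\xdag}^*\opd{\xdag}+\alpha\id)^{-1}w}\to 0$ as $\alpha\to 0$ (with no rate --- hence only $o(1)$ in the conclusion); the structural condition \autoref{rg1} is then used to transfer this from $\opd{\xdag}$ to $A_k$ up to errors with the small constants $c_R,c_Q$, and an induction absorbs the remaining terms. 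The residual rate $o(\sqrt{\alpha_k})$ in Case 1 falls out of the same spectral argument applied to $\norm{\alpha\opd{\xdag}(\opd{\xdag}^*\opd{\xdag}+\alpha\id)^{-1}w}$ for $w\in\nsp{\opd{\xdag}}^\bot$. Your passage from exact to noisy data via stability at fixed index and \autoref{stoprgb} is acceptable, but it cannot rescue the exact-data step, which is where your Case 1 argument needs to be replaced.
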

In \cite{Bak92} local convergence has been proved under a stronger source condition than
\autoref{eq:source_var}, which allowed to even prove the rate (compare with the rates in \autoref{rgrat1})
\begin{equation*}
	\|\xk-\xdag\|_\X=\mathcal{O}(\alpha_k)\,.
\end{equation*}
In \cite{BlaNeuSch97,KalNeuSch08} it was shown that convergence rates can be obtained under weak conditions.
Here this result is omitted for the purpose of unification of the book.

\begin{theorem} \label{rgrat2}
	Let \autoref{as:assrgn} hold. For Case 2 we assume on top that \autoref{rg1} holds. Let $k_*=k_*(\delta)$ be
	chosen according to \autoref{eq:disc} with $\tau>1$. Moreover, we assume $\x_0$ and $\xdag$ are close (the precise numbers can be found in \cite[Theorem 4.13]{KalNeuSch08}). Then
	\begin{equation*}
		\|\x_{k_*}^\delta-\xdag\|_\X=\left\{\begin{array}{ll}
			o(1) & \text{Case 1}\,, \\
			\mathcal{O}(\sqrt{\delta}) & \text{Case 2}\,. \end{array}\right.
	\end{equation*}
\end{theorem}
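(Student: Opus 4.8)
The plan is to run the standard fixed-point/induction argument for iteratively regularized Gauss--Newton methods with an a-posteriori stopping rule, parallel to the Landweber analysis in \autoref{lwmon}--\autoref{lwcnd} and as carried out in detail in \cite[Theorem~4.13]{KalNeuSch08}. Write $A_k:=\opd{\xkd}$, $A:=\opd{\xdag}$, $e_k:=\xkd-\xdag$, $r_k:=\op{\xkd}-\yd$, and let $v_k:=\op{\xkd}-\op{\xdag}-A_k e_k$ be the linearization remainder. Subtracting $\xdag$ from \autoref{eq:itrgn} and inserting $(A_k^*A_k+\alpha_k\id)^{-1}(A_k^*A_k+\alpha_k\id)$ in front of $e_k$ yields the basic identity
\begin{equation*}
	e_{k+1} = \alpha_k(A_k^*A_k+\alpha_k\id)^{-1}(\x_0-\xdag) + (A_k^*A_k+\alpha_k\id)^{-1}A_k^*\bigl((\yd-\y)-v_k\bigr)\,.
\end{equation*}
I would estimate the two terms with the spectral filter bounds $\norm{\alpha(A_k^*A_k+\alpha\id)^{-1}}\le 1$, $\norm{(A_k^*A_k+\alpha\id)^{-1}A_k^*}\le \tfrac{1}{2\sqrt\alpha}$, $\norm{\alpha(A_k^*A_k+\alpha\id)^{-1}A_k^*}\le\tfrac{\sqrt\alpha}{2}$, combined with the structural estimates of \autoref{rg1}: from $A_k=R[\xkd,\xdag]A+Q[\xkd,\xdag]$ one gets $\norm{A-A_k}\le c_R+c_Q\norm{A e_k}_\Y$ and $\norm{v_k}_\Y\lesssim (c_R+c_Q\norm{e_k})\,\norm{\op{\xkd}-\op{\xdag}}_\Y$. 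In Case~2 one additionally substitutes $\x_0-\xdag=-A^*\omega$ from \autoref{eq:source_var}; the resulting term $\alpha_k(A_k^*A_k+\alpha_k\id)^{-1}A^*\omega$ is treated by repeatedly writing $A^*=A_k^*+(A^*-A_k^*)$, turning it into a geometric series (convergent since $c_R<1$) whose leading contribution is $\mathcal{O}(\sqrt{\alpha_k}\,\norm\omega)$ and whose tail is of higher order in $\norm{e_k}$.

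\emph{Step 1 (iterates stay in the ball; coupled recursions).} Assuming $\x_0$ is close enough to $\xdag$ and $\norm\omega,c_R,c_Q$ are small (the explicit thresholds are in \cite[Theorem~4.13]{KalNeuSch08}), I would prove by induction on $k\le k_*$ that $\xkd\in\mathcal{B}_\rho(\xdag)\subseteq\mathcal{B}_{2\rho}(\x_0)$ — so that \autoref{eq:itrgn} is well defined and all of \autoref{rg1} applies — while simultaneously establishing (i) a decay/monotonicity estimate for $\norm{r_k}_\Y$ valid as long as $\norm{r_k}_\Y>\tau\delta$, and (ii) an error recursion. In Case~2 the latter reduces, after absorbing the higher-order terms via the induction hypothesis, to $\norm{e_{k+1}}\le C_1\sqrt{\alpha_k}\,\norm\omega+C_2\,\delta/\sqrt{\alpha_k}$; in Case~1 it is of contractive type $\norm{e_{k+1}}\le q\,\norm{e_k}+\cdots$ with $q<1$ as long as $\norm{r_k}_\Y\gg\delta$. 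The three ingredients — containment, residual decay, error control — must be propagated together, since each is used to close the other two at index $k+1$; getting this coupled induction to close, with a mutually consistent choice of the smallness constants, is the main obstacle.

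\emph{Step 2 (stopping index and the Case~2 rate).} From the residual decay in Step~1 and \autoref{eq:disc}, $k_*=k_*(\delta)<\infty$ for $\delta>0$ and $k_*(\delta)\to\infty$ as $\delta\to0$, using $\alpha_k\to0$ and $\alpha_k/\alpha_{k+1}\le r$ from \autoref{alpb}. In Case~2 the source condition forces the residuals to decay like $\alpha_k$ up to an $\mathcal{O}(\delta)$ term, so $\norm{r_{k_*}}_\Y\le\tau\delta$ gives $\alpha_{k_*}\lesssim\delta$ and $\norm{r_{k_*-1}}_\Y>\tau\delta$ gives $\alpha_{k_*-1}\gtrsim\delta$; together with $\alpha_{k_*-1}\le r\alpha_{k_*}$ this yields $\alpha_{k_*}\sim\delta$. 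Substituting into the Step~1 error recursion gives $\norm{\x_{k_*}^\delta-\xdag}=\mathcal{O}\bigl(\sqrt{\alpha_{k_*}}+\delta/\sqrt{\alpha_{k_*}}\bigr)=\mathcal{O}(\sqrt\delta)$, the Case~2 claim.

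\emph{Step 3 (Case~1).} Here there is no rate, so I would argue as for Landweber (\autoref{lwced}, \autoref{lwcnd}): for exact data the IRGN iterates converge, $\xk\to\xdag$ (the $\delta=0$ statement of \autoref{rgrat1}, driven by \autoref{rg1}); for each fixed $n$ the map $\yd\mapsto\x_n^\delta$ is continuous, so $\x_n^\delta\to\x_n$ as $\delta\to0$; and the residual monotonicity from Step~1 controls how $k_*(\delta)$ grows. A subsequence argument then finishes: if $\norm{\x_{k_*(\delta_j)}^{\delta_j}-\xdag}\not\to0$ along some $\delta_j\to0$, either $k_*(\delta_j)$ stays bounded, which is ruled out by stability together with $k_*(\delta)\to\infty$, or $k_*(\delta_j)\to\infty$, which is ruled out by exact-data convergence combined with the uniform-in-$k$ closeness estimates of Step~1 — contradiction. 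Hence $\norm{\x_{k_*}^\delta-\xdag}=o(1)$, completing the proof.
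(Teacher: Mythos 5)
The paper does not prove this theorem: it states the result, remarks that ``the proofs are rather technical and omitted here,'' and defers entirely to \cite[Theorem~4.13]{KalNeuSch08}. Your outline reproduces the standard argument from that reference — the error identity $e_{k+1}=\alpha_k(A_k^*A_k+\alpha_k\id)^{-1}(\x_0-\xdag)+(A_k^*A_k+\alpha_k\id)^{-1}A_k^*((\yd-\y)-v_k)$, spectral filter bounds plus \autoref{rg1}, the coupled induction, $\alpha_{k_*}\sim\delta$ from \autoref{eq:disc}, and the stability/subsequence argument for Case~1 — so it is the same approach as the paper's cited source; the only caveat is that the genuinely technical part (closing the coupled induction with mutually consistent smallness constants on $\norm{\omega}$, $c_R$, $c_Q$) is described rather than carried out, which is precisely the part the paper also omits.
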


Summarizing the convergence rates results in a table gives a better impression of the challenges (see \autoref{tab:crates}):

\begin{table}[h]
	\begin{tabular}{r || c | c | c}
		Iterative method & Rate & Source condition &  Structural property $\opo$\\
		\hline
		\hline
		Landweber-& $o(\delta)$ & --- & \autoref{eq:nlc}\\
		& $\sqrt{\delta}$ &\autoref{eq:source_var} & \autoref{eq:sc1}\\
		\hline
		\IRLI  & $o(\delta)$ & --- & \autoref{rg1}\\
		& $\sqrt{\delta}$ &\autoref{eq:source_var} & \autoref{eq:lipschitz}\\
		\hline
		\IRGN  & $o(\delta)$ & --- & \autoref{rg1}\\
		& $\sqrt{\delta}$ &\autoref{eq:source_var} & \autoref{eq:lipschitz}\\
		\hline\hline
	\end{tabular}
	\caption{\label{tab:crates} The method, the used source conditions on $\x_0=\x^0$ and the essential structural properties on $\opo$ giving a rate. One sees that stronger source conditions allow to decrease the strength of the structural properties condition for \IRLI and \IRGN.}
\end{table}	

The proofs are rather technical and omitted here. It is however important to note that structural properties of $\opo$ can be avoided if $\xdag-\x_0$ satisfies a source condition.

\section{Open research questions}
The theory of iterative regularization methods is quite developed but not complete. One open research question concerns whether it is possible to prove convergence of iterative regularization methods if only approximations of the operator $\opo$ are available.
\begin{opq} \label{opq:FFN} The open challenge concerns the derivation of error estimates, like for Tikhonov regularization (see \autoref{th:NeuSch90} and \autoref{th:NeuSch90b}) for iterative regularization methods under error perturbation of $\opo$. Therefore, we aim for estimates for $\|\xkd - \x_k^{\delta,{\ttn}}\|_\X$, where $\x_k^{\delta,{\ttn}}$ are the iterates of the Landweber-iteration with an approximating operator $\opo_\ttn$:
	\begin{equation} \label{eq:landweber_m}
		\x_{k+1}^{\delta,{\ttn}} = \x_{k}^{\delta,{\ttn}} - F_\ttn'[\x_{k}^{\delta,{\ttn}}]^*\left( F_\ttn[\x_{k}^{\delta,{\ttn}}] - \y^\delta \right)\;.
	\end{equation}
	$F_\ttn$ could be a constructive finite element approximation or a trained operator (see \autoref{ex:c_reconstruct} for motivation). Neither for Landweber-nor the iteratively regularized Landweber-iteration such estimates could be derived so far. The only methods which provide an analysis are \emph{multi-scale methods} (see \cite{Sch98b,BerHooFauSch16,HooQiuSch12,HooQiuSch15,Kal06,Kal08}) and \emph{frequency swapping methods} \cite{BerHooFauSch16,BerHooFraVesZha17}.
\end{opq}

\begin{opq} It is interesting to note that convergence rates results for the Landweber-iteration are proven with source conditions together with even stronger conditions than the tangential cone condition \autoref{eq:nlc}, which is used to prove pure
	convergence (see \cite{HanNeuSch95,KalNeuSch08}). For the iteratively regularized Gauss-Newton-\cite{Bak92} and iteratively regularized Landweber-iteration \cite{Sch98} this behavior is opposite, namely the conditions on $\opo$ get standard (weak) Fr{\`e}chet-differentiability when the prior $\x^0=\x_0$ satisfies a source condition and are strongest, when no source condition holds. The question is whether the theory of both classes of methods can be unified? The comparison of different iterative methods are summarized in \autoref{tab:crates}.

\end{opq}

\section{Further reading and outlook}
A relatively complete \emph{deterministic regularization theory} of variational methods for solving nonlinear ill--posed problems is documented in the books \cite{Mor84,Gro84,Hof86,Bak92,BakGon94,EngHanNeu96,KalNeuSch08,SchGraGroHalLen09,SchuKalHofKaz12,Kab12,SchGraGroHalLen09,VesLeG16}. In the context of \emph{early vision} we mention \cite{BerPogTor88}. The theory of regularization methods for solving linear ill--posed problems in Hilbert spaces in summarized for instance in \cite{Gro84,Gro11,Han17}, which present the technical analysis in a very appealing manner. An excellent exposition developing regularization methods for particular inverse imaging examples is \cite{BerBoc98}.

The first analysis of Tikhonov regularization for solving nonlinear ill--posed problems was published simultaneously in \cite{EngKunNeu89,SeiVog89}. The importance of the prior paper is that it provides the first \emph{quantitative results}.
Later this have been extended in \cite{EngHanNeu96}. While the theory in the above referenced work is valid for Hilbert space $\X$ and $\Y$, the books \cite{SchGraGroHalLen09,SchuKalHofKaz12} formulate some according results in Banach spaces and even beyond.

Wavelets as ansatz functions in regularization methods have been considered in \cite{FreSchn98,DauDefMol04,MaaRie97}, typically with Besov space norm regularization (see \autoref{sec:besov}).
\begin{remark}\label{re:ObmSchwHal21} Neural network ansatz functions together with Tikhonov regularization have been investigated, for instance, in \cite{BurEng00} for solving inverse problems and in \cite{PogGir90} for denoising. Later, they were given the name \emph{synthesis networks}\index{synthesis networks} in \cite{ObmSchwHal21}. There, elements $\x \in \X$ are parametrized by means of a map $\Psi: \Xi \subset \R^{\dimlimit} \to \X$.
	Then, instead of $\T_{\alpha,\y^\delta}$, the functional
	\begin{equation} \label{eq:Tiksyn}
		\vp \mapsto \norm{\op{\Psi[\vp]}-\y^\delta}_{\Y}^2+\alpha\norm{\vp-\vp^0}_2^2
	\end{equation}
	is minimized.
	The concept of reparametrization in regularization is much older and was studied already in \cite{ChaKun93}.
	Existence, stability and convergence follow immediately if the composition $\opo \circ \Psi$ satisfies the general \autoref{as:ip:weakly-closed} with the space $\X$ replaced by $\Xi \subseteq \R^{\dimlimit}$.
	Examples of parametrizations are given in \autoref{de:affinenns}.
	
	In the context parametrized representations of functions one often looks for solution, which can be represented by a few parameters. This is achieved by \emph{sparsity regularization}\index{regularization!sparsity}, such as
	\begin{equation} \label{eq:Tiksynl1}
		\vp \mapsto \norm{\op{\Psi[\vp]}-\y^\delta}_{\Y}^2+\alpha\norm{\vp}_{1}\,.
	\end{equation}
	This method has been studied extensively in the literature. See for instance \cite{Can06,Can08,CanRomTao06b,DauDefMol04,ChaPoc11} - a particular emphasize is on fast reconstruction.
	Convergence rates of sparsity regularization based on source conditions were established in \cite{GraHalSch11,Gra10b,Gra20}. These papers, in particular link source conditions and the \emph{restricted isometry property}\index{restricted isometry property} of \cite{CanRomTao06b}.
    Algorithms for solving the problem are LASSO (see for instance \cite{Tib96b}), ISTA, which is proximal gradient method (see for instance \cite{DauDefMol04,ComPes07}) and FISTA (see for instance \cite{BecTeb09}).
\end{remark}

The theory of \emph{iterative Tikhonov regularization}\index{Tikhonov regularization!iterative} is well-understood for linear inverse problems (see \cite{Gro84}). It is well-known for linear inverse problems that iterated Tikhonov regularization can overcome the \emph{saturation effect} of Tikhonov regularization: The best convergence rates for Tikhonov-regularization are $\mathcal{O}(\delta^{2/3})$, a rate which has also been proven for nonlinear inverse problems in \cite{Neu89}. Source conditions in weak form have been developed in \cite{EngZou00}. Tikhonov regularization for nonlinear inverse problems have been analyzed in \cite{Sch93b}. This method can overcome the saturation result of Tikhonov regularization, where $\mathcal{O}(\sqrt{\delta})$ is optimal (see \cite{Gro83,Gro84}). To overcome the saturation is also the topic of iterative Bregman regularization for learning (see for instance \cite{GruHolLehHocZel24_report}).

Convergence rates for finite dimensional approximations of Tikhonov regularization in Banach spaces have been published in \cite{PoeResSch10}.

Instead of evaluating $\opo$ exactly one can formulate a constraint $\y = F[\x]$ and solve for a constrained Tikhonov functional. It is often referred to as an \emph{all-at-once} approach\index{all-at-once approach} (see for instance \cite{Kal16,KalNguSch21}) and bilevel optimization \cite{ChaKun94a,ItoKun00,ItoKun99}. The all-at-once approach also allows for convergence rates results.

The main interest of this chapter was in an analysis of regularization methods in an infinite dimensional setting. Iterative methods for solving \emph{linear} ill-posed problems have been extensively studied in the infinite dimensional setting, see for instance \cite{Han95,Gro84,EngHanNeu96}. For nonlinear ill-posed problems the literature is much sparser: We refer to the books \cite{KalNeuSch08,BakGon89,BakKok04}. A Levenberg-Marquardt iteration has been analyzed in \cite{Han97}. More on gradient descent methods can be found in \cite{ReaJin20}. \cite{Ram99} considered \autoref{eq:lwm} where  $\opo$ is approximated during the iteration by operators $F_\ttn$ which are possibly easier to evaluate. For the analysis of iterative methods for solving finite dimensional linear inverse problems we refer to \cite{Han10}. The analysis of iterative regularization methods for solving linear inverse problems in infinite dimensional Hilbert spaces has been developed for instance in \cite{Gro84,Han95,Gro11,Gil77}. Regularization theory of iterative methods for solving nonlinear inverse problems has been developed in \cite{Bak92,HanNeuSch95,Sch96,Bla96,BlaNeuSch97,BakKok04,KalNeuSch08,SchGraGroHalLen09,SchuKalHofKaz12}.
More recent references on Landweber-types methods in Banach-spaces are published in \cite{Jin25,HuaJinLuZha25}, containing also steepest descent and minimal error methods, which have been proposed in an Hilbert space setting in \cite{Sch96}.

A recent trend also concerns the analysis of dynamical flow for solving linear inverse problems, which probably goes back to \cite{Sho67}. In contrast to \emph{scale space methods}\index{scale space}, which are often used for filtering (see for instance \cite{AlvGuiLioMor93,AlvLioMor92,Wei97a,Wei98}) regularization methods lead to \emph{inverse scale space methods}\index{inverse scale space}, introduced in \cite{SchGro01}, which has been extended from the Hilbert space norm setting to the Bregman setting in
\cite{Gil18,BurGilOshXu06,ShiOsh08b,LieNor07b,BurOshXuGil05,BurGilMoeEckCre16}.

More recent advances are \cite{BotDonElbSch22,JinHua24,GroSch00,Tau94,HocHonOst09,JinWan18,JinLu14,ZhaHof20,KalNeuRam02}. Several of this work originated from \cite{Nes05,AttChbPeyRed18,AttPeyRed16,Nes83,Pol64}. A convergence analysis for a continuous
analogue of the iteratively regularized Gauss-Newton-method was derived in \cite{KalNeuRam02}.
	
\emph{Source conditions} like \autoref{eq:source_var}, \autoref{eq:source_var_l} play a significant role in the quantitative analysis of regularization methods and occur in various forms (see \cite{Hof93,HofKalPoeSch07,HofMat07,Neu97,Mat04,HeiHof09,HofSch98,SchGraGroHalLen09,EngKunNeu89,Fle11a,HohWei17a}). Source conditions related to machine learning have been considered for instance in \cite{BenBubRatRic24}.

\emph{Over-smoothing} is a phenomenon, which appears when the solution $\x_* \notin \X$, in other words the regularization norm is smoother than the solution $\x_*$; Note that in this case we do not have a minimum-norm solution. In this case convergence can still be proven if the operator $\opo$ satisfies Hilbert scales. See for instance \cite{Nat84,HofHofMatPla21,HofKinMat19,HofMat20}. Similar analysis is also performed with \emph{weak noise}\index{weak noise} (see \cite{EggLarNas09,EggLarNas12}).

Machine learning techniques have been introduced to learn the step sizes $\set{\mu_k: k \in \N_0}$ of gradient descent methods (see \cite{AdlOkt17,AdlOkt18}, where more general methods have been considered)
\begin{equation} \label{eq:gan_landweber}
	\xkpd = \xkd - \mu_k \opd{\xkd}^* (\op{\xkd} - \y^\delta)\;.
\end{equation}

Variational methods for solving \emph{discrete inverse problems} have been studied in (see \cite{Han10}). Bayesian formulations (see \cite{KaiSom05,CalSom18,CalSom23}) result in discrete inverse problems although of course can solve statistical challenges. Data driven ideas are included, for instance to derive adequate regularization and fit-to-data functionals, such as
\begin{equation*}
	\mathcal{T}_{\y^\delta,E}[\x] := \norms{F[\x] -\y^\delta}_\Y^2 + \norms{\x-\x^0}_E^{2}
\end{equation*}
where $E$ is some learned semi-norm (see for instance \cite{Stu10,KaiSom05}). It is interesting to note that when regularization methods are generalized from functions to random-variables the same convergence rates conditions apply see \cite{EngHofKin05} and \autoref{sec: Polregres}.

Variational regularization also plays a big role in \emph{data assimilation} (see \cite{NakPot15}).

We emphasize in this book on \emph{nonlinear} inverse problems in Hilbert spaces: The restriction to Hilbert spaces is certainly a limitation, but it has been chosen on purpose to limit the complexity of regularizations methods to the nonlinearity of the operator. The extension of classical regularization theory to Banach spaces has been considered for instance in \cite{SchuKalHofKaz12,SchGraGroHalLen09,Res05,BonKazMaaSchoSchu08} and in particular for sparsity regularization and compressed sensing\index{compressed sensing} (see for instance \cite{GraHalSch08,BonBreLorMaa07,CanRomTao06,CanRomTao06b,DauDefMol04,Don06,Ram08,Lor08,Tro06} to mention but a few).

\chapter[Neural network functions]{Neural network functions and approximations} \label{ch:nn}
In this chapter we define generalized neural network functions and derive approximation results, which can be used as basis for efficient implementations of regularization methods (see \autoref{ex:c_reconstruct}). 

Early approximation theoretical results of neural networks where for \emph{shallow neural networks}\index{neural network!shallow}, while 
recently it became popular to study \emph{deep neural networks}\index{neural network!deep}:
\begin{definition}[Deep and shallow networks]\label{de:network}
	Let $\sigma: \R^{\hat{m}} \to \R^n$ be a function and let $A_\rho: \R^m \to \R^{\hat{m}}$, where $\rho$ is a parametrization vector; See \autoref{ta:nn}, where we see several examples and one also sees that the vector can be chosen from a countable or uncountable family of elements, or with mixed components. A \emph{neural network ansatz function} $N$ \index{neural network!ansatz function} is composed of an activation function $\sigma$ and a transformation $\rho$, typically an affine linear function (see for instance \autoref{eq:classical_approximation}).
	\begin{table}[H]
		\begin{center}
			\begin{tabular}{r|l}
				\hline
				activation function $\sigma$ & transformation $A_\rho$\\
				\autoref{ss:act} & \autoref{se:nnf} \\
				\hline
			\end{tabular}
			\caption{\label{ta:nn2} A neural network is the composition of an activation function and a transformation. Several options will be discussed in the following subsections.}
		\end{center}  
	\end{table} 
	\begin{itemize}
		\item The family of \emph{shallow} neural network functions consists of functions
		\begin{equation*}
			\vs \in \R^m \mapsto N(\vs) := \sum_{i \in \N} a_i \sigma(A_{\rho_i}(\vs)) \in \R^n\,,
		\end{equation*}
		presumably the limit exists.
		\item A \emph{deep} neural network is a composition of shallow neural networks:
		\begin{equation*}
			\vs \in \R^m \mapsto  N_{\tilde{m}}\left(N_{\tilde{m}-1}\left(\ldots N_1(\vs)\right)\right)\;.
		\end{equation*}
	\end{itemize}
\end{definition}
The starting point of mathematics of neural networks is difficult to be traced back to a single paper. However, a few ground breaking publications on shallow neural networks (in the spirit of \autoref{de:network}) stand out:
\begin{enumerate}
	\item Norbert Wiener \cite{Wie32} made a significant and inspiring contribution: He answered the following research question:\footnote{Norbert Wiener is considered the father of \emph{cybernetics}\index{cybernetics}, which has its origins in interdisciplinary interactions between numerous disciplines around 1940. Initial developments included already neural networks, information theory and control (see \cite{RosWieBig43}).}
	\bigskip\par
	\fbox{
			\parbox{0.90\textwidth}{
	\begin{center}
		When can an arbitrary univariate function from $\R$ to $\R$ or $\C$ be approximated by linear combinations of \emph{translates} of a \emph{single function} $\sigma$ with arbitrary accuracy?
	\end{center}}}
\bigskip\par
    These answers to such questions are commonly called \emph{Tauber-Wiener-theorems}.\index{Theorem!Tauber-Wiener}
  \bigskip\par
    	\fbox{
    	\parbox{0.90\textwidth}{
    		\begin{center}
    The choice of the function $\sigma$ is crucial in mathematical theory and therefore we try to give a rather complete picture on usable and reasonable functions.
    	\end{center}}}
\bigskip\par
    \item \emph{Wavelet bases} (see \cite{Dau88,Dau92,Mey93}) approximate a function with arbitrary precision by	
    \bigskip\par
    \fbox{
    	\parbox{0.90\textwidth}{
    \begin{center}
    	\emph{scalings} and \emph{translations} of a single function.
    \end{center}}}
\bigskip\par
    In wavelet theory there exist two approaches:	

    \begin{enumerate}
    	\item \emph{The continuous wavelet transform} (see \cite{Mey93}), is a continuous transforms of the function $\x$ into a function of all possible scalings and translations (which is an uncountable set), and the
    	\item \emph{discrete wavelet transform} (see \cite{Dau92}), which maps the function $\x$ into a function of dyadic scalings and translations (which is a countable representation).\index{wavelet transform!continuous}\index{wavelet transform!discrete}
    \end{enumerate}
\end{enumerate}
The relevance for efficient implementation of neural network and wavelet based algorithms (for instance on hardware architecture \cite{BocFonKut23}) is the possibility of approximation of an arbitrary function $\x$ by linear combinations of scalings and translations of a \emph{single} function $\sigma$.\footnote{It is the personal experience of an author with Pentium III and Pentium IV processors that computation with Pentium IV was slower but it had other computational benefits (see \url{https://retrocomputing.stackexchange.com/questions/14890/is-it-true-that-pentium-iii-was-faster-than-its-successor-pentium-4}). As a consequence look-up tables where used instead of function evaluations. Clearly, if only function needs to be evaluated it is a clear storage benefit.}
\begin{figure}
	\begin{center}
		\includegraphics[width=.8\linewidth]{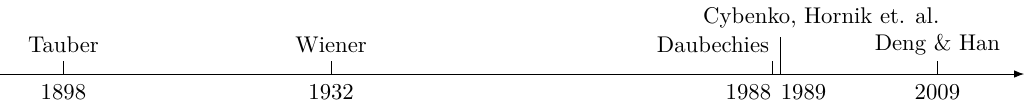}
		\caption{\label{fig:timeline} Historical timeline of important publications on neural networks and wavelets. In addition to the ground-breaking references on neural networks and wavelets we add Deng \& Han \cite{DenHan09}, whose work allows for proving convergence rates for neural network approximations. A result which is required for quantitative analysis of regularization methods.}
	\end{center}
\end{figure}
Below we use the following three observations as a guide star:
\begin{itemize}
	\item In \cite{Wie32} conditions on the expanding function $\sigma$ are formulated. These conditions are formulated in terms of the Fourier-transform or expansion (see \autoref{Wiener_Tauberian} below). In the literature the expanding functions are called \emph{Tauber-Wiener} functions. Neural networks are defined via \emph{activation functions} (again denoted by $\sigma$). They are not Tauber-Wiener-functions, because they are not $L^1$-integrable. However, in the literature they are used synonymously. Is this identification justified? It actually is, because a Tauber-Wiener $\sigma$ is used to approximate arbitrary functions on the unbounded domain $\R$, while neural network papers approximates functions on bounded domains (see \cite{Cyb89}). Therefore, commonly, compactified activation functions can be used and they are Tauber-Wiener-functions as we show
	(see \autoref{ss:act}).
    \item Another aspect is the dimensionality: The Tauber-Wiener and the universal approximation theorems show that one can approximate an arbitrary function $\x$ with a given accuracy with a \emph{finite} linear combination of scaling and shifts of a single function, which forms an \emph{uncountable} family. this is in contrast to discrete wavelets, which approximate $\x$ with a linear combination from a \emph{countable} family of function. The overall question is therefore, how many shifts and scalings are actually needed?
    \item Tauber-Wiener theorems and wavelet approximations can be generalized in various ways to multi-variate functions. We discuss several possibilities.
\end{itemize}

\section{Tauber-Wiener theorems} 
Tauber-Wiener-theorems (see \cite{Wie32}) give answers to the question whether a function $\x \in L^1(\R)$, $L^2(\R)$, respectively, can be approximated by linear combinations of translations of a given function $\sigma \in L^2(\R)$.\footnote{Wiener-did not distinguish between real and complex valued functions. We concentrate on real valued functions for the sake of simplicity of presentation.} See \autoref{ch:fourier} for background information on the Fourier-transform.
\begin{theorem}[Tauber-Wiener-theorem] \index{Theorem!Wiener's} \label{Wiener_Tauberian}
	\begin{enumerate}
		\item Let 
		$\sigma \in L^2(\R)$. Then the following statements are equivalent:
		\begin{itemize}
			\item For every function $\x \in L^2(\R)$ and every $\ve >0$ there exists $\noc \in \N$ and a set
		          $\set{(\alpha_k,\theta_k) : k=1,\ldots,\noc}$ such that the function
		          \begin{equation} \label{eq:wiener_property}
			         s \in \R \mapsto \x_\noc(s) = \sum_{k=1}^\noc \alpha_k \sigma(s + \theta_k)
		          \end{equation}
	              satisfies
	              \begin{equation*}
	              	 \norm{\x-\x_\noc}_{L^2(\R)} < \ve.
	              \end{equation*}
            \item The \emph{Lebesgue measure} of the zeros of the Fourier-transform of $\sigma$ vanishes
            \index{Lebesgues-measure}, i.e.,\index{meas!Lebesgues-measure}
            \begin{equation} \label{eq:zeros_fourier_I}
            	\text{meas}\set{s : \mathcal{F}[\sigma](s) = 0} = 0\;.
            \end{equation}

        \end{itemize}
            \item Let $\sigma \in L^1(\R)$. The following statements are equivalent:
        \begin{itemize}
        \item For every function $\x \in L^1(\R)$ and every $\ve >0$ there exists $\noc \in \N$ and a set
        $\set{(\alpha_k,\theta_k) : k=1,\ldots,\noc}$ such that the function $\x_\noc$ defined in \autoref{eq:wiener_property}
        satisfies
        \begin{equation*}
        	\norm{\x - \x_\noc}_{L^1(\R)} < \ve.
        \end{equation*}
        \item The Fourier-transform of $\sigma$, $\mathcal{F}[\sigma]$, satisfies
        \begin{equation} \label{eq:zeros_fourier_II}
        	\set{s : \mathcal{F}[\sigma](s) = 0} = \emptyset\;.
        \end{equation}
	    \end{itemize}
	\end{enumerate}
\end{theorem}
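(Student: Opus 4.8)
The statement is the classical Wiener Tauberian theorem, so the plan is to reduce the density question to a statement about the closed translation-invariant subspace generated by $\sigma$ and then invoke Wiener's theorem on the ideal structure of $L^1(\R)$ (resp. the $L^2$ analogue). First I would fix the setting: let $V_\sigma$ denote the closed linear span in $L^2(\R)$ (resp. $L^1(\R)$) of all translates $\{\sigma(\cdot+\theta):\theta\in\R\}$. The approximation property \eqref{eq:wiener_property} says precisely that $V_\sigma$ is all of $L^2(\R)$ (resp. $L^1(\R)$). Because $V_\sigma$ is by construction translation invariant, the task is to characterize when a closed translation-invariant subspace is the whole space. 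I would treat the $L^1$ case and the $L^2$ case separately, since the former uses Gelfand theory of the commutative Banach algebra $L^1(\R)$ under convolution and the latter uses the Plancherel theorem.

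For the $L^2$ part, the key step is that the Fourier transform $\mathcal{F}$ is a unitary isomorphism of $L^2(\R)$ onto $L^2(\R)$ intertwining translation with modulation: $\mathcal{F}[\sigma(\cdot+\theta)](s)=\e^{2\pi\i\theta s}\mathcal{F}[\sigma](s)$ (with whatever normalization the book's \autoref{ch:fourier} uses). Hence $\mathcal{F}(V_\sigma)$ is the closed subspace of $L^2(\R)$ generated by all functions $s\mapsto\e^{2\pi\i\theta s}\mathcal{F}[\sigma](s)$. One shows this closed subspace is exactly $\{g\in L^2(\R): g=0 \text{ a.e. on the zero set of }\mathcal{F}[\sigma]\}$: the inclusion $\subseteq$ is immediate, and for $\supseteq$ one approximates an arbitrary such $g$ by $m\cdot\mathcal{F}[\sigma]$ for suitable multipliers $m$, using that $\mathcal{F}[\sigma]\neq 0$ off a null set together with a truncation/dominated-convergence argument. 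Consequently $V_\sigma=L^2(\R)$ if and only if that zero set is Lebesgue-null, which is \eqref{eq:zeros_fourier_I}.

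For the $L^1$ part, the plan is to use that $L^1(\R)$ is a commutative Banach algebra under convolution whose Gelfand spectrum is $\R$ via $f\mapsto\mathcal{F}[f]$, and that $V_\sigma$ is exactly the smallest closed ideal containing $\sigma$ (translation invariance of a closed subspace of $L^1$ is equivalent to being a convolution ideal). Wiener's Tauberian theorem then states that this ideal is all of $L^1(\R)$ precisely when $\sigma$ lies in no maximal ideal, i.e. when $\mathcal{F}[\sigma]$ has no zero, which is \eqref{eq:zeros_fourier_II}; conversely, if $\mathcal{F}[\sigma](s_0)=0$ then every element of $V_\sigma$ has Fourier transform vanishing at $s_0$ (by continuity of $\mathcal{F}$ on $L^1$ and the intertwining relation), so $V_\sigma\neq L^1(\R)$. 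The main obstacle is the $\supseteq$ direction in the $L^1$ ideal argument — this is the genuine content of Wiener's theorem and is not elementary; I would either cite it directly (e.g. from \cite{Rud73}) or, if a self-contained proof is wanted, reproduce the standard argument that the closed ideal generated by a function with nonvanishing Fourier transform contains an approximate identity, via a partition-of-unity construction in Fourier variables together with Wiener's $1/f$ lemma for absolutely convergent Fourier integrals. Given the book's stated policy of citing standard theorems, I would present the $L^2$ case in detail and invoke the classical reference for the hard direction of the $L^1$ case.
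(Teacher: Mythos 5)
Your proposal is a correct reconstruction of the classical argument, but it is worth knowing that the paper does not prove this theorem at all: its entire proof reads ``See Theorem I and II in \cite{Wie32}.'' So the comparison is between your worked sketch and a bare citation. Your route --- identifying the approximation property with density of the closed translation-invariant subspace $V_\sigma$, passing to the Fourier side via Plancherel for $L^2$, and invoking the ideal-theoretic form of Wiener's Tauberian theorem for $L^1$ --- is exactly the standard modern proof of the cited result, and your decision to present the $L^2$ case in detail while citing \cite{Rud73} for the hard $L^1$ direction matches the book's own policy of deferring standard theorems to references. The one place where your $L^2$ sketch glosses over real content is the inclusion $\supseteq$: to show that $\overline{\operatorname{span}}\set{\e^{\i\theta s}\mathcal{F}[\sigma](s):\theta\in\R}$ contains every $g\in L^2$ vanishing a.e.\ on the zero set of $\mathcal{F}[\sigma]$, it is not enough to write $g=m\cdot\mathcal{F}[\sigma]$ and truncate, because the multiplier $m$ must then be approximated by trigonometric polynomials in a way that controls the product in $L^2$; the clean argument characterizes all closed modulation-invariant subspaces of $L^2(\R)$ as $\set{g:\hat g=0 \text{ a.e.\ off } E}$ for measurable $E$ and identifies the minimal $E$ for $V_\sigma$ with the essential support of $\mathcal{F}[\sigma]$. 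That lemma is itself a (soft) theorem of Wiener and should be either proved or cited explicitly. With that caveat, your plan is sound and strictly more informative than what the paper provides.
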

\begin{proof}
	See Theorem I and II in \cite{Wie32}.
\end{proof}
Note that the sampling of the approximating function in \autoref{eq:wiener_property} is dependent on $\x$ and $\ve$.

\begin{remark}
	The Tauber–Wiener-theorem (Theorem~\ref{Wiener_Tauberian}) applies to all $\x \in L^1(\R)$ and $L^2(\R)$, including those with compact support or rapid decay. However, the structure of the Fourier-transform in these two special classes simplifies the vanishing conditions.
\begin{itemize}
	\item For a nonzero compactly supported function $\sigma \in L^2(\R)$, its Fourier-transform $\mathcal{F}[\sigma]$ is real-analytic (in fact, entire of exponential type). Consequently, its zero set in $\R$ consists only of isolated points and thus has zero Lebesgue measure. Hence, for $L^2(\R)$, every nonzero
	compactly supported $\sigma$ automatically satisfies the Tauber–Wiener-property: for every $\x \in L^2(\R)$ and every $\ve > 0$, there exists $\noc \in \N$ and coefficients $(\alpha_k,\theta_k)$ such that $\norm{\x - \sum_{k=1}^{\noc} \alpha_k \sigma(\cdot + \theta_k)}_{L^2(\R)} < \ve$. In contrast, for $L^1(\R)$ with compact support, the stronger condition $\mathcal{F}[\sigma](\xi) \neq 0$ for all $\xi \in \R$ is required; a compactly supported $\sigma \in L^1(\R)$ whose Fourier-transform has any real zero (e.g., $\sigma = \mathbf{1}_{[-1,1]}$) fails to generate a dense set of translates in $L^1(\R)$.
	\item For rapidly decaying (Schwartz) functions, the situation is reversed: the Fourier-transform is also Schwartz, and it may vanish on intervals of positive measure. For example, take $\sigma$ such that $\mathcal{F}[\sigma]$ is a nonzero smooth bump function supported on $[-1,1]$. Then $\mathcal{F}[\sigma]$ vanishes on a set of positive measure, so by the $L^2$ version of the theorem, the translates of $\sigma$ are not dense in $L^2(\R)$. In the $L^1$ case, the same $\sigma$ fails the condition because $\mathcal{F}[\sigma](\xi) = 0$ for $|\xi| > 1$. Thus, while compactly supported functions are always "good" for $L^2$-approximation, Schwartz functions can be "bad" in both $L^1$ and $L^2$ if their Fourier-transform is compactly supported. This highlights the interplay between the smoothness/decay of $\sigma$ and the size of the zero set of $\mathcal{F}[\sigma]$ in the Wiener–Tauberian theory.
\end{itemize}
\end{remark}
\begin{definition}[Tauber-Wiener-function] \label{de:tw} A function $\sigma:\R \to \R$ is called \emph{Tauber-Wiener-function}\index{function!Tauber-Wiener}\footnote{In \cite{Wie32} functions have also been considered complex valued.} if it satisfies \autoref{eq:zeros_fourier_I}.
\end{definition}

\begin{remark} \label{re:tw} For a function $\x_\noc$ as defined in \autoref{eq:wiener_property} the Fourier-transform is given by
	\begin{equation}\label{eq:mot}
		\mathcal{F}[\x_\noc](\omega) = \sum_{k=1}^\noc \alpha_k \mathcal{F}[\sigma(\cdot+\theta_k)](\omega) =
		\mathcal{F}[\sigma](\omega) \left(\sum_{k=1}^\noc \alpha_k \exp^{\i \omega \theta_k} \right)\;.
	\end{equation}
    If $\mathcal{F}[\sigma]$ vanishes at most on a set of measure $0$, we can divide by $\mathcal{F}[\sigma]$ and get
    \begin{equation}\label{eq:explicit}
    \frac{\mathcal{F}[\x_\noc](\omega)}{\mathcal{F}[\sigma](\omega)} = \sum_{k=1}^\noc \alpha_k \exp^{\i \omega \theta_k} \text{ almost everywhere}
    \end{equation}
    with respect to the Lebesgue measure. Note that the coefficient
    \begin{equation*}
    	\set{\alpha_k=\alpha_k^\ve : k=1,\ldots,M=M(\ve)}
    \end{equation*}
    of \autoref{eq:explicit}, depend on the specified accuracy $\ve$. How the coefficients behave when $\ve \to 0$ is commonly refered to \emph{Tauber-Wiener-theorems}.\index{Theorem!Tauber-Wiener} Important work in this area is for instance \cite{HarLit21,Har31,Wie32,Har49} dating back to \cite{Tau97}. Different variants have been considered with functions defined on bounded intervals and on $\R$.
\end{remark}
\begin{remark}
	The fight hand side of \autoref{eq:explicit} is nothing else than a \emph{non uniform discrete Fourier-transform}
	(see for instance \cite{FesSut03b,LeeGre05,PloPotSteTas18,PotSte03}). This means that the the Tauber-Wiener theorem 
	guarantees that $\frac{\mathcal{F}[\x_\noc]}{\mathcal{F}[\sigma]}$ provides the existence of a non equidistant Fourier-series. 
\end{remark}
In the following we define energies of functions, which are computed from the coefficients of Tauber-Wiener-approximations (see \autoref{eq:wiener_property})

\begin{definition}[$\mathcal{L}^1$-energy] \label{de:ell1}
	Let $\x, \sigma \in L^p(\R)$, with $p=1,2$, then, by making use of \autoref{eq:wiener_property} we define
	\begin{equation} \label{eq:normve}
		\norms{\x}_{\mathcal{L}^1}^{\ve,p} := \inf_{\alpha_k,\theta_k \in \R} \set{ \sum_{k=1}^\noc \abs{\alpha_k} : \norm{\sum_{k=1}^\noc \alpha_k \sigma(\cdot+\theta_k)-\x(\cdot)}_{L^p} \hspace*{-0.02\textwidth} < \ve}\;.
	\end{equation}
	Taking the limit $\ve \to 0+$ (note that the $\norms{\tt g}_{\mathcal{L}^1}^{\ve,p}$ is monotonically increasing), we define the $\mathcal{L}^1$-energy\index{energy!$\mathcal{L}^1$}
	\begin{equation} \label{eq:norm}
		\norms{\x}_{\mathcal{L}^1}^{p} := \lim_{\ve \to 0+} \norms{\x}_{\mathcal{L}^1}^{\ve,p}\;.
	\end{equation}
\end{definition}
Because of the conceptual similarity to the original definition of the $\mathcal{L}^1$-energy
(see \autoref{de:ell1space} below) we use the same name here. The only difference in the definition is that we use explicitly the $\ve$-accuracy.

We also explain the similarity of the $\mathcal{L}^1$-energy with the definition of energies on \emph{Barron-vector spaces}\index{Barron!vector space} (see \autoref{sec:barron}) and the definition of Hardy-spaces (see \cite{CoiWei77} and \cite{Ste70}), respectively. We define the following energies of Barron-type:
\begin{definition}[Barron-type energy]\label{de:barron_energies}
First, we define two Barron-type energies on sets of generalized functions, distributions (see \autoref{sec:distribution}), respectively:
\begin{itemize}
   \item For a function $\x: \Omega \subseteq \R^m \to \R$, which admits the representation
    \begin{equation}\label{eq:barron_rep_TW}
    \x(s) = \lim_{\noc \to \infty} \x_\noc(s) \text{ with }
    \x_\noc(s):= \sum_{k=1}^\noc \alpha_k^\noc	\sigma(s + \theta_k^\noc)\,,
    \end{equation}
    we define the \emph{multiplicative Barron-energy}\index{Barron!multiplicative energy} as the limes inferior of all Tauber-Wiener-approximations (see \autoref{eq:wiener_property}) that converge to $\x$ in a distributional setting:
\begin{equation} \label{eq:barron_norm_TW}
\norm{\x}_{\mathcal{B}_p}^p := \inf_{\x_\noc \to \x} \sum_{k=1}^\noc \abs{\alpha_k^\noc}^p (1+\abs{\theta_k^\noc})^p\;.
\end{equation}
The $1$ in $(1+\abs{\theta_k^\noc})^p$ is used to avoid that for small values $\theta_k^\noc$, $k=1,\ldots,\noc$ the multiplicative Barron-energy becomes small. This is consistent with the standard Barron-energy (see \autoref{eq:barron_norm}), where the vector $\vw$ is a one dimensional constant $1$. In this case the Barron-energy is approximating the $\mathcal{L}^1$-norm defined in \autoref{de:ell1space}.
\item The \emph{additive Barron-energy}\index{Barron!additive energy} is defined by
\begin{equation} \label{eq:barron_norm_TW2}
	\norm{\x}_{\mathcal{B}_p}^p := \inf_{\x_\noc \to \x} \sum_{k=1}^\noc \left(\abs{\alpha_k^\noc}^p + \abs{\theta_k^\noc}^p \right)\;.
\end{equation}
\item
The \emph{Barron-vector space}\index{Barron!vector space} consists of all tempered distributions (see \autoref{sec:distribution}, for which the Barron-energy is finite. The additive and multiplicative Barron-energies can define two possible different spaces.
\end{itemize}
\end{definition}

\begin{remark} In \autoref{eq:normve} we are looking for an absolutely convergent sequences of Fourier-coefficients, which the quotient of two functions can attain. Norbert Wiener's result actually prevents us from divergent series (see \cite{Har49}).
\end{remark}

\begin{figure}[H]
	\centering
		\begin{subfigure}[t]{0.45\linewidth}
		\begin{tikzpicture}[samples=100,domain=0:60,scale=.65,every node/.style={scale=0.65}]]
			\pgfmathdeclarefunction{sinc}{1}{%
			\pgfmathparse{abs(#1)<0.001 ? int(1) : int(0)}%
			\ifnum\pgfmathresult>0 \pgfmathparse{1}\else\pgfmathparse{sin(#1 r)/#1}\fi%
		}
		\pgfmathdeclarefunction{sincd}{1}{%
			\pgfmathparse{abs(#1)<0.01 ? int(1) : int(0)}%
			\ifnum\pgfmathresult>0 \pgfmathparse{0}\else\pgfmathparse{(#1*cos(#1 r)-sin(#1 r))/(#1*#1)}\fi%
		}
		\draw[very thin,color=gray] (-0.1,-1.1) grid (6.1,3.1);
		\draw[thick,->] ( -0.2, 0    ) -- ( 6.2, 0   );
		\draw[thick,->] (    0, -1.2 ) -- ( 0  , 3.2 );
		\draw[color=black,smooth] plot (\x/10,{sqrt(2*pi)*sinc(\x/(2*pi))})
			node[right,yshift=2mm] {$\Re(\mathcal{F}[\sigma](\omega))$};
		\draw[color=red,smooth] plot (\x/10,{sqrt(2*pi)*sincd(\x/(2*pi))})
			node[right,yshift=-1mm] {$\Im(\mathcal{F}[\sigma](\omega))$};
	\end{tikzpicture}
	\caption{\label{fi:fcla} The common zeros of $\Re(\mathcal{F}[\sigma](\omega))$ and $\Im(\mathcal{F}[\sigma](\omega))$ are empty such that the shifts of a compactified linear activation function (see \autoref{eq:ReLU_general}) form a basis of $L^2(\R)$. Here $\mathcal{F}[\sigma]$ (see \autoref{eq:Fourierheaviside}) is plotted for parameters $b=c=1$.}
	\end{subfigure}
	\hfill
	\begin{subfigure}[t]{0.45\linewidth}
		\centering
		\begin{tikzpicture}[samples=100,domain=0:60,xscale=.65,yscale=1.5*.65,every node/.style={scale=0.65}]
		    \pgfmathsetmacro{\c}{1}
			\pgfmathsetmacro{\a}{2}
			\pgfmathsetmacro{\b}{1}
			\pgfmathsetmacro{\M}{max(-\c, -\b/\a)} 

			\draw[very thin,color=gray] (-0.1,-1.1) grid (6.1,1.1);
			\draw[thick,->] ( -0.2, 0    ) -- ( 6.2, 0   );
			\draw[thick,->] (    0, -1.2 ) -- ( 0  , 1.2 );

			\draw[color=black,smooth] plot (\x/10,{(cos(\x*\c r) - cos(\x*\M r))/sqrt(2*pi)})
				node[right,yshift=2mm] {$\Re(\mathcal{F}[\sigma](\omega))$};

			\draw[color=red,smooth] plot (\x/10,{(-sin(\x*\c r) + sin(\x*\M r))/sqrt(2*pi)})
				node[right,yshift=-1mm] {$\Im(\mathcal{F}[\sigma](\omega))$};
		\end{tikzpicture}
		\caption{\label{fi:fchs}$\Re(\mathcal{F}[\sigma](\omega))$ and $\Im(\mathcal{F}[\sigma](\omega))$ from  \autoref{eq:fchs} with  $c=1$ and $b/a=0.5$ have common zeros such that linear combinations form a basis of $L^1(\R)$. }
	\end{subfigure}\\%
	\begin{subfigure}[t]{0.45\linewidth}
		\centering
		\begin{tikzpicture}[samples=100,domain=1:60,xscale=.65,yscale=1.2*.65,every node/.style={scale=0.65}]
			\pgfmathsetmacro{\c}{1} 
			\pgfmathsetmacro{\b}{1} 
			\pgfmathsetmacro{\a}{2} 

			\draw[very thin,color=gray] (-0.1,-1.1) grid (6.1,1.1);
			\draw[thick,->] ( -0.2, 0    ) -- ( 8.2, 0   );
			\draw[thick,->] (    0, -1.4 ) -- ( 0  , 1.2 );
			\draw[color=black,smooth] plot (\x/10,{4*sin(\x/2 r)*(\x + 2*\x*cos(\x r) - sin(\x r))/sqrt(2*pi)/((\x)^2)})
				node[right,yshift=-3mm] {$\Re(\mathcal{F}[\sigma](\omega))$};
			\draw[color=red,smooth] plot (\x/10,{4*cos(\x/2 r)*(-\x + 2*\x*cos(\x r) - sin(\x r))/sqrt(2*pi)/((\x)^2)})
				node[right,yshift=3mm] {$\Im(\mathcal{F}[\sigma](\omega))$};
		\end{tikzpicture}
		\caption{\label{fi:fcReLU} $\Re(\mathcal{F}[\sigma](\omega))$ and $\Im(\mathcal{F}[\sigma](\omega))$ from  \autoref{eq:fcReLU} with  $c=1$ and $b/a=0.5$ have no common zeros such that linear combinations form a basis of $L^2(\R)$. }
	\end{subfigure}
	\hfill
	\begin{subfigure}[t]{0.45\linewidth}
		\centering
		\begin{tikzpicture}[samples=100,domain=1:60,xscale=.65,yscale=2*.65,every node/.style={scale=0.65}]
			\pgfmathsetmacro{\a}{1} 
		    \pgfmathsetmacro{\b}{0} 

			\draw[very thin,color=gray] (-0.1,-0.11) grid (6.1,1.1);
			\draw[thick,->] ( -0.2, 0    ) -- ( 6.2, 0   );
			\draw[thick,->] (    0, -0.12 ) -- ( 0  , 1.2 );

			\draw[color=black,smooth] plot (\x/10,{exp(-(\x/10)^2/2)})
				node[right,yshift=2mm] {$\Re(\mathcal{F}[\sigma](\omega))$};

			\draw[color=red,smooth] plot (\x/10,{0})
				node[right,yshift=-1mm] {$\Im(\mathcal{F}[\sigma](\omega))$};
	    \end{tikzpicture}
	    \caption{\label{fi:fgaussion} $\Re(\mathcal{F}[\sigma](\omega))$ and $\Im(\mathcal{F}[\sigma](\omega))$ from  \autoref{eq:fgaussian} with  $b=0$ and $a=1$ have no common zeros such that linear combinations form a basis of $L^2(\R)$. }
	\end{subfigure} \\
	\begin{subfigure}[t]{0.45\linewidth}
	\centering
	\begin{tikzpicture}[xscale=.65,yscale=.65/3,every node/.style={scale=0.65}]
		\draw[very thin,color=gray] (-0.1,-1) grid (10.1,7);
		\draw[thick,->] ( -0.2,  0 ) -- ( 10.2, 0   );
		\draw[thick,->] (    0, -1 ) -- ( 0  , 7.1 );

		\draw[color=black] plot[smooth,mark=none] coordinates {
	(0,0)
	(0.1,-7.590692511467564e-15) (0.2,-1.9461837236409147e-15) (0.3,-2.772131118649356e-15)
	(0.4,1.8953304386965303e-15) (0.5,-1.6198176232009453e-15) (0.6,2.1197927559271068e-15)
	(0.7,8.873393632454936e-16) (0.8,0.) (0.9,2.1603835292930007e-15)
	(1.,0.) (1.1,6.311799773971112e-16) (1.2,-4.0457184797122157e-16)
	(1.3,-2.8813472126679593e-16) (1.4,-1.9700024432046923e-16) (1.5,6.734540025530212e-16)
	(1.6,1.438895085889114e-16) (1.7,-4.3040603879193195e-16) (1.8,1.8392024750226415e-16)
	(1.9,1.571848644886075e-16) (2.,1.1514500889770288e-16) (2.1,3.9362827890718494e-16)
	(2.2,-3.9247704701194444e-16) (2.3,2.875071541866546e-16) (2.4,-2.0476163665167858e-16)
	(2.5,-4.8124083130216156e-17) (2.6,-2.991169008678222e-17) (2.7,1.0225442857471595e-16)
	(2.8,0.) (2.9,-2.9874763921045595e-16) (3.,0.) (3.1,1.7729243969429453e-16)
	(3.2,9.324338544027555e-17) (3.3,2.7891203858062985e-16) (3.4,3.4052609824031657e-16)
	(3.5,-2.910258622073653e-16) (3.6,-2.487211785270229e-17) (3.7,2.1256607292101197e-17)
	(3.8,-6.358331634092484e-17) (3.9,2.484141306994394e-16) (4.,-5.307591469852438e-17)
	(4.1,0.) (4.2,-3.101343274053646e-16) (4.3,1.6565747548362486e-16) (4.4,7.830969077392353e-17)
	(4.5,1.8149499882834693e-16) (4.6,1.861345915979749e-16) (4.7,-1.4140206459316842e-16)
	(4.8,-1.8127094213386866e-16) (4.9,-2.5820112124671134e-17) (5.,0.)
	(5.1,-1.6973171441133803e-16) (5.2,-3.223529629642784e-17) (5.3,-2.7549444658298108e-17)
	(5.4,2.82536965939936e-16) (5.5,-2.0122188515316708e-17) (5.6,4.2992866717343497e-17)
	(5.7,4.4091901924126345e-17) (5.8,2.512168443699471e-17) (5.9,1.0734947939611612e-16)
	(6.,1.2844262141519094e-16) (6.1,-1.568166972671887e-17) (6.2,3.886614647832768e-16)
	(6.3,0.) (6.4,4.405024858726766e-16) (6.5,2.3424755333359504e-16) (6.6,-3.145943299595761e-16)
	(6.7,-4.644008318859943e-16) (6.8,-5.22228511797999e-16) (6.9,-4.853678734888834e-16)
	(7.,-5.111257278818486e-16) (7.1,-7.954153216601446e-16) (7.2,-8.915286139918804e-17)
	(7.3,3.4286960150669136e-16) (7.4,8.628067409186637e-16) (7.5,9.008627197415212e-16)
	(7.6,8.561158171769256e-16) (7.7,6.503710193812338e-16) (7.8,2.6054554029805653e-16)
	(7.9,-1.484477321050325e-17) (8.,-5.709094913232725e-16) (8.1,-8.836769775281127e-16)
	(8.2,8.895868347898259e-16) (8.3,-5.22687641701614e-16) (8.4,-3.2487830902596847e-16)
	(8.5,9.255089941254042e-17) (8.6,2.3729199117590086e-16) (8.7,-8.449928248736505e-16)
	(8.8,7.779031115161147e-16) (8.9,6.912472371890526e-16) (9.,2.1098741900606378e-16)
	(9.1,3.606348873020188e-17) (9.2,1.6951635648138055e-16) (9.3,-6.058399919942638e-16)
	(9.4,-7.006814894259036e-16) (9.5,-6.685706011864418e-16) (9.6,3.452971101795244e-16)
	(9.7,1.4052540036333442e-16) (9.8,2.1617651870266741e-16) (9.9,6.055767704577998e-16)
	(10.,6.622856267026505e-16)
};
\node[right] at (10, 0.5) {$\Re(\mathcal{F}[\sigma](\omega))$};

\draw[color=red] plot[smooth,mark=none] coordinates {
	(0,0)
	(0.1,2.3663855991043676) (0.2,4.434438591074961) (0.3,5.9546808519610845)
	(0.4,6.766365782076449) (0.5,6.821159281467363) (0.6,6.186564785116135)
	(0.7,5.028948705649815) (0.8,3.5799048058723826) (0.9,2.0927684901833543)
	(1.,0.7977551779009409) (1.1,-0.135851104739 70885) (1.2,-0.6237385979749052) (1.3,-0.6730904761572879)
	(1.4,-0.37144085365525087) (1.5,0.13880914596612454) (1.6,0.6942391887082351)
	(1.7,1.1453297036647203) (1.8,1.3853939130806485) (1.9,1.3681242118814612)
	(2.,1.1110377933386593) (2.1,0.6852013760428599) (2.2,0.19445323920328278)
	(2.3,-0.2506629166062622) (2.4,-0.5582636375829698) (2.5,-0.6743519000930475)
	(2.6,-0.5921507734837889) (2.7,-0.35019003195349346) (2.8,-0.020344008080910102)
	(2.9,0.31083907234596314) (3.,0.5628931772616103) (3.1,0.6794669829518191)
	(3.2,0.639719144643949) (3.3,0.46088441580246153) (3.4,0.19194015277503484)
	(3.5,-0.09967222811318785) (3.6,-0.344839065767523) (3.7,-0.4890256851819846)
	(3.8,-0.5042663849903829) (3.9,-0.39434016411995776) (4.,-0.19227610203770468)
	(4.1,0.04885393381765418) (4.2,0.2693883817195809) (4.3,0.41743707477761294)
	(4.4,0.460752990408652) (4.5,0.3934484197202417) (4.6,0.2362087121248139)
	(4.7,0.03023422927918518) (4.8,-0.17340027847087966) (4.9,-0.32627475625716734)
	(5.,-0.39401259533599386) (5.1,-0.3638881904064395) (5.2,-0.24700495769046735)
	(5.3,-0.07473766913898909) (5.4,0.10950265306102533) (5.5,0.26105756813798275)
	(5.6,0.34471335109825235) (5.7,0.3427632073472652) (5.8,0.2586177431454417)
	(5.9,0.11525962616369069) (6.,-0.0509148689665602) (6.1,-0.19924775906136535)
	(6.2,-0.2946870197900714) (6.3,-0.3159957123406355) (6.4,-0.2603982764363121)
	(6.5,-0.14366742828972895) (6.6,0.004228135279315555) (6.7,0.14674063782402008)
	(6.8,0.24970677173372988) (6.9,0.28947521831057466) (7.,0.2582785700495455)
	(7.1,0.16564337853543262) (7.2,0.03561301114104122) (7.3,-0.09943455722336794)
	(7.4,-0.2067989033272053) (7.5,-0.2613290132110299) (7.6,-0.25127101203225666)
	(7.7,-0.1807426232821225) (7.8,-0.06832876024128655) (7.9,0.05776450078798545)
	(8.,0.16674615813526172) (8.1,0.23272141579056393) (8.2,0.24080901621774817)
	(8.3,0.19045788966092336) (8.4,0.09523364109166996) (8.5,-0.020789129297538927)
	(8.6,-0.1290803332476027) (8.7,-0.20363973759199366) (8.8,-0.22721118149833722)
	(8.9,-0.19524716020128832) (9.,-0.11671992859620726) (9.1,-0.011654333891738701)
	(9.2,0.09395060335527704) (9.3,0.17453029729276967) (9.4,0.21111978209739599)
	(9.5,0.19579425065976108) (9.6,0.13335067227339098) (9.7,0.03988278248334751)
	(9.8,-0.061342235837487935) (9.9,-0.14565494520857875) (10.,-0.19297632594972003)
};
\node[color=red,right] at (10, -0.5) {$\Im(\mathcal{F}[\sigma](\omega))$};
	\end{tikzpicture}
	\caption{$\Re(\mathcal{F}[\sigma](\omega))$ and $\Im(\mathcal{F}[\sigma](\omega))$ from  \autoref{eq:fcTanh} with $c=5$. Since the Fourier-transform of functions with compact support are analytic, and zero sets of analytic functions are zero measure, we can see that the zero set of this Fourier-transform has zero measure.}
	\end{subfigure}
	\caption{\label{fig:alles} Examples of Tauber-Wiener-functions. All of them have compact support and therefore are analytic functions.}
\end{figure}

The approximation theories of neural networks are linked with Tauber-Wiener-approximation results by proving that activation functions of neural networks are Tauber-Wiener-functions.

More general, we define Tauber-Wiener-functions of higher order:
	\begin{definition} \label{de:twho} (Tauber-Wiener-function higher order)\index{function!Tauber-Wiener!higher order} A function $\sigma:\R \to \R$ is called Tauber-Wiener-function of order $k \in \N_0$ if
	\begin{equation*}
		\text{meas} \left( \bigcup_{l=0}^k \set{s:\mathcal{F}[\sigma^{l}](s)=0} \right) =0\;.
	\end{equation*}
    \end{definition}

\subsection{Tauber-Wiener-and activation functions} \label{ss:act}
Here, we show that \emph{activation functions} of neural networks
(as defined in \cite{HinDenYuDahMoh12,KriSutHin17,AlElr19,Cyb89,ElfUchDoy18}) are indeed Tauber-Wiener-functions (see \autoref{de:tw}) after compactification. This is a simple consequence of the \emph{Paley-Wiener-Theorem}, which (in its forward direction) states that the Fourier-transform of a compactly supported function (or distribution) admits an entire holomorphic extension (see \cite{Yos95}). The converse characterization—relating growth conditions to the size of the support—is given in \autoref{th:PaleyWienerYosida}.
	
We consider approximation of functions by translations of a Tauber-Wiener-function $\sigma$, which are $L^1$, $L^2$-functions, where the zeros are at most of measure $0$: We consider two classes of activation functions, which are \emph{ridge} and \emph{radial} activations.

{\bf Ridge activation functions:} \index{function!activation!ridge}
		\begin{itemize}
			\item{\bf The linear activation function} \index{function!activation!linear} is defined as
			\begin{equation}\label{eq:linear_act}
				s \in \R \mapsto \sigma(s):=as +b\;.
			\end{equation}
			The Fourier-transform only exists in a distributional sense (see \autoref{sec:distribution} and is given by
			\begin{equation*}
				\omega \in \R \mapsto  \mathcal{F}[\sigma](\omega) = \sqrt{2\pi} \left(b \delta(\omega) + {\i} a \delta'(\omega) \right)\,,
			\end{equation*}
		    where $\delta$ denotes the $\delta$-distribution and $\delta'$ denotes its derivative. (see \autoref{eq:delta_dis}).		
	    	We cannot apply Tauber-Wiener-theorems, because the function $\sigma$ is neither in $L^1(\R)$ nor in $L^2(\R)$.
	    	
	    	The {\bf compactified linear activation function.} \index{function!activation!linear compactified} With a compactification of a linear activation function we obtain a Tauber-Wiener-function:
	    	\begin{equation} \label{eq:ReLU_general}
	    		s \in \R \mapsto  \sigma(s) := (as +b) \chi_{[-c,c]}(s) \;.
	    	\end{equation}
    	    Note that the Fourier-transform of a function with compact support is analytic and the Fourier-transform can only have zeros of measure zero or vanish identically (see \cite{Lan67}).
    	
    	    The Fourier-transform is computed using the convolution theorem (see \autoref{th:Fconv}) and the computation rules for the $\delta$-distribution (see \autoref{th:distribution_rechenregeeln}) and reads as (see \autoref{fi:fcla})
    	    \begin{equation} \label{eq:fcla}
    	    	\begin{aligned}
    	    	\omega \in \R \mapsto  \mathcal{F}[\sigma](\omega)
    	    	=  \sqrt{2 \pi} \left( \frac{b}{c} \sinc \left(\frac{\omega}{2 \pi c} \right) - {\i} \frac{a}{c} \sinc' \left(\frac{\omega}{2 \pi c} \right) \right)\;.
    	    	\end{aligned}
    	    \end{equation}
    	    \item The \emph{Heaviside-activation function}\index{function!activation!Heaviside} is defined by
            \begin{equation} \label{eq:heaviside}
	             s \in \R \mapsto \sigma(s) := \chi(as+b) 
	             = \chi_{\set{t > -\frac{b}{a}}}(s)
            \end{equation}
            where $\chi$ is the Heaviside-function and $\chi_{\set{t > -\frac{b}{a}}}$ is the characteristic function of the set $\set{t > -\frac{b}{a}}$. Its Fourier-transform is given by
            \begin{equation}\label{eq:Fourierheaviside}
        	 \begin{aligned}
        		\omega \in \R \mapsto  \mathcal{F}[\sigma](\omega) =
        		\frac{\exp^{\i \omega \frac{b}{a}}}{\sqrt{2 \pi}}  \left( \frac{1}{\i \omega} + \pi \delta(\omega) \right)\;.
             \end{aligned}
            \end{equation}
            Again the Heaviside-activation function needs to be considered a distribution. After compactification with sufficiently large $c > 0$ we get
            \begin{equation}\label{eq:chs}
            	s \in \R \mapsto \sigma(s) := \chi_{\left]-\frac{b}{a},c\right[}(s)\;.
            \end{equation}
            An therefore the Fourier-transform is given by
            \begin{equation}\label{eq:fchs}
        	\begin{aligned}
        		\omega \in \R \mapsto  \mathcal{F}[\sigma](\omega)
        		= \frac{1}{\sqrt{2\pi}} \left( \exp^{-{\i} \omega c} - \exp^{\i \omega \frac{b}{a}}\right)\;.
        	\end{aligned}
        \end{equation}
        The real and imaginary part of $\sigma$ have common zeros, such that $\sigma$ vanishes on some points of the real line. The Tauber-Wiener-theorem \autoref{Wiener_Tauberian} therefore allows to approximate arbitrary functions in $L^2$ with arbitrary accuracy but not in $L^1$.
        \item \emph{Sigmoid activations} play an important role in \emph{universal approximation theorems} (see \autoref{le:LinearApproximation} below):
	    A continuous function $\sigma:\R \to \R$ is called \emph{sigmoidal}\index{function!sigmoid} if
	    	\begin{equation}\label{de:sigmoid}
	    		\sigma(s) \to \left\{ \begin{array}{rcl} 1 & \text{ as } & s \to \infty\\
	    			0 & \text{ as } & s \to -\infty
	    		\end{array}\right. \;.
	    	\end{equation}		
	    	We call $\sigma$ \emph{antisymmetric sigmoidal} \index{function!activation!sigmoid antisymmetric} if
	    	\begin{equation} \label{eq:sigmoid_anti}
	    		\sigma(s)-\frac{1}{2} = - \left(\sigma(-s)-\frac{1}{2}\right).
	    	\end{equation}
	    	The characteristic function of the interval $[0,\infty[$ is antisymmetric. Examples of sigmoidal functions are
	    	\begin{enumerate}
	    		\item The \Sig function is defined as:
	    		\begin{equation}\label{de:sigmoid_f}
	    			\sigma(s) =\frac{1}{1+\exp^{-s}}.
	    		\end{equation}
	    		The Fourier-transform of \Sig is given by
            \begin{equation}\label{eq:Fouriersigmoid}
        	 \begin{aligned}
        		\omega \in \R \mapsto  \mathcal{F}[\sigma](\omega) =
        		\sqrt{\frac{\pi}{2}}\left(\delta(\omega)  -  \i \operatorname{csch}(\pi\omega)\right).
             \end{aligned}
            \end{equation}

	    		\item The \HS function is defined as
	    		\begin{equation}\label{de:hardsigmoid}
	    			\sigma(s) = \left\{ \begin{array}{rcl} 1 & \text{ if }\,\, s \geq c\\
	    				0 & \text{ if }\,\, s \leq -c\\
	    				\frac{s}{2c} +\frac{1}{2} & \text{ otherwise }
	    			\end{array}\right.
	    		\end{equation}
	    		where $c>0$ is a threshold. The Fourier-transform is given by
	    		\begin{equation*}
	    			\begin{aligned}
	    				\omega  \in \R \mapsto  \mathcal{F}[\sigma](\omega)
	    				= &\frac{\exp^{\i \omega c}}{2 \pi}  \left( \frac{1}{\i \omega} + \pi \delta(\omega) \right) +\\
	    				  & \qquad + \frac{1}{2 \i \omega} \left(1+\frac{1}{\i c \omega}\right)\left( \e^{\i\omega c} -\e^{-\i\omega c}\right)\;.
	    			\end{aligned}
	    		\end{equation*}
	    	\end{enumerate}   	
	\item The \ReLU (rectified linear unit) function is defined as \index{function!activation!rectified linear unit \ReLU}
	    \begin{equation} \label{eq:ReLU}
	    		\begin{aligned}
	    			s \in \R \mapsto  \ReLU(s) &:= \max\set{(0,as +b)}\\
	    			&= (as +b) \chi_{[-\frac{b}{a},\infty[}(s) \;.
	    		\end{aligned}
	    \end{equation}
    	The Fourier-transform is given by
    \begin{equation*}
    \begin{aligned}
    \omega \in \R \mapsto  \mathcal{F}[\ReLU](\omega)
    =  \frac{a \exp^{\i \omega \frac{b}{a}}}{\sqrt{2\pi}}\left(-\frac{1}{\pi\omega^2}+\i\delta'(\omega)\right)\;.
    \end{aligned}
    \end{equation*}
    The Fourier-transform of the compactified \ReLU \index{function!activation!compactified \ReLU}
    \begin{equation} \label{eq:cReLU}
    	s \in \R \mapsto \sigma(s) := \chi_{]-c,c[}(s) \ReLU(s)
    \end{equation}
    is given by
    	\begin{equation} \label{eq:fcReLU}
    		\begin{aligned}
    			&\omega \in \R \mapsto  \mathcal{F}[\sigma](\omega) = \\
    	     			&
    	  \frac{\exp^{-\i \omega \left(c + \frac{b}{a}\right)}}{\sqrt{2\pi}\omega^2} \left( a+\i \omega(2b+ac) - \exp^{2\i c \omega}(a+\i \omega(2b-ac))\right)\;.
        \end{aligned}
    	 \end{equation}
    	 \end{itemize}
  \bigskip\par\noindent
        {\bf Radial activation functions} \index{function!activation!radial}
        \begin{itemize}
        	\item The {\bf Gaussian activation}\index{function!activation!Gaussian} is defined by
        	\begin{equation} \label{eq:gaussian}
        		s \in \R \mapsto \sigma(s):=\exp^{-\frac{(s-b)^2}{2a^2}}.
        	\end{equation}
        	where $\alpha >0$.
            This function is in $L^2(\R)$ and the Fourier-transform is given by
            	\begin{equation}\label{eq:fgaussian}
            		\omega \in \R \mapsto  \mathcal{F}[\sigma](\omega):=\abs{a} \exp^{-\frac{\omega}{2} (a^2 \omega + 2 \i b)}.
            	\end{equation}
            The Fourier-transform has no zeros on the real line and thus the Tauber-Wiener-theorem allows to approximate arbitrary functions in $L^1$ and $L^2$.
            \item{\bf $\tanh$ activation:} \index{function!activation!$\tanh$} The Fourier-transform of $\tanh$ is given by
            \begin{equation*}
            	\omega \in \R \mapsto - \i \sqrt{\frac{\pi}{2}} \text{csch} \left( \frac{\pi \omega}{2}\right)\;.
            \end{equation*}
            For all $p \geq 1$ this function is not in $L^p(\R)$. The compactified tanh function
            	\begin{equation} \label{eq:cTanh}
            		s \in \R \mapsto \sigma(s):=\chi_{]-c,c[}(s) \tanh(s)
            	\end{equation}
            	has the Fourier-transform
            	\begin{equation} \label{eq:fcTanh}
            		\begin{aligned}
            			\mathcal{F}[\sigma](\omega) &=
            			\frac{1}{2} \exp^{-\frac{\pi \omega}{2}} \left( B\left(-\exp^{2c}; 1 - \frac{\i \omega}{2}, 0\right) + B\left(-\exp^{2c}; -\frac{\i \omega}{2}, 0\right)\right. \\
            			&- \exp^{\pi \omega} \left( B\left(-\exp^{2c}; 1 + \frac{\i \omega}{2}, 0\right) + B\left(-\exp^{2c}; \frac{i \omega}{2}, 0\right) \right) \\
            			&\left.- 2\i \pi \left(1 + \coth\left(\frac{\pi \omega}{2}\right)\right) \right)\,,
            		\end{aligned}
            	\end{equation} with the $\beta$-function
            		\begin{equation} \label{eq:beta}
            			B(z; a, b) := \int_{0}^{z}u^{a-1}(1-u)^{b-1}du.
            		\end{equation}
            		and $_2F_1(a, b; c; z)$ is the hypergeometric function. With the Tauber-Wiener-theorem $\sigma$ can be used to approximate functions in $L^1$ and $L^2$, respectively.
        \end{itemize}

In the following we consider generalizations of Tauber-Wiener, wavelets, sigmoidal and activation functions in higher dimensions.

\subsection{Extensions to higher dimensions}
The Tauber-Wiener-theorem gives a complete answer on approximating univariate function $\x:\R \to \R$ by linear combinations of shifts of a single functions $\sigma:\R \to \R$. The generalization to higher dimensions can be performed in various ways, leading to the following research questions:
\begin{itemize}
\item Can every function $\x : \R^m \to \R$ be approximated by linear combinations of \emph{affine linear transformations}  of a function $\sigma:\R \to \R$?\index{Translations!affine linear} This research question is the subject of a class of theorems that are referred to as the universal approximation theorems (see \cite{Cyb89,Hor91,Bis95} \autoref{le:LinearApproximation} below). That is, for a given continuous function
$\x: \R^m \to \R$ we can find a function
\begin{equation} \label{eq:translation_affine}
\x_\noc(\vs) = \sum_{k=1}^\noc \alpha_k \sigma (\vw_k^T \vs + \theta_k)\,,
\end{equation}
with appropriate parameters $\vw_k \in \R^m$, $\theta_k \in \R$ and $\alpha_k \in \R$, $k=1,\ldots,\noc$, that approximates an arbitrary function $\x:\R^m \to \R$ on a given \emph{compact domain}. Linear combinations of compositions of affine linear combinations $\vs \mapsto \vw^T \vs + \theta$ with  \emph{activation functions} $\sigma$\index{function!activation} are called \emph{affine linear neural network functions}.\index{neural network!affine linear}
The \emph{uncountable} number of ansatz functions in neural network functions might be stated as the
\emph{neural network mystery:}\index{mystery!neural networks} The family of neural networks used in machine learning publications to approximate an arbitrary function is \emph{uncountable} (see \cite{Cyb89,Hor91}). However, approximation rates are proven based on best-worst-case scenarios with a \emph{countable} number of neural networks (see, for instance, \cite{ShaCloCoi18} and \autoref{co:ConvR} below). In other words the stronger results are proven with less ansatz functions. This explains why \emph{sparsity} is such an important tool in neural networks theory, which can be achieved with compressed sensing techniques as we explained already in \autoref{eq:Tiksynl1}.

\item A different generalization of the Tauber-Wiener-theorems \autoref{Wiener_Tauberian} to higher dimensions is: Can every function $\x : \R^m \to \R$ be approximated by linear combinations of \emph{standardized affine linear translations} of a function $\sigma$? \index{Translations!standardized affine linear} Standardized refers to affine linear translations, where for all $k=1,\ldots,\noc$ the Euclidean norm is one, i.e., $\norm{\vw_k}_2=1$. Of course all other norms $\norm{\cdot}_p$,
$p=[1,\infty]$ could be used for standardization (see \autoref{ta:nn}).
\item Let $\sigma :\R^m \to \R$. Then we can ask the question, whether an arbitrary function $\x:\R^m \to \R$ can be approximated by
\begin{equation} \label{eq:translation_affine_n}
\x_\noc(\vs) = \sum_{k=1}^\noc \alpha_k \sigma (O_k^T \vs + \theta_k)?
\end{equation}
Here $O_k \in \R^{m\times m}$ is an orthogonal matrix, $\theta_k \in \R^m$, $\alpha_k \in \R$, $k=1,\ldots,\noc$ and $\sigma$ is the indicator of a rectangle. It has been shown in \cite{Rud73,Cyb89} that an arbitrary function $\x \in L^1(]-1,1[^m)$ can be approximated with \autoref{eq:translation_affine_n}. Note that here $\sigma:\R^m \to \R$ is multi-variate function. A typical multi-variate function is the \SM \index{function!softmax} where
\begin{equation}\label{de:softmax}
	(\sigma(\vs))_i = \frac{\exp^{s_i}}{\sum_{j=1}^{m}\exp^{s_j}}\,,\quad i=1,\ldots,m\;.
\end{equation}
\item $\x \in L^1(\R^m)$ can also be approximated by the function
\begin{equation} \label{eq:translation_Rudin}
	\x_\noc(\vs) = \sum_{k=1}^\noc \alpha_k \sigma (w_k \vs + \vec{\theta}_k)\,,
\end{equation}
where $w_k \in \R$, $\vec{\theta}_k \in \R^m$ and $\alpha_k \in \R$, $k=1,\ldots,\noc$. See \cite[Chap.9.7]{Rud73}.
\end{itemize}
\begin{remark}
	We consider the approximating function $\x_\noc:\R \to \R$ from \autoref{eq:translation_affine} and apply the Fourier-transformation, which gives
	\begin{equation*}
		\begin{aligned}
		\mathcal{F}[\x_\noc](\omega) &= \sum_{k=1}^\noc \alpha_k \frac{1}{\sqrt{2\pi}} \int_{s=-\infty}^\infty \sigma(w_ks+\theta_k) \exp^{-\i \omega s}\,ds\\
		&\underbrace{=}_{t=w_k s + \theta_k}
		\sum_{k=1}^\noc \frac{\alpha_k}{w_k} \exp^{\i \frac{\omega}{w_k} \theta_k} \frac{1}{\sqrt{2\pi}} \int_{t=-\text{sgn}(w_k)\infty}^{\text{sgn}(w_k)\infty} \sigma(t) \exp^{-\i \frac{\omega}{w_k} t}\,dt\\
		&=
		\sum_{k=1}^\noc \frac{\alpha_k \exp^{\i \frac{\omega}{w_k} \theta_k}}{\abs{w_k}}
		\mathcal{F}[\sigma] \left(\frac{\omega}{w_k}\right)\;.
		\end{aligned}
	\end{equation*}
    For a wavelet $\sigma$ and a dyadic decomposition of the scale-shift space $(w,\theta)$ the above formula gives the wavelet approximation (see \cite{Dau92}). In contrast to the Tauber-Wiener-expansion (see \autoref{Wiener_Tauberian}) we cannot simply divide the left hand side by $\mathcal{F}[\sigma]$ to derive an explicit expansion for $\mathcal{F}[\x_N]/\mathcal{F}[\sigma]$ (compare with \autoref{eq:explicit}).
\end{remark}

\section{Neural network functions} \label{se:nnf}
We introduce general neural network functions below: We start the discussion with classical, affine linear, neural network functions, which are then successively generalized. Essentially there are two paths to generalize classical shallow neural networks:
\begin{enumerate}
	\item Deep neural networks (see \autoref{eq:DNN} below) with more than one internal layer.
	\item Alternative, more complex functions than affine linear functions, can be considered inside the activation function. Here the transformation part in \autoref{ta:nn2} is more complex.
\end{enumerate}
While the first approach has attracted a lot of attention in the literature, the second approach has been only considered sparsely, although it yields to the same approximation rates (see \cite{Mha93}, where this possibility was discussed first, and \cite{FriSchShi24}).

We define now generalized neural networks. For that purpose we introduce the following notation:
\begin{notation}
	Let $\degree \geq m \in \N$.
	Row vectors 
	in $\R^\degree$ and $\R^m$ are denoted by
	\begin{equation*}
		\vw = (w_1,w_2,\ldots,w_\degree)^T \text{ and }
		\vs = (s_1,s_2,\ldots,s_m)^T, \text{ respectively.}
	\end{equation*}
    We indicate with the different dimension $m$ and $\hat{m}$ that the \emph{weight vector} can have a different notation than the space dimension, which is the dimension of $\Omega_\X \subseteq \R^m$. We apply the obtained approximations also to elements $\Omega_\Y \subseteq \R^n$, with obvious notational adaptations.
\end{notation}
	
\begin{definition}[Affine linear neural network functions] \label{de:affinenns} Let $\noc \in \N$ be fixed.
\begin{itemize}
		\item A \emph{shallow} affine linear neural networks ({\bf ALNN}) is a function (here $\degree=m$)\index{neural network!affine linear!ALNN}
		\begin{equation}\label{eq:classical_approximation}
			\vs \in \R^m \to {\tt p}(\vs) := \Psi[\vp](\vs) := \sum_{k=1}^{\noc} \alpha_k\sigma\left(\vw_k^T \vs +\theta_k \right) \in \R,
		\end{equation}
		with $\alpha_k, \theta_k \in \R$, $\vw_k \in \R^m$; $\sigma$ is an activation function, such as the {\bf Sigmoid}- (see \autoref{de:sigmoid})\commentO{Reference with example section the $\ReLU{}^k$} or \SM-function (see \autoref{de:softmax}) respectively).
		Moreover,
		\begin{equation} \label{eq:p}
			\begin{aligned}
			\vp =
			(\alpha_k,\vw_k,\theta_k)_{k=1}^\noc \in \R^\dimlimit
			\text{ with } \dimlimit = (m+2)\noc
			\end{aligned}
		\end{equation}
		denotes the according parametrization of ${\tt p}$. Note ${\tt p}$ is a function, which is parametrized by a vector $\vp$.
		In this context the set of {\bf ALNN}s is given by
		\begin{equation*}
			\bP := \set{{\tt p} = \Psi[\vp] \text{ of the form \autoref{eq:classical_approximation}}: \vp \in \R^\dimlimit}.
		\end{equation*}
		\item \emph{Deep} affine linear neural network functions ({\bf DNN}s) are defined as follows: \index{neural network!affine linear!deep!DNN} An $(L+2)$-layer network looks as follows:
		\begin{equation}\label{eq:DNN}
				\begin{aligned}
					\vs \in \R^m \to {\tt p}(\vs) := \Psi[\vp](\vs) := \vec{a}^T \vec{\sigma}_L\left( {\tt p}_{L} \left( \ldots \left( \vec{\sigma}_1 \left({\tt p}_{1}(\vs) \right) \right)\right) \right),
				\end{aligned}
			\end{equation}
			where we define iteratively
			\begin{equation*}
				\begin{aligned}
					{\tt p}_l: \R^{\noc_{l-1}} &\to \R^{\noc_l}\\
					\vs &\mapsto (\vw_{1,l},\vw_{2,l},\ldots,\vw_{\noc_l,l})^T \vs +(\theta_{1,l},\theta_{2,l},\ldots,\theta_{\noc_l,l})^T
				\end{aligned}
			\end{equation*}
			with $\vec{a} \in \R^{\noc_L}, \theta_{j,l} \in \R$ and $\vw_{j,l} \in \R^{\noc_{l-1}}$ for all $l=1,\ldots,L$ and all $j=1,\ldots,\noc_l$. Here, for $l=0,1,\ldots,L$
			\begin{itemize}
				\item $\noc_l$ denotes the number of possible coefficients in $l$-th internal layer and $\noc_0=m$;
				\item $\vec{\sigma}_l:\R^{\noc_{l}} \to \R^{\noc_l}$ denotes an activation function (see \autoref{ss:act}) to layer $l$, which operates componentwise, that is for an activation function $\sigma:\R \to \R$
				\begin{equation*}
					\vec{\sigma}_l(\vs) = (\sigma (s_i))_{i=1}^{\noc_l} \text{ for all } \vs \in \R^m\;.
				\end{equation*}
			\end{itemize}
		    Moreover, we denote the parametrizing vector by
			\begin{equation} \label{eq:pd}
				\vp = [\vec{a}; \vw_{1,1},\ldots,\vw_{\noc_L,L}; \theta_{1,1},\ldots,\theta_{\noc_L,L}] \in \R^\dimlimit.
		\end{equation}
	\end{itemize}
\end{definition}
\begin{remark}
	A shallow linear neural network\index{neural network!shallow!linear} can also be called a \emph{3-layer network}.\index{neural network!3-layer} The notation 3-layer network is consistent with the literature because input-and output are counted on top as layers. Therefore,
	a general $(L+2)$-layer network has only $L$ \emph{internal} layers.\index{layer!internal}
\end{remark}

This affine linear neural network functions can be generalized by using higher order polynomials:
\begin{definition}[Shallow generalized neural network function] \index{neural network!shallow generalized} \label{de:gnn}
	Let $m,\degree,\noc \in \N$ and let
	\begin{equation}\label{eq:hj}
	    \vfm_j: \R^m \to \R^\degree, \quad j=1,\ldots,\noc
    \end{equation}
	be vector-valued functions, called \emph{network generating functions}. \index{function!network generating} Neural network functions associated to a family of network generating functions are defined by
	\begin{equation}\label{eq:general}
		\vs \to {\tt p}(\vs) = \Psi[\vp](\vs) := \sum_{j=1}^{\noc} \alpha_j\sigma\left(\vw_j^T\vfm_j(\vs) +\theta_j\right)
	\end{equation}
    with
    \begin{equation*}
    	\vp = (\alpha_j, \theta_j, \vw_j)_{j=1}^\noc \in \R^{(\degree+2)\noc}\;.
    \end{equation*}
    Again we denote the parametrization vector with $\vp$ and the set of all such functions with $\bP$.
	We call
	\begin{equation}\label{eq:GeneralFunctions}
		\mathcal{D}: = \set{ \vs \to \vw^T \vfm_j(\vs) +\theta: \vw \in \R^{\degree}, \theta \in \R, j=1,\ldots,\noc}
	\end{equation}
	the set of \emph{decision functions}\index{function!decision} associated to \autoref{eq:general}.
	The composition of $\sigma$ with a decision function is called \emph{neuron}.\index{neuron}
\end{definition}
\begin{remark}
	Classical decision functions, that means {\bf ALNN's}, are widely documented (see for instance \cite{HinDenYuDahMoh12,KriSutHin17,AlElr19,Cyb89,ElfUchDoy18}), while generalized neural network functions have only been discussed sparsely (see for instance \cite{TsaTefNikPit19,FanXioWan20,FriSchShi24,FriSchShi25}.
	\end{remark}

The versatility  of the network defined in \autoref{eq:general} is due to the possibility to flexibly choose the functions $\vfm_j$, $j=1,\ldots,\noc$.

We give a few examples of generalized decision function below and split them into three categories:

\begin{definition}[Neural networks with generalized decision functions] \label{de:quadratic}
	\begin{description}
		\item{General quadratic neural networks {\bf (GQNN)}}: Let \index{neural network!general quadratic} \index{GQNN}
		$$\degree = m+1, \quad \vfm_j(\vs) = \begin{pmatrix} {\tt h}_{j,1}(\vs)\\ \vdots\\ {\tt h}_{j,m}(\vs)\\ {\tt h}_{j,m+1}(\vs) \end{pmatrix}= \begin{pmatrix} s_1\\ \vdots\\ s_m\\ \vs^T A_j \vs \end{pmatrix}
		\text{ for all } j=1,\ldots,\noc.$$
		That is, $\vfm_j$ is a graph of a quadratic function. Inserting this choice into \autoref{eq:general} we get:
		\begin{equation}\label{eq:quadratic_approximation_fixed}
			\begin{aligned}
				\vs \to {\tt p}(\vs) = \Psi[\vp](\vs) &:=
				\sum_{j=1}^{\noc} \alpha_j \sigma \left( \vw_j^T \vs + w_{j,m+1} \vs^T A_j \vs +\theta_j\right) \\
				&\text{ with }  \alpha_j, \theta_j \in \R, \vw_j \in \R^{m+1}, A_j \in \R^{m \times m} .
			\end{aligned}
		\end{equation}
		$\vp$ denotes the parametrization of ${\tt p}$:
		\begin{equation} \label{eq:pq}
			\begin{aligned}
				\vp = (\alpha_j,\vw_j,A_j,\theta_j)_{j=1}^\noc \in \R^\dimlimit 
				\text{ with }
				\dimlimit = (m^2+m+3)\noc.
			\end{aligned}
		\end{equation}
		\item{Constrained quadratic neural networks {\bf (CQNN)}} are networks, where the entries of the matrices $A_j$, $j=1,\ldots,\noc$ are constrained:\index{neural network!constrained quadratic {\bf CQNN}}
			For the choice $A_j=\id$, the identity matrix, we get:
			\begin{equation}\label{eq:radial_approximation}
				\begin{aligned}
					\vs \to {\tt p}(\vs) = \Psi[\vp](\vs) &:=
					\sum_{j=1}^{\noc} \alpha_j \sigma \left(\vw_j^T \vs + w_{j,m+1} \norm{\vs}_2^2 +\theta_j\right) \\
					& \text{ with } \alpha_j, \theta_j \in \R, \vw_j \in \R^{m+1}\;.
				\end{aligned}
			\end{equation}
		\item{Radial quadratic neural networks \RQNN{}s:} \index{neural network!radial quadratic!\RQNN}
		For $w_{j,m+1} \neq 0$ the argument in \autoref{eq:radial_approximation} can be rewritten as
		\begin{equation}\label{eq:nus}
			\begin{aligned}
				w_{j,m+1} \norm{\vs}_2^2 + \vw_j^T\vs  + \theta_j
				= \kappa_j + w_{j,m+1} \norm{\vs - \vr_j}_2^2
			\end{aligned}
		\end{equation}
		with
		\begin{equation} \label{eq:kappa}
			\vr_j = - \frac{1}{2w_{j,m+1}} \vw_j \text{ and } \kappa_j = \theta_j - \frac{\norm{\vw_j}_2^2}{4 w_{j,m+1}}.
		\end{equation}
		We call the set of functions from \autoref{eq:radial_approximation}, which satisfy the constraint
		\begin{equation} \label{eq:circle}
			w_{j,m+1} < 0 \text{ and } \kappa_j > 0 \text{ for all } j=1,\ldots,\noc,
		\end{equation}
		\RQNN{}s, \emph{radial neural network functions}\index{radial neural network}. That is, an \RQNN reads as follows
		\begin{equation}\label{eq:rqnn}
		\begin{aligned}
			\vs \to {\tt p}(\vs) = \Psi[\vp](\vs) &:=
			\sum_{j=1}^{\noc} \alpha_j \sigma \left( \kappa_j - \nu_j \norm{\vs - \vr_j}_2^2 \right) \\
			& \text{ with } \nu_j, \kappa_j > 0.
		\end{aligned}
	    \end{equation}
	    We see that the level sets of $\Psi[\vp]$ are circles and so \RQNN{}s are \emph{radial basis functions}
		(see for instance \cite{Buh03} for a general overview).
		\item{Sign based quadratic {\bf (SBQNN)} and cubic neural networks {\bf (CUNN)}}: Let $\degree=m$, which means that we have as many network generating functions ${\tt h}_j$, $j=1,\ldots,\degree$ as the space dimension $m$.\index{neural network!sign based quadratic} \index{neural network!cubic} \index{SBQNN} \index{RQNN}
		\begin{enumerate}
			\item {\bf (SBQNN)}: Let, independent of $j=1,\ldots,\noc$,
			\begin{equation} \label{eq:sgn_distance}
				{\tt h}_{i} (\vs)= \text{sgn}(s_i) s_i^2 \text{ for } i=1,\ldots,m\;.
			\end{equation}
			With this family of network generating functions the family of neural network functions reads as follows:
			\begin{equation}\label{eq:generalf}
				\begin{aligned}
					\Psi[\vp](\vs) := \sum_{j=1}^{\noc} \alpha_j \sigma
					\left(\sum_{i=1}^m w_{j,i}\text{sgn} (s_i) s_i^2+\theta_j\right) \\
					\text{ with } \alpha_j, \theta_j \in \R \text{ and } \vw_j \in \R^m.
				\end{aligned}
			\end{equation}
			We call these functions \emph{signed based squared neural networks}\index{neural network!signed squared}. Note, here $\vp \in \R^\dimlimit$ with $\dimlimit=(m+2)\noc$.
			\item {\bf (CUNN)}: Let
			\begin{equation} \label{eq:cube}
				{\tt h}_{i} (\vs)= s_i^3 \text{ for } i=1,\ldots,m.
			\end{equation}
			We obtain the family of functions
			\begin{equation}\label{eq:cubef}
				\begin{aligned}
					\Psi[\vp](\vs) := \sum_{j=1}^{\noc} \alpha_j \sigma
					\left(\sum_{i=1}^m w_{j,i} s_i^3+\theta_j\right) \\
					\text{ with } \alpha_j, \theta_j \in \R \text{ and } \vw_j \in \R^m.
				\end{aligned}
			\end{equation}
			We call these functions \emph{cubic neural networks}. Again $\vp \in \R^\dimlimit$ with $\dimlimit=(m+2)\noc$.
		\end{enumerate}	
	\end{description}
\end{definition}

\subsection{Universal approximation theorems} \label{sec:uat}
We start our convergence review with the classical \emph{universal approximation theorem} from \cite{Cyb89}. For this, we need the definition of discriminatory functions, which is considered broader than in the standard literature:\index{function!discriminatory}
\begin{definition}[Discriminatory function] \label{de:disc_function}
	Let $\Omega \subseteq \R^m$ be Lebesgue measurable:
	\begin{itemize}
		\item A continuous function $\sigma : \R \to \R$ is called \emph{discriminatory with respect to measures} if for every measure $\mu$ on $\Omega$, which satisfies
		\begin{equation} \label{eq:discrimination}
			\int_\Omega \sigma (\vw^T \vs + \theta)\,d\mu(\vs)=0 \quad
			\text{ for all } \vw \in \R^m, \theta \in \R,
		\end{equation}
		it follows that $\mu$ is equal to zero. The original definition of \cite{Cyb89} is for $\Omega = [0,1]^m$.
	\item Let $p=1,2$ with associated dual $p_*=\infty,2$, respectively. A function $\sigma \in L^p(\R)$ is called \emph{discriminatory with respect to $L^p(\R)$}
	 if for every function $g \in L^{p_*}(\Omega)$, which satisfies
	 \begin{equation} \label{eq:discriminationp}
	 	\int_\Omega \sigma (\vw^T \vs + \theta)\,g(\vs) d \vs=0 \quad
	 	\text{ for all } \vw \in \R^m, \theta \in \R
	 \end{equation}
	 it follows that $g$ is equal to zero. A detailed treatment of these duality results appears in Brezis's classic textbook on functional analysis \cite{Bre11b}.
	 \end{itemize}	
\end{definition}

\begin{example}\label{ex:sigmoid}
	Let $\Omega$ be connected and $\text{meas}(\Omega) > 0$.  Moreover, let $\sigma$ be a linear activation function (see \autoref{eq:linear_act}), then from \autoref{eq:discrimination} it follows that for $g \in L^{p_*}(\Omega)$ which satisfies
	\begin{equation} \label{eq:discriminationb}
		\int_{\Omega} (a(\vw^T \vs + \theta) + b)g(\vs)\,d \vs=0 \quad \text{ for all } \vw \in \R^m, \theta \in \R\,
	\end{equation}
	which is equivalent to
	\begin{equation} \label{eq:discriminationc}
		\vw^T \int_{\Omega} \vs g(\vs) \,d\vs + \theta \int_{\Omega} g(\vs)\,d \vs=0 \quad \text{ for all } \vw \in \R^m, \theta \in \R \;.
	\end{equation}
	However, \autoref{eq:discriminationc} is satisfied if the zero-th and first moments of $g$ vanish. Clearly, there exists such non-trivial functions. 
	The same argument applies to all polynomials $\sigma$. However, compactified functions have the discriminatory property. In particular all Tauber-Wiener-functions are discriminatory.
	
	Note that every non-polynomial function is \emph{discriminatory} (this follows from the results in \cite{LesLinPinScho93}). Therefore the activation functions in \autoref{de:affinenns}
	are discriminatory.
\end{example}
In the following, we show the relation to the Tauber-Wiener-theory:
\begin{example}\label{ex:lpdiscriminatory}
	Let $\Omega = \R$. Then $\sigma \in L^2(\R)$ is discriminatory in $L^2(\R)$ if from  \autoref{eq:discrimination}, which reads particularly for $\Omega=\R$,
	\begin{equation} \label{eq:discriminationae}
		\int_\R \sigma (w s + \theta) g(s)\,d s=0 \quad \text{ for all } w \in \R, \theta \in \R \text{ and } g \in L^2(\R)
	\end{equation}
	it follows that $g \equiv 0$.
	
	In particular $\sigma$ is discriminatory if from the weaker condition
		\begin{equation} \label{eq:discriminationaep}
		\int_\R \sigma (s + \theta) g(s)\,d s=0 \quad \quad \text{ for all } w \in \R, \theta \in \R
	\end{equation}
	it follows that $g \equiv 0$.
    Using the Fourier-theorem it follows that
    \begin{equation} \label{eq:discriminationaepf}
    	\int_\R \mathcal{F}[\sigma](\omega) \e^{\i \omega \theta} \mathcal{F}[g](\omega)\,d \omega=0 \quad \quad \text{ for all } w \in \R, \theta \in \R.
    \end{equation}
Since the family $\e^{\i \cdot \theta}$ is a basis, we conclude that $\mathcal{F}[\sigma](\omega) \mathcal{F}[g](\omega) = 0$ and if $\sigma$ is Tauber-Wiener, then $\mathcal{F}[g]\equiv 0$.
\end{example}

\begin{theorem}[Universal approximation theorem (UAT) \cite{Cyb89}]\label{le:LinearApproximation}\index{Theorem!Universal approximation}\index{UAT}
	Let $\sigma:\R \to \R$ be a continuous discriminatory function. Then, for
	every function $\x \in C([0,1]^m)$ and every $\ve >0$, there exists a function
	\begin{equation}\label{eq:Linear}
		\x_\noc(\vs) = \sum_{j=1}^{\noc}\alpha_j \sigma(\vw_j^T\vs + \theta_j)
	\end{equation}
    with $\noc \in \N$, $\alpha_j$, $\theta_j \in \R$, $\vw_j \in \R^m$.
	satisfying
	\begin{equation*}
		\abs{\x_\noc(\vs)-\x(\vs)} < \ve  \text{ for all } \vs\in [0,1]^m.
	\end{equation*}
\end{theorem}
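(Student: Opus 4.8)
The plan is to follow Cybenko's classical argument by contradiction, using the Hahn–Banach theorem together with the Riesz representation theorem. First I would set $X := C([0,1]^m)$ with the supremum norm, and consider the linear subspace
\[
	S := \set{\apf : \apf(\vs) = \sum_{j=1}^{\noc}\alpha_j \sigma(\vw_j^T\vs + \theta_j), \; \noc \in \N, \; \alpha_j,\theta_j \in \R, \; \vw_j \in \R^m}.
\]
The claim of the theorem is precisely that $\overline{S} = X$. Suppose, for contradiction, that the closure $\overline{S}$ is a proper closed subspace of $X$. Then by the Hahn–Banach theorem there exists a nonzero bounded linear functional $\Lambda$ on $X$ that vanishes on $\overline{S}$, hence on $S$.

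Next I would invoke the Riesz representation theorem for $C([0,1]^m)$: the functional $\Lambda$ is represented by a (signed, regular, finite Borel) measure $\mu$ on $[0,1]^m$, i.e.
\[
	\Lambda(f) = \int_{[0,1]^m} f(\vs)\, d\mu(\vs) \quad \text{for all } f \in C([0,1]^m).
\]
Since every function of the form $\vs \mapsto \sigma(\vw^T\vs + \theta)$ lies in $S$, we get
\[
	\int_{[0,1]^m} \sigma(\vw^T\vs + \theta)\, d\mu(\vs) = 0 \quad \text{for all } \vw \in \R^m, \; \theta \in \R.
\]
By \autoref{de:disc_function}, the fact that $\sigma$ is discriminatory forces $\mu \equiv 0$, hence $\Lambda = 0$, contradicting $\Lambda \neq 0$. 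Therefore $\overline{S} = X$, which is exactly the assertion: for every $\fun \in C([0,1]^m)$ and every $\ve > 0$ there is an element $\apf \in S$ of the stated form with $\abs{\apf(\vs) - \fun(\vs)} < \ve$ for all $\vs \in [0,1]^m$.

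The only genuinely substantive ingredient is the implication ``$\sigma$ discriminatory $\Rightarrow$ $\mu \equiv 0$'', but in the formulation given here this is the \emph{definition} of discriminatory, so it is available for free; the real content has been pushed into \autoref{ex:sigmoid}, where it is asserted (via \cite{LesLinPinScho93}) that non-polynomial continuous functions are discriminatory. If one had to prove that fact as well, the hard part would be there: one shows that if $\mu$ annihilates all ridge functions $\sigma(\vw^T\vs+\theta)$, then by a density/approximation argument (approximating, e.g., $\cos$ and $\sin$, or bump functions, by affine combinations of shifted-and-scaled copies of $\sigma$) $\mu$ annihilates a class of functions rich enough to separate measures — for sigmoidal $\sigma$ this uses that $\sigma(\lambda t)$ approaches an indicator of a half-line as $\lambda \to \infty$, letting one recover the measure of arbitrary slabs $\set{\vs : a < \vw^T\vs + \theta < b}$. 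Given the excerpt's framing, however, I would simply cite \autoref{ex:sigmoid} and present the three-line Hahn–Banach/Riesz argument above.
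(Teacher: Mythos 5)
Your argument is correct and is exactly Cybenko's original proof (Hahn--Banach applied to the closure of the span of ridge functions, Riesz representation of the annihilating functional as a signed regular Borel measure, and then the definition of a discriminatory function to force $\mu \equiv 0$). The paper itself gives no proof of this theorem and simply cites \cite{Cyb89}, so your proposal matches the intended (referenced) argument; your closing remark correctly identifies that the substantive content lives in showing that concrete activation functions are discriminatory, which the paper delegates to \autoref{ex:sigmoid}.
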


It is obvious that general quadratic neural networks ({\bf GQNN}s) and matrix constrained neural networks ({\bf CQNN}) are more versatile than affine linear works, and thus they satisfy the universal approximation property. For radial quadratic neural networks (\RQNN{}s) we prove a convergence rates result in \autoref{sec:rqnn} and this serves as an alternative to the universal approximation results. For sign based quadratic ({\bf SBQNN}s) and cubic ({\bf CUNN}s) networks we need a different analysis:

\begin{theorem}[Generalized universal approximation theorem (GUAT)] \label{th:general}
	Let $\sigma:\R \to \R$ be a continuous discriminatory function
	and assume that $\vfm : [0,1]^m \to \R^{\hat{m}}$ is injective (this in particular means that $m \leq \hat{m}$) and continuous.
	
	Then for every $g\in C([0,1]^m)$ and every $\ve>0$ there exists some function
	\begin{equation*}
		G_\ve^\vfm(\vs) := \sum_{j=1}^\noc \alpha_j\sigma\left(\vwm_j^T\vfm(\vs) +\theta_j\right) \text{ with } \alpha_j, \theta_j \in \R \text{ and } \vwm_j \in \R^{\hat{m}}
	\end{equation*}
	satisfying
	\begin{equation*}
		\abss{G_\ve^\vfm(\vs) - g(\vs)} < \ve \text{ for all }\vs\in [0,1]^m.
	\end{equation*}
\end{theorem}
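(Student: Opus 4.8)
The plan is to reduce the Generalized UAT to the classical UAT (\autoref{le:LinearApproximation}) by pushing the problem forward along $\vfm$. The key observation is that since $\vfm : [0,1]^m \to \R^{\hat m}$ is continuous and injective on a compact set, it is a homeomorphism onto its image $K := \vfm([0,1]^m) \subseteq \R^{\hat m}$. Hence the inverse $\vfm^{-1} : K \to [0,1]^m$ is continuous, and the composition $g \circ \vfm^{-1} : K \to \R$ is a continuous function on the compact set $K$. The idea is to approximate $g \circ \vfm^{-1}$ on $K$ by a shallow affine linear network in the $\hat m$ variables, and then precompose with $\vfm$ to recover an approximation of $g$ on $[0,1]^m$.

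\textbf{Key steps.} First I would extend $g \circ \vfm^{-1}$ from $K$ to a continuous function $\tilde g$ on all of $\R^{\hat m}$ (equivalently, on a cube $[-M,M]^{\hat m} \supseteq K$ after rescaling to $[0,1]^{\hat m}$), using the Tietze extension theorem; this is where compactness of $K$ is essential. Second, apply the classical universal approximation theorem \autoref{le:LinearApproximation} to $\tilde g$ on $[0,1]^{\hat m}$ (after an affine change of variables $\R^{\hat m} \to [0,1]^{\hat m}$, which only rescales the weights $\vwm_j$ and shifts the biases $\theta_j$, keeping $\sigma$ fixed and still discriminatory): for every $\ve > 0$ there exist $\noc \in \N$, $\alpha_j, \theta_j \in \R$, $\vwm_j \in \R^{\hat m}$ with
\begin{equation*}
	\abss{\sum_{j=1}^\noc \alpha_j \sigma\left(\vwm_j^T \vt + \theta_j\right) - \tilde g(\vt)} < \ve \quad \text{for all } \vt \in [0,1]^{\hat m}.
\end{equation*}
Third, substitute $\vt = \vfm(\vs)$ (possibly composed with the affine normalization), which lies in the image of $\vfm$, hence in $[0,1]^{\hat m}$; since $\tilde g(\vfm(\vs)) = g(\vfm^{-1}(\vfm(\vs))) = g(\vs)$ by injectivity, the function $G_\ve^\vfm(\vs) := \sum_{j=1}^\noc \alpha_j \sigma(\vwm_j^T \vfm(\vs) + \theta_j)$ satisfies $\abss{G_\ve^\vfm(\vs) - g(\vs)} < \ve$ for all $\vs \in [0,1]^m$, which is the claim.

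\textbf{Main obstacle.} The only genuinely nontrivial point is ensuring that a continuous function defined merely on the (possibly irregular, non-convex) compact image $K = \vfm([0,1]^m)$ admits a continuous extension to a full cube so that \autoref{le:LinearApproximation} — which is stated for functions on $[0,1]^{\hat m}$ — can be invoked. This is handled by the Tietze–Urysohn extension theorem, but it is worth stating explicitly since $K$ need not be a cube or even convex. Everything else is bookkeeping: the homeomorphism property of $\vfm$ onto its image follows from compactness of $[0,1]^m$ and Hausdorffness of $\R^{\hat m}$, and the affine normalization to $[0,1]^{\hat m}$ merely reparametrizes the weights and biases without altering the activation function or its discriminatory character (cf.\ \autoref{ex:sigmoid}). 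One should also remark that $m \le \hat m$ is implied by injectivity of a continuous map only in the sense stated in the hypothesis; it plays no further role in the argument beyond what is already encoded in $\vfm$ being a well-defined map into $\R^{\hat m}$.
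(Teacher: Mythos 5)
Your proposal is correct and follows essentially the same route as the paper's own proof: both exploit that $\vfm$ is a homeomorphism onto its compact image, extend $g\circ\vfm^{-1}$ via the Tietze--Urysohn theorem, invoke the classical UAT (\autoref{le:LinearApproximation}) in the $\hat m$ variables, and substitute $\vz=\vfm(\vs)$. Your explicit remark that the UAT must be applied on a compact cube containing $\vfm([0,1]^m)$ (after an affine reparametrization of weights and biases) is a point the paper's proof glosses over, so no changes are needed.
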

\begin{proof}
	We begin the proof by noting that since $\vs \to \vfm(\vs)$ is injective, the inverse function on the range of $\vfm$ is well-defined, and we write
	$$\vfm^{-1}:\vfm([0,1]^m) \subseteq \R^{\hat{m}} \to [0,1]^m \subseteq \R^m\;.$$
	Since $\vfm$ is continuous and $[0,1]^m$ is compact also $\vfm^{-1}$ is continuous and $\vfm([0,1]^m)$ is compact.
	
	Applying the Tietze–Urysohn–Brouwer extension theorem (see \cite{Kel55}) to the continuous function $g \circ \vfm^{-1} : \vfm([0,1]^m) \to \R$, this can be extended continuously to $\R^{\hat m}$. This extension will be denoted by $g^*:\R^{\hat m} \to \R$.
	
	We apply \autoref{le:LinearApproximation} to conclude that there exists $\alpha_j, \theta_j \in \R$ and $ \vwm_j \in \R^{\hat{m}}$, $j=1,\ldots,\noc$ such that
	\begin{equation*}
		G^*(\vz) := \sum_{j=1}^\noc \alpha_j \sigma(\vwm_j^T \vz +\theta_j) \text{ for all } \vz \in \R^{\hat{m}}, \theta_j \in \R,
	\end{equation*}
	which satisfies
	\begin{equation}\label{eq:1}
		\abs{G^*(\vz)-g^*(\vz)} <\ve \text{ for all } \vz \in \R^{\hat m}.
	\end{equation}
	Then, because $\vfm$ maps into $\R^{\hat m}$ we conclude, in particular, that
	\begin{equation*}
		\begin{aligned}
			G^*(\vfm(\vs)) &=\sum_{j=1}^\noc \alpha_j \sigma(\vwm_j^T \vfm(\vs)+\theta_j) \text{ and } \\
			\abs{G^*(\vfm(\vs)) - g(\vs)}
			&=\abs{G^*(\vfm(\vs)) - g^*(\vfm(\vs))} <\ve.
		\end{aligned}
	\end{equation*}
	Therefore $G_\ve^\vfm(\cdot):= G^*(\vfm(\cdot))$ satisfies the claimed assertions.
\end{proof}
We notice that the variability in $\vw_j$, $j=1,\ldots,\noc$ is the key to bring our proof and the universal approximation theorem in context. That is, if $\vw_j$, $\theta_j$, $j=1,\ldots,\noc$ are allowed to vary over the full spaces $\R^m$, $\R$, respectively. \RQNN{}s are constrained to $\theta_j < \norm{\vw_j}_2^2/(4 w_{j,m+1})$ in \autoref{eq:circle} and thus
	 \autoref{th:general} does not apply. Interestingly \autoref{thm:ConvR} below applies and allows to approximate arbitrary functions in $\mathcal{L}^1(\R^m)$ (even with rates). So the universal approximation theorem is subject to generalizations.
	
\begin{corollary}[UATs for {\bf SBQNN}s and {\bf CUNN}s] \label{th:qdf}
	Let $\sigma: \R \to \R$ be discriminatory and Lipschitz continuous. All families of neural network functions from \autoref{de:quadratic} satisfy the universal approximation property on $[0,1]^m$.
\end{corollary}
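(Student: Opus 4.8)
The plan is to classify the families of \autoref{de:quadratic} into two types and reduce each to an approximation result already available in this chapter, namely the classical \autoref{le:LinearApproximation} and its generalization \autoref{th:general}.

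\emph{Quadratic families (\textbf{GQNN}, \textbf{CQNN}).} First I would note that both of these classes contain the affine linear networks of \autoref{de:affinenns} as a special case: putting $A_j=0$ in \autoref{eq:quadratic_approximation_fixed}, respectively $w_{j,m+1}=0$ in \autoref{eq:radial_approximation}, reduces the network to the form \autoref{eq:classical_approximation}. Since $\sigma$ is continuous and discriminatory, \autoref{le:LinearApproximation} gives, for every $g\in C([0,1]^m)$ and $\ve>0$, an affine linear approximant within $\ve$; a fortiori the same function lies in the \textbf{GQNN} (resp.\ \textbf{CQNN}) family, so these families have the universal approximation property.

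\emph{Polynomial-generating families (\textbf{SBQNN}, \textbf{CUNN}).} Here I would invoke \autoref{th:general}, whose only nontrivial hypothesis to check is that the relevant network generating map $\vfm:[0,1]^m\to\R^{\hat m}$ is injective and continuous. For \textbf{SBQNN} we have $\vfm(\vs)=(\mathrm{sgn}(s_1)s_1^2,\ldots,\mathrm{sgn}(s_m)s_m^2)$; restricted to $[0,1]^m$ each coordinate is $s_i\mapsto s_i^2$ (the sign factor is $+1$, and the value at $s_i=0$ is consistent), which is continuous and strictly increasing on $[0,1]$, hence $\vfm$ is continuous and injective with $\hat m=m$. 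For \textbf{CUNN}, $\vfm(\vs)=(s_1^3,\ldots,s_m^3)$ is continuous and injective on all of $\R^m$ since $t\mapsto t^3$ is strictly increasing, again with $\hat m=m$. Applying \autoref{th:general} then produces approximants of the form $\sum_{j=1}^{\noc}\alpha_j\sigma(\vwm_j^T\vfm(\vs)+\theta_j)$ with $\vwm_j\in\R^{m}$, which matches \autoref{eq:generalf} (resp.\ \autoref{eq:cubef}) and uses the parametrization $\vp\in\R^{(m+2)\noc}$ already recorded for these families; this gives the universal approximation property for \textbf{SBQNN} and \textbf{CUNN}.

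\emph{Radial family (\textbf{RQNN}).} This family is \emph{not} reachable through \autoref{th:general}, because the constraints \autoref{eq:circle} keep $\theta_j$ away from the full real line; instead its density in $C([0,1]^m)$ follows from the quantitative approximation result established in \autoref{sec:rqnn}. I do not anticipate any serious obstacle: the only points requiring care are the injectivity/continuity checks above, and one should also observe that the Lipschitz assumption on $\sigma$ in the hypothesis is not actually used for the density statement — it is retained only for compatibility with the convergence-rate results that follow. Collecting the cases proves the corollary.
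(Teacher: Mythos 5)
Your proposal is correct and follows essentially the same route as the paper: the paper's own proof is a one-liner that invokes \autoref{th:general} together with the injectivity of the generating functions, which is exactly your treatment of {\bf SBQNN}s and {\bf CUNN}s, while your handling of {\bf GQNN}/{\bf CQNN} (specialization to {\bf ALNN}s) and of {\bf RQNN}s (excluded from \autoref{th:general} by the constraint \autoref{eq:circle}, covered instead by \autoref{sec:rqnn}) reproduces what the paper states in the surrounding text rather than in the proof itself. The only caveat worth flagging is that \autoref{thm:ConvR} yields $L^2$-approximation of $\mathcal{L}^1$-functions rather than uniform approximation on $[0,1]^m$, so for {\bf RQNN}s one obtains, as the paper itself concedes, only an ``alternative'' to the universal approximation property; your observation that the Lipschitz hypothesis on $\sigma$ is not needed for the density statement is also accurate.
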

\begin{proof} The proof follows from \autoref{th:general} and noting that the generating network functions $\vfm_j$, $j=1,\ldots,\noc$ defined in \autoref{de:quadratic} are injective.
\end{proof}

We summarize universal approximation theorems for neural network functions extending the table from \cite{Cyb89}:

\begin{table}
	\begin{center}
		\begin{tabular}{r|c|c|l}
			\hline
			Neural network type & Function space & Reference & Our notation \\
			\hline
			\hline
			$\sigma(\vw^T\vs + \theta)$ & $C([0,1]^m)$ & \cite{Cyb89} & {\bf ALNN}\\
			$\vw \in \R^m$, $\theta \in \R$ & & &\\
			$\sigma$ continuous sigmoid & $L^2(]0,1[^m)$ & \cite{CarDic89} & \\
			\hline
			$\sigma(\vw^T\vs + \theta)$ & $C([0,1]^m)$ & \cite{Fun89,HorStiWhi89} &{\bf ALNN}\\
			$\vw \in \R^m$, $\theta \in \R$ & & &\\
			$\sigma$ monotonic sigmoid & & &\\		
			\hline
			$\sigma(\vw^T\vs + \theta)$ & $C([0,1]^m)$ & \cite{Jon90}&{\bf ALNN}\\
			$\vw \in \R^m$, $\theta \in \R$ & & &\\
			$\sigma$ sigmoid & & &\\	
			\hline
			$\sigma(\vw^T\vs + \theta)$ & $L^1(]0,1[^m)$ & \cite{Cyb89}&{\bf ALNN}\\
			$\vw \in \R^m$, $\theta \in \R$ & & &\\
			$\sigma \in L^1(\R)$, $\int_\R \sigma(s)ds \neq 0$ & & &\\	
			\hline
			$\sigma(w^Ts + \theta)$& $W^{1,2}(]0,1[)$ & \autoref{th:rates_relu} & \ReLU\\
			$w \in \R$, $\theta \in \R$ & & &\\
			$\sigma=\ReLU$ & & &\\	
			\hline
			$\sigma(U \vs + \vec{\theta})$ & $L^1(]0,1[^m)$ & \cite{Cyb89} &\\
			$U \in \R^{m \times m}$ orthogonal, $\vec{\theta} \in \R^m$ & & &\\
			$\sigma =\chi_{[0,1]^m}$ & & &\\
			\hline
			$\sigma(t \vs + \vec{\theta})$ & $L^1(]0,1[^m)$ & \cite{Rud73} &\\
			$t \in \R$, $\vec{\theta} \in \R^m$ & & &\\
			$\sigma \in L^1(\R^m)$, $\int_{\R^m} \sigma(\vs) \neq 0$ & &  &\\	
			\hline
			$\sigma \left( \vec{w}^T \vs + \vs^T A \vs +\theta\right)$ & $C([0,1]^m)$ & \cite{FriSchShi25} & {\bf CQNN}\\
			$\vw \in \R^m$, $\theta \in \R$ & & &\\
			$\sigma$ continuous sigmoid & &  &\\
			\hline
			$\sigma \left( \vec{w}^T \vs + \xi \vs^T A \vs +\theta\right)$ & $C([0,1]^m)$ & \cite{FriSchShi25} & {\bf MCNN}\\
			$\vw \in \R^m$, $\theta, \xi \in \R$ & & &\\
			$\sigma$ continuous sigmoid & &  &\\
			\hline	
			$\sigma(\xi \norm{\vs}^2 + \vw^T \vs  + \theta)$ & $C([0,1]^m)$ & \cite{FriSchShi25} & \RQNN\\
			$\vw \in \R^m$, $\theta, \xi \in \R$ & & &\\
			$\sigma$ continuous sigmoid & &  &\\
			\hline	
			$\sigma (\vw^T (\text{sgn} (\vs) \vs^2) +\theta)$ & $C([0,1]^m)$ & \cite{FriSchShi25} & {\bf SBQNN}\\
			$\vw \in \R^m$, $\theta \in \R$ & & &\\
			$\sigma$ continuous sigmoid & &  &\\
			\hline
			$\sigma(\vw^T \vs^3+\theta)$  & $C([0,1]^m)$ & \cite{FriSchShi25} & {\bf CUNN}\\
			$\vw \in \R^m$, $\theta \in \R$ & & &\\
			$\sigma$ continuous sigmoid & &  &\\		
			\hline
			$s \mapsto 2^{j/2}\sigma(2^j s + k)$, $k \in \ZN$  & $W^{2,\xi}(\R)$ & \cite{BasBoi98} & {\bf DWF}\\	
			multiresolution analysis &&& \\
			with respect to $j \in \N_0$  &  & & \\
			\hline
		\end{tabular}
	\end{center}
	\caption{\label{ta:nn} The table shows possible approximations of arbitrary functions with shallow neural networks.}
\end{table}

\section{Convergence rates}\label{sec:rates}
Convergence rates results for neural networks in function spaces (typically Sobolev spaces), which are required in the analysis of regularization methods remain widely open. We present a few results, which have been derived so far, in particular, in the wavelet context. Moreover, some results for noisy data can also be found in \cite{LiLuMatPer24}.

\subsection{\ReLU networks}
Let \ReLU be the normalized function from \autoref{eq:ReLU} with $a=1$ and $b=0$. That is
\begin{equation*}
	s \in \R \mapsto \ReLU(s) = \max \set{0,s}\;.
\end{equation*}
Then for the linear spline functions defined in \autoref{eq:linearspline} we find
\begin{equation} \label{eq:lambda_ReLU}
	\begin{aligned}
		\frac{1}{\m} \Lambda_i(s) &= \ReLU \left(s-\frac{i-1}{\m} \right) -2 \ReLU\left(s-\frac{i}{\m} \right) \\
		&\quad + \ReLU \left(s-\frac{i+1}{\m} \right) \text{ for } i=0,\ldots,\m \text{ and } s \in [0,1]\;.
	\end{aligned}
\end{equation}
It follows that
\begin{equation*}
	{\tt R}_\m = \spann \set{ \ReLU \left(s-\frac{j}{\m}\right) : j=0,\ldots,\m}
\end{equation*}
contains all linear splines on a uniform grid of $\m+1$ points on $[0,1]$. 

\begin{remark}
	A consequence of this is that we get convergence rates results for \ReLU approximation functions from splines. It is therefore not surprising that differential equations can be solved efficiently with \ReLU-networks.
\end{remark}

\begin{theorem} \label{th:rates_relu}
	Each $\x \in W^{2,2}(0,1)$ can be approximated by a \ReLU-network such that 
	\begin{equation} \label{eq:rate_relu}
		\norm{\x-\x_{\m}}_{L^2(0,1)} = \mathcal{O}(\m^{-2}) \text{ and } \norm{\x-\x_{\m}}_{W^{1,2}(0,1)} = \mathcal{O}(\m^{-1})\;.
	\end{equation} 
\end{theorem}
\begin{proof} \autoref{eq:rate_relu} follows immediately from the results used in \autoref{ex:ac}, in particular \autoref{eq:interp-error}.
\end{proof}

\begin{remark} We emphasize that for approximation of an arbitrary function we only need \ReLU functions with slope $a=1$ and dilations on a uniform grid. At the same time, when we make use of \emph{adaptivity}, that is, when we allow for discretization points which are non-uniform, then we can get a better rate $o(\m^{-1})$, than \autoref{eq:rate_relu} (see \cite{FrePop69}). In this situation we also need to adapt the slopes of the \ReLU network. In fact in general the weights are $\frac{1}{s_{i+1}-s_i}$ in this situation. 	
\end{remark}

\subsection{Wavelets}
A motivation for \emph{wavelets}\index{wavelets} stems from the \emph{windowed Fourier-transform}, \index{Fourier-transform!windowed} which was historically the first concept of a time-frequency analysis.
The idea consists in making locally around a given time instance a Fourier-analysis.

\subsection*{Windowed Fourier-transform}

\begin{definition}\label{de:wft} For a given function $g \in L^2(\R)$ (can be real or complex valued), called the \emph{window function},\index{function!window} the \emph{windowed Fourier-transform} of a function $f \in L^2(\R)$ (can be real or complex valued) is defined as follows
	\begin{equation} \label{eq:wft}
		\begin{aligned}
			(\omega,s) \in \R^2 \mapsto {\mathcal W}[{\tt f}](\omega,s) &:= \int_\R {\tt f}(t) \overline{{\tt g}(t-s)} \exp^{-\i \omega t}\,dt \\
			&= \inner{\tt f}{{\tt g}^{(\omega,s)}}_{L^2},
		\end{aligned}
	\end{equation}
	where $\inner{\cdot}{\cdot}_{L^2}$ denotes the complex $L^2$-inner product on $\R$ and
	\begin{equation} \label{eq:gpq}
		t\in \R \mapsto {\tt g}^{(\omega,s)}(t) := {\tt g}(t-s)\exp^{\i \omega t}.
	\end{equation}
	The function ${\tt g} = {\tt g}^{(0,0)}$ is called \emph{centered} at $(0,0)$ if \index{window!centered}
	\begin{equation} \label{eq:cwf}
		\int_\R t \abs{{\tt g}(t)}^2 dt = 0 \text{ and } \int_\R \omega \abs{\mathcal{F}[{\tt g}](\omega)}^2 d\omega = 0.
	\end{equation}
\end{definition}
The windowed Fourier-transform describes the local frequency spectrum of a function $f$ at $s \in \R$.
\begin{lemma}\label{le:iso}
	The windowed Fourier-transform is a mapping from $L^2(\R)$(real or complex valued) to $L^2(\R^2;\C)$ because it satisfies
	\begin{equation*}
		\norm{{\mathcal W}[{\tt f}]}_{L^2(\R^2;\C)}^2 = 2\pi \norm{\tt g}_{L^2}^2\norm{\tt f}_{L^2}^2\;.
	\end{equation*}
\end{lemma}
\begin{proof}
	Let $s \in \R \mapsto {\tt h}(s) := \ol{{\tt g}(-s)}$ and $s \mapsto {\tt j}(s) := {\tt f}(s) \exp^{-\i \omega s}$, then it follows from the isometry of the Fourier-transform, \autoref{eq:fourier_isometry} and \autoref{eq:conf_F} that
	\begin{equation} \label{eq:w}
		\begin{aligned}
			\ & \int_{\R^2} \abs{{\mathcal W}[{\tt f}](\omega,s)}^2\,ds\,d\omega
			=
			\int_{\R^2} \abs{ \int_\R {\tt f}(t) \ol{{\tt g}(t-s)} \exp^{-\i \omega t} \,dt }^2 ds\,d\omega \\
			= &
			\int_{\R^2} \abs{ \int_\R {\tt h}(s-t) {\tt f}(t) \exp^{-\i \omega t} \,dt }^2 ds\,d\omega =
			\int_{\R^2}
			\abs{ h*{\tt j}(s) }^2 ds\,d\omega \\
			= &
			\int_{\R^2}
			\abs{ \mathcal{F}[h*j](\hat{\omega}) }^2 d \hat{\omega} \,d\omega
			=
			2\pi\int_{\R^2}
			\abs{ \mathcal{F}[{\tt h}](\hat{\omega})\mathcal{F}[{\tt j}](\hat{\omega})}^2 d\hat{\omega} \,d\omega \;.
		\end{aligned}
	\end{equation}
	From the definition of the Fourier-transform it follows that
	\begin{equation*}
		\abs{\mathcal{F}[{\tt j}](\hat{\omega})} = \abs{\frac{1}{\sqrt{2\pi}} \int_\R {\tt f}(s) \exp^{- \i (\omega+\hat{\omega})s} \,ds} = \abs{\mathcal{F}[{\tt f}](\hat{\omega}+\omega)}\,,
	\end{equation*}
	and
	\begin{equation*}
		\begin{aligned}
			\abs{\mathcal{F}[{\tt h}](\omega)}^2 &= \frac{1}{2\pi} \abs{ \int_\R \ol{{\tt g}(-s)} \exp^{-\i \omega s} d s }^2
			= \frac{1}{2\pi} \abs{ \int_\R \ol{{\tt g}(s)} \exp^{\i \omega s} }^2 d s \\
			&= \frac{1}{2\pi} \abs{ \int_\R {\tt g}(s) \exp^{- \i \omega s} }^2 = \abs{\mathcal{F}[{\tt g}](\omega)}^2\;.
		\end{aligned}
	\end{equation*}
	Plugging the last two identities in \autoref{eq:w} we get
	\begin{equation*}
		\begin{aligned}
			\int_{\R^2} \abs{\mathcal{F}[{\tt h}](\hat{\omega}) \mathcal{F}[{\tt j}](\hat{\omega}) }^2 d\hat{\omega} d\omega
			&= \int_{\R^2} \abs{\mathcal{F}[{\tt g}](\hat{\omega})}^2 \abs{\mathcal{F}[{\tt f}](\hat{\omega}+\omega)}^2 d\hat{\omega} \,d\omega\\
			&= \int_\R \abs{\mathcal{F}[{\tt g}](\hat{\omega})}^2 d\hat{\omega} \int_\R \abs{\mathcal{F}[{\tt f}](\omega)}^2 \,d\omega\;.
		\end{aligned}
	\end{equation*}
	Using the isometry of the Fourier-transform and plugging the following identity in \autoref{eq:w} we get the assertion.
\end{proof}
For practical application a discretization is employed:
\begin{equation*}
	\omega = j \omega_0, s = k s_0\;
\end{equation*}
\begin{definition}
	The discrete window functions are defined as \index{window function!discrete}
	\begin{equation} \label{eq:gmn}
		t \mapsto {\tt g}_{j,k}(t) := {\tt g}^{(j\omega_0,ks_0)}(t) = {\tt g}(t-ks_0)\exp^{\i  j \omega_0 t}, \quad (j,k) \in \ZN^2.
	\end{equation}
	The \emph{discrete windowed Fourier-transformation} \index{windowed Fourier-transformation!discrete} is defined as
	\begin{equation} \label{eq:dwft}
		\begin{aligned}
			\mathcal{W}_d: L^2(\R) &\to l^2(\ZN^2).\\
			{\tt f} &\mapsto \left(
			\inner{{\tt g}_{j,k}}{\tt f}_{L^2}=:c_{j,k}[{\tt f}] \right)_{(j,k) \in \ZN^2}\;.
		\end{aligned}
	\end{equation}
\end{definition}

\begin{remark}
	For a fixed frequency $\omega_0$, $t \mapsto {\tt g}^{(\omega_0,s)}(t) = {\tt g}(t-s)\e^{\i \omega_0 t}$ are $L^2$-discriminatory (see \autoref{de:disc_function}) if from $\mathcal{W}[{\tt f}](\omega_0,\cdot) = 0$ it follows that ${\tt f} \equiv 0$.
\end{remark}

\subsection*{Wavelet transformation}
Instead of the window functions defined in \autoref{eq:gpq} we use now \emph{analyzing wavelet}\index{wavelet!analyzing}, where shifts (in this case $b$) and scale (in this case $a$) are linked. The main idea stays the same, namely to analyze a function locally in time and frequency, although frequency has a different meaning.
\begin{definition}
	Let $\psi \in L^2(\R)$ and $a \neq 0$. The family of functions
	\begin{equation} \label{eq:ana_wavelet}
		\psi^{(a,b)}(t) := \abs{a}^{-\frac{1}{2}} \psi\left( \frac{t-b}{a} \right)
		\text{ for } a \neq 0
	\end{equation}
	is called \emph{continuous wavelet family}.\index{wavelet family!continuous}
\end{definition}
The factor $\abs{a}^{-\frac{1}{2}}$ is introduced to preserve norm properties:
\begin{lemma} \label{le:norm_wavelets} For every $a \neq 0$ and $b \in \R$ we have
	\begin{equation*}
		\int_\R \abs{\psi^{(a,b)}(t)}^2 dt = \int_\R \abs{\psi(t)}^2 dt\;.
	\end{equation*}
\end{lemma}
\begin{proof}
We compute directly using change of variable:
\begin{equation*}
\begin{aligned}
\int_{\R} |\psi^{(a,b)}(t)|^2 dt
&= \int_{\R} \left| |a|^{-1/2} \psi\!\left( \frac{t-b}{a} \right) \right|^2 dt \\
&= |a|^{-1} \int_{\R} \left| \psi\!\left( \frac{t-b}{a} \right) \right|^2 dt \\
&= |a|^{-1} \int_{\R} |\psi(u)|^2 \cdot |a| \, du \qquad (u = (t-b)/a,\; dt = |a|\,du) \\
&= \int_{\R} |\psi(u)|^2 du = \int_{\R} |\psi(t)|^2 dt.
\end{aligned}
\end{equation*}
Thus $\|\psi^{(a,b)}\|_{L^2} = \|\psi\|_{L^2}$ for every $a \neq 0$ and $b \in \mathbb{R}$.
\end{proof}
\begin{remark}
	The support of the functions $\set{\psi^{(a,b)} : a \neq 0, b \in \R}$ is increasing with respect to $a$.
	The function $\psi^{(a,b)}$ (see \autoref{eq:cwf}) is centered at $t=b$ and scaled by the factor $a$, which means that
	\begin{equation*}
		\int_\R t \abs{\psi^{(a,b)}(t)}^2 dt = b \text{ and }
		\int_\R \omega \abs{\mathcal{F}[\psi^{(a,b)}](\omega)}^2 d \omega = \frac{\omega_0}{a}\;.
	\end{equation*}
\end{remark}
In analogously with the windowed Fourier-transformation we define the \emph{continuous wavelet transform}\index{wavelet tranform!continuous}:
\begin{definition}[Continuous wavelet transform] \label{de:cwt}
	\begin{equation} \label{eq:cwt}
		\Psi[\tt f](a,b):=\Psi(b,a) := \int_\R  {\tt f}(t) {\overline{\psi^{(a,b)}}}(t)dt =
		\inner{{\tt f}}{\psi^{(a,b)}}_{L^2}\;.
	\end{equation}
\end{definition}
\begin{remark}
	We note that the wavelet family $\psi^{(a,b)}$ is a composition of an activation function $\Psi$ and an affine linear function. That is it is an \ALNN. Moreover, we show below that there exists a countable family of wavelets, with which we can approximate arbitrary functions $\x \in L^2(\R)$. This leads to the discrete wavelet transform.
\end{remark}
As for the windowed Fourier-transform we defined the discrete Wavelet transformation.
However, now we use consider an \emph{exponential grid}:
\begin{equation*}
	a = a_0^j \text{ with } a_0 > 0 \text{ and } b = k b_0 a_0^j \text{ for } j,k \in \ZN\;.
\end{equation*}
With this grid we get the discrete wavelets from \autoref{eq:ana_wavelet}:
\begin{equation} \label{eq:dwt}
	\psi_{j,k}(t) = \psi^{(a_0^j,k b_0 a_0^j)}(t) = a_0^{-\frac{j}{2}} \psi(a_0^{-j}t - k b_0) \text{ for } j,k \in \ZN\;.
\end{equation}
Now, we define the discrete wavelet transformation: \index{wavelet tranformation!discrete}
\begin{definition}
	The \emph{discrete wavelet transformation} $\Psi_d: L^2(\R) \to l^2(\ZN^2)$ is defined as follows:
	\begin{equation} \label{eq:dwt1}
		\begin{aligned}
			\Psi_d[f](j,k) &:= c_{(j,k})[{\tt f}]:= \inner{{\tt f}}{\psi_{j,k}}_{L^2} \\
			& =
			a_0^{-\frac{j}{2}}
			\int_\R  {\tt f}(t) {\overline{\psi (a_0^{-j}t - k b_0)}}\,dt\;.
		\end{aligned}
	\end{equation}
\end{definition}
If the family $\psi_{j,k}$ is an orthonormal basis of $L^2(\R)$, then we can reconstruct a function from the wavelet coefficients. The construction of such has been made in the fundamental work of Ingrid Daubechies and Yves Meyer \cite{Dau88,Dau92,Mey91,Mey93,Mey93b}.

\subsection*{Wavelet filter coefficients}
Now, we construct filter coefficients of real, compactly supported, orthonormal wavelets. We restrict attention to $a_0=2$ and $b_0=1$ in \autoref{eq:dwt}. We follow Daubechies' original construction (see \cite{Dau88,Dau92}) of orthonormal wavelets, which is based on the existence of a scaling function:
%
%
%
%
%
%

\begin{definition}[Scaling function]\label{de:scaling}
	A \emph{scaling function}\index{scaling function} $\phi: \R \to \R$ or $\C$, satisfies
	\begin{enumerate}
		\item for every $j \in \ZN$ the functions
		\begin{equation} \label{eq:scalf}
			t \mapsto \phi_{j,k}(t) := 2^{-j/2}\phi(2^{-j}t-k) \text{ for all } k\in \ZN\,
		\end{equation}
		are orthonormal with respect to $L^2({\R})$. $j$ is called the \emph{scale dimension} of $\phi_{j,k}$.\index{scale dimension}
		\item The sets
		\begin{equation}\label{eq:vm}
			V_j:=\overline{\spann\set{\phi_{j,k} :k\in\ZN}} \text{ for all } j\in\ZN
		\end{equation}
		form a multiresolution analysis on $L^2({\R})$\index{Multiresolution analysis}: That is
		\begin{equation}\label{eq:mra}
			V_j \subset V_{j-1} \text{ for all } j\in\ZN\,,
		\end{equation}
		with
		\begin{equation*}
			\bigcap_{j\in\ZN} V_j = \set{ 0 } \mbox{ and }
		    \overline{\bigcup_{j\in\ZN}V_j} = L^2({\R}).
		\end{equation*}
	\end{enumerate}
\end{definition}
The wavelet space is defined based on the scaling function:
\begin{definition}[Wavelet function]\label{de:wavelet}
	The wavelet spaces $\set{W_j : j \in \ZN}$ according to a scaling function $\phi$ are the orthogonal
	complements of $V_j$ in $V_{j-1}$: That is
	\begin{equation*}
		W_j := V_j^\perp \cap V_{j-1}.
	\end{equation*}
	The \emph{mother wavelet} $\psi$ is chosen such that the set
	\begin{equation*}
		t \mapsto \psi_{j,k}(t) := 2^{-j/2}\psi(2^{-j}t-k) \text{ for } k\in\ZN,
	\end{equation*}
	 form an orthonormal basis of $W_j$.
	\index{space!wavelet} \index{mother wavelet}
\end{definition}
\begin{remark}
Since $\phi=\phi_{0,0}\in V_0\subset V_{-1}$ (see \autoref{eq:mra}),
the scaling function $\phi$ must satisfy the \emph{dilation equation}
\begin{equation}\label{eq:dil}
	\phi(t) = \sum_{k \in \ZN} h_k\phi(2t-k) \text{ for almost all } t \in \R\,,
\end{equation}
where the sequence~$\set{h_k: k \in \ZN}$ is called \emph{filter sequence}. \index{filter sequence}
\end{remark}

Following \cite{Dau88,Dau92} the construction of
orthonormal wavelet functions is reduced to the design of the
corresponding filter sequence $\set{h_k : k \in \ZN}$ in \autoref{eq:dil}. Moreover,
one assumes that the mother wavelet $\psi$ satisfies
\begin{equation}\label{e:dil-w}
	\psi(t) = \sum_{k \in \ZN} (-1)^k h_{1-k}\phi(2t-k) \text{ for almost all } t \in \R\;.
\end{equation}
In particular this choice guarantees that the wavelet $\psi$ and the scaling
function $\phi$ are orthogonal in $L^2(\R)$.

In orthogonal wavelet theory the desired properties on the scaling function and wavelets are:
\begin{enumerate}
	\item For fixed integer $N \ge 1$ the scaling function $\phi$ has
	support in the interval $[1-N,N]$. This in particular holds when
	the filter coefficients satisfy
	\begin{equation}\label{e:hk.1}
		h_k=0 \text{ for }k<1-N \text{ and } k>N.
	\end{equation}
	This means that for all $t \in \R$,
	\begin{equation}\label{e:dil_N}
		\phi(t) = \sum_{k=1-N}^N h_k\phi(2t-k) \text{ and } \psi(t) = \sum_{k=1-N}^N (-1)^k h_{1-k}\phi(2t-k)
	\end{equation}
	\item The existence of a scaling function $\phi$ satisfying
	\autoref{eq:dil} requires that
	\begin{equation}\label{e:hk.2}
		\sum_{k \in \ZN} h_k=2\;.
	\end{equation}
	\item In order to impose orthonormality of the integer translates of
	the scaling function $\phi$: That is
	\begin{equation*}
	   \int\limits_{\R} \phi(t-l)\phi(t) dt =\delta_{0,l},
	\end{equation*}
	the filter coefficients $\set{h_k : k \in \ZN}$ have to satisfy
	\begin{equation} \label{e:hk.3}
		\sum_{k \in \ZN} h_k h_{k-2l} = 2\delta_{0,l} \text{ for }l=0,\dots,N-1.
	\end{equation}
	\item The wavelet $\psi$ is postulated to have $N$ vanishing
	moments, i.e.,
	\begin{equation}
		\label{vanish}
		\int\limits_{\R} t^l\psi(t) dt =0 \text{ for } l=0,\dots,N-1\,,
	\end{equation}
	which requires the filter sequence to satisfy
	\begin{equation}\label{e:hk.4}
		\sum_{k \in \ZN} (-1)^k h_{1-k} k^l = 0\text{ for } l=0,\dots,N-1\;.
	\end{equation}
\end{enumerate}
\begin{example}
	\begin{enumerate}
		\item $N=1$: The real valued \emph{Haar-wavelet} is given by\index{Haar wavelet}
	\begin{equation*}
		(h_0,h_1) =
		\left( \frac{1}{\sqrt{2}}, \frac{1}{\sqrt{2}}\right)\;.
	\end{equation*}
	The Haar-functions were already introduced in 1910 (see \cite{Haa10}).
	\item
	There exist four Daubechies wavelets of order $2N=4$. The two with real valued coefficients are given by
\begin{equation*}
	(h_{-1},h_0,h_1,h_2) =
	\left( \frac{1+{\sqrt 3}}{4}, \frac{3+{\sqrt 3}}{4},
	\frac{3-{\sqrt 3}}{4}, \frac{1-{\sqrt 3}}{4} \right)
\end{equation*}
and
\begin{equation*}
	(h_{-1},h_0,h_1,h_2) =
	\left( \frac{1-{\sqrt 3}}{4}, \frac{3-{\sqrt 3}}{4},
	\frac{3+{\sqrt 3}}{4}, \frac{1+{\sqrt 3}}{4} \right).
\end{equation*}
One real valued Daubechies $3$ wavelet is given by
\begin{equation*}
	\begin{aligned}
	~ & (h_{-2},h_{-1},h_0,h_1,h_2,h_3) \\
	= &
\biggl( {\frac{1 + {\sqrt{10}} +
		{\sqrt{5 + 2{\sqrt{10}}}}}{16}},
{\frac{5 + {\sqrt{10}} +
		3{\sqrt{5 + 2{\sqrt{10}}}}}{16}},
{\frac{5 - {\sqrt{10}} +
		{\sqrt{5 + 2{\sqrt{10}}}}}{8}}, \biggr. \\
	&\quad \biggl. {\frac{5 - {\sqrt{10}} -
		{\sqrt{5 + 2{\sqrt{10}}}}}{8}},
{\frac{5 + {\sqrt{10}} -
		3{\sqrt{5 + 2{\sqrt{10}}}}}{16}},
{\frac{1 + {\sqrt{10}} -
		{\sqrt{5 + 2{\sqrt{10}}}}}{16}}\,\biggr)\;.
	\end{aligned}
\end{equation*}
\end{enumerate}
\end{example}
\begin{remark}
The scaling functions and wavelets are elements of the neural network functions,
\begin{equation*}
\begin{aligned}
\Phi &:= \set{ t \mapsto a \phi(w t + \theta) : a,w,\theta \in \R}\,,\\
\Psi &:= \set{ t \mapsto a \psi(w t + \theta) : a,w,\theta \in \R}\,,
\end{aligned}
\end{equation*}
respectively.

Since the Daubechies scaling functions and wavelets are compactly supported, they are Taubner-Wiener functions (see \autoref{Wiener_Tauberian} and recall the Paley-Wiener theorem). In particular, every function $\x$ can be approximated by shifts $s \mapsto \phi(s+\theta)$ and $s \mapsto \psi(s+\theta)$, respectively.
\end{remark}

\subsection*{Smoothness and convergence rates of wavelets}
With the filter coefficients $\set{h_k: k \in \ZN}$ satisfying \autoref{e:hk.1},  \autoref{e:hk.2}, \autoref{e:hk.3} and \autoref{e:hk.4}, we associate the trigonometric polynomial
\begin{equation} \label{eq:tau}
	\xi \in [-\pi,\pi) \mapsto \tau(\xi) := \sum_{k \in \ZN} h_k \e^{-\i k\xi}\;.
\end{equation}
This polynomial satisfies
\begin{equation}\label{lowpass_cond}
	\abs{\tau(\xi)}^2 = \cos^{2N} (\xi/2) {\tt r}(\xi) \text{ for all } \xi \in [-\pi,\pi)
\end{equation}
where
\begin{equation*}
	\xi \mapsto {\tt r}(\xi) := \sum_{k=0}^{N-1} r_k \cos(k\xi)
\end{equation*}
is a trigonometric polynomial with real coefficients $\set{r_k: k=0,\ldots,N-1}$. In this case the filter coefficients of the scaling function $\set{h_k:k \in \ZN}$ are real, which we aim for.
Associated with the trigonometric polynomial ${\tt r}$, there is the operator
\begin{equation}\label{eq:Tr}
	\begin{aligned}
		T_{\tt r} : C[-\pi,\pi) &\to C[-\pi,\pi)\\
		{\tt u} & \mapsto \left(\xi \in [-\pi,\pi) \to {\tt r}(\xi/2) {\tt u}(\xi/2) + {\tt r}(\pi - \xi/2) {\tt u} (\pi - \xi/2) \right).
	\end{aligned}
\end{equation}
Evaluation of the operator $T_{\tt r}$ for each of the functions in the set
\begin{equation*}
\set{\xi \in [-\pi,\pi) \mapsto {\tt e}_l(\xi):= \cos(l\xi): 0 \leq l \leq N-1}\,,
\end{equation*}
we get
\begin{equation*}
\begin{aligned}
	T_{\tt r} [{\tt e}_l](\xi) & = \sum_{k=0}^{N-1} r_k (1+ (-1)^{k+l}) \cos(k\xi/2) \cos(l\xi/2)\\
	& = \sum\limits_{\substack{0 \leq k \leq N-1 \\ k+l \text{ is even}}} r_k \left( \cos\left(\frac{(k+l)\xi}{2}\right) + \cos \left(\frac{(k-l)\xi}{2}\right) \right).
\end{aligned}
\end{equation*}
Then we define the matrix
\begin{equation} \label{eq:A_r}
\begin{aligned}
    A_{\tt r} &= (A_{m,l})_{0 \leq m,l \leq N-1}
\text{ with } \\
	A_{m,l} &= \begin{cases}
		r_{2m - l} + r_{2m + l} & \text{if } 0 \leq 2m-l \leq N-1 \text{ and } 0 \leq 2m + l \leq N-1,\\
		r_{2m - l} & \text{if } 0 \leq 2m-l \leq N-1 \text{ and } 2m + l \notin [0,N-1],\\
		r_{2m + l} & \text{if } 0 \leq 2m+l \leq N-1 \text{ and } 2m - l \notin [0, N-1],\\
		0 & \text{otherwise.}
	\end{cases}
\end{aligned}
\end{equation}
\begin{theorem}\label{s_N^*}
	Let $N$ be the degree of a Daubechies wavelet (see \autoref{e:hk.1}).
    Moreover, let $\tau$ be as in \autoref{eq:tau} and let ${\tt r}$ be the trigonometric
    polynomial defined in \autoref{lowpass_cond} with the associated matrix
    $A_{\tt r}$ defined in \autoref{eq:A_r}. Then for every
    \begin{equation} \label{eq:csob}
        s < s_N^* := N - \log_4 \rho(A_{\tt r})\,,
    \end{equation}
    where $\rho(A_{\tt r})$ denotes the spectral norm of $A_{\tt r}$,
    the scaling function $\phi$ and wavelet $\psi$ are in $W^{s,2}(\R)$.
    $s_N^*$ is called \emph{critical Sobolev index}\index{critical Sobolev index}
\end{theorem}
The proof of this theorem can be found in \cite{Coh03}.
\begin{table}\label{table:wavelet}
	\begin{center}
		\begin{tabular}{|r||c|c|c|c|c|c|c|c|c|c|}
			\hline
			N & 1 & 2 & 3 & 4 & 5 & 6 & 7 & 8 & 9 &10\\
			\hline
			Sobolev $s_N^*$ & 0.5 & 1 & 1.42 & 1.78 & 2.10 & 2.39 & 2.66 & 2.91 &3.16 &3.40 \\
			\hline
		\end{tabular}\caption{\label{ta:smoothwavelet} The critical Sobolev index defined in \autoref{s_N^*}. The Daubechies scaling function $\phi$ and wavelet $\psi$ are elements of the Sobolev space $W^{s,2}$ with $s < s_N^*$.}
	\end{center}
\end{table}
Finally, we recall a convergence rates result:
\begin{theorem} \label{thm: Sobolev-error-estimate}
	Let $\phi \in W^{s,2}(\R)$ be a scaling function of Daubechies $N$-wavelets and let $0 < s < t < s_N^*$, then for all $\x \in W^{t,2}(\R)$ we have
	\begin{equation} \label{eq:rates_wavelets}
		\norm{\x - P_j\x} _{W^{s,2}} \leq 2^{-j(t-s)} \norm{\x}_{W^{t,2}}.
	\end{equation}
	Here $P_j$ denotes the projection of $\x$ onto the space $V_j$ as defined in \autoref{eq:vm}.
\end{theorem}

One can apply this theorem to $a$- and $c$-problem (see \autoref{a_reconstruct} and \autoref{ss:c}), respectively.
\commentO{For the moment we have to leave this out:
\begin{example}
	Let ${\tt m} \in \Z$ and let $\X_{\tt m} := V_{\tt m}$ be the space of linear combination of Daubechies-$N=2$-wavelets (see \autoref{eq:vm}). We verify the conditions of \autoref{th:NeuSch90} and \autoref{th:NeuSch90b} with 
	$\opo_{\tt n}=\opo$ for the $c$-problem (see \autoref{ss:c}) to simplify the considerations. This, in particular means that the conditions taking into account $\nu_\n$ (for instance in \autoref{eq:ps1} and following) are always satisfied.
    For Daubechies-$2$-wavelets, the critical Sobolev-smoothness is $s_2^* = 1$ (see \autoref{ta:smoothwavelet}), showing (see \autoref{eq:rates_wavelets}) that for arbitrary but fixed $0 < t< 1$, for all $\z \in W^{t,2}$ we have the $L^2 = W^{s=0,2}$ estimate
    $$\norm{\z - P_{\tt m}\z}_{L^2} \leq C 2^{-{\tt m}t}\norm{\z}_{W^{t,2}}\;.$$ 
    This shows, in particular, that for $\xdag \in W^{t,2}$ we have 
    $$ \norm{(I-P_\m) \xdag} = \mathcal{O}(2^{-{\tt m}t})\;.$$
    Moreover, since the range of $\opo'[\xdag]$ is in $W^{2,2}$ \commentO{Check} we also have  that $\gamma_{\tt m}$ as defined in \autoref{eq:gamma} satisfies \commentO{Here the index $t$ is different. I think it is $t=1$}
    $$\gamma_{\tt m} = \mathcal{O}(2^{-{\tt m}t})\;.$$
    Therefore \autoref{eq:ps2} in \autoref{th:NeuSch90} requires to choose $\alpha$ such that 
    \begin{equation} \label{eq:ps2a}
      4^{-{\tt m}t}/\sqrt{\alpha} \to 0\,,
    \end{equation}
    for obtaining a convergent regularization method.     
    For the convergence rates result in \autoref{th:NeuSch90b} we require that \commentO{$\x^0 \in W^{2,2} \cap W_0^{1,2}$???} and a choice (see \autoref{eq:eta}) \commentO{The Sobolev spaces are not the same in the previous example and the one here}
    $$\alpha \sim \max\{\delta, \gamma_{\tt m}^2\}$$  
    yields $\norm{\x_{\tt m,\tt n}^{\alpha,\delta,\eta} - \xdag} = \mathcal{O}(\sqrt{\delta} + 2^{-{\tt m}t})$. Balancing $2^{-{\tt m}t} \sim \sqrt{\delta}$ gives ${\tt m} \sim \frac{1}{t}\log_2(1/\sqrt{\delta})$ and the optimal rate is $\mathcal{O}(\sqrt{\delta})$.
\end{example}}

\commentO{This is definitely not part of the Arxive:
\begin{example}
Take $\X_{\tt m} = V_{\tt m}$ the space of Daubechies $N=3$ wavelets ($6$ coefficients) at scale ${\tt m}$ (see \autoref{eq:vm}). Applying \autoref{th:NeuSch90} and \autoref{th:NeuSch90b} with $\opo_{\tt n} = \opo$ to the $a$-problem, where $\opo: \x \mapsto \y$ satisfies $-(\x \y')' = {\tt f}$ with $\y(0)=\y(1)=0$. This means that the operator evaluation of $\opo$ is performed exactly and not by a finite element method. 
For Daubechies $N=3$, $s_3^* \approx 1.42$, so \commentO{Where is $\gamma$ defined}$\gamma_{\tt m} = \mathcal{O}(2^{-1.42{\tt m}})$.  \autoref{th:NeuSch90} requires $\alpha \gg \gamma_{\tt m}^2$, and \autoref{th:NeuSch90b} with $\alpha \sim \max\{\delta, \gamma_{\tt m}^2\}$ yields $\norm{\x_{\tt m}^{\alpha,\delta} - \xdag} = \mathcal{O}(\sqrt{\delta} + 2^{-1.42{\tt m}})$. Balancing $2^{-1.42{\tt m}} \sim \sqrt{\delta}$ gives ${\tt m} \sim \frac{1}{2.83}\log_2(1/\delta)$ and the optimal rate is $\mathcal{O}(\sqrt{\delta})$.
\end{example}}

\subsection{Radial quadratic neural nets (\RQNN{}s)} \label{sec:rqnn}
In the following we review convergence rates of \RQNN{}s (as defined in \autoref{eq:radial_approximation})
in the $\mathcal{L}^1$-norm, defined in \autoref{eq:spaceL1} below, which is a space where the norm is defined via the wavelets coefficients and is not the same as the standard Lebesgue space $L^1$.
To be precise we specify a subset of \RQNN{}s with which we can already approximate arbitrary functions with a rate.
This is a much finer result than the standard universal approximation result, \autoref{th:general}.

However, so far only for \RQNN's quantitative approximation result can be provided, but not for other neural network. One reason for that is that the constraint \autoref{eq:circle}, below, implies that the level-sets of the neurons are compact and not unbounded, as they are for instance for shallow affine linear neurons. Therefore we require a different analysis (and different spaces) than in the most advanced and optimal convergence rates results for affine linear neural networks as for instance in \cite{SieXu22,SieXu23}. In fact the presented analysis is closer to an analysis of compact wavelets.

The convergence rate are expressed in the following norm:
\begin{definition} \label{de:ell1space}
	Let ${\tt U}$ be a family of functions (probably uncountable) in $L^2(\R^m)$.
 For a function $\x \in L^2(\R^m)$ which can be expresses as
\begin{equation*}
\x = \sum_{\mathtt{g} \in {\tt C}} c_{\mathtt{g}} \mathtt{g} \text{ almost everywhere,}
\end{equation*}
where ${\tt C} \subseteq {\tt U}$ is countable,
the \textbf{$\mathcal{L}^1$-norm} of $\x$ is defined as:
\begin{equation} \label{eq:l1norm}
\norm{\x}_{\mathcal{L}^1} := \inf \set{ \sum_{\mathtt{g} \in {\tt C}} |c_{\mathtt{g}}| \ : \ \x= \sum_{\mathtt{g} \in {\tt C}} c_{\mathtt{g}} \mathtt{g} }\;.
\end{equation}
Moreover,
\begin{equation} \label{eq:spaceL1}
\mathcal{L}^1(\R^m) = \set{\x \in L^2(\R^m): \norm{\x}_{\mathcal{L}^1} < \infty}\;.
\end{equation}
Let $\Omega \subseteq \R^m$. We say $\x \in \mathcal{L}^1(\Omega)$ if there exist an extension $\hat{\x} : \R^m \to \R$ such that $\hat{\x} \in \mathcal{L}^1(\R^m)$.
\end{definition}
\begin{remark} We add several remarks on this definition:
	\begin{enumerate}
		\item $\mathcal{L}^1$ \emph{does not} refer to the common $L^1$-function space of absolutely integrable functions and depends on the choice on the family ${\tt U}$. There is only a notational similarity of the spaces. For more properties and details on this space see \cite[Remark 3.11]{ShaCloCoi18}.
		\item In \autoref{eq:norm} we have used the uncountable family
		\begin{equation*}
		   {\tt U} = \set{\sigma(\cdot+\theta) : \theta \in \R}
		\end{equation*}
	    to define the $\mathcal{L}^1$-norm.
	    \item If ${\tt U} = \set{\psi_n: n \in \N}$ is an orthonormal basis of $L^2(\R^m)$, then we have for
	    $\x \in L^2(\R^m)$:
	     \begin{equation*}
	     	\x = \sum_{n \in \N} c_n \psi_n \text{ almost everywhere}.
	     \end{equation*}
	     Here $c_n = \inner{\x}{\psi_n}_{L^2}$.
         And the $\mathcal{L}^1$-norm is the standard $\ell^1$-norm:
         \begin{equation*}
         	\norm{\x}_{\mathcal{L}^1} = \sum_{n \in \N} \abs{c_n} = \norm{(c_n)_{n \in \N}}_{\ell^1}\;.
         \end{equation*}
        \item The $\mathcal{L}^1$-norm is always used, when the set ${\tt U}$ is redundant.
	\end{enumerate}

\end{remark}
The following definitions are quite general and allow for proving convergence rates for wavelets generated from neural network.
\bigskip\par
\fbox{
		\parbox{0.90\textwidth}{
				\begin{center}
					The convergence rates for approximating arbitrary functions are obtained with a countable set of neural network ansatz functions while the family of neural network functions is itself uncountable.
					The family of general neural networks (linear combinations of an arbitrarily shifted and scaled functions) is redundant.
						\end{center}}}
\bigskip\par

In the following we construct a family of \RQNN's (see \autoref{eq:rqnn}) which forms a frame. For this purpose
we require the following technical concepts formulated by Deng \& Han \cite{DenHan09}:
\begin{enumerate}
	\item \emph{Approximation to identity} (AtI)
	which satisfies
	\item the \emph{double Lipschitz condition}.\index{double Lipschitz condition}
\end{enumerate}
The workflow is as follows: First construct a family of functions which forms an AtI and a double Lipschitz condition. From such a wavelet family can be constructed which forms a frame. In turn from an expansion of a function with respect to the frame we get approximation rates.

\begin{definition}[Approximation to the identity (AtI)\cite{DenHan09}] \label{def:wavelet} 
	\index{Approximation to the identity!AtI}
	A family of \emph{symmetric kernel functions}
	\begin{equation*}
	   \set{S_k: \R^m \times \R^m  \to \R : k\in \ZN}
	\end{equation*}
	is said to be an AtI if there exist a quintuple $(\ve ,\zeta,C,C_\rho,C_A)$ of positive numbers satisfying
	\begin{equation} \label{eq:quin}
		0 < \ve  \leq \frac{1}{m}, \; 0 < \zeta \leq \frac{1}{m} \text{ and } C_A < 1
	\end{equation}
	such that the following three conditions are satisfied for all $k \in \ZN$:
	\begin{enumerate}
		\item \label{it1}
		$\abs{S_k(\vs,\vt)} \leq C\frac{2^{-k\ve }}{\left(2^{-k}+ C_\rho\norms{\vs - \vt}_2^m \right)^{1+\ve}}$ for all $\vs,\vt \in \R^m$;
		\item \label{it2}
		$\abs{S_k(\vs,\vt) - S_k(\vr,\vt)} \leq C \left( \frac{C_\rho\norms{\vs - \vr}_2^m }{2^{-k}+ C_\rho\norms{\vs - \vt}_2^m }\right)^{\zeta}\frac{2^{-k\ve }}{\left(2^{-k}+ C_\rho\norms{\vs - \vt}_2^m \right)^{1+\ve }}$ \\
		for all triples $(\vs,\vr,\vt) \in \R^m \times \R^m \times \R^m$ which satisfy
		\begin{equation} \label{eq:rest_item2}
			C_\rho\norms{\vs - \vr}_2^m  \leq C_A \left(2^{-k}+ C_\rho\norms{\vs - \vt}_2^m \right);
		\end{equation}
		\item \label{it4} The following normalization property holds:
		\begin{equation} \label{eq:it4}
			\int_{\R^m} S_k(\vs,\vt) d\vt = 1 \text{ for all } \vs \in \R^m\;.
		\end{equation}
	\end{enumerate}
	Moreover, we say that the AtI satisfies the \emph{double Lipschitz condition}\index{double Lipschitz condition} if there exists a triple $(\tilde{C},\tilde{C}_A,\zeta)$ of positive constants with
	$\tilde{C}_A < \frac12,$ such that for all $k \in \ZN$
	\begin{equation}\label{eq:DoubleLipschitzCondition}
		\begin{aligned}
			&\abs{S_k(\vs,\vt) - S_k(\vr,\vt) - S_k(\vs,\vec{u}) + S_k(\vr,\vec{u})}\\
			\leq & \tilde{C} \left( \frac{C_\rho\norms{\vs - \vr}_2^m }{2^{-k}+C_\rho\norms{\vs - \vt}_2^m} \right)^\zeta
			\left( \frac{C_\rho \norms{\vt - \vec{u}}_2^m}{2^{-k}+C_\rho\norms{\vs - \vt}_2^m }\right)^\zeta \cdot \\&
			\qquad \cdot
			\frac{2^{-k \ve }}{(2^{-k}+C_\rho\norms{\vs - \vt}_2^m )^{1+\ve }}
		\end{aligned}
	\end{equation}
	for all quadruples $(\vs,\vr,\vt,\vec{u})\in \R^m \times \R^m \times \R^m \times \R^m$ which satisfy
	\begin{equation} \label{eq:rest_item3}
		C_\rho \max\set{\norms{\vs - \vr}_2^m,\norms{\vt - \vec{u}}_2^m} \leq \tilde{C}_A \left( 2^{-k}+C_\rho\norms{\vs - \vt}_2^m\right).
	\end{equation}
    Associated to an AtI there are \emph{wavelets}
    \begin{equation}\label{eq:Sb}
    	\begin{aligned}
    		\wcinn{k}: \R^m \times \R^m &\to \R\;.\\
    		(\vs,\vt) &\mapsto 2^{-k/2} (\scinn{k}(\vs,\vt)- \scinn{k-1}(\vs,\vt)).
    	\end{aligned}
    \end{equation}
    The name wavelets is attribute to the fact that they are constructed by scaling and differences of shifts but it is not an actual orthonormal wavelet as in \autoref{de:wavelet}.
\end{definition}
Below we construct a frame out of an AtI generated wavelet family related to \RQNN{}s defined in \autoref{eq:rqnn}:

\begin{definition}[\RQNN wavelets] \label{de:cf}
	Let $r > 0$ be a fixed constant and let $\sigma$ be a discriminatory function (see \autoref{de:disc_function})  satisfying
	\begin{equation*}
		\abs{\int_{\R^m}\sigma(r^2-\norm{\vs}_2^2)d\vs} < \infty\;.
	\end{equation*}
	Then we define
	\begin{equation}\label{eq:varphi}
		\begin{aligned}
			\varphi: \R^m &\to \R\,,\\
			\vs &\mapsto C_m \sigma(r^2 - \norm{\vs}_2^2),
		\end{aligned}
	\end{equation}
	where $C_m$ is a normalizing constant such that $\int_{\R^m}\varphi(\vs)d\vs=1$. With $\varphi$ we define for all $k \in \ZN$ the \emph{circular scaling functions}
		\begin{equation}\label{eq:S}
			\begin{aligned}
				\scinn{k}: \R^m \times \R^m &\to \R\\
				(\vs,\vt) &\mapsto 2^k \varphi(2^{k/m}(\vs-\vt))
			\end{aligned}
		\end{equation}
	If $k \in \ZN$ and $\vt \in \R^m$ are considered parameters, we write
	\begin{equation}\label{eq:Sa}
		\begin{aligned}
			\vs \to \scinn{k,\vt}(\vs) := \scinn{k}(\vs,\vt) \text{ and }
			\vs \to \wcinn{k,\vt}(\vs) := \wcinn{k}(\vs,\vt)\;.
		\end{aligned}
	\end{equation}
	We consider the following countable subset of \RQNN{}s (as defined in \autoref{eq:rqnn}), called the \RQNN-\emph{scaling functions}\index{scaling function!\RQNN}
	\begin{equation} \label{eq:fpcinnd}
		\mathbb{S} := \set{S_{k,\vt}: k \in \ZN \text{ and } \vt \in 2^{-k/m}\ZN^m}\,,
	\end{equation}
	and the associated wavelet space, which we call \emph{\RQNN-wavelets}\index{wavelet!\RQNN} (see \autoref{eq:Sb})  	
	\begin{equation} \label{eq:fpwinnd}
		\mathbb{W} :=
		\set{\wcinn{k,\vt} := 2^{-k/2} \left(\scinn{k,\vt}-\scinn{k-1,\vt}\right) :
			k \in \ZN\,, \vt \in 2^{-k/m}\ZN^m}.
	\end{equation}
\end{definition}

The approximation to the identity (AtI) in \autoref{def:wavelet} is the basis
to derive quantitative $\mathcal{L}^1$-estimates for arbitrary functions in $\mathcal{L}^1(\R^m)$ with \RQNN-wavelets:
	
\begin{theorem}[$\mathcal{L}^1$-convergence of \RQNN-wavelets] \label{thm:ConvR}
	Let $\sigma:\R \to \R$ be monotonically increasing, twice differentiable, and satisfy for $i=0,1,2$
	\begin{equation} \label{eq:sigma0}
		\abs{\sigma^i (r^2-t^2)} \leq C_\sigma (1+\abs{t}^m)^{-1-{(2i+1)/m}} \text{ for all } t \in \R,
	\end{equation}
	where $\sigma^i$ denotes the $i$-th derivative of $\sigma$. Then the \RQNN-wavelets, $\mathbb{W}$, defined in \autoref{eq:fpwinnd}, is a frame (see for instance \cite{Chr16}).
			
	Moreover, for every function $\x \in \mathcal{L}^1(\R^m)$ (see \autoref{de:ell1space}) and every
	$\noc \in \N$, there exists a function $\x_\noc$, which is spanned by $\noc$ elements of $\mathbb{W}$ satisfying
	\begin{equation} \label{eq:app_error}
		\norm{\x - \x_\noc}_{L^2} \leq \norm{\x}_{\mathcal{L}^1} (\noc+1)^{-1/2}.
	\end{equation}
\end{theorem}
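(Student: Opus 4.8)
The plan is to prove the two assertions separately. First I will show that the family $\mathbb{W}$ from \autoref{de:cf} is a frame in $L^2(\R^m)$ by recognizing the kernels $\scinn{k}$ as an \emph{approximation to the identity} (AtI) satisfying the double Lipschitz condition in the sense of \autoref{def:wavelet}, and then invoking the Deng--Han theorem \cite{DenHan09}, which asserts that such an AtI generates a wavelet frame once the translation parameter is discretized dyadically, exactly as in \eqref{eq:fpwinnd}. Second, I will derive the rate \eqref{eq:app_error} from membership of $\fun$ in $\mathcal{L}^1(\R^m)$ by a Maurey--Jones--Barron greedy selection argument.

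For the frame property, I would verify the axioms of \autoref{def:wavelet} and the double Lipschitz condition \eqref{eq:DoubleLipschitzCondition} for $\scinn{k}(\vs,\vt) = 2^k\varphi(2^{k/m}(\vs-\vt))$ with $\varphi(\vs) = C_m\sigma(r^2-\norm{\vs}_2^2)$. The normalization \eqref{eq:it4} follows at once from the change of variables $\vu = 2^{k/m}(\vt-\vs)$, the identity $\int_{\R^m}\varphi = 1$, and the radial symmetry of $\varphi$. The pointwise size bound follows from the case $i=0$ of \eqref{eq:sigma0}, after rewriting the resulting estimate in the homogeneous form $2^{-k\varepsilon}(2^{-k}+C_\rho\norm{\vs-\vt}_2^m)^{-1-\varepsilon}$ with $\varepsilon = 1/m$ and an appropriate $C_\rho$; the admissible range in \eqref{eq:quin} is met precisely because the decay exponent in \eqref{eq:sigma0} for $i=0$ is $-1-1/m$. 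The first-order continuity estimate follows from the mean value theorem applied to $\vs\mapsto\varphi(2^{k/m}\vs)$, using $\nabla\varphi(\vs) = -2C_m\vs\,\sigma'(r^2-\norm{\vs}_2^2)$ and the case $i=1$ of \eqref{eq:sigma0}: the extra factor $\norm{\vs}$ produced by the chain rule is absorbed by the sharper exponent $-1-3/m$. The double Lipschitz condition is treated analogously by a mixed second-order Taylor estimate in $\vs$ and $\vt$, using the second derivatives of $\varphi$ and the case $i=2$ of \eqref{eq:sigma0} with exponent $-1-5/m$. The monotonicity of $\sigma$ guarantees that $\varphi\geq 0$ is a genuine radial profile, so that $C_m$ is well defined and $\scinn{k}$ truly behaves as an approximate identity. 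I expect this verification to be the main obstacle: matching the admissible exponents in \eqref{eq:quin} with the prescribed decay rates in \eqref{eq:sigma0}, uniformly in $k\in\ZN$, is where all the work lies, while the passage from the AtI to the frame is a black box from \cite{DenHan09}. Along the way I would also record that each $\wcinn{k,\vt} = 2^{-k/2}(\scinn{k,\vt}-\scinn{k-1,\vt})$ is, upon substituting the definition of $\varphi$, a difference of two neurons of the form $\vz\mapsto\alpha\,\sigma(\kappa-\nu\norm{\vz-\vr}_2^2)$, hence a genuine {\bf RQNN} as in \eqref{eq:rqnn}.

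For the rate, I would take ${\tt U} = \mathbb{W}$ in \autoref{de:ell1space} (this dictionary is redundant precisely because $\mathbb{W}$ is a frame rather than a basis) and normalize so that $\norm{\mathtt{g}}_{L^2}\leq 1$ for all $\mathtt{g}\in\mathbb{W}$, absorbing constants from \eqref{eq:varphi} if necessary. Given $\fun\in\mathcal{L}^1(\R^m)$ and $\eta>0$, the definition of $\norm{\fun}_{\mathcal{L}^1}$ furnishes a countable set ${\tt C}\subseteq\mathbb{W}$ and coefficients $(c_\mathtt{g})_{\mathtt{g}\in{\tt C}}$ with $\fun = \sum_{\mathtt{g}\in{\tt C}}c_\mathtt{g}\mathtt{g}$ in $L^2$ and $\sum_{\mathtt{g}\in{\tt C}}\abs{c_\mathtt{g}}\leq\norm{\fun}_{\mathcal{L}^1}+\eta$. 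Applying the Maurey--Jones--Barron greedy approximation lemma in the refined form used in \cite{ShaCloCoi18}, there exist $\mathtt{g}_1,\dots,\mathtt{g}_\noc\in{\tt C}$ and coefficients $d_1,\dots,d_\noc\in\R$ such that $\apf := \sum_{j=1}^\noc d_j\mathtt{g}_j$ satisfies $\norm{\fun-\apf}_{L^2}\leq(\norm{\fun}_{\mathcal{L}^1}+\eta)(\noc+1)^{-1/2}$; letting $\eta\to 0$ gives \eqref{eq:app_error}. Since $\apf$ is spanned by $\noc$ elements of $\mathbb{W}$, it is a finite linear combination of {\bf RQNN} neurons, as required. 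Note that the frame bounds themselves are not used in this last step — only the $\ell^1$-representability of $\fun$ over $\mathbb{W}$ — but they are what makes $\mathcal{L}^1(\R^m)$ a meaningful and nontrivial class here.
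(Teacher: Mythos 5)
Your proposal is correct and follows essentially the route the paper intends: the text only states that ``the proof is a consequence of the result from \cite{ShaCloCoi18}'' and describes exactly your workflow (verify that the $\scinn{k}$ form an AtI with the double Lipschitz condition via \eqref{eq:sigma0}, obtain the frame from the Deng--Han machinery, then extract the rate \eqref{eq:app_error} by the Maurey--Jones--Barron greedy argument over the redundant dictionary $\mathbb{W}$). Your exponent bookkeeping ($\ve=\zeta=1/m$ matching the decay rates $-1-(2i+1)/m$) and the remark that the $2^{-k/2}$ factor keeps $\norm{\wcinn{k,\vt}}_{L^2}$ uniformly bounded are precisely the details the paper leaves implicit.
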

The proof is a consequence of the result from \cite{ShaCloCoi18}.

Using \autoref{thm:ConvR} we are able to formulate the main result of this subsection: For this purpose we write down the scaling functions and wavelets related to \RQNN{}s with fixed $\kappa_j = r^2 = 1$ in \autoref{eq:rqnn}, for the sake of simplicity of presentation.
Then,
\begin{itemize}
	\item for $k \in \ZN$ and $\vec{k} \in \ZN^m$ the elements of $\mathbb{S}$, defined in \autoref{eq:fpcinnd} are the functions
	\begin{equation*}
		\vs \mapsto S_{k,2^{-k/m} \vec{k}}(\vs) = 2^k C_m \sigma\left(1 - 2^{k/m}\norms{\vs - 2^{-k/m}\vec{k}}_2^2\right)\;.
	\end{equation*}
    \item The wavelets functions of $\mathbb{W}$ are given by
    \begin{equation*}
    	\begin{aligned}
    		\vs \mapsto \wcinn{k,2^{-k/m}\vec{k}}(\vs)
    		& = 2^{\frac{k}{2}}C_m \bigl( \sigma(1 -2^{2k/m}\norms{\vs-2^{-k/m}\vec{k}}_2^2) \bigr.\\
    		& \qquad \bigl.
    		-2^{-1} \sigma(1-2^{(2k-2)/m}\norms{\vs-2^{-k/m}\vec{k}}_2^2)
    		\bigr)\;.
    	\end{aligned}
    \end{equation*}
\end{itemize}
Only linear combinations of the above mentioned wavelet functions are required to approximate function in $\mathcal{L}^1$.

\begin{corollary}[$\mathcal{L}^1$-convergence of \RQNN{}s] \label{co:ConvR}
	Let the activation function $\sigma$ satisfying the assumptions of \autoref{thm:ConvR}.
	Then, for every function $\x \in \mathcal{L}^1(\R^m)$ (see \autoref{de:ell1space}) and every
	$\noc \in \N$, there exists an index set
	\begin{equation*}
		I_\noc \subset \set{(k,\vec{k}) \in \ZN \times \ZN^m}
	\end{equation*}
    of cardinality $\noc$ and coefficients $(\alpha_{\vec{j}})_{\vec{j} \in I_\noc}$ such that
	\begin{equation} \label{eq:rqnnpsi}
		\begin{aligned}
			\Psi[\vp](\vs) := & \sum_{(k,\vec{k}) \in I_\noc} \alpha_{(k,\vec{k})} \sigma(1-p_{k}\norms{\vs-\theta_{k,\vec{k}}}_2^2)\\
			& \quad 
			-\frac{1}{2} \sum_{(k,\vec{k}) \in I_\noc} \alpha_{(k,\vec{k})} \sigma(1-q_{k}\norms{\vs-\theta_{k,\vec{k}}}_2^2)\,,
		\end{aligned}
	\end{equation}
	with
	\begin{equation} \label{eq:pp}
		\theta_{k,\vec{k}} = 2^{-k/m}\vec{k},\; p_{k} = 2^{2k/m},\; q_{k} = 2^{(2k-2)/m}
	\end{equation}
	satisfies
	\begin{equation}
		\label{eq:app_error II}
		\norm{\x - \Psi[\vp]}_{L^2} \leq \norm{\x}_{\mathcal{L}^1} (\noc+1)^{-1/2}\;.
	\end{equation}
\end{corollary}
For the proof of \autoref{co:ConvR} see \cite{FriSchShi24}.
\begin{remark}
	\begin{itemize}
		\item We point out that the parameter $\vp \in \R^{1+m}$ used to approximate $\x$ has less freedom than used in the universal approximation theorem, where, for instance for \ALNN's (see \autoref{eq:p}) $\vp \in (\R \times \R \times \R^m)_{j=1}^\noc$.
        \item Note that the result is actually sharp. \autoref{eq:app_error} and \autoref{eq:app_error II} are consistent with the result from \cite[Theorem 2.1]{BarCohDahDev08}.
	\end{itemize}
\end{remark}
In the following we define \RQNN-wavelets with a bounded scale $\ttm$:
\begin{definition} The \emph{scale $\ttm$} \RQNN-wavelet basis functions are elements of the set:\index{wavelets!scale $\ttm$ basis functions}
	\begin{equation} \label{eq:wfm}
		\mathbb{W}_\ttm := 
		\set{ \vs \to \psi_{k,\vt}(\vs): k \in \set{-\ttm,\ldots,\ttm}, \vt \in 2^{-k/m}\ZN^m }\;.
	\end{equation}
    Associated is the finite dimensional space of \RQNN{}s
    \begin{equation} \label{eq:xfm}
    	\X_\ttm := \overline{\spann (\mathbb{W}_\ttm)}\;.
    	\end{equation}
    Moreover, $P_\ttm:=P_{\X_\ttm}$ denotes the $L^2$-orthogonal projector onto $\X_\ttm$.
    We call $\X_\ttm$ the space of \RQNN-wavelets of scale $\ttm$.\index{wavelets!scale $\ttm$!$\X_\ttm$}
\end{definition}
In the following we define functions, which can be approximated by functions in $\X_\ttm$: 
\begin{definition} \label{de:representer}
Let $\x \in \mathcal{L}^1$.
	We call $\x$ \emph{countably representable}\index{function!countably representable} if for every 
	$\noc \in \N$ there exists $\ttm$ such that 
	\begin{equation}\label{eq:lfunm}
		\x_\noc = \sum_{i=1}^\noc c_{{\tt g}_i}^\noc {\tt g}_i^\noc \in \X_\ttm
	\end{equation}
	and that $$\norm{\x_\noc - \x}_{L^2}\to 0\;.$$ 
	Without loss of generality we assume that the sequence is monotonically 
	\begin{equation} \label{eq:monom}
		\norm{\x_1 - \x}_{L^2} \geq \norm{\x_2-\x}_{L^2} \geq \ldots
	\end{equation}
	Moreover, we assume that 
	\begin{equation} \label{eq:monoII}
		\X_\ttm \subseteq \X_{\ttm +1}\;.
	\end{equation}

\end{definition}
Note that this result holds for bounded open sets $\Omega$ and $\R^m$. Therefore we do not specify the domain in the spaces and norms. 

We then define a function, which maps $\noc$ onto the smallest ${\tt m} \in \N$ such that
\begin{equation} \label{eq:Nfuncresult}
	\x_\noc \in \X_\ttm\;.
\end{equation}
\begin{definition} \label{de:mono} Let $\x \in \mathcal{L}^1$ be representable and let $\x_\noc$ as in \autoref{eq:monom} approximating $\x$ with respect to the $L^2$-norm. The monotonic increasing envelope of the function $\noc \to \ttm$, is denoted by 
\begin{equation} \label{eq:Nfunc}
	\begin{aligned}
		\ttm: \N &\to \N\;.\\
		\noc &\mapsto \ttm(\noc)
	\end{aligned}
\end{equation}%
Because of the monotonicity this function can be inverted, 
and it is given by 
\begin{equation} \label{eq:Nfunci}
	\begin{aligned}
		\noc: \N &\to \N\,,\\
		\ttm &\mapsto \noc(\ttm)
	\end{aligned}
\end{equation}
where
\begin{equation*}
	\noc(\ttm) := \max \set{\hat{\noc}: \ttm(\hat{\noc}) \leq \ttm}
\end{equation*}
\end{definition}

These mappings are central for proving convergence rates results of regularization methods. It is used in particular in \autoref{ex:c_recon_cont} and \autoref{ex:c_recon_cont_II}.

\section{Deep neural networks}
\label{sec:deep}
In the previous sections we considered approximations of functions with shallow neural networks. In the following we highlight benefits of deep neural network approximations in certain applications. For this purpose we first use the Heaviside-function as an activation function, which will be approximated by a sigmoid function for numerical purposes at a later stage.

\subsection{A Heaviside-network for polygons}

We consider a Heaviside-network as defined in \autoref{eq:heaviside} in $\R^m$. This means that each Heaviside-activation function is the characteristic function of a halfspace (see \autoref{fig:motiv_eg_2}). The boundaries of a convex polyhedral domain, such as the triangle in \autoref{fig:motiv_eg_2}, are defined by the zero-level sets of the affine functions ${\tt a}_j: \vs \mapsto \vw_j^T \vs + \theta_j$ for $j=1, \ldots, n_1$ where $n_1$ denotes the number of neurons in the first hidden layer. The signs of the weights $\vw_j$ and biases $\theta_j$ are chosen such that the intersection of the positive half-spaces matches the polyhedron. Therefore, by appropriately choosing the network's weights and activation thresholds, we obtain the characteristic function of the polyhedron. In the following, we present a theorem to illustrate how this characteristic function of the polyhedron can be constructed using a single-hidden-layer neural network.


\begin{theorem} \label{th:CV_connex} 	
	Let $\set{\mathcal{P}_k: k=1,\ldots, \mathcal{S}} \subseteq \Omega$ be a collection of non empty, open, connected polygons such that the boundaries of any two polygons intersect in only finitely many points, and the boundary of each polygon is a simple closed curve. For every $k = 1,\ldots, m$, define
	$\mathcal{P}_k^1 := \mathcal{P}_k$ and $\mathcal{P}_k^{-1} := \Omega\backslash\mathcal{P}_k$, and for every $\mult \in \set{-1,1}^\mathcal{S} $, define \footnote{the sets $P_\mult$ can be empty or disconnected.}
	\begin{equation} \label{eq:pmult}
		P_\mult = P_{(\iota_1,\iota_2,\ldots,\iota_\mathcal{S} )}  := \bigcap_{k=1}^\mathcal{S}  \mathcal{P}_k^{\iota_k}.
	\end{equation}
	Then, there exists a family of one layer networks $\set{\network_k: k=1,\ldots,\mathcal{S} } \subseteq \nnset$
		\begin{equation} \label{eq:neto}
		\x_{k} (\vs)  := \x[\vec{a}_k, {\bf W}_k,\vec{\theta}_k] := \sum_{j=1}^{n_1} (\vec{a}_k)_j \sigma ((\vw_{k})_j^T\vs + (\vec{\theta}_k)_j)\,,
	\end{equation}
	 such that for every $\mult \in \{-1,1\}^\mathcal{S} $,
	\begin{equation}\label{eq:GammaP}
		P_\mult = \bigcap_{k=1}^\mathcal{S}  \set{\vs \in \Omega : \mult_k \x_{k} (\vs) > 0}\;.
	\end{equation}
	Moreover, if $\vs \in \partial P_\mult$, then there exists some $k \in \{1,\ldots, \mathcal{S} \}$ such that $\vs \in \{\x_{k} = 0\}$, and for every $k \in \{1,\ldots, \mathcal{S} \}$ and $\vs \in \{\x_{k} = 0\}$, there exists
	some $\mult \in \{-1,1\}^\mathcal{S}$ such that $\vx \in \partial P_\mult$.
\end{theorem}

\begin{example}
\autoref{fig:motiv_eg_2} traces the step-by-step computation of a two-hidden-layer network segmenting a triangle. First, three first-layer neurons create binary half-planes, whose overlaps partition the domain into seven subregions with unique activation codes. The second layer then averages these triplets per subregion, yielding scores of $1/3$, $2/3$, or $1$. Finally, an output threshold isolates the target triangle by mapping the maximum score to $1$ and all other regions to $0$. This process illustrates how hierarchical architectures systematically compose complex, bounded shapes from simple linear cuts.
\end{example}

\begin{figure*}[h]
	\centering   \includegraphics[width=0.8\linewidth]{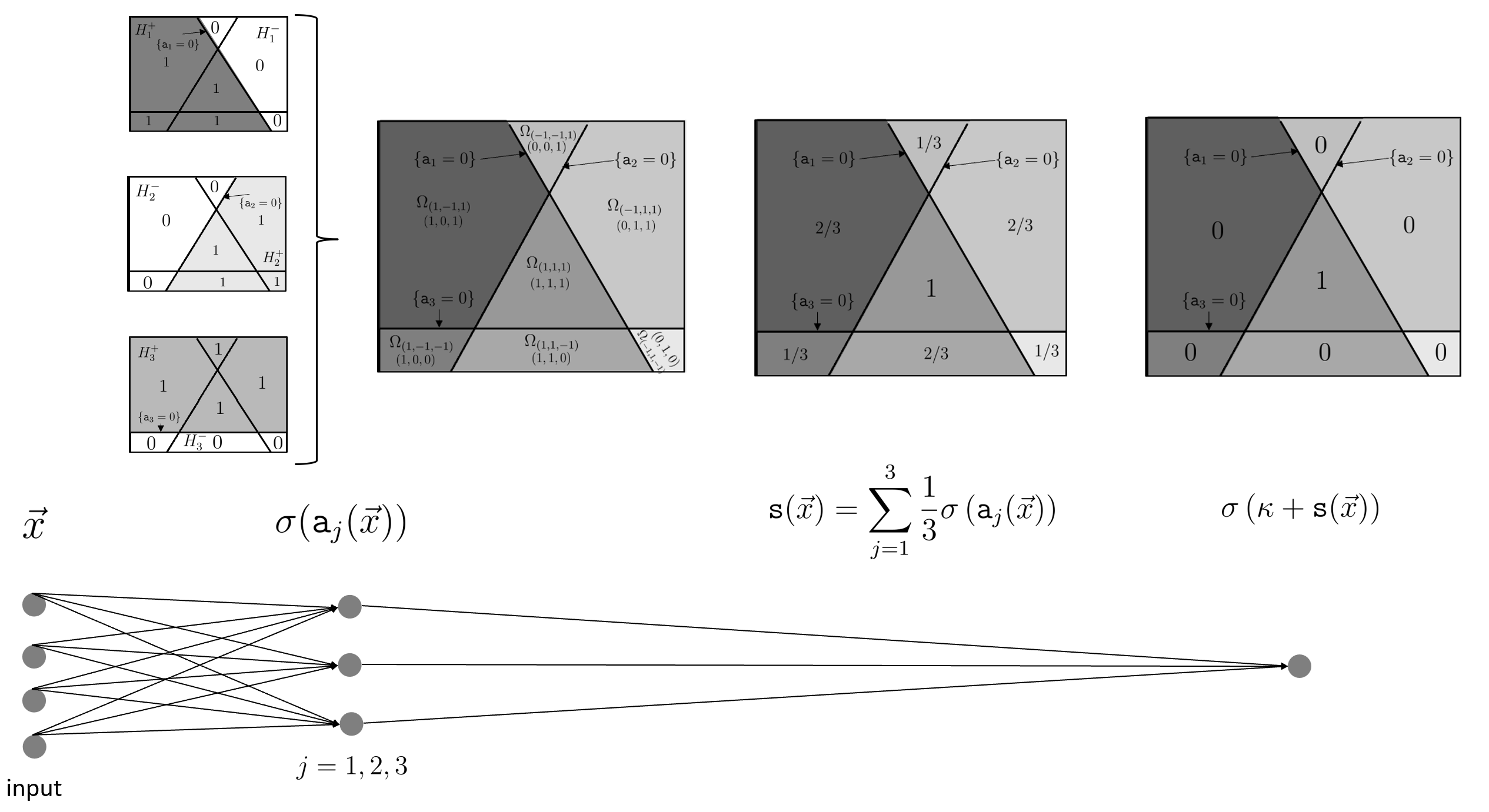}
	\caption{\textbf{(1)} Binary partitions from three neurons in the first layer. \textbf{(2)} Their activation regions.
		\textbf{(3)} Averaged triplets per subregion.
		\textbf{(4)} Final triangle detection via thresholding.}\label{fig:motiv_eg_2}
\end{figure*}


\subsection{Level set methods} \label{se:lsm}

 Level set methods, originally introduced by Osher \& Sethian \cite{OshSet88}, have proven a powerful approach to solve image segmentation tasks, because the implicit representation of segments allows to identify objects of complex topologies, such as, for instance, segments consisting of multiple connected domains. Building upon this foundational framework, Chung \& Vese \cite{ChuVes05,ChuVes09} extended the Chan-Vese-model to a multiphase level set framework capable of simultaneously segmenting multiple objects. Brox \& Weickert \cite{BroWei04} proposed a variational minimization strategy within a level set framework that robustly optimizes both the number of desired segments. Kang et al \cite{KanSanYip11,KanMar14,KanShaSte14} have complemented level set methods which allow for automatic determination of the number of objects.

There exist two categories of level set algorithms:
\begin{itemize}
	\item{\emph{Unparametrized}} level set algorithms consider the input images and the level set functions, from which segments and classes are detected, pixel wise. All functions are represented as a vector of pixel values.
	\item{\emph{Parametrized}} algorithms (see, for instance, \cite{YanFucJueSch06,FeiFucJueSchYan08,AghKilMil11,OzsKilStuSaiMil25}) use ansatz-functions (such as splines or finite elements) to represent the level sets and input images. This means, in particular, that the segments and classes are specified \emph{double implicitly}: as the zero level sets of the representing functions (first), which in turn are represented via the parametrization (second).
\end{itemize}

The different concepts can be compared with \emph{finite difference} and \emph{finite element} methods for solving differential equations: The latter represents the solution of the differential equation implicitly by finite element parametrizations, whereas in finite difference methods the function values of the solution are given as a vector associated with solution values on the grid points. In fact, \emph{hybrid} level set algorithms combining the advantages of both parametrized and unparametrized strategies have also been developed (see \cite{FucJueSchYan09b}).


\section{Chan-Vese Regularization}

Chan \& Vese \cite{ChaVes01,VesCha02} proposed a level set formulation of the functional defined in \eqref{eq:CV_og} to minimize it numerically. The formulation utilizes that a piecewise constant function $\multiphase$ can be represented as a \emph{multiphase level set function}. We give the definition for multiphase level set functions below.
\begin{definition}[\textbf{Multiphase level set function}] \label{lsf}
	Let $\mathcal{S} \in \N$. For every $k=1,\ldots,\mathcal{S}$, let $\levelfun_k \in C^0(\overline{\Omega})$ be a continuous \emph{level set function}, which, in particular, implies that
	\begin{equation}\label{eq:GammaI0}
		\begin{aligned}
			\mathcal{L}_k^1 = \set{\vs \in \Omega : \levelfun_k(\vs) > 0},
			\mathcal{L}_k^{-1} = \set{\vs \in \Omega : \levelfun_k(\vs) < 0},\\
			\partial \mathcal{L}_k^1 = \mathcal{L}_k^0 =  \{\vs \in \Omega : \levelfun_k(\vs) = 0\}, \text{ with } \mathcal{H}^1(\partial \mathcal{L}_k^1) < \infty. \nonumber
		\end{aligned}
	\end{equation}
\end{definition}

\begin{definition}[\textbf{Multiphase level set function of degree $2^\mathcal{S}$}] \label{lsfm}
	Let $\mathcal{S} \in \N$. A multiphase level set function \emph{of degree }$2^\mathcal{S}$ is a function of the form
	\begin{equation}\label{eq:multiphaselevelset}
		\begin{aligned}
			\multiphase_\levelfun(\vs) = \hspace*{-0.03\textwidth} \sum_{\mult = (\mult_1,\ldots,\mult_\mathcal{S}) \in \{-1,1\}^\mathcal{S}} \hspace*{-0.02\textwidth} c_\mult \prod_{k=1}^\mathcal{S} \sigma(\mult_k \levelfun_k(\vs))\;.		
		\end{aligned}
	\end{equation}
\end{definition}


The Chan-Vese model is designed to approximate a given function $\f:\Omega \to \R$
by a piecewise constant
\begin{equation}\label{eq:CV_piecewise}
	\multiphase(\vs) = \sum_{\mult \in \{-1,1\}^\mathcal{S}} c_\mult \chi_{L_\mult}(\vs)\;
\end{equation}
which minimizes the Chan-Vese-functional
\begin{equation}\label{eq:CV_og}
	\begin{aligned}
		\mathrm{CV}_{\tt f}(\{(c_\mult,L_\mult)\}_{\mult \in\{-1,1\}^\mathcal{S}}) = &\sum_{\mult \in \{-1,1\}^\mathcal{S}}\int_{L_\mult} \left( \multiphase - \f \right)^2 \mathrm{d}\vs \\ + \mu \hspace*{-0.01\textwidth} \sum_{\mult \in \{-1,1\}^\mathcal{S}} \mathcal{H}^1(\partial L_\mult) &+ \nu \mathcal{H}^2\left(\bigcup_{\mult \in \{-1,1\}^\mathcal{S}} L_\mult\right),
	\end{aligned}
\end{equation}
where $\mathcal{H}^n$, for $n=1,2$, denotes the $n$-dimensional Hausdorff measure.

We consider the Vese-Chan segmentation of multiple objects, where the level set functions $\set{\levelfun_k : k = 1,\ldots,\mathcal{S}}$ (see \eqref{eq:multiphaselevelset}) are parametrized by one layer  Heaviside-networks. Since polygons approximate arbitrary sets, one layer  Heaviside-networks are an adequate ansatz for numerical minimization of the Chan-Vese-level set functional.

\begin{definition}[\textbf{Multiphase function for polygons}] \label{lsfm1}
	Let $m \in \N$. A multiphase Heaviside-function for polygons is a function of the form
	\begin{equation} \label{eq:para}
		\vs \in \Omega \subseteq \R^2 \mapsto \multiphase_\network(\vs):=\sum_{\mult \in  \{-1,1\}^\mathcal{S}} c_\mult \prod_{k=1}^\mathcal{S} \sigma(\mult_k\x_k (\vs)).\;
	\end{equation}
\end{definition}

By definition, the multiphase function for polygons $\multiphase_\network$, defined in \eqref{eq:para}, is piecewise constant, and its corresponding superlevel sets
\begin{align}\label{eq:GammaIs}
	S_\mult = S_{(\mult_1,\ldots,\mult_\mathcal{S})} = \bigcap_{k=1}^\mathcal{S} \set{\vs \in \Omega : \mult_k \x_k (\vs) > 0} = \bigcap_{k=1}^\mathcal{S} \mathcal{S}_k^{\mult_k}
\end{align}
are polygons. The main question of interest is the converse direction: given polygonal sets $\{P_\mult :\mult \in \{-1,1\}^\mathcal{S} \}$, we seek to construct a family of \emph{one layer} networks $\{\x_k : k=1,\ldots,\mathcal{S}\}$, whose zero level sets coincide with the boundaries of $\{P_\mult :\mult \in \{-1,1\}^\mathcal{S} \}$ . Consequently, the induced multiphase function $\multiphase_\network$ is piecewise constant on this polygonal partition (compare with \eqref{eq:CV_piecewise}), that is,
\begin{equation} \label{eq:sn}
	\multiphase_\network(\vs) =  \sum_{\mult \in \{-1,1\}^\mathcal{S}} c_\mult \chi_{P_\mult}(\vx) \text{ for almost all } \vs \in \Omega\,,
\end{equation}
and $P_\mult = S_\mult$ for all $\mult \in \{-1,1\}^\mathcal{S}$, where $S_\mult$ is as defined in \eqref{eq:GammaIs}.
This construction is formalized in \autoref{th:CV_connex}.

As a consequence of \autoref{th:CV_connex}, we naturally use one layer Heaviside-neural networks $\{\x_k : k=1,\ldots,\mathcal{S}\}$ to parametrize the level set functions $\{\levelfun_k : k=1,\ldots,\mathcal{S}\}$ of the objects to be segmented, and define the parametrized Chan-Vese-functional as follows:
\begin{definition}
The one layer  neural network parametrized Chan-Vese-functional is defined by
\begin{equation}\label{eq:paraChanVese_l}
	\begin{aligned}
		&\mathrm{CV}^{\mathrm{LS}}_{\tt f}((c_\mult)_\mult,(\vec{a}_k,{\bf W}_k,\vec{\theta}_k)_{k=1}^\mathcal{S}) \\
		:= &\sum_{\mult \in \{-1,1\}^\mathcal{S}}\int_{\Omega} \left( c_\mult - \f(\vs) \right)^2\prod_{k=1}^\mathcal{S}\sigma(\mult_k\x_k(\vs)) \ \mathrm{d}\vs \\
		&+ \mu \sum_{k=1}^\mathcal{S} \mathcal{H}^1(\{\x_k=0\}) + \nu \mathcal{H}^2\left(\bigcup_{k=1}^\mathcal{S} \{\x_k > 0\}\right).
	\end{aligned}
\end{equation}
\end{definition}
The one layer  neural network parametrized Chan-Vese-functional can be used for \emph{multi-polygonal} segmentation: Instead of minimizing Chan-Vese functional over arbitrary level set functions to determine the segments of the function ${\tt f}$, we constrain the optimization of the functional to piecewise constant functions on polygonal partitions of the domain.

Moreover, if we consider two layer neural network, the above result can be extended. We first give the definition of two layer Heaviside-neural network:

\begin{definition}
A \emph{two layer Heaviside-neural network with $n_1$ neurons in the first layer and $n_2$ neurons in the second layer} is defined as
\begin{equation}\label{eq:t}
	\begin{aligned}
		\vs \in \R^2 &\mapsto \nettwo(\vs) : = \nettwo[{\bf A},\vec{c}, {\bf D}, \vec{\theta}, {\bf W}](\vs)\\
		&:= \sum_{i=1}^{n_2} c_i \sigma\left( \sum_{j=1}^{n_1} a_{ij} (\sigma \circ {\tt a}_j)(\vs) + d_{ij} \right)\;.
	\end{aligned}
\end{equation}
${\bf A} = (a_{ij}) \in \R^{n_2 \times n_1}$, $\vec{c} =(c_i) \in \R^{n_2}$, ${\bf D} = (d_{ij}) \in \R^{n_2 \times n_1}$ and $\vec{\theta}=(\theta_j)\in \R^{n_1}$, ${\bf W} = (\vw_j) \in \R^{2 \times n_1}$ (note that affine linear function ${\tt a}_j$ depends on ${\bf W}$ and $\vec{b}$) are the parametrization of ${\tt t}$.
\end{definition}

With this definition, we show that every piecewise constant functions on polygonal sets $\multiphase_\network$ as defined in \eqref{eq:para} coincides pointwise with a two layer Heaviside-neural network $\nettwo \in \nntwo$:

\begin{theorem} \label{thrm:lsnn}
	Let $m \in \N$ be fixed. Then, every multiphase Heaviside-function for polygons $\multiphase_\network$, defined in \eqref{eq:para}, can be represented as a two layer Heaviside-network
	\begin{equation} \label{eq:lsnn}
		\begin{aligned}
			\vs \in \Omega \mapsto \multiphase_\network(\vs) =
			\hspace{-0.3cm}\sum_{\mult \in \{-1,1\}^\mathcal{S}} \hspace{-0.3cm} c_\mult \sigma\left(\kappa + \sum_{j=1}^{n_1}\frac{1}{n_1} \sigma(\mult_j \affine_j(\vs))\right) \in \nntwo,
		\end{aligned}
	\end{equation}
	where $\kappa \in \R$ is a constant.
\end{theorem}

This chapter established the theoretical foundations for implementing the Chan-Vese level set model using neural networks. The Chan–Vese model is implemented via a level set formulation, where the level set function is crucially represented by a neural network — its output defines the implicit surface and its zero-level set encodes the segmentation boundary. This neural network parameterization replaces traditional grid-based evolution with a learnable continuous function, offering a flexible and mesh-free framework for geometric segmentation. A more comprehensive theoretical analysis, alongside concrete numerical algorithms and diverse empirical examples, are presented in \cite{SchShiVu25}.


\section{Open research questions}
The open research questions concern proving Tauber-Wiener theorems in Sobolev-spaces rather than in $L^1$ and $L^2$.

\begin{opq} \label{opq:tw1} The idea used in \autoref{re:tw} to approximate a function $\x_\noc$ is applied to the derivative
	\begin{equation*}
		\mathcal{F}[\x_\noc'](\omega) = \sum_{k=1}^\noc \alpha_k \mathcal{F}[\sigma'(\cdot+\theta_k)](\omega) =
		\mathcal{F}[\sigma'](\omega) \sum_{k=1}^\noc \alpha_k \exp^{\i \omega \theta_k}\;.
	\end{equation*}
	If in addition also $\mathcal{F}[\sigma']$ vanishes at most on a set of measure $0$, we can divide by it and get
	\begin{equation*}
		\frac{\mathcal{F}[\x_\noc'](\omega)}{\mathcal{F}[\sigma'](\omega)} = \sum_{k=1}^\noc \alpha_k \exp^{\i \omega \theta_k} \text{ almost everywhere}\;.
	\end{equation*}
	If $\sigma$ has compact support, which was often assumed above (see \autoref{fig:alles}), then all derivatives have compact support, and all Fourier transforms of derivatives of $\sigma$ are analytic functions by the Paley-Wiener theorem. In other words the sets of zeros of all derivatives have measure zero. Consequently, the Tauber-Wiener \autoref{Wiener_Tauberian} guarantees that also $\x_\noc'$ can be approximated with \autoref{eq:wiener_property} when $\sigma$ is replaced by $\sigma'$. Applying \autoref{Wiener_Tauberian} therefore shows that an arbitrary function $\x_\noc$ can be approximated with respect to the Sobolev-norm $W^{1,2}$ (see \autoref{sob_def}). This is an important observation for regularization theory as discussed in the introduction \autoref{se:chall}.
	
	The research question however concerns the characterization of \emph{Tauber-Wiener functions of higher order} to generalize the Tauber-Wiener theorem to Sobolev-spaces:
    We note that for a higher order Tauber-Wiener function,
		\begin{equation*}
			\mathcal{F}[\sigma^{k}](\omega) = (\i \omega)^k \mathcal{F}[\sigma](\omega)
		\end{equation*}
		in an appropriate sense (distributional, almost everywhere,...).
		
		According to \autoref{Wiener_Tauberian} we can approximate
		$\x_\noc$ with respect to Sobolev-norm $W^{k,2}(\R)$ but not with respect to the $W^{k,1}$-norm if $k \geq 1$, because the Fourier transform of a derivative has always a zero at $\omega=0$. Note that in order to approximate function in $L^1$ it is required in \autoref{Wiener_Tauberian} that the zeros of the Fourier transform of $\sigma$ are empty.
		
		The two research questions are:
		\begin{enumerate}
			\item Can a function $\x_\noc \in W^{k,1}(\R)$ be approximated with a representation \autoref{eq:wiener_property}.
			\item Can a Tauber-Wiener functions of higher order approximate a function in $W^{\xi,1}$ ($\xi$ must not be equal to $k$) and still have some zeros at the real line?
		\end{enumerate}
\end{opq}
%
%

\begin{opq}
	The curious observation is that in the literature convergence rates of approximating arbitrary functions with neural networks are obtained already from a countable family of neural network function. This is actually also the basis of discrete wavelet theory. Interestingly the Tauber-Wiener theorems allow for approximation of univariate functions with uncountable basis functions while wavelets require countable ones. So the question is what are the actually required parameters of scaling and translation? Note that \cite{TiaHon95} already proved that over a family of functions to approximate the scaling and translation parameters can be chosen constant.
	This implies a significant reduction of parameters for the training (see \autoref{ch:learning}). It would be ideal to know, depending on $\sigma$ and classes of functions to approximate, the minimum number of neural network parameters.
\end{opq}
%

\section{Further reading}
In the field of machine learning, there have been various attempts to model neural networks modeled based on the \emph{Kolmogorov–Arnold representation} (see \cite{Kol36,LinUnb93,PolPol21})\index{Kolmogorov–Arnold representation} and some extensions \cite{Spr65,Lor62}), which replaces the universal approximation theorems \autoref{sec:uat} in this context. The \emph{Kolmogorov-Arnold} representation states that every continuous function $\x:[0,1]^m \to \R$
can be represented as a superposition of continuous univariate functions: That is
\begin{equation*}
	\x (\vs) = \sum_{q=1}^{2m+1} \Phi_q\left( \sum_{p=1}^{m}\phi_{q, p}(s_p)\right)
\end{equation*}
where $\Phi_q : \R \to \R$ and $\phi_{q, p}: [0, 1] \to \R$ are continuous univariate functions. Mathematically this is extremely exciting because the Kolmogorov-Arnold theorem is related to \emph{Hilbert's thirteenth problem}\index{Hilbert's 13th problem} \cite{Vit04}. However, \cite{GirPog89} state that it is irrelevant for neural network computations, because $\Phi_q$ depends on $\x$. Nevertheless, the connection between neural networks and the Kolmogorov–Arnold representation has been explored in several studies, see or example \cite{LinUnb93,PolPol21,Afz25,CerCerKolMalPlo61,Arn63}. The research in this field is partly motivated to better understand neural networks (see \cite{Bis24}).

For more background on wavelets we refer to \cite{Wal04,Mey93b,Dau92,Mal09}: From the reference list there many more references can be found. Extensions of wavelets, such as curvelets \cite{CanDon99,MaPlo10} and shearlets (see \cite{Kut23}) can been considered for machine learning. A recent survey on wavelets and machine learning is \cite{DauDevDymFaiKov23}.

The relation between \ReLU-networks and splines has been discussed extensively in the literature (see \cite{BalBar18}). As explored by Lewandowski \cite{Lew25}, the decision boundaries formed by ReLU networks correspond to tropical hypersurfaces, which are unions of polyhedral segments (see \cite{AlfBibHamGaaGha23}). This correspondence enables the use of tropical geometry to study the shape and complexity of decision regions in classification tasks. Complementary results by Koutschan and collaborators demonstrate that piecewise linear functions computed by ReLU networks can be analyzed using methods from algebraic geometry and computer algebra \cite{KouMosPonSchi25}. In particular, Koutschan et al. show that every continuous piecewise linear function on $\R^n$ can be represented as a linear combination of maxima of at most $n+1$ affine-linear functions, establishing fundamental bounds on the expressive power of such networks. This algebraic perspective, combined with the tropical view of neural networks \cite{ZhaNaiLim18_report}, unifies spline theory, tropical geometry, and the polyhedral analysis of network decision regions. The Tauber-Wiener theorem has been generalized to quantum harmonic analysis (see \cite{Wer84,FulLueWer26}). 

The computation of wavelets coefficients can be realized with computer algebra software (see \cite{ChyPauSchSchoZim01}), which allows for generalizations of the wavelets construction (see \cite{PauSchScho03}).

\chapter[Supervised learning]{Supervised function, functional and operator learning} \label{ch:op_learning}
We recall the terminology from the introduction, \autoref{cha:intro}:
 We assume that $\x^{(\ell)}$ and $\y^{(\ell)} = \op{\x^{(\ell)}}$ are 
      \emph{noiseless}, meaning that
	  $\x^{(\ell)}$ solves \autoref{eq:op} with right hand side $\y^{(\ell)}$.
	  We recall that $$
	     \mathcal{S}_\no:= \set{(\x^{(\ell)},\y^{(\ell)}=\op{\x^{(\ell)}}): 
          \ell = 0,1,\ldots,\no} $$ (see \autoref{eq:expert_information}) is the 
      available expert information.
As in \autoref{eq:trainingsamplesx} and \autoref{eq:trainingsamplesy}, we call the sets
\begin{equation} \label{eq:sampled1}
\begin{aligned}
	\mathcal{T}_\no :=\set{(\x^{(\ell)},\z_\y^{(\ell)}): \ell = 0, 1, \ldots, \no} \text{ and }\\
	\mathcal{T}_\no :=\set{(\z_\x^{(\ell)},\y^{(\ell)}): \ell = 0, 1, \ldots, \no}
\end{aligned}
\end{equation}
\emph{labeled expert pairs}\index{labeled expert!pairs}. We think of $\z_\x^{(\ell)}$ and $\z_\y^{(\ell)}$ as \emph{features} of $\x^{(\ell)}$ and $\op{\x^{(\ell)}}$, respectively. For instance we can think of $\x^{(\ell)}$ as digital images and $\z_\x^{(\ell)}$ as digits classifying the images.\index{classifiers}
To be consistent, noisy data is considered labeled data $\z_\x^{(\ell)}$ and $\z_\y^{(\ell)}$, respectively.

We also consider the \emph{labeled expert triple}\index{labeled expert!triple}, 
combining expert and feature data:
\begin{equation}  \label{eq:sampled2}
\begin{aligned}
	\mathcal{T}_\no :=
	\set{(\x^{(\ell)},\z_\y^{(\ell)},\y^{(\ell)}): \ell = 0, 1, \ldots, \no} \text{ and }\\
	\mathcal{T}_\no := \set{(\z_\x^{(\ell)},\x^{(\ell)},\y^{(\ell)}): \ell = 0, 1, \ldots, \no}
\end{aligned}
\end{equation}
and the \emph{labeled expert quadruple}\index{labeled expert!quadruple}
\begin{equation}  \label{eq:sampled3}
	\begin{aligned}
		\mathcal{T}_\no&:=
		\set{(\z_\x^{(\ell)},\x^{(\ell)},\z_\y^{(\ell)},\y^{(\ell)}): \ell = 0, 1, \ldots, \no}\;.
	\end{aligned}
\end{equation}

The elements $\x^{(\ell)}$, $\ell=1,\ldots,\no$ (without the information $\y^{(\ell)}$) are called \emph{expert inputs}. The elements $\y^{(\ell)}$, $\ell=1,\ldots,\no$ are called \emph{expert outputs}. Note that expert inputs are \emph{unsupervised training elements} as defined in
\autoref{eq:expert_information_discriminative}
\begin{equation*}
	\mathcal{U}_\no:=\set{\x^{(\ell)}: \ell = 0, 1, \ldots,  \no}.
\end{equation*}
We emphasize on some inconsistencies in the notation:
\begin{itemize}
	\item In contrast to the original definition of $\mathcal{S}$ in \autoref{eq:expert_information} we add here the subscripts $\no$, resulting in $\mathcal{S}_\no$, and for labeled samples $\mathcal{T}_\no$. The subscript denotes the number of samples. In the introduction this was avoided, because there $\no$ was fixed. Here we also consider convergence for $\no \to \infty$, meaning that we consider $\no$ as a variable.
	\item The notation above is tuned to operator learning. In the context of function and functional regression $\x^{(\ell)}$ and $\y^{(\ell)}$ are vectors or numbers, which will be taken into account below. This means that for function regression we consider expert vectors $\vx^{(\ell)}$, $\ell = 0, 1,  \ldots,\no$, and $\vy^{(\ell)}$, $\ell = 0, 1, \ldots,  \no$, and samples
	$\vz_\vx^{(\ell)}, \vz_\vy^{(\ell)}$, $\ell = 0, 1, \ldots, \no$. For univariate functions and functionals the expert data are
	$y^{(\ell)}, z_y^{(\ell)}$, $\ell = 0, 1, \ldots, \no$.
	\item For the samples and expert data we start the summation at either $0$ or $1$. It should be clear from the context what is meant. For instance in iterative methods we use $\x^{(0)}$ as prior, which is taken from the expert pairs $\mathcal{S}_\no$ and then we consider the rest of $\mathcal{S}_\no$ for operator learning or function regression, respectively. In Tikhonov regularization we denote the prior with $\x^0$ (without brackets) and it is used unsupervised.
\end{itemize}

\section{Scenarios of learning}
We study below three scenarios for function, functional and operator learning, as well as for \emph{supervised learning}.
\begin{definition} \label{de:operator_learning}
	Given are expert pairs $\mathcal{S}_\no$ as defined in \autoref{eq:expert_information}:
	\begin{description}
		\item{Function learning:} Let $\no < \infty$ and $(\vx^{(\ell)},\vy^{(\ell)}) \in \R^{m} \times \R^{n}$ for all $\ell=1,\ldots,\no$, then finding a function relation $\opo: \R^m \to \R^n$, which satisfies $F(\vx^{(\ell)}) = \vy^{(\ell)}$ for all $\ell=1,\ldots,\no$ is called function learning.\index{learning!function}
	    \item{Functional learning:} Let $\X$ be a (probably infinite dimensional) Hilbert-space:
	    and $(\x^{(\ell)},\vy^{(\ell)}) \in \X \times \R^n$ for all $\ell=1,\ldots,\no$, then finding a functional relation $\opo: \X \to \R^n$, which satisfies $\op{\x^{(\ell)}} = \vy^{(\ell)}$ is called functional learning.
        Note, $\no$ can be finite or infinite and $\X$ can be finite or infinite dimensional. In the case $\X = \R^m$ this is equivalent to function learning. \index{functional learning}
	    \item{Operator learning:} Let $\X$ and $\Y$ be (probably infinite dimensional) Hilbert-spaces:
	    	Finding an operator relation $\opo: \X \to \Y$, which satisfies $\op{\x^{(\ell)}} = \y^{(\ell)}$ is called operator learning. \index{operator learning}
        \item \emph{Supervised learning}\index{supervised learning}, refers to function, functional and operator learning of labeled data $\mathcal{T}_\no$. In particular finding an approximate relation $G: \X \times \Y \to \Z_\X \times \Z_\Y$, which satisfies
        \begin{equation*}
    	    G[\x^{(\ell)},\y^{(\ell)}] \approx (\z_\x^{(\ell)},\z_\y^{(\ell)})\;.
        \end{equation*}
        is called \emph{feature learning}.\index{feature learning} Therefore the mapping $G$ provides sample data from input data $\x$ and $\y$.
        Note that in this relation single components can be considered constant, such as the feature operator $G: \X  \to \Z_\Y$, which is, for instance, the composition of $\opo$ and a classification for the data $\y$
        \begin{equation*}
        	G[\x^{(\ell)}] = C[ \op{\x^{(\ell)}}] =: \z_\y^{(\ell)}\;.
        \end{equation*}
        We emphasize that with the above mentioned inconsistencies of notation, in function regression $\x^{(\ell)}$, $\y^{(\ell)}$ and $\z^{(\ell)}$ can be vectors, in which case they must be interpreted as $\vx^{(\ell)}$, $\vy^{(\ell)}$ and $\vz^{(\ell)}$.
    \end{description}
\end{definition}
In view of \autoref{de:operator_learning} functional and vector valued regression, discussed in \autoref{sec: Polregres}, \autoref{sec:vRKHS}, can be used as tools for supervised functional and operator learning.
Linear function, functional and operator learning is widely studied in the literature (see for instance the early references on \emph{black box} theory \cite{Pap62}) \index{theory!black box} or in \emph{multi linear regression} (see for instance \cite{YaXinXueDac12}).

We study below several variants of supervised and operator learning and start with the simplest case of \emph{supervised learning} of finite dimensional linear operators.

\section[Learning finite dimensional linear functions]{Supervised learning of finite dimensional linear functions} \label{sec:slol}
We consider learning a linear function $\opo:\R^m \to \R^n$ (that is a matrix) which satisfies
\begin{equation} \label{eq:Fx=y}
	\opo \vx^{(\ell)} = \vy^{(\ell)} \text{ for all } \ell=1,\ldots,\no.
\end{equation}
All along this section we assume that $\opo$ is injective, meaning in this case that $\opo$ has no nullspace. A consequence of this assumption is that $n \geq m$.
\begin{definition} \label{def:Un}
We denote the spans of the first $\ttm \leq \no$ training data by
\begin{equation}
	\X_\ttm := \spann\set{\vx^{(\ell)}: \ell=1,\ldots,\ttm} \text{ and } \Y_\ttm := \spann\set{\vy^{(\ell)}: \ell=1,\ldots,\ttm}.
\end{equation}
We assume that $\text{dim}(\X_\ttm) = \ttm$. That is, the images $\vx^{(\ell)}$ are linearly independent. Since we also assume that $\opo$ is injective also the data $\vy^{(\ell)}$ are linearly independent.

Orthogonal projection operators onto $\X_\ttm$ and $\Y_\ttm$ are denoted by $P_{\X_\ttm}$ and $P_{\Y_\ttm}$, respectively.
\end{definition}
This families of sets $\set{\mathcal{S}_\ttm: \ttm \leq \no}$, $\set{\X_\ttm : \ttm \leq \no}$, $\set{\Y_\ttm:\ttm \leq \no}$ are nested, that is
\begin{equation}\label{ass_2}
	\X_\ttm \subset \X_{\ttm+1}, \quad \Y_\ttm \subset \Y_{\ttm+1}, \quad \mathcal{S}_\ttm \subseteq \mathcal{S}_{\ttm+1} \text{ for all } \ttm \leq \no-1.
\end{equation}An often used methodology for function, functional and operator learning is \emph{orthonormalization} of the expert inputs (see \cite{YaXinXueDac12,AspKorSch20}).

In the following we highlight an orthonormalization strategy for supervised linear operator learning. For this we make the following assumptions on the expert information:
\begin{assumption}[Uniform boundedness]\label{ass_1}
	There exist constants $c_\x,C_\x>0$ such that
    	\begin{equation*}
    		c_\x \leq \norms{\vx^{(\ell)}}_{\R^m} \leq C_\x \text{ for all } \ell \in  \set{1,\ldots,\no}.
    	\end{equation*}
        Hence with no loss of generality we assume in this section that
        \begin{equation*}
        	\norms{\vx^{(\ell)}}_{\R^m} =1 \text{ for all } \ell \in  \set{1,\ldots,\no}.
        \end{equation*}
\end{assumption}

\begin{remark}\label{prop:data_lin_indep} Since $\opo$ is a bounded operator and the inputs $\set{\vx^{(\ell)}: \ell=1,\ldots,\no} \subseteq \R^m$ are uniformly bounded, the outputs $\set{\vy^{(\ell)}:\ell=1,\ldots,\no} \subseteq \R^n$ are also uniformly bounded.
\end{remark}
In the following we present the general approach to learn a linear operator $\opo:\R^m \to \R^n$ as used in \cite{YaXinXueDac12}: We represent the linear operator as a matrix $\opo \in \R^{m \times n}$.
Let $\no < \infty$, then we define the matrices
\begin{equation*}
	Y^{(\no)} = \begin{pmatrix} \vy^{(1)}, \ldots, \vy^{(\no)} \end{pmatrix} \in \R^{n \times \no} \text{ and }
	X^{(\no)} = \begin{pmatrix} \vx^{(1)}, \ldots, \vx^{(\no)} \end{pmatrix} \in \R^{m \times \no}.
\end{equation*}
Therefore, we aim to identify the matrix $\opo \in \R^{m \times n}$, which minimizes the function
\begin{equation*}
	\opo \in \R^{m \times n} \mapsto \norms{Y^{(\no)} - \opo X^{(\no)}}_2^2.
\end{equation*}
The optimality condition provides us with an explicit solution
\begin{equation} \label{eq:right_inverse}
	\opo = Y^{(\no)} X^{(\no)}{}^T (X^{(\no)} X^{(\no)}{}^T)^\dagger
\end{equation}
where $(X^{(\no)} X^{(\no)}{}^T)^\dagger$ is the Moore-Penrose inverse of $X^{(\no)} X^{(\no)}{}^T$ (see \autoref{not:inverse}).
This is a common estimator in statistics (see \cite{CraMas13,HorKid15}). The typical approach to determine $\opo$ is via a spectral decomposition (see \cite{YaXinXueDac12}).

An alternative approach, presented below, is based on orthonormalizing the vectors $\vy^{(\ell)}$ with a Gram-Schmidt-orthononormalization.

\subsection{Gram-Schmidt for matrix learning} \label{ss:GS}
We start with the expert data $\vy^{(1)},\ldots,\vy^{(\no)} \in \R^n$ and orthonormalize them iteratively.
Let
\begin{equation}\label{eq:sigmave}\begin{aligned}
		\vec{\sigma} : \R^n \backslash \set{0} & \to \R^n \\
		\vy &\mapsto \frac{\vy}{\norms{\vy}_2}
	\end{aligned} \quad \text{ and } \quad \begin{aligned}
	\vec{\sigma}_\ve : \R^n \backslash \set{0} & \to \R^n.\\
	\vy &\mapsto \frac{\vy}{\sqrt{\norms{\vy}_2^2+\ve^2}}
\end{aligned}
\end{equation}
Note that $\vec{\sigma}$ and $\vec{\sigma}_\ve$ can be seen as a \emph{high dimensional antisymmetric} activation function (such as \autoref{eq:sigmoid_anti} multiplied by a factor $2$).

Then the Gram-Schmidt-algorithm computes the orthonormalized vectors as follows: \index{Gram-Schmidt!orthonormalization}
\begin{equation} \label{eq:gs}
	\begin{aligned}
		\ul{\vy}^{(\ell)} &:= \vec{\sigma} \biggl(\underbrace{
			\vy^{(\ell)} - \sum_{i=1}^{\ell-1}\inner{\vy^{(\ell)}}{\ul{\vy}^{(i)}}_2 \ul{\vy}^{(i)}
		}_{=:a_\ell(\vy^{(\ell)})} \biggr) \text{ for all } \ell=1,\ldots,\no.
	\end{aligned}
\end{equation}
Moreover, we denote the according images $\ul{\vx}^{(\ell)}$ of orthonormalized expert data $\ul{\vy}^{(\ell)}$ as the solutions of
\begin{equation} \label{eq:gsuu}
		\opo \ul{\vx}^{(\ell)} = \ul{\vy}^{(\ell)} \text{ for all } \ell=1,\ldots,\no.
\end{equation}
Note that in general $\ul{\vx}^{(\ell)}$, $\ell =1,\ldots,\no$ are \emph{not} orthonormal.

In actual implementations we use the \emph{regularized Gram-Schmidt-algorithm},\index{Gram-Schmidt!regularized} which computes for all $\ell=1,\ldots,\no$
\begin{equation} \label{eq:gsII}
	\ul{\vy}^{(\ell)} = \vec{\sigma}_\ve (a_\ell(\vy^{(\ell)})).
\end{equation}
Note that this method only provides an approximation of the orthonormalized data.

Since $\ul{\vy}^{(\ell)}$, $\ell=1,\ldots,\no$, are orthonormal, it follows that
\begin{equation} \label{eq:coordinates}
	Y^{(\no)} = \ul{Y}^{(\no)} R^{(\no)},
\end{equation}
where $\ul{Y}^{(\no)}$ is the matrix consisting of orthonormalized vectors
and
\begin{equation}\label{eq:R_n}
	R^{(\no)} :=
	\begin{pmatrix}
		\inner{\vy^{(1)}}{\ul{\vy}^{(1)}}_2 & \inner{\vy^{(2)}}{\ul{\vy}^{(1)}}_2 & \cdots & \inner{\vy^{(\no)}}{\ul{\vy}^{(1)}}_2 \\
		0 			& \inner{\vy^{(2)}}{\ul{\vy}^{(2)}}_2 & \cdots & \inner{\vy^{(\no)}}{\ul{\vy}^{(2)}}_2 \\
		0			&	0       & \ddots 	& \vdots \\
		0 &  0 & \cdots & \inner{\vy^{(\no)}}{\ul{\vy}^{(\no)}}_2
	\end{pmatrix}.
\end{equation}
Since $\opo$ is linear we get from \autoref{eq:coordinates} and the assumption that $\vy^{(\ell)}$, $\ell =1,\ldots\no$ are linearly independent that
\begin{equation*}
	\ul{X}^{(\no)} = X^{(\no)} (R^{(\no)})^{-1}\;.
\end{equation*}
This means that $\ul{\vx}^{(\ell)}$, $\ell=1,\ldots,\no$ can be explicitly calculated from $\vy^{(\ell)}$, $\ell =1,\ldots,\no$
(note that we assume that $\opo$ is injective).

We compute a solution $\vx^\dag \in \R^m$ of the linear equation
\begin{equation} \label{eq:linear}
	\opo \vx = \vy
\end{equation}
with $\vy \in \range{\opo} \subseteq \R^n$ as follows. By basis expansion we get
\begin{equation} \label{eq:recon}
	\vy = \sum_{\ell=1}^\no \inner{\vy}{\ul{\vy}^{(\ell)}}_2 \ul{\vy}^{(\ell)}\,,
\end{equation}
and consequently a solution of \autoref{eq:linear} is given by
\begin{equation} \label{eq:recon_x}
	\vx^\dag = \sum_{\ell=1}^\no \inner{\vy}{\ul{\vy}^{(\ell)}}_2 \ul{\vx}^{(\ell)}\;.
\end{equation}
This is in fact the best approximating solution of an arbitrary vector $\vy \in \R^n$ in $\X_\ttm$, and therefore we denote it by $\vx^\dag$. For more details we refer to \cite{AspKorSch20,AspFriKorSch21}.

\begin{remark} \label{re:GS_just}
	Note that $a_\ell(\vy)$ as in \autoref{eq:gs} is a linear function in $\vy$. In other word Gram-Schmidt can be implemented as affine linear neural network (see \autoref{de:affinenns} above) with precomputed weights $w^{(l,k)} = - \inner{\vy^{(l)}}{\vy^{(k)}}_2$.
\begin{figure}[h]
	\begin{center}
		\includegraphics[scale=0.7]{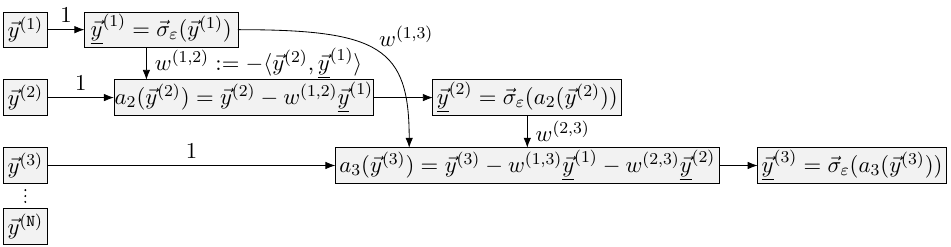}
		\caption{\label{fig:zymlk} The neural network structure of the Gram-Schmidt- orthonormalization. We use here the function $\vec{\sigma}_\ve$ from \autoref{eq:sigmave} as activation function (replacing the original $\vec{\sigma}$ in Gram-Schmidt. Note that the network structure has $\no$-layers, which is very unusual for deep neural networks.}
	\end{center}
\end{figure}
	We note, however, that this is not a standard neural network (see, for instance, \cite{Bis95}) where
	$\sigma_\ve:\R \to \R$ is evaluated for each component of $\R^n$. Such standard network cannot be used here because we must ensure that every $a_\ell(\vy^{(\ell)})$, $\ell=1,\ldots,\no$,(defined in \autoref{eq:gs}) is a linear combination of $\vy^{(j)}$, $j=1,\ldots,\ell-1$.
	
	There are efficient alternatives to Gram-Schmidt, which are, for instance, block based (see, e.g., \cite {CarLunRozTho22}). These can be reinterpreted again as deep neural networks \cite{QiuWanLuZhaDu12b}. Gram-Schmidt breaks down if and only if one of the vectors $a_\ell(\vy^{(\ell)})$ becomes zero, or in other words, if the vectors $\vy^{(\ell)}$, $\ell=1,\ldots,\no$ are linearly dependent. We excluded this by a general assumption that all images are linearly independent.
\end{remark}

\subsection{Bi-orthonormalization for matrix learning} \label{ss:Bio}
We proceed now opposite to \autoref{ss:GS} and orthonormalize the \emph{expert images} $\vec{x}^{(1)},\ldots,\vec{x}^{(\no)}$, which are denoted by $\vec{\ol{x}}^{(\ell)}$, $\ell=1,\ldots,\no$. We use overline to denote the orthonormalized expert images, while in \autoref{ss:GS} we used underline to denote the orthonormalized expert data.
Accordingly the data of orthonormalized expert images is also denoted with overline.

Since for $\vec{x} \in \X_\no$ we have the representation
\begin{equation*}
	\vx = \sum_{\ell=1}^\no \inner{\vx}{\vec{\ol{x}}^{(\ell)}}_2 \vec{\ol{x}}^{(\ell)},
\end{equation*}
it follows that
\begin{equation}\label{eq:gsi}
	\vy = \opo \vx = \sum_{\ell=1}^\no \inner{\vx}{\vec{\ol{x}}^{(\ell)}}_2 \opo \vec{\ol{x}}^{(\ell)} = \sum_{\ell=1}^\no \inner{\vx}{\vec{\ol{x}}^{(\ell)}}_2 \vec{\ol{y}}^{(\ell)}.
\end{equation}

\begin{remark} If instead of an exact Gram-Schmidt the approximate Gram-Schmidt from \autoref{fig:zymlk} is used, we get
	\begin{equation} \label{eq:gsi2}
		\begin{aligned}
			\vec{\ol{y}}^{(\ell)} &:= F \vec{\ol{x}}^{(\ell)} \approx \opo \sigma_\ve (a_\ell(\vx^{(\ell)})) \\
			& = \frac{1}{\norms{\sigma_\ve \left( a_\ell(\vx^{(\ell)}) \right)}_2}  \opo a_\ell(\vx^{(\ell)})
			\text{ for all } \ell=1,\ldots,\no.
		\end{aligned}
	\end{equation}
	Recall that the linear function $a_\ell$ is defined in \autoref{eq:gs} for the vectors $\vy \in \R^n$, while here it is the linear function of Gram-Schmidt for vectors $\vx \in \R^m$.
\end{remark}
In the following we explain a two-step orthonormalization procedure based on the following notation:
\begin{definition} Let $\vec{\ol{x}}^{(\ell)}$, $\ell=1,\ldots,\no$, be the orthonormalized training images and $\vec{\ol{y}}^{(\ell)}$, $\ell=1,\ldots,\no$ the according images as defined in \autoref{eq:gs} and \autoref{eq:gsi}, respectively. We denote:
	\begin{equation}\label{eq:pcaa}
		\begin{aligned}
			{\ol X} := (\vec{\ol{x}}^{(1)},\ldots,\vec{\ol{x}}^{(\no)}) \in \R^{{m \times \no}},\;
			{\ol Y} = (\vec{\ol{y}}^{(1)},\ldots,\vec{\ol{y}}^{(\no)}) \in \R^{{n \times \no}} \\
			\text{ and } A:={\ol Y} {\ol Y}^T \in \R^{n \times n}.
		\end{aligned}
	\end{equation}
	Note that by our general assumptions, the rank of each of the three matrices is always $\no$, which satisfies $\no \leq m \leq n$.
\end{definition}
Now, we are able to formulate the matrix learning algorithm. In fact we are not learning the matrix itself but it's singular values and functions (see \autoref{alg:ON2}) first, from which we can calculate the solution of the matrix equation \autoref{eq:linear}. The well-definedness of the algorithm is proven in \cite{AspFriSch25}:
\begin{algorithm}
	\caption{A two step orthonormalization}
	\label{alg:ON2}
	Input is expert information $(\vx^{(\ell)},\vy^{(\ell)})$, $\ell=1,\ldots,\no$.
	\begin{algorithmic}[1]
		\State Orthonormalize $\vx^{(\ell)}$ to get $\vec{\ol{x}}^{(\ell)}$, $\ell=1,\ldots,\no$;
		\State Compute $\vec{\ol{y}}^{(\ell)}$ from \autoref{eq:gsi2}, $\ell=1,\ldots,\no$;
		\State Calculate the principal component analysis (PCA) of $A=\ol{Y} \ol{Y}^T$ (this is the second orthonormalization procedures), which delivers the singular values $\psi^{(\ell)}$, $\ell=1,\ldots,\no$ of and singular vectors of $\opo$. See \cite{AspFriSch25} for a proof of this statement.
		\State Calculate the least squares solution from \autoref{eq:linear} using the principal vectors (for details see \cite{AspFriSch25}).
	\end{algorithmic}
\end{algorithm}
In \cite{AspFriSch25} this technique has been applied to compute the singular values of the Radon-transform (see \cite{Kuc13,Nat01}), represented in \autoref{fig:ex_func_u} and \autoref{fig:ex_func_v}. One challenge with the Radon-transform is its multiple eigenvalues (see \cite{Dav81,Nat01}).

\section{Learning nonlinear functions}\label{sec:lnf}
The task of approximating nonlinear functions is also called \emph{function regression}.\index{function regression} Among many other approaches neural networks (see \autoref{ch:nn}) are used for approximating nonlinear functions
\begin{equation*}
	\x:\R^m \to \R\;.
\end{equation*}
We concentrate on here uni-variate (i.e., scalar-valued) functions. For multi-variate functions $\x:\R^m \to \R^n$ every component is approximated separately.

\subsection{Reproducing kernel Hilbert-spaces (RKHS)} \label{ss:rkhs}
Reproducing kernel Hilbert-spaces are a widely used technique to approximate functions (see \cite{Scholkopf_2001}, \cite{Per22}).
\begin{definition}(RKHS) A Hilbert-space $\X$ of functions $\x: \Omega \subseteq \R^m \to \R$ is a reproducing kernel Hilbert-space if for every $\vx \in \Omega$ there exists a function $K_\vx \in \X$ such that\index{reproducing kernel Hilbert-spaces}\index{RKHS}
	\begin{equation} \label{eq:kak0}
		\inner{\x}{K_\vx}_\X = \x(\vx)\;.
	\end{equation}
	The function $K: \Omega \times \Omega \to \R$, which is defined for $\vx, \vx^{\prime} \in \Omega$ as
	\begin{equation*}
	   K(\vx, \vx^{\prime}):= \inner {K_\vx}{K_{\vx^{\prime}}}_\X\,,
	\end{equation*}
	is called \emph{reproducing kernel}\index{reproducing kernel} of the space $\X$.
\end{definition}
One of the most popular approaches to the RKHS-approximation of a function $\x:\Omega \to \R$ from its possibly noisy values $y^{(\ell)} \in \R$ at points $\vx^{(\ell)} \in \Omega$, $\ell=1,\ldots,\no$, is penalized least squares regression with the RKHS penalty. In this approach, the approximation of $\x$ in RKHS $\X$ is defined as the minimizer $\x_{\alpha}$ of the penalized least squares problem
\begin{equation} \label{eq:rkhs_ls}
\frac{1}{\no} \sum_{\ell =1}^{\no} \left( \x(\vx^{(\ell)}) - y^{(\ell)} \right)^2 + \alpha \norms{\x}_\X^2	\\ \rightarrow \min.
\end{equation}
The \emph{representer theorem} \index{representer theorem} of Kimeldorf \& Wahba \cite{KimWah70} tells us that the minimizer
$\x_{\alpha}$ lies in the finite-dimensional subspace of the infinite-dimensional RKHS $\X$ spanned by $K_{\vx^{(\ell)}}$, $\ell= 1,\ldots,\no$, and admits the following representation:
\begin{equation} \label{eq:rkhs_representer}
\x_{\alpha}(\vx)= \sum_{\ell = 1}^{\no} c^{(\ell)} K_{\vx^{(\ell)}}(\vx) \text{ for all } \vx\in \Omega,
\end{equation}
where the vector $\vec{c}= (c^{(1)}, \ldots, c^{(\no)})^T $ is given as
\begin{equation}\label{eq:c_in_rep}
\vec{c} = \no^{-1}(\alpha \id  + \no^{-1} \mathcal{K} )^{-1}Y^{(\no)} ,
\end{equation}
with the Gram-matrix associated to the kernel $K$
\begin{equation*}
  \mathcal{K}_{\ell,j} = K(\vx^{(\ell)},\vx^{(j)}) = \inner {K_{\vx^{(\ell)}}}{K_{\vx^{(j)}}}_\X, \quad 1 \le \ell,j \le \no,
\end{equation*} and
\begin{equation*}
Y^{(\no)} = (y^{(1)}, \ldots, y^{(\no)})^T \in \R^{\no}.
\end{equation*}

\subsection{Neural networks for function learning}
We describe here the strategy for approximation with \emph{shallow affine linear neural networks} (\ALNN's) (see \autoref{de:affinenns}). The objective is to find a vector of coefficients
\begin{equation} \label{eq:pl}
	\vp = (\alpha_j,\vw_j,\theta_j)_{j=1}^{\noc} \in \R^\dimlimit \text{ with } \dimlimit = (m+2)\noc, \\
\end{equation}
such that the neural network function
\begin{equation}\label{eq:classical_approximation_n}
	\vx \in \R^m \to \Psi[\vp](\vx) := \sum_{j=1}^{\noc} \alpha_{j}\sigma\left(\vw_{j}^T \vx +\theta_j  \right)
\end{equation}
approximates $\x:\R^m \to \R$ uniformly on bounded subsets of $\R^m$.

These coefficients are determined by computing an optimal vector
\begin{equation} \label{eq:training_fun}
	\argmin_{\vp \in \R^{\dimlimit}} \sum_{\ell=1}^\no | \Psi[\vp](\vx^{(\ell)}) - \vy^{(\ell)}|^2\,,
\end{equation}
which provides the best matching operator $\Psi[\vp]$ from the expert information $\mathcal{S}_\no$, defined in \autoref{eq:expert_information}.
This procedure is called \emph{training} of a neural network and discussed in greater detail in \autoref{ch:learning}.\index{neural network!training}
Instead of shallow networks as in \autoref{eq:classical_approximation} one can also use deep networks as defined in \autoref{eq:DNN} below. For the sake of simplicity of presentation we concentrate in the presentation and analysis on shallow networks.

\subsection{Deep neural networks for composite function learning}
The goal is to approximate a function ${\tt h} = {\tt g} \circ {\tt f}$. When the function ${\tt f}$ maps a high-dimensional space $\X$ onto a space $\Z$, it might be beneficial to approximate it with neural networks as described in \autoref{ch:nn}. If also ${\tt g}:\Z \to \Y$ is approximated by a neural network the resulting network is deep. Neural neural networks are therefore ideal candidates for approximating composite functions and the number of layers is therefore directly related to the number of compositions. In \cite{ConPhi12} the authors argue that if ${\tt g}: \X \to \Z$ and
${\tt f}:\Z \to \Y$, where $\X$ is high-dimensional and $\Z, \Y$ are low-dimensional, then Lancos-based methods for approximating ${\tt f} \circ {\tt g}$ are very efficient. This is intuitive because ${\tt g}$ is compressing the data. In view of this the layers of deep networks could be understood as optimal decompositions of a function, where every composite is approximated by a neural network. 

\section{Linear functional learning} \label{sec:lfl}
Let $\X$ be an infinite dimensional Hilbert-space. We are given expert information $\mathcal{S}_\no$ consisting of pairs of functions and vectors $(\x^{(\ell)},\vy^{(\ell)}) \in \X \times \R^n$ for all $\ell=1,\ldots,\no$. The goal is to compute a \emph{linear functional} $F: \X \to \R^n$, which satisfies $\opo \x^{(\ell)} = \vy^{(\ell)}$ for all $\ell=1,\ldots,\no$. In general, in this case the operator $\opo$ has a null-space - for instance if $\X \subseteq \R^{\hat{n}}$ with $\hat{n} \geq n$ and $\opo$ is linear. The reconstruction formula \autoref{eq:right_inverse} applies analogously here, when we use as function space $\X_\no$ the subspace of $\X$ spanned by the functions $\x^{(1)},\ldots, \x^{(\no)}$. The only difference is that the operator maps from $\X$ to $\R^n$. For instance the Moore-Penrose inverse $(\X^{(\no)}{}^t\X^{(\no)})^\dagger$ can be represented as an element of $\X^{m \times m}$, which is a matrix of functions (see \cite{AspKorSch20,AspFriKorSch21}).

\section{Nonlinear functional learning} \label{se:nful}
Nonlinear functional learning was considered in \cite{TiaHon95}. The goal is to compute a \emph{nonlinear functional} $\opo: \X \to \R$, from a function space $\X$ of functions $\x:\Omega \to \R$, which satisfies $\op{\x^{(\ell)}} = y^{(\ell)}$. It was shown in \cite{TiaHon95} that $\opo$ can be approximated by
\begin{equation} \label{eq:tiahon95}
		\Psi[\vp][\x] = \sum_{k=1}^{N_k} \alpha_{k}\sigma\left( \sum_{l=1}^{N_l} w_{k,l} \x(\vs_l) +\theta_k \right)\;.
\end{equation}
%
Here
\begin{equation} \label{eq:pl_func}
		\vp = (\alpha_k,w_{k,l},\theta_k,\vs_l)_{k,l=1}^{N_k,N_l}  \in \R^\dimlimit
		\text{ with } \dimlimit = 2N_k + N_k N_l +m N_l\,,
\end{equation}
where $\vs_l$, $l=1,\ldots,N_l$ are the locations where the function $\x : \Omega \to \R$ is sampled. Training is performed with the goal to determine the optimal vector $\vp$ minimizing the functional
\begin{equation} \label{eq:training}
	\vp \in \R^\dimlimit \to \sum_{\ell=1}^\no \abs{ \Psi[\vp][\x^{(\ell)}] - y^{(\ell)}}^2\;.
\end{equation}
Note the difference to \autoref{eq:training_fun}, where $\Psi[\vp]$ is evaluated at a vector $\x^{(\ell)}$, while here $\Psi[\vp][\x^{(\ell)}]$ is a function evaluation.

\section{Linear operator learning} \label{ss:lol}
When we aim for learning a linear operator $\opo: \X \to \Y$ we also need to make an assumption that the expert information is  sufficiently rich in the sense that $\bigcup_{\ttm \in \N} \X_\ttm$ is dense in $\X$. Recall that $\X_\ttm$ is the subspace spanned by the first $\ttm$ training images (see \autoref{ass_2}).
\begin{assumption}[Denseness]\label{ass_3}
	We assume that the input subspaces $\X_\ttm$ are nested and dense in $\X$, that is
	\begin{equation*}
		\overline{\lim_{\ttm \to \infty} \X_\ttm} = \overline{\bigcup_{\ttm \in \N} \X_\ttm} = \X.
	\end{equation*}
\end{assumption}
As a consequence of the previous assumptions, we have
\begin{proposition} \label{pr:dense_Y}
	By \autoref{ass_3} the subspaces $\Y_\ttm = \opo \X_\ttm$ are dense in $\overline{\range{\opo}}$, that is,
	\begin{equation*}
		\overline{\bigcup_{\ttm \in \N} \Y_\ttm} = \overline{\range{\opo}}.
	\end{equation*}
\end{proposition}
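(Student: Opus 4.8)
The plan is to show the two inclusions $\overline{\bigcup_\ttm \Y_\ttm} \subseteq \overline{\range{\opo}}$ and $\overline{\range{\opo}} \subseteq \overline{\bigcup_\ttm \Y_\ttm}$ separately, using continuity of $\opo$ and \autoref{ass_3}. The first inclusion is immediate: each $\Y_\ttm = \opo \X_\ttm \subseteq \range{\opo}$, so $\bigcup_\ttm \Y_\ttm \subseteq \range{\opo} \subseteq \overline{\range{\opo}}$, and taking closures preserves this. The nontrivial direction is the reverse inclusion, which is where continuity (boundedness) of the linear operator $\opo$ enters.

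For the reverse inclusion, I would take an arbitrary $\vv \in \range{\opo}$, say $\vv = \opo \x$ for some $\x \in \X$. By \autoref{ass_3}, $\bigcup_\ttm \X_\ttm$ is dense in $\X$, so there is a sequence $(\x_j)$ with $\x_j \in \X_{\ttm_j}$ for suitable indices $\ttm_j$, such that $\x_j \to \x$ in $\X$. Since $\opo$ is linear and bounded, $\opo \x_j \to \opo \x = \vv$ in $\Y$. But $\opo \x_j \in \opo \X_{\ttm_j} = \Y_{\ttm_j} \subseteq \bigcup_\ttm \Y_\ttm$, hence $\vv \in \overline{\bigcup_\ttm \Y_\ttm}$. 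This shows $\range{\opo} \subseteq \overline{\bigcup_\ttm \Y_\ttm}$, and since the right-hand side is closed, we get $\overline{\range{\opo}} \subseteq \overline{\bigcup_\ttm \Y_\ttm}$ as well. Combining the two inclusions gives the claimed equality. The equality $\overline{\lim_{\ttm} \Y_\ttm} = \overline{\bigcup_\ttm \Y_\ttm}$ in the statement is just notational, reflecting that the $\Y_\ttm$ are nested (which follows from \eqref{ass_2}), so their union coincides with their ``limit''.

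The only point requiring care — though it is not really an obstacle — is that the argument uses boundedness of $\opo$, which is part of the standing hypothesis that $\opo$ is a (bounded) linear operator between Hilbert spaces; without continuity the image of a dense subspace need not be dense in the image. I would also note in passing that the nestedness of $(\Y_\ttm)$ follows from that of $(\X_\ttm)$ together with linearity of $\opo$, so no separate assumption is needed there. The whole proof is a two-line density-and-continuity argument; the main content is simply recognizing that \autoref{ass_3} transfers through the continuous map $\opo$.
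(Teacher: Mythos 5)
Your proof is correct and is exactly the standard density-and-continuity argument; the paper itself omits the proof of \autoref{pr:dense_Y}, stating it only as a consequence of \autoref{ass_3}, and your two inclusions (the trivial one $\Y_\ttm = \opo\X_\ttm \subseteq \range{\opo}$, and the transfer of denseness through the bounded linear operator $\opo$) supply precisely the missing argument. Your side remarks — that boundedness of $\opo$ is the one hypothesis doing real work, and that nestedness of $(\Y_\ttm)$ is inherited from $(\X_\ttm)$ by linearity — are also accurate.
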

The consequence of \autoref{pr:dense_Y} is that only data in the closure of the range of $\opo$ can be approximated by elements in $\X$.

\emph{Linear operator learning} performs similarly to supervised linear functional and function learning (see \autoref{sec:lfl} and \autoref{sec:slol}, respectively): Let $\X, \Y$ be infinite-dimensional Hilbert-spaces and we are given expert information
\begin{equation*}
	(\x^{(\ell)},\y^{(\ell)}) \in \X \times \Y \text{ for } \ell \in \N.\footnote{Note that we aim to learn an operator between infinite-dimensional Hilbert-spaces, such that theoretically we require an infinite amount of training data.}
\end{equation*}
The goal of operator learning is to compute a \emph{linear operator} $\opo: \X \to \Y$, which satisfies $\opo \x^{(\ell)} = \y^{(\ell)}$. The difference to the strategies proposed in \autoref{sec:lfl} and \autoref{sec:slol} is that we have to consider a multi scale approach with respect to the size of the expert information $\no \to \infty$. With this approach we aim to learn the operator $\opo$ restricted to
\begin{equation*}
  \X_\no = \set{\x^{(\ell)} : \ell =1,\ldots,\no}
\end{equation*}
first and let $\no \to \infty$ afterwards. The solutions $\x_{\X_\no}^\dagger$ of \autoref{eq:op} constraint to $\X_\no$ converge to $\xdag$ for $\no \to \infty$ solving \autoref{eq:op} (see \cite{AspKorSch20,AspFriKorSch21}). However, this strategy has some convergence issues, which have been pointed out already in \cite{Sei80} in a different context. Error propagation is currently studied in \cite{HanSch25_report}. The mentioned convergence issues can be overcome with a bi-orthonormalization strategy as outlined above in vector spaces (see \autoref{ss:Bio}), which can be carried over analogously to a function space setting.

\section{Nonlinear operator learning} \label{ss:nol}
Let $\opo: \X \to \Y$ be a nonlinear operator between Hilbert-spaces of functions $\X$ and $\Y$. These functions are specified by
$\x:\Omega_\X \subseteq \R^m \to \R$, and $\y:\Omega_\Y \subseteq \R^n \to \R$, respectively.
The idea of learning a nonlinear operator is to approximate the operator $\opo$ by a linear combination of neural network functional approximation, as defined in \autoref{eq:tiahon95}, and a classical neural network approximation for functions ( see \autoref{eq:classical_approximation_n}, again with $n=1$). That is
\begin{equation} \label{eq:op_approx}
	\begin{aligned}
		 \op{\x}(\vt) &\approx \Psi^\opo[\vp](\vt) = \sum_{j=1}^{N_j} \alpha_{j}[\x] \sigma\left(\vw_j^T \vt +\theta_j \right) \text{ for all } \vt \in \R^n.
	\end{aligned}
\end{equation}
For the further motivation we recall the \emph{linear operator approximation} \autoref{eq:gsi}, which tells us that in the situation of a linear operator the coefficients are just evaluations of an inner product and can be considered a quadrature integration formula of the infinite dimensional setting. Since the orthonormal basis functions according to $\ol{\x}^{(\ell)}$ have in general very similar smoothness  (compare for instance \autoref{fig:ex_func_u} and \autoref{fig:ex_func_v}) we can choose the coefficients approximating $\alpha_j$ independent of $j$. Therefore, we use \emph{functional} evaluations, defined as follows (see \autoref{eq:tiahon95})
\begin{equation} \label{eq:approxII}
	\begin{aligned}
		\alpha_j[\x] = \Psi[\vp][\x] = \sum_{k=1}^{N_k} \alpha_{j,k} \sigma\left( \sum_{l=1}^{N_l} w_{j,k,l} \x(\vs_l) +\theta_{j,k} \right)\,,
	\end{aligned}
\end{equation}
where $N_k$ and $N_l$ are independent of $j$.
%
Plugging \autoref{eq:approxII} in \autoref{eq:op_approx} yields
\begin{equation} \label{eq:op_approx_II}
	\begin{aligned}
		 \op{\x}(\vt) &\approx \opo_\ttn[\x](\vt) := \sum_{j=1}^{N_j} \underbrace{\sum_{k=1}^{N_k} \alpha_{j,k} \sigma\left( \sum_{l=1}^{N_l} w_{j,k,l} \x(\vs_l) +\theta_{j,k} \right)}_{=\alpha_j[\x]} \sigma\left(\vw_j^T \vt + \theta_j  \right)\,,
	\end{aligned}
\end{equation}
%
which we refer to as an \emph{\deepONet}.
The $\ttn$ in \eqref{eq:op_approx_II} is identified with the triple $(N_j,N_k,N_l)$.
In \cite{LuJinPanZhaKar21} so-called \emph{DeepONets} were introduced to efficiently learn operators.

\begin{theorem}[{\bf Approximation of operators (Theorem 5, \cite{TiaHon95})}]\label{th:operator_apx}
	Let $\sigma = \tanh$.\footnote{We need particular properties of the $\sigma$, such as anti-symmetricity and that it goes to $\pm 1$ for $s \to \pm \infty$. The $\tanh$-function satisfies them.}
	The following assumptions hold:
	\begin{itemize}
		\item $\Omega_\X, \Omega_\Y$ are compact subsets of $\R^m, \R^n$, respectively;
		\item $(C(\Omega_\X),\norms{\cdot}_\infty)$ is the Banach- spaces of continuous function defined on $\Omega_\X$ and
		\begin{equation*}
			\opo: \dom{\opo} \subseteq W^{s,2}(\Omega_\X) \to L^2(\Omega_\Y)
		\end{equation*}
		is a continuous operator with respect to the Sobolev- space  (see \autoref{sob_def}) $(W^{s,2}(\Omega_\X),\norms{\cdot}_{W^{s,2}(\Omega_\X)})$;
	    \item $\dom{\opo}$ is a compact set of $(C(\overline{\Omega_\X}),\norms{\cdot}_\infty)$.
	\end{itemize}
	Then, for every $\ve >0$, there exist
	\begin{itemize}
		\item positive integers $N_j$, $N_k$, $N_l$,
		\item numbers $c_i^k$, $\zeta_k$, $\xi_{i,j}^k \in \R$,
		\item vectors $\vw_k \in \Omega_\Y$, $\vs_j \in \Omega_\X$ for $i=1,\ldots N_i$, $k=1,\ldots N_k$, $j=1,\ldots N_j$,
	\end{itemize}
	such that for all $\x \in \dom{\opo}$
	\begin{equation}\label{eq:operator apx}
		\norms{\op{\x} - \opo_\ttn[\x]}_{L^2(\Omega_\Y)} < \ve,
	\end{equation}
    where $\opo_\ttn[\x]$ is defined in \autoref{eq:op_approx_II}.
\end{theorem}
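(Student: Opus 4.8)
The plan is to follow the classical route of \cite{TiaHon95} (compare the proof of \autoref{th:general}): collapse the operator to a finite-dimensional object by sampling the input function at finitely many points, and then invoke the universal approximation theorem \autoref{le:LinearApproximation} twice -- once in the ``trunk'' variable $\vt\in\Omega_\Y$ and once in the ``branch'' variable that records the samples $\x(\vs_1),\dots,\x(\vs_{N_l})$. Since $\sigma=\tanh$ is continuous and non-polynomial it is discriminatory (\autoref{ex:sigmoid}), so \autoref{le:LinearApproximation} is applicable; after an affine change of variables we may additionally assume $\Omega_\X\subseteq[0,1]^m$ and $\Omega_\Y\subseteq[0,1]^n$.

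First I would reduce the infinite-dimensional input to finitely many point evaluations. Because $\dom{\opo}$ is compact in $(C(\overline{\Omega_\X}),\norms{\cdot}_\infty)$, the Arzel\`a--Ascoli theorem makes it uniformly bounded and equicontinuous. Given a tolerance, I would cover $\overline{\Omega_\X}$ by finitely many small balls centred at points $\vs_1,\dots,\vs_{N_l}$ and use a continuous partition of unity $\{\phi_l\}_{l=1}^{N_l}$ subordinate to this cover to define the reconstruction $R[\x]:=\sum_{l=1}^{N_l}\x(\vs_l)\phi_l$. Equicontinuity forces $\norms{R[\x]-\x}$ to be uniformly small over $\dom{\opo}$, so, using continuity of $\opo$ on the compact set $\dom{\opo}$, $\norms{\op{\x}-\op{R[\x]}}_{L^2(\Omega_\Y)}$ is uniformly small as well. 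Since $R[\x]$ depends on $\x$ only through the data vector $\vu(\x):=(\x(\vs_1),\dots,\x(\vs_{N_l}))$, which ranges over a compact set $U\subseteq[0,1]^{N_l}$, it then suffices to approximate the continuous map $\vu\mapsto\op{R_\vu}$, $U\to L^2(\Omega_\Y)$, where $R_\vu:=\sum_{l=1}^{N_l}u_l\phi_l$.

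The two remaining approximations are then standard. For the trunk: $\opo(\dom{\opo})$ is compact in $L^2(\Omega_\Y)$, hence within a small distance of a finite-dimensional subspace, and combining density of $C(\overline{\Omega_\Y})$ in $L^2(\Omega_\Y)$ with \autoref{le:LinearApproximation} on $[0,1]^n$ yields an integer $N_j$, fixed parameters $\vw_j\in\Omega_\Y$, $\theta_j\in\R$, and continuous coefficient functionals $\vu\mapsto\nu_j(\vu)$ on $U$ with $\op{R_\vu}$ uniformly well approximated by $\sum_{j=1}^{N_j}\nu_j(\vu)\,\sigma(\vw_j^T\vt+\theta_j)$ in $L^2(\Omega_\Y)$; continuity of the $\nu_j$ is inherited from $\vu\mapsto\op{R_\vu}$ by composing with a fixed continuous projection onto the linear span of $\{\sigma(\vw_j^T\vt+\theta_j)\}_{j=1}^{N_j}$. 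For the branch: each $\nu_j$ is continuous on the compact set $U\subseteq\R^{N_l}$, so by the Tietze--Urysohn--Brouwer extension theorem (exactly as in the proof of \autoref{th:general}) it extends continuously to $\R^{N_l}$, and a further application of \autoref{le:LinearApproximation} gives coefficients $\alpha_{j,k},\theta_{j,k},w_{j,k,l}\in\R$ (with a common $N_k$) so that $\nu_j(\vu)$ is uniformly well approximated on $U$ by $\sum_{k=1}^{N_k}\alpha_{j,k}\,\sigma\big(\sum_{l=1}^{N_l}w_{j,k,l}u_l+\theta_{j,k}\big)$. Substituting $u_l=\x(\vs_l)$ reproduces the functional evaluation \eqref{eq:approxII}, inserting it into the trunk expansion yields the \deepONet{} \eqref{eq:op_approx_II}, and a triangle-inequality bookkeeping of the three error contributions gives \eqref{eq:operator apx}.

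The step I expect to cause the real difficulty is the reduction in the second paragraph, specifically the compatibility between the topology in which $\dom{\opo}$ is assumed compact (the sup norm on $C(\overline{\Omega_\X})$) and the one in which $\opo$ is assumed continuous ($\norms{\cdot}_{W^{s,2}(\Omega_\X)}$): deducing that $\norms{\op{\x}-\op{R[\x]}}_{L^2}$ is small from $\norms{R[\x]-\x}_\infty$ being small requires that the sampling/reconstruction also converges in $W^{s,2}$. In the original $C$--$C$ setting of \cite{TiaHon95} this issue is absent; here one has to either exploit additional regularity or boundedness of $\dom{\opo}$ in $W^{s,2}$, replace the pure point-sampling-plus-partition-of-unity operator $R$ by a smoothing operator, or argue via norm equivalence on the fixed finite-dimensional range of $R$ together with a quantitative control of the residual. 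A secondary, more routine, point to verify carefully is that the coefficient functionals $\nu_j$ produced in the trunk step are genuinely continuous in the sample vector, since the second invocation of \autoref{le:LinearApproximation} relies on it.
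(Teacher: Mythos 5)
Your proposal follows essentially the same route the paper intends: the paper gives no detailed argument here, stating only that the result follows from Theorem 5 of \cite{TiaHon95} after ``an adaptation of indices and the embedding theorem for Sobolev spaces (see \autoref{th:ap:sob_emb})'', and your sampling/partition-of-unity reduction followed by two applications of \autoref{le:LinearApproximation} is exactly the Chen--Chen construction being adapted. The difficulty you single out at the end --- reconciling compactness of $\dom{\opo}$ in $(C(\overline{\Omega_\X}),\norms{\cdot}_\infty)$ with continuity of $\opo$ only in $\norms{\cdot}_{W^{s,2}}$ --- is precisely the point the paper's one-line remark delegates to the Sobolev embedding theorem, so your diagnosis of where the real work lies is accurate rather than a gap in your argument.
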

The proof requires an adaptation of indices and the embedding theorem for Sobolev-spaces (see \autoref{th:ap:sob_emb}) from \cite{TiaHon95} (see \cite{SchVuYan25_report}).

\begin{remark}
	Following \cite{TiaHon95} it becomes clear that the number of free parameters can be significantly reduced: In particular we can choose
	$$w_{j,k,l} = w_{k,l}, \; \theta_{j,k} = \theta_k\,,$$
	which results in the simplified approximation:
	\begin{equation} \label{eq:op_approx_III}
		\begin{aligned}
			\op{\x}(\vt) &\approx \sum_{j=1}^{N_j} \sum_{k=1}^{N_k} \alpha_{j,k} \sigma\left( \sum_{l=1}^{N_l} w_{k,l} \x(\vs_l) +\theta_k \right)\sigma\left(\vw_j^T \vt +\theta_j  \right)\;.
		\end{aligned}
	\end{equation}
\end{remark}

\subsection{Learning and regularizing with operator valued reproducing kernels}\label{sec:vRKHS}
In this section, we discuss operator learning with \emph{vector valued reproducing kernel Hilbert-spaces}
(vRKHS)\index{vector valued reproducing kernel Hilbert-spaces}\index{vRKHS}. These spaces generalize \emph{reproducing kernel Hilbert-spaces} (see \autoref{ss:rkhs}) and are tools for approximating vector valued functions and operators.
\begin{definition}(vRKHS, \cite{CarVitToi06}) Let $\X$ and $\Y$ be Hilbert-spaces and let $\mathcal{L}(\Y;\Y)$ denote the Banach- space of bounded linear operators on $\Y$. $\mathcal{L}(\Y;\Y)$ is equipped with the operator norm (see \autoref{de:op_norm}).
	
	Moreover, we assume that the potentially \emph{nonlinear} mapping
	\begin{equation} \label{eq:kak}
		\mathfrak{K}: \X \times\X \rightarrow \mathcal{L}(\Y;\Y),
	\end{equation}
	is
	\begin{itemize}
		\item \emph{symmetric}, which means that
		\begin{equation*}
			\mathfrak{K}[\x,\x'] = \mathfrak{K}[\x',\x]	\text{ for all } \x,\x' \in \X\;.
	    \end{equation*}
		\item for all  $\x,\x' \in \X$ the operator $\mathfrak{K}[\x,\x']$ is positive-semidefinite on $\Y$, i.e.,
		\begin{equation*}
			\inner{{\tt h}}{\mathfrak{K}[\x,\x']{\tt h}}_{\Y} \geq 0 \text{ for all } \tt h \in \Y\;.
		\end{equation*}
	\end{itemize}
	For every $\x \in X$ and $\tt h \in \Y$ we define an $\Y$-valued function
	\begin{equation*}
		\begin{aligned}
			\mathfrak{K}_{\x} {\tt h}: \X &\to \Y\;. \\
			\x' &\mapsto \mathfrak{K}[\x',\x]{\tt h}
		\end{aligned}
	\end{equation*}
	Then the closure of the linear span
	\begin{equation*}
		\mathcal{G}_{\mathfrak{K}} = \overline{\set{\mathfrak{K}_{\x}{\tt h}:\X \to \Y: \x \in \X, {\tt h} \in \Y}}^{\mathcal{G}_{\mathfrak{K}}}\,,
	\end{equation*}
	of all such functions with respect to the norm induced by the inner product
	$$\langle\mathfrak{K}_{\x}{\tt h},\mathfrak{K}_{\x'}{\tt h}'\rangle_{\mathcal{G}_{\mathfrak{K}}}:=
	\inner{{\tt h}}{\mathfrak{K}[\x,\x']{\tt h}'}_{\Y}$$
	is called the vector valued (vRKHS) reproducing-kernel Hilbert-space generated by the operator-valued kernel $\mathfrak{K}$.
\end{definition}

For supervised expert data
\begin{equation*}
\mathcal{S}_\no = \set{(\x^{(\ell)},\y^{(\ell)}) \in \X \times \Y : \ell=1,\ldots,\no}
\end{equation*}
the considered vRKHS-method consists in computing a regularized function
\begin{equation} \label{eq:vrkhs_ls}
	\opo_\alpha := \argmin_{\opo \in \mathcal{G}_{\mathfrak{K}}} \left(
	\frac{1}{\no} \sum_{\ell=1}^{\no} \norms{ \op{\x^{(\ell)}} - \y^{(\ell)}}_{\Y}^2 + \alpha \norms{\opo}^2_{\mathcal{G}_{\mathfrak{K}}}	\right)\,,
\end{equation}
which can be seen as an operator analog of the RKHS-method, defined by \autoref{eq:rkhs_ls}.
\begin{example} \label{ex:gene} (see \cite{meunier2024optimal}) Let
	\begin{equation} \label{eq:kernel_scal}
		\mathfrak{K}[\x,\x']:= K[\x,\x']\id_{\Y} \text{ for all } \x,\x'\in \X,
	\end{equation}
	where $\id_{\Y}$ is the identity operator in $\Y$, and $K: \X \times\X \rightarrow \R$ is  a scalar-valued symmetric positive-semidefinite kernel function. Examples of such function can be found in Table 1 of \cite{Gretton2016} and include in particular
	radial basis kernels $$K[\x,\x'] = \kappa (\norms{\x-\x'}_{\X})$$
	defined for $s \in \R$
	\begin{itemize}
		\item $\kappa (s) = \exp^{-\frac{s^2}{\sigma^2}}$ (Gaussian kernels),
		\item $\kappa (s) = \exp^{-\frac{s}{\sigma^2}}$ (exponential kernels),
		\item $\kappa (s) = (1 + \frac{s^2}{\sigma^2})^{-1}$ (Cauchy kernels),
		\item $\kappa (s) = (1 + s^{\theta})^{-1}$, $0< \theta < 2$ (generalized $t$-student kernels),
		\item $\kappa (s) = (s^2 + \sigma^2)^{-1/2}$ (inverse multiquadric kernels).
	\end{itemize}
	The family of kernels given by \autoref{eq:kernel_scal} allows for a closed-form solution of \autoref{eq:vrkhs_ls}. Namely, from \cite[Proposition 1]{meunier2024optimal} we know that for vRKHS $\mathcal{G}_{\mathfrak{K}}$ generated by the kernels from \autoref{eq:kernel_scal} the solutions $\opo_{\alpha}[\x]$ of \autoref{eq:vrkhs_ls} are given as
	\begin{equation}\label{eq: closed_form}
		\opo_{\alpha}[\x] = \sum_{\ell = 1}^{\no} \y^{(\ell)} {\tt c}_{\ell}[\x] ,
	\end{equation}
	where the vector $\vec{\tt c}[\x]= ({\tt c}_{1}[\x],\ldots, {\tt c}_{\no}[\x])^T $ admits the representation
	\begin{equation}\label{eq:c}
		\vec{\tt c}[\x] = \no^{-1}(\alpha \id + \no^{-1} \mathcal{K} )^{-1} Y^\no[\x] ,
	\end{equation}
	with the Gram-matrix associated to the scalar-valued kernel $K[\x,\x']$,
    \begin{equation*}
    	\mathcal{K}_{\ell,j} = K[\x^{(\ell)},\x^{(j)}] \text{ with } 1 \le \ell,j \le \no
    \end{equation*}	
	and
	\begin{equation*}
	  Y^\no[\x]= (K[\x,\x^{(1)}],\ldots, K[\x,\x^{(\no)}])^T\;.
	\end{equation*}
	
	One can see the similarities between penalized least squares regressions in RKHS and vRKHS by comparing \autoref{eq:rkhs_ls} - \autoref{eq:c_in_rep} with \autoref{eq:vrkhs_ls} -  \autoref{eq:c}.
\end{example}
\begin{remark}
Note that learning with vRKHS also gives a tool for function-on-function regression, which currently attracts attention in several ways, including sensor technology, data measurement, and engineering security. For example, recently in \cite{WitNeuMenGer25}, a function-on-function regression framework for long-term monitoring of changes in structural conditions has been discussed in the context of Structural Health Monitoring. It is worth to observe that in the framework of  \cite{WitNeuMenGer25} a model describing the effect of temperature  $\x = \x(t)$ has also the form of \autoref{eq: closed_form}, while the functions ${\tt c}_{\ell}[\x]$ are chosen a priori, but not in a data-driven manner (see the formula (6) in \cite{WitNeuMenGer25}). From this perspective, learning with vRKHS potentially allows for more flexibility.
\end{remark}


%

\subsection{Backpropagation} \label{ss:backpropagtion}
More general then calculating neural network parametrization of a function is the problem of calculating
neural network parameters of an input-output relation. The algorithms, which have been developed in the machine learning community (see for instance \cite{MinPap88,Hoc91,WilZip95,HocSchm97,Schm93}) are named \emph{backpropagation algorithms}. This topic falls in the category of function and operator learning, which have been discussed in \autoref{sec:lnf} and \autoref{ss:nol}. We split the learning problem of finding the parameters in \autoref{eq:op_approx_II} into two steps. This means, we solve the coupled system:
	\begin{equation}\label{eq:learn3}
		\begin{aligned}
		\opo_1(\vp)
		&:=\sum_{j=1}^{N_j} \beta_j^{(\ell)} \sigma (\vw_j^T \vt_\rho +\theta_j)\,,\\
		\opo_2(\vp) &:=
		\left(\beta_j^{(\ell)}\right)_{
			\tiny{\begin{array}{c}
					j=1,\ldots,N_j\\
					\ell=1,\ldots,\no
				  \end{array}}} \\
			  &= \left(\sum_{k=1}^{N_k} \alpha_{j,k} \sigma \left( \sum_{l=1}^{N_l} w_{j,k,l} \x^{(\ell)}(\vs_l)+\theta_{j,k}\right) \right)_{
			  \tiny{\begin{array}{c}
			  j=1,\ldots,N_j\\
			  \ell=1,\ldots,\no
		  \end{array}}}\;.
			  \end{aligned}
\end{equation}
Now, the parameters (assuming that $\vec{s}_l$, $l = 1,\ldots,\no$ is fixed)
\begin{equation*}
	\begin{aligned}
	\vp = (\underbrace{\alpha_{j,k}}_{\R},\underbrace{w_{j,k,l}}_{\R},\underbrace{\vw_j}_{\in \R^m},\underbrace{\theta_{j,k}}_{\in \R},\underbrace{\theta_j}_{\in \R})_{\tiny \begin{array}{c} j=1,\ldots,N_j \\ k=1,\ldots,N_k \\ l=1,\ldots,N_l \end{array}} \in \R^\dimlimit \\
		 \text{ with } \dimlimit=N_jN_K(2+N_l) + (m+1)N_j\;.
	\end{aligned}
\end{equation*}
are determined in two steps, by first determining 
$$ \vp_1  = (\underbrace{\alpha_{j,k}}_{\R},\underbrace{w_{j,k,l}}_{\R},\underbrace{\theta_{j,k}}_{\in \R})_{\tiny \begin{array}{c} j=1,\ldots,N_j \\ k=1,\ldots,N_k \\ 
		\ell =1,\ldots,\no \end{array}} \in \R^{N_jN_K(2+N_l)}\,,$$
and then by determining
$$ \vp_2  = (\underbrace{\vw_j}_{\in \R^m},\underbrace{\theta_j}_{\in \R})_{\tiny \begin{array}{c} j=1,\ldots,N_j \\ \end{array}} \in \R^{(m+1)N_j}\;.$$

In an implementation $\dimlimit$ should match the number of measurements, which is $Q\no$.


\section{Applications in regularization theory} \label{sec:appl}
In this section we apply \autoref{th:NeuSch90} and \autoref{th:NeuSch90b} to the $c$-example, \autoref{ex:c_reconstruct}, and the $a$-example from \autoref{a_reconstruct}.

\begin{example}[$c$-example with \RQNN{}s: convergence] \mbox{} \label{ex:c_recon_cont}
	We use \RQNN{}s to approximate functions $\x \in \X=L^2(0,1)$. In this case we have
	\begin{equation*}
		\X_\ttm = \overline{\spann \left(\mathbb{W}_\ttm \right)}\,,
	\end{equation*}
	see \autoref{eq:wfm}, \autoref{eq:xfm}.  
	
	Moreover, let $\opo_\ttn$ be the finite element operator as in \autoref{ex:ac}.
	
	If $\xdag \in \mathcal{L}^1(\R)$ can be extended to a countably representable function, see \autoref{de:representer}, then the approximation rate \eqref{eq:app_error} can be used to show 
	\begin{equation} \label{eq:para_selectII}
		\frac{\gamma_\ttm \xi_\ttm+ \frac{L}{2}\xi_\ttm^2}{\sqrt{\alpha}} \to 0,
	\end{equation}
	where $\xi_\ttm = \norms{(I-P_\ttm)\xdag}$, provided $\alpha \to 0$ sufficiently slowly. Note that $\ttm = \ttm({\tt M})$ as in \autoref{eq:Nfunc}. 
	
	Thus \autoref{th:NeuSch90} yields convergence of (sub-)sequences of approximate minimizers $\x_{\ttm, \ttn}^{\alpha,\delta,\eta}$ of the functionals
	\begin{align*}
		\mathcal{T}^{\ttm,\ttn}_{\alpha,\y^\delta}[\x] =  \norm{\opdis{\ttn}{\x} -\yd}^2 + \alpha\norm{\x-\x^0}^2
	\end{align*}
	to $\x^0$-minimum-norm solutions of \autoref{eq:op}.
	Summarizing, this example shows that: 
	\bigskip\par
	\fbox{
		\parbox{0.90\textwidth}{
		 The {\bf finite element approximation} $\opo_\ttn$ defined on the space of \RQNN {\bf neural network functions} $\X_\ttm$ can be used for a convergent discretization of Tikhonov regularization for solving \autoref{ex:c_reconstruct}.
	}}
\end{example}

\begin{remark} \label{re:space_conj}
		The condition $\xdag \in \mathcal{L}^1(\R)$ is less technical than it appears at first glance. We recall the relation to the $\ell^1$-norm (see \autoref{de:ell1space}), if the basis ${\tt U}$ is orthonormal. One difficulty in combining regularization theory and data driven methods comes from the fact that the information about a data constructed surrogate operator can be contained in somehow unpredictable components of the data. We observed this in connection with \autoref{alg:ON2} with which one can calculate singular values of an operator from expert data: We have observed that we \emph{cannot} compute the singular vales in an order fashion, from first to last. For instance, singular vectors corresponding to low singular values might not be contained in the expert data. The same difficulties appear here and are resembled in the condition $\xdag \in \mathcal{L}^1(\R)$.
	\end{remark}
\begin{example} [$c$-example with \RQNN{}s: rates]\label{ex:c_recon_cont_II}
In order to obtain a rate for the convergence $\x^{\alpha,\delta,\eta}_{\ttm ,\ttn} \to \xdag$ we need to verify additional conditions, mainly, the source condition in \autoref{as:ip:rates} and the rate $\norm{(I - P_\ttm)\x^0} = \mathcal{O}(\gamma_\ttm)$.
Concerning the former we can rely on the results of \cite{EngKunNeu89}, recall \autoref{ex:c-rates}. In particular, we point out that an $\omega \in \Y$ satisfying $\xdag - \x^0 = \opd{\xdag}^*\omega$ exists, if $(\xdag-\x^0)/\op{\xdag} \in W^{2,2} \cap W^{1,2}_0$. 
\end{example}

Next, we analyze the $a$-example from \autoref{a_reconstruct} further. We emphasize that in \autoref{ex:c_recon_cont} we have used a neural network ansatz for the functions in $\X$ and a finite element ansatz for the operator. In the following example we do the opposite.
\begin{example}[$a$-example with an \deepONet: convergence] \mbox{} \label{ex:s_recon_cont}
	We continue with \autoref{a_reconstruct} with $\Omega_\X=\Omega_\Y=]0,1[$ and $\opo$ as in \autoref{eq:aex},
	\begin{equation*}
		\begin{aligned}
			\opo: \dom{\opo} := \set{\x \in W^{1,2}(]0,1[) : \x \geq \rho > 0} \to L^2(]0,1[), \quad \x \mapsto \y[\x]
		\end{aligned}
	\end{equation*}
	where $\y[\x]:[0,1] \to \R$ is the unique solution of
	\begin{equation*}
		\begin{aligned}
			-(\x\y')'(s)  & = {\tt f}(s) \quad \text{ for all } s \in ]0,1[\,, \\
			\y(0) = \y(1)  & = 0 \;.
		\end{aligned}
	\end{equation*}
	Concerning the applicability of \autoref{th:NeuSch90} to this inverse problem we refer to \cite{EngKunNeu89}.
	
	Now, we let $\X_\ttm$ be the space of linear splines on a uniform grid of $\ttm + 1$ points in $[0, 1]$ vanishing at 0 and 1, while $\opo_\ttn$ is a suitable sequence of \deepONet s, as defined in \autoref{eq:op_approx_II}, approximating $F$ in a neighborhood of $\xdag$ according to \autoref{th:operator_apx}. That is,
		\begin{equation} \label{eq:nun}
			\norms{\opo_{\tt n}[\x]-\op{\x}} \to 0
		\end{equation}
		uniformly with respect to all $\x$ in the neighborhood, so that a sequence $\nu_\ttn \to 0$, as required by \autoref{as:fda}, exists.
	As in \autoref{ex:c_recon_cont} we have $\xi_\ttm, \gamma_\ttm \to 0$. Therefore, if $\alpha \to 0$ sufficiently slowly, we obtain convergence of (sub)sequences of the $\x_{\ttm, \ttn}^{\alpha,\delta,\eta}$ to $\x^0$-minimum-norm solutions of \autoref{eq:op}.
\end{example}

\bigskip
\fbox{
	\parbox{0.90\textwidth}{
	An \deepONet $\opo_\ttn$ defined on the space of {\bf linear splines} $\X_\ttm$ can be used for a convergent discretization of Tikhonov regularization.
}}
\bigskip

\begin{example}[$a$-example: rates] \mbox{} \label{ex:a_recon_II} Assuming that the source condition is satisfied, the details of which can be found in \cite{EngKunNeu89},
the main obstacle in obtaining convergence rates for the approximation procedure elaborated in \autoref{ex:s_recon_cont} seems to be connected to requirement
\begin{equation*}
	\nu_\ttn = \mathcal{O}(\gamma_\ttm^2 + \delta)
\end{equation*}
from \autoref{th:NeuSch90b}, since the rate of $\nu_\ttn \to 0$ is unknown in general. This suggests that, generally speaking, learned operators do not yet provide the same quality of results as constructive finite element methods when applied in regularization theory.
\end{example}

\bigskip
\fbox{
\parbox{0.9\textwidth}{At present neural network surrogates do not provide the same quality of approximation as splines and wavelets for functions and finite element methods for operators. On the other hand, if used in a hybrid manner, the analysis in \autoref{ch:Aspri paper} shows improvements over classical results.
}}
\bigskip\par\noindent

\section{Open research questions}
The neural network approximation theory is not as advanced that we can apply it directly in regularization theory. While the classical regularization analysis for finite dimensional approximations (see \autoref{ch:classic}) is based on the well-developed theory of splines (see for instance \cite{DeB78}) and finite element methods (see for instance \cite{Hac18,Cia78}) Sobolev- space estimates are not as common for neural networks (see for instance \autoref{ta:nn}), but instead Besov- space approximations are used (see \cite{Tri78,Tri92,Tri06}). Resulting from this we reviewed several open challenges:
\begin{opq} \label{opq:approx1}
	The current literature on approximations of functions $\x$ as surveyed in \autoref{ta:nn} is only of partial use in inverse problems theory, where typically approximation properties in Sobolev-, Bochner- or Banach- spaces are required (see for instance \cite{NeuSch90,PoeResSch10,NeuSch97,DicMaa96}). Can the approximation results for neural networks be extended to Sobolev- or Bochner- spaces (see \cite{Tri83,Tri92,Tri06}?
\end{opq}
\begin{opq} \label{opq:approxs}
	The two treated examples \autoref{ex:c_recon_cont} and \autoref{ex:s_recon_cont} can be applied for neural network basis function or neural network operator approximations, respectively. Can the theory be extended to allow for two neural network approximations (operators and function spaces) in parallel?
\end{opq}

\begin{opq} \label{re:317}
	Every element from $\Psi(\dom{\Psi})$ is represent only by the vectors $\vp \in \R^\dimlimit$ aside from the elements with \emph{obvious symmetries} such as formulated in \autoref{eq:antisym}. These ``mirrored'' elements
	are a set of measure zero in $\bP$. We conjecture that this corresponds to the set of measure zero as stated in \cite{Lam22}.
\end{opq}

\begin{opq} \label{re:3:17a}
	\autoref{eq:ca_dd} requires that all components of the vector $\vec{\alpha}$ are non-zero. This means in particular that for ``sparse solutions'', with less than $\dimlimit=(m+2) \noc$ non zero coefficients,
	convergence is not guaranteed by the existing results, because of a locally degenerating submanifold. The conjecture is that Newton's method is still converging and convergence can also be proven.
\end{opq}

\begin{opq} \label{co:newton1}
	Let $Q\no=\dimlimit$ (meaning that in \autoref{eq:op_disc} we have as many equations as unknowns). We conjecture that Newton's method, \autoref{eq:newton_invert}, is locally convergent if the points $(\vec{\beta}_j \neq 0, \theta_j)$ are pairwise different and the vectors $\vw_j$ are linearly independent.
\end{opq}
Open research questions are also related to practical realizations:
\begin{opq} \label{co:gs1}
With a smooth approximation $\vec{\sigma}_\ve$, the linear dependence of the training images does not result in a break down of the algorithm described in \autoref{fig:zymlk}. In fact, all available training pairs can be used. However, the linear dependence assumption cannot be avoided for the analysis (see \cite{AspFriSch25}). The research question concerns, whether an approximate Gram-Schmidt-methods provides approximations of the orthonormalized training vectors $\vy^{(\ell)}$, $\ell = 1,\ldots,\no$. The second research question in this context is whether block Gram-Schmidt can be written as a deep neural network.
\end{opq}

\section{Further reading}
Level set methods for solving inverse problems have been studied in \cite{Bur01,BurOsh05,OshSan01,San96,Set99}. Monotonic sigmoid functions can be used in level set methods. The approximation of arbitrary level set functions with neural network functions has been studied in \cite{SchShiVu25}.

Classification with generalized neural networks has been studied in \cite{FriSchShi25}.

Operator learning has been performed in many different situations, leading for instance to \emph{black box theory}\index{black box theory} (see for instance \cite{Pap62}). An early reference on back-propagation algorithms in neural networks and physiological motivation is \cite{Ros61}, which appear under the name of perceptrons\index{perceptron} (see \cite{MinPap88}).
	
In \cite{PinPet23} both \emph{discrete} forward and inverse problem are approximated and neural networks are learned for both. It is claimed that deep neural networks can be used to stably solve high-dimensional, noisy, nonlinear inverse problems. Implementing the approach from \cite{PinPet23} one learns networks for the forward and inverse operator separately, and accordingly the coefficient vectors for forward and inverse problem are uncorrelated. A counter argument against this strategy is that the inverse operator $\opo^{-1}$ is not continuous, and therefore the learned inverse operator can only be regularizing if the number of coefficients of the trained inverse network are small relative to the noise level.

In \cite{MooCyrOhmSieTum25} Jacobi iterations are learned for determining the largest singular value and it was shown that these are neural networks.

The relation between tomography and reproducing kernels has ben discussed recently in \cite{YunPan25}.

Learning in vRKHS has many applications (see for instance \cite{MicPon05,CarVitToi06,meunier2024optimal} and the references therein.

\chapter{Sequential Learning} \label{ch:sequentiallearning}

\emph{Sequential learning} addresses the challenge of making predictions or extracting structure from data that arrive in a temporal or logical order, where each observation may depend on preceding ones. \emph{Markov-models}\index{Markov-models} provide a natural framework for such problems by capturing the probabilistic dependencies between successive states. The forward problem in a Markov model is straightforward: given the model parameters, we compute the likelihood of an observed sequence. The inverse problem, however, is more challenging: given only the observed sequence, we wish to recover the underlying parameters that generated it. \emph{Markov-models}\index{Markov-models}, which we study now, are a mathematical foundation of \emph{sequential learning}.\index{learning!sequential}

This chapter will introduce the foundational concepts of Markov models and hidden Markov models, framing each as a general inverse problem. We develop the necessary mathematical machinery, including forward operators, inverse problems, parameter estimation, and so on.

\section{Homogeneous Markov-models} \label{ss:stationaryMc}
Below we introduce Markov-models. We motivate their definition and analysis by two well studied examples for the better understanding:
\begin{itemize}
	\item the \emph{weather example}\index{example!weather} (see \autoref{ex:weather}) and
	\item the \emph{mugs and vase examples}\index{example!mugs and vase} (see \autoref{ex:cupsandvase}).
\end{itemize}
Before we study these examples we comment on the notation used here.
\begin{notation}
	In \autoref{de:prob_joint} and \autoref{de:prob_cond} we defined the \emph{joint} and \emph{conditional} probabilities, which are recalled here:
	 For two discrete random variables
		${\textsf T}_1: \Omega_1 \to \Alpha_1$, ${\textsf T}_2:\Omega_2 \to \Alpha_1$
		\begin{itemize}
			\item the joint probability is defined as
		\begin{equation*}
			\prob_{{\textsf T}_1,{\textsf T}_2}(\alpha_1,\alpha_2) =
			\prob\bigl( \set{   ({\textsf t}_1,{\textsf t}_2): ({\textsf T}_1,{\textsf T}_2)=(\alpha_1,\alpha_2)
			} \bigr)\,,		
		\end{equation*}
		which is defined in the product space $({\textsf t}_1,{\textsf t}_2) \in \Omega_1 \times \Omega_2$.	
		\item The conditional probability is defined as
		\begin{equation*}
			\prob_{{\textsf T}_1|{\textsf T}_2}(\alpha_1|\alpha_2) =
			\left\{ \begin{array}{ll}
				\frac{\prob_{{\textsf T}_1,{\textsf T}_2}(\alpha_1,\alpha_2)}{\prob_{{\textsf T}_2}(\alpha_2)}& \text{ if } \; \prob_{{\textsf T}_2}(\alpha_2)>0\,,\\
				0 & \text{ if } \; \prob_{{\textsf T}_2}(\alpha_2)=0\;.
				\end{array}
			\right.
		\end{equation*}
		\end{itemize}
		This notation has been used already in \cite{SchGraGroHalLen09} and is different to standard notation in Bayesian analysis:
		For instance there they use for the joint probability the notation
		\begin{equation*}
		   \prob(({\textsf T}_1,{\textsf T}_2)=(\alpha_1,\alpha_2)) \text{ or simply }\;
		   \prob(\alpha_1,\alpha_2)\;.
		\end{equation*}
		In the context of sequential learning, the new notation seems quite intuitive, because we aim to determine the probabilities considered as functions of all possible inputs. An example is the parameter vector $\vec{\lambda}$ in \autoref{eq:lambda1}, which contains all evaluations of conditional probabilities and probabilities of the state spaces, respectively. In other words identifying $\vec{\lambda}$ is a parameter estimation problem or an inverse problem, as one wishes to call it, for the functions $\prob_{\observ|\staterv}$ and $\prob_{\staterv}$, respectively.
\end{notation}

We start with the Markov-weather model:
\begin{example}[Weather model (see \cite{Lue79})] \label{ex:weather} We assume that weather has three observable states, namely
	sonny, cloudy and rainy, which we denote by $\Omega_\obse := \set{s, c, r}$. 
	We make the assumption that the weather is \emph{only depending} on the previous day but not on the other preceding days.
	
	We use the probability space $\Omega_\obse$ (see \autoref{de:ap:prob_space}) with the $\sigma$-algebra $\mathfrak{F} =2^{\Omega_\obse}$ and the probability measure
	\begin{equation*}
		\prob : \mathfrak{F} \to [0,1]\;.
	\end{equation*}

	In addition we are given the probabilities that weather changes over one day according to the following probability table:
	\begin{center}
		\begin{tabular}{c|ccc}
			$p_{i,j}$ & s & c & r\\
			\hline
			s & $\frac{1}{2}$ & $\frac{1}{2}$& $0$\\
			c & $\frac{1}{2}$ & $\frac{1}{4}$ & $\frac{1}{4}$ \\
			r & $0$ & $\frac{1}{2}$ & $\frac{1}{2}$\\
		\end{tabular}
	\end{center}
	The entries of the according matrix
    \begin{equation} \label{eq:weather_matrix}
	\probm = \begin{pmatrix}
		p_{i,j}
	\end{pmatrix}_{i,j=1,2,3} = \begin{pmatrix}
	\frac{1}{2} & \frac{1}{2} & 0\\
	\frac{1}{2} & \frac{1}{4} & \frac{1}{4} \\
	0 & \frac{1}{2} & \frac{1}{2} \end{pmatrix}\;.
	\end{equation}
	are \emph{conditional probabilities} of the observation random variable
	\begin{equation}\label{eq:state:rvII}
		\observ_k:\Omega_\obse \to \R, \quad k \in \N_0\,,
	\end{equation}
	where $\observ_k(s)=1$, $\observ_k(c)=2$ and $\observ_k(r)=3$. In practice we identify $\Omega_\obse = \set{s, c, r}$ and $\set{1,2,3}$, and use them synonymously.
	This means that
	\begin{equation*}
		p_{i,j} := 
        \prob_{\observ_{k+1}|\observ_k}(\obse_i|\obse_j),
	\end{equation*}
	provides the probability that on the next day (``k+1'') we observe the weather condition $\obse_i \in \Omega_\obse$ if we observe the weather condition $\obse_j \in \Omega_\obse$ on day $k$. For instance that a weather condition changes from sunny to cloudy is
	$$\prob_{\observ_1|\observ_0}(s|c)=1/2\;.$$
	We investigate now the weather over multiple days: We denote by
	\begin{equation*}
		\vp_k = \begin{pmatrix}
			\prob_{\observ_k}(s)\\
			\prob_{\observ_k}(c)\\
			\prob_{\observ_k}(r)
		\end{pmatrix}\,,
	\end{equation*}
	the vector where each row denotes the probability that at day $k \in \N_0$ the weather is sunny, cloudy, rainy, respectively.
	We investigate the long term behavior of $\vp_{k}$:	
	\begin{equation} \label{eq:mrf}
		\vp_{k+1}^T = \vp_{k}^T \probm.
	\end{equation}
	This iteration is called a \emph{Markov-process}. \index{Markov-process}
    In this example $\probm$ is not dependent on $k \in \N_0$. We call \autoref{eq:mrf} therefore \emph{homogeneous}.
\end{example}

In the example we have been considering a Markov-model where we have three different states $s,c,r$. In general a Markov-model is defined as follows:
\begin{definition}[Homogeneous Markov-model]\label{de:mc}
	A homogeneous \emph{Markov-model}\index{Markov-model!homogeneous} $(\Omega_\obse,\probm)$ of order ${\tt K}$ consists of ${\tt K}$ different states $\Omega_\obse=\set{\obse_1,\ldots,\obse_{\tt K}}$ and conditional probabilities
	\begin{equation} \label{eq:uebergangsmatrix}
		\probm = \begin{pmatrix} p_{i,j} \end{pmatrix}_{i,j \in \set{1,\ldots,{\tt K}}}.
	\end{equation}
	It is common to abbreviate the Markov-model by $\probm$ instead of $(\Omega_\obse,\probm)$. 
	
	A homogeneous \emph{Markov-process}\index{Markov-process} is an iteration of the form \autoref{eq:mrf} in which $\probm$ is independent of $k$. 
\end{definition}
\begin{remark}
For a homogeneous Markov-model we have
\begin{equation*}
\prob_{\observ_{k+1}|\observ_k}(\obse_{k+1}|\obse_k)=
\prob_{\observ_{k+1}|\observ_k,\observ_{k-1}, \ldots, \observ_1}
(\obse_{k+1}|\obse_k,\obse_{k-1}, \ldots, \obse_1)\,,
\end{equation*} which means that only the previous observation influences the current observation.
\end{remark}

Now, we define probability vectors and stochastic matrices:
\begin{definition}[Probability vector and stochastic matrix]
	A vector $\vp \in \R^{\tt K}$ is called \emph{probability vector}\index{probability vector} if
	\begin{equation} \label{eq:randomv}
		p_i \in [0,1] \text{ and } \sum_{i=1}^{\tt K} p_i=1.
	\end{equation}
	A matrix $\probm$ that satisfies
	\begin{equation} \label{eq:smatrix}
		p_{i,j} \in [0,1] \text{ for all }i,j=1,\ldots,{\tt K} \text{ and } \sum_{j=1}^{\tt K} p_{i,j}=1 \text{ for all } i=1, \ldots, {\tt K}
	\end{equation}
	is called \emph{stochastic matrix}.\index{matrix!stochastic}
\end{definition}
For instance, the matrix $\probm$ in \autoref{ex:weather} is a stochastic matrix.
\begin{lemma}
	Let $\vp$ a probability vectors and $\probm$ a stochastic matrix, then $\vp^T P$ is also a probability vector.
	In particular, this means that the iterates of the Markov-process defined in \autoref{eq:mrf} are probability vectors.
\end{lemma}
\begin{proof} First, we note that all components of
	\begin{equation*}
		\vp^T P = \left(\sum_{i=1}^{\tt K} p_i p_{i,j} \right)_{j=1,\ldots,{\tt K}}
	\end{equation*}
	are non negative. Therefore, because of \autoref{eq:smatrix} we get
	\begin{equation*}
		\sum_{j=1}^{\tt K} (\vp^T P)_j = \sum_{j=1}^{\tt K} \sum_{i=1}^{\tt K} p_i p_{i,j} = \sum_{i=1}^{\tt K} p_i
		\underbrace{\sum_{j=1}^{\tt M} p_{i,j}}_{=1} = 1,
	\end{equation*}
	which gives the assertion.
\end{proof}
\begin{definition}\label{eq:randomvariable}
	$\vp \in \R^{\tt K}$ is the probability of the state vector $\vec{\obse} \in \Omega_\obse^{\tt K}$.
	A \emph{Markov-model}\index{process!Markov} (as in \autoref{eq:mrf}) is formally written as
	\begin{equation} \label{eq:mrf_sloppy}
		\vec{\obse}_{k+1}^T = \vec{\obse}_{k}^T \probm\;.
	\end{equation}
	Note however, that the probabilities $\vp$ of $\vec{\obse}$ are evolved and not $\vec{\obse}$ itself.
\end{definition}

The basis of analyzing Markov-models is the theorem of Frobenius-Perron:
\begin{theorem}[Frobenius-Perron] \label{th:Frobenius}\index{Theorem!Frobenius-Perron}
	Let $A \in \R^{{\tt K} \times {\tt K}}$ satisfying $a_{i,j} >0$ for all $i,j \in \set{1,\ldots,{\tt K}}$. Let $\sigma(A)$ denote the spectrum of the matrix $A$. That is, $\sigma(A)$ consists of all $\lambda \in \C$ such that there exists a vector $0 \neq \vx \in \R^{\tt K}$ such that $A\vx = \lambda \vx$ (see \cite{GolVan96}). Then, there exists $0 < r \in \R$, such that
	\begin{itemize}
		\item $\max\set{ \abss{{\vl}} : {\vl} \in \sigma(A)} = r$,
		\item and the eigenvalue $r$ has multiplicity $1$.
	\end{itemize}
	$r$ is called Frobenius-Perron-number.\index{Frobenius-Perron-number}
\end{theorem}
\begin{proof}
	See for instance \cite{Lue79}.
\end{proof}
From this theorem it follows with arguments from linear algebra (see \cite[Sec. 6.2]{Lue79}):
\begin{proposition}
	A stochastic matrix has an eigenvalue one of single multiplicity. Therefore, the absolute values of all other eigenvalues are strictly smaller than $1$.
\end{proposition}
In the following we define special Markov-models:
\begin{definition}[regular Markov-model]\index{Markov-model!regular} \label{de:Mcr}
	A Markov-model $(\Omega_\obse,\probm)$ (see \autoref{de:mc}) is called \emph{regular} if there exists $m \in \N$ such that $\probm^m > 0$. Note that this means that every matrix entry of $\probm^m$ is greater than $0$.
\end{definition}
Regular Markov-models have nice properties:
\begin{theorem}\label{th:markov}
	Let $\probm$ be a regular Markov-model.
	\begin{enumerate}
		\item Then there exists a unique probability vector $\vp > 0$ (means it is componentwise greater than $0$), such that
		\begin{equation*}
			\vp^T \probm = \vp^T\;.
		\end{equation*}
		\item Let $$\vec{e}_i = \begin{pmatrix} 0 \\ \vdots \\ 1 \\ \vdots \\ 0 \end{pmatrix},\qquad i=1,\ldots,{\tt K}$$ be the canonical basis in $\R^{\tt K}$. Then for every $i=1,\ldots,{\tt K}$
		\begin{equation*}
			\vp^T = \lim_{k \to \infty} \vec{e}_i^T \probm^k\;.
		\end{equation*}
		\item Let
		\begin{equation*}
			\ol{\probm} = \begin{pmatrix} \vp^T \\ \vdots \\ \vp^T \end{pmatrix} \in \R^{{\tt K} \times {\tt K}},
		\end{equation*}
		then
		\begin{equation*}
			\ol{\probm} = \lim_{k \to \infty} \probm^k.
		\end{equation*}
	\end{enumerate}
\end{theorem}
\begin{proof} see \cite[p 231]{Lue79}.
\end{proof}
We continue with the weather example \autoref{ex:weather}.
\begin{example} \label{ex:wetterII}	With the Markov-model $\probm$ from \autoref{eq:weather_matrix} we get
	\begin{equation*}
		\probm^{16} = \begin{pmatrix} 0.4 & 0.4 & 0.2\\0.4 & 0.4 & 0.2\\0.4 & 0.4 & 0.2
		\end{pmatrix}= \begin{pmatrix} \vp^T \\ \vdots \\ \vp^T \end{pmatrix} = \ol{\probm} .
	\end{equation*}
	Therefore we have on average 40\% sunny and cloudy days, respectively, and 20\% rainy days.
	In particular the weather model from \autoref{ex:weather} is a regular Markov-model (see \autoref{de:Mcr}). However, we note that we cannot apply the standard theorem of Frobenius-Perron, \autoref{th:Frobenius}, because the matrix $\probm$ from \autoref{eq:weather_matrix} has a zero entry.
\end{example}

\section{Non homogeneous Markov-models} \label{ss:nsmp}
We generalize the idea of homogeneous Markov-models to non homogeneous ones.
\begin{definition}[Non homogeneous Markov-model]\label{de:mcns}
	Let $(\Omega_\obse,\probm_k)_{k \in \N_0}$ be a family of \emph{Markov-models} of order ${\tt K}$. Every Markov-model consists of the ${\tt K}$ different states $\Omega_\obse=\set{\obse_1,\ldots,\obse_{\tt K}}$ and for every $k \in \N_0$ the conditional probabilities are given by
	\begin{equation} \label{eq:uebergangsmatrix_ns}
		\probm_k = \begin{pmatrix} p_{i,j;k} \end{pmatrix}_{i,j \in \set{1,\ldots,{\tt K}}} \text{ for all } k \in \N_0.
	\end{equation}
	The probabilities tell us the probability that the process is at state $\obse_i$ at instance $k+1$ if the process is at state $\obse_j$ at instance $k \in \N_0$.
	
	A \emph{non homogeneous Markov-process}\index{Markov-process!non homogeneous} is an iteration of the form
	\begin{equation} \label{eq:mrf_ns}
		\vp_{k+1}^T = \vp_{k}^T \probm_k\;.
	\end{equation}
\end{definition}
Again, the entries of $\probm_k$ are conditional probabilities of two random vectors
$\observ_{k}$ and $\observ_{k+1}$. That is
\begin{equation*}
p_{i,j;k} = \probm_{\observ_{k+1}|\observ_k;k}(\obse_i|\obse_j),
\end{equation*}
which means that on the next day (``$k+1$'') we have the weather condition $\obse_i \in \Omega_\obse$ if today (``$k$'') we observe the weather condition $\obse_j \in \Omega_\obse$. One could for instance model the influence of weather by \emph{Geoengineering}\index{Geoengineering} (the term has been created in 1977, see \cite{Schn10}):
\begin{example}
	We continue with the weather \autoref{ex:weather}. On the first day we do not employ Geoengineering but on the second, which could be modeled as follows:
		\begin{equation} \label{eq:weather_matrix_GE}
		\probm_0 = \begin{pmatrix}
			\frac{1}{2} & \frac{1}{2} & 0\\
			\frac{1}{2} & \frac{1}{4} & \frac{1}{4} \\
			0 & \frac{1}{2} & \frac{1}{2} \end{pmatrix} \text{ and } 	
			\probm_1 = \begin{pmatrix}
			\frac{1}{4} & \frac{1}{2} & \frac{1}{4}\\
			\frac{1}{2} & \frac{1}{4} & \frac{1}{4} \\
			\frac{1}{4} & \frac{1}{4} & \frac{1}{2} \end{pmatrix}
			\;.
	\end{equation}
	The second matrix models the fact of Geoengineering to produce more rain.
\end{example}
The modeling of non homogeneous Markov-model is quite challenging due to the amount of data that is needed (all matrices $\probm_k$, $k \in \N_0$).

\emph{Hidden Markov-models}, as discussed below, model the conditional probabilities $\probm_k$ in dependence of $k \in \N_0$ from hidden state variables. The hidden states influence the observations but, in general, not vice versa.

\section{Hidden Markov-models} \label{se:hmm}\index{Markov-model!hidden}
In \autoref{ss:stationaryMc} and \autoref{ss:nsmp} we considered the iteration of observations (like the weather conditions). Now, we consider a potential complicated interaction between observations and hidden states. We motivate the ideas with the mugs and vases examples, which have been considered in \cite{Por88}:
\begin{example} \label{ex:cupsandvase}
  One has a mug (see \autoref{fig:cupsmug1}), denoted by ``0'', filled with balls marked with ``1'' and ``2''. One draws from the mug a ball and the number of the ball tells us from which vase we draw next. The outcome is either a white or black ball. The color is noted. The observation of the experiment is the color of the ball drawn from the vase. The complete experiment consists of two subsequent random experiments, namely drawing from the mug first and then drawing from the according vase; see \autoref{fig:cupsmug1} and \autoref{fig:cupsmug2}.
  	\begin{figure}[h!]
  	\centering
  	\includegraphics[width=\textwidth]{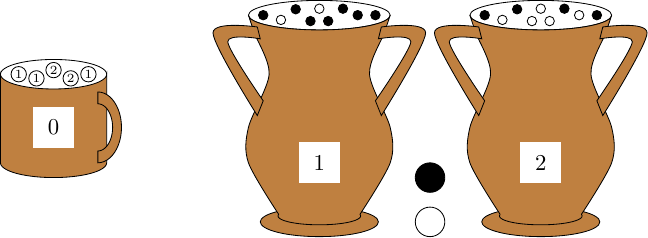}
  	\caption{We have a mug, named MUG(0), containing balls numbered with $1$ and $2$. Moreover, we have two vases, named Vase(1) and Vase(2), where each of them contains black and white balls.}
  	\label{fig:cupsmug1}
  \end{figure} 	
  In \autoref{fig:cupsmug2} a simple hidden Markov-model is shown.
  \begin{figure}[h!]
  	\centering
  	\includegraphics[width=\textwidth]{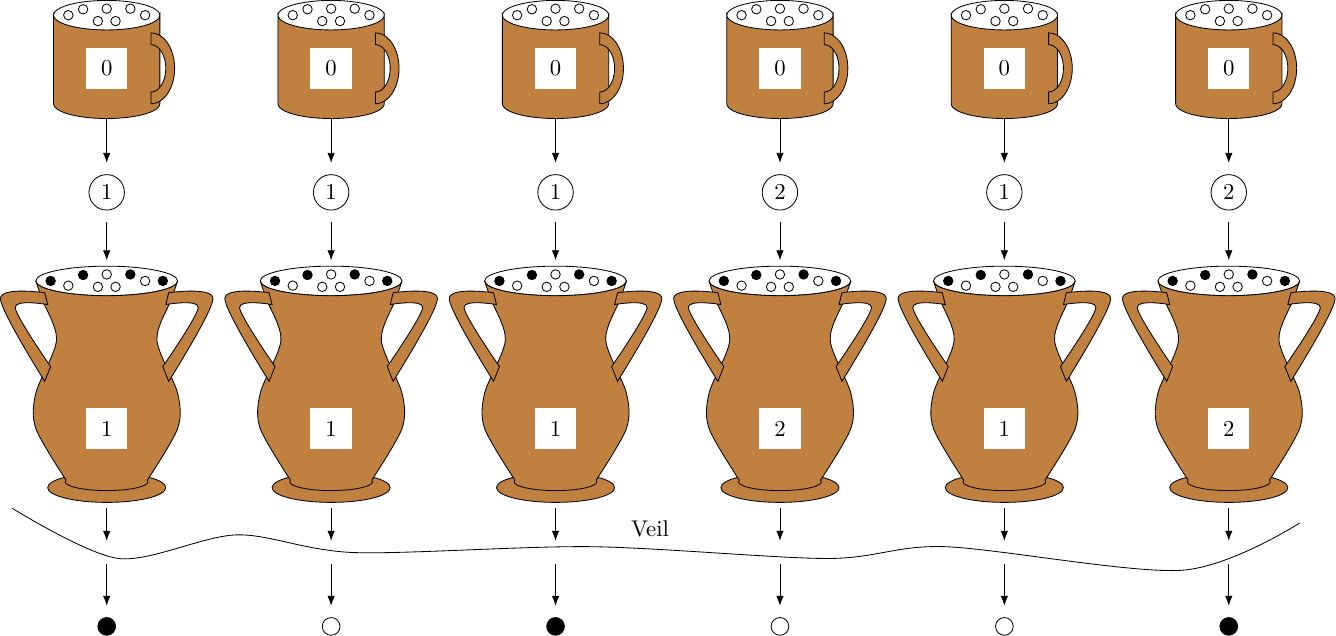}
  	\caption{In the game we draw a ball from the mug. This ball has either the value $1$ or $2$. Then we draw from the vase with the number of the drawn ball from the mug. The observation is a black or white ball. The veil symbolizes that we observe the outcome of two stochastic experiments for drawing from the mug and the vases. Finally, the only relevant observation are the relative counts of observed black and white balls (see \autoref{eq:game2}). In this game we have a hidden state, which are the sequence of drawn states ``1'' and ``2''. This state is not interacting with other states. Therefore, we call it $1$-state hidden-Markov- model. In \autoref{fig:cupsmug3} we consider interactions of hidden states, which gives us a $2$-state hidden Markov-model. The term veil refers to the fact that the observations are a composition of two random experiments, where the single ones cannot be observed.}
  	\label{fig:cupsmug2}
  \end{figure}

  We denoted by
  \begin{equation} \label{eq:st}
  	\begin{aligned}
  		\staterv : \Omega_\state := \set{1,2} &\to \R,\\
  		              \state &\mapsto \state
  	\end{aligned}
  \end{equation}
  the random variable (see \autoref{de:ap:random_variable}) of the \emph{state space}, which describes drawing from the mug. Similarly, we denote the random variable of the  \emph{observation space} by
  \begin{equation} \label{eq:o}
  	\begin{aligned}
  		\observ: \Omega_\obse:=\set{b,w} &\to \R\,,\\
  		\obse &\mapsto \left\{\begin{array}{rcl}
  			1 &\text{ if } & \obse=b\\
  			2 &\text{ if } & \obse=w
  		\end{array} \right.
  	\end{aligned}
  \end{equation}
  As in \autoref{ex:ap:dice} we use $b$, $w$ synonymously for $1$,$2$, respectively.

  From Bayes' theorem (see \autoref{th:Bayes}) it follows that
  \begin{equation}\label{eq:ex_conc}
  	\begin{aligned}
  	\prob_\observ(b)
  	=& \quad \prob_{\observ|\staterv}(b|1)
  	         \prob_\staterv(1) +
  		     \prob_{\observ|\staterv}(b|2)
  		     \prob_\staterv(2)\;.
  	\end{aligned}
  \end{equation}
  In other words, given $\prob_\staterv(1), \prob_\staterv(2)$ and $\prob_{\observ|\staterv}(b|1)$, $\prob_{\observ|\staterv}(b|2)$ we can determine the average count of black ball observations. The parameter vector
  \begin{equation}\label{eq:lambda}
  	\vl = (\prob_\staterv(1),\prob_\staterv(2),\prob_{\observ|\staterv}(b|1),\prob_{\observ|\staterv}(w|1),\prob_{\observ|\staterv}(b|2),\prob_{\observ|\staterv}(w|2))
  \end{equation}
  is a complete descriptor for the observable random vector $\observ$. We are using the \emph{reduced} parameter vector
  \begin{equation}\label{eq:lambdar}
  	\vl_r = (\prob_\staterv(1),\prob_{\observ|\staterv}(b|1),\prob_{\observ|\staterv}(b|2))\,,
  \end{equation}
  which contains all information of $\vl$, because
  \begin{equation*}
  \prob_\staterv(2)=1-\prob_\staterv(1), \prob_{\observ|\staterv}(b|i)=1-\prob_{\observ|\staterv}(w|i)
  \text{ for } i=1,2.
  \end{equation*}
  \end{example}
  In the following we consider a repeated experiment:
  \begin{example}
  We repeat the experiment $\no$-times, see \autoref{fig:cupsmug2}. We use the random variables
  \begin{equation*}
  	\begin{aligned}
  		\vec{\observ} := (\observ_1,\ldots,\observ_\no) : \Omega_\obse^\no \to \R^\no \text{ and }
  		\vec{\staterv}:= (\staterv_1,\ldots,\staterv_\no) : \Omega_\state^\no \to \R^\no\,,
  	\end{aligned}
  \end{equation*}
  which maps every $\no$-times repeated experiment to an observation and a hidden state (a vector in $\R^\no$), respectively.

  In this example the states $\state_i$ and $\state_{i+1}$ are independent from each other, which simplifies the computations. In the forthcoming \autoref{ex:lambda_conc2} this assumptions does not hold. Applying Bayes' theorem, since every observation is independent, we get for a given observation $\vec{\obse} \in \set{b,w}^\no$,
  \begin{equation} \label{eq:game}
  	\begin{aligned}
  		\prob_{\vec{\observ}} (\vec{\obse})
  		= & \prod_{\ell=1}^{\no} \prob_{\observ_\ell} (\obse_\ell)
  		=  \prod_{\ell=1}^{\no}
  		    \biggl( \prob_{\observ_\ell,\staterv_\ell}(\obse_\ell,1) + \prob_{\observ_\ell,\staterv_\ell}(\obse_\ell,2) \biggr) \\
  		= & \prod_{\ell=1}^{\no}\biggl(
  		\prob_{\observ_\ell|\staterv_\ell}(\obse_\ell|1)
  		\prob_{\staterv_\ell}(1) +
  		\prob_{\observ_\ell|\staterv_\ell}(\obse_\ell|2)
  		\prob_{\staterv_\ell}(2) \biggr)
  		\;.
  	\end{aligned}
  \end{equation}
  Here $\prob_{\observ_\ell,\staterv_\ell}$ denote the joint probability and $\prob_{\observ_\ell|\staterv_\ell}$ denotes the conditional probability (see \autoref{de:prob_cond}).

  Each term in the product of the right hand side is either
  \begin{equation*}
  	\prob_{\observ|\staterv}(b|1) \prob_\staterv(1) + \prob_{\observ|\staterv}(b|2) \prob_\staterv(2) \quad \text{ or } \quad \prob_{\observ|\staterv}(w|1) \prob_\staterv(1) + \prob_{\observ|\staterv}(w|2) \prob_\staterv(2) \,,
  \end{equation*}
  such that we get from \autoref{eq:game} with
  \begin{equation*}
  	k_b = \abs{\set{{\obse}_\ell=b}} \quad \text{ and } \quad k_w = \abs{\set{{\obse}_\ell=w}}
  \end{equation*}
  that
  \begin{equation} \label{eq:game2}
  	\begin{aligned}
  	\prob_{\vec{\observ}} (\vec{\obse}) =&
  		\bigl( \prob_{\observ|\staterv}(b|1) \prob_\staterv(1) + \prob_{\observ|\staterv}(b|2) \prob_\staterv(2) \bigr)^{k_b} \\
  		&\qquad
  		\bigl( \prob_{\observ|\staterv}(w|1) \prob_\staterv(1) + \prob_{\observ|\staterv}(w|2) \prob_\staterv(2) \bigr)^{k_w}\;.
  	\end{aligned}
  \end{equation}
\end{example}
In the following we make the example concrete to get a feeling on the dimensions:
\begin{example} \label{ex:lambda_conc}
	To make it concrete, we assume that in the mug we have 40\% balls marked with ``1'' and 60\% balls marked with ``2'' and we assume that Vase(1) has 20\% black and Vase(2) has 50\% black. Therefore, for the particular example \autoref{fig:cupsmug2} we have
	\begin{equation} \label{eq:lambda1}
		\begin{aligned}
			\vl &= (\prob_\staterv(1),\prob_\staterv(2),\prob_{\observ|\staterv}(b|1),\prob_{\observ|\staterv}(w|1),\prob_{\observ|\staterv}(b|2),\prob_{\observ|\staterv}(w|2))\\
			&= (0.4,0.6,0.2,0.8,0.5,0.5) \text{ and }\\
			\vl_r &= (\prob_\staterv(1),\prob_{\observ|\staterv}(b|1),\prob_{\observ|\staterv}(b|2)) = (0.4,0.2,0.5)\,,
		\end{aligned}
	\end{equation}
	In thus case it follows from \autoref{eq:ex_conc} that
	\begin{equation}\label{eq:ex_conc_conc}
		\prob_\observ(b) = \prob_{\observ|\staterv}(b|1) \prob_\staterv(1) + \prob_{\observ|\staterv}(b|2) \prob_\staterv(2) = 0.2 \cdot 0.4 + 0.5 \cdot 0.6 = 0.38.
	\end{equation}
	In the example of \autoref{fig:cupsmug2} $\no=6$. The \emph{sampling balls} are $\vec{\state}=(1,1,2,1,2,1)$ (state spaces) and the observation vector is $\vec{\obse}=(b,w,b,w,w,b)$.
	Since we observe 50\% black and 50\% white balls drawn, that
	\begin{equation} \label{eq:game2s} \begin{aligned}
			 \prob_{\vec{\observ}}(b,w,b,w,w,b)
			&=
			\left( 0.2 \cdot 0.4 + 0.5\cdot 0.6 \right)^3
			\left( 0.8 \cdot 0.4 + 0.5\cdot 0.6\right)^3 \\
			&= 0.0131\;.
			\end{aligned}
	\end{equation}
    We have calculated the probability that the sequence $b,w,b,w,w,b$ is observed (in this order).
\end{example}
Now, we put \autoref{ex:cupsandvase} in perspective with inverse problems of the form \autoref{eq:op}.
\begin{example}
We define the \emph{random forward operator}\index{forward operator!random}, which maps a given vector of
probabilities $\vl_r \in [0,1]^3$ onto the set of pairs, consisting of possible observations and associated probabilities, 
 $$\mathcal{O}:= \bigcup_{\vl_r \in [0,1]^3} \mathcal{O}_{\vl_r} \text{ where }  \mathcal{O}_{\vl_r} = \set{\left(\set{b,w}^\no,\prob_{\vl_r}^\no\right)}\;.$$
 Note that for every $\set{b,w}$-sequence we also have a probability. In practice the probability is not given and defined from observations. Thus it is a conditional probability.
Then, in formulas, the operator reads as follows
  \begin{equation} \label{eq:random_op}
  	\begin{aligned}
  		\opo_r: [0,1]^3 &\to 2^\mathcal{O}\;.\\
  		         \vl_r &\mapsto \mathcal{O}_{\vl_r}
  	\end{aligned}	
  \end{equation}
  This means that if we have given the parameter $\vl_r$, which consists of the numbers of balls in the mug marked with $1$ and $2$ and the numbers of black and white balls in the vases, then we repeat the game and record the observed drawings and probabilities.

  Moreover, we define the \emph{reduced random observation operator}\index{forward operator!random, observed}
  \begin{equation}\label{eq:roex1}
  	\opo_{r,r}^{\vec{\obse}}: [0,1]^3 \to [0,1]\,,
  \end{equation}
  which maps a reduced parameter $\vl_r$ onto the relative observations of black ($b$) balls (the observation of $w$ (white) has probability one minus the probability of observing $b$). Note, that here the outcome is not a random variable but an observed probability. In other words, it maps the reduced parameter $\vl_r$ onto the \emph{expected probability} of observing the black balls.\index{probability!expected} 

  In a real game the conditional probabilities
  \begin{equation}\label{eq:cond1}
  	\prob_{\observ|\staterv}(b|1) := \prob_{\observ_\ell|\staterv_\ell}(b|1) \text{ and } \prob_{\observ|\staterv}(b|2):=\prob_{\observ_\ell|\staterv_\ell}(b|2)\,,
  \end{equation}
  and the probabilities
  \begin{equation} \label{eq:cond2}
  	\prob_\staterv(1):=\prob_{\staterv_\ell}(1) \text{ and } \prob_\staterv(2):=\prob_{\staterv_\ell}(2)
  \end{equation}
  are \emph{not} known. We assume the probabilities to be independent of $\ell = 1,\ldots,\no$ and note that the $0$ in $p_{0,i}$ refers to MUG(0).

  The \emph{random inverse problem}\index{inverse problem!random} consists in finding from the expected probability and the observation $\obse$ the reduced parameter vector $\vl_r$.
\end{example}
In the following we summarize the essential some essential terms in an abstract from, which have been used the previous example:
\begin{definition}[Prior and posterior probability] \label{de:pcp} \index{probability!prior}\index{probability!posterior}
	We denote the set of \emph{observables}\index{observable} by
	$$\Omega_\obse = \set{\obse_1,\ldots,\obse_{\tt O}}$$
	and the set of attainable \emph{states}\index{states} by
	$$\Omega_\state = \set{\state_1,\ldots,\state_{\tt M}}\,.$$
	Let $\state \in \Omega_\state$ be a state of $\staterv_\ell$ and let $\vec\observ =(\obse_1,\ldots,\obse_\no)$ by a vector observation random variable, consisting again of $\no$ identical random variables, and let $\obse$ by a state of $\observ_\ell$.
	\begin{itemize}
		\item Then the quantity
	          \begin{equation} \label{eq:prior}
		         \prob_{\staterv_\ell}(\state)
	          \end{equation}
	          denotes the \emph{prior probability} of state $\state$ at instance $\ell \in \set{1,\ldots,{\tt M}}$.
	    \item The \emph{conditional probability} of an observation $\obse$ at instance $\ell$ is defined as
	          \begin{equation} \label{eq:cpob}
	          	\prob_{\observ_\ell|\staterv_\ell}(\obse|\state)\;.
	          \end{equation}
	          Note that we assume that ${\tt M} \leq \no$ and that we expand the states to a vector of size $\no$, if there is no hidden state, which means that $\prob_{\observ_\ell|\staterv_\ell}(\obse|\state)=1$, which makes \autoref{eq:cpob} well defined.
	    \item The \emph{posterior probability} of state $\state$ at $\ell \in \set{1,\ldots,\no}$ is given by
	          \begin{equation}\label{eq:posterior}
		         \gamma_\ell(\state) :=
		         \prob_{\staterv_\ell | \vec{\observ}} (\state|\vec{\obse})\;.
	          \end{equation}
\end{itemize}
\end{definition}
\begin{example} \label{ex:markov_practice}
	In \autoref{ex:cupsandvase}
	$$\Omega_\obse = \set{b,w}, \Omega_\state = \set{1,2}\;.$$
	We use the two probability spaces $(\Omega_\state,2^{\Omega_\state},\prob_\state)$ and $(\Omega_\obse,2^{\Omega_\obse},\prob_\obse)$, where $\prob_\state$ is the prior probability (see \autoref{eq:prior}) and (see \autoref{eq:game})
	\begin{equation*}
		\begin{aligned}
		 \prob_\obse(\obse) := \prob_\observ(\obse)\;.
		\end{aligned}
	\end{equation*}
	Moreover, we have
	\begin{equation*}
	    \prob_\state(\state) := \prob_\staterv(\state)\;.
	\end{equation*}
	It follows from Bayes' theorem and from \autoref{eq:prior} and \autoref{eq:cond1} that
		\begin{equation} \label{eq:concrete_ex}
			\begin{aligned}
				\gamma_\ell(\state)
				&=\frac{\prob_{\vec{\observ},\staterv_\ell}(\vec{\obse},\state)}
				{\prob_{\vec{\observ}}(\vec{\obse})} =\prob_{\observ_\ell,\staterv_\ell}(\obse,\state)
				\frac{\prod_{t \neq \ell} \prob_{\observ_t}(\obse_t)}{\prob_{\vec{\observ}}(\vec{\obse})}
				= \prob_{\observ_\ell|\staterv_\ell}(\obse_\ell|\state)  \frac{\prob_{\staterv_\ell}(\state)}{\prob_{\observ_\ell}(\obse_\ell)} \\
				&= \frac{\prob_{\observ_\ell|\staterv_\ell}(\obse_\ell|\state)\prob_{\staterv_\ell}(\state)}
				{\prob_{\observ_\ell|\staterv_\ell}(\state_\ell|1)\prob_{\staterv_\ell}(1) +
				 \prob_{\observ_\ell|\staterv_\ell}(\state_\ell|2)\prob_{\staterv_\ell}(2)}\;.
			\end{aligned}
		\end{equation}
		In the computation of \autoref{eq:concrete_ex} we used that the states $\state_\ell$ can be observed independently. We will derive later more general formulas, when the $\state$ need to be considered dependent (see \autoref{eq:ps}.
		
In particular we also have
\begin{equation*}
	\gamma_\ell(1) + \gamma_\ell(2) =1\;.
\end{equation*}
Thus, if the true parameter $\vl$ is known, we can make with $\gamma_\ell$ an educated guess whether a ball from the mug with a mark $1$ or $2$ is chosen. In practice $\vl$ is not know and has to be determined from maximum-likelihood estimation. \index{maximum-likelihood estimation}

For the particular \autoref{ex:lambda_conc} we get for $\state = 1$ and $\ell=6$, meaning that $\obse_6=b$, that:
\begin{equation*}
	\begin{aligned}
		\gamma_6(1)
		&=\frac{\prob_\staterv(1) \prob_{\observ|\staterv}(b|1)}{\prob_{\observ|\staterv}(b|1) \prob_\staterv(1) + \prob_{\observ|\staterv}(b|2) \prob_\staterv(2)} =
		\frac{0.4 \cdot 0.2}{0.2\cdot 0.4 + 0.5 \cdot 0.6}
		=0.2105 \text{ and } \\
		\gamma_6(2)
		&=\frac{\prob_\staterv(2) \prob_{\observ|\staterv}(b|2)}{\prob_{\observ|\staterv}(b|1) \prob_\staterv(1) + \prob_{\observ|\staterv}(b|2) \prob_\staterv(2)} =
		\frac{0.6 \cdot 0.5}{0.2\cdot 0.4 + 0.5 \cdot 0.6}
		=0.7895\;.
	\end{aligned}
\end{equation*}
$\gamma_6(2)$ is higher than $\gamma_6(1)$, which mans that at the $6$ draw we have with high probability a state ``2''. This coincides with the experiment \autoref{fig:cupsmug2}.
\end{example}
We continue with a more complicated game with mugs and vases:
\begin{example} \label{ex:lambda_conc2}
This game is also considered in \cite{Por88}. The further complication, in relation to \autoref{ex:cupsandvase}, (see \autoref{fig:cupsmug4}) is that we have now three mugs, which influence the choice of the vases (see \autoref{fig:cupsmug3}): MUG(0) is only used once to initialize the process. According to the drawn ball from the mug we choose \emph{both} the vase and the mug, where we draw next. This is an \emph{order one} or \emph{2-state} hidden Markov-model.\index{Markov-model!order one}\index{Markov-model!2-state} In this model one has an interaction between two hidden states.
  	\begin{figure}[h!]
	\centering
	\includegraphics[width=\textwidth]{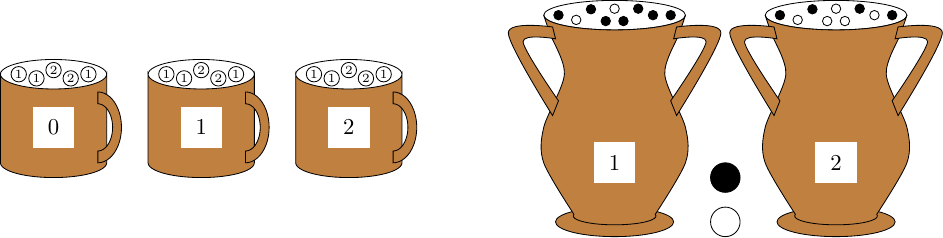}
	\caption{Here we draw from three mugs. The drawing from the first mug influences the next drawing from the mug and thus the game has a memory.}
	\label{fig:cupsmug4}
\end{figure} 	
\begin{figure}[h!]
	\centering
	\includegraphics[width=\textwidth]{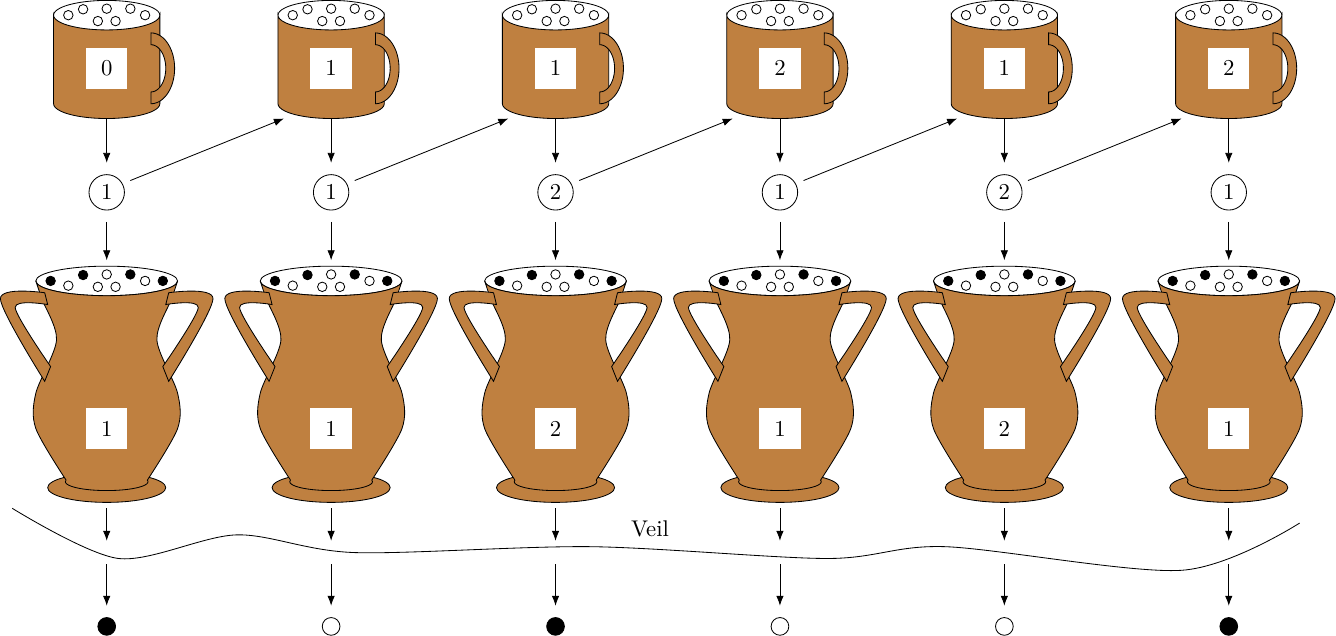}
	\caption{
	\label{fig:cupsmug3} The observations $\vec{\obse}=(b,w,b,w,w,b)$ are white or black balls from vase one or two. The hidden state spaces, $\vec{\state} = (1,1,2,1,2,1)$, are the numbers drawn from the mugs.}
\end{figure}
We have a state random variable
\begin{equation*}
	\vec{\staterv} : \Omega_\state^\no = \set{1,2}^\no \to \R^\no
\end{equation*}
which models drawing from the mug. $\staterv_1$ is special, because in the game it is fixed that one pulls from MUG(0). In the next mug drawings one draws from either MUG(1) or MUG(2). The random observations are independent, because we assume that drawn ball from the vase is put back.

The output vector $\vec{\obse}$ and the hidden states $\vec{\state}$ (see \autoref{fig:cupsmug3}) are related as in \autoref{eq:ex_conc}: In this case we have a parameter vector
\begin{equation} \label{eq:lambda_hm}
	\begin{aligned}
	\vl =& (\prob_{\staterv}^0(1),\prob_\staterv^0(2),\prob_\staterv^1(1),\prob_\staterv^1(2),\prob_\staterv^2(1),\prob_\staterv^2(2),\\
	& \qquad ,\prob_{\observ|\staterv}(b|1),\prob_{\observ|\staterv}(w|1),\prob_{\observ|\staterv}(b|2),\prob_{\observ|\staterv}(w|2))\,,
	\end{aligned}
\end{equation}
which contains the probabilities $\prob_\staterv^\rho(i)$ to draw a ball marked with $i=1,2$ from MUG($\rho$).
Note that for $i=1,2$ and $\rho=1,2$
\begin{equation*}
	\prob_\staterv^0(1) = \prob_{\staterv_1}^0(1)\,, \quad
	\prob_\staterv^\rho(i) = \prob_{\staterv_\ell}^\rho(i) \text{ for } \ell=2\,\ldots,\no
\end{equation*}
and $j=b,w$ and $i=1,2$
\begin{equation*}
	\prob_{\observ|\staterv}(j|i) = \prob_{\observ_\ell|\staterv_\ell}(j|i) \text{ for } \ell=2\,\ldots,\no
\end{equation*}
denote the probabilities of drawing a black or white ball from VASE($i$), $i=1,2$.
In other word the probabilities are independent of the number of the experiment and therefor $\vl$ completely determines the experiment. For further considerations we denote
\begin{equation} \label{eq:further}
	\vec{p}_0 = (\prob_\staterv^0(1),\prob_\staterv^0(2)), \quad \probm = \begin{pmatrix}
		\prob_\staterv^1(1) & \prob_\staterv^1(2) \\
		\prob_\staterv^2(1) & \prob_\staterv^2(2)
	\end{pmatrix}\,,
\end{equation}
which are a probability vector and a stochastic matrix (see \autoref{eq:randomv} and \autoref{eq:smatrix}).
The game is completely described by the reduced parameter vector
\begin{equation} \label{eq:lambda_hmr}
	\vl_r = (\prob_\staterv^0(1),\prob_\staterv^1(1),\prob_\staterv^2(1),\prob_{\observ|\staterv}(b|1),\prob_{\observ|\staterv}(b|2))\;.
\end{equation}
As in the previous example let
 $$\mathcal{O}:= \bigcup_{\vl_r \in [0,1]^5} \mathcal{O}_{\vl_r} \text{ where }  \mathcal{O}_{\vl_r} = \set{\bigl(\set{b,w}^\no,\prob_{\vl_r}^\no\bigr)}\;.$$
We define the set-valued random forward operator as
\begin{equation} \label{eq:random_op2}
\begin{aligned}
	\opo_r: [0,1]^5 &\to \mathcal{O}\,,\\
	\vl_r &\mapsto \mathcal{O}_{\vl_r}
\end{aligned}	
\end{equation}
and the \emph{reduced random observation operator}
\begin{equation}\label{eq:roex2}
\opo_{r,r}^\obse: [0,1]^5 \to [0,1]\,,
\end{equation}
which maps onto the expected probability for observing black balls.
\end{example}
The games in \autoref{ex:cupsandvase} and \autoref{ex:lambda_conc2} differ by the dependencies of the states.
In fact in \autoref{ex:cupsandvase} the states are not dependent on each other. In \autoref{ex:lambda_conc2} the state at the $k$-th experiment is dependent on the $k-1$-th state. More generally, we define
\begin{definition} \label{de:hMmstates}
	An \emph{$m$-state hidden Markov-model} selects a successor state of $m-1$--predecessor states.\index{Markov-model!$m$-state hidden}
\end{definition}
In view of this \autoref{ex:cupsandvase} is an example of a $1$-state hidden Markov-model, while \autoref{ex:lambda_conc2} is an example a $2$-state hidden Markov-model.

We systematically visualize a general Markov-model of order one in \autoref{fig:cupsmug5} (would correspond to $\no$ mugs):
\begin{figure}[h!]
	\centering
	\includegraphics[width=0.6\textwidth]{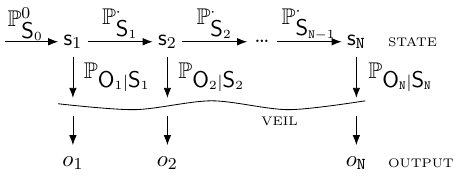}
	\caption{
		\label{fig:cupsmug5} Simulation of a Markov-model with ${\tt M}$ hidden states $\state_i$, $i=1,\ldots,{\tt M}$ and $\no$ observations $\obse_i$ for $i=1,\ldots,\no$. We assume that we have ${\tt M} \leq \no$ hidden states.}
\end{figure}
\begin{remark}
	In \autoref{fig:cupsmug2} no state is connected, thus it is a one layer Markov-model. In \autoref{fig:cupsmug3} two neighboring states are connected, and thus it is called two layer Markov-model. In general, when ${\tt M}$-layers are connected, it is called ${\tt M}$-layer Markov-model.\index{Markov-model!$\state$-layer} We assume that (see \autoref{fig:cupsmug5})\index{Markov model!two states hidden}\index{Markov model!order one}
	\begin{equation*}
		{\tt M} \leq \no\;.
	\end{equation*}
	We can point out the main difference between the games described in \autoref{fig:cupsmug2} and \autoref{fig:cupsmug3}, respectively. While in the game from \autoref{fig:cupsmug2} the states are independent, they are influenced from all prior states in \autoref{fig:cupsmug3}.
\end{remark}
\begin{convention}
	In the following we take ${\tt M}=\no$ and we justify this by increasing the number of hidden states artificially. Of course this is not a practical way for computations.
\end{convention}

In the following we define the conditional properties of states of a repeated experiment: We recall that $\vl$ is given and fixed. Moreover, we make the following assumption, which we have used implicitly in the Cups and Vases \autoref{ex:cupsandvase} and \autoref{ex:lambda_conc2}.
\begin{assumption}\label{ass:obscure}
	The joint conditional probability $\prob_{\observ_\ell,\staterv_\ell|\staterv_{\ell-1}}$ satisfies
	\begin{equation}
		\label{eq:obscure}
		\prob_{\observ_\ell,\staterv_\ell|\staterv_{\ell-1}}(\obse,\state|\stater) =
		\prob_{\observ_\ell|\staterv_\ell}(\obse|\state) \cdot \prob_{\staterv_\ell,\staterv_{\ell-1}}(\state|\stater) \text{ for all } \ell=2,\ldots,\no\;.
	\end{equation}
\end{assumption}
Note that by Bayes' theorem \autoref{th:Bayes} it follows that for $\ell=1,\ldots,\no$:
\begin{equation} \label{eq:obscure1}
	\begin{aligned}
		\prob_{\observ_\ell,\staterv_\ell|\staterv_{\ell-1}}(\obse,\state|\stater) &=
		\frac{\prob_{\observ_\ell,\staterv_\ell,\staterv_{\ell-1}}(\obse,\state,\stater)}
		     {\prob_{\staterv_\ell,\staterv_{\ell-1}}(\state,\stater)} \cdot
		     \frac{\prob_{\staterv_\ell,\staterv_{\ell-1}}(\state,\stater)}{\prob_{\staterv_{\ell-1}}(\stater)} \\
		     &=
		     \frac{\prob_{\observ_\ell,\staterv_\ell,\staterv_{\ell-1}}(\obse,\state,\stater)}
		     {\prob_{\staterv_\ell,\staterv_{\ell-1}}(\state,\stater)} \cdot
		     \prob_{\staterv_\ell,\staterv_{\ell-1}}(\state|\stater)\;.
	\end{aligned}	
\end{equation}
\autoref{eq:obscure} and \autoref{eq:obscure1} match, if $\observ_\ell$ is only depended on the $\state_\ell$, which is the case in the two examples, \autoref{ex:cupsandvase} and \autoref{ex:lambda_conc}.

\begin{definition}
 The \emph{conditional probability} between successive instances $\ell \in \set{2,\ldots,\no-1}$ of $\vec{\staterv}$ is defined by
	\begin{equation} \label{eq:cp}
		p_{\state_{\ell-1},\state_\ell} := \prob_{\staterv_\ell|\staterv_{\ell-1}}(\state_\ell|\state_{\ell-1})\;.
	\end{equation}
	In a Markov-model, we define conditional probabilities between arbitrary instance as
		\begin{equation}\label{eq:ps}
		\begin{aligned}	
			\prob_{\staterv_\no}(\state_\no) &:=
			\prob_{\staterv_\no|\staterv_{\no-1}}(\state_\no|\state_{\no-1}) \cdots
			\prob_{\staterv_2|\staterv_1}(\state_2|\state_1)
			     \cdot \prob_{\staterv_1}(\state_1) \\
			     &= \prob_{\staterv_1}(\state_1) \prod_{\ell=2}^\no p_{\state_{\ell-1},\state_\ell}\;.
		\end{aligned}
	\end{equation}
\end{definition}

We compute now prior, posterior and conditional probabilities as defined in \autoref{de:pcp} when the states of the experiment are dependent (as it is for instance in \autoref{ex:lambda_conc2}). Note, however,
that now states are dependent. For the calculation we use the following lemma:
\begin{lemma}\label{le:dependent}
	Let be given a Markov-model with parameter vector
	\begin{equation} \label{eq:lambda_recon}
		\vl = \bigl( \vp, \probm \bigr)\,,
		\end{equation}
		where
		$\vp = (\prob_{\staterv_1}(\state))_{\tiny \state \in \Omega_\state}$ is a prior probability, which is independent of the rest of the hidden Markov-model; see for instance \autoref{ex:lambda_conc}, where we draw first from MUG(0) to initialize the experiment. MUG(0) is never used afterwards again in the experiment. Moreover,
	\begin{equation*}
		\probm = \begin{pmatrix} p_{i,j} \end{pmatrix} = \begin{pmatrix} p_{\state_i,\state_j} \end{pmatrix}
		\in [0,1]^{{\tt M} \times {\tt M}}
	\end{equation*} denotes a stochastic matrix, which denote the probabilities of transferring a state $\observ_\ell$ to $\observ_{\ell+1}$.
    Then the observations have the probabilities
    	\begin{equation} \label{eq:induction}
    	\begin{aligned}
    		\prob_{\vec{\observ}}(\vec{\obse})
    		&= \sum_{\vec{\state} \in \Omega_\state)^\no} \prob_{\observ_1|\staterv_1}(\obse_1|\state_1) (\prob_{\staterv_1}(\state_1))^\no \prod_{\ell=2}^\no \left(  \prob_{\observ_\ell|\staterv_\ell}(\obse_\ell|\state_\ell)  \prod_{i=2}^\ell p_{\state_{\ell-1},\state_\ell}\right)\;.
    	\end{aligned}
    	\end{equation}
\end{lemma}
\begin{proof}
	For $\no \in \N_0$  we have
	\begin{equation*}
		\begin{aligned}
			\prob_{\vec{\observ}}(\vec{\obse}) &= \sum_{\vec{\state} \in (\Omega_\state)^\no} \prob_{\vec{\observ},\vec{\staterv}}(\obse,\state) = \sum_{\vec{\state} \in (\Omega_\state)^\no} \prod_{\ell=1}^\no \prob_{\observ_\ell,\staterv_\ell}(\obse_\ell,\state_\ell)\\
			&= \sum_{\vec{\state} \in \Omega_\state)^\no} \prob_{\observ_1|\staterv_1}(\obse_1|\state_1) \prob_{\staterv_1}(\state_1) \prod_{\ell=2}^\no \bigl( \prob_{\observ_\ell|\staterv_\ell}(\obse_\ell|\state_\ell) \prob_{\staterv_{\ell}}(\state_\ell)\bigr)\\	
			&= \sum_{\vec{\state} \in \Omega_\state)^\no} \prob_{\observ_1|\staterv_1}(\obse_1|\state_1) \prob_{\staterv_1}(\state_1) \prod_{\ell=2}^\no \left( \left( \prob_{\observ_\ell|\staterv_\ell}(\obse_\ell|\state_\ell)  \prod_{i=2}^\ell p_{\state_{\ell-1},\state_\ell}\right)\prob_{\staterv_1}(\state_1) \right)\;.
		\end{aligned}
	\end{equation*}
\end{proof}
\begin{remark}\label{re:critical}
	The Markov-model is specified by a parameter vector $\vec{\lambda}$. This is determined from repeated experiments: That is, we determine $\vec{\lambda}$ from input states $\vec{\state}$ and observation $\vec{\obse}$ and $\prob_{\vec{\observ}}(\vec{\obse}_\ell)$.
	In real experiments, a times series $\observ$ is, in general, \emph{not} generated by a hidden Markov-model. A theoretical justification of a maximum likelihood model is therefore not given.
	However in practice the likelihood model works reasonably well.
\end{remark}
In the following we turn to the inverse problem of recovering a parameter $\vl$ (see \autoref{eq:lambda}).
We have a $\no$-long observation sequence and we assume that we have an ${\tt M}$-state hidden-Markov-model.
From these observations we aim to recover the parameter vector $\vl$, which determines the hidden states.
\begin{algorithm}[H] \caption{Markov-model parameter estimation} \label{alg:mmpe}
	The inverse problem of reconstructing $\lambda$ from an $\no$-long observation vector $\vec{\obse}$ is done as follows:
	\begin{enumerate}
		\item Compute $\prob_{\vec{\observ}}^\vl(\vec{\obse})$ for all $\vl$ from \autoref{eq:lambda_recon}.
		\item Find the model $\vec{\hat{\lambda}}$ that maximizes $\prob_{\vec{\observ}}^\vl(\vec{\obse})$. That is the parameter vector, which maximizes the likelihood.
		\item Estimate the hidden state sequence $\vec{\state}$ according to an observation $\vec{\obse}$ and $\vec{\hat{\lambda}}$.
	\end{enumerate}
\end{algorithm}
There have proposed several algorithms for maximum likelihood estimation (that is the 2nd step of algorithm), which we do not review here. In the following we are concerned with the third item of \autoref{alg:mmpe}.

\section{Language models}
We consider exemplarily the \emph{Raleigh-language} model (see for instance \cite{Jel76}). \index{Model!Raleigh language} This model has a vocabulary of $250$ words, which are organized into word classes (boxes) at different states. A simplified version of the Rayleigh-language model is shown in \autoref{fig:grammer1}. Our model only consists of six layers, while the 
original model is more diverse: 
\begin{itemize}
	\item Layer 1 (Determiners): Box 1A = \{one\}, Box 1B = \{each\}.
	\item Layer 2 (Adjectives): Box 2A = \{bad, black, gentle, great, primary, ...\}, Box 2B = \{distant, eager, kind, large, new, ...\}.
	\item Layer 3 (Nouns): Box 3A = \{condition, duration, general, private, ...\}, Box 3B = \{division, part, period, power, ...\}.
	\item Layer 4 (Verbs): Box 4A = \{considered, created, gave, ...\}, Box 4B = \{contributed, disturbed, forgot, ...\}.
	\item Layer 5 (Article): Box 5A = \{the\}.
	\item Layer 6 (Concrete nouns): Box 6A = \{bus, campaign, food, gun, motion, ...\}.
\end{itemize}
This model is a hidden Markov-model, where the observables are the words and the states are the different branches (denoted by circles). The analogy with the mugs-and-vases model is as follows: The boxes of words corresponds to the vases, which are chosen randomly after the path ,  corresponds to selecting the states from the mugs, has been selected. Exemplarily we see that at the first state we have two choices of the paths, and this corresponds to a mug with two balls. After this path has been chosen a word from the according box is chosen, corresponding to a draw from a vase. 

The difference to the mugs-and-vases model is that the intersection of the boxes can be empty, in which cases the states are not hidden, but in fact the words indicate, which branch/state has been selected. In this case one can model the world alignment with a standard Markov-model.

The most significant challenge is to identify the language model itself. On the other hand finding the most probable sentence from a model is already challenging and done with the Viterbi-algorithm \cite{Vit67}\index{algorithm!Viterbi}. 

\begin{figure}[h!]
	\centering
	\includegraphics[width=1\textwidth]{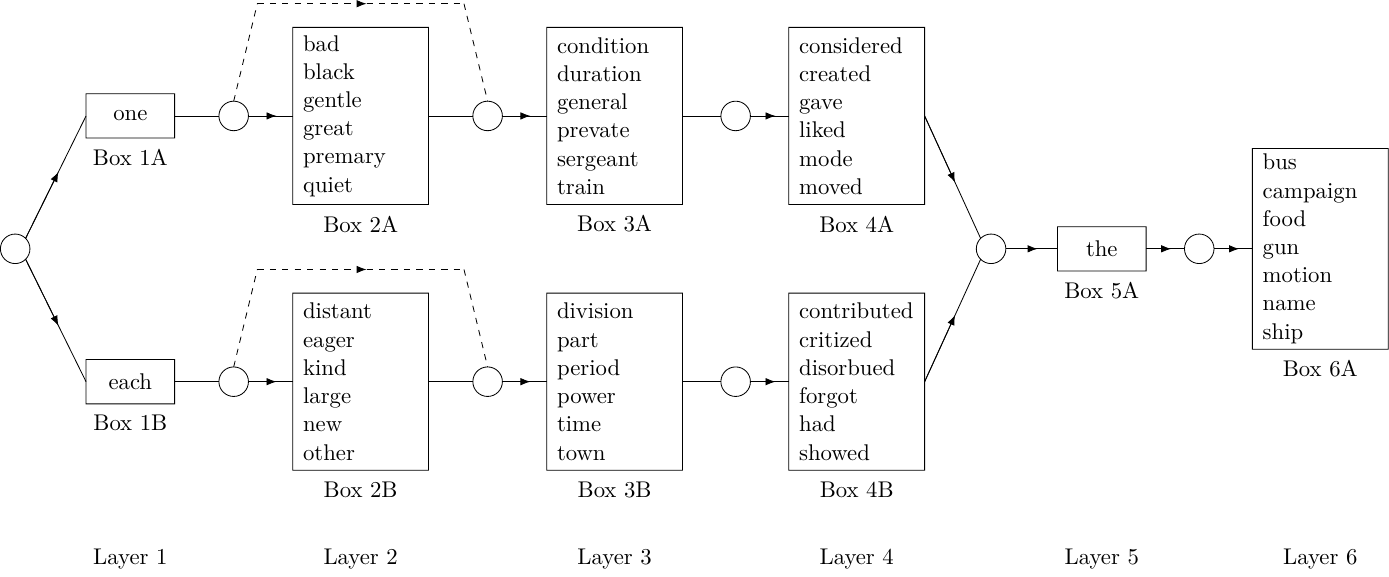}
	\caption{A simplified Rayleigh-language model. Compared it with the original one in \cite{Jel76}, which is more diverse. In this example there are only six layers.
		\label{fig:grammer1}}
\end{figure}

For this language model we need a more general definition of prior and posterior probabilities then in \autoref{de:pcp}:

\begin{definition}[Prior and posterior probability] \label{de:pcpg} \index{probability!prior}\index{probability!posterior}
The index $\ell= 1,\ldots,{\tt M}$ denotes the layers (in the example ${\tt M}=6$) and $c_\ell \in {A,B}$ denotes the box from which we choose the word. We denote the set of \emph{observable words}\index{words!observable} by 
    $$ \Omega_\obse := (\Omega_\obse^1,\ldots,\Omega_\obse^{\tt M}),$$
    where
	$$\Omega_\obse^\ell  = \set{\obse_1^\ell,\ldots,\obse_{{\tt O}_\ell}^\ell}\,, \; \ell = 1,\ldots,{\tt M}$$ 
	and the set of attainable \emph{states}\index{states} by 
	$$ \Omega_\state := (\Omega_\state^1,\ldots,\Omega_\state^{\tt M}),$$
	where
	$$\Omega_\state^\ell = \set{\state_1^\ell,\ldots,\state_{{\tt M}_\ell}^\ell}\,, \; \ell = 1,\ldots,{\tt M}\;.$$
	The difference to \autoref{de:pcp} is that the states are different and for every state we can have a different observation (word).
	
	Let $\state \in \Omega_\state^\ell$ be a state of the random variable $\staterv_\ell$ and let $\vec\observ =(\obse_1,\ldots,\obse_\no)$ by a vector observation random variable, consisting of $\no$ random variables, and let $\obse$ by a state of $\observ_\ell$. Note that in contrast to \autoref{de:pcp} the observation random variables are not identical. With this notation the prior, conditional and posterior distributions can be defined analogously as in 
	\autoref{eq:prior}, \autoref{eq:cpob} and \autoref{eq:posterior}.
\end{definition}
%
%

From the Markov property, the probability of a sentence $\vec{\obse} = (\obse_1,\obse_2,\ldots,\obse_6)$ is given by 
(see \autoref{eq:induction}):
\begin{equation}\label{eq:grammermarkov}
	\prob_{\vec{\observ}}(\vec{\obse}) = \prob_{\staterv_1}(\state_1) \prod_{\ell=2}^{6} \prob_{\staterv_\ell|\staterv_{\ell-1}}(\state_\ell|\state_{\ell-1}) \prod_{l=1}^{6} \prob_{\observ_\ell|\staterv_\ell}(\obse_\ell|\state_\ell)\;.
\end{equation}
Note that $\prob_{\observ_1|\state_1}=1$ because there is no observation \commentO{Check}. 
The parameter vector for the model is:
\begin{equation*}
	\vl = (\prob_{\staterv_1}(\state_1), \prob_{\staterv_\ell|\staterv_{\ell-1}}(\state_\ell|\state_{\ell-1}),\prob_{\observ_\ell|\staterv_\ell}(\obse_\ell|\state_\ell))_{\ell=1,\ldots,6},
\end{equation*}
which means that it has 13 parameters.
%
%

\section{Recovery of the states of a hidden Markov-model} \label{se:rhMm}

Since $\vec{\staterv}$ has ${\tt M}^\no$ possible states, the computation of $\prob{\vec{\observ}}(\vec{\obse})$ with \autoref{eq:induction} becomes untractable if ${\tt M}$ or $\no$ get large. In such a situation one can use the following computations:

\begin{definition}[Forward- and backward calculation] \label{de:foba}
For fixed $\ell = 1,\ldots,\no$ we define the
\begin{description}
	\item{\emph{forward calculation}}: Let
	\begin{equation*}
		\vec{\observ}_\ell^+ := (\observ_1,\ldots,\observ_\ell)\;.
	\end{equation*}
	Then for every $\state \in \Omega_\state$ the forward calculation is defined by
	\begin{equation} \label{eq:fc}
		\begin{aligned}
			\alpha_\ell(\state) &:= \prob_{\vec{\observ}_\ell^+,\staterv_\ell} (\vec{\obse}_\ell^+,\state)\;.
		\end{aligned}
	\end{equation}
	\item{The \emph{backward calculation}:}\index{backward calculation} Let
	\begin{equation*}
		\vec{\observ}_\ell^- := (\observ_{\ell+1},\ldots,\observ_\no)\;.
	\end{equation*}
	Then the backward calculation is defined as
	\begin{equation} \label{eq:bc}
		\begin{aligned}
			\beta_\ell(\state) &:= \prob_{\vec{\observ}_\ell^-|\staterv_\ell}( \vec{\obse}_\ell^-|\state)\;.
		\end{aligned}
	\end{equation}
\end{description}

\end{definition}
The terms forward-and backward calculations date back to \cite{Jel76,BahJelMer83}.
According to Bayes' theorem it follows from \autoref{eq:prior} and \autoref{eq:cp} that
\begin{equation} \label{eq:foba1}
	\begin{aligned}
		\alpha_1(\state)
		&= \prob_{\observ_1,\staterv_1}(\obse_1,\state)  =
		   \prob_{\observ_1|\staterv_1}(\obse_1|\state)  \prob_{\staterv_1}(\state)\;.
\end{aligned}
\end{equation}
and therefore we see by summing up over all possible states $\stater \in \Omega_\state$ that
\begin{equation} \label{eq:fobana}
	\begin{aligned}
		\alpha_\ell(\state) &= \sum_{\stater \in \staterv}
		\prob_{\vec{\observ}_\ell^+,\staterv_\ell,\staterv_{\ell-1}}(
		\vec{\obse}_\ell^+,\state,\stater)\;.
	\end{aligned}
\end{equation}
Then by using Bayes' theorem and \autoref{ass:obscure} we get
\begin{equation} \label{eq:foban}
\begin{aligned}
\alpha_\ell(\state) &= \sum_{\stater \in \staterv}
		\prob_{\vec{\observ}_{\ell-1}^+,\staterv_{\ell-1}} (\vec{\observ}_{\ell-1}^+,\stater)
		\prob_{\observ_\ell,\staterv_\ell|\staterv_{\ell-1}}(\obse_\ell,\state_\ell|\stater)\\
		&= \sum_{\stater \in \staterv} \alpha_{\ell-1}(\stater)
		\prob_{\observ_\ell|\staterv_\ell}(\obse_\ell|\state)
		\prob_{\staterv_\ell|\staterv_{\ell-1}}(\state|\stater)\\
		&= \sum_{\stater \in \staterv} \alpha_{\ell-1}(\stater) p_{\stater,\state} \prob_{\observ_\ell|\staterv_\ell}(\obse_\ell|\state)\;.
\end{aligned}
\end{equation}
We emphasize that from \autoref{eq:fc} it follows that
\begin{equation*}
	\prob_{\vec{\observ}}(\vec{\obse}) = \sum_{\state \in \staterv} \alpha_\ell(\state) \text{ for all } \ell=1,\ldots,\no\;.
\end{equation*}
By analogous computations as in \autoref{eq:foban} we see that for $\state \in \staterv$
\begin{equation*}
	\begin{aligned}
		\beta_\no(\state) =1 \text{ and }
		\beta_\ell(\state) = \sum_{\stater \in \staterv} p_{\state,\stater} \prob_{\observ_{\ell+1}|\staterv_\ell}(\obse_{\ell+1}|\stater) \beta_{\ell+1}(\stater) \text{ for all } \ell=\no-1,\ldots,1\;.
	\end{aligned}
\end{equation*}
This is a recursive formula, which requires only a linear amount of elementary computations to calculate
the conditional probabilities of the states $\prob_{\vec{\observ}_\ell^-|\staterv_\ell}( \vec{\obse}_\ell^-|\state)$ and $\prob_{\vec{\observ}_\ell^+,\staterv_\ell} (\vec{\obse}_\ell^+,\state)$ at the next iterate.

\begin{remark}
	We observe that since
\begin{equation} \label{eq:helper1}
	\begin{aligned}
	\prob_{\vec{\observ},\staterv_\ell}(\vec{\obse},\state)
	= &
	\prob_{\vec{\observ}_\ell^+,\staterv_\ell,\vec{\observ}_{\ell+1}^-} (\vec{\obse}_\ell^+,\state,\vec{\obse}_{\ell+1}^-)\\	
	= &
	\prob_{\vec{\observ}_\ell^+,\staterv_\ell} (\vec{\obse}_\ell^+,\state) \cdot
	\prob_{\vec{\observ}_{\ell+1}^-|\staterv_\ell,} (\vec{\obse}_{\ell+1}^-|\state)\\
	=&
	\alpha_\ell(\state) \beta_\ell(\state)
	\end{aligned}
\end{equation}
We calculate now some \emph{posterior probabilities}\index{probability!posterior} (see also \autoref{eq:concrete_ex}) by making use of \autoref{eq:helper1}:
\begin{equation*}
	\begin{aligned}
		\gamma_\ell(\state)
		:= &\prob_{\staterv_\ell|\vec{\observ}} (\state|\vec{\obse})
		=  \frac{\prob_{\vec{\observ},\staterv_\ell}(\vec{\obse},\state)}
		     {\prob_{\vec{\observ}}(\vec{\obse})}
		= \frac{\alpha_\ell(\state) \beta_\ell(\state)}{\prob_{\vec{\observ}}(\vec{\obse})}\;.
	\end{aligned}
\end{equation*}
$\gamma$ is the posterior distribution and gives an estimate on the probability of the states $\state$ at instance $\ell =1,\ldots,\no$, that is of $\prob_{\staterv^{\ell}}$.
We see from \autoref{eq:foban} that in a backprojection formula we sum up over all paths that pass through $\state$.
This is the similarity with the backprojection algorithm in tomography (see \autoref{de:back}).
\begin{figure}[h]
	\begin{center}
		\includegraphics[scale=0.7]{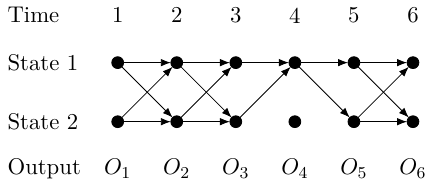}
		\caption{\label{fig:bakcprojection} All random-variables $\vec{\staterv}_\ell$, where $\vec{\staterv}_\ell = \state$, are summed up in a backprojection algorithm. This is similar as in the backprojection algorithm in tomography (see \autoref{de:back}).}
	\end{center}
\end{figure}
\end{remark}

\begin{remark}
	It is important to note that in general an observed \emph{time series}\index{time series} will not be generated by a hidden Markov-model. Justification of these models rest on the success in applications.
\end{remark}

\section{Open research questions}

\begin{opq} \label{co:mr1}
	We reconsider the mugs and vase examples, \autoref{ex:lambda_conc} and \autoref{ex:lambda_conc2}:
	So far we do not know whether the restricted operators $\opo_{r,r}^{\obse}$ in \autoref{eq:roex1} in
	\autoref{ex:lambda_conc} and \autoref{eq:roex2} in \autoref{ex:lambda_conc2} are injective.
	In the above question we consider one particular observation $\obse$, which in particular means that the
	$\abs{\obse}=\no$ is given. The same question can be asked for $\no \to \infty$ and assuming that we have available the particular distribution of drawn black and white balls.
\end{opq}

\section{Further reading}

Training of a Markov-model with \autoref{alg:mmpe} requires two algorithmic solutions: Finding the hidden state sequence, which can by performed with \emph{dynamic programming}\index{dynamic programming} (\emph{Bellman's algorithm}\index{algorithm!Bellham's} \cite{Bel58b}), that is finding the optimal probability parameter $\vl$. Bellham discussed in \cite{Bel57b} the asymptotic of Markov-models, which are integral equations\commentO{Explain more}. The first solution of the parameter estimation problem for $\vl$ in \autoref{alg:mmpe} appeared in \cite{BahJelMer83}.
An interesting introduction to speech recognition is \cite{Red76}, where the challenges are highlighted.

\commentO{ToDo}

\commentO{\cite{BenFra96,Die02}}

\chapter{Training of neural networks} \label{ch:learning}
\emph{Training}\index{training of neural networks} is concerned with finding parameters in a neural network which fit sampled training sets or expert pairs. To make it concrete we study the problem of training an \deepONet first. This represents a realistic scenario for training and it reveals the challenges arising due to the huge amount of parameters to train. A consequence of this discussion is the necessity of reducing the parameters a priori. We also emphasize that training is a highly nonlinear problem and we study the problem of computing parameters of a neural network approximating a given function $\x$ (see \autoref{sec:cond} below). While the standard numerical methods for training networks are gradient descent algorithms, we analyze Newton-type methods. The reason is twofold: Firstly, when we solve an originally linear inverse problem on a set of functions parametrized by neural networks, then it becomes a nonlinear inverse problem of the second decomposition case as discussed in \autoref{ss:decomp2}, and for such problems Newton-type methods are easier to analyze (see for instance \cite{Bla96,KalNeuSch08}). A second observation of \cite{SchHofNas23}, obtained from numerical experiments, is that computing parameters $\vp$ of a neural network representing a function $\x$ is more efficient with a Newton-type method.

\section{Training an \deepONet}
We start with the example of training an \deepONet to demonstrate the complexity of this approach.
\begin{definition}{\bf (Training an \deepONet)} consists in determining a high-dimensional vector
	\begin{equation*}
		C = \begin{pmatrix} \underbrace{\alpha_{j,k}}_{\in \R} & \underbrace{w_{j,k,l}}_{\in \R} & \underbrace{\vw_j}_{\in \R^n} & \underbrace{\theta_{j,k}}_{\in \R} & \underbrace{\vs_l}_{\in \R^m} & \underbrace{\theta_j}_{\in \R} &
		\end{pmatrix}_{\tiny \begin{array}{c} j=1,\ldots,N_j\\ k=1,\ldots,N_k\\ l=1,\ldots,N_l \end{array}} \in \R^\dimlimit
	\end{equation*}
	(repeated from \autoref{eq:D}), which satisfies
	\begin{equation} \label{eq:learn2}
		\begin{aligned}
			\y^{(\ell)}(\vt_\rho) 
			&= \sum_{j=1}^{N_j} \underbrace{\sum_{k=1}^{N_k} \alpha_{j,k} \sigma \left( \sum_{l=1}^{N_l} w_{j,k,l} \x^{(\ell)}(\vs_l)+\theta_{j,k}\right)}_{=:\beta_j^{(\ell)}} \sigma (\vw_j^T \vt_\rho +\theta_j)\;.
		\end{aligned}
	\end{equation}
\end{definition}
\begin{remark}
	Note, that in the above determination the general setting with \emph{vector} sampling points $\vs_j \in \R^m$, $j=1,\ldots,N_j$ and $\vt_\rho \in \R^n$, $\rho=1,\ldots,Q$ is used.
	
	In \autoref{eq:learn2} only the function evaluations $\y^{(\ell)}(\vt_\rho)$ and $\x^{(\ell)}(\vs_l)$ are used as expert information, while the general setting of this book is based on the expert information of \emph{expert function pairs} $\mathcal{S}_\no = \set{(\x^{(\ell)},\y^{(\ell)}): \ell =1,\ldots,\no}$ (see \autoref{eq:expert_information}). That is, the information of the whole functions is available and not just sample variables.
\end{remark}

\section{Training with Newton's method} \label{se:newtonlearning}
For illustration purposes we simplify \autoref{eq:learn2} as follows:
\begin{simple}
	The sampling points
	\begin{equation*}
		\set{ \vt_\rho : \rho =1,\ldots,Q} \text{ and } \set{\vs_j:j=1,\ldots,N_j}
	\end{equation*}
	have been pre-determined. For instance we might assume an equidistant rectangular sampling both for the functions $\x$ and $\y$. For starting the discussion we make a further assumption that $w_{j,k,l}$ and $\theta_{j,k}$ with $j=1,\ldots,N_j$, $k=1,\ldots,N_k$ and $l=1,\ldots,N_l$ are also specified a-priori. This means that some difficult parameters to be computed, $\beta_j^{(\ell)}$ for $j=1,\ldots,N_j$ and $\ell=1,\ldots,\no$, are already known.
	
	After this simplification we get from \autoref{eq:learn2} the equation
	\begin{equation} \label{eq:op_disc}
		\opo(\vp)= \vy \text{ where } \vy = (\y^{(\ell)}(\vt_\rho))_{\tiny \begin{array}{c}\ell=1,\ldots,\no\\ \rho=1,\ldots,Q\end{array}}\,,
	\end{equation}
	which has $Q\no$ equations and where the nonlinear function $\opo$ (recall that we denote a function evaluation with $(.)$-brackets and a functional/operator evaluation with $[\cdot]$) is defined as follows
	\begin{equation*}
		\opo(\vp)
		:= \left( \sum_{j=1}^{N_j} \beta_j^{(\ell)} \sigma (\vw_j^T \vt_\rho +\theta_j) \right)_{\tiny \begin{array}{c}\ell=1,\ldots,\no\\ \rho=1,\ldots,Q\end{array}}\;.
	\end{equation*}
	Here
	\begin{equation*}
		\vp = (
		\underbrace{\beta_{j}^{(\ell)}}_{\in \R},\underbrace{\vw_j}_{\in \R^m},\underbrace{\theta_j}_{\in \R})_{\tiny \begin{array}{c} \ell=1,\ldots,\no\\ j=1,\ldots,N_j \\ \ell =1,\ldots,\no \end{array}} \in \R^\dimlimit \text{ with } \dimlimit=N_j(m+1+\no)\;.
	\end{equation*}
\end{simple}
For an implementation of Newton's method this number should match the number of unknown $\dimlimit=Q\no$.

For analyzing convergence of a Newton's method for solving \autoref{eq:op_disc} we need to calculate the partial derivatives of $\opo$ first:
\begin{equation*}
	\begin{aligned}
		\frac{\partial \opo}{\partial \beta_{j}^{(\ell)}} (\vp) & = \sigma (\vw_j^T \vt_\rho +\theta_j)\,,\;\;\;\ell=1,\ldots,\no\;,
		j=1,\ldots,N_j\,,\\
		\frac{\partial \opo}{\partial \vw_{j}} (\vp) & = \beta_j^{(\ell)} \sigma' (\vw_j^T \vt_\rho +\theta_j) \vt_\rho \in \R^n \,,\;\;\;j=1,\ldots,N_j,\\
		\frac{\partial \opo}{\partial \theta_{j}} (\vp) & = \beta_j^{(\ell)} \sigma' (\vw_j^T \vt_\rho +\theta_j) \,,\;\;\;j=1,\ldots,N_j\;.
	\end{aligned}
\end{equation*}
Convergence of Newton's method is unproved so far: The problem consists in verifying critical assumptions of convergence theorems for Newton's method, such as \autoref{th:deupot92}: One important assumption there is that $\opo'(\vp)$ has full rank in a neighborhood of the solution $\vec{p}^\dag$. We refer to \autoref{co:newton1}, where we conjecture that the Fr{\`e}chet-derivative $\opo'$ of $\opo$ has \emph{locally} full rank. Other assumptions from \autoref{th:deupot92}, like Lipschitz-continuity of $\opo'$, are rather trivial if $\sigma$ is Fr{\`e}chet-differentiable.

\section{Newton methods for computing parametrizations of a shallow network} \label{sec:cond}
Here we consider the seemingly easy problem of computing the parametrization $\vp \in \R^\dimlimit$ of a neural network function $\x$ as defined in \autoref{eq:classical_approximation}. That is, we aim to solve the equation\footnote{Note that here $m=n$.}
\begin{equation} \label{eq:nnpara}
	\Psi[\vp] = \x\;.
\end{equation}
Here $\Psi$ is the functional of an affine linear neural network (\ALNN) as defined in \autoref{eq:classical_approximation}.
For convergence of Newton's method we verify the Newton-Mysovskii conditions for $\Psi$ as defined in \autoref{th:deupot92dg}. To do so, we calculate the first and second derivatives of $\Psi$ with respect to (see \autoref{eq:classical_approximation})
\begin{equation*}
	\vp := (\alpha_\ell,\vw_\ell,\theta_\ell)_{\ell=1}^{\noc} \in \R^{(m+2)\noc} = \R^\dimlimit.
\end{equation*}
Note, that the main difference to the \autoref{se:newtonlearning} is that the operator $\opo$ maps finite dimensional vectors $\vp$ onto functions $\x$ of an infinite dimensional space, and thus it may not have full range in general. This has to be taken into account in an analysis.

\begin{lemma} \label{le:sigma}
	Let $\sigma : \R \to \R$ be a two times differentiable function with uniformly bounded
	function values and first, second order derivatives, such as $\tanh$. Then, the derivatives of $\Psi$ (defined in \autoref{eq:classical_approximation}) with respect to the coefficients $\vp$ are given by the following formulas, where we use the matrix notation
	\begin{equation*}
		w_{\ell,i} \text{ is the }i-\text{th component of the vector } \vw_\ell \text{ and } W=(w_{\ell,i})_{\tiny \begin{array}{c} i=1,\ldots,m\\ \ell=1,\ldots,\noc \end{array}}\;:
	\end{equation*}
	\begin{itemize}
		\item Derivative with respect to $\alpha_\ell$, $\ell=1,\ldots,\noc$:
		\begin{equation}\label{eq:ca_d1}
			\begin{aligned}
				\frac{\partial \Psi}{\partial \alpha_\ell}[\vp](\vs) &= \sigma \left(\vw_\ell^T \vs +\theta_\ell \right) \text{ for } \ell=1,\ldots,\noc.
			\end{aligned}
		\end{equation}	
		\item Derivative with respect to $w_{\ell,i}$ where $\ell=1,\ldots,\noc$, $i=1,\ldots,m$:
		\begin{equation}\label{eq:ca_d2}
			\begin{aligned}
				\frac{\partial \Psi}{\partial w_{\ell,i}} [\vp](\vs) = \sum_{j=1}^{\noc} \alpha_j\sigma' \left(\vw_j^T\vs +\theta_j \right) \delta_{\ell=j}s_i 
				= \alpha_\ell \sigma' \left(\vw_\ell^T \vs +\theta_\ell \right) s_i \;.
			\end{aligned}
		\end{equation}
		\item Derivative with respect to $\theta_\ell$ where $\ell=1,\ldots,\noc$:
		\begin{equation}\label{eq:ca_d3}
			\begin{aligned}\frac{\partial \Psi}{\partial \theta_\ell} [\vp](\vs)&= \sum_{j=1}^{\noc} \alpha_j\sigma' \left(\vw_j^T\vs +\theta_j \right) \delta_{\ell=j}
				= \alpha_\ell \sigma' \left(\vw_\ell^T \vs +\theta_\ell \right).
			\end{aligned}
		\end{equation}
	\end{itemize}
	All derivatives are functions in $\X = L^2((0,1)^m)$.
	In particular, we see that
	\begin{equation} \label{eq:DG}
		\begin{aligned}
			D \Psi[\vp](\vs)  \vh
			= \begin{pmatrix}
				\frac{\partial \Psi}{\partial \vec{\alpha}}[\vp](\vs) &
				\frac{\partial \Psi}{\partial W}[\vp](\vs) &\frac{\partial \Psi}{\partial \vec{\theta}}[\vp](\vs)
			\end{pmatrix}^T\vh\\
			\text{ for all } \vh = \begin{pmatrix} \vh_{\vec{\alpha}} & {\vec h}_W & \vh_{\vec{\theta}}
			\end{pmatrix}^T \in \R^\dimlimit \text{ and } \vs \in \R^m.
		\end{aligned}
	\end{equation}
	The convergence of Newton's method also requires 2nd derivatives to $\Psi$, which are written down here for the sake of completeness: Let $j_1,j_2=1,\ldots,\noc$, $k_1,k_2 = 1,\ldots,m$, then it follows that
	\begin{equation}\label{eq:ca_dd}
		\begin{aligned}
			\frac{\partial^2 \Psi}{\partial \alpha_{j_1} \partial \alpha_{j_2}}(\vs) &= 0,\\
			\frac{\partial^2 \Psi}{\partial \alpha_{j_1} \partial w_{j_2,k_1}}(\vs) &= \sigma' \left(\sum_{i=1}^m w_{j_2,i} x_i +\theta_{j_1} \right) x_{k_1} \delta_{j_1=j_2}, \\
			\frac{\partial^2 \Psi}{\partial \alpha_{j_1} \partial \theta_{j_2}} (\vs)&= \sigma' \left(\sum_{i=1}^m w_{j_2,i} x_i +\theta_{j_1} \right) \delta_{j_1=j_2}, \\
			\frac{\partial^2 \Psi}{\partial w_{j_1,k_1} \partial w_{j_2,k_2}} (\vs)&=
			\alpha_{j_1} \sigma'' \left(\sum_{i=1}^m w_{j_2,i} x_i +\theta_{j_1} \right) x_{k_1}x_{k_2} \delta_{j_1=j_2}, \\
			\frac{\partial^2 \Psi}{\partial w_{j_1,k_1} \partial \theta_{j_2}}(\vs) &=
			\alpha_{j_1} \sigma'' \left(\sum_{i=1}^m w_{j_2,i} x_i +\theta_{j_1} \right) x_{k_1} \delta_{j_1=j_2}, \\
			\frac{\partial^2 \Psi}{\partial \theta_{j_1} \partial \theta_{j_2}}(\vs) &= \alpha_{j_1} \sigma'' \left(\sum_{i=1}^m w_{j_2,i} x_i +\theta_{j_1} \right)\delta_{j_1=j_2},
		\end{aligned}
	\end{equation}
	where $\delta_{a=b} = 1$ if $a=b$ and $0$ else, that is the Kronecker-delta.
\end{lemma}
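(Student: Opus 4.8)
The plan is to carry out the computation in two stages: first verify the pointwise formulas by elementary differentiation, and then upgrade these pointwise identities to genuine Fréchet derivatives of the map $\vp \mapsto \Psi[\vp] \in \X = L^2(]0,1[^m)$, the second stage being where the boundedness hypotheses on $\sigma$, $\sigma'$, $\sigma''$ actually enter.

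First I would fix $\vs \in ]0,1[^m$ and regard $g_\vs(\vp) := \Psi[\vp](\vs) = \sum_{j=1}^\noc \alpha_j \sigma(\vw_j^T\vs + \theta_j)$ as a scalar function of the finite-dimensional variable $\vp \in \R^\dimlimit$. Since $\sigma$ is twice differentiable and each argument $\vw_j^T\vs + \theta_j$ depends affinely on $\vp$, the sum and chain rules give the first partials directly: differentiating in $\alpha_\ell$ only the $\ell$-th summand contributes, leaving the factor $\sigma(\vw_\ell^T\vs + \theta_\ell)$, which is \autoref{eq:ca_d1}; differentiating in $w_{\ell,i}$ picks out $\partial(\vw_j^T\vs + \theta_j)/\partial w_{\ell,i} = \delta_{\ell=j}\, s_i$, so only $j=\ell$ survives and one obtains $\alpha_\ell\, \sigma'(\vw_\ell^T\vs + \theta_\ell)\, s_i$, i.e.\ \autoref{eq:ca_d2}; the $\theta_\ell$-derivative is the same with $s_i$ replaced by $1$, giving \autoref{eq:ca_d3}. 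Repeating the differentiation yields the second-order table \autoref{eq:ca_dd}: the $\alpha_{j_1}\alpha_{j_2}$-derivative vanishes because $g_\vs$ is linear in each $\alpha$; every mixed derivative carries a Kronecker factor $\delta_{j_1=j_2}$ because distinct neurons share no parameters; and the purely $\vw$/$\theta$ second derivatives produce $\sigma''$ together with the corresponding monomial in the components of $\vs$.

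Next I would verify that each pointwise expression defines an element of $\X = L^2(]0,1[^m)$ and that the vector they assemble is the Fréchet derivative $D\Psi[\vp]$. Membership in $L^2$ is immediate: every displayed derivative is a finite linear combination of terms of the form $c\,\sigma^{(r)}(\vw_\ell^T\vs + \theta_\ell)$, $c\,\sigma^{(r)}(\cdot)\,s_i$, or $c\,\sigma^{(r)}(\cdot)\,s_{k_1}s_{k_2}$ with $r \in \{0,1,2\}$; by hypothesis $\sigma^{(r)}$ is uniformly bounded, and the polynomials $s_i$, $s_{k_1}s_{k_2}$ are bounded on the cube $]0,1[^m$, so each term is a bounded function on a set of finite measure, hence square integrable. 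To identify the Fréchet derivative, I would take a perturbation $\vh = (\vh_{\vec{\alpha}}, \vec h_W, \vh_{\vec{\theta}}) \in \R^\dimlimit$ and estimate, pointwise in $\vs$, the first-order Taylor remainder of $t \mapsto \Psi[\vp + t\vh](\vs) - \Psi[\vp](\vs) - t\,(D\Psi[\vp](\vs))^T \vh$ by $C\, t^2 \norm{\vh}^2$, using the mean-value form of Taylor's theorem together with the uniform bound on $\sigma''$ and the boundedness of $\vs \in ]0,1[^m$; integrating this bound over the cube shows the remainder is $o(t)$ in the $L^2$-norm, which is exactly Fréchet differentiability and simultaneously establishes the block form \autoref{eq:DG}. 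The second derivative $D^2\Psi[\vp]$ is handled the same way, now expanding to second order and controlling the remainder by the continuity (and uniform boundedness) of $\sigma''$ on the relevant bounded set of arguments.

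There is no genuine obstacle here; the only point requiring care is the passage from elementary pointwise differentiation to differentiability in the $L^2(]0,1[^m)$-norm, and this is precisely where the hypotheses that $\sigma$ and its first two derivatives are uniformly bounded (as satisfied by $\tanh$) are used: they furnish, via Taylor's theorem, a pointwise remainder estimate with a constant independent of $\vs$, so that integration over the finite-measure cube converts the pointwise estimate into the required norm estimate. The remaining bookkeeping — tracking the index $\ell$ singled out by each partial derivative and the Kronecker factors — is routine once one observes that distinct neurons occupy disjoint blocks of the parameter vector $\vp$.
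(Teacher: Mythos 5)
Your proposal is correct and follows essentially the same route as the paper, whose proof consists of the single line that the formulas follow from elementary calculations. The additional care you take in passing from the pointwise partial derivatives to genuine Fr\'echet differentiability in $L^2(]0,1[^m)$ --- using the uniform bounds on $\sigma$, $\sigma'$, $\sigma''$ and the finite measure of the cube to control the Taylor remainder in norm --- is exactly the step the paper leaves implicit, and it is where the stated hypotheses are actually used.
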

\begin{proof}
	The proof follows from elementary calculations.
\end{proof}
The notation of directional derivatives with respect to parameters might be confusing. Note, that for instance
$\frac{\partial \Psi}{\partial \theta_s} [\vp](\vs)$ denotes a directional derivative of the functional $\Psi$ with respect to the variable $\theta_s$ and this derivative is a function, which depends on $\vs \in ]0,1[^m$.

\begin{remark}
	\begin{itemize}
		\item 	In particular \autoref{eq:ca_dd} shows that if $\sigma$ is twice differentiable that
		\begin{equation} \label{eq:d2}
			\begin{aligned}
				\vh^T D^2 \Psi [\vp](\vs) \vh
				\text{ is continuously dependent on } \vp \text{ for fixed }\vs.
			\end{aligned}
		\end{equation}Here we use the abbreviation $\vh = \begin{pmatrix} \vh_{\vec{\alpha}} & \vh_W & \vh_{\vec{\theta}} \end{pmatrix}^T$.
		\item
		We emphasize that under the assumptions of \autoref{le:sigma} the linear space (for fixed $\vp$) is a subspace of $L^2((0,1)^m)$,
		\begin{equation*}
			\range{D\Psi[\vp]} = \set{D\Psi [\vp] \vh : \vh = \begin{pmatrix} \vh_{\vec{\alpha}} & \vh_W & \vh_{\vec{\theta}} \end{pmatrix} \in \R^\dimlimit} \subseteq L^2((0,1)^m).
		\end{equation*}
		\item In order to prove convergence of the Gauss-Newton method\footnote{We call it Gauss-Newton method, because the linearization does not have full rank, and the inverse operator in Newton's method has to be replaced by a Moore-Penrose inverse.}  (defined in \autoref{eq:newton}) by applying \autoref{th:deupot92}, we have to prove that $\Psi$ is a Lipschitz-continuous immersion. The only non-trivial property of the Newton convergence condition is that
		\begin{equation} \label{eq:co1}
			\partial_{h_k} \Psi[\vp], \quad k=1,\ldots,\dimlimit
		\end{equation}
		are linearly independent functions. That this is in fact true remains open as a conjecture so far (see \autoref{re:317}).
	\end{itemize}
\end{remark}
In the following we survey some results on linear independence with respect to the coefficients $\vp = (\alpha_j,\vw_j,\theta_j)_{j=1,\ldots,\noc} \in \R^\dimlimit$, of the \ALNN basis functions (see \autoref{eq:classical_approximation})
\begin{equation*}
	\vs \in \R^m \mapsto \sigma \left(\vw_j^T \vs  + \theta_j \right)\,,\quad j=1,\ldots,\noc,
\end{equation*}
which equals the functions $\vs \to \frac{\partial \Psi}{\partial \alpha_j}[\vp](\vs)$, that are the derivatives of $\Psi$ with respect to the first $\noc$ variables of the neural network operator. This is actually only a necessary but not sufficient condition for convergence of Newton's method.

For analyzing convergence of a Newton's method for solving \autoref{eq:op_disc} we need to calculate the partial derivatives of $\opo$ first:
\begin{equation*}
	\begin{aligned}
		\frac{\partial \opo}{\partial \beta_{j}^{(\ell)}} (\vp) & = \sigma (\vw_j^T \vt_\rho +\theta_j)\,,\;\;\;\ell=1,\ldots,\no,\;
		j=1,\ldots,N_j \\
		\frac{\partial \opo}{\partial \vw_{j}} (\vp) & = \beta_j^{(\ell)} \sigma' (\vw_j^T \vt_\rho +\theta_j) \vt_\rho \in \R^n \,,\;\;\;j=1,\ldots,N_j,\\
		\frac{\partial \opo}{\partial \theta_{j}} (\vp) & = \beta_j^{(\ell)} \sigma' (\vw_j^T \vt_\rho +\theta_j) \,,\;\;\;j=1,\ldots,N_j\;.
	\end{aligned}
\end{equation*}
Convergence of Newton's method is unproved so far: The problem consists in verifying critical assumptions of convergence theorems for Newton's method, such as \autoref{th:deupot92}: One important assumption there is that $\opo'(\vp)$ has full rank in a neighborhood of the solution $\vec{p}^\dag$. We refer to \autoref{co:newton1}, where we conjecture that the Fr{\`e}chet-derivative $\opo'$ of $\opo$ has \emph{locally} full rank. Other assumptions from \autoref{th:deupot92}, like Lipschitz-continuity of $\opo'$, are rather trivial if $\sigma$ is Fr{\`e}chet-differentiable.

\section{Newton methods for computing parametrizations of a deep network} \label{sec:cond_deep}
We consider solving \autoref{eq:op_disc} where $\opo$ is the deep-network structure from \autoref{sec:deep} (see also \autoref{de:network}), that is 
\begin{equation*}
	\opo(\vp)
	:= \sigma \left( \sum_{j=1}^{\tt M} \beta_j \sigma (\vw_j^{(\ell)}{}^T \vt_\rho +\theta_j^{(\ell)}) \right) \in \R^Q\;.
\end{equation*}
Here $\beta_j$, $j=1,\ldots,{\tt M}$ is given, that means we have a customized network. Then the parametrization vector 
is given by  
\begin{equation*}
	\vp = (\underbrace{\vw_j^{(\ell)}}_{\in \R^m},\underbrace{\theta_j^{(\ell)}}_{\in \R})_{\tiny \begin{array}{c} \ell=1,\ldots,\no\\ j=1,\ldots,{\tt M}  \end{array}} \in \R^\dimlimit \text{ with } 
	\dimlimit=2{\tt M}(m+1)\;.
\end{equation*}
Note that in the context of the example shown in \autoref{fig:motiv_eg_2} the parameter $\vp$ is vector of the parameters of the three lines bounding the triangle.  
With the abbreviations
\begin{equation*}
	\begin{aligned}
		z_j^{(\ell)} &= \beta_j \sigma (\vw_j^{(\ell)}{}^T \vt_\rho +\theta_j^{(\ell)})\,,\\
		\tau_j^{(\ell)} &= \vw_j^{(\ell)}{}^T \vt_\rho +\theta_j^{(\ell)}\,,
	\end{aligned}
\end{equation*} 
we calculate the derivatives of $\opo$ with respect to $\vp$ for $j=1,\ldots,{\tt M}$ and $\ell =1,\ldots,\no$: 
\begin{equation*}
	\begin{aligned}
		\frac{\partial F}{\partial \vw_j^{(\ell)}}(\vp) &= \sigma'(z_j^{(\ell)})  \beta_j \sigma'(\tau_j^{(\ell)})\vec{t}_\rho \\
		\frac{\partial F}{\partial \theta_j^{(\ell)}}(\vp) &= \sigma'(z_j^{(\ell)})  \beta_j \sigma'(\tau_j^{(\ell)})
	\end{aligned}
\end{equation*}


These derivative expressions highlight how the depth and width of the network influence the sensitivity of $\opo$ to parameter variations. The product structure $\sigma'(z_j^{(\ell)})  \beta_j \sigma'(\tau_j^{(\ell)})\vec{t}_\rho$ and $\sigma'(z_j^{(\ell)})  \beta_j \sigma'(\tau_j^{(\ell)})$ indicates that gradients can vanish or explode depending on the activation regime, directly affecting the conditioning of the Newton system. Understanding this structure is therefore essential for designing robust initialization and regularization strategies.

\section{Linear independence of activation functions and its derivatives}
The universal approximation theorems \cite{Cyb89,HorStiWhi89,Hor91} do not allow to conclude
that neural networks functions, as in \autoref{eq:classical_approximation}, are linearly independent. In fact the basis representation with neural networks might not be unique. Linear independence is a non-trivial aspect:
We recall a result from \cite{Lam22} from which linear independence of a shallow neural network operator,
as defined in \autoref{eq:classical_approximation}, can be deduced for a variety of activator functions.
Similar results on linear independence of shallow network functions based on sigmoid activation functions have been stated in \cite{TamTat97,Gua03}, but the discussion in \cite{Lam22} raises questions on the completeness of the proofs.
In \cite{Lam22} it is stated that all activation functions from the \emph{Pytorch library} \cite{PasGroMasLerBra19} are linearly independent with respect to \emph{almost all} parameters $\vw$ and $\theta$.
\begin{theorem}[from \cite{Lam22}] \label{th:lam_main} For the following activation functions,  \HS (\autoref{de:hardsigmoid}), \ReLU (\autoref{eq:ReLU}),  \Sig (\autoref{de:sigmoid}),  $\tanh$, and the \emph{PyTorch} functions \SELU (see \cite{PasGroMasLerBra19}),
	the according \ALNN{}s (defined in \autoref{eq:classical_approximation})
	formed by \emph{randomly generated} vectors $(\vw_j,\theta_j)$, $j=1,\ldots,\noc$ are linearly independent.
\end{theorem}

\begin{remark} \label{re:nonunique}
	\begin{enumerate}
		\item For the analysis of Newton's method linear independence \emph{for almost all} parameters is not sufficient. Instead we require that for all parameters $\vp$ in a \emph{local} neighborhood of the solution $\vp^\dagger$ the according neural network functions are linearly independent. So there is a lack of results in neural network theory on linear independence of neural network functions for proving convergence of Gauss-Newton methods. However, in order to apply neural network theory we need in fact weaker results: The discrepancy can in fact the visualized as follows:
		\bigskip\par
		\fbox{
			\parbox{0.90\textwidth}{
				\begin{center}
					local everywhere (Gauss-Newton) $\Leftrightarrow$ global almost everywhere independent neural networks (see \cite{Lam22}).
		\end{center}}}
		\item \autoref{th:lam_main} states that the functions $\frac{\partial \Psi}{\partial \alpha_s}$ (taking into account \autoref{eq:ca_d1}) are linearly independent for \emph{almost all} parameters $(W,\vec{\theta}) \in \R^{\noc \times m} \times \R^{\noc}$. Note, that $W = (\vw_j)_{j=1}^\noc$ summarizes all vectorial weights.
		In other words, the first block of the matrix is $D \Psi$ in \autoref{eq:DG} consists of functions, which are linearly independent for almost all parameters $(W,\vec{\theta})$.
		For our Gauss-Newton convergence results to hold we need on top that the functions $\frac{\partial \Psi}{\partial w_{s,t}}$ and $\frac{\partial \Psi}{\partial \theta_s}$
		from the second and third block (see \autoref{eq:ca_dd}) are linearly independent within the blocks, respectively, and also across the blocks of the matrix $W$. So far this has not been proven but can be conjectured (see \autoref{re:317}).
		\item For $\sigma = \tanh$ we also have \emph{obvious symmetries} in $\Psi$ because
		\begin{equation} \label{eq:antisymmetric}
			\begin{aligned}
				\sigma' \left(\vw_j^T \vs +\theta_j \right) &= \sigma' \left(-\vw_j^T \vs -\theta_j \right) \\
				& \text{ for every } \vw_j \in \R^m, \theta_j \text{ and } j \in \set{1,\ldots,\noc}\;.
			\end{aligned}
		\end{equation}
		Consequently, for the function $\Psi$ from \autoref{eq:classical_approximation} we have according to \autoref{eq:ca_d3} that
		\begin{equation}\label{eq:antisym}
			\begin{aligned}
				\frac{\partial \Psi}{\partial \theta_s} [\vec{\alpha},W,\vec{\theta}](\vs)
				&=  \alpha_s \sigma'(\vw_j^T \vs + \theta_j)
				=  \alpha_s \sigma'(-\vw_j^T \vs - \theta_j) \\ &=
				\frac{\partial \Psi}{\partial \theta_s}[\vec{\alpha},-\vw,-\vec{\theta}](\vs),
			\end{aligned}
		\end{equation}
		and consequently $\frac{\partial \Psi}{\partial \theta_s}[\vec{\alpha},\vw,-\vec{\theta}]$ and
		$\frac{\partial \Psi}{\partial \theta_s}[\vec{\alpha},-\vw,-\vec{\theta}]$ are linearly dependent.
	\end{enumerate}
\end{remark}
The convergence results of Gauss-Newton method can be extended to 2nd decomposition case operator (see \autoref{ss:decomp2}).
\begin{theorem}[2nd decomposition case] \label{th:newtonNN} Let $L:\X = L^2((0,1)^m) \to \Y$ be a linear, bounded operator with trivial nullspace and dense range. Moreover, let $\opo = L \circ \Psi$, where $\Psi: \dom{\Psi} \subseteq \R^{(m+2)\noc} \to \X$ is a shallow neural network operator generated by an activation function $\sigma$.
	Let $\vp^0 \in \dom{\Psi}$ be the starting point of the Gauss-Newton iteration \autoref{eq:newton} and
	let $\vp^\dagger \in \dom{\Psi}$ be a solution of \autoref{eq:sol}, which satisfy \autoref{eq:h}.
	Then the Gauss-Newton iterations are locally, that is if $\vp^0$ is sufficiently close to $\vp^\dagger$, and quadratically converging.
\end{theorem}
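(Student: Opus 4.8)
The plan is to verify the hypotheses of the abstract Gauss--Newton convergence theorem (\autoref{th:deupot92}) for the composite map $\opo = L \circ \Psi : \dom{\Psi} \subseteq \R^{(m+2)\noc} \to \Y$ in a neighbourhood of the solution $\vp^\dagger$. Three ingredients are needed: Lipschitz continuity of the Fr\'echet derivative $D\opo$, the immersion property (that $D\opo[\vp]$ is injective with closed range for all $\vp$ near $\vp^\dagger$), and the attainability/source condition \autoref{eq:h} on $\vp^\dagger$, which is an explicit hypothesis of the theorem and which I would simply carry along.

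First I would record that $D\opo[\vp] = L\, D\Psi[\vp]$, so by \autoref{le:sigma} the Jacobian of $\opo$ is $L$ applied columnwise to the functions $\partial_{\alpha_\ell}\Psi$, $\partial_{w_{\ell,i}}\Psi$, $\partial_{\theta_\ell}\Psi$ from \autoref{eq:ca_d1}--\autoref{eq:ca_d3}. Since $\sigma$ is twice continuously differentiable with uniformly bounded derivatives (e.g. $\sigma = \tanh$), the second-derivative formulas \autoref{eq:ca_dd} show that $D\Psi$ is Lipschitz continuous on bounded subsets of $\R^\dimlimit$ with values in $L^2(]0,1[^m)$; composing with the bounded linear operator $L$ preserves Lipschitz continuity, so $D\opo$ is Lipschitz near $\vp^\dagger$. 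Closed range is automatic because $\operatorname{range} D\opo[\vp]$ is finite-dimensional, hence so is $\operatorname{range} D\opo[\vp]^* D\opo[\vp]$, and the Moore--Penrose inverse in \autoref{eq:newton} is well defined there.

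Second --- and this is the real content --- I would establish that $D\opo[\vp]$ is injective for every $\vp$ in a ball $B$ around $\vp^\dagger$. Because $L$ has trivial nullspace, $L\, D\Psi[\vp]\vh = 0$ forces $D\Psi[\vp]\vh = 0$, so injectivity of $D\opo[\vp]$ is equivalent to linear independence in $L^2(]0,1[^m)$ of the $\dimlimit$ partial derivatives listed in \autoref{le:sigma}. For the first block of \autoref{eq:DG} this is \autoref{th:lam_main}, but only for \emph{almost all} $(W,\vec\theta)$; here we need the stronger statement that the full set of columns --- including the $\partial_{w_{\ell,i}}\Psi$ and $\partial_{\theta_\ell}\Psi$ blocks, independence within them and across all three blocks --- is linearly independent at \emph{every} point of the neighbourhood $B$. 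This is exactly the local-everywhere linear-independence conjecture (\autoref{re:317}), and in addition $\dom{\Psi}$ must be chosen so that $\vp^\dagger$ avoids the manifest symmetries of $\tanh$ recorded in \autoref{eq:antisymmetric}, on which columns become dependent. I expect this step to be the principal obstacle: the argument is complete only modulo that conjecture, so the proof ultimately reduces, as in \autoref{ss:decomp2}, to the same open linear-independence question that already obstructs the direct Newton analysis of $\Psi$.

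Finally, with Lipschitz $D\opo$ on $B$, the immersion property on $B$, and the source condition \autoref{eq:h} at the solution $\vp^\dagger$ of \autoref{eq:sol}, the Newton--Mysovskii-type hypotheses of \autoref{th:deupot92} are all met. Its conclusion then yields: for $\vp^0 \in B$ sufficiently close to $\vp^\dagger$, the Gauss--Newton iterates \autoref{eq:newton} remain in $B$ and converge quadratically to $\vp^\dagger$, which is the assertion of the theorem.
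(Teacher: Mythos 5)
Your proposal follows essentially the same route the paper intends: it defers the detailed argument to \cite{SchHofNas23}, but the supporting machinery it builds (\autoref{le:dec} and \autoref{th:deupot92dg}) is exactly your scheme of verifying that $\Psi$ is a Lipschitz-differentiable immersion and then invoking the abstract Gauss--Newton theorem for $N = L \circ \Psi$ --- note only that the theorem you want to cite at the end is \autoref{th:deupot92dg}, not the finite-dimensional \autoref{th:deupot92}, since the latter requires invertibility of the derivative rather than a Moore--Penrose inverse. You also correctly isolate the genuine obstruction, the everywhere-local linear independence of the columns of $D\Psi[\vp]$, which the paper itself concedes is only conjectural (\autoref{re:317} and the remark following the theorem), so your proof is complete to exactly the same extent as the paper's.
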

The proof can be found in \cite{SchHofNas23}.
\begin{remark}
	We have shown that a nonlinear operator equation, where the operator is a composition of a linear bounded operator and a shallow neural network operator (2nd decomposition case \autoref{ss:decomp2}), can be solved with a Gauss-Newton method with guaranteed local convergence in the parameter space. So far, the proof is not complete, because it contains assumptions on the linear independence of neural network basis functions.
\end{remark}

\section{Further reading}
Newton's method has been extensively studied in the literature across various contexts and involving different types of inverses, such as the Moore-Penrose inverse, outer inverses, and others. See for instance \cite{DeuHei79,DeuHoh91,DeuHoh93,DeuPot92,Hae86,KanAki64,NasChe93,Ort68,OrtRhe70,Schw79}.
In machine learning gradient descent methods are typically used for parameter learning. However, our experience in calculating parametrizations $\vp$ of neural network functions, which is a learning process on its own, clearly indicates a preference for Newton's methods (see \cite{SchHofNas23}). The theory of \cite{Bla96,DeuEngSch98,Kal97,Kal98b,KalNeuSch08} applies to Newton-type methods for solving inverse problems if the nonlinear operator satisfies the 2nd decomposition case (see \autoref{ss:decomp2}).
Newton's method is quite noise sensitive and therefore it requires some stabilization, such as iterative regularization \cite{BakGon89,BakGon94,Bak92} or trust region methods (see \cite{Han97}).
Currently, we are extending the theory of linear operator learning to decomposition cases (see \cite{LiSch25_report}).

\chapter{Regularization parameter selection} \label{ch:par}
From the regularization community there have been developed a-priori, a-posteriori, heuristic and learned parameter choice  criteria, both in the deterministic and stochastic setting. For the sake of simplicity of presentation we concentrate on parameter choice of Tikhonov regularization, that is of $\alpha$, although the strategies apply to other regularization techniques in a similar manner. We differ between data-driven and deterministic regularization parameter choice. The data-driven prior uses in addition prior experience.  

\section{Parameter selection (deterministic)}
We start with a review on deterministic parameter choice strategies, meaning that the regularization parameter $\alpha$ is determined \emph{without} prior experience from experiments with training samples.

\subsection{A-priori regularization parameter selection} \label{sec:priori}
A-priori strategies determine the regularization parameter from knowledge of the noise-level $\delta$ and properties of the desired solution, typically the minimum norm solution $\xdag$ (defined in \autoref{de:xMNS}). Examples of such strategies have been outlined before in \autoref{eq:noise_cond} and \autoref{eq:order}. There exist a variety of such results in the literature on nonlinear inverse problems (see \cite{EngHanNeu96,SchGraGroHalLen09}).

\subsection{A-posteriori regularization parameter selection} \label{sec:post}
The most famous a-posteriori principle is \emph{Morozov's discrepancy principle}\index{stopping criterion!Morozov}, (see \cite{Mor84}) where the regularization parameter in Tikhonov regularization is chosen as the largest parameter $\alpha$ such that 
\begin{equation}\label{eq:Moro}
	\norms{\op{\xad}-\yd}_\Y \leq \tau \delta
\end{equation}
for some specified $\tau > 1$. This is the same strategy as implemented for iterative regularization algorithms in \autoref{eq:disc}. Compared to a-priori strategies the implementation is more demanding since regularized solutions for many parameters have to be calculated. Other a-posteriori strategies which have been analyzed in the literature for nonlinear inverse problems are for instance \cite{SchEngKun93}, which is based on the Engl-Gfrerer principle for linear ill--posed problems \cite{EngGfr88} or \cite{LuPerRam07}, which is based on Lepski-type balancing principle \cite{MatPer03}.

\section{Heuristic parameter selection without noise information}
We consider regularization parameter selection for Tikhonov regularization (as defined in \autoref{eq:Tik}). The technique can be applied to other regularization methods as well, but we omit it here for the sake of simplicity of presentation. There have been proposed quite a number of such techniques: We concentrate here on two:
\begin{enumerate}
	\item Let $\opo :\X \to \R^n$ be a linear operator with finite-dimensional range. The \emph{generalized cross-validation}\index{generalized cross-validation} \cite{Wah90} consists in selection the global minimum $\hat{\alpha}$ of the functional
	\begin{equation}
		\label{eq:cgv}
		\psi(\alpha,\yd) = \frac{1}{\rho(\alpha)} \norms{\opo \xad -\yd}_\Y\,,
	\end{equation}
    where 
    \begin{equation*}
    	\rho(\alpha) = \frac{\alpha}{n} \text{tr}\left((\alpha I + \opo^* \opo)^{-1}\right)\,,
    \end{equation*}
    where $\text{tr}$ denotes the trace of the operator. Note that this parameter criterion has been developed originally for linear operators $\opo :\R^m \to \R^n$. The difficulty in carrying over the theory to nonlinear problems is already formal that a surrogate for $\rho$ needs to be found.
    \item It was observed numerically in \cite{Han92,HanOle93} that the graph 
    \begin{equation*}
    	\set{\left(\norms{\op{\xad}-\yd}_\Y,\norms{\xad}_X \right): \alpha \in \R_+}
    \end{equation*}
    has a characteristic kink, literally an $L$, at the optimal regularization parameter $\alpha$. This is commonly called the $L$-\emph{curve method}.\index{stopping criterion!$L$-curve method}
\end{enumerate}
Various kinds of \emph{heuristic strategies} have been proposed in the literature (see \cite{HanRau96,Reg21,Luk93,KinRai19}).
Against all these methods \emph{Bakushinskii's veto}\index{Bakushinskii's veto} \cite{Bak84} is directed, which states that estimates of the noise-level are required to implement a stable regularization method. A detailed analysis of the Bakushinskii veto for the $L$-curve criterion can be found in \cite{EngGre94}. Since the arguments are for an infinite dimensional setting the counter argument is that the selection criteria are implemented for finite dimensional inverse problems, which in turn can be counter argued that this only can provide a solution of the infinite dimensional problem if the projection error of the solution on the finite dimensional space is known (see \cite{Sch98b}). 

\section{Parameter selection (data driven)} \label{ss:psl}
In the following we discuss parameter selection criteria based on supervised data $\mathcal{S}_\no$, as defined in \autoref{eq:expert_information}\footnote{As stated above we are flexible to consider the elements of $\mathcal{S}_\no$ starting at index $0$ or $1$.}. But we even allow the items $\x^{(\ell)},\y^{(\ell)}$ of the training data $\mathcal{S}_\no$ to be samples of the random variables ${\textsf x}$, ${\textsf y}$ related by \autoref{eq:op_eps}, such that $\y^{(\ell)}$ is not necessary the exact free term of the equation $\y^{(\ell)} = \op{\x^{(\ell)}}$.

We follow two approaches:
\begin{enumerate}
	\item Empirical risk minimization and 
	\item regularization functional learning.
\end{enumerate}

\subsection{Empirical risk minimization}
In empirical risk minimization a regularization parameter $\alpha \in (0,\infty)$ is learned from training data $\mathcal{S}_\no$ and this parameter is used afterwards in applications. 
 \begin{definition}\label{de:emprisk}
	Let $R:\X \times \X \to \R \cup \set{+\infty}$ be a function, which is called the \emph{loss function}\index{loss function}. Then the \emph{empirical risk}\index{empirical risk} related to $\mathcal{S}_\no$ is defined as 
	\begin{equation} \label{eq:emp_risk}
		{\mathcal{\Hat{R}}_{\alpha,\no}} = \frac{1}{\no}\sum_{\ell = 1}^{\no} R[\x^{(\ell)},\xa^{(\ell)}],
	\end{equation}
	where $\xa^{(\ell)}$ is a minimizer of the functional $\x \mapsto \mathcal{T}_{\alpha,\y^{(\ell)}}[\x]$. 

The \emph{empirical risk minimization principle}\index{empirical risk!minimization principle} consists in determining a regularization parameter $\hat{\alpha}$, which minimizes the empirical risk 
\begin{equation} \label{eq:erm}
	\hat{\alpha}  = \argmin_{\alpha \in \mathcal{A}_q} \mathcal{\Hat{R}}_{\alpha,\no}\,,
\end{equation}
from an admissible set of distinct regularization parameters
\begin{equation} \label{eq:admissible}
	\mathcal{A}_q = \set{\alpha_j: j=1,\ldots,q}\;.
\end{equation} 
\end{definition}
This strategy has been also discussed, for example, in \cite{HabTen03,AfkChuChu21} and 
the performance of the regularization parameter choice rule as in \autoref{eq:erm} has been analyzed in \cite{ChiVitMolRosVil24} in a stochastic setting. 

In particular, if the loss function is defined via squared norm $\norm{\cdot}_\X^2$, and the set $\mathcal{A}_q$ consists of a geometric sequence, such that $\alpha_j = \alpha_1 (\frac{\alpha_q}{ \alpha_1})^{\frac{j-1}{q-1}}$ and the noise level $\delta$ in \autoref{eq:exp_eps} is within the interval $(\alpha_1,\alpha_q)$, then under the assumptions of \autoref{th:Tikhonov_risk} and some additional, mostly technical assumptions, the main result of \cite{ChiVitMolRosVil24} states that with probability at least $1-\eta$
\begin{equation*} 
	\mathbb{E} \left[\norm{{\textsf x}^* - \textsf{x}_{\hat{\alpha}}}^2 \right] \le C \left(\delta \left(\frac{\alpha_q}{ \alpha_1}\right)^{\frac{1}{q-1}} + \frac{1}{\no} \log \left(\frac{q}{\eta} \right) \right)\;.
\end{equation*}

This means that for sufficiently large number $\no$ of examples in supervised data set $\mathcal{S}_\no$ the regularization parameter choice rule $\alpha = \hat{\alpha}$ allows for automatic achievement of expected accuracy that corresponds to the optimal order $\mathcal{O}(\delta)$ in \autoref{eq:bound_chirinos} (see \cite{ChiVitMolRosVil24}). 

\subsection{Regularization functional parameter}
Her the strategy assumes that for every pair $(\x^{(\ell)},\y^{(\ell)}$, $\ell = 0, 1,\ldots\no$, we have predetermined the optimal regularization parameter $\alpha^{(\ell)}$, $\ell=0,1,\ldots,\no$ (for instance by visual inspection comparing $\x^{(\ell)}$ and $\x_\alpha^{(\ell)}$).
Afterwards one learns a functional $\alpha: \Y \to [0,\infty[$ with the methods developed in \autoref{se:nful}, which is used to choose the regularization parameter for arbitrary data $\y \in \Y$.

\begin{example} \label{ex:rkh}
For instance, in the present context, the tool presented in \autoref{ex:gene} can be simplified allowing the following form of the learned functional

\begin{equation}\label{eq: closed_form_parameter }
		{\alpha} = {\alpha}[\y] = \sum_{\ell = 1}^{\no} {\alpha}^{(\ell)} {\tt c}_{\ell}[\y] ,
	\end{equation}
where the vector $\vec{\tt c}[\y]= ({\tt c}_{1}[\y],\ldots, {\tt c}_{\no}[\y])^T $ admits the representation
	\begin{equation}\label{eq:c_for_parameter}
		\vec{\tt c}[\y] = \no^{-1}(\lambda \id + \no^{-1} \mathcal{K} )^{-1} Y^\no[\y] ,
	\end{equation}
and, similar to \autoref{ex:gene}, the matrix $\mathcal{K}$ is formed by the values of the chosen scalar-valued kernel $K = K[\y,\y']$,
\begin{equation*}
    	\mathcal{K}_{\ell,j} = K[\y^{(\ell)},\y^{(j)}] \text{ with } 1 \le \ell,j \le \no,
    \end{equation*}	
while the vector $Y^\no[\y]$ depends on previously unseen input $\y$ as follows

\begin{equation*}
	  Y^\no[\y]= (K[\y,\y^{(1)}],\ldots, K[\y,\y^{(\no)}])^T\;.
	\end{equation*}
From \autoref{eq: closed_form_parameter } and  \autoref{eq:c_for_parameter} one can see that the learned functional ${\alpha}[\y]$ also depends on a regularization parameter $\lambda$ that needs to be turned properly. But this turning issue can be addressed with the use of expert information in the way described, for example, in \cite{dinu2023addressing}.
\end{example}
    
  \begin{remark}
  	In the linear, finite-dimensional setting, when $\X, \Y$ are Euclidean spaces and the forward operator is 
  	defined by a matrix $\opo$, i.e., $\op{\x}= \opo \x$, the strategy was explored in \cite{VitForNau22}. 
  	If $\Y$ is a Hilbert space, polynomial or nonlinear functional data regression \cite{WanChiMul16}, \cite{HolPer24} can be employed as a tool to learn the regularization parameter.
  \end{remark}

\section{Further reading}  
The methods of parameter learning from \autoref{ss:psl} for Tikhonov regularization can potentially be used for a variety of regularization methods, such as iterative methods. In many publications parameter selection criteria have only been applied for regularization methods for solving linear inverse problems. An exception is \cite{ChiVitMolRosVil24}, which applies to nonlinear inverse problems. A recent survey on learning methods is \cite{HabHol23_report}.

Bilevel optimization for parameter learning is a natural strategy for learning regularization parameters. It consist in subsequently optimizing for the parameter and the regularized solution. It has been considered for instance in \cite{KunPoc13,ChuEsp17,HolKunBar18,CalCaoCarSchoVal16,LosSchoVal16,LosSchoVal17} and also for learning of regularization functionals \cite{HabTen03}.

\chapter{Hybrid regularization}
\label{ch:Aspri paper}
In this chapter, we consider and analyze \emph{data-informed} regularization methods, which consist of using learned operators \index{regularization methods!data informed} as priors for solving \emph{physics informed} inverse problems.\index{inverse problems! physics informed}. The trust is in the physics-based model, and convergence is therefore proven to a solution of \autoref{eq:op}.
\section{Motivation}
A typical convergence result of a gradient descent method for solving a function equation\footnote{In this example, $F$ is a function. Thus, we use brackets $(\cdot)$ for the arguments, and $x \in \R$ is the argument.} $F(x)=0$ guarantees local convergence (see, for instance, \cite{QuaSacSal07}); this means that a good initial guess $x_1$ is required to assure convergence of Newton's iterations to the desired solution. A classical deterministic analysis of Newton's method cannot admit a possible second guess $x_2$ (see, for instance, \autoref{th:deupot92}). A potential remedy to this problem could be achieved through a stochastic implementation and an analysis based on averaging of initial guesses. However, this still does not employ the expert information $y_i= F(x_i)$, $i=1,2$.  The linear interpolation of the expert information $(y_i \approx f_i,x_i)$, $i=1,2$, takes into account all available expert information. In the example in \autoref{fig:example}, the gradient descent method, starting from the zero of the linear interpolation, converges to the expected local solution. Neither $x_1$ nor $x_2$ is a good initial guess, but the zero of the linear interpolation indeed is. This example shows the reasonability of the approach.
\begin{figure}[bht]
	\begin{center}
	\includegraphics[width=0.7\linewidth]{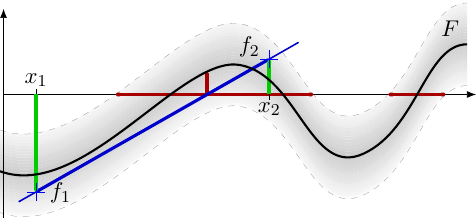}%
	\caption{The mathematical function $F$. The blue line is the linear interpolation of expert information, for which the zero is close to the most meaningful zero of $F$. The image shows an uncertainty region, which contains ``good'' expert information.}
	\label{fig:example}
	\end{center}
\end{figure}

\section[Regularization with surrogate operators]{Data-driven Tikhonov regularization complemented with surrogate operators} 
\label{sec:ddtr}
We continue with \autoref{se:Tik} and further analyze Tikhonov regularization. Now we make use of a learned operator $P:\X \to \Y$ from expert pairs $\mathcal{S}_\no$ (defined in \autoref{eq:expert_information}), which acts as a prior in the Tikhonov regularization method. 

\bigskip\par
\fbox{
	\parbox{0.90\textwidth}{
		\begin{center}
			In this section, we assume that the learned operator $P$ does not fully explain the data and is used only to support the reconstruction; that is, we consider a \emph{physics-informed} and \emph{data-driven} regularization method. 
	\end{center}}}
\bigskip\par
We therefore investigate the properties of minimizers of a \emph{hybrid Tikhonov functional}
\begin{equation} \label{eq:hybrid_tik}
	\mathcal{T}_{\alpha,\lambda,\y^\delta}[\x] = \norms{F[\x]-\y^\delta}^2 + \lambda \underbrace{\norms{P[\x]-\y^\delta}^2}_{\text{prior}}  +
	\alpha \underbrace{\norms{\x - \x^0}^2}_{\text{regularizer}}\;.
\end{equation}
For the sake of simplicity of notation, we leave out the specification of the norms and inner products whenever we think it will not disturb the precision of presentation.

In addition to \autoref{as:ip:weakly-closed} we assume now that the prior operator $P$ satisfies analogous conditions as $\opo$:
\begin{assumption} \label{as:ip:weakly-closed:l}
	The operator $P: \dom{F} \subset \X \to \Y$ is sequentially closed with respect to the weak topologies on $\X$ and $\Y$.
\end{assumption}
Analogously as in \autoref{se:Tik} the following results, extending \autoref{th:ip:well-posedness}, \autoref{th:ip:stability} and \autoref{th:ip:convergence} can be proven:
\begin{theorem}[Hybrid regularization with surrogate operators] \label{th:ip:well-posedness_l}
	Let $\opo$, $\dom{F}$, $\X$ and $\Y$  satisfy \autoref{as:ip:weakly-closed} and assume that $P$ satisfies \autoref{as:ip:weakly-closed:l}.
	
	Assume that $\y^\delta \in \Y$, and $\x^0 \in \X$.
	Then,
	\begin{enumerate}
		\item for every $\tilde{\y} \in \Y$ and every $\alpha >0$ and $\lambda >0$, $\mathcal{T}_{\alpha,\lambda,\tilde{\y}}$ attains a minimizer.
		\item Let $\y_k \in \Y$ converge to $\y^\delta$ and let $\x_k \in \argmin \; \mathcal{T}_{\alpha,\lambda,\y_k}$.
		Then $(\x_k)$ has a convergent subsequence. Every convergent
		subsequence converges to a minimizer of $\mathcal{T}_{\alpha,\lambda,\y^\delta}$.	
		\item Assume that \autoref{eq:op} has a solution in $\dom{F}$ and that
		$\alpha: (0,\infty) \to (0,\infty)$ satisfies \autoref{eq:noise_cond}. Moreover, we assume
		that $\lambda: (0,\infty) \to (0,\infty)$ is of the same order as $\alpha$, i.e., 
		\begin{equation*}
			\alpha \sim \lambda 
		\end{equation*} which means that there exist $c,C > 0$ such that 
		\begin{equation*}
		c \alpha(\delta) \leq \lambda(\delta) \leq C \alpha(\delta)\;.
		\end{equation*}
		Let the sequence $(\delta_k)$ of positive numbers converge to $0$,
		and assume that the data $\y_k:=\y^{\delta_k}$ satisfy $\norm{\y-\y_k} \leq \delta_k$.
	Let 
	\begin{equation*} 
		\xk \in \argmin \; \mathcal{T}_{\alpha(\delta_k),\lambda(\delta_k),\y_k}\;.
	\end{equation*}
	Then, 
	\begin{enumerate}
			\item $(\xk)_{k \in \N}$ has a convergent subsequence. The limit $\ol{\x}$ is a solution of \autoref{eq:op}.
            \item Let $\alpha = \lambda$. Then $\ol{\x}$ is a $(P,\x^0)$-solution, meaning that
            \begin{equation} \label{eq:mnslopo}
            	\begin{aligned}
            	~& \norms{\ol{\x}-\x^0}^2 + \norms{\lop{\ol{\x}}-\y}^2 \\
            	=& \min\set{\norms{\hat{\x}-\x^0}^2 + \norms{\lop{\hat{\x}}-\y}^2 : \hat{\x} \text{ solves } \autoref{eq:op}}.
            	\end{aligned}
            \end{equation}
            The $(P,\x^0)$-solution will again be denoted by $\xdag$.
            If in addition the $(P,\x^0)$-minimum norm solution is unique, then $\xk \to
        \xdag$.
	\end{enumerate}\end{enumerate}
\end{theorem}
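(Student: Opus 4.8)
The plan is to mirror the classical proofs of \autoref{th:ip:well-posedness}, \autoref{th:ip:stability} and \autoref{th:ip:convergence}, exploiting the fact that the hybrid functional $\mathcal{T}_{\alpha,\lambda,\yd}$ is, structurally, of exactly the same type as $\mathcal{T}_{\alpha,\yd}$: it is a sum of squared norms of (weakly closed) operator residuals plus a quadratic regularizer. The key observation making everything work is that under \autoref{as:ip:weakly-closed} and \autoref{as:ip:weakly-closed:l} the map $\x \mapsto (\op{\x}-\tilde{\y},\ \lop{\x}-\tilde{\y})$ is weakly sequentially closed from $\X$ into $\Y\times\Y$, so the combined fit term $\x\mapsto\norms{\op{\x}-\tilde{\y}}^2+\lambda\norms{\lop{\x}-\tilde{\y}}^2$ is weakly lower semicontinuous on $\dom{\opo}$, just as in the one-operator case. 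With that, parts (i) and (ii) follow verbatim from the standard direct-method arguments: for (i) take a minimizing sequence, use that $\alpha\norms{\x-\x^0}^2$ is coercive to extract a weakly convergent subsequence, pass to the limit using weak lower semicontinuity of all three terms and weak closedness of $\opo$ and $\lopo$ to conclude the limit is admissible and minimizing; for (ii) run the same argument with the extra bookkeeping that $\mathcal{T}_{\alpha,\lambda,\y_k}[\x_k]\le\mathcal{T}_{\alpha,\lambda,\y_k}[\z]$ for all $\z\in\dom{\opo}$, and let $k\to\infty$ using $\y_k\to\yd$.

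For the convergence statement (iv) I would follow the template of the proof of \autoref{th:ip:convergence}. Let $\xdag$ be a solution of \autoref{eq:op} in $\dom{\opo}$ (when $\alpha=\lambda$, I would take $\xdag$ to be a minimizer of $\hat{\x}\mapsto\norms{\hat{\x}-\x^0}^2+\norms{\lop{\hat{\x}}-\y}^2$ over the solution set; its existence follows by another direct-method argument using weak closedness of $\opo$ and $\lopo$, analogous to \autoref{le:ip:existence}). Using minimality of $\xk$ against the comparison element $\xdag$ gives
\begin{equation*}
	\norms{\op{\xk}-\y_k}^2 + \lambda_k\norms{\lop{\xk}-\y_k}^2 + \alpha_k\norms{\xk-\x^0}^2
	\le \delta_k^2 + \lambda_k\norms{\lop{\xdag}-\y_k}^2 + \alpha_k\norms{\xdag-\x^0}^2\,,
\end{equation*}
where I used $\op{\xdag}=\y$ and $\norms{\y-\y_k}\le\delta_k$. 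Dividing by $\alpha_k$ and invoking \autoref{eq:noise_cond} ($\delta_k^2/\alpha_k\to0$) together with $\alpha_k\sim\lambda_k$ and $\delta_k\to0$ yields, first, that $\norms{\op{\xk}-\y_k}\to0$ and $\norms{\lop{\xk}-\y_k}\to0$, hence $\op{\xk}\to\y$ and $\lop{\xk}\to\y$, and second that $\limsup_k\norms{\xk-\x^0}^2\le\norms{\xdag-\x^0}^2+\limsup_k\norms{\lop{\xdag}-\y_k}^2=\norms{\xdag-\x^0}^2$, where the last equality uses $\y_k\to\y$ and $\op{\xdag}=\lop{\xdag}$... — careful here: $\lop{\xdag}$ need not equal $\y$, so the correct bound is $\limsup_k\norms{\xk-\x^0}^2\le\norms{\xdag-\x^0}^2+\norms{\lop{\xdag}-\y}^2$. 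This boundedness gives a weakly convergent subsequence $\xk\rightharpoonup\ol{\x}$; weak closedness of $\opo$ (using $\op{\xk}\to\y$) gives $\ol{\x}\in\dom{\opo}$ and $\op{\ol{\x}}=\y$, proving (iv)(a); weak closedness of $\lopo$ (using $\lop{\xk}\to\y$) gives $\lop{\ol{\x}}=\y$ as well when $\lambda$ is bounded below by a constant multiple of $\alpha$, but in general only that $\lop{\ol{\x}}$ equals the weak limit of $\lop{\xk}$, which is $\y$.

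For the optimality characterization (iv)(b) in the case $\alpha=\lambda$, I would combine weak lower semicontinuity with the $\limsup$ bound: along the subsequence,
\begin{equation*}
	\norms{\ol{\x}-\x^0}^2 + \norms{\lop{\ol{\x}}-\y}^2
	\le \liminf_k\Bigl(\norms{\xk-\x^0}^2 + \norms{\lop{\xk}-\y_k}^2\Bigr)
	\le \norms{\xdag-\x^0}^2 + \norms{\lop{\xdag}-\y}^2\,,
\end{equation*}
where the first inequality uses weak lsc of both terms plus $\y_k\to\y$, and the second is exactly the divided-inequality estimate obtained above (keeping the $\norms{\lop{\xdag}-\y}^2$ term rather than discarding it). Since $\ol{\x}$ solves \autoref{eq:op} and $\xdag$ was chosen to minimize $\norms{\cdot-\x^0}^2+\norms{\lop{\cdot}-\y}^2$ over the solution set, equality holds throughout, which is precisely \autoref{eq:mnslopo}; and if the $(\lopo,\x^0)$-solution is unique, the usual subsequence-of-every-subsequence argument (every subsequence of $(\xk)$ has a further subsequence converging weakly, then strongly via the norm convergence $\norms{\xk-\x^0}\to\norms{\xdag-\x^0}$ in a Hilbert space, to $\xdag$) upgrades weak to strong convergence of the whole sequence $\xk\to\xdag$. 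The main obstacle I anticipate is the bookkeeping around whether $\lop{\xdag}$ can be taken equal to $\y$: since the whole point of this section is that $\lopo$ does \emph{not} explain the data, one must be disciplined about carrying the $\norms{\lop{\xdag}-\y}^2$ terms through all estimates and not silently dropping them — the clean cancellation in (iv)(b) depends on treating $\norms{\cdot-\x^0}^2+\norms{\lop{\cdot}-\y}^2$ as the correct "generalized norm" functional from the outset, exactly as $\norms{\cdot-\x^0}$ plays that role in the proof of \autoref{th:ip:convergence}.
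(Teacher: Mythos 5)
Your proposal follows exactly the route the paper intends: the paper gives no explicit proof of \autoref{th:ip:well-posedness_l}, stating only that the results are proven ``analogously as in \autoref{se:Tik}'', and your adaptation --- treating $\x\mapsto\norms{\x-\x^0}^2+\norms{\lop{\x}-\y}^2$ as the generalized norm functional and rerunning the direct-method and comparison-element arguments of \autoref{th:ip:well-posedness}, \autoref{th:ip:stability} and \autoref{th:ip:convergence} --- is precisely that adaptation. Parts \textit{(i)}, \textit{(ii)} and the main chain of estimates for \textit{(iv)} are correct.

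One step is wrong and should be repaired, even though your final argument does not actually rely on it. From the key inequality you only obtain $\lambda_k\norms{\lop{\xk}-\y_k}^2\to 0$; since $\lambda_k\to 0$ this does \emph{not} give $\norms{\lop{\xk}-\y_k}\to 0$, so the claims ``$\lop{\xk}\to\y$'' and ``$\lop{\ol{\x}}=\y$'' are unjustified --- and indeed must fail in general, since the premise of the whole section is that $\lopo$ does not explain the data, and \autoref{eq:mnslopo} would be vacuous if every limit satisfied $\lop{\ol{\x}}=\y$. What the inequality does give, after dividing by $\alpha_k$ and using that $\alpha\sim\lambda$ implies $\lambda_k/\alpha_k\ge c>0$, is that $\norms{\lop{\xk}-\y_k}$ is \emph{bounded}; hence $\lop{\xk}$ has a weakly convergent subsequence, weak sequential closedness of $\lopo$ identifies its weak limit as $\lop{\ol{\x}}$ (not as $\y$), and weak lower semicontinuity of the norm together with $\y_k\to\y$ then yields $\norms{\lop{\ol{\x}}-\y}\le\liminf_k\norms{\lop{\xk}-\y_k}$, which is exactly what your display for \textit{(iv)(b)} needs. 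With that correction the rest of your argument --- the $\limsup$ bound against the comparison element $\xdag$, the squeeze giving $\norms{\xk-\x^0}^2\to\norms{\xdag-\x^0}^2$ and $\norms{\lop{\xk}-\y_k}^2\to\norms{\lop{\xdag}-\y}^2$, and the upgrade from weak to strong convergence in the Hilbert space --- goes through.
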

This theorem shows potential benefits to classical results: For instance, the minimum-norm solution might be visually much closer to the desired solution because it respects prior information.

\subsection{Convergence rates} results are more difficult in the hybrid setting because one has to take into account the prior operator $P$:
\begin{theorem}[Convergence rates] \label{th:ip:rate-l} We assume that the assumptions from \autoref{th:ip:rate} hold:
	\begin{itemize}
		\item $\opo$, $\dom{F}$, $\X$ and $\Y$ satisfy \autoref{as:ip:weakly-closed};
		\item $\emptyset \neq \dom{F}$ is open and convex.
		\item $\y^\delta \in \Y$ satisfies \autoref{eq:datn}. 
		\item Moreover, let $\opo$ be Fr\'{e}chet differentiable with Lipschitz continuous derivative in a neighborhood of $\xdag$ (i.e. it satisfies \autoref{eq:lipschitz}). The Lipschitz constant of $\opo'$ is denoted again by $L$.
	\end{itemize}
    A variation to the assumptions in \autoref{th:ip:rate} is that 
    \begin{itemize}
    	\item $P$ satisfies \autoref{as:ip:weakly-closed:l} and has a Lipschitz continuous derivative.
	    \item Let $\xdag$ be an $(P;\x^0)$ minimum norm solution and that there exist $\omega\in \Y$ satisfying the \emph{learned source condition}\index{source condition!learned}
		\begin{equation} \label{eq:source_var_l}
			\boxed{\xdag - \x^0 + P'[\xdag]^*(\lop{\xdag}-\y) = \opd{\xdag}^*\omega \text{ with } L\norm{\omega}_\Y \leq 1\;.}
		\end{equation}
	 Here, as in \autoref{eq:lipschitz} $L$ denotes the Lipschitz constant of $\opd{\x}$ in a neighborhood of $\xdag$.
 \end{itemize}
	Then for the regularization parameter choice $\alpha =\lambda \sim \delta$ we have the following convergence rates result
	\begin{equation} \label{eq:delta_rate-l}
		\boxed{\norms{\xad - \xdag} = \mathcal{O}(\sqrt{\delta}).}
	\end{equation}
\end{theorem}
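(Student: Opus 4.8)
The plan is to mimic the classical convergence rates proof for Tikhonov regularization (\autoref{th:ip:rate}, whose proof is in \cite{EngHanNeu96,EngKunNeu89,SchGraGroHalLen09}), but applied to the hybrid functional \eqref{eq:hybrid_tik} with $\alpha=\lambda$. The key observation is that with $\alpha=\lambda$ the hybrid functional can be rewritten as
\begin{equation*}
	\mathcal{T}_{\alpha,\alpha,\yd}[\x] = \norms{\op{\x}-\yd}^2 + \alpha\Bigl(\norms{\lop{\x}-\yd}^2 + \norms{\x-\x^0}^2\Bigr)\,,
\end{equation*}
so the effective regularizer is $\x \mapsto \norms{\lop{\x}-\yd}^2 + \norms{\x-\x^0}^2$, a strictly convex functional (on the convex domain) whose ``source-condition gradient'' at $\xdag$ is precisely $2\bigl(\xdag-\x^0 + \lopo'[\xdag]^*(\lop{\xdag}-\y)\bigr)$ --- exactly the left-hand side of the learned source condition \eqref{eq:source_var_l}. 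This is what makes \eqref{eq:source_var_l} the natural hybrid analogue of \eqref{eq:source_var}.

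First I would write down the minimality inequality: since $\xad$ minimizes $\mathcal{T}_{\alpha,\alpha,\yd}$ and $\xdag$ is admissible,
\begin{equation*}
	\norms{\op{\xad}-\yd}^2 + \alpha\norms{\lop{\xad}-\yd}^2 + \alpha\norms{\xad-\x^0}^2 \le \norms{\op{\xdag}-\yd}^2 + \alpha\norms{\lop{\xdag}-\yd}^2 + \alpha\norms{\xdag-\x^0}^2\,.
\end{equation*}
Using $\op{\xdag}=\y$ and $\norms{\y-\yd}\le\delta$, the first term on the right is at most $\delta^2$. Next I would expand the two quadratic terms involving $\lopo$ and the regularizer on the left around $\xdag$, writing $\norms{\lop{\xad}-\yd}^2 = \norms{\lop{\xdag}-\yd}^2 + 2\inner{\lop{\xad}-\lop{\xdag}}{\lop{\xdag}-\yd} + \norms{\lop{\xad}-\lop{\xdag}}^2$ and $\norms{\xad-\x^0}^2 = \norms{\xdag-\x^0}^2 + 2\inner{\xad-\xdag}{\xdag-\x^0} + \norms{\xad-\xdag}^2$. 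The $\norms{\xdag-\x^0}^2$ and $\norms{\lop{\xdag}-\yd}^2$ terms cancel against the right-hand side, leaving
\begin{equation*}
	\norms{\op{\xad}-\yd}^2 + \alpha\norms{\xad-\xdag}^2 + \alpha\norms{\lop{\xad}-\lop{\xdag}}^2 + 2\alpha\inner{\xad-\xdag}{\xdag-\x^0} + 2\alpha\inner{\lop{\xad}-\lop{\xdag}}{\lop{\xdag}-\yd} \le \delta^2\,.
\end{equation*}
Then I would handle the cross terms. For the first: insert the learned source condition \eqref{eq:source_var_l} only in the combination that appears, i.e. combine $\inner{\xad-\xdag}{\xdag-\x^0}$ with $\inner{\lopo'[\xdag](\xad-\xdag)}{\lop{\xdag}-\y}$ to get $\inner{\xad-\xdag}{\opd{\xdag}^*\omega} = \inner{\opd{\xdag}(\xad-\xdag)}{\omega}$. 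The difference between $\inner{\lop{\xad}-\lop{\xdag}}{\lop{\xdag}-\yd}$ and $\inner{\lopo'[\xdag](\xad-\xdag)}{\lop{\xdag}-\y}$ splits into two pieces: one controlled by the Lipschitz continuity of $\lopo'$ (giving a term $\mathcal{O}(\norms{\xad-\xdag}^2)$, absorbable for small $\delta$ since it comes with a factor, using the standard trick of pairing against $\norms{\lop{\xad}-\lop{\xdag}}^2\ge 0$ on the left or a Young's inequality), and one controlled by $\norms{\y-\yd}\le\delta$ paired with $\norms{\lop{\xad}-\lop{\xdag}}$, which is $\le \frac{1}{2}\norms{\lop{\xad}-\lop{\xdag}}^2 + \tfrac12\delta^2$-type and thus absorbable. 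Then $\inner{\opd{\xdag}(\xad-\xdag)}{\omega}$ is rewritten, exactly as in the classical proof, using $\opd{\xdag}(\xad-\xdag) = (\op{\xad}-\y) - \bigl(\op{\xad}-\op{\xdag}-\opd{\xdag}(\xad-\xdag)\bigr)$, with the Taylor remainder bounded by $\frac{L}{2}\norms{\xad-\xdag}^2$ via \eqref{eq:lipschitz}/\eqref{eq:lin-approx}. Collecting everything yields an inequality of the form
\begin{equation*}
	\tfrac12\norms{\op{\xad}-\yd}^2 + \alpha(1-L\norms{\omega})\norms{\xad-\xdag}^2 \le \mathcal{O}(\delta^2) + \mathcal{O}(\alpha\delta) + 2\alpha\norms{\omega}\,\norms{\op{\xad}-\yd}\,,
\end{equation*}
after absorbing the $\lopo$-generated terms into the nonnegative $\alpha\norms{\lop{\xad}-\lop{\xdag}}^2$ and the residual term. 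This is once more of the form $a^2+b^2\le c^2+ad$, so $b\le c+d$, and with $\alpha\sim\delta$ one reads off $\sqrt{\alpha}\,\norms{\xad-\xdag} = \mathcal{O}(\delta) + \mathcal{O}(\sqrt{\alpha\delta}) = \mathcal{O}(\sqrt{\delta}\,)\cdot\sqrt{\alpha}$, i.e. \eqref{eq:delta_rate-l}; well-posedness (existence of $\xad$) and the fact that the manipulations are legitimate follow from \autoref{th:ip:well-posedness_l}.

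\textbf{Main obstacle.} The routine part is the classical $\opd{\xdag}^*\omega$ manipulation; the genuinely new difficulty is controlling the discrepancy between $\norms{\lop{\xad}-\lop{\xdag}}^2$ (and its cross term) and its linearization, since unlike in \autoref{th:ip:rate} there is no assumption that $\lopo$ is scaled or that $L\norms{\omega}$ leaves ``room''. The careful point is to verify that the $\mathcal{O}(\norms{\xad-\xdag}^2)$ contributions coming from the Lipschitz remainder of $\lopo'$ can be absorbed: one either needs $\delta$ small enough that $\norms{\xad-\xdag}$ is small (which requires first establishing $\norms{\xad-\xdag}\to 0$, e.g. via \autoref{th:ip:well-posedness_l}(4) plus a preliminary boundedness argument from the minimality inequality), or one keeps the $\alpha\norms{\lop{\xad}-\lop{\xdag}}^2$ term on the left explicitly and uses it together with a weighted Young inequality to dominate the cross term $2\alpha\inner{\lop{\xad}-\lop{\xdag}}{\lop{\xdag}-\yd}$ and the remainder. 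Making this absorption quantitatively clean --- and in particular checking that it does not degrade the constant $1-L\norms{\omega}$ below zero --- is where the real care is needed.
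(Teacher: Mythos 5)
Your proposal is correct and follows exactly the route the paper intends: the paper gives no detailed proof of \autoref{th:ip:rate-l}, stating only that it is ``analogous to the proof of \autoref{th:ip:rate}'', and your expansion of the hybrid functional with $\alpha=\lambda$, cancellation against $\xdag$, substitution of the learned source condition \eqref{eq:source_var_l} into the combined cross terms, and the final $a^2+b^2\le c^2+ad$ step is precisely that analogy carried out. The one point you rightly flag as delicate is real: absorbing the $\lopo$-linearization remainder costs an extra $\alpha L_P\norms{\lop{\xdag}-\y}\,\norms{\xad-\xdag}^2$, so the argument needs strict inequality $L\norm{\omega}<1$ (as in \autoref{eq:source_var}, whereas \eqref{eq:source_var_l} is stated with $\le 1$) together with a preliminary convergence argument from \autoref{th:ip:well-posedness_l} to make that factor small, or an explicit smallness condition on the prior misfit $\norms{\lop{\xdag}-\y}$.
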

\begin{remark} The proof is analogous to the proof of \autoref{th:ip:rate}. Note that the term $P'[\xdag]^*(\lop{\xdag}-\y)$ only provides a difference to the standard source condition, \autoref{eq:source_var} if $P[\xdag] \neq \y$. This is what we mean when we say that the learned operator $P$ \emph{cannot} fully explain the data. In the next subsection, the situation is different: We think, for instance, of feature constraints, in which case the prior can explain the feature data $\z$. Since the feature operator is not injective, the explanation of features does not mean that one obtains the solution.
\end{remark}

\section[Regularization with feature operators]{Data-driven Tikhonov regularization with feature operators} 
\label{sec:ddtrfo}
A more general view of data-driven Tikhonov regularization as discussed in \autoref{eq:hybrid_tik} concerns determining 
a learned operator $P:\X \to \Z$, which acts as a prior for \emph{features}, for instance, learned classifiers. We therefore investigate the properties of minimizers of a \emph{hybrid Tikhonov functional}
\begin{equation} \label{eq:hybrid_tik_f}
	\mathcal{T}_{\alpha,\lambda,\y^\delta,\z^\delta}[\x] = \norms{F[\x]-\y^\delta}^2 + \lambda \underbrace{\norms{P[\x]-\z^\delta}^2}_{\text{constraint}}  +
	\alpha \underbrace{\norms{\x - \x^0}^2}_{\text{regularizer}}\;.
\end{equation}
In addition to \autoref{as:ip:weakly-closed} we assume now that the prior operator $P:\dom{F} \subseteq \X \to \Z$
maps into the feature space $\Z$. In addition to data $\y^\delta$, we assume that we also have a feature function $\z^\delta$.

The main difference to \autoref{th:ip:well-posedness_l} is that the data can probably explain the features. That is, there exists $\ol{\x}$ satisfying 
\begin{equation} \label{eq:explain}
	\lop{\ol{\x}} = \ol{\z}\;.
\end{equation}

In the case \autoref{eq:explain} does not hold, the analysis concerning well-posedness, stability, convergence, and rates is analogous to that in \autoref{sec:ddtr}, and thus is omitted here. With a feature operator, the analysis is different:

Analogously as in \autoref{se:Tik} the following results, extending \autoref{th:ip:well-posedness}, \autoref{th:ip:stability} and \autoref{th:ip:convergence} can be proven:
\begin{theorem}[Hybrid regularization with features] \label{th:ip:well-posedness_l_general}	
	Let $\opo$, $\dom{F}$, $\X$ and $\Y$  satisfy \autoref{as:ip:weakly-closed} and assume that $P$ satisfies \autoref{as:ip:weakly-closed:l}. 
	
	Assume that $\alpha > 0$, $\tilde{\y} \in \Y$, $\tilde{\z} \in \Z$ and $\x^0 \in \X$.
	Then
	\begin{enumerate}
		\item $\mathcal{T}_{\alpha,\lambda,\tilde{\y},\tilde{\z}}$ attains a minimizer.
		\item Let $(\y_k,\z_k) \in \Y \times \Z$ converge to $(\y^\delta,\z^\delta)$ and let $\x_k \in \argmin \; \mathcal{T}_{\alpha,\lambda,\y_k,\z_k}$.
		Then $(\x_k)$ has a convergent subsequence. Every convergent
		subsequence converges to a minimizer of $\mathcal{T}_{\alpha,\lambda,\y^\delta,\z^\delta}$.	
		\item Assume in addition that \autoref{eq:op} has a solution in $\dom{F}$ and that
		$\alpha: (0,\infty) \to (0,\infty)$ satisfies \autoref{eq:noise_cond}. We take\footnote{Remember, that in \autoref{sec:ddtr} we have chosen $\alpha=\lambda \to 0$ for $\delta \to 0$.} 
		\begin{equation*} 
			\lambda: (0,\infty) \to (0,\infty) \equiv 1\;. 
		\end{equation*}
		Let the sequence $(\delta_k)$ of positive numbers converge to $0$,
		and assume that the data $\y_k:=\y^{\delta_k}$, $\z_k:=\z^{\delta_k}$ satisfy 
		$\norms{\y-\y_k}_\Y \leq \delta_k$ and $\norms{\z-\z_k}_\Z \leq \delta_k$.
		Moreover, let 
		\begin{equation*} 
			\xk \in \argmin \; \mathcal{T}_{\alpha(\delta_k),\lambda(\delta_k),\y_k,\z_k}.
		\end{equation*}
	    Then 
		\begin{enumerate}
			\item $(\xk)_{k \in \N}$ has a convergent subsequence. 
			\item The limit $\ol{\x}$ is a minimum-norm solution of \autoref{eq:op}.
			If in addition the $\x^0$-minimum norm solution is unique, then $\xk \to
			\xdag$.
		\end{enumerate}
	   \end{enumerate}
\end{theorem}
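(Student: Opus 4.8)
The plan is to follow the now-familiar three-part argument (existence of minimizers, stability, convergence) used in \autoref{th:ip:well-posedness_l}, adapting only the places where the feature term differs from the surrogate-operator term. Since this time $\lambda \equiv 1$, the prior term $\norms{\lop{\x}-\z^\delta}^2$ is kept at a fixed weight while only $\alpha \to 0$; the key structural observation is that \autoref{eq:explain}, i.e.\ solvability of $\lop{\ol{\x}} = \ol{\z}$ together with $\op{\ol{\x}}=\y$, makes the limiting functional behave essentially like the classical one with an extra bounded, weakly lower semicontinuous term that vanishes at the competitor used in the comparison argument.

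First I would prove \emph{(i)} existence: for fixed $\alpha,\lambda>0$ and fixed $(\tilde\y,\tilde\z)$, take a minimizing sequence, use coercivity of $\alpha\norms{\x-\x^0}^2$ to extract a weakly convergent subsequence $\x_{k}\rightharpoonup\x$, invoke \autoref{as:ip:weakly-closed} for $\opo$ and \autoref{as:ip:weakly-closed:l} for $\lopo$ to pass the residual terms to the limit (each of $\x\mapsto\norms{\op{\x}-\tilde\y}^2$, $\x\mapsto\norms{\lop{\x}-\tilde\z}^2$ is weakly sequentially lower semicontinuous by weak closedness and weak lower semicontinuity of the norm), and conclude that $\x$ is a minimizer — this is verbatim the proof of \autoref{th:ip:well-posedness}. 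For \emph{(ii)} stability with $(\y_k,\z_k)\to(\yd,\z^\delta)$: let $\x_k$ minimize $\mathcal{T}_{\alpha,\lambda,\y_k,\z_k}$; plugging a fixed competitor $\x$ into the minimality inequality and using $\y_k\to\yd$, $\z_k\to\z^\delta$ gives $\limsup_k\mathcal{T}_{\alpha,\lambda,\y_k,\z_k}[\x_k]<\infty$, hence $(\x_k)$ is bounded, hence $\x_k\rightharpoonup\bar\x$ along a subsequence; weak closedness identifies $\op{\bar\x}$, $\lop{\bar\x}$, and a standard $\liminf$/$\limsup$ sandwich (exactly as in the proof of \autoref{th:NeuSch90} or \autoref{th:ip:stability}) upgrades weak to strong convergence and shows $\bar\x$ minimizes the limit functional.

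The substantive part is \emph{(iii)}, convergence as $\delta_k\to0$ with $\alpha(\delta_k)$ satisfying \autoref{eq:noise_cond} and $\lambda\equiv1$. Let $\ol{\x}$ be the $\x^0$-minimum-norm solution of \autoref{eq:op} that simultaneously satisfies $\lop{\ol{\x}}=\ol{\z}$ — here I would actually use as comparison element a solution furnished by the standing hypotheses (the theorem assumes \autoref{eq:op} is solvable in $\dom{\opo}$; the case \eqref{eq:explain} fails is explicitly excluded). Minimality gives
\begin{equation*}
	\norms{\op{\x_k}-\y_k}^2 + \norms{\lop{\x_k}-\z_k}^2 + \alpha_k\norms{\x_k-\x^0}^2 \leq \delta_k^2 + \delta_k^2 + \alpha_k\norms{\ol{\x}-\x^0}^2,
\end{equation*}
using $\op{\ol{\x}}=\y$, $\lop{\ol{\x}}=\ol{\z}$, $\norms{\y-\y_k}\leq\delta_k$, $\norms{\z-\z_k}\leq\delta_k$. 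Dividing by $\alpha_k$ and letting $k\to\infty$ with $\delta_k^2/\alpha_k\to0$ yields $\limsup_k\norms{\x_k-\x^0}\leq\norms{\ol{\x}-\x^0}$ as well as $\op{\x_k}\to\y$ and $\lop{\x_k}\to\ol{\z}$. Boundedness of $(\x_k)$ then gives a weakly convergent subsequence $\x_{k_j}\rightharpoonup\bar{\x}$; weak closedness of $\opo$ forces $\op{\bar\x}=\y$, so $\bar\x$ solves \autoref{eq:op}. The minimum-norm identification and the strong-convergence upgrade are then the same Hilbert-space computation as in \autoref{eq:help_conv3}: $\norms{\bar\x-\x^0}^2=\lim_j\inner{\bar\x-\x^0}{\x_{k_j}-\x^0}\leq\norms{\bar\x-\x^0}\limsup_j\norms{\x_{k_j}-\x^0}\leq\norms{\bar\x-\x^0}\norms{\ol{\x}-\x^0}$, whence $\bar\x$ is an $\x^0$-minimum-norm solution and $\x_{k_j}\to\bar\x$ strongly; uniqueness of the $\x^0$-minimum-norm solution then promotes the subsequential statement to convergence of the full sequence by the usual subsequence-of-subsequence argument.

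The main obstacle — and the reason the statement is flagged as ``different'' — is the bookkeeping around \eqref{eq:explain}: one must choose the comparison element $\ol{\x}$ to be a \emph{simultaneous} solution of $\op{\ol{\x}}=\y$ and $\lop{\ol{\x}}=\ol{\z}$ so that \emph{both} residual terms on the right-hand side of the minimality inequality collapse to $O(\delta_k)$; if no such simultaneous solution existed, the fixed-weight term $\norms{\lop{\x_k}-\z_k}^2$ would not tend to zero and $\bar\x$ would only minimize a penalized functional rather than solve \autoref{eq:op}. I would therefore state clearly at the outset that \eqref{eq:explain} is in force (the theorem text already does this), and note that the Fréchet-differentiability and Lipschitz hypotheses on $\opo$, though listed, are not needed for the pure convergence claim — they are there only to align the statement with the rates theorem — so the proof of \emph{(iii)} uses only weak closedness, as in the classical case.
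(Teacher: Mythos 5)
Your proposal is correct and follows exactly the route the paper intends: the paper prints no proof of \autoref{th:ip:well-posedness_l_general}, saying only that the results can be proven by extending \autoref{th:ip:well-posedness}, \autoref{th:ip:stability} and \autoref{th:ip:convergence}, and your three steps (weak compactness plus weak sequential closedness of both $\opo$ and $\lopo$ for existence, the minimality-inequality and norm-sandwich argument for stability, the comparison-element argument with $\delta_k^2/\alpha_k \to 0$ for convergence) are precisely that extension. You also correctly isolate the one place where the feature setting differs from \autoref{th:ip:well-posedness_l}: since $\lambda \equiv 1$ the feature residual is not damped, so the comparison element must satisfy $\op{\ol{\x}}=\y$ \emph{and} $\lop{\ol{\x}}=\z$ simultaneously, which is what \autoref{eq:explain} supplies.

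One point deserves more care than either your write-up or the theorem statement gives it: the identification of the limit in part (iii)(b). Your sandwich chain $\norm{\bar{\x}-\x^0}^2 \leq \norm{\bar{\x}-\x^0}\,\norm{\ol{\x}-\x^0}$ closes into an equality only if the comparison element $\ol{\x}$ minimizes $\norm{\cdot-\x^0}$ over a class that provably contains the weak limit $\bar{\x}$. What you can actually prove about $\bar{\x}$ (via weak closedness of $\opo$ and $\lopo$ applied to $\op{\x_k}\to\y$ and $\lop{\x_k}\to\z$) is that it lies in the set of \emph{simultaneous} solutions $\set{\x \in \dom{\opo} : \op{\x}=\y,\ \lop{\x}=\z}$; hence the clean conclusion is that $\bar{\x}$ is the $\x^0$-minimum-norm element of that smaller set. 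It is an $\x^0$-minimum-norm solution of \autoref{eq:op} in the sense of \autoref{de:xMNS} only under the tacit extra assumption that some minimum-norm solution of \autoref{eq:op} also satisfies the feature constraint; otherwise the two notions can differ, and your phrase ``the $\x^0$-minimum-norm solution of \autoref{eq:op} that simultaneously satisfies $\lop{\ol{\x}}=\ol{\z}$'' names an object that need not exist. This imprecision is inherited from the theorem statement itself, but in a written proof you should say explicitly over which class you are minimizing. Your closing observation that the Fr\'echet-differentiability and Lipschitz hypotheses are not used for (i)--(iii) is correct.
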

\begin{remark}
The parameter $\lambda$ controls the relative weight of the feature equation~\eqref{eq:explain} with respect to the data misfit term~\eqref{eq:hybrid_tik_f}. In Theorem~\eqref{th:ip:well-posedness_l_general}, we consider the special case $\lambda \equiv 1$ and assume the same noise level $\delta$ for the measured data $\y^\delta$ and the feature data $\z^\delta$. More generally, if the measured and feature data are affected by different noise levels $\delta_\y$ and $\delta_\z$, respectively, the convergence analysis requires
\[
\frac{\delta_\y^2+\lambda(\delta)\delta_\z^2}{\alpha(\delta)}
\to 0,
\qquad \text{as } \delta \to 0.
\]
If $\lambda(\delta)\to 0$, the method approaches a standard data-driven Tikhonov reconstruction. Conversely, if $\lambda(\delta)\to\infty$, the feature equation~\eqref{eq:explain} behaves asymptotically as a dominant constraint.
\end{remark}
Now, we prove convergence rate results for feature priors:
\subsection{Convergence rates}
Convergence rates can be proven very similar to \autoref{th:ip:rate}. The only difference is that instead of \autoref{eq:op} we consider the product equation 
\begin{equation*}
	\begin{aligned}
		F[\x] = \y \text{ and }
		P[\x] = \z\,,
	\end{aligned}
\end{equation*}
together with the product measure of the error:
\begin{equation} \label{eq:er2}
	\norms{\y^\delta-\y}_\Y^2 + \norms{\z^\delta-\z}_\Z^2 \leq \delta^2\;.
\end{equation}

\begin{theorem}[Convergence rates] \label{th:ip:rate-lf} We assume that the assumptions from \autoref{th:ip:rate} hold:
	\begin{itemize}
		\item $\opo$, $\dom{F}$, $\X$ and $\Y$ satisfy \autoref{as:ip:weakly-closed};
		\item $\emptyset \neq \dom{F}$ is open and convex;
		\item $\y^\delta \in \Y$ satisfies \autoref{eq:datn};
		\item Moreover, let $\opo$ be Fr\'{e}chet differentiable with Lipschitz continuous derivative in a neighborhood of $\xdag$ (i.e. it satisfies \autoref{eq:lipschitz}). The Lipschitz constant of $\opo'$ is denoted again by $L_\opo$.
		\item $P$ satisfies \autoref{as:ip:weakly-closed:l} and has a Lipschitz continuous derivative. $L_P$ denotes the Lipschitz constant of $P$.
		\item $\xdag$ is an $\x^0$ minimum norm solution and there exist $\omega\in \Y$ satisfying \autoref{eq:source_var}
		\begin{equation*} 
			\xdag - \x^0  = \opd{\xdag}^*\omega_1  + P'[\xdag]^*\omega_2\text{ with } 
			L_\opo\norms{\omega_1}_\Y +L_P\norms{\omega_2}_\Z\leq 1\;.
		\end{equation*}
	\end{itemize}
	Then for the regularization parameter choice $\alpha \sim \delta$ and $\lambda \sim 1$, we have the following convergence rates result
	\begin{equation*} 
		\boxed{\norms{\xad - \xdag} = \mathcal{O}(\sqrt{\delta}) \text{ and } \norm{\op{\xad}-\y} = \mathcal{O}(\delta)\,,\norm{P[\xad]-\y} = \mathcal{O}(\delta).}
	\end{equation*}
\end{theorem}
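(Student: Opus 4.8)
\textbf{Proof proposal for \autoref{th:ip:rate-lf}.}

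The plan is to mimic the proof of the classical convergence rates result \autoref{th:ip:rate} exactly, observing that the feature term with $\lambda \sim 1$ behaves as a benign addition that never hurts the estimate. First I would exploit the minimizing property of $\xad$ for $\mathcal{T}_{\alpha,\lambda,\yd,\z^\delta}$, comparing against $\xdag$: since $\op{\xdag}=\y$ and $\lop{\xdag}=\z$ (here I use that the data can explain the features, $\lop{\ol{\x}}=\ol{\z}$, and that $\xdag$ is taken to be such a solution, so the feature residual at $\xdag$ is controlled by $\delta$ via \autoref{eq:er2}), we get
\begin{equation*}
	\norms{\op{\xad}-\yd}^2 + \lambda\norms{\lop{\xad}-\z^\delta}^2 + \alpha\norms{\xad-\x^0}^2 \leq \delta^2 + \lambda\delta^2 + \alpha\norms{\xdag-\x^0}^2.
\end{equation*}
Then I would insert the source condition \autoref{eq:source_var}, $\xdag-\x^0 = \opd{\xdag}^*\omega$, expand $\norms{\xad-\x^0}^2 = \norms{\xad-\xdag}^2 + 2\inner{\xad-\xdag}{\xdag-\x^0} + \norms{\xdag-\x^0}^2$, and handle the cross term $2\alpha\inner{\xad-\xdag}{\opd{\xdag}^*\omega} = 2\alpha\inner{\opd{\xdag}(\xad-\xdag)}{\omega}$ by the usual trick: write $\opd{\xdag}(\xad-\xdag) = -(\op{\xad}-\op{\xdag}-\opd{\xdag}(\xad-\xdag)) + (\op{\xad}-\yd) + (\yd-\y)$, bound the Taylor remainder by $\tfrac{L}{2}\norms{\xad-\xdag}^2$ using the Lipschitz continuity of $\opd{\cdot}$ and convexity of $\dom{\opo}$ (so the segment between $\xad$ and $\xdag$ stays in the domain), and use Cauchy–Schwarz on the remaining terms.

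The key cancellation is the same as in \autoref{th:ip:rate}: after collecting terms one arrives at an inequality of the shape
\begin{equation*}
	\norms{\op{\xad}-\yd}^2 + \lambda\norms{\lop{\xad}-\z^\delta}^2 + \alpha(1-L\norms{\omega})\norms{\xad-\xdag}^2 \leq \mathcal{O}(\delta^2 + \alpha\delta) + 2\alpha\norms{\omega}\,\norms{\op{\xad}-\yd},
\end{equation*}
where the extra nonnegative feature term $\lambda\norms{\lop{\xad}-\z^\delta}^2$ on the left is simply discarded (it only strengthens the bound). Because $L\norms{\omega}\leq 1$, and in fact we need strict positivity of $1-L\norms{\omega}$ — here I would invoke the same slight-generalization argument as in \autoref{th:ip:qual_rates}, or note that the inequality has the form $a^2 + b^2 \leq c^2 + ad$ with $a,b,c,d\geq 0$, which yields $b \leq c+d$ directly without needing strictness; this gives
\begin{equation*}
	\sqrt{\alpha(1-L\norms{\omega})}\,\norms{\xad-\xdag} \leq \sqrt{\mathcal{O}(\delta^2+\alpha\delta)} + 2\alpha\norms{\omega}.
\end{equation*}
Dividing by $\sqrt{\alpha}$ and using $\alpha \sim \delta$ gives $\norms{\xad-\xdag} = \mathcal{O}(\sqrt{\delta})$, as claimed.

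The main obstacle I anticipate is bookkeeping of the feature residual at the comparison point: one must be careful that $\xdag$, defined as an $\x^0$-minimum-norm solution of \autoref{eq:op}, also satisfies $\lop{\xdag}=\z$ exactly (so that $\norms{\lop{\xdag}-\z^\delta} \leq \delta$) — this is exactly the assumption \autoref{eq:explain} that the data explains the features, and it is what distinguishes this theorem from \autoref{th:ip:rate-l}, where $\lop{\xdag}\neq\y$ forced the modified source condition \autoref{eq:source_var_l}. If instead $\xdag$ only solves \autoref{eq:op} but does not explain the features, the term $\lambda\norms{\lop{\xdag}-\z^\delta}^2$ is $O(1)$ rather than $O(\delta^2)$ and the argument collapses; so the statement implicitly presumes we are in the regime \autoref{eq:explain}, and the cleanest fix is to take $\xdag$ to be a solution of the product system $\op{\x}=\y$, $\lop{\x}=\z$ that is $\x^0$-minimum-norm among those — which is consistent with \autoref{th:ip:well-posedness_l_general}. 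The rest is routine and parallels \cite{EngHanNeu96,EngKunNeu89,SchGraGroHalLen09}.
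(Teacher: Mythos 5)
Your proposal is correct and takes essentially the same route as the paper, which disposes of this theorem by treating $\op{\x}=\y$ and $\lop{\x}=\z$ as a single product equation with the product error measure \autoref{eq:er2} and then running the argument of \autoref{th:ip:rate}; your direct estimate is exactly that argument unfolded, with the nonnegative feature residual simply discarded on the left and only the Taylor remainder of $\opo$ (hence only $L\norm{\omega}$) entering the smallness condition. Your observation that $\xdag$ must additionally satisfy $\lop{\xdag}=\z$ — so that the comparison value at $\xdag$ is $\mathcal{O}(\delta^2)$ rather than $\mathcal{O}(\lambda)$ — is precisely what the paper's product-equation formulation and \autoref{eq:er2} presuppose, and your suggested reading of $\xdag$ as an $\x^0$-minimum-norm solution of the product system is the intended one.
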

We summarize the difference between the two comments, \autoref{sec:ddtr} Tikhonov regularization with complemented surrogate operators and \autoref{sec:ddtrfo} with feature operators: In the latter, we assume that we have information on priors, which do not explain the model. For instance, we think of a particular prior, which is a digit in an image. That is the problem of letter identification in an inverse problem. If the letter is recognized, then the constraint will be achieved. So this is a situation that falls in the category of this subsection. On the other hand, if we learn an operator from finite samples of expert data, it will not be sufficient to fully explain an infinite-dimensional physical model, and this falls into the category treated in \autoref{sec:ddtr}. 

In the mapping $P[\x] =\z$, input features in $\x$ (e.g., a tissue sequence surrounding a region in an ultrasound image) capture structural and textural patterns, while output features in $\z$ represent higher-level semantics such as a diagnosis or segmentation label or learned latent representation. The function or operator $P$ must learn to compress and transform these low-level spatial features into the target feature space, meaning only discriminative input patterns that consistently predict changes in $\z$ are useful. Understanding which input features are invariant versus predictive is key to designing an effective and interpretable model.

Another example of feature operator, where $\x$ denotes a satellite image and $\z$ represents a land-cover segmentation map. The image $\x$ contains low-level spectral and spatial information, such as texture, vegetation indices, and reflectance patterns, while $\z$ encodes higher-level semantic categories including urban areas, forests, water bodies, and agricultural regions. The feature operator $P$ transforms the raw image into a semantic representation by identifying patterns that are predictive of the land-cover classes. As a result, only discriminative image characteristics that reliably distinguish between different terrain types contribute significantly to $\z$. Incorporating the feature relation $P(\x)=\z$ into the reconstruction process can reduce ambiguities and promote solutions that are consistent with both the measured data and the expected semantic structure of the scene.

\section{Data-driven iterative regularization methods} \label{sec:datadriven}
In the following, we consider regularized gradient descent and Newton-type methods in infinite-dimensional Hilbert spaces, taking into account expert information. The developed theory is based on the original references \cite{Bak92,HanNeuSch95,Bla96,DeuEngSch98,KalNeuSch08}. The additional use of expert information has only been considered recently in \cite{AspBanOekSch20}, and this is considered here. Generalizations to the Banach space setting (see \cite{SchuKalHofKaz12}) are formally similar, but technically more complicated, and thus omitted here. In the following, we start with a review of established iterative regularization methods, with the goal of motivating the inclusion of expert and feature operators. The complexity of the usable prior information is gradually increased. We emphasize that for these new kinds of iterative regularization methods, we consider the analysis preliminary, starting with \cite{AspBanOekSch20}. The main goal of this section is to point out potential ways of making use of prior information, and to break up the traditional structure of iterative methods of \emph{one step} into \emph{multi-step} iterations, which reveal synergies with advanced machine learning methods, like
\emph{score-based diffusion}. In a nutshell, these techniques optimize simultaneously for the physical model and the expert information.

\subsection{Motivation}
The best analyzed iterative regularization method is the Landweber-method \index{Landweber-method} (see \cite{HanNeuSch95,KalNeuSch08}), where Given an initial guess $\x_0 \in \X$ the iterations 
\begin{equation}\label{eq:landweber_iter}		
	\xkpd = \xkd - \opd{\xkd}^*\left( \op{\xkd}-\y^\delta \right),\qquad k \in \N_0
\end{equation}
are computed.
As mentioned in the introduction, the regularizing effect comes from early stopping:
The regularized solution is $\x_{k_*(\delta)}$, where $k_*$ is determined from the discrepancy principle \autoref{eq:disc}.

We recall that the Landweber method is the gradient descent method (see \autoref{eq:steepest} explained in \autoref{cha:intro}) with fixed step size $\lambda_k\equiv 1$.

The \emph{iteratively regularized Landweber} method \index{Landweber-method!iteratively regularized!\IRLI} as developed in \cite{Sch98} (see also \cite{KalNeuSch08}), consists in computing the iterative updates via
\begin{equation}\label{eq:ir_landweber_iter}		
	\xkpd = \xkd - \opd{\xkd}^*\left( \op{\xkd}-\y^\delta \right) - \mkd(\xkd - \x^0)\;.
\end{equation}
Two different priors can be used: Namely, the start of the iteration $\x_0$ and $\x^0$.
$\mkd > 0$ are non-negative numbers, which are chosen based on theoretical considerations (see \cite{Sch98}). The structure of \IRLI motivates the development of data-driven methods below.
We note that the iteration \autoref{eq:ir_landweber_iter} can be rewritten as
\begin{equation}\label{eq:ir_landweber_iter_opt}
	\xkpd = \xkd - \tfrac{1}{2}\partial_\x \left( \norms{F[\x]-\y^\delta}^2 + \mkd \norm{\x-\x^0}^2 \right)[\xkd]\;,
\end{equation}
where $\partial_\x$ denotes the \emph{functional derivative} \index{derivative!functional} at $\xkd$ with respect to the elements $\x \in \X$ of the functional
\begin{equation*}
	\x \in \X \mapsto \norms{F[\x]-\y^\delta}^2 + \mkd \norm{\x-\x^0}^2\;.
\end{equation*}
In fact, it has been shown in \cite{Sch98} that multiple prior information $\x_0$ and $\x^0$ can increase the applicability but also can slow down the iteration speed. In our terminology $\x_0$ and $\x^0$ are unsupervised information (see \autoref{eq:expert_information_discriminative}) that means without taking into account information on $\op{\x^0}$ and $\op{\x_0}$, respectively. In the following, we study more sophisticated unsupervised iterative regularization methods and move forward to supervised methods.
\bigskip\par\noindent
\IRLI has been motivated from the \emph{iteratively regularized Gauss-Newton-method} (\IRGN) \index{Gauss-Newton-method!iteratively regularized!\IRGN}from \cite{Bak92}:
\begin{equation}\label{eq:ir_gauss_iter}	
	\begin{aligned}	
	\xkpd &= \xkd - \left(\mkd \id + \opd{\xkd}^*\opd{\xkd}\right)^{-1} \\
	& \qquad \qquad \left( \opd{\xkd}^* \left( \op{\xkd}-\y^\delta \right) + \mkd(\xkd - \x^0)\right)\;,
	\end{aligned}
\end{equation}
which shares with \IRLI the additional stabilizing term. From theoretical considerations, the choice $\mkd = 2^k$ is commonly used in theoretical considerations. Essential is the exponential behavior of this sequence.

Note that \IRGN is based on a Gauss-Newton method
\begin{equation}\label{eq:ir_gauss}		
	\xkpd = \xkd - \left(\opd{\xkd}^*\opd{\xkd}\right)^{-1} \left( \opd{\xkd}^* \left( \op{\xkd}-\y^\delta \right)\right)\,,
\end{equation}
which is nothing but Newton's equation for the normal \autoref{eq:op_min}
\begin{equation*}
	\opd{\x}^*(F[\x]-\y)=0
\end{equation*}
with noise-free data. \IRGN was invented and analyzed in \cite{Bak92} (see also \cite{BakKok04,BakGon94}).

It is instructive to filet \IRLI and \IRGN into two iteration steps, which will motivate novel two-step iteration methods. These formulations are closer to optimization methods used in machine learning applications.

\subsection{2-step \IRLI} \label{ss:2sIRLI}
The \emph{2-step} \IRLI \index{Landweber-method!iteratively regularized!two step}
is defined as follows:
\begin{itemize} \label{alg:two-step}
	\item Initialize $\x_0^\delta:=\x_0$.
	\item Until the Morozov's stopping criterion (see \autoref{eq:disc}) is satisfied perform two iteration steps:
	\begin{enumerate}
		\item model driven update: $\xkppd = \xkd - \opd{\xkd}^*( \op{\xkd} - \y^\delta )$;
		\item data driven update: $\xkpd = \xkppd - \mkd(\xkppd - \x^0)$.
	\end{enumerate}
\end{itemize}

In the considered \emph{2-step} \IRLI method, the model-driven step performs a gradient descent update of the current iterate $\xkd$ using the forward model, starting from an initial guess $\x_0$. The second step incorporates prior information by regularizing the solution toward the regularization point $\x^0$, thereby enhancing the stability of the method. 

To account for the interaction between the two steps, we consider a ball centered at $\x_c = \frac{1}{2}(\x^0 + \x_0)$ with radius $r = \norms{\x^0 - \x_0}$, which is expected to contain the iterates $\xkd$. Accordingly, the assumptions on the operator $\opo$, \autoref{eq:scal} and \autoref{eq:nlc}, are assumed to hold within the ball $\mathcal{B}_{2r}(\x_c)$ from this point onward.

The \emph{2-step} \IRLI can be written in the form of a convex combination as follows: 
	\begin{equation} \label{eq:lwos}
		\xkpd = (1-\mkd) \left(\xkd - \opd{\xkd}^*( \op{\xkd} - \y^\delta )\right) +\mkd \x^0\;,
	\end{equation}
where $\mkd\in (0,1)$ is a weight that controls the interpolation between the model-driven and data-driven updates. When $\mkd$ is small, the iteration is dominated by the model-driven term, and when $\mkd$ is close to 1, the update is dominated by the prior information $\x^0$.  An analysis can be performed along the same lines as \cite{KalNeuSch08} for \IRLI.

\begin{remark}
The \emph{2-step} \IRLI from \autoref{eq:lwos} can be interpreted as a fixed-point iteration for the optimality condition of the nonlinear least-squares problem \autoref{eq:op_min}, given by 
\begin{equation}\label{eq:fixed-point}
	\x = \x-\opd{\x}^*( F[\x] - \y )\;.
\end{equation}
Since fixed-point iterations only converge if $\y^\delta=\y$ is noiseless, we continue with this setting. Then, the \emph{2-step}
\IRLI takes the form
\begin{equation*}
	\xkpd = (1-\mkd)\left(\xkd - \opd{\xkd}^*( \op{\xkd} - \y )\right)+\mkd\; \x^0.
\end{equation*}
The associated fixed-point operator is generally \emph{not} contractive, which prevents the use of classical fixed-point theory. Instead, convergence is proven under structural conditions such as the tangential cone condition~\cite{HanNeuSch95}. Such classical iterations have been considered before by \cite{BroPet67,Bro67}.
\end{remark}

\subsection{Unsupervised \IRLI} \label{sec:wUIRLI}
Unsupervised information $\mathcal{U}=\set{\x^{(\ell)}:\ell=0,1,\ldots,\no}$ can be included in the \IRLI-method (as defined in \autoref{eq:ir_landweber_iter}) in various ways:
\begin{description}
	\item{The \emph{weighted unsupervised} \IRLI} reads as follows \index{Landweber-method!iteratively regularized!weighted unsupervised}
	\begin{equation} \label{eq:wUIRLI}
		\xkpd = \xkd - \opd{\xkd}^*\left( \op{\xkd}-\y^\delta \right) - \mkd \left( \xkd - \frac{1}{\no+1} \sum_{\ell=0}^\no \x^{(\ell)}\right)\;.
	\end{equation}	
	This method has been investigated in \cite{AspSch25}. Analogously as in \autoref{ss:2sIRLI} we define the 2-step weighted unsupervised \IRLI:
	\begin{equation} \label{eq:wUIRLI2} \begin{aligned}
			\xkppd &= \xkd - \opd{\xkd}^*\left( \op{\xkd}-\y^\delta \right), \\
			\xkpd &= \xkppd -\mkd \left(\xkppd - \frac{1}{\no+1} \sum_{\ell=0}^\no \x^{(\ell)}\right)\;.
		\end{aligned}
	\end{equation}
	\item{The \emph{cylic unsupervised} \IRLI} \index{Landweber-method!iteratively regularized!cyclic unsupervised} reads as follows
	\begin{equation} \label{eq:cUIRLI}
		\xkpd = \xkd - \opd{\xkd}^*\left( \op{\xkd}-\y^\delta \right) - \mkd (\xkd-\x^{(r(k))})\;,
	\end{equation}
	where $r(k) = k-\no \lfloor \frac{k}{\no} \rfloor$ with $\lfloor s \rfloor$ the greatest integer below $s$.
	Or its two-step variant:
	\begin{equation} \label{eq:cUIRLI2} \begin{aligned}
				\xkppd &= \xkd - \opd{\xkd}^*\left( \op{\xkd}-\y^\delta \right), \\
				\xkpd &= \xkppd -\mkd (\xkppd-\x^{(r(k))})\;.
		\end{aligned}
	\end{equation}
	\item{The \emph{randomized unsupervised} \IRLI} \index{Landweber-method!iteratively regularized!randomized unsupervised} reads as follows
	\begin{equation} \label{eq:rUIRLI}
		\xkpd = \xkd - \opd{\xkd}^*\left( \op{\xkd}-\y^\delta \right) - \mkd (\xkd-\x^{(i(k))})\;,
	\end{equation}
	where $i(k)$ is a random integer in $\set{0,1,\ldots,\no}$. The 2-step method can be defined similarly as in \autoref{eq:cUIRLI}.
\end{description}

\subsection{Supervised \IRLI}
The strategy consists of first learning an operator $P$ from the training pairs $\mathcal{S}$ with either one of the methods studied in \autoref{ch:op_learning}. The \emph{supervised} \IRLI
\index{Landweber-method!iteratively regularized!supervised} is defined as follows
\begin{equation}\label{eq:wsIRLI}
	\xkpd = \xkd - \opd{\xkd}^*\left( \op{\xkd}-\y^\delta \right) - \mkd P'[\xkd]^*\left( \lop{\xkd} -\y^\delta \right)\;.
\end{equation}
Or the according 2-step method, decomposing the method into a physics modeled and a data-driven iteration step:
\begin{equation} \label{eq:wsIRLI2} \begin{aligned}
		\xkppd &= \xkd - \opd{\xkd}^*\left( \op{\xkd}-\y^\delta \right), \\
		\xkpd &= \xkppd - \mkd P'[\xkppd] ^*\left( \lop{\xkppd} -\y^\delta \right)\;.
	\end{aligned}
\end{equation}
The supervised \IRLI \, has been analyzed in \cite{AspBanOekSch20}. The complication in the analysis in relation to \IRLI (see \cite{Sch98}) concerns the choice of the step-size of the data-driven update.

\subsection{Unsupervised \IRGN} \label{sec:wUIRGN}
Including unsupervised  expert information $\mathcal{U}=\set{\x^{(\ell)}:\ell=0,1,\ldots,\no}$ in the \IRGN-method defined in \autoref{eq:ir_gauss_iter} can be realized analogously as for \IRLI:
\begin{description}
	\item{The \emph{weighted unsupervised} \IRGN} reads as follows\index{Gauss-Newton-method!iteratively regularized!weighted unsupervised}
	\begin{equation} \label{eq:wUIRGN}
		\begin{aligned}
		\xkpd & = \xkd - \left( \mkd \id + \opd{\xkd}^*\opd{\xkd}\right)^{-1} \\
		& \qquad \left(\opd{\xkd}^*\left( \op{\xkd}-\y^\delta \right) + \mkd \left( \xkd-\frac{1}{\no+1}\sum_{\ell=0}^\no \x^{(\ell)} \right)\right)
		\end{aligned}
	\end{equation} 
    and its two-step variant
    \begin{equation*} \begin{aligned}
		\xkppd &= \xkd - \left(\mkd \id + \opd{\xkd}^*\opd{\xkd}\right)^{-1} \opd{\xkd}^*\left( \op{\xkd}-\y^\delta \right), \\
		\xkpd &= \xkppd -\mkd \left(\mkd \id + \opd{\xkd}^*\opd{\xkd}\right)^{-1} \left( \xkppd-\frac{1}{\no+1}\sum_{\ell=0}^\no \x^{(\ell)} \right)\;.
	\end{aligned}
\end{equation*}
\item{The \emph{cylic unsupervised} \IRGN}\index{Gauss-Newton-method!iteratively regularized!cyclic unsupervised} reads as follows
\begin{equation} \label{eq:cUIRGN}
	\begin{aligned}
		\xkpd & = \xkd - \left(\mkd \id + \opd{\xkd}^*\opd{\xkd}\right)^{-1} \\
		& \qquad \qquad \quad \left(\opd{\xkd}^*\left( \op{\xkd}-\y^\delta \right) + \mkd (\xkd-\x^{(r(k))})\right)\,,
	\end{aligned}
\end{equation}
and its two-step variant:
\begin{equation} \label{eq:cUIRGN2} \begin{aligned}
	\xkppd &= \xkd - \left(\mkd \id + \opd{\xkd}^*\opd{\xkd}\right)^{-1} \opd{\xkd}^*\left( \op{\xkd}-\y^\delta \right), \\
	\xkpd &= \xkppd -\mkd \left(\mkd \id + \opd{\xkd}^*\opd{\xkd}\right)^{-1} \left( \xkppd-\x^{(r(k))} \right)\;.
\end{aligned}
\end{equation}
Note that in the unsupervised \IRGN methods, we freeze $\left(\mkd \id + \opd{\xkd}^*\opd{\xkd}\right)^{-1}$, which means that we do not calculate an update of the inverse.

\item{The \emph{randomized unsupervised} \IRGN} \index{Gauss-Newton-method!iteratively regularized!randomized unsupervised} reads as follows
\begin{equation} \label{eq:rUIRGN}
	\begin{aligned}
\xkpd &= \xkd - \left(\mkd \id + \opd{\xkd}^*\opd{\xkd}\right)^{-1} \\
& \qquad \qquad \quad \left( \opd{\xkd}^*\left( \op{\xkd}-\y^\delta \right) + \mkd (\xkd-\x^{(i(k))}) \right)\;,
\end{aligned}
\end{equation}
where $i(k)$ is a random integer in $\set{1,\ldots,\no}$. The 2-step method can be defined similarly as in \autoref{eq:rUIRGN}.
\end{description}

\subsection{Supervised \IRGN}
As for supervised \IRLI, we learn an operator $P$ from the training pairs $\mathcal{S}$. Then \emph{supervised} \IRGN \index{Gauss-Newton-method!iteratively regularized!supervised} is defined as follows
\begin{equation}\label{eq:wsIRGN}
	\begin{aligned}
      \xkpd &= \xkd - \left( \opd{\xkd}^* \opd{\xkd} + \mkd P'[\xkd]^* P'[\xkd] \right)^{-1} \\
      &\qquad \left(\opd{\xkd}^*\left( \op{\xkd}-\y^\delta \right) + \mkd P'[\xkd] ^*\left( \lop{\xkd} -\y^\delta \right) \right)\;.
\end{aligned}
\end{equation}
Or according to the 2-step method
\begin{equation} \label{eq:wsIRGN2} \begin{aligned}
\xkppd &= \xkd - \left( \opd{\xkd}^* \opd{\xkd} + \mkd P'[\xkd]^* P'[\xkd] \right)^{-1} \opd{\xkd}^*\left( \op{\xkd}-\y^\delta \right), \\
\xkpd &= \xkppd - \mkd \left( \opd{\xkd}^* \opd{\xkd} + \mkd P'[\xkd]^* P'[\xkd] \right)^{-1}\\
& \qquad \qquad \qquad \quad P'[\xkppd] ^*\left( \lop{\xkppd}-\yd \right)\;.
\end{aligned}
\end{equation}
Of course, several other variants can be defined.
\begin{remark}
	In this section, we have considered iterative methods with the learned export operator $P$. All presented methods can be motivated to feature priors: In this case, we learn a feature operator $P$ first: The \emph{supervised feature} \IRLI
	\index{Landweber-method!iteratively regularized!supervised feature} is defined as follows
	\begin{equation}\label{eq:wsIRLI_f}
		\xkpd = \xkd - \opd{\xkd}^*\left( \op{\xkd}-\y^\delta \right) - \mkd P'[\xkd]^*\left( \lop{\xkd} -\z^\delta \right)\;.
	\end{equation}
	Or the according 2-step method, decomposing the method into a physics modeled and a data-driven iteration step:
	\begin{equation} \label{eq:wsIRLI2_f} \begin{aligned}
			\xkppd &= \xkd - \opd{\xkd}^*\left( \op{\xkd}-\y^\delta\right), \\
			\xkpd &= \xkppd - \mkd P'[\xkppd]^*\left( \lop{\xkppd}-\z^\delta \right)\;.
		\end{aligned}
	\end{equation}
    Note that $P'[\x]^*:\Z \to \X$ and the data are now $\z^\delta \in \Z$.	
\end{remark}

\begin{remark} \label{section: relation_to_vvr}{\bf (Relation to vector valued regression)}
	In \cite{AspBanOekSch20} we showed that nonlinear learned operators, such as, for instance, vector-valued reproducing kernel Hilbert spaces (vRKHS), are beneficial for convergence of hybrid iterative techniques: The implementation of supervised \IRLI and \IRGN, respectively, requires learning an operator $P$ from training pairs.
\end{remark}

\begin{remark}\label{section: relation_to_iss}{\bf(Relation to inverse scale spaces)}
From a variational perspective, these methods can be interpreted as discretizations of continuous gradient flows associated with coupled data fidelity terms. The learned feature term introduces an additional descent direction that complements the physics-based update and can be viewed as incorporating feature information into the evolution dynamics.
\end{remark}

\section{Model update learning}
In this section, we consider learning a model update. We are given information $\opo_0$ of $\opo$ such that
\begin{equation}\label{eq:modelupdate}
	\opo[\x] = \opo_0[\x] + N[\x]\;.
\end{equation}
For instance, we assume that $\opo_0$ is a linear operator and $N$ is a nonlinear operator. Instead of learning $\opo$ we are now learning $N$, which is a smaller perturbation. In this case we propose the following method for solving \autoref{eq:op} after we have learned $N$ with some method from \autoref{ss:nol}: 
	\begin{equation} \label{eq:wsIRLI2_u} \begin{aligned}
		\xkppd &= \xkd - \opo_0[\xkd]^*\left( \opo_0[\xkd] + N[\xkd] -\y^\delta\right), \\
		\xkpd &= \xkpd - N[\xkppd]^*\left( \opo_0[\xkppd] + N[\xkppd] -\y^\delta\right)\;.
	\end{aligned}
\end{equation}
If $\opo_0[\xkd]$ can be inverted, for instance if $\opo$ is linear, then it is possible to replace the
gradient iteration by Newton's method, yielding  
	\begin{equation} \label{eq:wsIRLI2_ub} \begin{aligned}
		\xkppd &= \xkd - \opo_0[\xkd]^{-1} \left( \opo_0[\xkd] + N[\xkd] -\y^\delta\right), \\
		\xkpd &= \xkpd - N'[\xkppd]^*\left( \opo_0[\xkppd] + N[\xkppd] -\y^\delta\right)\;.
	\end{aligned}
\end{equation}

\section{Open research questions}
\begin{opq} \label{opq:source}
	Does the trained operator $P$ allow for a simplification of the source condition? That is, is \autoref{eq:source_var_l} simpler to satisfy than \autoref{eq:source_var}. 
   
   For the same example the learned source condition \autoref{eq:source_var_l}, which states
   \begin{equation*}
   	\xdag-\x^0 - P'[\xdag]^*(\lop{\xdag}-\y) = \opd{\xdag}^* \omega
   \end{equation*}
   is equivalent to
   \begin{equation}\label{eq:source_c_il}
   	\frac{\xdag-\x^0- P'[\xdag]^*(\lop{\xdag}-\y)}{\y} \in W^{2,2}(0,1) \cap W_0^{1,2}(0,1)\;.
   \end{equation}
   The research question is whether an operator $P$ can be constructed that encodes the zeros of $\y$? For instance, choosing $\x^0 \equiv 0$, the question becomes whether $P$ can be chosen such that $P'[\xdag]^*(\lop{\xdag}-\y)$ equals $\xdag$ at the zeros $\y$.
\end{opq}

\begin{opq}\label{opq:source_a} Same question as in \autoref{opq:source} but for the $a$-problem. 
\end{opq}

\section{Further reading}
In the mathematical inverse problems context iterative machine learning methods have been considered for solving sparse and limited view tomography imaging problems (see \cite{AndKutOktPet22,BubKutLasMarSam19}), surrogate modeling \cite{HerSchwZec24}, nullspace learning \cite{SchwAntHal19}, optimizing iterative reconstruction methods \cite{AdlOkt18}, with neural network basis functions \cite{AdlOkt17,BurNeu03,DitKluMaaOte20,HalNgu23,LiSchwAntHal20,PinPet23}, post-processing of tomographic data \cite{JinMccFroUns17}, task adapted reconstructions \cite{AdlLunVerSchoOkt22}, parameter learning with NETT \cite{LiSchwAntHal20}, among others. They have also been used to speed up iterative methods and to improve parameter quality (see \cite{ArrMaaOktScho19,JinMccFroUns17}). Surveys of machine learning methods for solving mathematical inverse problems are summarized in \cite{ArrMaaOktScho19,HabHol23_report,MukHauOktPerScho23}.
Guaranteed convergence of learned reconstructions is the title of \cite{MukHauOktPerScho23}. However, this and papers with a similar topic are typically devoted to linear inverse problems, while here we consider nonlinear inverse problems. 
Recently generalized regularizers appeared under the name of \emph{plug and play} priors\index{prior!plug and play}; See, for instance, \cite{VenBouWoh13,MukHauOktPerScho23,LauBorAlmDelDur22}. The methodic ideas implement variable splitting and, therefore, are conceptually similar to the two-step hybrid methods considered here. However, their motivation is linear ill--posed problems, while we study nonlinear ill--posed problems here. A recent advance is PiLocNet \cite{LuAoWanPraCha24_report}, which is a hybrid Tikhonov regularization for linear ill-posed problems with sophisticated regularizers. A recent paper showing the relation between efficient diffusion posterior sampling and the Levenberg-Marquardt method is \cite{LiWan25}.

\chapter{Appendix}
Below we summarize basic notation and collect some results from the literature.

\section{Basic notation}\label{sec:not}
We try to obey the following notation throughout this book:
\begin{description}
	\item{\bf Functions and operators:}
\begin{enumerate}
	\item Elements of Euclidean space are denoted by $\vs \in \R^m$ or $\vt \in \R^n$. In dimension $m=1$ we also write $t \in \R$.
	\item Scalar functions defined on subsets of Euclidean space are written in monospace font, for example, $\x:\R^m \to \R, \, \vs \mapsto \x(\vs)$.
	\item Arguments of operators and functionals are written with square brackets $[\cdot]$, for instance, $\y = \op{\x}$.
	\item Parametrizations are denoted by $\vp$. For instance neural network functions $\x$ are parametrized by $\vp$. We write $\x[\vp](\vs)$ and mean that the neural network function obtained by parameters $\vp \in \R^\dimlimit$ is a function with arguments $\vs \in \R^m$  (see \autoref{se:nnf}).	
	\item $\norm{\cdot}_p$ denotes the $\ell^p$-norm on $\R^m$ or of sequences. 
    \item $(\Z,\norm{\cdot}_\Z)$ denotes a normed space. If $\Z$ is a Hilbert space, then its inner product is denoted by $\inner{\z_1}{\z_2}_\Z$. The standard inner product on $\R^m$ is denoted by $\inner{\cdot}{\cdot}_2$.
	\item Lebesgue spaces are denoted by $L^p(\Omega)$, for $1 \leq p \leq \infty$.
    If necessary, we use $L^p(\Omega;\C)$ for complex valued functions.
\end{enumerate}
\item{\bf Indices and data driven notation:}
\begin{enumerate}	
	\item Training data are usually indexed by $\ell=1, \ldots,\no$ or $\ell=0,1, \ldots,\no$.
	\item $\hat{\x}$ denotes a learned function. That is a function, where the parameters have been trained by expert information.
	\item $\deep{\cdot}$ denotes an \deepONet.
	\item $\ttm$ and $\noc$ denote the number of components for approximation of a function: $\ttm$ is the dimension of a spline or wavelet, finite element space, respectively (see for instance \autoref{ex:a_recon_II}), and $\noc$ is the number of summands used for approximation of neural networks (see for instance \autoref{eq:classical_approximation}. That means $\x$ is approximated by $\x_\ttm$, $\x_\noc$, respectively. See for instance \autoref{eq:wiener_property} for neural networks and \autoref{eq:xfm} for wavelets.
	\item $\ttn$ denotes the order of approximation of an operator. That means $\opo$ is approximated by $\opo_\ttn$.
	\item Neural networks are parameterized by a vector $\vp$ of length $\noc_*$ (see for instance \autoref{eq:p}.
	\item For multi-indices $\gamma \in \N_0^m$ we define $\abs{\gamma} = \sum_{i=1}^m \gamma_i$.
	\index{multi-indices}
\end{enumerate}
\end{description}
\section{Probability theory} \label{ap:probability_theory}
We review the basic ideas of \emph{probability theory}\index{theory!probability} which are used at several places throughout the book: For instance we discuss the performance of regularization methods under the assumption that input data $\y$ depend on random events in a sample space $\Omega$ (see \autoref{sec: Polregres}) and for inverse problems related to Markov-models (see \autoref{ch:sequentiallearning}.

We start with the definition of measurable sets, which are defined by a $\sigma$-algebra (see for instance \cite{Els18}).\index{algebra!$\sigma$}
\begin{definition}({\bf $\sigma$-algebra})\label{de:ap:sigma_algebra}
	Let $\Omega$ be a set and consider $2^\Omega$ the set of all subsets of $\Omega$. A $\sigma$-algebra $\mathfrak{F}$ is a subset of $2^\Omega$, which satisfies
	\begin{enumerate}
		\item $\emptyset \in \mathfrak{F}$.
		\item If $A \in \mathfrak{F}$, then also the complement $\mathcal{C}A =\Omega \backslash A$ is an element of $\mathfrak{F}$.
		\item The countable union $\bigcup_{i \in \N} A_i \in \mathfrak{F}$ if $A_i \in \mathfrak{F}$ for all $i \in \N$.
	\end{enumerate}
	The elements of the $\sigma$-algebra are often called the \emph{measurable sets}\index{sets!measurable} of $\Omega$.
\end{definition}
\begin{example}
	\begin{enumerate}
		\item The \emph{Borel-$\sigma$-algebra} is the smallest $\sigma$-algebra that contains all open subsets of $\Omega \subseteq \R^n$. In particular this means that all open and closed sets are Borel-measurable.
		\item The $\sigma$-algebra of \emph{Lebesgue-measurable} sets on $\R^n$: A set is $\lambda_n$-measurable (Lebesgue-measurable) if it is the union of a countable set of \emph{finite measurable}\index{measurable!finite} Lebesgue sets.\index{measurable!$\lambda_n$}\index{measurable!Lebesgue} A set is finite measurable if it is the limit of a sequence of elementary sets. See for instance \cite{Els18}.
		\item There exist more Lebesgue-measurable sets than Borel-measurable sets. In fact the Borel-sets have cardinality $c$ and the Lebesgue-sets have cardinality $2^c$. 
		\item The Vitali-set (see \cite{EvaGar15}) is not Lebesgue-measurable and therefore also not Borel-measurable.
	\end{enumerate}
\end{example}

\begin{definition}({\bf Measure and probability space})\label{de:ap:prob_space}
	A \emph{measurable space}\index{space!probability} consists of a \emph{sample space}\index{space!sample} $\Omega$ equipped with a $\sigma$-algebra $\mathfrak{F}$ of subsets of $\Omega$. A \emph{measure space}\index{space!probability} is a measurable space endowed with a measure
	\begin{equation*}
		\mu : \mathfrak{F} \subseteq 2^{\Omega} \to \R_{\geq 0} \cup \set{+\infty}\;.
	\end{equation*}
	
	A \emph{probability space}\index{space!probability} is a measure space whose measure is a probability measure
	\begin{equation*}
		\mathbb{P} : \mathfrak{F} \subseteq 2^{\Omega} \to [0,1]\,,
	\end{equation*}
	such that $\mathbb{P}(\Omega) = 1$.
\end{definition}

\begin{definition} ({\bf Random variable})\label{de:ap:random_variable}
	Let $(\Alpha,\mathfrak{S})$ be a measurable space. An $\Alpha$-valued random variable $\textsf T$ is a measurable function ${\textsf T}: \Omega \to \Alpha$ from the underlying probability space $(\Omega, \mathfrak{F}, \mathbb{P})$ to the measurable space $(\Alpha, \mathfrak{S})$. A typical choice for $A$ is $\R^n$.
\end{definition}

\begin{example}\label{ex:ap:dice}
	Let $\Omega =\set{b,w}$ be the sample space of black and white balls.
	If we draw from the mug we can model this by the random variable
	\begin{equation*}
		\begin{aligned}
		{\textsf T}: \Omega &\to \R\,,\\
		               x &\mapsto \left\{\begin{array}{rcl}
		               	1 &\text{ if } & x=b\\
		               	2 &\text{ if } & x=w
		               	\end{array} \right.
		\end{aligned}
	\end{equation*}
	associated with the probability $\prob:\Omega \to [0,1]$. As a discrete random variable it is clearly measurable. It is common to use $b$, $w$ synonymously for $1$,$2$, respectively.
\end{example}

\begin{definition} ({\bf Distribution of random variable})\label{de:ap:random_variable_distribution}
	Let $(\Alpha,\mathfrak{S},\mu)$ be a measurable space and let ${\textsf T}: \Omega \mapsto \Alpha$ be a $\Alpha$-valued random variable. Then for any $\textit{Z} \in \mathfrak{S}$ the distribution or the probability law $\rho({\textsf T})$ of ${\textsf T}$ is defined as
	$$\rho(\set{{\textsf T} \in \textit{Z}}):= \mathbb{P}(\{\zeta \in \Omega: {\textsf T}(\zeta) \in \textit{Z} \}).$$
\end{definition}

\subsection{Bayes' theorem} \index{theorem!Bayes}
We recall the famous theorem of Bayes. For this we need the definition of \emph{joint} and \emph{conditional probabilities}\index{probability!joint} \index{probability!conditional}:

\begin{definition} \label{de:prob_joint} The \emph{joint probability} of two discrete random variables
	${\textsf T}_1: \Omega_1 \to \Alpha_1$, ${\textsf T}_2:\Omega_2 \to \Alpha_1$ is defined as follows
\begin{equation} \label{eq:prob_joint}
\prob_{{\textsf T}_1,{\textsf T}_2}(\alpha_1,\alpha_2) =
\prob\bigl( \set{   ({\textsf t}_1,{\textsf t}_2): ({\textsf T}_1,{\textsf T}_2)=(\alpha_1,\alpha_2)
           } \bigr)\,,		
\end{equation}
which is defined in the product space $({\textsf t}_1,{\textsf t}_2) \in \Omega_1 \times \Omega_2$.	
\end{definition}
\begin{remark}
	If the two discrete random variables
	${\textsf T}_1: \Omega_1 \to \Alpha_1$, ${\textsf T}_2:\Omega_2 \to \Alpha_1$ are independent \index{random variable!independent}, then 
	\begin{equation*} 
		\prob_{{\textsf T}_1,{\textsf T}_2}(\alpha_1,\alpha_2) = 
		\prob_{{\textsf T}_1}(\alpha_1) \prob_{{\textsf T}_2}(\alpha_2)\;.
	\end{equation*}
\end{remark}

\begin{definition} \label{de:prob_cond} The \emph{conditional probability} is defined as 	
	\begin{equation} \label{eq:prob_cond}
		\prob_{{\textsf T}_1|{\textsf T}_2}(\alpha_1|\alpha_2) = 
		\left\{ \begin{array}{ll}
			\frac{\prob_{{\textsf T}_1,{\textsf T}_2}(\alpha_1,\alpha_2)}{\prob_{{\textsf T}_2}(\alpha_2)}& \text{ if } \; \prob_{{\textsf T}_2}(\alpha_2)>0\,,\\
			0 & \text{ if } \; \prob_{{\textsf T}_2}(\alpha_2)=0\;.
		\end{array}
		\right.
	\end{equation}
\end{definition}

\begin{theorem}[Theorem of Bayes]\label{th:Bayes}
	Let $(\Omega,\mathfrak{F}=2^\Omega,\prob)$ be a probability space.
	Then the \emph{conditional probability} \index{probability!conditional}
	is given by
	\begin{equation} \label{eq:bayes}
		\prob_{\observ|\staterv}(\obse|\state) = \frac{\prob_{\staterv|\observ}(\state|\obse) \prob_\observ(\obse)}{\prob_\staterv(\state) }\;.
	\end{equation}		
\end{theorem}

\section{Functional analysis} \label{sec:weak}
Functional analysis is a highly developed topic and there exist various great books such as \cite{Bre11b,Con07,DunSchw63,Rud73,Yos95}. Here we recall only a very limited amount of results, which are related to \emph{weak convergence} in separable Hilbert spaces. 

\begin{definition} Let $\X$ be a Hilbert space and let $(\x_k)_{k \in \N}$ be a sequence in $\X$. We say that $\x_k$ \emph{converges weakly to}\index{convergence!weak} $\tilde{\x}$, in symbols $\x_k \rightharpoonup \tilde{\x}$, if for all $\x \in \X$
	\begin{equation*}
		\inner{\x_k}{\x}_\X \to \inner{\tilde{\x}}{\x}_\X\;.
	\end{equation*}
\end{definition}
\begin{theorem}[Weak convergence is strong convergence in $\R^{\tt m}$] A sequence $(\x_k)_{k \in \N} \subset \R^{\tt m}$ converges weakly if and only if it converges strongly.
\end{theorem}

\begin{example}[Weak convergence is not strong convergence]\label{ex:onb}
	Let $(\x_k)_{k \in \N}$ be an orthonormal basis of $\X$ (in particular this means that $\X$ is infinite dimensional), then $(\x_k)$ converges weakly to $0$.
\end{example}

\begin{theorem} \label{th:weak_convergence}
	Every bounded sequence in $\X$ has a weakly convergent subsequence.	Conversely, every weakly convergent sequence is bounded.
\end{theorem}
\begin{remark}
	This generalizes the well-known result that every bounded sequence in $\R^{\tt m}$ has a convergent subsequence.
\end{remark}
\begin{definition}[Weak lower semi-continuity]
	A functional
	\begin{equation*}
		\mathcal{S} : \mathcal{D}({\mathcal S}) : \X \to \R \cup \set{+\infty}
	\end{equation*}
	is \emph{weak lower semi-continuous}\index{lower semi-continuous!weak} if for every sequence
	$(\x_k)_{k \in \N}$, which is weakly convergent in $\X$ to $\tilde{\x} \in \mathcal{D}({\mathcal S})$
	\begin{equation*}
		\mathcal{S}(\tilde{\x}) \leq \liminf \set{\mathcal{S}(\x_k) : k \in \N}\;.
	\end{equation*}
\end{definition}
\begin{theorem} \label{th:ls}
	The norm in a Hilbert-space is weakly lower semi continuous.
\end{theorem}

\section{Spectral decomposition}
is an important computational tool but also an important aspect in the analysis of regularization methods.
\begin{definition}[Spectral decomposition, \cite{EngHanNeu96}] \label{de:spectral} Let $\opo : \X \to \Y$ be a compact linear operator between real Hilbert spaces $\X$ and $\Y$. A \emph{singular system} $({\tt u}_k,{\tt v}_k;\gamma_k)_{k \in \N_0}$ is defined as follows:
	\begin{enumerate}
		\item $\gamma_k^2$, $k \in \N_0$ are the non-zero eigenvalues of the selfadjoint operator $\opo^*\opo$ (and also $\opo\opo^*$) written in decreasing order. We always take $\gamma_k > 0$.
		\item $({\tt u}_k \in \X)_{k \in \N_0}$ are a complete orthonormal system of eigenvectors of $\opo^*\opo$ (on the space $\overline{\mathcal{R}(\opo^*)} =\overline{\mathcal{R}(\opo^*\opo)}$.
		\item $\left({\tt v}_k := \frac{1}{\norm{\opo {\tt u}_k}_\Y} \opo {\tt u}_k \in \Y\right)_{k \in \N_0}$.
	\end{enumerate}
	$\left({\tt v}_k\right)_{k \in \N_0}$ is a complete orthonormal system of eigenvectors of $\opo \opo^*$ which span $\overline{\mathcal{R}(\opo)} = \overline{\mathcal{R}(\opo \opo^*)}$. Moreover, the following formulas hold:
	\begin{equation}\label{eq:spectral}
		\begin{aligned}
			\opo {\tt u}_k &= \gamma_k {\tt v}_k, \quad
			\opo^* {\tt v}_k = \gamma_k {\tt u}_k, \\
			\opo \x &= \sum_{k=0}^\infty \gamma_k \inner{\x}{{\tt u}_k}_\X {\tt v}_k \text{ for all } \x \in \X, \\
			\opo^* \y &= \sum_{k=0}^\infty \gamma_k \inner{\y}{{\tt v}_k}_\Y {\tt u}_k \text{ for all } \y \in \Y.
		\end{aligned}
	\end{equation}
\end{definition}
\autoref{de:spectral} generalizes the well-known spectral decomposition of matrices:
\begin{theorem}[Spectral theory: See Theorem 2.5.2 in \cite{GolVan96}]\label{th:svd} Let the operator $\opo: \R^m \to \R^n$ be linear. Then for every $\vx \in \R^m$
	
	\begin{equation} \label{eq:svd}
		\opo \vx = \sum_{j=1}^{\min \set{m,n}} \gamma_j \inner{\vx}{\vec{u}_j} \vec{v}_j \text{ and } \opo^T \vec{v}_j = \gamma_j \vec{u}_j,
		\opo \vec{u}_j = \gamma_j \vec{v}_j,
	\end{equation}
	where $\vec{u}_j \in \R^m$, $j=1,\ldots,m$ and $\vec{v}_j \in \R^n$, $j=1,\ldots,n$ are orthonormal, respectively, and
	\begin{equation*}
		0 \leq \gamma_1 \leq \gamma_2 \leq \cdots \leq \gamma_{\min \set{m,n}}.
	\end{equation*}
	In matrix form, this identity becomes more compact:
	\begin{equation} \label{eq:svdd}
		\opo = V D U^T \text{ with } U \in \R^{m \times m}, V \in \R^{n \times n},
	\end{equation}
	where $U$ and $V$ are orthogonal and
	\begin{equation*}
		\begin{aligned}
			D = \text{diag} (\gamma_1,\gamma_2, \cdots,\gamma_{\min \set{m,n}}) \in \R^{n \times m}.
		\end{aligned}
	\end{equation*}
\end{theorem}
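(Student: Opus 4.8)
The plan is to reduce the statement to the spectral theorem for real symmetric matrices applied to $A := \opo^T \opo \in \R^{m \times m}$. First I would observe that $A$ is symmetric, since $A^T = (\opo^T\opo)^T = \opo^T\opo = A$, and positive semidefinite, since $\inner{A\vx}{\vx} = \norm{\opo\vx}^2 \ge 0$ for all $\vx \in \R^m$. Hence by the spectral theorem there is an orthonormal basis $\set{\vec{u}_1,\ldots,\vec{u}_m}$ of $\R^m$ consisting of eigenvectors, $A\vec{u}_j = \gamma_j^2 \vec{u}_j$ with $\gamma_j^2 \ge 0$; I would order them so that $0 \le \gamma_1 \le \cdots \le \gamma_m$ and choose the nonnegative square roots $\gamma_j \ge 0$, matching the convention in \autoref{eq:svd}.

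Next I would construct the right singular vectors. For each index $j$ with $\gamma_j > 0$ set $\vec{v}_j := \gamma_j^{-1} \opo \vec{u}_j \in \R^n$. Orthonormality follows from
\begin{equation*}
	\inner{\vec{v}_i}{\vec{v}_j} = \frac{1}{\gamma_i\gamma_j}\inner{\opo\vec{u}_i}{\opo\vec{u}_j} = \frac{1}{\gamma_i\gamma_j}\inner{\opo^T\opo\vec{u}_i}{\vec{u}_j} = \frac{\gamma_i}{\gamma_j}\inner{\vec{u}_i}{\vec{u}_j} = \delta_{ij}.
\end{equation*}
For the eigenrelations, $\opo\vec{u}_j = \gamma_j\vec{v}_j$ holds by definition when $\gamma_j>0$, while for $\gamma_j=0$ we get $\norm{\opo\vec{u}_j}^2 = \inner{A\vec{u}_j}{\vec{u}_j}=0$, so $\opo\vec{u}_j=0=\gamma_j\vec{v}_j$ as well; the adjoint relation is $\opo^T\vec{v}_j = \gamma_j^{-1}\opo^T\opo\vec{u}_j = \gamma_j^{-1}\gamma_j^2\vec{u}_j = \gamma_j\vec{u}_j$, which is the second identity asserted in \autoref{eq:svd}.

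To obtain the expansion I would expand an arbitrary $\vx\in\R^m$ in the orthonormal basis, $\vx = \sum_{j=1}^m \inner{\vx}{\vec{u}_j}\vec{u}_j$, and apply $\opo$ by linearity to get $\opo\vx = \sum_{j=1}^m \gamma_j\inner{\vx}{\vec{u}_j}\vec{v}_j$. Since the $\vec{v}_j$ with $\gamma_j>0$ are orthonormal in $\R^n$ there are at most $n$ of them, and at most $m$ in total, so at most $\min\set{m,n}$ of the $\gamma_j$ are strictly positive; dropping the vanishing terms yields \autoref{eq:svd}. Finally I would complete $\set{\vec{v}_j : \gamma_j>0}$ by Gram--Schmidt to a full orthonormal basis $\set{\vec{v}_1,\ldots,\vec{v}_n}$ of $\R^n$, and assemble the orthogonal matrices $U=(\vec{u}_1,\ldots,\vec{u}_m)$ and $V=(\vec{v}_1,\ldots,\vec{v}_n)$ together with the rectangular $D\in\R^{n\times m}$ carrying $\gamma_1,\ldots,\gamma_{\min\set{m,n}}$ on its diagonal. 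The column-wise identities $\opo\vec{u}_j=\gamma_j\vec{v}_j$ read exactly $\opo U = VD$, and multiplying by $U^T=U^{-1}$ gives the matrix form \autoref{eq:svdd}.

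There is no deep obstacle here, as this is the classical finite-dimensional singular value decomposition; indeed the result can alternatively be read off as the special case of \autoref{de:spectral} with $\X=\R^m$ and $\Y=\R^n$, every linear map being compact in finite dimensions. The only points that require care are bookkeeping ones: verifying that the number of strictly positive $\gamma_j$ does not exceed $\min\set{m,n}$ so that the sum in \autoref{eq:svd} truncates correctly, extending the partial system $\set{\vec{v}_j:\gamma_j>0}$ to a full basis of $\R^n$ when $m\neq n$, and keeping the rectangular shape of $D\in\R^{n\times m}$ consistent with the (unusual) increasing ordering $0\le\gamma_1\le\cdots$ adopted in the statement.
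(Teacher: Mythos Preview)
Your argument is correct and is the standard derivation of the SVD via the spectral theorem for the symmetric positive semidefinite matrix $\opo^T\opo$. Note, however, that the paper does not actually prove this theorem: it is stated in the appendix with a reference to \cite{GolVan96} (Theorem~2.5.2) and no proof is given, so there is no ``paper's own proof'' to compare against beyond observing that your construction is precisely the classical one found in that reference and mirrors the infinite-dimensional version recalled in \autoref{de:spectral}.
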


\begin{definition}[Operator norm] \label{de:op_norm} Let $L:\X \to \Y$ be a linear operator between Hilbert spaces. Then the \emph{operator norm}\index{norm!operator} of $L$ is defined as
	\begin{equation*}
		\norm{L} := \sup_{\set{\x \in \X: \norm{\x}_\X=1}} \norm{L\x}_\Y.
	\end{equation*}
\end{definition}

\section{Distributions} \label{sec:distribution}
The distributions are needed to explicitly calculate the Fourier-transform of activation functions (see \autoref{ss:act}):
\begin{definition}[Continuously differentiable functions]
	For $k \in \N \cup \{\infty\}$ and $\Omega \subseteq \R^m$ let $C^k(\Omega;\C)$ be the space of $k$ times continuously differentiable complex-valued functions on $\Omega$. We denote by $C_0^k(\Omega;\C)$ the subspace of $C^k(\Omega;\C)$ consisting of functions with compact support in $\Omega$ and by $C_B^{k}(\Omega)$ the subspace of bounded functions.
	
	For real valued functions, the definition of $C^k(\R^m)$, $C_B^{l-1}$ and $C_0^\infty(\R^m)$ are analogous.
\end{definition}
\begin{definition}[Schwartz-functions]
	The elements of the set
	\begin{equation*}
		\mathcal{S} (\R^m;\C) := \set{\phi: \R^m \to \C: \forall_{\vec{i}, \vec{j} \in \N_0^m} \exists_{C>0} \sup_{\vs \in \R^m} \abs{\vs^{\vec{i}} D^{\vec{j}} \phi(\vs)} \leq C}
	\end{equation*}
	are called \emph{Schwartz-functions}.\index{function!Schwartz}
\end{definition}
\begin{definition}[Distributions]
	A \emph{distribution}\index{distribution} on $\Omega$ is a linear functional
	\begin{equation*}
		T: C_0^\infty(\Omega;\C) \to \C\,,
	\end{equation*}
	which is bounded in the following sense: For every set $K \subseteq \Omega$ which is compact in $\R^m$ there exist $k \in \N_0$ and a constant $C>0$ such that for all $\phi \in C_0^\infty(K;\C)$ it holds that
	\begin{equation*}
		\abs{T[\phi]} \leq C \sum_{\set{\vec{i} \in \N_0^m : \abs{\vec{i}} \leq k}}
		\sup_{\vs \in K} \abs{D^{\vec{i}} \phi(\vs)} \;.
	\end{equation*}
	A \emph{tempered distribution}\index{distribution!tempered} on $\R^m$ is a linear functional
	\begin{equation} \label{eq:tempered_distribution}
		T: \mathcal{S}(\R^m;\C) \to \C\,,
	\end{equation}
	satisfying that there exists $k \in \N_0$ and $C >0$ such that for all $\phi \in \mathcal{S}(\R^m;\C)$
	\begin{equation*}
		\abs{T[\phi]} \leq C
		\max_{\set{\vec{i}, \vec{j} \in \N_0^m : \abs{\vec{i}} + \abs{\vec{j}} \leq k}} \sup_{\vs \in \R^m}\abs{\vs^{\vec{i}} D^{\vec{j}} \phi(\vs)} \;.
	\end{equation*}
	\item It is convenient to write (tempered) distributions as inner products: We write a (tempered) distribution $T[\phi]$
	as
	\begin{equation} \label{eq:ipdist}
		T[\phi] = \inner{T}{\phi}\;.
	\end{equation}
\end{definition}
In the following we summarize some results how we can work with distributions (see \cite{Yos95,DauLio88}):
\begin{theorem} \label{th:distribution_rechenregeeln}
	Let $T$ be a distribution or a tempered distributions on $\R$. with the notation \autoref{eq:ipdist} the derivative of $T$, $T'$, is defined by
	\begin{equation} \label{eq:disdiv}
		\inner{T'}{\phi} = -\inner{T}{\phi'} \text{ for all } \phi \in C_0^\infty(\R;\C), \mathcal{S}(\R;\C) \text{ respectively}.
	\end{equation}
	This is again a distribution, tempered distribution, respectively.
\end{theorem}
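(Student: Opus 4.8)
\textbf{Proof plan for \autoref{th:distribution_rechenregeeln}.}

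The statement to be proved asserts that, given a distribution (respectively tempered distribution) $T$ on $\R$, the functional $T'$ defined by $\inner{T'}{\phi} = -\inner{T}{\phi'}$ is again a distribution (respectively tempered distribution). The plan is to verify the two defining properties separately: linearity of $T'$ as a functional on test functions, and the appropriate boundedness estimate. Linearity is immediate: for $\phi_1,\phi_2 \in C_0^\infty(\R;\C)$ and scalars $a,b$, differentiation is linear, so $\inner{T'}{a\phi_1 + b\phi_2} = -\inner{T}{a\phi_1' + b\phi_2'} = -a\inner{T}{\phi_1'} - b\inner{T}{\phi_2'} = a\inner{T'}{\phi_1} + b\inner{T'}{\phi_2}$, using linearity of $T$. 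So the substantive content is the boundedness estimate.

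First I would handle the distribution case. Fix a compact set $K \subseteq \R$. Since $T$ is a distribution, there exist $k \in \N_0$ and $C > 0$ such that $\abs{T[\psi]} \leq C \sum_{\set{i \in \N_0 : i \leq k}} \sup_{s \in K} \abs{D^i \psi(s)}$ for all $\psi \in C_0^\infty(K;\C)$. Now for $\phi \in C_0^\infty(K;\C)$ note that $\phi' \in C_0^\infty(K;\C)$ as well (the support of $\phi'$ is contained in that of $\phi$, hence in $K$). Applying the estimate to $\psi = \phi'$ gives
\begin{equation*}
	\abs{T'[\phi]} = \abs{T[\phi']} \leq C \sum_{i=0}^{k} \sup_{s \in K} \abs{D^i \phi'(s)} = C \sum_{i=1}^{k+1} \sup_{s \in K} \abs{D^i \phi(s)} \leq C \sum_{i=0}^{k+1} \sup_{s \in K} \abs{D^i \phi(s)}\;,
\end{equation*}
which is exactly the bound required in the definition of a distribution, now with order $k+1$ in place of $k$ and the same constant $C$. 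Since $K$ was an arbitrary compact subset, $T'$ satisfies the defining boundedness condition, so $T'$ is a distribution.

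For the tempered case the argument is structurally identical but uses the global Schwartz seminorm estimate rather than a local one. By hypothesis there are $k \in \N_0$ and $C > 0$ with $\abs{T[\psi]} \leq C \max_{\set{i,j \in \N_0 : i+j \leq k}} \sup_{s \in \R} \abs{s^i D^j \psi(s)}$ for all $\psi \in \mathcal{S}(\R;\C)$. One checks that $\phi \mapsto \phi'$ maps $\mathcal{S}(\R;\C)$ into itself, so applying the estimate to $\psi = \phi'$ and using $D^j(\phi') = D^{j+1}\phi$ yields $\abs{T'[\phi]} \leq C \max_{\set{i+j \leq k}} \sup_{s} \abs{s^i D^{j+1}\phi(s)} \leq C \max_{\set{i+j \leq k+1}} \sup_{s} \abs{s^i D^j \phi(s)}$, which is the required bound with order $k+1$. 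I expect no real obstacle here — the only point requiring a moment's care is observing that differentiation does not enlarge supports (for the compactly supported case) and maps Schwartz functions to Schwartz functions (for the tempered case), which legitimizes substituting $\phi'$ into the given estimates; everything else is bookkeeping with the seminorms. The closing remark that $T'$ is "again a distribution, tempered distribution, respectively" is then just the conclusion of these two parallel verifications.
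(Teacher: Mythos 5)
Your proof is correct and complete. The paper itself gives no proof of this statement — it simply defers to the cited references \cite{Yos95,DauLio88} — and your argument is exactly the standard one found there: linearity is inherited from linearity of $T$ and of differentiation, and the boundedness estimate for $T'$ follows by substituting $\phi'$ into the estimate for $T$, using that differentiation does not enlarge supports (respectively maps $\mathcal{S}(\R;\C)$ into itself), at the cost of raising the order from $k$ to $k+1$. Nothing is missing relative to the definitions of distribution and tempered distribution as stated in the paper's appendix.
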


\begin{example}	The constant function
	\begin{equation*}
		T: s \mapsto 1
	\end{equation*}
	is a tempered distribution (see \cite{DauLio88}).
	
	The derivative of the $\delta$-distribution\index{distribution!$\delta$}
	\begin{equation*}
		\delta[\phi] = \inner{\delta}{\phi} = \phi(0) \text{ for all } \phi \in C_0^\infty(\R;\C)
	\end{equation*}
	is given by 	
	\begin{equation*}
		\delta'[\phi] = \inner{\delta'}{\phi} = -\phi'(0) \text{ for all } \phi \in C_0^\infty(\R;\C)\;.
	\end{equation*}
\end{example}

\begin{definition}\label{def:convergence_distribution}
	A sequence of distributions $T_k$ converges to a distribution $T$ (denoted by $T_k \rightarrow T$) if
	\begin{equation*}
		\lim_{k\rightarrow \infty}\inner{T_k}{\phi} = \inner{T}{\phi}
	\end{equation*}
	for any $\phi \in C_0^\infty(\R;\C)$.
\end{definition}
\begin{example}
	Consider the sequence of functions $\delta_k: \R \to \C$ defined by:
	\begin{equation*}
		\delta_k(x) = k \cdot \mathbf{1}_{\left[-\frac{1}{2k}, \frac{1}{2k}\right]}(x) =
		\begin{cases}
			k & \text{if } |x| \leq \frac{1}{2k} \\
			0 & \text{otherwise}
		\end{cases}
	\end{equation*}
	This sequence converges to $\delta$-distribution:
	\begin{equation*}
		\delta_k \rightarrow \delta.
	\end{equation*}
\end{example}

\section{Fourier-transform} \label{ch:fourier}
\begin{definition} \label{de:fourier} \index{Fourier-transform}
	\begin{itemize}
		\item The Fourier-transform of a function $f \in L^1(\R;\C)$ is defined as
		\begin{equation}
			\label{eq:fourier_l1}
			\omega \in \R \mapsto \mathcal{F}[f](\omega) = \frac{1}{\sqrt{2\pi}} \int_\R f(t) \exp^{-\i \omega t}\,dt\;.
		\end{equation}
		The inverse Fourier-transform of a function $g \in L^1(\R;\C)$ is defined as
		\begin{equation}
			\label{eq:fourier_l1_1}
			t \in \R \mapsto \mathcal{F}^{-1}[g](t) = \frac{1}{\sqrt{2\pi}} \int_\R g(\omega) \exp^{\i \omega t}\,dt\,.
		\end{equation}
		\item The Fourier-transform of a function $f \in L^2(\R;\C)$ is defined as
		\begin{equation}
			\label{eq:fourier_l2}
			\omega \in \R \mapsto \mathcal{F}[f](\omega) = \frac{1}{\sqrt{2\pi}} \lim_{0 < A \to \infty} \int_{-A}^A f(t) \exp^{-\i \omega t}\,dt\;.
		\end{equation}
		The inverse of $\mathcal{F}$ is given by
		\begin{equation} \label{eq:fourier_l2_1}
			t \in \R \mapsto \mathcal{F}^{-1}[g](t) = \frac{1}{\sqrt{2\pi}} \lim_{0 < A \to \infty} \int_{-A}^A g(\omega) \exp^{\i \omega t}\,dt\,,
		\end{equation}
		for all $g \in L^2(\R)$.
		\item 	The Fourier-transform of a tempered distribution $f$ is the unique tempered distribution $\mathcal{F}[f]$ satisfying\footnote{$\mathcal{S}'$ can denote the dual space of real or complex valued function $\phi:\R \to \R$ or into $\C$ which are infinitely often differentiable, that is $\phi \in C^\infty(\R)$. We do not distinguish among them.}
		\begin{equation} \label{eq:f1}
			\mathcal{F}[f][\phi] = f[\mathcal{F}^{-1}[\phi]] \quad \text{ for all } \phi \in \mathcal{S}(\R) \text{ or } \mathcal{S}(\R;\C).
		\end{equation}
	\end{itemize}
\end{definition}
\begin{example}{\bf ($\delta$-distribution)}.
	The Fourier-transform of the constant function $s \mapsto 1$ is the \emph{$\delta$-distribution} \index{distribution!$\delta$}:
	\begin{equation} \label{eq:delta_dis}
		\mathcal{F}[s \mapsto 1](\omega) = \sqrt{2\pi} \delta(\omega)\;.
	\end{equation}
\end{example}
Here we show a few distributional Fourier-transforms: They are well-known but the scaling coefficients cause often some problems in calculation, such that we add them here for the sake of completeness.
\begin{table}[h]
	\centering
	\begin{tabular}
		{r||l}
		\hline
		$1$ & $\sqrt{2\pi} \delta$\\
		$s$ & $\sqrt{2\pi} \i \delta'$ \\
		$\chi_{[-c,c]}$ &  $\frac{1}{\sqrt{2\pi c}} \textnormal{sinc} (\omega/(2\pi c))$\\
		$\chi_{[0,\infty]}$ & $\sqrt{\frac{2}{\pi}}c \,\textnormal{sinc} (c\omega/\pi) $\\
		\hline	
	\end{tabular}
	\caption{\centering See for instance \cite{Zei13}. Here $\sinc (x) := \frac{\sin(\pi x)}{\pi x}$.}
\end{table}
The Fourier-transform of a derivative of a tempered distribution can be computed with the following theorem:
\begin{theorem} Let $f$ be a tempered distribution, then the Fourier-transform of the derivative of $f$, which is again a tempered distribution, reads as follows
	\begin{equation} \label{eq:deriv_dist}
		\mathcal{F}[f'] = \mathcal{F}[\omega \to \i \omega f(\omega)]\;.
	\end{equation}
\end{theorem}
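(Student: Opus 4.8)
The plan is to prove \autoref{eq:deriv_dist} directly from the definition of the Fourier transform of a tempered distribution, \autoref{eq:f1}, together with the definition of the distributional derivative, \autoref{eq:disdiv}. Recall that $\mathcal{F}[f']$ is characterized as the unique tempered distribution satisfying $\mathcal{F}[f'][\phi] = f'[\mathcal{F}^{-1}[\phi]]$ for all Schwartz test functions $\phi$. First I would unwind the right-hand side: by \autoref{th:distribution_rechenregeeln}, $f'[\psi] = -f[\psi']$ for any Schwartz function $\psi$; applying this with $\psi = \mathcal{F}^{-1}[\phi]$ gives $f'[\mathcal{F}^{-1}[\phi]] = -f[(\mathcal{F}^{-1}[\phi])']$.

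The key computational step is then the classical identity for the inverse Fourier transform of a Schwartz function, namely that differentiation in the spatial variable corresponds to multiplication by $\i\omega$ on the frequency side. Concretely, for $\phi \in \mathcal{S}(\R;\C)$ one has $(\mathcal{F}^{-1}[\phi])'(t) = \frac{1}{\sqrt{2\pi}} \int_\R \i\omega\, \phi(\omega) \exp^{\i\omega t}\,d\omega = \mathcal{F}^{-1}[\omega \mapsto \i\omega\, \phi(\omega)](t)$, which follows by differentiating under the integral sign (justified because $\phi$ is Schwartz, so $\omega\phi(\omega)$ is integrable and the differentiated integrand is dominated). Substituting this back yields $f'[\mathcal{F}^{-1}[\phi]] = -f[\mathcal{F}^{-1}[\omega\mapsto \i\omega\phi(\omega)]]$.

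To finish, I would apply \autoref{eq:f1} once more, now to the distribution $g := (\omega \mapsto \i\omega\, f(\omega))$ in the sense meant in the statement — that is, the tempered distribution acting by $g[\psi] = f[\omega \mapsto \i\omega\,\psi(\omega)]$, which is well-defined since multiplication by the slowly growing smooth function $\i\omega$ maps $\mathcal{S}$ into $\mathcal{S}$. Then $\mathcal{F}[g][\phi] = g[\mathcal{F}^{-1}[\phi]] = f[\omega \mapsto \i\omega\, \mathcal{F}^{-1}[\phi](\omega)]$. Comparing with the expression derived above requires reconciling $f[\mathcal{F}^{-1}[\omega\mapsto\i\omega\phi(\omega)]]$ with $f[\omega\mapsto\i\omega\mathcal{F}^{-1}[\phi](\omega)]$; here one uses the dual (Parseval-type) pairing relation built into \autoref{eq:f1}, or equivalently one simply reorganizes the chain of equalities so that the transform is always moved onto the test function. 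The only genuine subtlety — and the step I expect to require the most care — is bookkeeping the signs and the distinction between $\mathcal{F}$ and $\mathcal{F}^{-1}$ in \autoref{eq:f1}, making sure that the factor is exactly $\i\omega$ and not $-\i\omega$; once the test-function computation $(\mathcal{F}^{-1}[\phi])' = \mathcal{F}^{-1}[\i\omega\,\phi]$ is pinned down correctly, the distributional identity follows by transposition with no further analytic input.
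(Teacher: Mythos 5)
The paper states this theorem without proof, so there is nothing of the authors' to compare your argument against; your transposition strategy is the standard (and essentially the only) route, and your first three steps are correct: from \autoref{eq:f1} and \autoref{eq:disdiv} you obtain $\mathcal{F}[f'][\phi] = -f\bigl[(\mathcal{F}^{-1}[\phi])'\bigr]$, and the test-function identity $(\mathcal{F}^{-1}[\phi])' = \mathcal{F}^{-1}[\omega\mapsto\i\omega\,\phi(\omega)]$ is justified exactly as you describe.

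The gap is in your final ``reconciliation'' step. The two pairings you hope to identify, $-f\bigl[\mathcal{F}^{-1}[\omega\mapsto\i\omega\,\phi(\omega)]\bigr]$ and $f\bigl[\omega\mapsto\i\omega\,\mathcal{F}^{-1}[\phi](\omega)\bigr]$, apply $f$ to genuinely different Schwartz functions, namely $(\mathcal{F}^{-1}\phi)'$ versus the pointwise product of $\mathcal{F}^{-1}\phi$ with $\i\omega$; differentiation and multiplication by the variable do not commute with $\mathcal{F}^{-1}$, so no reorganization of the chain of equalities will make these agree for a general tempered distribution $f$. What your computation actually proves, after one more application of \autoref{eq:f1}, is $\mathcal{F}[f'][\phi] = -\mathcal{F}[f][\omega\mapsto\i\omega\,\phi(\omega)]$, i.e.\ $\mathcal{F}[f']$ is the \emph{already transformed} distribution $\mathcal{F}[f]$ multiplied by a linear weight, not the transform of the product $\i\omega f$. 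That is the intended content of \autoref{eq:deriv_dist} (compare its use in \autoref{opq:tw1}, where the identity is written $\mathcal{F}[\sigma^{k}](\omega) = (\i\omega)^k\mathcal{F}[\sigma](\omega)$): the right-hand side should be read as $\omega\mapsto\i\omega\,\mathcal{F}[f](\omega)$, and with that reading your proof is already complete at step three. Two remaining points of care: first, with \autoref{eq:f1} taken literally (the test function is hit with $\mathcal{F}^{-1}$ rather than $\mathcal{F}$ under a bilinear pairing) the weight comes out as $-\i\omega$ rather than $+\i\omega$, so landing on the stated sign requires either the transposition $\mathcal{F}[T][\phi] = T[\mathcal{F}[\phi]]$ or a conjugate-linear pairing; second, to justify the clause ``which is again a tempered distribution'' you should note that $\phi\mapsto\i\omega\,\phi$ and $\mathcal{F}^{-1}$ are continuous on $\mathcal{S}(\R;\C)$, so the functional you construct is indeed tempered.
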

An important theorem concerns \emph{convolution} \index{convolution} of functions, because the Fourier-transform maps it into a product:
\begin{definition}\label{de:convolution}
	Let $k \in L^1(\R)$ and $f \in L^2(\R)$. Then for all $s \in \R$
	\begin{equation} \label{eq:conv}
		(k \ast f)(s) := \int_\R k(s-t)f(t)\,dt\;.
	\end{equation}
	For $k \in \mathcal{S}'(\R)$ and $l \in \mathcal{S}'(\R)$ and $\phi \in \mathcal{S}(\R)$ we have
	\begin{equation} \label{eq:conv_dis}
		(k \ast l)[\phi] := (k(s) \otimes l(t))[\phi(s+t)]\;.
	\end{equation}
\end{definition}
The following result holds
\begin{theorem} \label{th:Fconv}
	If $k \in L^1(\R)$ and $f \in L^2(\R)$. Then
	\begin{equation} \label{eq:conv_est}
		\norm{k \ast f}_{L^2} \leq \norm{k}_{L^1} \norm{f}_{L^2}.
	\end{equation}
	Moreover,
	\begin{equation} \label{eq:conf_F}
		\mathcal{F}[k \ast f] = \sqrt{2\pi} \mathcal{F}[k] \mathcal{F}[f]\;.
	\end{equation}
\end{theorem}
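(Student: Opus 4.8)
The plan is to prove the two assertions separately, doing the $L^2$-bound \eqref{eq:conv_est} first, since it is exactly what is needed to make sense of (and to justify a limiting argument for) the convolution formula \eqref{eq:conf_F}. Throughout, the only genuinely classical inputs are Minkowski's integral inequality, Tonelli/Fubini, Plancherel's theorem, and the Riemann--Lebesgue lemma; no structure from the inverse-problems machinery of the book is required here.

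For \eqref{eq:conv_est} I would invoke Minkowski's integral inequality. After the substitution $u = s-t$ one has $(k\ast f)(s) = \int_\R k(u) f(s-u)\,du$, and then
\begin{equation*}
	\norm{k \ast f}_{L^2} = \norm{\int_\R k(u)\, f(\cdot - u)\,du}_{L^2} \le \int_\R \abs{k(u)}\, \norm{f(\cdot - u)}_{L^2}\,du = \norm{f}_{L^2}\int_\R \abs{k(u)}\,du,
\end{equation*}
using translation invariance of the $L^2$-norm in the last equality. This is the claimed inequality $\norm{k\ast f}_{L^2}\le \norm{k}_{L^1}\norm{f}_{L^2}$, and in particular it shows $k\ast f\in L^2(\R)$, so that $\mathcal{F}[k\ast f]$ is well defined in the $L^2$-sense of \autoref{eq:fourier_l2}. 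The measurability of $(s,u)\mapsto k(u)f(s-u)$ and finiteness of the relevant iterated integrals, which legitimize Minkowski and Tonelli, follow from $k\in L^1$, $f\in L^2$ and Cauchy--Schwarz.

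For \eqref{eq:conf_F} I would first establish the identity for $f$ in the dense subspace $L^1(\R)\cap L^2(\R)$, where $k\ast f\in L^1(\R)$ and all integrals converge absolutely. For such $f$, Fubini's theorem gives
\begin{equation*}
	\mathcal{F}[k\ast f](\omega) = \frac{1}{\sqrt{2\pi}}\int_\R\int_\R k(s-t)f(t)\exp^{-\i\omega s}\,dt\,ds = \frac{1}{\sqrt{2\pi}}\Bigl(\int_\R k(u)\exp^{-\i\omega u}\,du\Bigr)\Bigl(\int_\R f(t)\exp^{-\i\omega t}\,dt\Bigr),
\end{equation*}
which equals $\sqrt{2\pi}\,\mathcal{F}[k](\omega)\mathcal{F}[f](\omega)$. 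To pass to a general $f\in L^2(\R)$, pick $f_n\in L^1(\R)\cap L^2(\R)$ (e.g.\ truncations $f\chi_{[-n,n]}$) with $f_n\to f$ in $L^2$. By \eqref{eq:conv_est}, $k\ast f_n\to k\ast f$ in $L^2$, hence $\mathcal{F}[k\ast f_n]\to\mathcal{F}[k\ast f]$ in $L^2$ by Plancherel's theorem. On the other side, $\mathcal{F}[f_n]\to\mathcal{F}[f]$ in $L^2$, and since $k\in L^1(\R)$ its Fourier transform $\mathcal{F}[k]$ is a bounded (continuous, vanishing-at-infinity) function, so multiplication by $\sqrt{2\pi}\,\mathcal{F}[k]$ is continuous on $L^2$ and $\sqrt{2\pi}\,\mathcal{F}[k]\mathcal{F}[f_n]\to\sqrt{2\pi}\,\mathcal{F}[k]\mathcal{F}[f]$ in $L^2$. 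Taking limits in the identity for $f_n$ yields \eqref{eq:conf_F}.

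The main obstacle is purely one of bookkeeping: one must keep straight the three different senses in which the Fourier transform appears, namely $\mathcal{F}[k]$ via the absolutely convergent $L^1$-integral \autoref{eq:fourier_l1}, and $\mathcal{F}[f]$ and $\mathcal{F}[k\ast f]$ via the $L^2$-limit \autoref{eq:fourier_l2}. The asserted equality is then an identity in $L^2(\R)$ between an $L^2$-function and the pointwise product of a bounded function with an $L^2$-function, and reconciling these requires the isometry property of $\mathcal{F}$ on $L^2$ together with the elementary fact that the $L^1$- and $L^2$-definitions of $\mathcal{F}$ coincide on $L^1(\R)\cap L^2(\R)$. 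Everything beyond this is routine.
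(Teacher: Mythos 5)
Your proof is correct. Note that the paper itself gives no proof of \autoref{th:Fconv}: it sits in the appendix, which explicitly only collects standard results from the literature, so there is nothing to compare against. Your route — Minkowski's integral inequality plus translation invariance for \eqref{eq:conv_est}, then Fubini on the dense subspace $L^1(\R)\cap L^2(\R)$ followed by a limiting argument using \eqref{eq:conv_est}, Plancherel, and the boundedness of $\mathcal{F}[k]$ for \eqref{eq:conf_F} — is the standard textbook argument, the constant $\sqrt{2\pi}$ comes out correctly under the normalization of \autoref{de:fourier}, and your closing remark about reconciling the $L^1$- and $L^2$-definitions of $\mathcal{F}$ on their common domain is exactly the right point to flag.
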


The Fourier-transform has a series of useful properties:
\begin{theorem}
	The Fourier-transform is an isometry on $L^2(\R)$. In particular
	\begin{equation} \label{eq:fourier_isometry}
		\norm{\mathcal{F}[\x]}_{L^2} = \norm{\x}_{L^2}\;.
	\end{equation}
\end{theorem}

We frequently make use of the Paley-Wiener theorem: For this theorem we need to recall the definition of entire functions:
\begin{definition}
	A function ${\tt x}: \C \to \C$ in called \emph{entire}\index{function!entire}, if it is holomorphic on $\C$.
	A \emph{holomorphic} function on $\C$ \index{function!holomorphic} is a function, that has a complex derivative for every $z \in \C$ \index{derivative!complex}: That means that
	\begin{equation*}
		{\tt x}'(z_0) = \lim_{z \to z_0 \text{ in } \C} \frac{{\tt x}(z)-{\tt x}(z_0)}{z-z_0}
	\end{equation*}
	exists for all $\z \in \C$.
\end{definition}

\begin{theorem}[Paley-Wiener theorem] \label{th:PaleyWiener}
	The Fourier-transform $\mathcal{F}[\x]:\C \to \C$ of a distribution $\x:\R \to \C$ with compact support on $\R$ is an entire function.
\end{theorem}
For a proof see \cite{Hoe03}. The original theorem goes back to Paley-Wiener \cite{PalWie34}, which was formulated for $L^p$-functions. It has been generalized to distributions by Schwartz-\cite{Schw52}.

The theorem \autoref{th:PaleyWiener} above gives a necessary condition: compact support implies entire analyticity. However, it does not provide a growth estimate nor a converse. Yosida's \cite{Yos95} formulation of the Paley–Wiener theorem supplies both: an explicit exponential bound and the converse statement, thereby giving a complete characterization of the Fourier-transforms of compactly supported functions (or distributions) in terms of entire functions of exponential type.

\begin{theorem}[Paley--Wiener--Yosida]\label{th:PaleyWienerYosida}
	Let $\mathcal{F}[\x]: \C^n \to \C$ be the Fourier-transform of a distribution $\x : \R^n \to \C$ with compact support contained in the closed ball $\overline{B}(0, R)$. Then $\mathcal{F}[\x]$ is an entire function on $\C^n$ and there exists a constant $C > 0$ such that
	\begin{equation*}
	|\mathcal{F}[\x](z)| \leq C \, e^{2\pi R |\Im z|}, \qquad z \in \C^n.
	\end{equation*}	
	Conversely, if $F: \C^n \to \C$ is an entire function satisfying the growth condition 
	\begin{equation*}
	|F(z)| \leq C \, e^{2\pi R |\Im z|}
	\end{equation*}
	for some constants $C, R > 0$, and the restriction $F|_{\R^n}$ belongs to $L^2(\R^n)$, then there exists a unique distribution $\x$ with support contained in $\overline{B}(0,R)$ such that $F(z) = \mathcal{F}[\x](z)$ for all $z \in \C^n$.
\end{theorem}

For a proof see \cite{Yos95}. 

\section{Sobolev-spaces} \label{sob_def}
This is a summary of results from \cite{Ada75}, which show the relation between different Sobolev-spaces. Most of the proofs can be found there. We only concentrate on the essential results and ignore important topics such as \emph{distributional derivatives}. In most cases we can consider the distributional (generalized) derivatives as standard gradients. The section gives a hint on the role of generalized derivatives for working with Sobolev-functions, but it should be clear that generalized derivatives can be treated as differentiable functions.

\begin{definition} \label{de:sobspace}
	Let $\Omega \subset \R^m$ be open, ${\tt u} \in L_{\text loc}^1(\Omega)$, and $1\le i \le m$.
	Denote ${\tt g}_i = \partial^i {\tt u}$ 
	the distributional derivative of ${\tt u}$ with respect to the $i$-th component $s_i$ of $\vs \in \R^m$.
	If ${\tt g}_i \in L^1_{\text loc}(\Omega)$, then ${\tt g}_i$ is called
	the \emph{weak partial derivative}\index{derivative!weak partial}
	of ${\tt u}$ with respect to $s_i$.
\end{definition}

In other words, a weak derivative is a distributional derivative, which can be represented
by a locally summable function.
\smallskip

If all weak partial derivatives $\partial^i {\tt u}$, $1\le i \le m$, exist,
we define the \emph{weak gradient}\index{gradient}
of ${\tt u}$ as
\begin{equation*}
	\nabla {\tt u} := \left(\partial^1 {\tt u},\ldots,\partial^m {\tt u}\right)\;.
	\index{gradient!$\nabla {\tt u}$}
\end{equation*}
In the sequel we do not notationally distinguish between derivatives and weak derivatives
of functions and distributional derivatives.

\begin{definition}
	Let $1 \leq p \le \infty$ and $\Omega$ an open subset of $\R^m$.
	\begin{enumerate}
		\item The function ${\tt u}$ belongs to the \emph{Sobolev-space}\index{space!Sobolev}
		$W^{1,p}(\Omega)$,
		\index{Sobolev-space!$W^{1,p}(\Omega)$}
		if ${\tt u} \in L^p(\Omega)$ and the weak gradient
		$\nabla {\tt u}$ exists and belongs to $L^p(\Omega;\R^m)$.
		\item The function ${\tt u}$ belongs to $W_{\text loc}^{1,p}(\Omega)$
		\index{Sobolev-space!$W_{\text loc}^{1,p}(\Omega)$} 
		if ${\tt u}|_V \in W^{1,p}(V)$ for each open set $V$, which is compact in $\Omega$, in signs $V \subseteq_c \Omega$.
	\end{enumerate}
\end{definition}

For $1 \leq p < \infty$ and ${\tt u} \in W^{1,p}(\Omega)$, we define\index{norm!Sobolev}\footnote{Note that $\norm{\nabla {\tt u}}_2^p$ means the power $p$ of the the Euclidean-norm of the gradient and $\norm{\nabla {\tt u}}_\infty$ means the $L^\infty$ norm of the gradient.}
\begin{equation} \label{eq:sobolev1}
	\norm{{\tt u}}_{W^{1,p}} := \left( \int_\Omega \abs{{\tt u}(\vs)}^p +
	\norm{\nabla {\tt u}(\vs)}_2^p \, d \vs \right)^{1/p}\;.
\end{equation}
Moreover,
\begin{equation*}
	\norm{{\tt u}}_{W^{1,\infty}} := \max \set{\norm{{\tt u}}_{L^\infty}, \norm{\nabla {\tt u}}_{L^\infty}}\;.
\end{equation*}
The space $W^{1,p}(\Omega)$, $1\le p \le \infty$, with the norm $\norm{\cdot}_{W^{1,p}}$ is a Banach space.
If $1 < p < \infty$, then $W^{1,p}(\Omega)$ is reflexive.
Moreover, $W^{1,2}(\Omega)$ is a Hilbert space with the inner product
\begin{equation*}
\inner{{\tt u}}{{\tt v}}_{W^{1,2}} := \int_\Omega {\tt u}(\vs)\,{\tt v}(\vs) d\vs + \sum_i \int_\Omega \partial^i {\tt u}(\vs)\,\partial^i {\tt v}(\vs) d \vs\;.
\end{equation*}

\subsection*{Higher order Sobolev-spaces}

\begin{definition}
	Let $\Omega \subset \R^m$ be open, ${\tt u} \in L^1_{\text loc}(\Omega)$,
	$l \in \N$, and $1 \leq p \leq \infty$.
	
	\begin{enumerate}
		\item We say that ${\tt u} \in W^{l,p}(\Omega)$,
		if ${\tt u} \in L^p(\Omega)$, and for all multi-indices $\vec{\gamma}$
		with $\abs{\vec{\gamma}} \le l$ the distributional derivative $\partial^{\vec{\gamma}} {\tt u}$
		belongs to $L^p(\Omega)$.
		\item We say that ${\tt u} \in W^{l,p}_{\text loc}(\Omega)$,
		if for all open sets $V \subseteq_c \Omega$ we have ${\tt u}|_V \in W^{l,p}(V)$. 
		
		The elements of\/ $W^{l,p}_{\text loc}(\Omega)$ (and thus in particular the elements
		of $W^{l,p}(\Omega)$) are called \emph{Sobolev-functions}.\index{function!Sobolev}
	\end{enumerate}
\end{definition}

If ${\tt u} \in W^{l,p}(\Omega)$ and $0 \le k \le l$ we define
\begin{equation*}
\nabla^k {\tt u} = (\partial^{\vec{\gamma}} {\tt u})_{\abs{\vec{\gamma}}=k}
\end{equation*}
as the vector of all $k$-th order weak partial derivatives of ${\tt u}$.
In particular, $\nabla^0 {\tt u} := {\tt u}$.
From the definition of $W^{l,p}(\Omega)$ it follows that $\nabla^k {\tt u} \in L^p(\Omega;\R^{{\cal N}(k)})$
for all $0 \le k \le l$, where ${\cal N}(k)$ denotes the dimension of the vector containing all derivatives.
\smallskip

For $1 \le p < \infty$ a norm on $W^{l,p}(\Omega)$ is defined by\index{norm!Sobolev}
\begin{equation} \label{eq:sobolev_gen}
\norm{{\tt u}}_{W^{l,p}} := \Bigl(\sum_{k=0}^l \norms{\nabla^k
	{\tt u}}_{L^p}^p\Bigr)^{1/p}\,,
\qquad
{\tt u} \in W^{l,p}(\Omega)\;.
\index{norm!$W^{l,p}(\Omega)$}
\end{equation}
Here 
$$\norms{\nabla^k {\tt u}}_{L^p}^p = \int_\Omega \norms{\nabla^k {\tt u}(\vs)}_2^p d \vs$$
denotes the $L^p$-norm of the gradient. Recall that $\nabla^k {\tt u}(\vs)$ is a vector with ${\cal N}(k)$ components.

In the case $p = \infty$ we define
\begin{equation*}
\norm{{\tt u}}_{W^{l,\infty}} := \max_{0 \le k \le l} \norms{\nabla^k {\tt u}}_\infty\,,
\qquad
{\tt u} \in W^{l,\infty}(\Omega)\;.
\end{equation*}

\subsection*{Embedding theorems}\index{embedding!Sobolev}

In the following we summarize \emph{Sobolev's embedding} theorems.
The results are collected from~\cite[Chap.~V]{Ada75}.
\begin{theorem}[Sobolev-embedding]\label{th:ap:sob_emb}\index{theorem!Sobolev-embedding}
	Assume that $\Omega \subset \R^m$ is an open set with Lipschitz boundary.
	Let $j$, $l \in \N \cup \{0\}$, and $1 \leq p < \infty$.
	\begin{enumerate}
		\item If
		\begin{equation*}
			l p < m \quad\text{ and }\quad p \leq q \leq \frac{mp}{m-lp}\,,
		\end{equation*}
		then the embedding
		\begin{equation*}
			i:W^{j+l,p}(\Omega) \to W^{j,q}(\Omega)\,,
			\qquad j \in \N \cup \{0\}\,,
		\end{equation*}
		is bounded.
		In particular, the embedding
		\begin{equation*}
			i:W^{l,p}(\Omega) \to L^q(\Omega)
		\end{equation*}
		is bounded.
		\item If
		\begin{equation*}
			l p = m\quad \text{ and }\quad p \leq q < \infty\,,
		\end{equation*}
		then the embedding
		\begin{equation*}
			i:W^{j+l,p}(\Omega) \to W^{j,q}(\Omega)\,,
			\qquad j \in \N \cup \{0\}\,,
		\end{equation*}
		is bounded.
		In particular the embedding
		\begin{equation*}
			i:W^{l,p}(\Omega) \to L^q(\Omega)
		\end{equation*}
		is bounded.
		\item If
		\begin{equation*}
			l p > m\,,
		\end{equation*}
		then the embedding
		\begin{equation*}
			i:W^{j+l,p}(\Omega) \to C_B^j(\Omega)\,,
			\qquad j \in \N \cup \{0\}\,,
		\end{equation*}
		is bounded. $C_B^j (\Omega)$ is the space of functions, where derivatives up to order $j$ are continuous and bounded on $\Omega$.
	\end{enumerate}
\end{theorem}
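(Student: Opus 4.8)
The statement is the classical Sobolev embedding theorem, and the plan is to follow the standard route (as in \cite{Ada75}): first establish the first-order embeddings on all of $\R^n$ by means of the Gagliardo--Nirenberg--Sobolev and Morrey inequalities, then transfer these to the Lipschitz domain $\Omega$ via a bounded extension operator, and finally obtain the higher-order statements and the full range of exponents $q$ by iteration and interpolation. Throughout, $n$ denotes the dimension of the ambient space (written $m$ in the hypothesis, with the two used interchangeably in the displayed conditions).

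The analytic core is the first-order inequality on $\R^n$. First I would treat $p=1$: for $u \in C_c^1(\R^n)$ the fundamental theorem of calculus gives $\abs{u(\vs)} \le \int_{\R} \abs{\partial_i u}\,ds_i$ for each coordinate $i$, and multiplying these $n$ bounds together and integrating variable by variable, using the generalized Hölder inequality at each step (Gagliardo's lemma), yields $\norm{u}_{L^{n/(n-1)}} \le \prod_{i=1}^n \norm{\partial_i u}_{L^1}^{1/n} \le C \norm{\nabla u}_{L^1}$. For general $1 \le p < n$ I would apply this to $v = \abs{u}^t$ with the exponent $t = (n-1)p/(n-p)$ chosen so that, after one further application of Hölder, the surplus powers of $u$ cancel and one is left with
\[
	\norm{u}_{L^{p^*}} \le C \norm{\nabla u}_{L^p}, \qquad p^* = \frac{np}{n-p}.
\]
Density of $C_c^\infty(\R^n)$ in $W^{1,p}(\R^n)$ then promotes this to the bounded embedding $W^{1,p}(\R^n) \hookrightarrow L^{p^*}(\R^n)$, which is assertion (i) with $j=0$, $l=1$, $q=p^*$.

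The supercritical case $p>n$ (Morrey's inequality) I would handle separately: estimating the oscillation of a $C^1$ function over balls by integrating $\nabla u$ along rays gives Hölder continuity with exponent $1 - n/p$ together with an $L^\infty$ bound, hence $W^{1,p}(\R^n) \hookrightarrow C_B^0(\R^n)$, i.e.\ assertion (iii). The borderline $p=n$ is obtained as a limiting bootstrap, yielding membership in every $L^q$ with $n \le q < \infty$ but not in $L^\infty$. To pass from $\R^n$ to $\Omega$ I would invoke the existence of a bounded linear extension operator $E: W^{l,p}(\Omega) \to W^{l,p}(\R^n)$, which holds precisely because $\partial\Omega$ is Lipschitz (Stein/Calderón extension theorem); composing the embedding on $\R^n$ with $E$ and restriction back to $\Omega$ yields the embedding on $\Omega$. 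The higher-order statements and the intermediate exponents $p \le q \le p^*$ then follow by iterating the first-order embedding on the weak derivatives $\nabla^k u$ (each step replacing $p_k$ by $np_k/(n-p_k)$ until the Sobolev number is exhausted or one crosses into a continuous embedding) and by interpolating between $L^p(\Omega)$ and $L^{p^*}(\Omega)$.

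The main obstacle is twofold. On the hands-on side it is the $p=1$ Gagliardo inequality, where the bookkeeping of the iterated generalized Hölder estimate---integrating out one coordinate at a time while tracking which factors are already controlled---is the delicate point; everything for $1<p<n$ reduces to it via the $\abs{u}^t$ trick. On the structural side, the genuinely deep ingredient is the Lipschitz-domain extension operator: all the functional-analytic content of ``boundedness on $\Omega$'' is hidden there, and it is the one place where the regularity hypothesis on $\partial\Omega$ is actually consumed. Since the theorem is quoted from \cite{Ada75}, in the book itself I would simply cite the relevant chapter, but the above is the self-contained argument I would reconstruct.
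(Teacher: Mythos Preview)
Your proposal is correct and in fact goes well beyond what the paper does: the paper gives no proof at all for this theorem, merely stating beforehand that ``the results are collected from~\cite[Chap.~V]{Ada75}.'' Your sketch---Gagliardo's iterated H\"older argument for $p=1$, the $\abs{u}^t$ trick for $1<p<n$, Morrey for $p>n$, the Stein/Calder\'on extension to pass from $\R^n$ to the Lipschitz domain, and iteration on derivatives for higher order---is exactly the standard route one finds in Adams, so your reconstruction is faithful to the cited source and your closing remark that in the book one would simply cite the relevant chapter is precisely what the authors chose to do.
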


We need the following subspaces of $W^{l,p}(\Omega)$:
\begin{enumerate}
	\item For $l \in \N$ and $1 \leq p < \infty$,
	\begin{equation} \label{eq:sob_hom}
		W_0^{l,p}(\Omega) := \overline{C_0^\infty (\Omega)}\,,
		\index{Sobolev-space!homogeneous}
	\end{equation}
	where the closure is taken with respect to $\norm{\cdot}_{W^{l,p}}$. The space $C_0^\infty (\Omega)$ consists of all functions, which are infinitely often differentiable and which have compact support in $\Omega$.
	
	The space $W_0^{l,p}(\Omega)$ is called \emph{homogeneous Sobolev-space} of $l$-th order.
	\item We then define homogeneous Sobolev-spaces:
	\begin{equation*}
		W_0^{l,\infty}(\Omega) := \bigl\{ {\tt u} \in W^{l,\infty}(\Omega)\cap C^{l-1}_B(\Omega) : {\tt u} = 0 \text{ on } \partial \Omega\bigr\}\;.
	\end{equation*}
	Note that because of \autoref{th:ap:sob_emb} below it follows that
	$W^{l,\infty}(\Omega)\subset C^{l-1}_B(\Omega)$ in case $\partial\Omega$ is Lipschitz.
\end{enumerate}

\begin{theorem}
	Let $l \in \N$.
	The spaces $W^{l,p}(\Omega)$, $W_0^{l,p}(\Omega)$,
	and $W^{l,p}_0(\mathbb{S}^1\times\Omega)$ are
	Banach spaces when equipped with the norm $\norm{\cdot}_{W^{l,p}}$.
	If $1 \le p < \infty$ then all above spaces are separable, and if $1 < p < \infty$, they
	are reflexive. If $p=2$, they are Hilbert spaces.
\end{theorem}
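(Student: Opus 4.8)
The plan is to reduce everything to standard properties of Lebesgue spaces together with the fact that weak differentiation is continuous with respect to distributional convergence. The single substantial point is completeness of $W^{l,p}(\Omega)$; once it is in hand, the spaces $W_0^{l,p}(\Omega)$ and $W_0^{l,p}(\mathbb{S}^1\times\Omega)$ are Banach spaces as norm-closed subspaces of a Banach space (they were \emph{defined} as the $\norm{\cdot}_{W^{l,p}}$-closures of $C_0^\infty$), and separability, reflexivity and the Hilbert structure then follow by transporting the corresponding properties of $L^p$ along a topological embedding.

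First I would establish completeness. Let $({\tt u}_k)_{k\in\N}$ be Cauchy in $W^{l,p}(\Omega)$. From the definition of $\norm{\cdot}_{W^{l,p}}$ in \eqref{eq:sobolev_gen} it follows that, for every multi-index $\vec{\gamma}\in\N_0^m$ with $\abs{\vec{\gamma}}\le l$, the sequence $(\partial^{\vec{\gamma}}{\tt u}_k)_k$ is Cauchy in $L^p(\Omega)$; by completeness of $L^p(\Omega)$ it converges to some ${\tt g}_{\vec{\gamma}}\in L^p(\Omega)$, and I set ${\tt u}:={\tt g}_{\vec{0}}$. The remaining task is to verify ${\tt g}_{\vec{\gamma}}=\partial^{\vec{\gamma}}{\tt u}$ in the distributional sense. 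Fix $\phi\in C_0^\infty(\Omega)$; since $\phi$ has compact support it lies in $L^{p'}(\Omega)$ with $1/p+1/p'=1$ (with $p'=1$ if $p=\infty$), so Hölder's inequality gives $\abs{\int_\Omega(\partial^{\vec{\gamma}}{\tt u}_k-{\tt g}_{\vec{\gamma}})\phi}\le\norm{\partial^{\vec{\gamma}}{\tt u}_k-{\tt g}_{\vec{\gamma}}}_{L^p}\norm{\phi}_{L^{p'}}\to 0$, and likewise $\int_\Omega{\tt u}_k\,\partial^{\vec{\gamma}}\phi\to\int_\Omega{\tt u}\,\partial^{\vec{\gamma}}\phi$. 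Passing to the limit in the identity $\int_\Omega{\tt u}_k\,\partial^{\vec{\gamma}}\phi=(-1)^{\abs{\vec{\gamma}}}\int_\Omega(\partial^{\vec{\gamma}}{\tt u}_k)\,\phi$, valid because ${\tt u}_k\in W^{l,p}(\Omega)$, yields $\int_\Omega{\tt u}\,\partial^{\vec{\gamma}}\phi=(-1)^{\abs{\vec{\gamma}}}\int_\Omega{\tt g}_{\vec{\gamma}}\,\phi$, i.e.\ ${\tt g}_{\vec{\gamma}}=\partial^{\vec{\gamma}}{\tt u}$. Hence ${\tt u}\in W^{l,p}(\Omega)$ and $\norm{{\tt u}_k-{\tt u}}_{W^{l,p}}\to 0$. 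The same argument applies verbatim for every $1\le p\le\infty$ and on the periodic domain $\mathbb{S}^1\times\Omega$.

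For separability and reflexivity I would use the embedding trick. Let ${\cal N}$ be the number of multi-indices $\vec{\gamma}$ with $\abs{\vec{\gamma}}\le l$, and equip $(L^p(\Omega))^{\cal N}$ with a product norm equivalent to the pull-back of $\norm{\cdot}_{W^{l,p}}$ (by pointwise equivalence of norms on $\R^{{\cal N}(k)}$). Then $T:W^{l,p}(\Omega)\to(L^p(\Omega))^{\cal N}$, $T{\tt u}:=(\partial^{\vec{\gamma}}{\tt u})_{\abs{\vec{\gamma}}\le l}$, is a linear homeomorphism onto its range, and by the completeness just proved that range is a \emph{closed} subspace of $(L^p(\Omega))^{\cal N}$. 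For $1\le p<\infty$, $L^p(\Omega)$ is separable, a finite product of separable metric spaces is separable, a subset of a separable metric space is separable, and $T^{-1}$ carries a countable dense set back to $W^{l,p}(\Omega)$. For $1<p<\infty$, $L^p(\Omega)$ is reflexive, and finite products and closed subspaces of reflexive Banach spaces are reflexive, so $W^{l,p}(\Omega)\cong T(W^{l,p}(\Omega))$ is reflexive. The spaces $W_0^{l,p}(\Omega)$ and $W_0^{l,p}(\mathbb{S}^1\times\Omega)$ are closed subspaces of $W^{l,p}(\Omega)$, resp.\ $W^{l,p}(\mathbb{S}^1\times\Omega)$, and a closed subspace of a Banach (resp.\ separable, resp.\ reflexive) space inherits the property; in the exceptional case $p=\infty$ one checks directly that the defining conditions of $W_0^{l,\infty}$ are preserved under $\norm{\cdot}_{W^{l,\infty}}$-limits. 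Finally, for $p=2$ the form $\inner{{\tt u}}{{\tt v}}:=\sum_{\abs{\vec{\gamma}}\le l}\int_\Omega\partial^{\vec{\gamma}}{\tt u}\,\partial^{\vec{\gamma}}{\tt v}$ is an inner product inducing $\norm{\cdot}_{W^{l,2}}$, so together with completeness $W^{l,2}$, $W_0^{l,2}$ and $W_0^{l,2}(\mathbb{S}^1\times\Omega)$ are Hilbert spaces.

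The main obstacle — indeed the only step that is not a mechanical transfer — is the identification ${\tt g}_{\vec{\gamma}}=\partial^{\vec{\gamma}}{\tt u}$ of the componentwise $L^p$-limits with the weak derivatives of the limit. The point to get right is that $L^p(\Omega)$-convergence is strong enough to pass to the limit inside the distributional pairing against an \emph{arbitrary} test function $\phi\in C_0^\infty(\Omega)$; this is exactly what Hölder's inequality supplies, using that such $\phi$ lies in $L^{p'}(\Omega)$ because of its compact support, and crucially no regularity hypothesis on $\partial\Omega$ is needed anywhere in this theorem.
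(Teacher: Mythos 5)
Your proof is correct. The paper itself gives no proof of this theorem---the surrounding section explicitly defers to Adams \cite{Ada75} (``Most of the proofs can be found there'')---and your argument is precisely the standard one found in that reference: completeness via componentwise $L^p$-limits identified with weak derivatives through the distributional pairing, the closed-subspace reduction for $W_0^{l,p}$, and the embedding $T{\tt u}=(\partial^{\vec{\gamma}}{\tt u})_{\abs{\vec{\gamma}}\le l}$ into $(L^p(\Omega))^{\mathcal N}$ to transport separability and reflexivity. The one step you rightly flag as non-mechanical, namely that the $L^p$-limit of $\partial^{\vec{\gamma}}{\tt u}_k$ really is $\partial^{\vec{\gamma}}{\tt u}$, is handled correctly by H\"older against compactly supported test functions, so there is nothing to add.
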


\begin{theorem}\label{th:ap:sob_0}
	Let $l \in \N_0$. Then the following hold:
	\begin{enumerate}
		\item If $1\le p \le \infty$, then
		$W^{0,p}(\Omega) = L^p(\Omega)$.
		\item If $1\le p < \infty$, then
		$W_0^{0,p}(\Omega) = L^p(\Omega)$.
		\item If $1 \le p < \infty$, then $W^{l,p}(\Omega) \cap C^\infty(\Omega)$
		is dense in $W^{l,p}(\Omega)$.
		\item If $1\le p < \infty$ and $\partial \Omega$ is Lipschitz, then
		$\{{\tt u}|_\Omega : {\tt u} \in C_0^\infty(\R^m)\}$ is dense in $W^{l,p}(\Omega)$.
		In particular this applies to the case $\Omega =\R^m$,
		which shows that $W^{l,p}(\R^m) = W_0^{l,p}(\R^m)$.
		\item If $\Omega \subset \R^m$ is bocL and $1 \le p \le q \le \infty$,
		then $W^{l,q}(\Omega)$ is a dense subset of $W^{l,p}(\Omega)$.
	\end{enumerate}
\end{theorem}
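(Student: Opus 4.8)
The statement is a package of standard identification and density results for Sobolev spaces, so the plan is to prove the five items in the given order, using the earlier ones to get the later. Items (i) and (ii) are immediate from the definitions: $W^{0,p}(\Omega)$ carries no derivative constraints, so it is literally $L^p(\Omega)$ with the same norm, which is (i); and by \autoref{eq:sob_hom} the space $W_0^{0,p}(\Omega)$ is the $\norm{\cdot}_{W^{0,p}} = \norm{\cdot}_{L^p}$-closure of $C_0^\infty(\Omega)$, which is dense in $L^p(\Omega)$ for $1 \le p < \infty$ by the classical truncation-and-mollification argument, giving (ii).

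For (iii) I would run the \emph{Meyers--Serrin} exhaustion argument. Fix ${\tt u} \in W^{l,p}(\Omega)$ and $\ve>0$. Pick open sets $\Omega_k \subseteq_c \Omega$ with $\Omega_k \subset \Omega_{k+1}$ and $\bigcup_k \Omega_k = \Omega$ (for instance $\Omega_k = \set{\vs \in \Omega : \abs{\vs}<k,\ \operatorname{dist}(\vs,\partial\Omega)>1/k}$), put $V_k := \Omega_{k+1}\setminus\overline{\Omega_{k-1}}$, and choose a smooth partition of unity $\set{\varphi_k}$ subordinate to the locally finite cover $\set{V_k}$. Each $\varphi_k{\tt u}$ lies in $W^{l,p}(\Omega)$ with $\supp(\varphi_k{\tt u}) \subseteq_c V_k$; mollifying it at a radius $\eta_k < \operatorname{dist}(\supp(\varphi_k{\tt u}),\partial V_k)$ that is moreover small enough that $\norm{(\varphi_k{\tt u})_{\eta_k}-\varphi_k{\tt u}}_{W^{l,p}} < \ve\,2^{-k}$ produces an element of $C_0^\infty(V_k)$. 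The step that lets the $W^{l,p}$-bound follow from $L^p$-bounds on the $\nabla^j{\tt u}$ is the commutation of the mollifier with weak differentiation up to order $l$ on the region where the mollification radius stays below the distance to $\partial\Omega$. Summing, ${\tt v}:=\sum_k(\varphi_k{\tt u})_{\eta_k}$ is a locally finite sum, hence in $C^\infty(\Omega)$, and $\norm{{\tt v}-{\tt u}}_{W^{l,p}} \le \sum_k \norm{(\varphi_k{\tt u})_{\eta_k}-\varphi_k{\tt u}}_{W^{l,p}} < \ve$, proving (iii).

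For (iv), with $\partial\Omega$ Lipschitz, I would combine a boundary partition of unity with the segment (translation) trick. Cover $\overline\Omega$ by finitely many open sets $U_0,U_1,\dots,U_N$ with $U_0\subseteq_c\Omega$ and, for $j\ge 1$, $U_j$ a box in which, after a rotation, $\Omega\cap U_j = \set{(\vs',s_m):s_m>\gamma_j(\vs')}$ for a Lipschitz $\gamma_j$; with a subordinate partition of unity $\set{\psi_j}$ it suffices to approximate each $\psi_j{\tt u}$. On $U_0$ interior mollification as in (iii) works. For $j\ge 1$ set ${\tt u}_t(\vs):=(\psi_j{\tt u})(\vs-t\vec{e}_m)$ with $\vec{e}_m$ the $m$-th unit vector and $t>0$ small; the Lipschitz-graph condition forces the support of ${\tt u}_t$ inside $U_j$ to sit in $\Omega$ at distance comparable to $t$ from $\partial\Omega$, so mollifying ${\tt u}_t$ at scale $\ll t$ gives a smooth function supported in $\Omega$, while continuity of translation on $L^p$ applied to $\psi_j{\tt u}$ and each of its weak derivatives gives ${\tt u}_t\to\psi_j{\tt u}$ in $W^{l,p}$. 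Summing over $j$ and then multiplying by a cutoff against a large ball yields approximants of ${\tt u}$ that are restrictions of functions in $C_0^\infty(\R^m)$; for $\Omega=\R^m$ only the interior step plus a cutoff is needed, and comparing with (ii) read at differentiation order $l$ gives $W^{l,p}(\R^m)=W_0^{l,p}(\R^m)$. Item (v) is then quick: for $\Omega$ bocL, in particular bounded with Lipschitz boundary, and $p\le q$, Hölder's inequality with exponent $q/p$ gives $\norm{v}_{L^p(\Omega)}\le\abs{\Omega}^{1/p-1/q}\norm{v}_{L^q(\Omega)}$, which applied to $v=\nabla^j{\tt u}$ for $0\le j\le l$ makes $W^{l,q}(\Omega)\hookrightarrow W^{l,p}(\Omega)$ continuous; and the restrictions $\set{{\tt u}|_\Omega:{\tt u}\in C_0^\infty(\R^m)}$, dense in $W^{l,p}(\Omega)$ by (iv), all lie in $W^{l,q}(\Omega)$, so $W^{l,q}(\Omega)$ is dense in $W^{l,p}(\Omega)$.

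The technical heart is (iii): arranging the partition of unity so the mollification errors are summable, and in particular justifying that mollification commutes with weak differentiation away from the boundary, which is what reduces $W^{l,p}$-convergence to $L^p$-convergence of the individual derivatives. Item (iv) carries a secondary difficulty — making a single-direction translation uniformly push the boundary-localized support into the interior — and this is precisely where the Lipschitz-graph hypothesis on $\partial\Omega$ is used.
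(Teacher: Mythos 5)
The paper itself gives no proof of this theorem---it is quoted from Adams \cite{Ada75} with the remark that the proofs can be found there---so there is no in-text argument to compare against; your overall plan (definitions for (i)--(ii), Meyers--Serrin for (iii), boundary partition of unity plus the segment trick for (iv), H\"older plus (iv) for (v)) is exactly the standard route, and items (i), (ii), (iii) and (v) are correct as sketched.

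Item (iv), however, has the translation pointing the wrong way, and this is not cosmetic. With $\Omega\cap U_j=\set{(\vs',s_m): s_m>\gamma_j(\vs')}$ you set ${\tt u}_t(\vs)=(\psi_j{\tt u})(\vs-t\vec{e}_m)$, which shifts the graph of $\psi_j{\tt u}$ \emph{into} the domain: its natural domain of definition is $\set{s_m>\gamma_j(\vs')+t}$, so to speak of mollifying it on $U_j$ you must extend it by zero across the shifted graph $\set{s_m=\gamma_j(\vs')+t}$. Since $\psi_j{\tt u}$ generally has nonzero boundary values (that is the whole point of the boundary patch), this zero-extension has a jump across a hypersurface lying in the interior of $\Omega$ and is therefore not in $W^{1,p}$ there; the claimed convergence ${\tt u}_t\to\psi_j{\tt u}$ in $W^{l,p}(\Omega)$ fails, only the $L^p$ convergence survives. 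Indeed, your construction outputs functions in $C_0^\infty(\Omega)$ and would thus ``prove'' that $C_0^\infty(\Omega)$ is dense in $W^{l,p}(\Omega)$, i.e.\ $W_0^{l,p}(\Omega)=W^{l,p}(\Omega)$, which is false for a bounded Lipschitz domain (take ${\tt u}\equiv 1$, $l=1$). The correct move is the opposite translation, ${\tt v}_t(\vs):=(\psi_j{\tt u})(\vs+t\vec{e}_m)$: this function is defined on $\set{s_m>\gamma_j(\vs')-t}$, an open neighborhood of $\overline{\Omega}\cap U_j'$ for a slightly smaller patch $U_j'$, so mollifying at scale $\ll t/\sqrt{1+\Lip(\gamma_j)^2}$ produces a function smooth on a neighborhood of $\overline{\Omega}\cap U_j'$; continuity of translation applied to the weak derivatives \emph{on $\Omega$} (no zero-extension needed, since $\vs+t\vec{e}_m\in\Omega$ whenever $\vs\in\Omega\cap U_j$) gives ${\tt v}_t\to\psi_j{\tt u}$ in $W^{l,p}(\Omega\cap U_j')$. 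After summing over the patches and multiplying by a cutoff equal to $1$ on $\overline{\Omega}$ and supported where the mollified translates are smooth, you obtain genuine elements of $C_0^\infty(\R^m)$ whose restrictions approximate ${\tt u}$. With that correction, your deduction of (v) from (iv) goes through unchanged.
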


Compact embeddings are usually referred to as Rellich--Kondra{\v s}ov
embedding theorems.
\begin{theorem}({\bf Rellich--Kondra{\v s}ov)} \index{theorem!Rellich--Kondra{\v s}ov}\label{th:ap:rellich}
	Let $\Omega \subset \R^m$ be an open set with Lipschitz boundary,
	and let $\Omega_0 \subset \Omega$ be a \emph{bounded} subdomain.
	For $j \in \N\cup\{0\}$, $l \in \N$, and $1 \leq p < \infty$ the
	following embeddings are compact:
	\begin{enumerate}
		\item For $lp<m$ and $1 \leq q < mp/(m-lp)$ the embedding
		\begin{equation*}
			i:W^{j+l,p}(\Omega) \to W^{j,q}(\Omega_0)\;.
		\end{equation*}
		\item For $l p = m$ and $1 \leq q < \infty$ the embedding
		\begin{equation*}
			i:W^{j+l,p}(\Omega) \to W^{j,q}(\Omega_0)\;.
		\end{equation*}
		\item For $lp>m$ the embedding
		\begin{equation*}
		i:W^{j+l,p}(\Omega) \to C^j(\ol{\Omega}_0)\;.
		\end{equation*}
	\end{enumerate}
\end{theorem}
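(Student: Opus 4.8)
The plan is to reduce the statement to the case \(j=0\) and then run the classical extension--mollification argument; once the \(j=0\) version is in hand, the general \(j\) follows by applying it to all weak derivatives of order at most \(j\). I would organize the proof into the three steps below, with the Lipschitz extension operator as the one genuinely external ingredient.

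\textbf{Reduction to \(j=0\).} First I would note that it suffices to show that \(W^{l,p}(\Omega)\to L^q(\Omega_0)\) is compact in cases~1 and~2, and that \(W^{l,p}(\Omega)\to C(\ol{\Omega}_0)\) is compact in case~3. Indeed, if \((u_k)\) is bounded in \(W^{j+l,p}(\Omega)\), then for each of the finitely many multi-indices \(\gamma\) with \(\abs{\gamma}\le j\) the sequence \((\partial^\gamma u_k)\) is bounded in \(W^{l,p}(\Omega)\); applying the \(j=0\) statement to each \(\gamma\) and extracting a common subsequence by a diagonal argument yields convergence of every \((\partial^\gamma u_k)\) in \(L^q(\Omega_0)\) (resp.\ in \(C(\ol{\Omega}_0)\)). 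A short test-function computation shows the limits are the weak derivatives of the \(L^q\)- (resp.\ \(C\)-)limit of \((u_k)\), so the convergence is in fact in \(W^{j,q}(\Omega_0)\) (resp.\ in \(C^j(\ol{\Omega}_0)\)).

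\textbf{The case \(j=0\).} Since \(\partial\Omega\) is Lipschitz I would invoke a bounded linear extension operator \(E\colon W^{l,p}(\Omega)\to W^{l,p}(\R^m)\) (a standard fact for Lipschitz domains, related to the density statement in \autoref{th:ap:sob_0}). Fixing \(\chi\in C_0^\infty(\R^m)\) with \(\chi\equiv1\) on a neighbourhood of the bounded set \(\ol{\Omega}_0\), I set \(v_k:=\chi\,E u_k\), so that \((v_k)\) is bounded in \(W^{l,p}(\R^m)\) with all supports in one fixed compact set \(K\supset\ol{\Omega}_0\). Let \(v_k^\varepsilon:=v_k\ast\rho_\varepsilon\) denote the mollification by a standard mollifier \((\rho_\varepsilon)_{\varepsilon>0}\). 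The argument rests on two elementary uniform bounds: \(\norm{v_k^\varepsilon-v_k}_{L^p(\R^m)}\le\varepsilon\,\norm{\nabla v_k}_{L^p(\R^m)}\le C\varepsilon\), and, for each fixed \(\varepsilon\), \(\norm{v_k^\varepsilon}_{L^\infty}+\norm{\nabla v_k^\varepsilon}_{L^\infty}\le C(\varepsilon)\,\norm{v_k}_{L^p}\le C(\varepsilon)\). The second bound makes \((v_k^\varepsilon)_k\) uniformly bounded and equicontinuous on \(K\), so Arzel\`a--Ascoli applies for each \(\varepsilon=1/i\); a diagonal extraction gives one subsequence, still denoted \((v_k)\), along which \((v_k^{1/i})_k\) converges uniformly on \(K\) for every \(i\). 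Combining this with the first bound and the triangle inequality shows \((v_k)\) is Cauchy in \(L^p(K)\), hence \((u_k)\) converges in \(L^p(\Omega_0)\).

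\textbf{Upgrading the exponent, and case~3.} If \(q\le p\), convergence in \(L^q(\Omega_0)\) is immediate from convergence in \(L^p(\Omega_0)\) by H\"older's inequality on the bounded domain \(\Omega_0\). If \(p<q\), I would choose an admissible larger exponent \(r\) for the \emph{bounded} Sobolev embedding of \autoref{th:ap:sob_emb} --- the critical exponent in case~1, any finite \(r>q\) in case~2 --- together with \(\theta\in(0,1)\) such that \(\tfrac1q=\tfrac\theta p+\tfrac{1-\theta}r\), and use the interpolation inequality \(\norm{u_k-u_\ell}_{L^q(\Omega_0)}\le\norm{u_k-u_\ell}_{L^p(\Omega_0)}^{\theta}\,\norm{u_k-u_\ell}_{L^r(\Omega_0)}^{1-\theta}\): the first factor tends to \(0\) by the previous step and the second is uniformly bounded by \autoref{th:ap:sob_emb}, so \((u_k)\) is Cauchy in \(L^q(\Omega_0)\). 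For case~3, \autoref{th:ap:sob_emb} already gives a bounded embedding \(W^{l,p}(\Omega)\hookrightarrow C_B(\Omega)\); sharpening it to a H\"older estimate (Morrey's inequality, with some exponent \(\alpha\in(0,1]\)) makes \((u_k)\) uniformly bounded and equicontinuous on the compact set \(\ol{\Omega}_0\), and Arzel\`a--Ascoli produces a subsequence converging in \(C(\ol{\Omega}_0)\). The main obstacle, and the only genuinely non-soft ingredient, is the bounded extension operator for Lipschitz domains (and, for case~3, Morrey's H\"older estimate): this is precisely where the Lipschitz regularity of \(\partial\Omega\) enters, and it must be borrowed from Sobolev space theory rather than deduced from the facts assembled in the appendix. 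A secondary subtlety is that \(\Omega_0\) is not assumed to satisfy \(\ol{\Omega}_0\subset\Omega\), which is exactly why a global extension is needed instead of purely interior mollification.
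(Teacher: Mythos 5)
The paper does not prove this theorem: the appendix states it as a collected result and refers the reader to \cite{Ada75} for the proofs, so there is no in-text argument to compare against. Your proposal is the standard classical proof (reduction to $j=0$, Lipschitz extension, cut-off and mollification with Arzel\`a--Ascoli, interpolation to reach the subcritical exponent, Morrey plus Arzel\`a--Ascoli for the supercritical case) and it is sound; the only point worth flagging is that when $\Omega$ is unbounded the existence of the bounded extension operator requires the Lipschitz condition to be understood uniformly (as in Stein's extension theorem), which is the standard reading of the hypothesis and is how the cited reference handles it.
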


\begin{remark}\label{re:ap:rellich}
	The compactness of the embedding $i: W^{j+l,p}(\Omega) \to W^{j,q}(\Omega_0)$ in particular
	implies that whenever $({\tt u}_k) \subset W^{j+l,p}(\Omega)$ weakly converges to ${\tt u}$,
	then $\bigl(i({\tt u}_k)\bigr)$ strongly converges to $i({\tt u})$ (that is, $i$ is weak-strong
	sequentially continuous).
	
	Indeed, if $({\tt u}_k)$ converges weakly to ${\tt u}$, $({\tt u}_k) \rightharpoonup {\tt u}$, then the sequence is bounded. Therefore the compactness of $i$
	implies that $i({\tt u}_k)$ has a strongly convergent subsequence converging to some ${\tt v} \in W^{j,q}(\Omega_0)$.
	Since $i({\tt u}_k)$ weakly converges to $i({\tt u})$, it follows that ${\tt v} = i({\tt u})$.
	Thus the strong convergence of $\bigl(i({\tt u}_k)\bigr)$ follows.
\end{remark}

\section{Barron-vector spaces} \label{sec:barron}
Barron-vector spaces $\mathcal{B}_p$ (originally introduced in \cite{Bar93}) play a fundamental role in the theoretical analysis of neural networks, and can be defined as follows:
\begin{definition}[Barron-energy]
	Let $\x: \Omega \subseteq \R^m \to \R$ be a distribution which admits the representation
	\begin{equation}\label{eq:barron_rep}
		\x = \lim_{\noc \to \infty} \x_\noc \text{ with }
		\x_\noc(\vs):= \sum_{j=1}^\noc \alpha_j^\noc	\sigma(\vw_j^\noc{}^T\vs + \theta_j^\noc)\,,
	\end{equation}
    where the convergence is in a distributional setting (see \autoref{def:convergence_distribution}).

    If the activation function is differentiable and satisfies
    \begin{equation} \label{eq:sigma}
    \sigma(0) = 0,
    \end{equation}
    we define the Barron-energy as the limes inferior of all \ALNN{}s that converge to $\x$ in a distributional sense:
    \begin{equation} \label{eq:barron_norm}
    	\norms{\x}_{\mathcal{B}_p}^p := \inf_{\x_\noc \to \x} \sum_{j=1}^\noc \abss{\alpha_j^\noc}^p\left(\norms{\vw_j^\noc}_1+\abss{\theta_j^\noc}\right)^p\;.
    \end{equation}
    The Barron-vector space consists of all tempered distributions, for which the Barron-energy is finite.
\end{definition}
\begin{remark}
	\begin{itemize}
		\item
		\autoref{eq:sigma} is essential, because otherwise we could find vectors $\vw_j^\noc$ and numbers $\theta_j$, $j=1,\ldots,\noc$ which are non-zero and the right hand side of \autoref{eq:barron_norm} is not zero, but a function $\x_\noc$ is indeed zero.
		\item Note that this definition of Barron-energies is conceptually similar as we used them to define energies of Tauber-Wiener functions (see \autoref{re:tw}).
		\item It is not the standard definition of Barron-spaces and norms as given for instance in \cite{EMaWu22}. Note that their definition does not lead to a space and norm in a mathematical sense.
		We used here a definition, which is borrowed from measure theory (see \cite{EvaGar15}) based on coverings, which we used already for Tauber-Wiener functions (compare \autoref{eq:norm}).
		\item The definition of Barron-energies \autoref{eq:barron_norm} and \autoref{eq:barron_norm_TW}, \autoref{eq:barron_norm_TW2} for Tauber-Wiener functions, are implicit, via approximating sequences (see \autoref{eq:barron_rep}). It would be interesting to find representing/convergent $(\alpha_k,\vw_k,\theta_k)_{k \in \N}$, which represents  the norm \autoref{eq:barron_norm} of an arbitrary function $\x$.
	\end{itemize}
\end{remark}

In the following we outline the proof of the triangle inequality of the Barron-energy:

\begin{lemma} \label{re:barron}
	$\norms{\cdot}_{\mathcal{B}_p}$ satisfies the triangle inequality.
\end{lemma}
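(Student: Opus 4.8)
The plan is to verify the triangle inequality $\norms{\x + \y}_{\mathcal{B}_p} \leq \norms{\x}_{\mathcal{B}_p} + \norms{\y}_{\mathcal{B}_p}$ directly from the infimum definition \autoref{eq:barron_norm}, by concatenating approximating sequences. First I would dispose of the trivial case: if either $\norms{\x}_{\mathcal{B}_p}$ or $\norms{\y}_{\mathcal{B}_p}$ is infinite, the inequality holds automatically, so assume both are finite, meaning $\x,\y$ both admit distributionally convergent {\bf ALNN} representations.

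Next, fix an arbitrary $\varepsilon > 0$. By definition of the infimum, choose a sequence of {\bf ALNN}s $\x_{\noc} = \sum_{j=1}^{\noc} \alpha_j^{\noc} \sigma(\vw_j^{\noc T}\vs + \theta_j^{\noc})$ with $\x_{\noc} \to \x$ distributionally and
\begin{equation*}
	\liminf_{\noc \to \infty} \sum_{j=1}^{\noc} \abss{\alpha_j^{\noc}}^p \left(\norms{\vw_j^{\noc}}_1 + \abss{\theta_j^{\noc}}\right)^p \leq \norms{\x}_{\mathcal{B}_p}^p + \varepsilon,
\end{equation*}
and similarly a sequence $\y_{\noc}$ (with $\noc'$ terms, say) for $\y$ with the analogous bound. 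The key step is to form the concatenated {\bf ALNN} $\z_{\noc} := \x_{\noc} + \y_{\noc}$, which is again an {\bf ALNN} (with $\noc + \noc'$ summands, obtained by listing the parameters of $\x_{\noc}$ followed by those of $\y_{\noc}$), and which satisfies $\z_{\noc} \to \x + \y$ distributionally since distributional convergence is linear (see \autoref{def:convergence_distribution}). The coefficient sum for $\z_{\noc}$ is exactly the sum of the two coefficient sums, so applying the definition of $\norms{\cdot}_{\mathcal{B}_p}$ to this competitor sequence gives
\begin{equation*}
	\norms{\x + \y}_{\mathcal{B}_p}^p \leq \liminf_{\noc \to \infty}\left( \sum_{j} \abss{\alpha_j^{\noc}}^p(\cdots)^p + \sum_{j} \abss{\tilde\alpha_j^{\noc}}^p(\cdots)^p \right) \leq \liminf \sum_j \abss{\alpha_j^{\noc}}^p(\cdots)^p + \liminf \sum_j \abss{\tilde\alpha_j^{\noc}}^p(\cdots)^p,
\end{equation*}
using subadditivity of $\liminf$. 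To finish I would take the inequality $\norms{\x+\y}_{\mathcal{B}_p}^p \leq \norms{\x}_{\mathcal{B}_p}^p + \norms{\y}_{\mathcal{B}_p}^p + 2\varepsilon$, let $\varepsilon \to 0$, and then pass from the $p$-th powers to the norms themselves: the map $t \mapsto t^{1/p}$ is subadditive on $[0,\infty)$, so $(\norms{\x}_{\mathcal{B}_p}^p + \norms{\y}_{\mathcal{B}_p}^p)^{1/p} \leq \norms{\x}_{\mathcal{B}_p} + \norms{\y}_{\mathcal{B}_p}$, which yields the claim.

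The main obstacle I anticipate is bookkeeping around the $\liminf$ and the possibly different lengths of the two approximating sequences: strictly speaking one should reindex so the competitor sequence for $\x+\y$ is indexed by a single parameter and the $\liminf$ can be split cleanly (one may need to pass to a common subsequence along which both coefficient sums converge, to legitimately split the $\liminf$ into a sum). A minor additional point worth checking is that the concatenation genuinely produces an admissible {\bf ALNN} competitor in the sense of \autoref{eq:barron_rep} — i.e.\ that no cancellation issue or the normalization $\sigma(0)=0$ from \autoref{eq:sigma} interferes — but since we are only using these sequences as upper-bound competitors in the infimum, this is straightforward.
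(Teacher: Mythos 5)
Your proof is correct and follows essentially the same route as the paper's: build a competitor for $\x+\y$ by adding approximants of $\x$ and $\y$, note that the associated cost is additive, and pass from the inequality for the $p$-th powers to the norms via subadditivity (concavity) of $t \mapsto t^{1/p}$. The only real difference is that the paper additionally merges summands whose decision functions $(\vw_j,\theta_j)$ coincide and invokes the Minkowski inequality for those merged coefficients, whereas you keep duplicated neurons as separate summands of the concatenated {\bf ALNN} --- which is admissible under \autoref{eq:barron_rep} and makes the cost exactly additive, so your bookkeeping is, if anything, cleaner.
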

\begin{proof}
	\begin{enumerate}
	\item The first step consists in reducing the triangle inequality for distributions to network functions:
	In fact we have by definition
	\begin{equation*}
		\norms{\x+\z}_{\mathcal{B}_p}^p = \inf_{\rho_{\noc^+} \to \x+\z} \sum_{j=1}^{\noc^+} \abss{\alpha_j^{\noc^+}}^p\left(\norms{\vw_j^{\noc^+}}_1+\abss{\theta_j^{\noc^+}}\right)^p\;.
	\end{equation*}
    We first note that if $\rho_{\noc^+} \to \x+\z$ and $\x$, $\z$ are also approximateable, then for every sequence $\rho_{\noc^+}$ we have by assumption also convergence of every summand: $\x_{\noc} \to \x$ and $\z_{\noc'}:=\rho_{\noc_+}-\x_{\noc} \to \z$. This means that
    \begin{equation*}
    	\norms{\x+\z}_{\mathcal{B}_p}^p = \inf_{\x_\noc \to \x \& \z_{\noc'}+\z} \sum_{j=1}^{\noc^+} \abss{\alpha_j^{\noc^+}}^p\left(\norms{\vw_j^{\noc^+}}_1+\abss{\theta_j^{\noc^+}}\right)^p\,,
    \end{equation*}
    where $\noc^+ \leq \noc'+\noc''$ is the number of approximating network functions. Making two summands, representing $\x_\noc \to \x$ and $\z_{\noc'} \to \z$ we get
	\begin{equation*}
	\norms{\x+\z}_{\mathcal{B}_p} \leq \norms{\x}_{\mathcal{B}_p}+\norms{\z}_{\mathcal{B}_p}\;.
\end{equation*}
	Therefore it suffices to prove the triangle for network functions:
	\item Let
	\begin{align*}
	\x_{1, \noc}(\vs)&= \sum_{j=1}^\noc \alpha_j^{1, \noc}\sigma(\vw_j^{1, \noc}{}^T\vs + \theta_j^{1, \noc})\,,\\
	\x_{2, \noc'}(\vs)&= \sum_{j=1}^{\noc'} \alpha_j^{2, \noc'}\sigma(\vw_j^{2, \noc'}{}^T\vs + \theta_j^{2,\noc'})\,,
	\end{align*}
    such that
    \begin{equation*}
    \lim_{\noc \to \infty} \x_{1, \noc} = \x_1 \text{ and }
    \lim_{\noc' \to \infty}\x_{2, \noc'} = \x_2
    \end{equation*}
    in a distributional setting. Now, we find the two index sets, where
    \begin{equation*}
    	\begin{aligned}
    		I_1 &:= \set{j \in \set{1,\ldots,\noc} : \forall_{k=1,\ldots,\noc'}
    			(\vw_j^{1,\noc},\theta_j^{1,\noc}) \neq (\vw_k^{2,\noc'},\theta_k^{2,\noc'})}, \\	
    		I_2 &:= \set{k \in \set{1,\ldots,\noc'} : \forall_{j=1,\ldots,\noc}
    			(\vw_k^{1,\noc'},\theta_k^{2,\noc'}) \neq (\vw_j^{1,\noc},\theta_j^{1,\noc})}\;.
    	\end{aligned}
    \end{equation*}
    Then it follows that
    \begin{align*}
    \x_{1, \noc}(\vs) + \x_{2, \noc'}(\vs) = &
    \sum_{j \in I_1} \alpha_j^{1, \noc}\sigma(\vw_j^{1, \noc}{}^T\vs + \theta_j^{1, \noc}) \\
    \\ & +
    \sum_{j \notin I_1} (\alpha_j^{1, \noc} + \alpha_{k(j)}^{2,\noc'}) \sigma(\vw_j^{1, \noc}{}^T\vs + \theta_j^{1, \noc}) \\
    \\ & +
    \sum_{k \in I_2} \alpha_k^{2, \noc'}\sigma(\vw_k^{2, \noc'}{}^T\vs + \theta_k^{2,\noc'})\;.
    \end{align*}
    From this we see from the Minkowski inequality and \autoref{eq:barron_norm} that
    \begin{equation*}
    	\begin{aligned}
    			& \norms{\x_{1} + \x_{2}}_{\mathcal{B}_p}^p\\
    			\leq &\inf \left(\sum_{j=1}^\noc \abss{\alpha_j^{1, \noc}}^p\left(\norms{\vw_j^{1, \noc}}_1+\abss{\theta_j^{1, \noc}}\right)^p + \sum_{j=\noc+1}^{\noc+\noc'} \abss{\alpha_j^{2, \noc'}}^p\left(\norms{\vw_j^{2, \noc'}}_1+\abss{\theta_j^{2, \noc'}}\right)^p\right)\\
    	\end{aligned}
    \end{equation*}
    Using the properties of the infimum, we get
    \begin{align*}
    & \norms{\x_{1} + \x_{2}}_{\mathcal{B}_p}^p\\
    \leq & \inf \left(\sum_{j=1}^\noc \abss{\alpha_j^{1, \noc}}^p\left(\norms{\vw_j^{1, \noc}}_1+\abss{\theta_j^{1, \noc}}\right)^p\right) \\
    & \qquad +
    \inf \left(\sum_{j=\noc+1}^{\noc+\noc'} \abss{\alpha_j^{2, \noc'}}^p\left(\norms{\vw_j^{2, \noc'}}_1+\abss{\theta_j^{2, \noc'}}\right)^p\right) \\
    = & \norms{\x_{1}}_{\mathcal{B}_p}^p + \norms{\x_{2}}_{\mathcal{B}_p}^p\;.
    \end{align*}
    In order to finally prove the triangle inequality we use the concavity of
    function $s \to f(s) = s^{1/p}$.
\end{enumerate}
\end{proof}
However, a Barron-energy might not satisfy that if $\norms{\x}_{\mathcal{B}_p} = 0$ that then $\x=0$. Therefore the norm properties are not proven, as shown in \cite{EMaWu22} the terminologies 'space' and 'norm' are used in a loose way.

\begin{remark}
In \autoref{ch:nn}, we introduced an energy \autoref{eq:norm} for Tauber-Wiener functions. compared to neural networks the situation is simpler because the approximation does not use the scaling terms $\vw_j^\noc$. We defined two different energies, the $\mathcal{L}^1$-norm (\autoref{eq:normve}) and the $\mathcal{B}^p$-energy (\autoref{eq:barron_norm_TW2}), which did not require the restriction $\sigma(0) = 0$. Therefore, more activation functions from \autoref{ss:act} can be included. Some of them have been discussed in the context of regularization in \cite{LiLuMatPer24}.

The definition of the Barron-energy \autoref{eq:barron_norm} is somehow different to the standard definition, as for instance given in \cite{EMaWu22}. It is not clear that they are equivalent. It would also be interesting to know whether we can impose constraints such that the Barron-energy is a norm?.
\end{remark}

\section{Besov spaces} \label{sec:besov}
Besov spaces are function spaces that generalize Sobolev, H\"{o}lder, and $L^p$-spaces. They 
are widely used in many mathematical areas such as harmonic analysis and wavelets theory, see \cite{Dau92,Tri83,Tri92, HarKerPicTsy98, ChoBar03}.

Besov spaces can be defined in various ways: We define them based on orthonormal \emph{wavelet systems}:
\begin{definition}
For a scaling function $\phi(\cdot)$ and wavelet function $\psi(\cdot)$. If the set of functions 
\begin{equation*}
\set{\phi_{j,\vec{i}}(\cdot) := 2^j \phi(2^j \cdot - \vec{i}):\R^m \to \R: j \in \N_0,\vec{i} \in \ZN^m}
\end{equation*}
\begin{equation*}
\set{\psi_{j,\vec{i}}(\cdot) := 2^j \psi(2^j \cdot - \vec{i}):\R^m \to \R: j \in \N_0,\vec{i} \in \ZN^m}
\end{equation*}

forms an orthonormal basis for $L^2(\R^m)$ then it is called \emph{orthonormal wavelet system}.\index{orthonormal wavelet system} 
\end{definition}
As a consequence, for every $\x \in L^2(\R^m)$, we have
\begin{equation*}
\x(\vs) = \sum_{\vec{i} \in \ZN^m} u_{0,\vec{i}} \phi_{0,\vec{i}}(\vs) + \sum_{j=1}^{\infty}\sum_{\vec{i}\in \ZN^m} \varpi_{j,\vec{i}} \psi_{j,\vec{i}}(\vs) \text{ for all } \vs \in \R^m.
\end{equation*}
Here $u_{j,\vec{i}} := \int_{\R^m} \x(\vs) \phi_{j,\vec{i}}(\vs) d\vs $ and $\varpi_{j,\vec{i}} := \int_{\R^m} \x(\vs) \psi_{j,\vec{i}}(\vs) d\vs $ are the expansion coefficients with respect to the orthonormal basis.

\begin{definition}[from \cite{ChoBar03}]
Let $s > 0$, $1 < p \leq \infty$, and $1 \leq q \leq \infty$. Then the \emph{Besov space} $B^s_{p,q}(\R^m)$ consists of all general functions $\x$ for which
\begin{equation*}\label{eq:besov_norm}
\norms{\x}_{B^s_{p,q}} = \norms{u_{0,\vec{i}}}_{p} + \left( \sum_{j=1}^\infty 2^{jq(s+1-2/p)} \left(\sum_{\vec{i}\in \ZN^m} \abss{\varpi_{j,\vec{i}}}^{p} \right)^{q/p}\right)^{1/q} < \infty\;.
\end{equation*}
\end{definition}

\section{Dual spaces} \label{sec:dualspace}
Dual spaces are an important concept, which has in particular applications in wavelet theory and for finite elements, what is relevant for this book. One differs between \emph{algebraic dual spaces}\index{dual space!algebraic} and \emph{continuous dual spaces}\index{dual space!continuous} (see for instance \cite{DunSchw63}). We focus on the latter ones:
\begin{definition} 
	Let $\X$ be a normed vector spaces (commonly a Hilbert or Banach space). The \emph{continuous dual} $\X'$ is the set of all linear functionals
	\begin{equation*}
		\set{L : \X \to \R : \norm{L}_{\X'}:=\sup \set{\abs{L\x} : \norm{\x}_\X \leq 1} < \infty}\;.
	\end{equation*}
    This is again a normed space. 
\end{definition}
\begin{remark}
$\X'$ is even a Banach space as long as $\X$ is complete. One can consider these functionals also as distributions (see \autoref{sec:distribution}).
\end{remark}
\begin{example} \label{ex:dual}
	The dual of the Sobolev space $W^{s,2}(\Omega)$ (see \autoref{sob_def}) is denoted by $W^{-s,2}(\Omega)$. In fact the linear functionals of $W^{-s,2}(\Omega)$ are commonly treated as functions and we write for $\x \in W^{s,2}(\Omega)$ and $\x' \in W^{-s,2}(\Omega)$:
	\begin{equation*}
		\inner{\x'}{\x} := \x'[\x]\;. 
	\end{equation*}
    In most cases this is actually the $L^2$-inner product.
\end{example}

\section{Spherical harmonics}
\emph{Spherical harmonics}\index{spherical harmonics} are special functions defined on the surface of the sphere, $\mathbb{S}^{m-1}$, of $\R^m$.
\begin{definition}[Spherical Harmonics] \label{de:spherical_harmonics} Let $2 \leq m \in \N$ and $l \in \N$. We define
	\begin{equation*}
		N(m,l) = \frac{(2l+m-2)(m+l-3)!}{l! (m-2)!},\; N(m,0)=1\;.
	\end{equation*}
    A \emph{spherical harmonic} $Y_{l,m}$ is the restriction to $\mathbb{S}^{m-1}$ of a
    harmonic and homogeneous polynomial of degree $l$ on $\R^m$.
\end{definition}
$N(m,l)$ denotes the number of independent spherical harmonics of degree $l$ on $\R^m$. For different $l$ spherical harmonics are orthogonal.
\begin{example}
	\begin{itemize}
	\item For $m=2$ and $l \in \N$ we have $N(m,l)= 2$. The spherical harmonics are given by
	\begin{equation} \label{eq:speherical_harmonics2}
		Y_{l,1}(\phi) = \cos(l\phi) \text{ and } Y_{l,-1}(\theta) = \sin(l\phi)  \text{ for } l \in \N\;.\footnote{With a slight abuse of notation we identify $\theta = \begin{pmatrix} \cos (\phi)\\
				\sin (\phi) \end{pmatrix}$. Note that the function on the right hand side is a function of $\theta$ because it lives on $\mathbb{S}^1$. The right hand side is in polar coordinates.}
	\end{equation}
For $m=2$ and $l=0$ we have $Y_{0}(\phi)=1$.
    \item For $m=3$ we have in spherical coordinates,
    \begin{equation} \label{eq:speherical_harmonics3}
	Y_{l,m}(\theta,\phi) = c_{l,m} \e^{\i m \phi} P_l^m(\cos(\theta))\,,
\end{equation}
where $P_l^m :[-1,1] \to \R$ are the \emph{Legendre polynomials} and $c_{lm}$ is a normalizing constant, such that $\int_{\mathbb{S}^2} Y_{l,m}(\theta,\phi) d\theta d \phi =1$.
\end{itemize}
\end{example}

\section{Differentiability of functionals and operators}

We recall the definitions of directional derivatives of an operator $\opo$.
For a survey on various concepts of differentiability we refer to~\cite{Cla90}.

\begin{definition}\label{de:ap:diff}
	Let $\opo: \X \to \Y$ be an operator between normed spaces $\X$ and $\Y$.
	\begin{enumerate}
		\item The operator $\opo$ admits a one-sided directional derivative $\opo'[\x,\tt h] \in \Y$
		at $\x \in \X$ in direction ${\tt h} \in \X$, if \index{derivative!one sided}
		\begin{equation}\label{eq:ap:dirderivative}
			\opo'[\x,{\tt h}] = \lim_{t \to 0^+} \frac{\op{\x+t{\tt h}}-\op{\x}}{t}\;.
		\end{equation}
		\item Let $\x \in \X$, and assume that $\opo'[\x,\tt h]$ exists for all ${\tt h} \in \X$.
		If there exists a bounded linear operator $\opd{\x}:\X \to \Y$ such that
		\begin{equation*}
			\opo'[\x,\tt h] = \opd{\x} {\tt h} \; \text{ for all } \; {\tt h} \in \X\,,
		\end{equation*}
		then $\opo$ is \emph{G\^ateaux-differentiable},\index{operator!G\^ateaux-differentiable}
		and $\opd{\x}$ is called the \emph{G\^ateaux-derivative}\index{derivative!G\^ateaux}
		of $\opo$ at $\x$.
		\item The operator $\opo$ is \emph{Fr\'echet-differentiable}\index{operator!Fr\'echet-differentiable}
		at $\x$, if it is G\^ateaux-differentiable
		and the convergence in \autoref{eq:ap:dirderivative} is uniform with respect to
		${\tt h} \in \mathcal{B}(0;\rho)$ for some $\rho > 0$.
	\end{enumerate}
\end{definition}

\section[Convergence of Newton's method]{Convergence of Newton's method and the Newton-Mysovskii conditions} \label{sec:NN}
In this section we review convergence conditions for Newton type methods.
We consider first Newton methods for solving functional equations \autoref{eq:op}.
Decomposition cases (see \autoref{se:decomp}) of the operator $N$ will be considered afterwards. This subsection is relevant for parameter learning as discussed in \autoref{ch:learning}.

\subsection{Newton's method in finite dimensions}
For Newton methods \emph{local convergence} is guaranteed under \emph{affine covariant} conditions.\index{condition!affine invariant}
Several variants of such conditions have been proposed in the literature (see for instance \cite{DeuHei79,DeuPot92,NasChe93}).
Analysis of Newton method has been an active research area, see for instance \cite{Ort68,Schw79}.

\begin{theorem}[Finite-dimensional Newton method] \label{th:deupot92} Let $\opo: \dom{\opo} \subseteq \R^m \to \R^m$ be continuously Fr\'echet-differentiable on a non-empty, open and convex set $\dom{\opo}$. Let $\vp^\dagger \in \dom{\opo}$ be a solution\footnote{Note that in comparison with \autoref{cha:intro}, here $\X=\Y=\R^m$ and the elements of $\X$ are denoted by vectors $\vp$. Moreover we specify $\norm{\cdot}_\X = \norm{\cdot}_2$.} of \autoref{eq:op}
	\begin{equation*}
		\opo(\vp) = \vec{y}\;.
	\end{equation*}
	Moreover, we assume that
	\begin{enumerate}
		\item $\opo'(\vp)$ is invertible for all $\vp \in \dom{\opo}$ and that
		\item the \emph{Newton-Mysovskii type condition} holds: That is, there exist some $C_F > 0$ such that
		\begin{equation} \label{eq:wrnmi}
			\begin{aligned}
				\norm{\opo'(\vq)^{-1}(\opo'(\vp+s(\vq-\vp))-\opo'(\vp))(\vq-\vp)}_2 \leq s C_F \norm{\vp-\vq}_2^2 \\
				\text{ for all } \vp, \vq \in \dom{\opo}, s \in [0,1].
			\end{aligned}
		\end{equation}
	\end{enumerate}	
	Let $\vp^0 \in \dom{\opo}$ which satisfies
	\begin{equation}\label{eq:h}
		\overline{\mathcal{B}(\vp^0;\rho)} \subseteq \dom{\opo} \text{ with }\rho := \norms{\vp^\dagger-\vp^0}_2
		\text{ and } \frac{\rho C_I C_L}{2} <1.
	\end{equation}
	Here $C_L$ denotes the Lipschitz constant of the inverse $\opo'(\vp)$ in $\overline{\mathcal{B}(\vp^0;\rho)}$ with respect to the spectral norm (which is the operator norm, \autoref{de:op_norm}), which is denoted here by  $\norm{\cdot}_2\norm{\opo'(\vp)^{-1} -\opo'(\vq)^{-1}}_2$ (note however it is the norm of operators represented as matrices),  and $C_I$ denotes the norm of the inverse: That is
	\begin{equation} \label{eq:cicp}
		\norm{\opo'(\vp)^{-1} -\opo'(\vq)^{-1}}_2 \leq C_L \norm{\vp-\vq}_2 \text{ and } \norm{\opo'(\vp)^{-1}}_2 \leq C_I\;.
	\end{equation}
	Then the Newton iteration with starting point $\vp^0$,
	\begin{equation} \label{eq:newton_invert} \begin{aligned}
			\vp^{k+1} = \vp^k - \opo'(\vp^k)^{-1}(F(\vp^k)-\by)
			\quad k \in \N_0,
		\end{aligned}
	\end{equation}
	satisfies that the iterates $\set{\vp^k: k=0,1,2,\ldots}$ belong to $\overline{\mathcal{B}(\vp^0;\rho)}$ and
	converge quadratically to $\vp^\dagger \in  \overline{\mathcal{B}(\vp^0;\rho)}$.
\end{theorem}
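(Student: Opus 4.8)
The plan is to run the classical affine-covariant (Newton--Mysovskii) argument: it rests on one algebraic identity for the Newton error, a quadratic contraction estimate obtained from \eqref{eq:wrnmi} and \eqref{eq:cicp}, and an induction that simultaneously keeps track of well-definedness of the iterates, their membership in $\overline{\mathcal{B}(\vp^0;\rho)}$, and their distance to $\vp^\dagger$. \textbf{Step 1 (Newton error identity).} For any $\vp\in\dom{\opo}$ — so that, by convexity of $\dom{\opo}$, the whole segment $[\vp,\vp^\dagger]$ lies in $\dom{\opo}$ — I would use $\opo(\vp^\dagger)=\by$ and invertibility of $\opo'(\vp)$ to write
\[
\vp-\opo'(\vp)^{-1}\bigl(\opo(\vp)-\by\bigr)-\vp^\dagger
=\opo'(\vp)^{-1}\Bigl(\opo'(\vp)(\vp-\vp^\dagger)-\bigl(\opo(\vp)-\opo(\vp^\dagger)\bigr)\Bigr).
\]
Applying the fundamental theorem of calculus to $s\mapsto\opo(\vp^\dagger+s(\vp-\vp^\dagger))$ and substituting $t=1-s$ rewrites the bracket as $-\int_0^1\bigl(\opo'(\vp+t(\vp^\dagger-\vp))-\opo'(\vp)\bigr)(\vp-\vp^\dagger)\,dt$, so the right-hand side is precisely the Newton iterate (starting from $\vp$) minus $\vp^\dagger$.

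\textbf{Step 2 (Quadratic contraction).} I would bound the above by splitting $\opo'(\vp)^{-1}=\opo'(\vp^\dagger)^{-1}+\bigl(\opo'(\vp)^{-1}-\opo'(\vp^\dagger)^{-1}\bigr)$. The term with $\opo'(\vp^\dagger)^{-1}$ is handled directly by \eqref{eq:wrnmi} (with $q=\vp^\dagger$), the factor $t$ inside the integral producing the constant $\tfrac12$; the correction term is handled by the Lipschitz bound $\norms{\opo'(\vp)^{-1}-\opo'(\vp^\dagger)^{-1}}_2\le C_L\norms{\vp-\vp^\dagger}_2$ and the uniform bound $\norms{\opo'(\cdot)^{-1}}_2\le C_I$ from \eqref{eq:cicp} on $\overline{\mathcal{B}(\vp^0;\rho)}$. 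Collecting both contributions yields an estimate of the form
\[
\norms{\vp-\opo'(\vp)^{-1}(\opo(\vp)-\by)-\vp^\dagger}_2\le\frac{C_IC_L}{2}\,\norms{\vp-\vp^\dagger}_2^2
\]
for every $\vp\in\overline{\mathcal{B}(\vp^0;\rho)}$, where I would track carefully how the contraction constant actually depends on $C_F$, $C_I$ and $C_L$.

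\textbf{Step 3 (Induction).} Set $h:=\tfrac{C_IC_L}{2}\,\rho<1$ by hypothesis \eqref{eq:h}, and note $\norms{\vp^0-\vp^\dagger}_2=\rho$. Assuming $\vp^k\in\overline{\mathcal{B}(\vp^0;\rho)}\subseteq\dom{\opo}$, assumption (i) makes $\opo'(\vp^k)$ invertible, so $\vp^{k+1}$ is well defined, and Step 2 gives $\norms{\vp^{k+1}-\vp^\dagger}_2\le h\,\norms{\vp^k-\vp^\dagger}_2<\norms{\vp^k-\vp^\dagger}_2\le\rho$; combined with a telescoping bound on the Newton increments $\vp^{k+1}-\vp^k$ and convexity of $\dom{\opo}$ this closes the induction, showing the iterates stay in the prescribed ball. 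Iterating the quadratic estimate of Step 2 then yields $\norms{\vp^k-\vp^\dagger}_2\le\rho\,h^{\,2^k-1}\to0$, i.e.\ quadratic convergence to $\vp^\dagger\in\overline{\mathcal{B}(\vp^0;\rho)}$.

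\textbf{Main obstacle.} The delicate points are not the calculus identity but two matching issues: the hypothesis \eqref{eq:wrnmi} carries the inverse at the ``endpoint'' $q$, whereas the identity of Step 1 naturally produces the inverse at the \emph{current} iterate $\vp^k$, so the passage between them must be mediated by the bounds \eqref{eq:cicp}; and, relatedly, one must verify that the iterates genuinely remain inside $\overline{\mathcal{B}(\vp^0;\rho)}$ (not merely a slightly larger ball) so that all the estimates and invertibility statements apply along the whole iteration. Making this domain bookkeeping airtight, and seeing exactly why the smallness condition $\tfrac{\rho C_IC_L}{2}<1$ suffices for it, is the part of the proof that needs the most care.
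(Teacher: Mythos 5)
The paper does not actually prove this theorem: it is recalled in the appendix as a classical affine-covariant result with pointers to \cite{DeuHei79,DeuPot92,NasChe93}, so there is no in-text proof to compare against. Your overall strategy (error identity, quadratic contraction, induction on ball membership) is the standard one and is in principle viable, but as written it has two concrete gaps, both of which you half-acknowledge and neither of which you close. First, the contraction constant in Step 2 is not derived. Your splitting $\opo'(\vp)^{-1}=\opo'(\vp^\dagger)^{-1}+(\opo'(\vp)^{-1}-\opo'(\vp^\dagger)^{-1})$ gives $\tfrac{C_F}{2}\norms{\vp-\vp^\dagger}_2^2$ for the first piece, but the correction piece is $(\opo'(\vp)^{-1}-\opo'(\vp^\dagger)^{-1})v$ with $v=\opo(\vp^\dagger)-\opo(\vp)-\opo'(\vp)(\vp^\dagger-\vp)$, and bounding $\norms{v}_2$ requires a bound on $\opo'$ itself (not its inverse), which is not among the named constants $C_F,C_I,C_L$. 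Whatever comes out, it is not $\tfrac{C_IC_L}{2}$, and since the smallness hypothesis \eqref{eq:h} involves only $C_IC_L$ and not $C_F$, your contraction constant is not controlled by the stated hypothesis. Note that the cleaner classical route — the one \eqref{eq:wrnmi} is tailored to — estimates the Newton \emph{corrections} rather than the errors: from $\opo(\vp^{k+1})=\int_0^1\bigl(\opo'(\vp^k+s\Delta\vp^k)-\opo'(\vp^k)\bigr)\Delta\vp^k\,ds$ with $\Delta\vp^k:=\vp^{k+1}-\vp^k$, applying \eqref{eq:wrnmi} with $\vp=\vp^k$, $\vq=\vp^{k+1}$ gives directly $\norms{\Delta\vp^{k+1}}_2\le\tfrac{C_F}{2}\norms{\Delta\vp^k}_2^2$ with no splitting and no mismatch of evaluation points; convergence to a solution then follows from a Cauchy/telescoping argument, and identification of the limit with $\vp^\dagger$ from uniqueness in the ball.

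Second, your induction does not establish $\vp^k\in\overline{\mathcal{B}(\vp^0;\rho)}$. The estimate $\norms{\vp^{k+1}-\vp^\dagger}_2<\norms{\vp^k-\vp^\dagger}_2\le\rho$ keeps the iterates within distance $\rho$ of $\vp^\dagger$, i.e.\ in $\overline{\mathcal{B}(\vp^\dagger;\rho)}$; but since $\norms{\vp^0-\vp^\dagger}_2=\rho$, such points can lie up to distance $2\rho$ from $\vp^0$, and only $\overline{\mathcal{B}(\vp^0;\rho)}$ is assumed to be contained in $\dom{\opo}$, so invertibility and the bounds \eqref{eq:cicp} are only guaranteed there. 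The ``telescoping bound on the Newton increments'' you mention is exactly the missing ingredient — one must sum $\sum_j\norms{\Delta\vp^j}_2$ and show the total stays below $\rho$, which requires relating $\norms{\Delta\vp^0}_2=\norms{\opo'(\vp^0)^{-1}(\opo(\vp^0)-\by)}_2$ to $\rho$ via $C_I$ and the hypotheses — and it is the part of the theorem where all the work is. Until that is carried out, the proof is a correct outline of the right method rather than a proof.
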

\begin{remark} \autoref{th:deupot92dg} can be generalized to abstract spaces. In finite dimensions, if $\opo: \R^m \to \R^m$, local convergence of Newton's method is often proven under Lipschitz continuity of the forward operator     
	\begin{equation} \label{eq:cicpL}
		\norm{\opo'(\vp) -\opo'(\vq)}_2 \leq C_L^+ \norm{\vp-\vq}
	\end{equation}
	and the linearization has full rank. Local convergence follows from the \emph{implicit function theorem} (see for instance \cite{Dei85b,KraPar13}). 
	
	Of course the local convergence regions $\overline{\mathcal{B}(\vp^0;\rho)}$, where the estimates \autoref{eq:cicp} and \autoref{eq:cicpL} hold, might be different in both situations.
\end{remark}

\subsection{Newton's method for functionals}
Now, we study the case of convergence of Gauss-Newton methods where $\opo: \bP = \R^{\dimlimit} \to \Y$, where $\Y$ is an infinite dimensional Hilbert-space. We use the notation of \autoref{se:nnf}.
In the case of a functional $\opo$, the inverse, used in a classical Newton method (see \autoref{eq:newton_invert}), is not well-defined, because $\opd{\vp}$ might not have full rank (see item (i) in \autoref{th:deupot92}). Since a standard Newton's method require the linearizations to have full rank, we call Newton's method which deal with non full rank, Gauss-Newton methods.

Before we phrase a convergence result for Gauss-Newton methods we recall and introduce some definitions:
\begin{definition}[Inner, outer and Moore-Penrose inverse] \label{not:inverse} (see \cite{Nas76,Nas87})\index{inverse!outer}\index{inverse!inner}\index{inverse!Moore-Penrose}
	Let $L: \bP \to \Y$ be a linear and bounded operator mapping between two vector spaces\footnote{The space $\bP$ does not necessarily be finite dimensional} $\bP$ and $\Y$. Then
	\begin{enumerate}
		\item the operator $B: \Y \to \bP$ is called a \emph{left inverse} to $L$ if
		$$ B L = I\;. $$
		\item $B:\Y \to \bP$ is called a \emph{right inverse} to $L$ if
		$$ L B = I\;. $$
		Left and right inverses are used in different context:
		\begin{itemize}
			\item For a left inverse the nullspace of $L$ has to be trivial, in contrast to $B$.
			\item For a right inverse the nullspace of $B$ has to be trivial.
		\end{itemize}
		\item $B:\bP \to \bP$ is called a \emph{inverse} to $L$ if $B$ is a right and a left inverse.
		\item $B:\bP \to \Y$ is an \emph{outer inverse} to $L$ if
		\begin{equation} \label{eq:outer}
			BLB = B.
		\end{equation}
		\item Let $\bP$ and $\Y$ be Hilbert-spaces and $L: \bP \to \Y$ be a linear bounded operator.  We denote the orthogonal projections
		$P$ and $Q$ onto $\mathcal{N}(L)$, the nullspace of $L$ (which is closed), and $\overline{\range{L}}$, the closure of the range of $L$:
		That is for all ${\tt p} \in \bP$ and $\y \in \Y$ we have
		\begin{equation} \label{eq:proj1}
			\begin{aligned}
				P{\tt p} &= \argmin \set{\norm{{\tt p}_1-{\tt p}}_\bP: {\tt p}_1 \in \mathcal{N}(L)} \text{ and }\\
				Q\y &= \argmin \set{\norm{\y_1-\y}_\Y: \y_1 \in \overline{\mathcal{R}(L)}}.
			\end{aligned}
		\end{equation}
		We therefore have
		\begin{equation*}
			\begin{aligned}
				P: \bP & \to \mathcal{N}(L) \dot{+} \mathcal{N}(L)^\bot\,\\
				{\tt p} &\mapsto P {\tt p} + 0
			\end{aligned} \quad \text{ and }\quad
			\begin{aligned}
				Q: \Y & \to \overline{\range{L}} \dot{+} \range{L}^\bot. \\
				\y &\to Q\y + 0
			\end{aligned}
		\end{equation*}
		$B:\dom{B} \subseteq \Y \to \bP$ with $\dom{B}:= \range{L} \dot{+} \range{L}^\bot$ is called the \emph{Moore-Penrose inverse} of $L$ if the following identities hold
		\begin{equation} \label{eq:MP}
			\begin{aligned}
				LBL &=L,\\
				BLB &=B,\\
				BL &=I-P,\\
				LB &= Q|_{\dom{B}}.
			\end{aligned}
		\end{equation}
	\end{enumerate}	
\end{definition}
\begin{remark}
	The range of a neural network operator $\Psi$ (defined in \autoref{eq:classical_approximation}, \autoref{eq:general}, \autoref{eq:quadratic_approximation_fixed} and other definitions in \autoref{ch:nn}) forms a submanifold in $\X$. 
	Next we show the relation between submanifolds and the Moore-Penrose inverse.
\end{remark}

\begin{definition}[Lipschitz differentiable immersion] \label{de:immersion} Let $\Psi: \dom{\Psi} \subseteq \bP =\R^{\dimlimit}\to \X$
	where $\dom{\Psi}$ is open, non-empty, convex and $\X$ is a seperable (potentially infinite dimensional) Hilbert-space.
	\begin{enumerate}
		\item \label{it1:immersion}
		We assume that $\mathcal{M}:=\Psi(\dom{\Psi})$ is a $\dimlimit$-dimensional \emph{submanifold} in $\X$: That is, the following conditions are satisfied:
		\begin{itemize}
			\item Let for all $\vp = (p_1,\ldots,p_{\dimlimit})^T \in \dom{\Psi}$ denote with $\Psi'$  the Fr\'echet-derivative of $\Psi$ (defined in \autoref{eq:classical_approximation}, \autoref{eq:general}, \autoref{eq:quadratic_approximation_fixed} and other definitions in \autoref{ch:nn}):
			\begin{equation*}
				\begin{aligned}
					\Psi'[\vp]: \bP & \to \X\;.\\
					\vq  &\mapsto 
					\sum_{i=1}^{\dimlimit} \partial_{p_i} \Psi[\vp] q_i
				\end{aligned}
			\end{equation*}
			Here $\begin{pmatrix} \partial_{p_i} \Psi[\vp] \end{pmatrix}_{i=1,\ldots,\dimlimit}$ denotes the vector of functions consisting of all partial derivatives with respect to $\vp$. In differential geometry notation this coincides with the \emph{tangential mapping} $T_{\vp} \Psi$. 
			
			\item The \emph{representation mapping} of the derivative
			\begin{equation*}
				\begin{aligned}
					\Psi': \dom{\Psi} & \to \X^{\dimlimit},\\
					\vp &\mapsto \begin{pmatrix}
						\partial_{p_i} \Psi[\vp]
					\end{pmatrix}_{i=1,\ldots,\dimlimit}.
				\end{aligned}
			\end{equation*}
			has always the same rank $\dimlimit$ in $\dom{\Psi}$, meaning that all elements of $\partial_{\vp} \Psi[\vp]$ are linearly independent.
			This assumption means, in particular, that $\Psi$ is an \emph{immersion} and $\mathcal{M}$ is a submanifold.
		\end{itemize}
		\item \label{it2:immersion} For given $\vp \in \bP$ we define
		\begin{equation} \label{eq:bP} \begin{aligned}
				P_\vp : \X &\to \X_\vp:=\spann\set{\partial_{p_i} \Psi[\vp]: i=1,\ldots,\dimlimit},\\
				\x &\mapsto P_\vp \x := \argmin \set{\norm{\x_1-\x}_\X : \x_1 \in \X_\vp}
		\end{aligned} \end{equation}
		as the projection from
		$$\X = \X_\vp \dot{+}
		\X_\vp\!\!\!{}^\bot$$
		onto $\X_\vp$, which is well-defined by the closedness of the finite dimensional subspace $\X_\vp$.
		
		Next we define the inverse of $\Psi'[\vp]$ on $\X_\vp$:
		\begin{equation*}
			\begin{aligned}
				\Psi'[\vp]^{-1} : \X_\vp &\to \bP,\\
				\x = \sum_{i=1}^{\dimlimit} x_i \partial_{p_i} \Psi[\vp] &\mapsto (x_i)_{i=1}^{\dimlimit}
			\end{aligned}
		\end{equation*}
		which is extended to $\X$ as a projection 
		\begin{equation} \label{eq:MP_Penrose}
			\begin{aligned}
				\Psi'[\vp]^\dagger : \X = \X_\vp \dot{+} \X_\vp\!\!\!{}^\bot &\to \bP,\\
				\x = (\x_1,\x_2) &\mapsto \Psi'[\vp]^{-1} \x_1
			\end{aligned}
		\end{equation}
		which are both well-defined because we assume that $\Psi$ is an immersion.
		Note that $x_i$, $i=1,\ldots,\dimlimit$ are coordinates with respect to the basis $\set{\partial_{p_i} \Psi[\vp]:i=1,\ldots,\dimlimit}$. We use here the notation of a Moore-Penrose inverse, which it actually is, which is proven below (see \autoref{le:MPN}).
		\item \label{it3:immersion} Finally, we assume that the operators $\Psi'[\vp]$ 
		are locally bounded and Lipschitz continuous in $\dom{\Psi}$:
		That is
		\begin{equation} \label{eq:cl}
			\begin{aligned}
				\norm{\Psi'[\vp]-\Psi'[\vq]}_{\bP \to \X}
				\leq C_L \norm{\vp-\vq}_{\bP} \quad 
				\norm{\Psi'[\vp]}_{\bP \to \X}
				\leq C_I \text{ for } \vp, \vq \in \dom{\Psi}.	
			\end{aligned}
		\end{equation}
		If $\Psi$ satisfies these three properties we call it a \emph{Lipschitz differentiable immersion}.
	\end{enumerate}
    Typically one takes $\norms{\vp}_{\bP} = \norms{\vp}_2$, the Euclidean norm or a Barron-type energy (see \autoref{eq:barron_norm})
    \begin{equation*}
    	\norms{\vp}_{\bP} = \abs{\alpha}^p \left( \norm{\vw}_1 + \abs{\theta} \right)\;.
    \end{equation*}
\end{definition}
The following lemma is proved by standard means but rather tedious calculations in \cite{SchHofNas23}: 
\begin{lemma}\label{le:MPN} For a Lipschitz differentiable immersion
	\begin{itemize}
		\item the function $\Psi'[\vp]^\dagger: \X \to \bP$ is in fact the Moore-Penrose inverse of $\Psi'[\vp]$ and
		\item for every point $\vp \in \dom{\Psi} \subseteq \bP$ there exists a non-empty closed neighborhood where $\Psi'[\vp]^\dagger$
		is uniformly bounded and it is Lipschitz continuous; That is		
		\begin{equation} \label{eq:clb}
			\begin{aligned}
				\norm{\Psi'[\vp]^\dagger-\Psi'[\vq]^\dagger}_{\X \to \bP}
				\leq C_L \norm{\vp-\vq}_{\bP}, \quad
				\norm{\Psi'[\vp]^\dagger}_{\X \to \bP}
				\leq C_I \text{ for } \vp, \vq \in \dom{\Psi}.	
			\end{aligned}
		\end{equation}
		\item Moreover, the operator $P_\vp$ from \autoref{eq:bP} is bounded.
	\end{itemize}
\end{lemma}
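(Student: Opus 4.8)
\textbf{Proof plan for Lemma~\ref{le:MPN}.}

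The plan is to verify each of the three bulleted claims in turn, working entirely within a fixed point $\vp \in \dom{\Psi}$ and a small ball around it where the immersion and Lipschitz hypotheses of \autoref{de:immersion} are available. First I would establish the Moore--Penrose identities \autoref{eq:MP} for $\Psi'[\vp]^\dagger$ as defined in \autoref{eq:MP_Penrose}. The key observation is that $\Psi'[\vp]$ is injective from $\bP$ onto $\X_\vp$ (this is exactly item~\ref{it1:immersion} of \autoref{de:immersion}: the $\dimlimit$ functions $\partial_{p_i}\Psi[\vp]$ are linearly independent), so $\Psi'[\vp]^{-1}:\X_\vp \to \bP$ is a genuine two-sided inverse on that subspace. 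Writing $\X = \X_\vp \dot{+}\X_\vp^\bot$ with $P_\vp$ the orthogonal projector onto $\X_\vp$, one then checks: $\nsp{\Psi'[\vp]} = \set{0}$ so the projector called $P$ in \autoref{not:inverse} is $0$, giving $\Psi'[\vp]^\dagger\Psi'[\vp] = I = I - P$; next $\Psi'[\vp]\Psi'[\vp]^\dagger \x = \Psi'[\vp]\Psi'[\vp]^{-1}(P_\vp\x) = P_\vp\x$, which is the orthogonal projection onto $\overline{\range{\Psi'[\vp]}} = \X_\vp$, i.e.\ the identity $LB = Q$; and the remaining two identities $LBL=L$, $BLB=B$ follow immediately by composing these. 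This is routine linear algebra once the decomposition is in place.

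Second, for the uniform boundedness of $\Psi'[\vp]^\dagger$ near a fixed $\vp_0$, I would argue by a perturbation/continuity argument. Since $\Psi'$ is continuous (indeed Lipschitz by \autoref{eq:cl}) and the rank is constant, the smallest singular value $\sigma_{\min}(\Psi'[\vp])$ — equivalently, $\inf\set{\norm{\Psi'[\vp]\vq}_\X : \norm{\vq}_{\bP}=1}$ — is a positive, continuous function of $\vp$, hence bounded below by some $c>0$ on a closed ball $\overline{\mathcal{B}(\vp_0;r)} \subseteq \dom{\Psi}$. Then $\norm{\Psi'[\vp]^\dagger}_{\X \to \bP} = 1/\sigma_{\min}(\Psi'[\vp]) \le 1/c$ there. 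The reason one can afford a uniform bound is precisely that a finite-dimensional immersion has no ``degenerating directions'' locally; this is where item~\ref{it1:immersion} of \autoref{de:immersion} is doing the real work. For the Lipschitz estimate on $\Psi'[\vp]^\dagger$, the standard identity for the difference of pseudo-inverses,
\begin{equation*}
	\Psi'[\vp]^\dagger - \Psi'[\vq]^\dagger = -\Psi'[\vp]^\dagger\bigl(\Psi'[\vp]-\Psi'[\vq]\bigr)\Psi'[\vq]^\dagger + (\text{lower-order projector terms}),
\end{equation*}
combined with the uniform bound just obtained and the Lipschitz bound \autoref{eq:cl} on $\Psi'$ itself, yields $\norm{\Psi'[\vp]^\dagger - \Psi'[\vq]^\dagger}_{\X\to\bP} \le \tilde C_L\norm{\vp-\vq}_{\bP}$ on the ball; the extra projector terms are controlled the same way because, in the full-rank setting, $I - \Psi'[\vp]\Psi'[\vp]^\dagger$ and $I - \Psi'[\vp]^\dagger\Psi'[\vp]$ are themselves Lipschitz in $\vp$.

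Third, boundedness of $P_\vp$ from \autoref{eq:bP} is then essentially free: $P_\vp$ is by definition an orthogonal projection onto the closed finite-dimensional subspace $\X_\vp$, so $\norm{P_\vp}_{\X\to\X} \le 1$ uniformly. If instead one wants $P_\vp$ expressed through $\Psi'$, note $P_\vp = \Psi'[\vp]\,\Psi'[\vp]^\dagger$, whose norm is bounded by $\norm{\Psi'[\vp]}_{\bP\to\X}\norm{\Psi'[\vp]^\dagger}_{\X\to\bP} \le C_I/c$ on the ball, again finite.

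The main obstacle is the Lipschitz continuity of the pseudo-inverse: one must be careful that the constants $C_L, C_I$ appearing in \autoref{eq:clb} are genuinely \emph{local} (valid only on a neighborhood of each point), since globally the rank could fail to be locally uniform in operator norm, and one must track how the projector corrections in the difference formula for $\Psi'[\vp]^\dagger - \Psi'[\vq]^\dagger$ inherit Lipschitz bounds. Everything else reduces to the finite-dimensionality of $\X_\vp$ and the constant-rank immersion hypothesis. As the statement notes, these are standard but tedious computations, carried out in detail in \cite{SchHofNas23}.
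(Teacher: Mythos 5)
Your plan is correct, and each step holds up: injectivity of $\Psi'[\vp]$ (constant rank $\dimlimit$) reduces the Moore--Penrose identities to the computation $\Psi'[\vp]^\dagger\Psi'[\vp]=I$ and $\Psi'[\vp]\Psi'[\vp]^\dagger=P_\vp$; the lower bound on $\sigma_{\min}(\Psi'[\vp])$ on a closed ball gives the uniform bound; the full-column-rank case of the pseudo-inverse difference formula (where the term involving $I-\Psi'[\vp]^\dagger\Psi'[\vp]$ vanishes identically) gives the Lipschitz estimate; and $\norm{P_\vp}\le 1$ is immediate. The paper itself does not prove the lemma --- it defers to \cite{SchHofNas23} with the remark that the argument is ``standard but tedious'' --- so there is no in-text proof to compare against, but your outline is exactly the standard argument that remark points to, and your caveat that the constants in \autoref{eq:clb} are only local (despite the statement's ``for $\vp,\vq\in\dom{\Psi}$'') is a fair and accurate reading of the bullet about a closed neighborhood.
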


\subsection{Gauss-Newton method}
In the following we study a Gauss-Newton method for solving 
\begin{equation} \label{eq:ip_2}
	N(\vp)= \op{\Psi(\vp)} = \y,
\end{equation}
where $\Psi: \dom{\Psi} \subseteq \R^{\dimlimit} \to \X$ is a Lipschitz-continuous immersion (see \autoref{de:immersion}), 
$\opo:\X \to \Y$ is bounded and $N=\opo \circ \Psi$. That is, we are concerned with an operator $N$ that satisfies the 2nd decomposition case as discussed in \autoref{ss:decomp2}.
\begin{remark}
	A curious thing with Newton's method, \autoref{eq:newton_invert}, is, that it annihilates the linear operator during iteration: Formally we have 
	\begin{equation} \label{eq:Newton_sec}
		\begin{aligned}
		\vp_{k+1} &= \vp_k - N'[\vp_k]^{-1}(N[\vp_k]-\y) \\
		&=\vp_k - \Psi'[\vp_k]^{-1} \opo^{-1} (\opo \Psi[\vp_k]- \opo \Psi[\vp^\dagger])\\
		&= \vp_k - \Psi'[\vp_k]^\dagger(\Psi[\vp_k]-\Psi[\vp^\dagger]).
		\end{aligned}
	\end{equation}
Note that the ill--posedness introduced by the compact operator is dismissed. However, the representation only works if $\y$ is noiseless, which means that the data $\y$ is attainable by a parametrization is actually the basis for proving quadratic convergence of \autoref{eq:Newton_sec}.
\end{remark}

\begin{figure}[h]
	\begin{center}
		\includegraphics[width=.6\linewidth]{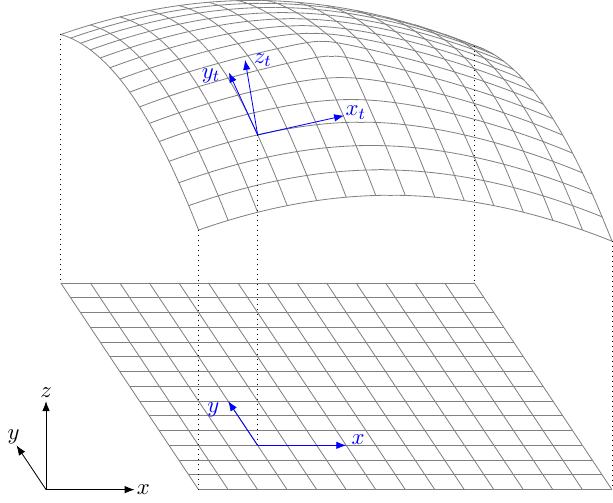}
		\caption{\label{fig:manifold} A Newton method on a manifold has to take into account that a point on the manifold lies in a higher-dimensional domain than the domain of definition (an atlas). So instead of the inverse in a Newton method, a Moore-Penrose inverse needs to be used. See \autoref{th:deupot92dg}.}
	\end{center}
\end{figure}

\begin{lemma} \label{le:dec} Let $F: \X \to \Y$ be linear, bounded, with trivial nullspace and dense range. Moreover, let $\Psi:\dom{\Psi} \subseteq \R^{\dimlimit} \to \X$ be a Lipschitz-differentiable immersion; see \autoref{de:immersion}, and let $N = F \circ \Psi$. 
	
	Note that by the definition of $N$, $\dom{N}=\dom{\Psi}$, and therefore for every 
	$\vp \in \dom{N}$ the derivative of the operator $N$ at a point $\vp$ has a Moore-Penrose inverse $N'[\vp]^\dagger$, 
	which satisfies:
	\begin{itemize}
		\item Decomposition property of the Moore-Penrose inverse:
		\begin{equation} \label{eq:MPa}
			N'[\vp]^\dagger \z = \Psi'[\vp]^\dagger \opo^{-1} \z \text{ for all } \vp \in \dom{N}, \z \in \range{F} \subseteq \Y.
		\end{equation}
		In particular this means that
		\begin{equation}\label{eq:MPb}
			N'[\vp]^\dagger N'[\vp] = I \text{ on } \R^{\dimlimit} \text{ and } 
			N'[\vp] N'[\vp]^\dagger = Q \text{ on } {\range{FP_\vp}},
		\end{equation}
		where 
		\begin{itemize} \item $I$ denotes the identity operator on $\R^{\dimlimit}$ and 
			\item $Q: \Y \to \overline{\range{FP_\vp}} \dot{+} \range{FP_\vp}^\bot$ is the orthogonal projection operator onto $\overline{\range{FP_\vp}}$.
			\end{itemize}
		\item Generalized Newton-Mysovskii condition:
		\begin{equation} \label{eq:wrnm} \begin{aligned}
				\norm{N'[\vp]^\dagger(N'[\vq+s(\vp-\vq)]-N'[\vq])(\vp-\vq)}_{\bP}  
				&\leq s C_I C_L \norm{\vp-\vq}_\bP^2\\
				& \quad \vp, \vq \in \dom{N}, s \in [0,1]\;.
			\end{aligned}
		\end{equation}
		We recall that the Lipschitz-constants $C_I$ and $C_L$ are defined in \autoref{eq:cl}.
	\end{itemize}
\end{lemma}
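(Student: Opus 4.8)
The plan is to reduce the whole statement to the factorization $N'[\vp]=F\circ\Psi'[\vp]$ and to exploit that $F$ is injective, so that $F$ restricts to a linear bijection of the finite-dimensional tangent space $\X_\vp=\text{span}\{\partial_{p_i}\Psi[\vp]:i=1,\dots,\dimlimit\}$ onto $F(\X_\vp)=\range{FP_\vp}$. First I would record the structural facts: since $F$ is linear and bounded, the chain rule gives $N'[\vp]=F\circ\Psi'[\vp]:\bP\to\Y$ for every $\vp\in\dom N=\dom\Psi$; by \autoref{de:immersion}, $\Psi'[\vp]$ is injective with $\dimlimit$-dimensional range $\X_\vp$, and since $\nsp F=\{0\}$ this forces $\nsp{N'[\vp]}=\{0\}$ and $\range{N'[\vp]}=F(\X_\vp)$, which is finite-dimensional, hence closed. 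Consequently a Moore--Penrose inverse of $N'[\vp]$ in the sense of \autoref{not:inverse} exists, and the projector onto $\nsp{N'[\vp]}$ is $P=0$.

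Next I would take as candidate $B\z:=\Psi'[\vp]^\dagger F^{-1}\z$, defined for $\z\in\range F$ — meaningful because $F^{-1}:\range F\to\X$ exists by injectivity of $F$, and $\Psi'[\vp]^\dagger:\X\to\bP$ is the Moore--Penrose inverse supplied by \autoref{le:MPN} — and verify the identities \eqref{eq:MP} on the relevant domains. The computation $BN'[\vp]\vq=\Psi'[\vp]^\dagger F^{-1}F\Psi'[\vp]\vq=\Psi'[\vp]^\dagger\Psi'[\vp]\vq=\vq$ uses $F^{-1}F=I_\X$ together with $\Psi'[\vp]^\dagger\Psi'[\vp]=I$ from \autoref{de:immersion} and \autoref{le:MPN}, and gives $N'[\vp]^\dagger N'[\vp]=I$. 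For $\z\in\range{FP_\vp}=F(\X_\vp)$ one has $F^{-1}\z\in\X_\vp$, hence $P_\vp F^{-1}\z=F^{-1}\z$ and therefore $N'[\vp]B\z=F\Psi'[\vp]\Psi'[\vp]^{-1}F^{-1}\z=\z$, i.e.\ $N'[\vp]N'[\vp]^\dagger=Q$ on $\range{FP_\vp}$, since $Q$ restricts to the identity there; the remaining two Penrose identities $N'[\vp]N'[\vp]^\dagger N'[\vp]=N'[\vp]$ and $N'[\vp]^\dagger N'[\vp]N'[\vp]^\dagger=N'[\vp]^\dagger$ are then immediate. This paragraph carries essentially all the work, and the genuinely delicate point is the domain bookkeeping: because $F$ is (typically) compact, $F^{-1}$ is unbounded and only densely defined, so every statement must be restricted to $\range F$, resp.\ $\range{FP_\vp}$, exactly as in the lemma, and one must invoke the immersion hypothesis to be sure that $\Psi'[\vp]^\dagger$ acts as a genuine inverse on the subspace actually produced.

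Finally, for the generalized Newton--Mysovskii estimate \eqref{eq:wrnm} I would set, for $\vp,\vq\in\dom N$ and $s\in[0,1]$, $\x:=\bigl(\Psi'[\vq+s(\vp-\vq)]-\Psi'[\vq]\bigr)(\vp-\vq)\in\X$; then $\bigl(N'[\vq+s(\vp-\vq)]-N'[\vq]\bigr)(\vp-\vq)=F\x\in\range F$, so by \eqref{eq:MPa},
\begin{equation*}
	N'[\vp]^\dagger\bigl(N'[\vq+s(\vp-\vq)]-N'[\vq]\bigr)(\vp-\vq)=\Psi'[\vp]^\dagger F^{-1}F\x=\Psi'[\vp]^\dagger\x .
\end{equation*}
Combining the uniform bound $\norms{\Psi'[\vp]^\dagger}_{\X\to\bP}\le C_I$ from \eqref{eq:clb} with the Lipschitz estimate $\norms{\Psi'[\vq+s(\vp-\vq)]-\Psi'[\vq]}_{\bP\to\X}\le sC_L\norms{\vp-\vq}_\bP$ from \eqref{eq:cl} then yields $\norms{\Psi'[\vp]^\dagger\x}_\bP\le C_I\cdot sC_L\norms{\vp-\vq}_\bP\cdot\norms{\vp-\vq}_\bP=sC_IC_L\norms{\vp-\vq}_\bP^2$, which is the claim. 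This last step is routine once \eqref{eq:MPa} is in hand, so the entire difficulty of the lemma lies in the careful treatment of the pseudoinverse of $N'[\vp]$ and its domain.
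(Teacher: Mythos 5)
The paper states \autoref{le:dec} without proof (like \autoref{le:MPN}, it is deferred to \cite{SchHofNas23} as ``standard means but rather tedious calculations''), so there is no in-text argument to compare against; judged on its own, your factorization argument is correct and is clearly the intended route: use $N'[\vp]=F\circ\Psi'[\vp]$, cancel the injective $F$ via $F^{-1}$ on $\range{F}$ so that everything reduces to the Moore--Penrose inverse of the immersion $\Psi'[\vp]$, and then deduce \eqref{eq:wrnm} from the bounds \eqref{eq:cl} and \eqref{eq:clb}. One caveat worth recording, which concerns the lemma's formulation rather than your verification: your candidate $B=\Psi'[\vp]^\dagger F^{-1}$ satisfies the identities \eqref{eq:MPb} precisely on the restricted domains the lemma specifies, but for a general $\z\in\range{F}$ it returns $\Psi'[\vp]^{-1}P_\vp F^{-1}\z$, whereas the canonical Moore--Penrose inverse of $N'[\vp]$ (whose range $F(\X_\vp)$ is closed, so it is defined on all of $\Y$) returns $\bigl(N'[\vp]\bigr)^{-1}Q\z$; these agree for all $\z\in\range{F}$ only if $FP_\vp=QF$, i.e.\ if $F$ maps $\X_\vp^{\bot}$ into $F(\X_\vp)^{\bot}$, which a general bounded injective $F$ need not do. Since \eqref{eq:MPa} is effectively the lemma's definition of $N'[\vp]^\dagger$ and \eqref{eq:MPb} is only asserted on $\R^{\dimlimit}$ and $\range{FP_\vp}$, your proof establishes exactly what is claimed; you are right that the entire difficulty sits in this domain and projection bookkeeping, and the final Newton--Mysovskii estimate is indeed routine once \eqref{eq:MPa} is available.
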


We have now all ingredients to state a local convergence rates result for a Gauss-Newton method, where the operator $N$ is the  composition of a linear bounded operator and a Lipschitz-differentiable immersions:
\begin{theorem} \label{th:deupot92dg} Let $F: \X \to \Y$ be linear, bounded, with trivial nullspace and dense range. Moreover, let 
	$\Psi: \dom{\Psi} \subseteq \bP \to \X$ be a Lipschitz-differentiable immersion with $\dom{\Psi}$ open, non-empty, and convex. Moreover, $N= \opo \circ \Psi : \dom{\Psi} \to \Y$. We assume that there exist $\vp^\dagger \in \dom{\Psi}$ that satisfies
	\begin{equation} \label{eq:sol}
		N[\vp^\dagger] = \y.
	\end{equation}
	Moreover, we assume that there exists $\vp^0 \in \dom{\Psi}$, which satisfies \autoref{eq:h}.
	Then, the iterates of the Gauss-Newton iteration,
	\begin{equation} \label{eq:newton} \begin{aligned}
			\vp_{k+1} = \vp_k - N'[\vp_k]^\dagger(N[\vp_k]-\y) 
			\quad k \in \N_0
		\end{aligned}
	\end{equation}
	are well-defined elements in $\overline{\mathcal{B}(\vp^0,\rho)}$ and converge quadratically to $\vp^\dagger$.
\end{theorem}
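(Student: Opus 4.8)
The plan is to carry over the finite-dimensional Newton--Mysovskii argument of \autoref{th:deupot92} to the present situation, the only structural novelty being that $N = \opo \circ \Psi$ maps into the (possibly infinite-dimensional) space $\Y$, so that $N'[\vp]$ is no longer invertible and the inverse in \eqref{eq:newton_invert} must be replaced by the Moore--Penrose inverse $N'[\vp]^\dagger$ furnished by \autoref{le:MPN} and \autoref{le:dec}. All technical content needed for this substitution has already been isolated in those two lemmas: the local uniform boundedness and Lipschitz continuity of $\Psi'[\vp]^\dagger$, and hence of $N'[\vp]^\dagger$; the factorisation $N'[\vp]^\dagger\z = \Psi'[\vp]^\dagger F^{-1}\z$ for $\z\in\range{F}$ from \eqref{eq:MPa}; the identities $N'[\vp]^\dagger N'[\vp] = I$ and $N'[\vp]N'[\vp]^\dagger = Q$ of \eqref{eq:MPb}; and the generalized Newton--Mysovskii estimate \eqref{eq:wrnm}. (We cannot invoke \autoref{th:deupot92} directly precisely because $\Y$ need not be finite-dimensional.)

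First I would check that the iteration \eqref{eq:newton} is well defined as long as the current iterate stays in $\dom{\Psi}$. Here the hypothesis \eqref{eq:sol} that the data be attainable is decisive: since $\y = N[\vp^\dagger] = \opo\Psi[\vp^\dagger]$ we get
\[
	N[\vp_k]-\y = \opo\bigl(\Psi[\vp_k]-\Psi[\vp^\dagger]\bigr)\in\range{F},
\]
so that, $F$ being injective, $N'[\vp_k]^\dagger(N[\vp_k]-\y) = \Psi'[\vp_k]^\dagger F^{-1}(N[\vp_k]-\y)$ is meaningful by \eqref{eq:MPa} and $\vp_{k+1}$ is well defined. This is exactly the mechanism indicated around \eqref{eq:Newton_sec}: attainability of $\y$ ``annihilates'' the compact operator $\opo$ and the apparent ill-posedness disappears; without it the scheme would break down.

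The heart of the proof is the quadratic recursion. Assume inductively that $\vp_k$ lies in a ball around $\vp^0$ contained in $\dom{\Psi}$ with $\norms{\vp_k-\vp^\dagger}_\bP\le\rho$. Since $\dom{\Psi}$ is convex, the segment from $\vp^\dagger$ to $\vp_k$ lies in $\dom{\Psi}$, and the fundamental theorem of calculus gives
\[
	N[\vp_k]-\y = N[\vp_k]-N[\vp^\dagger] = \int_0^1 N'\bigl[\vp^\dagger + t(\vp_k-\vp^\dagger)\bigr](\vp_k-\vp^\dagger)\,dt .
\]
Applying $N'[\vp_k]^\dagger$ and using $\vp_k-\vp^\dagger = \int_0^1 N'[\vp_k]^\dagger N'[\vp_k](\vp_k-\vp^\dagger)\,dt$ (by $N'[\vp_k]^\dagger N'[\vp_k]=I$) yields
\[
	\vp_{k+1}-\vp^\dagger = \int_0^1 N'[\vp_k]^\dagger\Bigl(N'[\vp_k]-N'\bigl[\vp^\dagger+t(\vp_k-\vp^\dagger)\bigr]\Bigr)(\vp_k-\vp^\dagger)\,dt .
\]
Each integrand is controlled by \eqref{eq:wrnm}, applied with base point $\vp^\dagger+t(\vp_k-\vp^\dagger)$, endpoint $\vp_k$ and $s=1$ and then cancelling the common factor $1-t$, which gives the bound $(1-t)\,C_I C_L\,\norms{\vp_k-\vp^\dagger}_\bP^2$; integrating in $t$,
\[
	\norms{\vp_{k+1}-\vp^\dagger}_\bP \le \tfrac12\,C_I C_L\,\norms{\vp_k-\vp^\dagger}_\bP^2 .
\]
With $q:=\tfrac12 C_I C_L\rho<1$ — which is exactly the smallness condition in \eqref{eq:h} together with $\norms{\vp^0-\vp^\dagger}_\bP=\rho$ — this forces $\norms{\vp_{k+1}-\vp^\dagger}_\bP\le q\norms{\vp_k-\vp^\dagger}_\bP\le q^{k+1}\rho$, which simultaneously closes the induction (the iterates stay in the ball on which \eqref{eq:wrnm} and \eqref{eq:clb} are valid) and establishes $\vp_k\to\vp^\dagger$ with quadratic rate.

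I expect the only point requiring genuine care to be the ball bookkeeping: one has to verify that the iterates never leave a ball contained in $\dom{\Psi}$, so that at every step the local bounds of \autoref{de:immersion}, \autoref{le:MPN}, \autoref{le:dec} and the convexity used in the Taylor expansion apply. This is handled by the same induction as in the finite-dimensional proof of \autoref{th:deupot92}, using nothing beyond the contraction $\norms{\vp_{k+1}-\vp^\dagger}_\bP\le q\norms{\vp_k-\vp^\dagger}_\bP$ and the radius condition \eqref{eq:h}; all the substantive analysis has been front-loaded into \autoref{le:MPN} and \autoref{le:dec}, so that, given those, the convergence statement follows along entirely classical lines.
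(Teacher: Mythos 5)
The paper does not actually prove \autoref{th:deupot92dg}: the statement is followed only by a remark pointing to outer-inverse Newton theory, and the proof is deferred to \cite{SchHofNas23} (as for the companion result \autoref{th:newtonNN}). So there is no in-text proof to compare against; judged on its own, your argument is correct and is precisely the proof the surrounding machinery is designed to support. You use attainability of $\y$ together with \autoref{eq:MPa} to reduce the Gauss-Newton correction to $\Psi'[\vp_k]^\dagger(\Psi[\vp_k]-\Psi[\vp^\dagger])$ (this is where injectivity of $F$ and \autoref{eq:sol} enter, exactly as in \autoref{eq:Newton_sec}), the identity $N'[\vp_k]^\dagger N'[\vp_k]=I$ from \autoref{eq:MPb} to write the error $\vp_{k+1}-\vp^\dagger$ as an integral of $N'[\vp_k]^\dagger\bigl(N'[\vp_k]-N'[\vp^\dagger+t(\vp_k-\vp^\dagger)]\bigr)(\vp_k-\vp^\dagger)$, and the generalized Newton--Mysovskii estimate \autoref{eq:wrnm} --- correctly instantiated with $\vq=\vp^\dagger+t(\vp_k-\vp^\dagger)$, $s=1$, and the factor $1-t$ cancelled --- to get $\norms{\vp_{k+1}-\vp^\dagger}_\bP\le\tfrac12 C_IC_L\norms{\vp_k-\vp^\dagger}_\bP^2$, whence \autoref{eq:h} yields the contraction and the quadratic rate.

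Two points deserve an explicit sentence in a written-up version. First, pulling $N'[\vp_k]^\dagger$ inside the integral is not automatic, since on $\range{F}$ this operator factors through the unbounded $F^{-1}$; it is legitimate here because $N'[\vp_k]^\dagger N'[\vq]=\Psi'[\vp_k]^\dagger\Psi'[\vq]$ by \autoref{eq:MPa} and $\Psi'[\vp_k]^\dagger$ is bounded on $\X$ by \autoref{le:MPN}, so the interchange can be performed on the $\X$-valued integral $\Psi[\vp_k]-\Psi[\vp^\dagger]=\int_0^1\Psi'[\vp^\dagger+t(\vp_k-\vp^\dagger)](\vp_k-\vp^\dagger)\,dt$. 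Second, the ball bookkeeping is genuinely loose, but the imprecision is inherited from the statement rather than introduced by you: your recursion keeps the iterates in $\overline{\mathcal{B}(\vp^\dagger;\rho)}$ (the distances to $\vp^\dagger$ decrease), while \autoref{eq:h} only places $\overline{\mathcal{B}(\vp^0;\rho)}$ inside the domain, and since $\vp^\dagger$ lies on the boundary of that ball, $\overline{\mathcal{B}(\vp^\dagger;\rho)}$ need not be contained in $\dom{\Psi}$; a clean write-up should either assume $\overline{\mathcal{B}(\vp^0;2\rho)}\subseteq\dom{\Psi}$ or phrase the conclusion for $\overline{\mathcal{B}(\vp^\dagger;\rho)}$. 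The same issue is present in the finite-dimensional template \autoref{th:deupot92}, so apart from these two remarks your argument is complete.
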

\begin{remark} \autoref{le:dec} shows that $\Psi(\vp)^\dagger \opo^{-1}$ is the Moore-Penrose inverse of the linearization of $N=\opo \Psi(\vp)$.
	In order to prove (quadratic) convergence of Gauss-Newton methods one requires an \emph{outer inverse} (see \autoref{not:inverse}). Following \cite{NasChe93} (see also \cite{Hae86}) the analysis of Gauss-Newton method could also be based on \emph{outer inverses} (compare \autoref{eq:outer} and \autoref{eq:MP}). 
	Expositions on Newton-Kantorovich-Mysovskii theory are \cite{KanAki64,OrtRhe70,Schw79} - here we replace the Newton-Kantorovich-Mysovskii conditions (see \autoref{eq:wrnmi}) by properties of an immersion, thus a differential geometry concept. For aspects related to Newton methods for singular points see \cite{DecKelKel83,Gri85}. For applications of generalized inverses in nonlinear analysis see 
	\cite{Nas87,Nas79}.
\end{remark}



%
%
%

\backmatter
\ifdgruyter
  \printbibliography[env=bibnumeric]
\else
{
  \sloppy
  \chapter{Bibliography}
  \printbibliography[heading=none]
}
\fi

\printindex

\end{document}